\documentclass[12pt]{article}
\title{Dp-finite fields III: inflators and directories}
\author{Will Johnson}

\usepackage{amsmath, amssymb, amsthm}    	
\usepackage{fullpage} 	
\usepackage{amscd}
\usepackage{hyperref}
\usepackage[all]{xy}
\usepackage{centernot}

\DeclareMathOperator*{\forkindep}{\raise0.2ex\hbox{\ooalign{\hidewidth$\vert$\hidewidth\cr\raise-0.9ex\hbox{$\smile$}}}}

\newcommand{\Ab}{\operatorname{Ab}}

\newcommand{\End}{\operatorname{End}}
\newcommand{\bdn}{\operatorname{bdn}}

\newcommand{\rk}{\operatorname{rk}}

\newcommand{\Frac}{\operatorname{Frac}}

\newcommand{\Hom}{\operatorname{Hom}}

\newcommand{\res}{\operatorname{res}}
\newcommand{\Aut}{\operatorname{Aut}}

\newcommand{\Set}{\operatorname{Set}}
\newcommand{\Fun}{\operatorname{Fun}}

\newcommand{\Mod}{\operatorname{Mod}}
\newcommand{\Vect}{\operatorname{Vect}}

\newcommand{\coker}{\operatorname{coker}}

\newcommand{\val}{\operatorname{val}}

\newcommand{\Stab}{\operatorname{Stab}}

\newcommand{\img}{\operatorname{im}}
\newcommand{\coim}{\operatorname{coim}}
\newcommand{\dpr}{\operatorname{dp-rk}}
\newcommand{\redrk}{\operatorname{rk}_0}
\newcommand{\botrk}{\operatorname{rk}_\bot}

\newcommand{\Sub}{\operatorname{Sub}}
\newcommand{\soc}{\operatorname{soc}}
\newcommand{\qsoc}{\operatorname{qsoc}}
\newcommand{\Dir}{\operatorname{Dir}}
\newcommand{\Pro}{\operatorname{Pro}}
\newcommand{\Ind}{\operatorname{Ind}}

\newtheorem{theorem}{Theorem}[section] 
\newtheorem{lemma}[theorem]{Lemma}
\newtheorem{lemmadefinition}[theorem]{Lemma-Definition}
\newtheorem{corollary}[theorem]{Corollary}
\newtheorem{fact}[theorem]{Fact}

\newtheorem{assumption}[theorem]{Assumption}
\newtheorem{conjecture}[theorem]{Conjecture}
\newtheorem{dream}{Dream}[section]

\newtheorem{proposition}[theorem]{Proposition}
\newtheorem{proposition-eh}[theorem]{Proposition(?)}
\newtheorem*{theorem-star}{Theorem}
\newtheorem*{conjecture-star}{Conjecture}
\newtheorem*{lemma-star}{Lemma}

\theoremstyle{definition}
\newtheorem{definition}[theorem]{Definition}
\newtheorem{example}[theorem]{Example}

\theoremstyle{remark}
\newtheorem{remark}[theorem]{Remark}
\newtheorem{claim}[theorem]{Claim}

\newtheorem*{acknowledgment}{Acknowledgments}

\newtheorem*{warning}{Warning}

\newcommand{\Qq}{\mathbb{Q}}

\newcommand{\Rr}{\mathbb{R}}

\newcommand{\Zz}{\mathbb{Z}}
\newcommand{\Kk}{\mathbb{K}}
\newcommand{\Nn}{\mathbb{N}}
\newcommand{\Cc}{\mathcal{C}}
\newcommand{\Dd}{\mathcal{D}}

\newcommand{\Hh}{\mathcal{H}}

\newcommand{\Ff}{\mathbb{F}}

\newcommand{\Oo}{\mathcal{O}}
\newcommand{\mm}{\mathfrak{m}}

\newenvironment{claimproof}[1][\proofname]
               {
                 \proof[#1]
                 
               }
               {
                 \endproof
               }

\begin{document}
\maketitle

\begin{abstract}
  We develop some tools for analyzing dp-finite fields, including a
  notion of an ``inflator'' which generalizes the notion of a
  valuation/specialization on a field.  For any field $K$, let
  $\Sub_K(K^n)$ denote the lattice of $K$-linear subspaces of $K^n$.
  An ordinary valuation on $K$ with residue field $k$ induces
  order-preserving dimension-preserving specialization maps from
  $\Sub_K(K^n)$ to $\Sub_k(k^n)$, satisfying certain compatibility
  across $n$.  An $r$-inflator is a similar family of maps
  $\{\Sub_K(K^n) \to \Sub_k(k^{rn})\}_{n \in \Nn}$ scaling dimensions
  by $r$.  We show that 1-inflators are equivalent to valuations, and
  that $r$-inflators naturally arise in fields of dp-rank $r$.  This
  machinery was ``behind the scenes'' in \S 10 of \cite{prdf}.
  We rework \S 10 of \cite{prdf} using the machinery of
  $r$-inflators.
\end{abstract}

\section{Introduction}

This paper continues \cite{prdf,prdf2}, and is concerned with the
problem of classifying fields of finite dp-rank.  See \cite{dp-add}
for background on dp-rank and \cite{halevi-hasson-jahnke} for
background on the classification problem for NIP fields.
``Dp-minimal'' means dp-rank 1 and ``dp-finite'' means dp-rank $n$ for
some $n < \omega$.

In the present paper, we develop a set of algebraic tools for
analyzing dp-finite fields.  These tools---inflators and
directories---were implicit in \cite{prdf} \S 9-10; we will re-work \S
10 of \cite{prdf} in the language of inflators.  In a future paper
\cite{prdf4}, we will use inflators to carry out a detailed analysis
of fields of dp-rank 2, yielding some new results.

\subsection{Model-theoretic motivation}

Dp-minimal fields were classified in \cite{arxiv-myself}.  Given an
unstable dp-minimal field $K$, one embeds $K$ into a monster model
$\Kk \succeq K$, and uses this strategy:
\begin{enumerate}
\item \label{step-1} Define a group of ``$K$-infinitesimals'' $I_K \le (\Kk,+)$.
\item \label{step-2} Show that $I_K$ is an ideal in a valuation ring
  $\Oo_K$ on $\Kk$.
\item \label{step-3} Show that $\Oo_K$ is henselian.
\item \label{step-4} Use canonical henselian valuations (specifically \cite{JK}) to find a
  henselian valuation $\Oo'$ on $\Kk$ with a controlled residue field.
\item \label{step-5} Use the Ax-Kochen-Ershov principle to determine
  the complete theory of the original fields $K$ and $\Kk$.
\end{enumerate}
All these steps generalize to dp-finite fields, except
Step~\ref{step-2}.  Step~\ref{step-1} was done in \cite{prdf},
Step~\ref{step-3} was done in \cite{prdf2}, Step~\ref{step-4} was done
by Halevi, Hasson, and Jahnke \cite{halevi-hasson-jahnke}, and
Step~\ref{step-5} was done by Sinclair \cite{sinclair}.

The main gap is thus
\begin{conjecture}[Valuation conjecture]\label{val-conj}
  If $K$ is an unstable dp-finite field embedded in a monster model
  $\Kk \succeq K$, then the group $I_K$ of $K$-infinitesimals is an
  ideal in a valuation ring on $\Kk$.
\end{conjecture}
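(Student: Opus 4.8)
The plan is to reduce Conjecture~\ref{val-conj} to a rigidity statement about inflators, and then to attack that statement by induction on $r := \dpr(K)$, with the base case $r = 1$ being exactly the equivalence between $1$-inflators and valuations established in this paper.

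First I would recall from \cite{prdf} that $I_K$ is a $\bigwedge$-definable connected subgroup of $(\Kk,+)$ of full dp-rank $r$, that $\Oo := \{a \in \Kk : a I_K \subseteq I_K\}$ is a subring of $\Kk$, and that $I_K$ is an $\Oo$-submodule of $\Kk$; the conjecture then says precisely that $\Oo$ is a valuation ring, equivalently that the scalings $\{a I_K : a \in \Kk^\times\}$ are totally ordered by inclusion. The next step is to organize the $\bigwedge$-definable subgroups comparable to $I_K$, together with their scalings across all $\Kk^n$, into the directory attached to $K$: because $\dpr(\Kk) = r$ and $I_K$ has full rank, the induced specialization maps on subspace lattices $\{\Sub_\Kk(\Kk^n) \to \Sub_k(k^{rn})\}_{n}$ (with $k$ the associated reduced stalk) form an $r$-inflator $\Pi$, and the valuation conjecture becomes a multiplicativity property of $\Pi$. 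So it suffices to show: \emph{the $r$-inflator $\Pi$ arising in this way from a field of dp-rank $r$ is induced by a valuation.}

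For the induction I would try to peel off one valuation at a time. Using the directory, one would produce a nontrivial $\bigwedge$-definable valuation ring $\Oo_1$ on $\Kk$ compatible with $I_K$, check that its residue field $k_1$ has dp-rank $< r$ and inherits an inflator from $\Pi$, apply the inductive hypothesis to $k_1$ and the image of $I_K$, and then pull the resulting valuation ring back through $\Oo_1$. The base case $r = 1$ is where the new machinery does the work: $\Pi$ is then a $1$-inflator, hence a genuine valuation $v$ on $\Kk$ by the main equivalence, and one checks from the construction of $I_K$ in \cite{prdf} that $I_K$ is an ideal of $\Oo_v$.

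The hard part --- and the reason the conjecture is still open --- is the first move of the inductive step: producing a rank-$1$ direction along which $\Pi$ is multiplicative, i.e., ruling out a ``homogeneously $r$-dimensional'' non-multiplicative inflator. In the dp-minimal case one argues directly that two full-rank $\bigwedge$-definable subgroups $A, B$ of $\Kk$ must be comparable, since otherwise $A + B$ would have dp-rank $2$, exceeding $\dpr(\Kk) = 1$. For $r \ge 2$ this pigeonhole collapses: dp-rank is merely subadditive over $\bigwedge$-definable extensions, so one can have $\dpr(A) = \dpr(B) = r$, $\dpr(A \cap B) = r-1$, and $\dpr(A+B) = r$ with $A$ and $B$ incomparable, and no rank count yields a contradiction. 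Finding a substitute for this rigidity --- presumably by combining finiteness of dp-rank with the field axioms, perhaps through a generic-flag argument forcing $\Pi$ to respect a filtration, or through the finer structure of the directory --- is the crux. A reasonable intermediate goal, and the one pursued in \cite{prdf4}, is to carry this out explicitly for $r = 2$, where the directory is small enough to analyze by hand, and to look there for a pattern that generalizes.
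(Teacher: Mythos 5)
The statement you are trying to prove is an open conjecture: the paper contains no proof of it, and explicitly says that the mutation strategy ``successfully yields a valuation ring, but fails to prove the valuation conjecture.'' Your proposal is therefore a strategy sketch rather than a proof, and you correctly locate the missing step yourself. To be precise about where the gap sits relative to the paper's machinery: the paper does not attach an inflator directly to $I_K$, but to a \emph{pedestal} $A$ (the base of a strict $r$-cube in $\Lambda_1$, where $r = \redrk(\Lambda_1)$ is only bounded above by $\dpr(\Kk)$, not equal to it), and the link back to $I_K$ is the Pedestal Criterion (Lemma~\ref{true-pc}): one needs $\Stab(A)$ to contain a non-zero multi-valuation ideal. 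So the conjecture reduces not to ``the inflator is induced by a valuation'' but to the weaker statement that some mutation of the pedestal inflator is weakly of multi-valuation type (Dream~\ref{dream-1}) --- and the paper's own examples show why this is delicate: the map $(V,W) \mapsto (V+W, V\cap W)$ produces $2$-inflators whose fundamental rings are nothing like valuation rings, and \S\ref{sec:ivan} exhibits a malleable inflator that never becomes of multi-valuation type under any finite mutation.

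Your inductive step has a concrete failure mode beyond being unproven. Even granting the non-trivial $\Aut$-invariant valuation ring $\Oo_1$ produced by Theorem~\ref{thm:dp1}, there is no reason its residue field $k_1$ has smaller dp-rank (it need not even be a definable quotient in a way that preserves the ambient model theory), no reason $\Pi$ descends to an $(r-1)$-inflator on $k_1$, and --- most importantly --- no compatibility between $\Oo_1$ and $I_K$: the valuation rings obtained by mutation correspond to \emph{different} pedestals $A'$, and the whole difficulty is relating $\Stab(A')$ back to $\Stab(A)$ or to $\Stab(I_K)$. The ``rank count'' obstruction you identify for $r \ge 2$ is real, but the substitute the paper actually pursues is not a generic-flag or filtration argument; it is the algebraic analysis of wicked malleable inflators (Theorem~\ref{thm:dp2}), carried out for $r = 2$ in the sequel, and even there the conjecture is not resolved --- the rank-$2$ analysis produces fields with a derivation and a valuation in which $I_K$ is genuinely not a valuation ideal, which should make you cautious about expecting the conjecture to hold as stated.
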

Here, $I_K$ is the group of $K$-infinitesimals constructed in
\cite{prdf}.  By later work (Lemma 5.8, Theorem 5.9 in \cite{prdf2},
and Corollary 6.19 in \cite{prdf}), we can characterize $I_K$ as the
smallest additive subgroup of $\Kk$ that is type-definable over $K$
and has full rank $\dpr(I_K) = \dpr(\Kk)$.

If $J$ is an additive subgroup of $\Kk$, the ``stabilizer''
\begin{equation*}
  \Stab(J) := \{a \in \Kk : a \cdot J \subseteq J\}
\end{equation*}
is always a subring of $\Kk$.  The valuation conjecture says that
$\Stab(I_K)$ is a valuation ring.  The focus of the present paper is
on understanding the algebraic structure of rings $\Stab(J)$ when $J$
is a type-definable group in a dp-finite field.

It is worth noting two related conjectures:
\begin{conjecture}[Shelah conjecture for dp-finite fields]\label{shelah-conj}
  Let $\Kk$ be a saturated, infinite, dp-finite field.  Then $\Kk$
  admits a non-trivial henselian valuation ring.
\end{conjecture}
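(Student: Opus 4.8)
The plan is to reduce the statement, via the first three steps of the strategy recalled above, to the Valuation conjecture (Conjecture~\ref{val-conj}), after disposing of a ``tame'' case by hand. Since $\Kk$ is saturated and infinite I may fix a small elementary substructure $K \prec \Kk$ with $|K| < |\Kk|$, so that $\Kk$ functions as a monster model over $K$, and I note that $\Kk$ is unstable iff $K$ is. First suppose $\Kk$ is separably closed or real closed. If $\Kk$ is real closed then by saturation it is non-archimedean, so the convex hull of $\Zz$ is a nontrivial convex valuation ring, and convex valuation rings of real closed fields are henselian. If $\Kk$ is algebraically closed then, again by saturation, it has a transcendental element $t$ over the prime field; an extension to $\Kk$ of the $t$-adic valuation on the rational function field of the prime field is a nontrivial valuation, and it is automatically henselian since $\Kk$ has no proper algebraic extensions. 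If $\Kk$ is separably but not algebraically closed it has positive imperfection degree, so it contains a transcendental $t$ with $t \notin \Kk^p$; extending the $t$-adic valuation as before gives a nontrivial valuation, and this is henselian because every algebraic extension of $\Kk$ is purely inseparable and valuations extend uniquely along purely inseparable extensions. So the remaining case is that $\Kk$ is neither separably closed nor real closed; modulo the expected fact that an infinite stable field of finite dp-rank is separably closed, we may then assume $\Kk$, hence $K$, is unstable.

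In the unstable case I would invoke Step~\ref{step-1}, the construction of \cite{prdf}: the group $I_K \le (\Kk,+)$ of $K$-infinitesimals---characterized (as recalled above following Conjecture~\ref{val-conj}) as the smallest additive subgroup of $\Kk$ that is type-definable over $K$ and satisfies $\dpr(I_K) = \dpr(\Kk)$---is a nonzero proper subgroup of $\Kk$. Granting Conjecture~\ref{val-conj} (Step~\ref{step-2}), the stabilizer $\Oo_K := \Stab(I_K)$ is a valuation ring on $\Kk$; Step~\ref{step-3}, carried out in \cite{prdf2}, then shows $\Oo_K$ is henselian (given that it is a valuation ring); and $\Oo_K$ is nontrivial because an equality $\Oo_K = \Kk$ would make $I_K$ an ideal of the field $\Kk$, forcing $I_K \in \{0,\Kk\}$, contrary to $I_K$ being nonzero and proper. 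This produces the desired nontrivial henselian valuation ring. One might hope to get away with less than the full valuation conjecture: the machinery of this paper attaches to $\Kk$ an $r$-inflator with $r = \dpr(\Kk)$, and since $1$-inflators are equivalent to valuations, it would suffice to ``reduce'' this $r$-inflator to a nontrivial $1$-inflator, or otherwise to extract a henselian valuation from it, which is a priori weaker than showing $\Stab(I_K)$ itself is a valuation ring.

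The main obstacle is exactly the Valuation conjecture, i.e.\ verifying the axiom $x \notin \Oo_K \Rightarrow x^{-1} \in \Oo_K$ for the stabilizer (or for whatever subring the $r$-inflator produces). For $r = 1$ this is the content of the dp-minimal classification \cite{arxiv-myself}, and via the equivalence of $1$-inflators with valuations established here it is a tractable statement; but for $r \ge 2$ it is precisely the point at which the dp-minimal argument breaks down (Step~\ref{step-2}), and no general proof is known---this is the crux of the program, and the reason the conclusion is stated as a conjecture rather than a theorem. A secondary, much softer gap is the stable case: one needs an infinite stable field of finite dp-rank to be separably closed, which is expected but open. A proof along these lines therefore rests on those two inputs, with the valuation conjecture by far the deeper of the two.
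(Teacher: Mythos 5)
The statement you were asked to prove is labelled a \emph{conjecture} in the paper, and the paper contains no proof of it: it records only the implication ``the Valuation Conjecture implies the Shelah and henselianity conjectures,'' citing \cite{prdf2}, and the entire point of the paper is to build machinery (inflators, mutation, the pedestal criterion) aimed at eventually establishing the Valuation Conjecture. Your proposal is therefore correctly calibrated: the conditional reduction you sketch --- dispose of the separably closed and real closed cases by exhibiting an explicit henselian valuation, then in the unstable case pass to $I_K$, invoke Conjecture~\ref{val-conj} to make $\Stab(I_K)$ a valuation ring, and invoke the henselianity result of \cite{prdf2} --- is essentially the route the paper itself has in mind (Steps~\ref{step-1}--\ref{step-3} of the introduction), and you correctly identify that the irreducible gap is Step~\ref{step-2}, the Valuation Conjecture, which is open. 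So there is no genuine proof here to compare against, and no proof was expected; the honest conclusion is exactly the one you reach.

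Two smaller remarks. First, the paper's own machinery suggests a weaker sufficient condition than the full Valuation Conjecture, which you gesture at but could make precise: by the Pedestal Criterion (Lemma~\ref{true-pc}) it suffices that the inflator attached to some $K_0$-pedestal be \emph{weakly} of multi-valuation type after some mutation (Dream~\ref{dream-1}), and Theorem~\ref{thm:dp1} already extracts a non-trivial $\Aut(\Kk/S)$-invariant valuation ring unconditionally --- the missing ingredient for Shelah is definability/henselianity of that ring, not its existence. Second, your handling of the stable case is fine as a reduction, but note that ``infinite stable dp-finite fields are separably closed'' is itself an instance of the stable fields conjecture and was open at the time of writing, so it is a second genuine hypothesis of your argument, as you say.
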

\begin{conjecture}[Henselianity conjecture for dp-finite fields]\label{hens-conj}
  If $(K,\Oo)$ is a dp-finite valued field, then $\Oo$ is henselian.
\end{conjecture}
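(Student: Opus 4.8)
The conjecture above is open --- a central open problem of the program --- so the following is a proposed line of attack rather than a proof, and it also serves to motivate the machinery developed below.

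\emph{Reductions.} Henselianity of $(K,\Oo)$ is a countable conjunction of first-order sentences in the language of valued fields (for each $n$: every monic degree-$n$ polynomial over $\Oo$ with a simple root modulo $\mm$ has a root in $\Oo$ reducing to it), hence is preserved under elementary equivalence; so we may pass to a monster model and assume $K=\Kk$. Suppose $\Oo$ is not henselian. Then $\Oo$ fails to extend uniquely to some finite Galois extension $L/\Kk$, yielding two distinct valuation rings $\Oo_1\neq\Oo_2$ of $L$ lying over $\Oo$. Since $L$ is interpretable in $\Kk$, and the integral closure $\tilde\Oo$ of $\Oo$ in $L$ together with each of its finitely many maximal ideals is definable, both $\Oo_1$ and $\Oo_2$ are definable over parameters in the dp-finite field $L$. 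Moreover distinct extensions of a valuation to a finite extension are always $\subseteq$-incomparable (the residue extension is finite, so a proper coarsening of $\Oo_i$ would restrict to a proper coarsening of $\Oo$ on $\Kk$). So it suffices to prove: \emph{a dp-finite field carries no two incomparable definable valuation rings}.

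\emph{Main argument.} I would play $\Oo_1$ and $\Oo_2$ against the canonical $r$-inflator of $L$, $r=\dpr(L)$, and against the infinitesimal groups $I_1,I_2$ and subrings $\Stab(I_1),\Stab(I_2)$ that they determine. The hope is that a definable valuation on a dp-finite field is forced to be comparable with a constituent of the canonical inflator --- pinned down by dp-rank constraints on residue fields and value groups together with the rigidity of the canonical object --- so that two incomparable valuations would produce two incompatible decompositions of that inflator. Concretely: let $\Oo_0$ be the finest common coarsening of $\Oo_1$ and $\Oo_2$. If $\Oo_0$ is non-trivial, its residue field $k_0$ is a dp-finite field of (one hopes) strictly smaller dp-rank carrying the induced incomparable definable valuations, and one closes by induction on $\dpr$. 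The hard case is $\Oo_0$ trivial, i.e.\ $\Oo_1,\Oo_2$ ``everywhere independent'': here one would try to use the approximation theorem for independent valuations to produce a definable family too rich for a strongly dependent field, or to extract a contradiction from how $I_1$, $I_2$, and the infinitesimals of the canonical inflator of $L$ sit relative to one another.

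\emph{Main obstacle.} The genuine difficulty is that, absent Conjecture~\ref{val-conj}, we have no a priori control over \emph{which} valuations of a dp-finite field are definable: excluding ``exotic'' definable valuations unrelated to the canonical inflator is essentially as hard as the full classification. Compounding this, the residue fields met along the induction may be stable (e.g.\ algebraically closed), exactly where valuation-theoretic leverage is weakest, so the induction on dp-rank need not terminate usefully. I therefore expect a full proof to require either Conjecture~\ref{val-conj} (from which the rest of the program, henselianity included, is widely expected to follow) or a genuinely new idea --- which is why this paper records Conjecture~\ref{hens-conj} only conjecturally and instead builds the inflator and directory toolkit that such an argument would presumably rest on.
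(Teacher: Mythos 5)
This statement is recorded in the paper as an open conjecture, not a theorem: the paper offers no proof, only the remarks that the henselianity conjecture follows from the Shelah conjecture (Conjecture~\ref{shelah-conj}) by \cite{hhj-v-top} and from the valuation conjecture (Conjecture~\ref{val-conj}) by \cite{prdf2}. You correctly recognize this and present a strategy rather than a proof, so there is nothing in the paper to compare your argument against.

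Your reductions are sound and standard: passing to a monster model, extracting two distinct (hence definable) extensions $\Oo_1\neq\Oo_2$ to a finite Galois extension $L$, and reducing to ``no two incomparable definable valuation rings on a dp-finite field'' is exactly the route by which the literature derives henselianity in the dp-minimal case and from Shelah's conjecture in general. One small correction: the incomparability of distinct extensions of $\Oo$ to $L$ is forced by the \emph{value groups}, not the residue fields as your parenthetical suggests --- if $\Oo_1\subsetneq\Oo_2$ both lay over $\Oo$, the convex subgroup $H\le\Gamma_{\Oo_1}$ corresponding to the coarsening would meet $\Gamma_\Oo$ nontrivially (since $\Gamma_{\Oo_1}/\Gamma_\Oo$ is torsion), so $\Oo_2\cap\Kk$ would properly coarsen $\Oo$. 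Your assessment of the main obstacle --- that controlling which valuations on a dp-finite field are definable is essentially as hard as the classification itself --- matches why the paper leaves this as a conjecture and instead develops the inflator machinery.
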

These two conjectures are much more likely than the valuation
conjecture.  Assuming the Shelah conjecture, the henselianity
conjecture follows \cite{hhj-v-top}, and the full classification of
dp-finite fields is known \cite{halevi-hasson-jahnke}.  The valuation
conjecture implies the Shelah and henselianity conjectures
\cite{prdf2}.

In the dp-minimal case, the valuation conjecture is true for a very
simple reason.  Let $\Lambda^{00}$ be the poset of type-definable
subgroups $G \le (\Kk,+)$ such that $G = G^{00}$, ordered by
inclusion.  The $K$-infinitesimals $I_K$ are an element of
$\Lambda^{00}$.

If $\dpr(\Kk) = 1$, then $\Lambda^{00}$ is totally ordered.
Therefore, for any $a \in \Kk^\times$,
\begin{equation*}
  a \cdot I_K \subseteq I_K \textrm{ or } I_K \subseteq a \cdot I_K.
\end{equation*}
Equivalently,
\begin{equation*}
  a \cdot I_K \subseteq I_K \textrm{ or } a^{-1} \cdot I_K \subseteq I_K.
\end{equation*}
In other words, $\Stab(I_K)$ is a valuation ring
(Conjecture~\ref{val-conj}).

If $\dpr(K)= r > 1$, the structure of $\Lambda^{00}$ is much less
constrained, though we can still say the following:
\begin{itemize}
\item $\Lambda^{00}$ is a bounded modular lattice.
\item For $n > r$, there are no strict $n$-cubes in $\Lambda^{00}$.
\end{itemize}
Here, a ``strict $n$-cube'' in a modular lattice $M$ means an
unbounded sublattice\footnote{``Unbounded sublattice'' means a
  substructure of the unbounded lattice $(M,\vee,\wedge)$, rather than
  a substructure of the bounded lattice $(M,\vee,\wedge,\top,\bot)$.}
isomorphic to the boolean algebra of size $2^n$.  We say that a
modular lattice $M$ is \emph{cube-bounded} if there is a uniform
finite bound on the size of strict cubes in $M$.

Note that any two incomparable elements yield a strict 2-cube.  Thus
for $r = 1$, the second condition says that $\Lambda^{00}$ is totally
ordered.  We can think of ``cube-bounded'' as the natural
generalization of ``totally ordered'' to higher ranks.

Now, one would like to somehow deduce the valuation conjecture from
cube-boundedness of $\Lambda^{00}$.

\subsection{Multi-valuation rings, magic fields, and pedestals}\label{sec:has-pc}
For any small model $K_0 \preceq \Kk$, let $\Lambda_{K_0}$ denote the
lattice of type-definable $K_0$-linear subspaces of $\Kk$.  If $K_0$
is a \emph{magic subfield} (Definition~\ref{def:magic}), then
$\Lambda_{K_0} \subseteq \Lambda^{00}$, and so $\Lambda_{K_0}$ is
cube-bounded.  We preferentially work in $\Lambda_{K_0}$ rather than
$\Lambda^{00}$ because the lattice operations are simpler; no
$(-)^{00}$'s are involved.

Let $r$ be maximal such that a strict $r$-cube exists in
$\Lambda_{K_0}$.  Say that $A$ is a \emph{$K_0$-pedestal} if $A$ is
the base of a strict $r$-cube in $\Lambda_{K_0}$.

Say that a subring $R$ of a field $K$ is a \emph{multi-valuation ring on
  $K$} if $R$ is a finite intersection of valuation rings on $K$.  Say
that a subset $M \subseteq K$ is a \emph{multi-valuation ideal} on $K$
if $M$ is an $R$-submodule of $K$ for some multi-valuation ring $R$ on
$K$.

It turns out that pedestals can be used to verify the valuation
conjecture:
\begin{lemma-star}[Pedestal criterion]
  Let $K$ be an unstable dp-finite field.  Let $\Kk \succeq K$ be a
  monster model.  Let $K_0 \succeq \Kk$ be a magic subfield.  Let $A$
  be a $K_0$-pedestal.
  \begin{enumerate}
  \item If $A$ contains a non-zero multi-valuation ideal, then $I_K$
    is a valuation ideal.
  \item If $\Stab(A)$ contains a non-zero multi-valuation ideal, then
    $I_K$ is a valuation ideal.
  \end{enumerate}
\end{lemma-star}
This is essentially Theorem~8.11 in \cite{prdf2}.\footnote{There is
  some subtlety in the second point when $A = 0$.  See
  Lemma~\ref{true-pc} for the details.}

\subsection{Flattening and inflators}
Let $(M,\vee,\wedge,\bot)$ be a lower-bounded modular lattice.  In \S
9.4 of \cite{prdf}, we defined a set of \emph{quasi-atoms} in $M$, and
a modular pregeometry on the quasi-atoms.  Let $M^\flat$ be the
lattice of closed sets in this pregeometry.  Then $M^\flat$ is an
atomic modular lattice.  By Corollary~9.39.2 in \cite{prdf}, every
element in $M$ determines a closed set in the pregeometry, yielding an
order-preserving map
\begin{equation*}
  f : M \to M^\flat.
\end{equation*}
We call $M^\flat$ the \emph{flattening of $M$}, and $f : M \to
M^\flat$ the \emph{flattening map}.

Now let $\Kk$ be a dp-finite field.  Fix a magic subfield $K_0$, and
suppress it from the notation.  Let $\Lambda_n$ be the lattice of
type-definable $K_0$-linear subspaces of $\Kk^n$.  For any $A \in
\Lambda_1$, we can build a family of maps
\begin{align*}
  \Sub_K(K^n) \to \Lambda_n &\to [A^n,\Kk^n] \to [A^n,\Kk^n]^\flat \\
  V \mapsto V & \mapsto V + A^n \mapsto f(V + A^n).
\end{align*}
Here the notation $\Sub_R(M)$ denotes the lattice of $R$-submodules of
$M$, for any ring $R$ and $R$-module $M$.

As predicted in Speculative Remark~10.10 of \cite{prdf}, there is a
$K_0$-algebra $R$ and a semisimple $R$-module $M$ such that
\begin{equation*}
  [A^n,\Kk^n]^\flat \cong \Sub_R(M^n)
\end{equation*}
for each $n$.  We get a family of maps
\begin{equation*}
  \varsigma_n : \Sub_\Kk(\Kk^n) \to \Sub_R(M^n).
\end{equation*}
These maps satisfy the following properties, predicted in Speculative
Remark~10.10 of \cite{prdf}:
\begin{enumerate}
\item \label{uc1} Each $\varsigma_n$ is order-preserving.
\item Each $\varsigma_n$ is $GL_n(K_0)$-equivariant.
\item \label{uc3} There is compatibility with $\oplus$:
  \begin{equation*}
    \varsigma_{n+m}(V \oplus W) = \varsigma_n(V) \oplus \varsigma_m(W)
  \end{equation*}
\item The map $\varsigma_n$ scales lengths by a fixed factor:
  \begin{equation*}
    \ell_R(\varsigma_n(V)) = \ell_R(M) \cdot \dim_\Kk(V).
  \end{equation*}
\end{enumerate}
We call such a configuration an \emph{inflator} on $\Kk$.  If $d =
\ell_R(M)$, we call this a \emph{$d$-inflator}; a $d$-inflator
inflates lengths by a factor of $d$:
\begin{equation*}
  \ell_R(\varsigma_n(V)) = d \cdot \dim_\Kk(V).
\end{equation*}

It is helpful to bundle the lattices $\Sub_R(M^\bullet)$ into a
multi-sorted structure
\begin{equation*}
  \Dir_R(M) := (\Sub_R(M^1),\Sub_R(M^2),\Sub_R(M^3),\ldots)
\end{equation*}
with the poset structure and $GL_n(K_0)$-action on $\Sub_R(M^n)$,
and the connecting maps
\begin{equation*}
  \oplus : \Sub_R(M^n) \times \Sub_R(M^m) \to \Sub_R(M^{n+m}).
\end{equation*}
We call such structures \emph{directories}.  Then conditions
(\ref{uc1})-(\ref{uc3}) say that $\varsigma_\bullet$ is a \emph{morphism
  of directories}.

Inflators can be seen as generalized valuation data.  In fact, if $K$
is a valued field with valuation ring $\Oo$, maximal ideal $\mm$, and
residue field $k$, then there is a 1-inflator
\begin{align*}
  \Dir_K(K) & \to \Dir_k(k) \\
  \Sub_K(K^n) & \to \Sub_k(k^n) \\
  V & \mapsto (V \cap \Oo^n + \mm^n)/\mm^n.
\end{align*}
Essentially all 1-inflators arise this way.

More generally, if $R = \Oo_1 \cap \cdots \cap \Oo_d$ is an
intersection of $d$ incomparable valuation rings on a field $K$, there
is a natural $d$-inflator on $K$ essentially given by
\begin{align*}
  \Dir_K(K) & \to \Dir_R(R/J) \\
  \Sub_K(K^n) & \to \Sub_R(R^n/J^n) \\
  V & \mapsto (V \cap R^n + J^n)/J^n,
\end{align*}
where $J$ is the Jacobson radical $\mm_1 \cap \cdots \cap \mm_d$ of
$R$.  These are the motivating examples of $d$-inflators.  Thus the
intuition is that any inflator should have something to do with a
multi-valuation ring.

\subsection{The fundamental ring}
Fix a $d$-inflator $\varsigma : \Dir_K(K) \to \Dir_S(M)$, where $M$ is a
semisimple $S$-module of length $d$.  For any endomorphism $\varphi
\in \End_S(M)$, let $\Theta_\varphi$ be the graph of $\varphi$:
\begin{equation*}
  \Theta_\varphi = \{(x,\varphi(x)) : x \in M\} \subseteq M^2.
\end{equation*}
Similarly, for $b \in K$ let $\Theta_b$ be the line with slope $b$:
\begin{equation*}
  \Theta_b = \{(x,bx) : x \in K\} \subseteq K^2.
\end{equation*}
It turns out that the set
\begin{equation*}
  \{(b,\varphi) : \varsigma_2(\Theta_b) = \Theta_\varphi\}
\end{equation*}
is the graph of a ring homomorphism
\begin{equation*}
  \widehat{\res} : R \to \End_S(M)
\end{equation*}
for some subring $R \subseteq K$.  We call $R$ the \emph{fundamental
  ring} and $\widehat{\res}$ the \emph{generalized residue map}.

For 1-inflators, $R$ is the corresponding valuation ring and
$\widehat{\res}$ is the usual residue map.  Similarly, if $\varsigma$ is
the $n$-inflator induced by a multi-valuation ring $R$, then the
fundamental ring is $R$.

If $\Kk$ is a saturated unstable dp-finite field, if $A$ is a
$K_0$-pedestal, and if $\varsigma$ is the induced inflator, then the
fundamental ring $R_\varsigma$ of $\varsigma$ turns out to be the stabilizer
ring of $A$:
\begin{equation*}
  R_\varsigma = \Stab(A) := \{x \in \Kk : x \cdot A \subseteq x\}.
\end{equation*}
In particular, if $R_\varsigma$ were a multi-valuation ring, the Valuation
Conjecture would hold in $\Kk$, by the Pedestal Criterion of
\S\ref{sec:has-pc}.  We say that $\varsigma$ has \emph{multi-valuation
  type} if its fundamental ring is a multi-valuation ring.

In fact, we only need $\Stab(A)$ to contain a non-zero multi-valuation
ideal on $K$.  We say that $\varsigma$ is \emph{weakly multi-valuation
  type} if $R_\varsigma$ contains a non-zero multi-valuation ideal on $K$.

\subsection{Breaking and repairing}
It would therefore be nice if every inflator was weakly
multi-valuation type.  Unfortunately, one can produce unwanted
examples of inflators using the identity
\begin{equation*}
  \dim_K(V) + \dim_K(W) = \dim_K(V + W) + \dim_K(V \cap W).
\end{equation*}
For example, if $(K,\sigma)$ is a difference field, there is a
2-inflator
\begin{align*}
  \Dir_K(K) &\to \Dir_K(K) \times \Dir_K(K) \\
  V & \mapsto (V + \sigma(V), V \cap \sigma(V)).
\end{align*}
The fundamental ring turns out to be the fixed field of $\sigma$,
which usually contains no multi-valuation ideal on the big field $K$.

Fortunately, there is a way to ``twist'' or ``mutate'' inflators that
seems to undo the corrupting influence of the $(V,W) \mapsto (V + W, V
\cap W)$ map.  If
\begin{equation*}
  \varsigma : \Dir_K(K) \to \Dir_S(M)
\end{equation*}
is a $d$-inflator, and $L$ is a 1-dimensional subspace of $K^m$, one
can define a new inflator $\varsigma'$ by the formula
\begin{equation*}
  \varsigma'_n(V) = \varsigma_{nm}(V \otimes L).
\end{equation*}
We call $\varsigma'$ a \emph{mutation} of $\varsigma$.  It turns out that
$R_{\varsigma'} \supseteq R_{\varsigma}$.  In fact,
\begin{equation*}
  R^\infty_\varsigma = \{R_{\varsigma'} : \varsigma' \textrm{ a mutation of } \varsigma\}
\end{equation*}
turns out to be a directed union, and $R^\infty_\varsigma$ is a
multi-valuation ring on the field $K$.

In the case of inflators on dp-finite fields, this gives the
construction of a weakly definable non-trivial multi-valuation ring.
This was already proven in \cite{prdf}, Theorem~10.25, but the proof
we give here is the original proof using inflators.

\subsection{Directions for future work}

One could be more optimistic, and conjecture the following:
\begin{dream}\label{dream-0}
  If $\varsigma$ is an inflator, then some mutation $\varsigma'$ of $\varsigma$ is
  weakly multi-valuation type.
\end{dream}
It turns out that Dream~\ref{dream-0} implies the Valuation
Conjecture~\ref{val-conj}.  If $\varsigma$ comes from a pedestal $A$, and
$\varsigma'$ is obtained from $\varsigma$ by mutation, then for some reason
$\varsigma'$ also comes from a pedestal $A'$.  So the Pedestal Criterion
of \S\ref{sec:has-pc} applies.

Unfortunately, Dream~\ref{dream-0} is too optimistic, and there are
examples where no mutation of $\varsigma$ is weakly multi-valuation type.

The inflators on dp-finite fields have an additional technical
property called \emph{malleability} (see Definition~\ref{def:mal}).
Most of the bad examples of inflators fail to be malleable, so we
could conjecture
\begin{dream}\label{dream-1}
  If $\varsigma$ is a malleable inflator, then some mutation $\varsigma'$ of
  $\varsigma$ is weakly multi-valuation type.
\end{dream}
This is probably still false, but much closer to the truth.  In a
future paper \cite{prdf4}, we will investigate malleable 2-inflators
and prove enough of a characterization to get the following
description of infinitesimals:
\begin{theorem}[to appear in \cite{prdf4}]
  Let $\Kk$ be an unstable dp-finite field of rank 2 and
  characteristic 0 in which the valuation conjecture fails---$I_K$ is
  not a valuation ideal.  Then there is a field $(L,\partial,\val)$
  with a derivation and a valuation such that
  \begin{equation*}
    (\Kk,+,\cdot,I_K) \equiv (L,+,\cdot,J)
  \end{equation*}
  where $J = \{x \in L : \val(x) > 0 < \val(\partial x)\}$.
  Furthermore, the derivation and valuation on $L$ satisfy some
  independence conditions---for example every set of the form
  \begin{equation*}
    \{x \in L : \val(a - x) > \gamma \textrm{ and } \val(b - \partial x) \ge 0\}
  \end{equation*}
  is non-empty.
\end{theorem}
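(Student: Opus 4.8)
The plan is to run everything through the inflator machinery of this paper, combine it with the classification of malleable $2$-inflators promised in \cite{prdf4}, and transfer back via an Ax--Kochen--Ershov argument.

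\emph{Producing the inflator.} Working in the monster model $\Kk$, fix a magic subfield $K_0 \preceq \Kk$ and let $A$ be a $K_0$-pedestal. Since $I_K$ is not a valuation ideal, $\Lambda_{K_0}$ is not totally ordered, so it contains a strict $2$-cube; as $\dpr(\Kk) = 2$ there is no strict $3$-cube, so $A$ is the base of a strict $2$-cube and the induced inflator $\varsigma \colon \Dir_\Kk(\Kk) \to \Dir_S(M)$ is a $2$-inflator, with $M$ a semisimple $S$-module of length $2$ and fundamental ring $R_\varsigma = \Stab(A)$. Being an inflator attached to a dp-finite field, $\varsigma$ is malleable. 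The pedestal criterion of \S\ref{sec:has-pc} now says that, because $I_K$ is not a valuation ideal, $\Stab(A)$ contains no nonzero multi-valuation ideal of $\Kk$; i.e.\ $\varsigma$ is not weakly multi-valuation type, and since mutations of a pedestal-inflator are again pedestal-inflators, the same holds for every mutation of $\varsigma$. On the other hand, the multi-valuation ring $R^\infty_\varsigma$ produced by the mutation construction of this paper is a genuine non-trivial valuation datum on $\Kk$ strictly containing $R_\varsigma$; the theorem's valuation $\val$ should be this $R^\infty_\varsigma$, and the derivation $\partial$ should measure the gap between $R_\varsigma$ and $R^\infty_\varsigma$.

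\emph{The shape of a bad malleable $2$-inflator.} This is where the work of \cite{prdf4} enters. One analyses a malleable $2$-inflator $\varsigma \colon \Dir_K(K) \to \Dir_S(M)$ over a field of characteristic $0$ according to the structure of the semisimple length-$2$ module $M$: either $M$ splits as $N_1 \oplus N_2$, or $M = N^2$ for a simple $N$, or $M$ has simple socle with $\End_S(M) \cong k[\epsilon]/(\epsilon^2)$. In the split cases the inflator decomposes, up to mutation, into $1$-inflators, hence into valuations, so $R_\varsigma$ contains a multi-valuation ideal, contradicting Step 1; the remaining potentially-bad configurations would require a non-trivial algebraic automorphism of a residue field in the style of the difference-field $2$-inflator $V \mapsto (V + \sigma V,\ V \cap \sigma V)$, and malleability together with characteristic $0$ rules these out. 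What is left is the Jordan case with a genuine derivation: the data of $\varsigma$ is equivalent to that of the canonical $2$-inflator of a valued differential field $(L_0, \partial, \val)$, given on lines by $\widehat{\res}(b) = \bar b + \overline{\partial b}\,\epsilon$ and on subspaces by $V \mapsto \{\bar v + \overline{\partial v}\,\epsilon : v \in V \cap \Oo^n,\ \partial v \in \Oo^n\}$, whose fundamental ring is $\Oo \cap \partial^{-1}(\Oo)$. That $\varsigma$ is a genuine inflator on all of $\Dir_\Kk(\Kk)$ — so that each $\varsigma_n$ is order-preserving and hits a free rank-$n$ submodule of $M^n$ on a generic $n$-space — forces exactly the independence between $\val$ and $\partial$ in the statement: the non-emptiness of every set $\{x : \val(a - x) > \gamma,\ \val(b - \partial x) \ge 0\}$ is precisely the condition making $\ell_S(\varsigma_n(V)) = 2\dim_\Kk(V)$ rather than something smaller.

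\emph{Transfer.} It remains to recover $\operatorname{Th}(\Kk, +, \cdot, I_K)$ from the inflator datum. By the characterization recalled after Conjecture~\ref{val-conj}, $I_K$ is the smallest type-definable-over-$K_0$ full-rank additive subgroup, and one checks it is definable from the pedestal $A$, hence from $\varsigma$; concretely it is the kernel of the generalized residue data, which under the identification of Step 2 matches $J = \{x : \val(x) > 0 < \val(\partial x)\}$, an ideal of $\Oo \cap \partial^{-1}(\Oo) = R_\varsigma$. An Ax--Kochen--Ershov argument in the spirit of Sinclair \cite{sinclair}, relativized to the valuation-plus-derivation datum carried by $\varsigma$, then shows that $\operatorname{Th}(\Kk, +, \cdot, I_K)$ is determined by the theory of the underlying field together with this datum; choosing any honest valued differential field $(L, \partial, \val)$ realizing the same datum — which exists precisely because of the independence conditions of Step 2 — yields $(\Kk, +, \cdot, I_K) \equiv (L, +, \cdot, J)$. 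The heart of the argument, and the bulk of \cite{prdf4}, is Step 2: showing that a malleable $2$-inflator of characteristic $0$ which is not weakly multi-valuation type must be equivalent to the canonical inflator of a valued differential field, and extracting the exact independence of $\val$ and $\partial$. This needs a delicate case analysis of the $S$-module $M$ and the $K$-action, a proof that malleability excludes the automorphism-type configurations in characteristic $0$, and a careful reading of the length-scaling and order-preservation axioms to pin down the independence conditions; a secondary difficulty is making the Ax--Kochen--Ershov transfer uniform in the inflator datum, so that the abstract datum coming from $\Kk$ can be re-realized over a concrete valued differential field without disturbing the theory of the pair.
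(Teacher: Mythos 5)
First, a point of order: the paper you were given does not prove this statement. It is quoted in the introduction as a theorem ``to appear in \cite{prdf4}'', and nothing in Parts~\ref{part:1}--\ref{part:3} or the appendices supplies an argument for it; the present paper only builds the machinery (pedestals, malleable $2$-inflators, mutation, wickedness) that the future proof is supposed to consume. So there is no proof here to compare yours against, and your proposal has to stand on its own. Your Step~1 is sound and matches the paper's setup: rank $2$ plus failure of the valuation conjecture forces $\redrk(\Lambda)=2$, the pedestal inflator is a malleable $2$-inflator with $R_\varsigma=\Stab(A)$, and the pedestal criterion together with Proposition~\ref{prop:still} shows $\varsigma$ and all its mutations fail to be weakly of multi-valuation type, i.e.\ $\varsigma$ is wicked.

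The gap is that Steps~2 and~3 are the entire content of the theorem and you assert rather than prove them. (i) The classification claim --- that a bad malleable $2$-inflator in characteristic $0$ must be the canonical inflator of a valued differential field --- is exactly the hard theorem of \cite{prdf4}; moreover, as you state it (``malleable, characteristic $0$, not weakly multi-valuation type'') it is false: the inflator of Example~\ref{fiona} is malleable (Example~\ref{fiona-malleable}), lives in characteristic $0$, has fundamental ring $\Rr$ containing no nonzero multi-valuation ideal on $\mathbb{C}$, and involves no derivation. It is saved only by not being wicked (its mutation along $\mathbb{C}\cdot(1,i)$ has fundamental ring $\mathbb{C}$), so wickedness must be used throughout the case analysis, not just invoked once; your sketch never exhibits how. (ii) Identifying the theorem's single valuation $\val$ with $R^\infty_\varsigma$ is not justified: Theorem~\ref{thm:generate-mvals} only gives an intersection of at most two valuation rings, and extracting one valuation plus one derivation from that datum is precisely what needs proving. (iii) The transfer step requires an actual completeness or Ax--Kochen--Ershov theorem for the relevant class of valued differential fields, together with a proof that $I_K$ is definable from the inflator datum by a formula preserved under elementary equivalence; neither exists in this paper and neither is supplied. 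In short, the proposal is a plausible road map that names the right ingredients, but every load-bearing step is deferred to the very reference the theorem cites.
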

Using this, we show
\begin{theorem}[to appear in \cite{prdf4}] \label{to-generalize}
  If $K$ is an unstable dp-finite field of dp-rank 2 and
  characteristic 0, then $K$ admits a unique definable V-topology.
  The canonical topology is also definable, and refines this
  V-topology.
\end{theorem}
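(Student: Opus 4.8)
The plan is to reduce the theorem to the structure theorem quoted above, via a case split on whether the Valuation Conjecture holds for $K$. Recall from \cite{prdf} that the canonical topology $\tau_{\mathrm{can}}$ of $K$ is a definable field topology which refines every definable field topology on $K$; this already gives the second assertion, and it remains to exhibit a definable V-topology, prove $\tau_{\mathrm{can}}$ refines it, and prove uniqueness. If the Valuation Conjecture holds for $K$, then $I_K$ is a valuation ideal, $\Stab(I_K)$ is a valuation ring, and $\tau_{\mathrm{can}}$ is precisely its valuation topology; so in this case $\tau_{\mathrm{can}}$ is itself a V-topology, we take it as the one in the statement, and defer uniqueness to the third paragraph. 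Henceforth assume $I_K$ is not a valuation ideal, so that the structure theorem applies.

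Fix $(L,\partial,\val)$ with $(\Kk,+,\cdot,I_K)\equiv(L,+,\cdot,J)$ and $J=\{x:\val(x)>0<\val(\partial x)\}$ as in the structure theorem, and transport all topology to $L$. I would first check that under this equivalence $\tau_{\mathrm{can}}$ becomes the differential valuation topology, whose neighbourhood filter of $0$ is generated by the definable sets $B_{\gamma,\delta}=\{x\in L:\val(x)>\gamma,\ \val(\partial x)>\delta\}$; this comes down to showing the $B_{\gamma,\delta}$ are cofinal among the $\emptyset$-definable ``$I_K$-scalings'' from which $\tau_{\mathrm{can}}$ is built in \cite{prdf}, using that $\partial$ is a derivation. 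Since $B_{\gamma,\delta}\subseteq\{x:\val(x)>\gamma\}$, the topology $\tau_{\mathrm{can}}$ refines the valuation topology $\tau_\val$. The crux is to show that $\tau_\val$ is definable in $(L,+,\cdot,J)$ and is the finest V-topology coarser than $\tau_{\mathrm{can}}$: I would apply the ``associated V-topology'' construction to the definable ring topology $\tau_{\mathrm{can}}$, taking $U^{\circ}:=\{x\in L: xU\text{ is }\tau_{\mathrm{can}}\text{-bounded}\}$ as $U$ ranges over the $B_{\gamma,\delta}$, noting that boundedness is a first-order condition quantifying only over the definable family $\{B_{\gamma,\delta}\}$, and then showing --- this is exactly where the independence conditions enter, e.g. the non-emptiness of every $\{x:\val(a-x)>\gamma,\ \val(b-\partial x)\ge 0\}$ --- that the $U^{\circ}$ generate $\tau_\val$ rather than a proper coarsening. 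This provides a definable V-topology which $\tau_{\mathrm{can}}$ refines.

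For uniqueness, let $\tau$ be any V-topology on $K$ definable over parameters in $K$. In characteristic $0$ a V-topology is induced either by a valuation or by an archimedean absolute value; the latter is impossible, since a field carrying a definable archimedean absolute value is real closed or algebraically closed, hence dp-minimal, contradicting $\dpr(K)=2$ together with instability. So $\tau=\tau_w$ for a non-trivial valuation $w$ with $\Oo_w$ definable over $K$. The maximal ideal $\mathfrak{m}_w$ is then definable over $K$ and of full dp-rank (for any non-trivial valuation, $\dpr(\mathfrak{m}_w)=\dpr(K)$), so $\mathfrak{m}_w\supseteq I_K$ by the characterization of $I_K$ as the smallest full-rank additive subgroup type-definable over $K$. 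Since $\tau_{\mathrm{can}}$ refines $\tau_w$, we get that $\tau_w$ is coarser than or equal to $\tau_\val$ (the finest V-topology below $\tau_{\mathrm{can}}$ constructed above; in the Valuation Conjecture case read $\tau_{\mathrm{can}}$ for $\tau_\val$ here). Finally $\tau_w$ cannot be strictly coarser: a proper coarsening $w$ of $\val$ would have $\mathfrak{m}_w\subsetneq\mathfrak{m}_\val$, while $\mathfrak{m}_w$ is an $\Oo_w$-submodule of $L$ containing $J$, and the independence conditions force $J$ to meet $\{x:\val(x)=\gamma\}$ for a cofinal set of $\gamma>0$, so any $\Oo_w$-submodule containing $J$ already contains $\mathfrak{m}_\val$ --- a contradiction. (In the Valuation Conjecture case, minimality of $I_K$ forces the canonical value group to be densely ordered and $I_K$ value-dense in $\mathfrak{m}_{\mathrm{can}}$, playing the same role.) Hence $\tau_w=\tau_\val$, so the definable V-topology is unique.

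The main obstacle I anticipate is the second paragraph: establishing that the associated V-topology of $\tau_{\mathrm{can}}$ exists and coincides with $\tau_\val$, i.e. that the differential valuation topology does not collapse. This is precisely the role of the independence conditions in the structure theorem and is where a careful uniform argument is required; the uniqueness step then recycles those same conditions together with the minimality of $I_K$ and should be comparatively routine.
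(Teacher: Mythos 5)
A preliminary but important point: the paper does not prove Theorem~\ref{to-generalize}. It is stated purely as an announcement of a result ``to appear in \cite{prdf4}'', to be derived there from the preceding structure theorem about $(L,\partial,\val)$, which is itself only announced. So there is no proof in this paper to compare yours against; I can only judge your proposal on its own terms. Your architecture --- case-split on the Valuation Conjecture, and in the failing case transport everything to $(L,\partial,\val)$ and extract the V-topology from $\val$ --- is at least consistent with the paper's declared plan (``Using this, we show\ldots''), so the overall shape is plausible.

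There are, however, genuine gaps. First, you treat definability of the canonical topology as already available from \cite{prdf}; it is not. In \cite{prdf,prdf2} the canonical topology is built from the type-definable group $I_K$, so its basic neighborhoods are a priori only type-definable; the definability claim is one of the two substantive assertions of the theorem, and your proposal never exhibits a definable basis. Your sets $B_{\gamma,\delta}$ are definable in $(L,+,\cdot,\partial,\val)$, but the elementary equivalence only gives you the reduct $(L,+,\cdot,J)$, so you must still show these sets (or a cofinal subfamily) are definable from $J$ alone. Relatedly, the assertion that $\tau_{\mathrm{can}}$ refines every definable field topology is not established in the cited papers. Second, and most seriously, your uniqueness step silently assumes that any definable V-topology $\tau_w$ with $\mathfrak{m}_w \supseteq I_K$ is \emph{comparable} to $\tau_{\val}$, so that ``strictly coarser'' is the only alternative to equality. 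Two V-topologies on a field need not be comparable, and $\mathfrak{m}_w \supseteq I_K$ does not make $w$ a coarsening of $\val$. To exclude a second, inequivalent definable V-topology you need something like the independence/weak-approximation theorem for inequivalent V-topologies combined with a dp-rank count, or an equivalent device; this is precisely the hard content being deferred to \cite{prdf4}, and it is missing from your argument. Third, a minor repair: the archimedean case is excluded by saturation (pass to a monster model; an archimedean absolute value embeds the field topologically into $\mathbb{C}$, bounding its cardinality by the continuum), not by any appeal to dp-minimality of real closed or algebraically closed fields.
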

The hope is to generalize these results to higher ranks.  If
Theorem~\ref{to-generalize} were generalized to all ranks, it would
imply the Shelah and henselianity conjectures, completing the
classification.

\subsection{Notation and conventions}
Following \cite{prdf2} but not \cite{prdf}, the monster model will be
denoted $\Kk$.  We will always assume that $\Kk$ is a field, and never
assume that $\Kk$ is a pure field.  We will sometimes abuse
terminology and use ``saturated'' to mean ``sufficiently saturated and
sufficiently strongly homogeneous,'' rather than the official meaning
of ``saturated in the size of the model.''

If $A$ is an additive subgroup of a field $K$, we will call the ring
\begin{equation*}
  \Stab(A) = \{x \in K : x \cdot A \subseteq A\}
\end{equation*}
the ``stabilizer'' of $A$, for want of a better name.  Sometimes the
scare quotes will be omitted.

Rings are always unital but not always commutative.  An $R$-module is
always a left $R$-module; the category of $R$-modules is denoted
$R\Mod$, or $R\Vect$ if $R$ is a field.  If $K$ is a field, then
$K\Vect^f$ will denote the full subcategory of finite-dimensional
$K$-vector spaces.

Lattices are not assumed to have top and bottom elements.  A ``bounded
lattice'' is a lattice with top and bottom elements.  We use $\vee,
\wedge, \bot, \top$ to denote the (bounded) lattice operations.  A
homomorphism of lattices need not preserve $\top, \bot$ when they
exist, but a bounded lattice homomorphism must preserve $\top, \bot$.
A sublattice need not contain $\top, \bot$, but a bounded sublattice
must.

A modular lattice is ``atomic'' if it is lower-bounded ($\bot$ exists)
and every element is a finite join of atoms.
A modular lattice has ``finite length'' if there is a maximal chain of
finite length.  By Jordan-H\"older, this implies all maximal chains
have finite length.  A modular lattice is ``semisimple'' if it is an
atomic modular lattice of finite length.

For objects in abelian categories, ``semisimple'' will always mean
``semisimple of finite length,'' i.e., a finite sum of simple objects,
even in cases where the more general notion of semisimple would make
sense.  So an object $A$ is semisimple if and only if the modular
lattice of subobjects is semisimple.

A subquotient of an object in an abelian category is a quotient of a
subobject, or equivalently, a subobject of a quotient.  If $A$ is an
object in an abelian category, we let
\begin{itemize}
\item $\End_\Cc(A)$ denote the endomorphism ring of $A$
\item $\Sub_\Cc(A)$ denote the modular lattice of subobjects of $A$.
\item $\Dir_\Cc(A)$ denote the \emph{directory} of $A$; see
  Definition~\ref{def:the-directory}.
\end{itemize}
If $\Cc$ is $R\Mod$ or $K\Vect$, we will shorten the subscript to $R$
or $K$.

We adopt the following definition from \cite{prdf}, Definition 8.3:
\begin{definition}\label{def:magic}
  Let $\Kk$ be a saturated dp-finite field.  A small submodel $K_0
  \preceq \Kk$ is \emph{magic} if for every type-definable subgroup $G
  \le (\Kk^n,+)$, we have
  \begin{equation*}
    K_0 \cdot G \subseteq G \implies G = G^{00}.
  \end{equation*}
  In other words, type-definable $K_0$-linear subspaces of $\Kk^n$ are
  00-connected.
\end{definition}
By \cite{prdf} (Theorem~8.4 and the proof of Corollary~8.7), all
sufficiently large submodels of $\Kk$ are magic.  In particular, magic
subfields exist.

We adopt the following changes in notation from \cite{prdf},
introduced in \S8 of \cite{prdf2}:
\begin{itemize}
\item The lattice of type-definable $K_0$-subspaces of $\Kk^n$ is
  denoted $\Lambda_n$, not $\mathcal{P}_n$.
\item We refer to the bases of maximal strict cubes in $\Lambda_1$ as
  \emph{$K_0$-pedestals}, rather than ``special groups.''
\end{itemize}
Later we will give a more general notion of ``pedestal'' in an
abstract setting; see Definition~\ref{def:pedestal1.5}.

Following \cite{prdf2}, a \emph{multi-valuation ring} on a field $K$
is a finite intersection of valuation rings on $K$.  Multi-valuation
rings are the same thing as Bezout domains with finitely many maximal
ideals; see \S6 in \cite{prdf2}.

For most of the paper there will be a small but infinite field $K_0$
lurking in the background.  All the fields will extend $K_0$, all the
rings will be $K_0$-algebras, all the abelian categories will be
$K_0$-linear abelian categories, and all the additive subgroups of
$\Kk$ will be $K_0$-linear subspaces.  In the dp-finite setting, $K_0$
will generally be a magic subfield.  In other cases one can usually
take $K_0$ to be $\Qq$ of $\Ff_p^{alg}$.  The field $K_0$ serves only
one purpose, which is to provide a simple criterion for being a
multi-valuation ring:
\begin{lemma-star}
  Fix $q_1, \ldots, q_n$ distinct elements of $K_0$.  Let $K$ be a
  field extending $K_0$ and $R$ be a $K_0$-subalgebra of $K$.  Then
  the following are equivalent:
  \begin{itemize}
  \item $R$ is an intersection of $n$ valuation rings on $K$.
  \item For any $x \in K$, at least one of the following is in $R$:
    \begin{equation*}
      x, \frac{1}{x - q_1}, \ldots, \frac{1}{x - q_n}.
    \end{equation*}
  \end{itemize}
\end{lemma-star}
See Lemma~\ref{algebra-case}.  Note that for $n = 1$ and $q_1 = 0$,
this generalizes the usual test for a valuation ring.

\subsection{Outline}
The paper is divided into three parts.  Each part begins
with a brief synopsis.  Part~\ref{part:1} defines directories and
inflators, and works through their basic algebraic theory.  In
particular, we see how inflators generalize valuations.
Part~\ref{part:2} constructs inflators on dp-finite fields.  A major
theme is lifting the analysis of modular lattices in \S 9 in
\cite{prdf} to the more natural setting of abelian categories.
Part~\ref{part:3} continues the algebraic investigation of inflators,
showing how they naturally give rise to multi-valuation rings.  As an
application, this gives the construction of non-trivial
multi-valuation rings on unstable dp-finite fields.

There are several appendices.  The first two,
Appendices~\ref{app:ab}-\ref{app:ind} review the category theory of
abelian categories and pro-objects.  Appendix~\ref{app:dirs}
double-checks some ``obvious'' statements from Part~\ref{part:1}.
Appendix~\ref{app:dirs2} contains some further speculations on
directories.  This speculation is important for motivating
directories, but not important for the main line of proofs.  Finally,
Appendix~\ref{app:ranks} is a remark on subadditive rank functions on
abelian categories, which helps motivate the notion of ``reduced
rank.''

\begin{acknowledgment}
The author would like to thank
\begin{itemize}
\item Martin Hils and Franziska Jahnke for the invitation to speak at
  the conference ``Model Theory of Valued Fields and Applications.''
  This paper began as supplementary notes for the talk.
\item Jan Dobrowolski, for some helpful discussions about groups of
  finite burden.
\item Meng Chen, for hosting the author at Fudan University, where
  this research was carried out.
\end{itemize}
{\tiny This material is based upon work supported by the National Science
Foundation under Award No. DMS-1803120.  Any opinions, findings, and
conclusions or recommendations expressed in this material are those of
the author and do not necessarily reflect the views of the National
Science Foundation.}  
\end{acknowledgment}

\tableofcontents

\part{Directories and inflators} \label{part:1}
Sections \ref{sec:directories}-\ref{sec:basic-theory} work through the
basic theory of directories and inflators.

Section \ref{sec:directories} defines the category of directories.
This category helps simplify the definition and construction of
inflators.  For example, we could define a $d$-inflator on a field $K$
as a family of maps
\begin{equation*}
  \varsigma_n : \Sub_K(K^n) \to \Sub_R(M^n)
\end{equation*}
where $R$ is a ring and $M$ is a semisimple $R$-module of length $d$,
satisfying the following conditions:
\begin{enumerate}
\item \label{cu1} Each $\varsigma_n$ is order-preserving
\item $\varsigma_n$ is $GL_n(K_0)$-equivariant
\item \label{cu3} The $\varsigma_n$ are compatible with $\oplus$
\item \label{cu4} Each $\varsigma_n$ scales lengths by a factor of $d$.
\end{enumerate}
The first three conditions are meaningful in greater generality, and
it is helpful to group them into the notion of a \emph{morphism of
  directories}.  The inflators we construct in Part~\ref{part:2} will
be constructed as a composition of simpler directory morphisms, each
satisfying conditions \ref{cu1}-\ref{cu3}, but only the final composition
will satisfy \ref{cu4}.  Moreover, most of the directory morphisms we
will use are generated out of a few simple examples in
\S\ref{sec:directory-morphisms}.  Conditions \ref{cu1}-\ref{cu3} only
need to be checked on these generating examples.  Thus, the category
of directories helps to suppress many of the boring details in proofs.

Also, the ring $R$ and module $M$ are not important; what we really
care about is the collection of lattices
\begin{equation*}
  (\Sub_R(M), \Sub_R(M^2), \Sub_R(M^n), \ldots).
\end{equation*}
For example, in Part~\ref{part:2} we will construct an inflator
\begin{equation*}
  \varsigma_n : \Sub_\Kk(\Kk^n) \to D_n
\end{equation*}
on dp-finite field $\Kk$.  The codomain lattices $D_n$ will be
constructed abstractly.  So the lattices are canonical, not the ring
$R$ and module $M$.  The notion of \emph{directory} focuses in on the
important information.

In \S\ref{sec:lambda} we show that if $\Kk$ is a saturated field,
possibly with extra structure, then there are three natural
directories:
\begin{itemize}
\item $\Delta_\bullet$, where $\Delta_n$ is the lattice of definable
  $K_0$-linear subspaces of $\Kk^n$.
\item $\Lambda_\bullet$, where $\Lambda_n$ is the lattice of
  type-definable $K_0$-linear subspaces of $\Kk^n$.
\item $\Lambda^{00}_\bullet$, where $\Lambda^{00}_n$ is the quotient
  of $\Lambda_n$ by 00-commensurability.
\end{itemize}
These structures certainly feel like directories, but there are a few
things to check.  Section~\ref{sec:lambda} is the only appearance of
model theory in Part~\ref{part:1}.

In \S\ref{sec:inflators} we precisely define inflators, and give a few
examples.  Sections \ref{sec:aaron}-\ref{sec:dorothy} cover the
natural examples of inflators arising from valuation rings, field
extensions, and multi-valuation rings.  Section \ref{garbage-ex} goes
through the unnatural examples of inflators that derail the analysis
of dp-finite fields.

In \S\ref{sec:basic-theory} we begin to analyze inflators, a project
which is continued in Part~\ref{part:3}.  In
\S\ref{sec:r-i}-\ref{sec:1-fold-classify} we show how to use the
inflator axioms to construct the generalized residue map and
fundamental ring, and we classify 1-inflators.  In
\S\ref{sec:tame-locus} we give a criterion for whether the fundamental
ring is a multi-valuation ring.  Lastly, \S\ref{sec:malleable}
introduces the notion of ``malleable'' inflators.  Malleability rules
out most of the unwanted inflators of \S\ref{garbage-ex}, but
continues to hold for the inflators on dp-finite fields constructed in
Part~\ref{part:2}.  The hope, then, is to classify malleable
inflators.  Malleability will play a key role in the sequel
\cite{prdf4}.

\section{Directories} \label{sec:directories}
Fix a small infinite field $K_0$.  See Appendix~\ref{app:ab} for a
review of abelian categories.
\begin{definition}\label{def:the-directory}
  Let $A$ be an object in a $K_0$-linear abelian category
  $\mathcal{C}$.  The \emph{directory of $A$} is the multi-sorted
  structure
  \begin{equation*}
    \Dir(A) := (\Sub(A), \Sub(A^2), \Sub(A^3), \ldots)
  \end{equation*}
  with the following functions and relations:
  \begin{itemize}
  \item The lattice structure on each $\Sub_\Cc(A^n)$.
  \item For any $n, m$, the map
    \begin{equation*}
      \oplus : \Sub(A^n) \times \Sub(A^m) \to \Sub(A^{n+m})
    \end{equation*}
  \item For each $n$, the action of $GL_n(K_0)$ on $\Sub(A^n)$.
  \end{itemize}
  We write $\Dir_\Cc(A)$ when we need to specify the category $\Cc$.
  
  A \emph{directory} is a structure isomorphic to $\Dir_\Cc(A)$ for
  some $K_0$-linear abelian category $\Cc$ and some $A \in \Cc$.
\end{definition}
Perhaps this should be called a ``$K_0$-linear directory.''  But we
will have no use for plain/$\Zz$-linear directories.


If $D_\bullet$ is a directory, each $D_n$ is a bounded modular
lattice.  There is a natural action of the $n$th symmetric group
$\mathcal{S}_n$ on $D_n$ via permutation matrices in $GL_n(K_0)$.
\begin{definition}
  Let $d$ be a nonnegative integer.  A directory $D_\bullet \cong
  \Dir_\Cc(A)$ \emph{has length $d$} if one of the following
  equivalent conditions holds:
  \begin{itemize}
  \item $A$ has length $d$.
  \item $D_1$ is a modular lattice of length $d$.
  \item For every $n$, $D_n$ is a modular lattice of length $dn$.
  \end{itemize}
  We say that $D$ \emph{has finite length} if $D_\bullet$ has length
  $d$ for some $d \in \Zz_{\ge 0}$.  We let $\ell(D_\bullet)$ denote
  the length of $D_\bullet$, if it is finite.
\end{definition}
\begin{definition}
  A directory $D_\bullet \cong \Dir_\Cc(A)$ is \emph{semisimple} if
  one of the following equivalent conditions holds:
  \begin{itemize}
  \item $A$ is semisimple (of finite length)
  \item $D_1$ is an atomic modular lattice of finite length.
  \item For every $n$, $D_n$ is an atomic modular lattice of finite
    length.
  \end{itemize}
\end{definition}
There are two intuitions for $\Dir_\Cc(A)$.  On one hand, the
directory can be viewed as a generalization of the endomorphism
$K_0$-algebra $\End_\Cc(A)$.  One can interpret $\End_\Cc(A)$ in
$\Dir_\Cc(A)$ by Proposition~\ref{prop:interpret-end} below.  In the
case of semisimple directories, the directory and endomorphism algebra
appear to even be bi-interpretable---see
Proposition~\ref{prop:semisimple-bi-interp}.

On the other hand, $\Dir_\Cc(A)$ is also a generalization of the
subobject lattice $\Sub_\Cc(A)$.  The subobject lattice $\Sub_\Cc(A)$
is trivially interpretable in $\Dir_\Cc(A)$, and most of the
configurations that yield maps between subobject lattices yield maps
between directories (see \S \ref{sec:directory-morphisms}).

\begin{remark}
  Very loosely, we can regard $\Dir_\Cc(A)$ as a version of
  $\Sub_\Cc(A)$ decorated with extra information which makes things
  like synthetic projective geometry work better.  For example, for
  each $n$ there is a corrspondence
  \begin{equation*}
    K \mapsto \Sub_K(K^n)
  \end{equation*}
  from skew fields to $(n-1)$-dimensional projective spaces.  For $n
  \ge 4$, this is a perfect correspondence.  But for $n = 3$ the map
  is not onto, because of non-Desarguesian projective planes.  For $n
  \le 2$, the map is not injective, because projective lines are
  structureless.  In contrast,
  \begin{equation*}
    K \mapsto \Dir_K(K)
  \end{equation*}
  is a perfect correspondence between skew fields and length-1
  directories.  (See \S\ref{sec:all-that}.)  The ``extra structure''
  of $\Sub_K(K^n)$ for $n > 1$ overcomes the problems of
  non-Desarguesian projective planes and structureless projective
  lines.  The general form of synthetic projective geometry for
  directories is the statement that any directory $(D_1,D_2,\ldots)$
  is isomorphic to $\Dir_\Cc(A)$ for some object $A$ in an abelian
  category $\Cc$.

  Of course we ``cheated'' and made this true by definition of
  ``directory.''  It would be better to define directories
  axiomatically as a collection of bounded modular lattices satisfying
  some axioms.  See Appendix~\ref{app:dirs2} for some speculation on
  what the axioms might be.  Having a list of axioms would simplify
  life in the cases when we need to prove that a certain structure is
  a directory, as in \S\ref{sec:lambda} and \S\ref{sec:dirflat} below.
\end{remark}

\subsection{Neighborhoods}\label{sec:neighborhoods}
If $A$ is an object in an abelian category $\mathcal{C}$, we define
the \emph{neighborhood} of $A$ to be the full subcategory of objects
isomorphic to subquotients of finite powers of $A$.  The neighborhood
of $A$ is in a sense the smallest abelian full subcategory
$\mathcal{C}' \subseteq \mathcal{C}$ containing $A$ and all its
subobjects.

If $\Cc'$ is the neighborhood of $A \in \Cc$, then $\Dir_{\Cc'}(A)
\cong \Dir_\Cc(A)$.  In particular, the neighborhood of $A$ determines
the directory of $A$.  The converse appears to be true: the directory
of $A$ seems to determine the neighborhood of $A$ up to equivalence of
categories.  See \S\ref{sec:directory-speculation}.

\subsection{Semisimple directories}\label{sec:all-that}
For any (noncommutative) ring $R$, let $M_n(R)$ denote the ring of $n
\times n$ matrices.  Recall the Artin-Wedderburn theorem:
\begin{theorem}[Artin-Wedderburn] \label{thm:aw}
  Let $R$ be a ring.  The following are equivalent:
  \begin{enumerate}
  \item \label{aw1} $R$ is a semisimple ring, i.e., $R$ is semisimple
    as an $R$-module.
  \item \label{aw2} $R$ is a finite product
    \begin{equation*}
      M_{n_1}(D_1) \times \cdots \times M_{n_k}(D_k)
    \end{equation*}
    of matrix rings over division rings $D_i$.
  \end{enumerate}
\end{theorem}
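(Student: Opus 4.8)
The statement to prove is the classical Artin–Wedderburn theorem, so the plan is to give the standard structure-theoretic argument. The harder direction is $(\ref{aw1}) \implies (\ref{aw2})$; the converse $(\ref{aw2}) \implies (\ref{aw1})$ is comparatively routine and I would dispatch it first. For the converse, observe that a product of rings is semisimple iff each factor is, so it suffices to show $M_n(D)$ is semisimple for a division ring $D$. This follows because $M_n(D) \cong (D^n)^n$ as a left $M_n(D)$-module, and $D^n$ (column vectors) is a simple $M_n(D)$-module: any nonzero vector can be carried to any other by a suitable matrix, so it generates the whole module. Hence $M_n(D)$ is a finite sum of simple modules, i.e.\ semisimple as a left module over itself.

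For the main direction, the plan is the usual three-step argument. \emph{Step 1: decompose the regular module.} Since $R$ is semisimple as a left $R$-module and $R$ is finitely generated (by $1$), write $R = \bigoplus_{i=1}^k R_i$ where each $R_i$ is the isotypic component for a simple module $S_i$, with $S_1,\dots,S_k$ pairwise non-isomorphic. Each $R_i$ is a two-sided ideal (it is the sum of \emph{all} submodules isomorphic to a power of $S_i$, hence stable under right multiplication too), so $R \cong \prod_{i=1}^k R_i$ as a ring, reducing to the case where $R$ is \emph{isotypic}: $R \cong S^n$ as a left module for a single simple $S$. \emph{Step 2: compute endomorphism rings.} Here I invoke Schur's lemma: $D := \End_R(S)$ is a division ring. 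Then $\End_R(R) \cong \End_R(S^n) \cong M_n(D)$ (matrices of maps between copies of $S$). \emph{Step 3: identify $R$ with its own endomorphism ring.} The canonical map $R \to \End_R({}_R R)^{\mathrm{op}}$ sending $r$ to right multiplication by $r$ is a ring isomorphism (injective since $R$ is unital, surjective by a direct check). Combining, $R \cong \End_R({}_R R)^{\mathrm{op}} \cong M_n(D)^{\mathrm{op}} \cong M_n(D^{\mathrm{op}})$, and $D^{\mathrm{op}}$ is again a division ring. Reassembling across the factors $R_i$ gives $R \cong \prod_{i=1}^k M_{n_i}(D_i)$ as desired.

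I expect the main obstacle to be purely bookkeeping: keeping the left/right module conventions and the opposite-ring transposes straight, since the paper works with left modules throughout (``an $R$-module is always a left $R$-module''). The one genuinely substantive input is Schur's lemma, which I would either cite or prove in one line (a nonzero endomorphism of a simple module is injective and surjective, hence invertible). Everything else is formal manipulation of semisimple modules, for which I would freely use the facts that submodules and quotients of semisimple modules are semisimple and that semisimple modules decompose into isotypic components — standard results I would cite rather than reprove. Since this is a well-known theorem quoted here only for reference, I would likely keep the proof brief or simply cite a standard source such as Lam's \emph{A First Course in Noncommutative Rings}.
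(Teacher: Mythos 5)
Your proof is correct and follows essentially the same route as the paper: the paper relegates this to Appendix C.1, where Propositions \ref{prop:sureish} and \ref{prop:sureish2} carry out exactly your Steps 1--3 (isotypic decomposition via Schur's lemma, identification of $\End$ of an isotypic piece with a matrix ring over a division ring, and $R \cong \End_R(R)^{\mathrm{op}}$), phrased slightly more abstractly for semisimple objects in an arbitrary abelian category, with the easy converse likewise dispatched by Morita equivalence. No gaps; the left/right and opposite-ring bookkeeping you flag is indeed the only delicate point, and you handle it correctly.
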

The same circle of ideas which prove the Artin-Wedderburn theorem
also prove the following proposition (see \S\ref{app:ssd} for the proof):
\begin{proposition}\label{prop:to-prove}
  Let $A$ be a semisimple object in an abelian category $\Cc$.
  \begin{enumerate}
  \item The directory $\Dir_\Cc(A)$ is isomorphic to $\Dir_R(R)$, where
    $R$ is a semisimple ring, namely $\End_\Cc(A)^{op}$.
  \item The directory $\Dir_\Cc(A)$ is isomorphic to $\Dir_S(M)$,
    where $S$ is a finite product $D_1 \times \cdots \times D_n$ of
    division rings, and $M$ is a finitely-generated $S$-module.
  \end{enumerate}
\end{proposition}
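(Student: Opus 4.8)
The plan is to recognize Proposition~\ref{prop:to-prove} as the directory-theoretic shadow of Artin--Wedderburn together with elementary Morita theory, and to deduce it by producing $K_0$-linear equivalences of abelian categories that transport the entire directory structure.

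First I would replace $\Cc$ by the neighborhood $\Cc'$ of $A$, which is harmless since $\Dir_{\Cc'}(A) \cong \Dir_\Cc(A)$. Every object of $\Cc'$ is a subquotient of a finite power of the semisimple object $A$, hence is itself semisimple of finite length; in particular every short exact sequence in $\Cc'$ splits, so $A$ is projective, and every object of $\Cc'$ is a quotient of a power of $A$ (a subobject of $A^n$ is a direct summand, hence a quotient of $A^n$). Thus $A$ is a projective generator of $\Cc'$. Set $R = \End_\Cc(A)^{op}$. Writing $A \cong S_1^{a_1} \oplus \cdots \oplus S_k^{a_k}$ with the $S_i$ simple and pairwise non-isomorphic, Schur's lemma makes each $\End_\Cc(S_i)$ a division ring and forces $\Hom_\Cc(S_i, S_j) = 0$ for $i \ne j$, so $\End_\Cc(A)$ --- and hence $R$ --- is a finite product of matrix rings over division rings, in particular a semisimple ring.

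Next I would run the standard Morita argument for the $K_0$-linear functor $F = \Hom_\Cc(A,-) : \Cc' \to R\Mod$. One has $F(A^n) = \Hom_\Cc(A, A^n) \cong R^n$, with $F(A) \cong R$ as a left $R$-module (the right $\End_\Cc(A)$-action by precomposition becomes a left $R$-action, $R = \End_\Cc(A)^{op}$). The functor $F$ is exact (everything splits); faithful (because $A$ generates: a morphism $g$ with $F(g)=0$ vanishes after precomposition with an epimorphism $q : A^n \to B$, since the components of $g\circ q$ lie in $\Hom_\Cc(A,C) = F(C)$ and are images under $F(g)$, whence $g\circ q = 0$ and $g = 0$); and full (on powers of $A$ via $\End_\Cc(A^n) \cong M_n(\End_\Cc(A)) \cong \End_R(R^n)$, and then on all of $\Cc'$ because each object is a direct summand of a power of $A$). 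Its essential image is exactly the finitely generated $R$-modules: since $R$ is semisimple these are the finite-length ones, each is a direct summand of some $R^n = F(A^n)$, and direct summands correspond to idempotents of $\End_R(R^n) \cong \End_\Cc(A^n)$. So $F$ is a $K_0$-linear equivalence of $\Cc'$ onto finitely generated $R$-modules carrying $A$ to $R$. Such an equivalence identifies $\Sub_\Cc(A^n)$ with $\Sub_R(R^n)$, is compatible with the cross-sort maps $\oplus$, and intertwines the $GL_n(K_0)$-actions (given by images of matrices over the central subfield $K_0$ in $\End(A^n)$, resp. $\End(R^n)$), hence induces an isomorphism of directories $\Dir_{\Cc'}(A) \cong \Dir_R(R)$. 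With $\Dir_{\Cc'}(A) \cong \Dir_\Cc(A)$ and the semisimplicity of $R$, this gives part (1).

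For part (2), I would apply Artin--Wedderburn (Theorem~\ref{thm:aw}) to write $R \cong M_{n_1}(D_1) \times \cdots \times M_{n_k}(D_k)$ with $D_i$ division rings, and then use the elementary Morita equivalences $M_{n_i}(D_i)\Mod \simeq D_i\Mod$, which send the regular module to the $D_i$-module $D_i^{n_i}$ of column vectors and are $K_0$-linear since $K_0$ is central. Taking the product over $i$ gives a $K_0$-linear equivalence $R\Mod \simeq S\Mod$ with $S = D_1 \times \cdots \times D_k$, under which the regular module $R$ corresponds to the finitely generated $S$-module $M = D_1^{n_1} \oplus \cdots \oplus D_k^{n_k}$. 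As before this transports directories, so $\Dir_R(R) \cong \Dir_S(M)$, and composing with part (1) finishes the proof.

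I expect no real difficulty in the ring theory, which is entirely classical. The point requiring genuine care --- which I would isolate as a lemma, presumably among the ``obvious'' statements verified in Appendix~\ref{app:dirs} and carried out in \S\ref{app:ssd} --- is that a $K_0$-linear equivalence of abelian categories transports the \emph{whole} directory structure, not just each subobject lattice: one must check compatibility with the cross-sort operations $\oplus : \Sub(A^n) \times \Sub(A^m) \to \Sub(A^{n+m})$ and with all the $GL_n(K_0)$-actions at once. The second place to be attentive is the left/right/opposite-ring bookkeeping: why $F(A)$ is the regular left $R$-module for $R = \End_\Cc(A)^{op}$, which ring acts on $\Hom_\Cc(A,A^n)$, and the passage from a product of matrix rings over division rings to its opposite. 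Getting any of this backwards would not affect the truth of the statement but would obscure the argument.
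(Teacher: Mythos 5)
Your proof is correct, and it lives in the same circle of ideas as the paper's (Appendix~\ref{app:ssd}), but the organization is reversed. The paper first proves part (2): it decomposes $A$ into isotypic components $A_1^{n_1}\oplus\cdots\oplus A_k^{n_k}$, uses Schur's lemma to show the explicit functor $Mat(D_1)\times\cdots\times Mat(D_k)\to\Cc$, $(j_1,\ldots,j_k)\mapsto A_1^{j_1}\oplus\cdots\oplus A_k^{j_k}$, is fully faithful with essential image the neighborhood of $A$, and reads off that this neighborhood is equivalent to the neighborhood of $D_1^{n_1}\times\cdots\times D_k^{n_k}$ over $D_1\times\cdots\times D_k$; it then derives part (1) from part (2) by Morita equivalence $M_{n_i}(D_i)\Mod\simeq D_i\Mod$. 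You instead prove part (1) first by the projective-generator argument with $F=\Hom_\Cc(A,-)$, and then get part (2) from Artin--Wedderburn plus the same Morita equivalences. Both routes deposit the real content in the same two places you correctly identify: that an equivalence of neighborhoods transports the full directory structure (the paper handles this via \S\ref{sec:neighborhoods} and Example~\ref{from-functor}), and the left/right/opposite bookkeeping for $\End_\Cc(A)^{op}$. Your version has the small advantage of making part (1) self-contained; the paper's has the advantage of never needing to verify that $\Hom_\Cc(A,-)$ is full and faithful, since fullness and faithfulness of its matrix-category functor is immediate from Schur's lemma.
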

Conversely, if $R$ is a semisimple ring and $M$ is a finitely
generated $R$-module, then $M$ is a semisimple $R$-module.  So we get
the following characterization of semisimple directories:
\begin{theorem}\label{thm:semisimple-directories}
  Let $D_\bullet$ be a directory.  The following are equivalent:
  \begin{enumerate}
  \item $D_\bullet$ is a semisimple directory.
  \item $D_\bullet \cong \Dir_R(M)$ for some semisimple ring $R$ and
    finitely generated $R$-module $M$.
  \item $D_\bullet \cong \Dir_R(R)$ for some semisimple ring $R$.
  \item $D_\bullet \cong \Dir_R(M)$ where $R$ is a product of division
    algebras and $M$ is a finitely generated $R$-module.
  \end{enumerate}
\end{theorem}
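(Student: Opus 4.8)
The plan is to establish the cycle $(1) \Rightarrow (3) \Rightarrow (2) \Rightarrow (1)$, together with $(1) \Rightarrow (4)$ and $(4) \Rightarrow (2)$; these implications together yield that all four statements are equivalent. Since essentially all of the real content has been isolated in Proposition~\ref{prop:to-prove} and in the remark immediately preceding the theorem, the argument here will just be a matter of lining up definitions and invoking those results, together with the standing convention that all rings are $K_0$-algebras (so that $R\Mod$ is always a $K_0$-linear abelian category, and $\Dir_R(M)$ is always a $K_0$-linear directory).

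For $(1) \Rightarrow (3)$ and $(1) \Rightarrow (4)$ I would start from the definitions: a directory is by definition isomorphic to $\Dir_\Cc(A)$ for some $K_0$-linear abelian category $\Cc$ and some $A \in \Cc$, and a semisimple directory is one for which this $A$ may be chosen semisimple. Then Proposition~\ref{prop:to-prove}(1) rewrites $\Dir_\Cc(A)$ as $\Dir_R(R)$ with $R = \End_\Cc(A)^{op}$ a semisimple ring, giving $(3)$, while Proposition~\ref{prop:to-prove}(2) rewrites it as $\Dir_S(M)$ with $S$ a finite product of division algebras and $M$ a finitely generated $S$-module, giving $(4)$.

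The remaining implications are short. For $(3) \Rightarrow (2)$, take $M = R$, which is finitely generated over itself, so $\Dir_R(R) = \Dir_R(M)$ witnesses $(2)$. For $(4) \Rightarrow (2)$, note that a finite product of division algebras is a semisimple ring --- this is the special case of Artin--Wedderburn (Theorem~\ref{thm:aw}) with all $n_i = 1$, or directly, each division ring is semisimple as a module over itself and a finite product of semisimple rings is semisimple --- so $(4)$ is a special case of $(2)$. For $(2) \Rightarrow (1)$, given $D_\bullet \cong \Dir_R(M)$ with $R$ semisimple and $M$ finitely generated, the remark preceding the theorem says $M$ is a semisimple $R$-module; since a semisimple ring is Artinian, $M$ even has finite length, so $M$ is a semisimple object of the $K_0$-linear abelian category $R\Mod$, and therefore $\Dir_R(M) \cong D_\bullet$ is a semisimple directory.

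I do not expect a real obstacle; the substance lives entirely in Proposition~\ref{prop:to-prove} (proved in \S\ref{app:ssd}) and in the elementary fact that finitely generated modules over semisimple rings are semisimple of finite length. The one point requiring a little care is the fixed convention that ``semisimple object'' means semisimple \emph{of finite length}: this is why statements $(2)$ and $(4)$ must say ``finitely generated $R$-module'' rather than merely ``semisimple $R$-module,'' and it is worth flagging explicitly so the equivalences are not misread.
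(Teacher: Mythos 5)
Your proposal is correct and follows essentially the same route as the paper, which derives the theorem from Proposition~\ref{prop:to-prove} together with the observation that a finitely generated module over a semisimple ring is semisimple of finite length; you have merely made the implication structure explicit. The point you flag about ``semisimple'' meaning ``semisimple of finite length'' is exactly the right thing to be careful about, and your handling of it is consistent with the paper's conventions.
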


\subsection{Morphisms of directories} \label{sec:directory-morphisms}
\begin{definition}\label{def:morphisms}.
  If $D_\bullet$ and $D'_\bullet$ are directories, a \emph{morphism}
  from $D_\bullet$ to $D'_\bullet$ is a system of maps $f_n : D_n \to
  D'_n$ satisfying the following constraints:
  \begin{enumerate}
  \item For each $n$, the map $f_n$ is order-preserving:
    \begin{equation*}
      x \ge y \implies f_n(x) \ge f_n(y).
    \end{equation*}
  \item For each $n$, the map $f_n$ is $GL_n(K_0)$-equivariant.
  \item The maps $f_\bullet$ are compatible with $\oplus$:
    \begin{equation*}
      f_{n+m}(x \oplus y) = f_n(x) \oplus f_n(y).
    \end{equation*}
  \end{enumerate}
\end{definition}
Note that we do not require $f_n$ to preserve the lattice structure.
\begin{example} \label{from-morphism}
  For any morphism $f : A \to B$ in $\Cc$, there are pushforward and
  pullback morphisms
  \begin{align*}
    f^* : \Dir(B) \to \Dir(A) \\
    f_* : \Dir(A) \to \Dir(B).
  \end{align*}
  defined by inverse and direct image along the componentwise maps
  $f^{\oplus n} : A^n \to B^n$.  These are functorial in the obvious
  way:
  \begin{align*}
    (f \circ g)_* &= f_* \circ g_* \\
    (f \circ g)^* &= g^* \circ f^*.
  \end{align*}
  Moreover,
  \begin{enumerate}
  \item If $f$ is an isomorphism, then $f_*$ and $f^*$ are
    isomorphisms, and $(f^{-1})_* = f^*$.
  \item If $f$ is a monomorphism, each map $f_{*,n} : \Sub(A^n) \to
    \Sub(B^n)$ is injective, and $f^* \circ f_* = id$.
  \item If $f$ is an epimorphism, each map $f^*_n : \Sub(B^n) \to
    \Sub(A^n)$ is injective, and $f_* \circ f^* = id$.
  \end{enumerate}
\end{example}
(We verify that $f^*$ and $f_*$ are directory morphisms in
Proposition~\ref{prop:ffs1} below.)
\begin{example}\label{from-functor}
  If $F : \Cc \to \Cc'$ is a left-exact $K_0$-linear functor, and $A
  \in \Cc$, then there is a morphism of directories
  \begin{equation*}
    F_* : \Dir_\Cc(A) \to \Dir_{\Cc'}(F(A)).
  \end{equation*}
  The map $\Sub_\Cc(A^n) \to \Sub_{\Cc'}(F(A)^n)$ is defined by
  sending a monomorphism
  \begin{equation*}
    X \stackrel{i}{\hookrightarrow} A^n
  \end{equation*}
  to
  \begin{equation*}
    F(X) \stackrel{F(i)}{\hookrightarrow} F(A^n) \cong F(A)^n.
  \end{equation*}
  This gives a well-defined map on subobjects.
\end{example}
(We verify that $F_*$ is a directory morphism in
Proposition~\ref{prop:ffs2} below.)
\begin{example}
  If $F : \Cc \to \Cc'$ is a ($K_0$-linear) equivalence of categories,
  then
  \begin{equation*}
    F_* : \Dir_\Cc(A) \to \Dir_{\Cc'}(F(A))
  \end{equation*}
  is an isomorphism of directories.
\end{example}

\subsection{Interval subdirectories}\label{sec:intervals}
\begin{proposition}\label{prop:sq}
  Let $D_\bullet$ be a directory and $a \le b$ be elements of $D_1$.
  Let $D^{[a,b]}_n$ be the interval $[a^{\oplus n},b^{\oplus n}]$
  inside $D_n$.
  \begin{enumerate}
  \item $D^{[a,b]}_\bullet$ forms a substructure of
    $D_\bullet$, i.e., $D^{[a,b]}_\bullet$ is closed under $\oplus,
    \vee, \wedge,$ and the $GL_\bullet(K_0)$-action.
  \item  The substructure $D^{[a,b]}_\bullet$ is itself a
    directory.
  \item  The inclusion maps $i_n : D^{[a,b]}_n
    \stackrel{\subseteq}{\to} D_n$ form a directory morphism $i :
    D^{[a,b]}_\bullet \to D_\bullet$.
  \item There is a directory morphism $r : D_\bullet \to
    D^{[a,b]}_\bullet$ given by
    \begin{equation*}
      r_n(x) = (x \vee a^{\oplus n}) \wedge b^{\oplus n} = (x \wedge
      b^{\oplus n}) \vee a^{\oplus n},
    \end{equation*}
    and $r$ is a retract of $i$: $r \circ i$ is the identity map on
    $D^{[a,b]}_\bullet$.
  \item \label{sq5-o} If $f : D' \to D$ is some directory morphism, then
    $f$ factors through $D^{[a,b]} \to D$ if and only if $f_n(x) \in
    [a^{\oplus n},b^{\oplus n}]$ for all $n$ and all $x \in D'_n$.
  \item \label{sq6-o} If $D_\bullet = \Dir(C)$ for some object $C$ in an
    abelian category, and if $a, b$ correspond to subobjects $A
    \subseteq B \subseteq C$, then $D^{[a,b]}_\bullet$ is isomorphic
    to $\Dir(B/A)$ via the maps from the isomorphism theorems.
  \end{enumerate}
\end{proposition}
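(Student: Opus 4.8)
The plan is to reduce at once to the concrete case $D_\bullet = \Dir_\Cc(C)$ and then verify each clause by a direct computation with subobjects. Fix an isomorphism $D_\bullet \cong \Dir_\Cc(C)$ for some object $C$ in a $K_0$-linear abelian category $\Cc$. Since an isomorphism of directories preserves $\oplus$ and the lattice order, it sends $a^{\oplus n}$ to $A^{\oplus n}$ and the interval $[a^{\oplus n},b^{\oplus n}]$ bijectively onto $[A^{\oplus n},B^{\oplus n}]$, where $A \subseteq B \subseteq C$ are the subobjects corresponding to $a \le b$; it therefore carries the whole structure $D^{[a,b]}_\bullet$ and all the claimed morphisms across, so we may assume $D_\bullet = \Dir_\Cc(C)$ and $D^{[a,b]}_n = [A^{\oplus n},B^{\oplus n}] \subseteq \Sub_\Cc(C^n)$.

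For (1): the interval $[A^{\oplus n},B^{\oplus n}]$ is a sublattice of $\Sub_\Cc(C^n)$ for trivial reasons; it is closed under $\oplus$ by monotonicity of $\oplus$ together with $A^{\oplus n}\oplus A^{\oplus m}=A^{\oplus(n+m)}$ (and likewise for $B$); and it is $GL_n(K_0)$-stable because $GL_n(K_0)$ acts on $C^n = C\otimes_{K_0}K_0^n$ through $\Aut_\Cc(C^n)$ — hence by lattice automorphisms of $\Sub_\Cc(C^n)$ — and fixes $A^{\oplus n}=A\otimes_{K_0}K_0^n$ and $B^{\oplus n}$. Clause (3) is then immediate, since the order, $\oplus$, and $GL_n(K_0)$-action on $D^{[a,b]}_\bullet$ are by construction the restrictions of those on $D_\bullet$. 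For (4), the two formulas for $r_n(x)$ agree by the modular law applied to $A^{\oplus n}\le B^{\oplus n}$, they manifestly land in $[A^{\oplus n},B^{\oplus n}]$, and $r_n\circ i_n = \id$ because $(x\vee A^{\oplus n})\wedge B^{\oplus n}=x$ once $A^{\oplus n}\le x\le B^{\oplus n}$; order-preservation and $GL_n(K_0)$-equivariance of $r$ follow from monotonicity of $\vee,\wedge$ and from each $g\in GL_n(K_0)$ being a lattice automorphism fixing $A^{\oplus n}$ and $B^{\oplus n}$. The only point in (4) using the concrete model is $\oplus$-compatibility of $r$: this drops out of the identities $(X\oplus Y)\vee(X'\oplus Y')=(X\vee X')\oplus(Y\vee Y')$ and its analogue for $\wedge$ (both visible from the external-direct-sum description of $\oplus$, or Appendix~\ref{app:dirs}), which let one push $r_{n+m}$ through $\oplus$ one coordinate block at a time.

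For (6), and hence (2): by the correspondence theorem $X\mapsto X/A^{\oplus n}$ is a lattice isomorphism $[A^{\oplus n},B^{\oplus n}]\xrightarrow{\sim}\Sub_\Cc(B^{\oplus n}/A^{\oplus n})$, and there is a canonical identification $(B/A)^{\oplus n}\cong B^{\oplus n}/A^{\oplus n}$; composing gives sort-wise bijections $D^{[a,b]}_n\to\Sub_\Cc((B/A)^{\oplus n})$, and I would check these assemble into an isomorphism of directories by verifying compatibility with the two remaining pieces of structure — the $\oplus$ maps, because the identifications $(B/A)^{\oplus(n+m)}\cong(B/A)^{\oplus n}\oplus(B/A)^{\oplus m}$ and $B^{\oplus(n+m)}/A^{\oplus(n+m)}\cong(B^{\oplus n}/A^{\oplus n})\oplus(B^{\oplus m}/A^{\oplus m})$ match up and external direct sum commutes with quotients; and the $GL_n(K_0)$-action, because both sides carry the action induced from $K_0^n$, which descends to the invariant subquotient $B^{\oplus n}/A^{\oplus n}$ of $C^n$. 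Granting (6), $D^{[a,b]}_\bullet\cong\Dir_\Cc(B/A)$ is a directory, which is (2). Finally (5): if $f=i\circ g$ then obviously $f_n(x)=g_n(x)\in D^{[a,b]}_n$; conversely, if every $f_n(x)$ lies in $[a^{\oplus n},b^{\oplus n}]$, the corestrictions $g_n:D'_n\to D^{[a,b]}_n$ inherit order-preservation, $GL_n(K_0)$-equivariance, and $\oplus$-compatibility from $f$ (the operations on $D^{[a,b]}_\bullet$ being restrictions), satisfy $i\circ g=f$, and are unique because each $i_n$ is injective.

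The only step with genuine content is the bookkeeping in (6): confirming that the isomorphism-theorem bijections $\Sub_\Cc((B/A)^{\oplus n})\cong[A^{\oplus n},B^{\oplus n}]$ respect the connecting maps $\oplus$ and the $GL_n(K_0)$-actions, so that they form an isomorphism of directories rather than a mere sequence of lattice isomorphisms. Everything else is an application of modularity, monotonicity, or the basic ``obvious'' identities about $\oplus$ collected in Appendix~\ref{app:dirs}.
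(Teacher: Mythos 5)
Your proof is correct, and it reaches the same reduction as the paper (pass to the concrete case $D_\bullet=\Dir_\Cc(C)$ and read everything off there), but the way you then certify that $i$ and $r$ are directory morphisms is genuinely different. The paper never verifies the axioms for $r_n(x)=(x\wedge b^{\oplus n})\vee a^{\oplus n}$ by hand: it factors $i$ and $r$ as $f_*\circ g^*$ and $g_*\circ f^*$, where $f:B\hookrightarrow C$ is the inclusion and $g:B\twoheadrightarrow B/A$ the quotient, and invokes Example~\ref{from-morphism} (verified in Proposition~\ref{prop:ffs1}) to conclude that these compositions are directory morphisms; the retraction identity $r\circ i=\id$ then drops out of $g_*\circ g^*=\id$ and $f^*\circ f_*=\id$, and parts (1), (2), (4), (6) are read off from the resulting commutative diagram, with only the sublattice claim and part (5) checked separately. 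Your route instead verifies order-preservation, $GL_n(K_0)$-equivariance, and $\oplus$-compatibility of $r$ directly, which forces you to use the identities $(X\oplus Y)\vee(X'\oplus Y')=(X\vee X')\oplus(Y\vee Y')$ and its $\wedge$-analogue, and to do the bookkeeping in (6) explicitly; all of these checks are sound (and the two formulas for $r_n$ agreeing by the modular law, the $GL_n(K_0)$-stability of $A^{\oplus n}$ via $K_0$-linearity, and the corestriction argument for (5) are exactly right). What the paper's approach buys is economy — it reuses infrastructure built precisely so that ``conditions need only be checked on generating examples'' — while yours buys self-containedness: a reader sees the lattice-level computations rather than trusting a composition of previously certified morphisms. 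Just make sure the $\oplus$-distributivity identities you lean on are actually recorded somewhere (they are of the same ``obvious but needs Mitchell embedding'' flavor as the ones in Appendix~\ref{app:dirs}), since they are the one ingredient your argument uses that the paper's does not.
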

See \S\ref{app:interval-proof} for the proof.
\begin{definition}\label{def:intervals}
  Let $D_\bullet$ be a directory.  An \emph{interval subdirectory} is
  a directory of the form $D^{[a,b]}_\bullet$.  A morphism of
  directories is an \emph{interval inclusion} or an \emph{interval
    retract} if it is one of the maps
  \begin{align*}
    i : D^{[a,b]}_\bullet \hookrightarrow D_\bullet \\
    r : D_\bullet \twoheadrightarrow D^{[a,b]},
  \end{align*}
  of Proposition~\ref{prop:sq}, respectively.
\end{definition}
\begin{lemma}
  Let $D_\bullet$ be a directory and $a, b$ be two elements of $D_1$.
  Then there is an isomorphism of directories $D^{[a \wedge b, a]} \to
  D^{[b,a \vee b]}$ given at each level by the usual isomorphism
  \begin{align*}
    [a^n \wedge b^n, a^n] & \to [b^n, a^n \vee b^n] \\
    x & \mapsto x \vee b^n.
  \end{align*}
\end{lemma}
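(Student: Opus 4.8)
The plan is to exhibit the map $\varphi_n \colon x \mapsto x \vee b^n$ as a morphism of directories $D^{[a \wedge b, a]} \to D^{[b, a \vee b]}$, and to produce an inverse morphism, namely the map $\psi_n \colon y \mapsto y \wedge a^n$ at level $n$. Once we know both are directory morphisms and mutually inverse as maps of sets, they are automatically isomorphisms of directories. First I would recall that inside any modular lattice, the classical isomorphism theorem for intervals gives that $x \mapsto x \vee b^n$ and $y \mapsto y \wedge a^n$ are mutually inverse \emph{lattice} isomorphisms between $[a^n \wedge b^n, a^n]$ and $[b^n, a^n \vee b^n]$ — this is pure lattice theory applied levelwise, using that $a^n \wedge b^n = (a \wedge b)^n$ and $a^n \vee b^n = (a \vee b)^n$, which hold because $\oplus$ distributes over $\vee$ and $\wedge$ in a directory (a fact implicit in Proposition~\ref{prop:sq}, since the intervals $[a^{\oplus n}, b^{\oplus n}]$ there are well-defined substructures). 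Note that $\varphi_n$ is in particular order-preserving, which is condition~(1) of Definition~\ref{def:morphisms}.

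Next I would check compatibility with $\oplus$, condition~(3). For $x \in D^{[a\wedge b, a]}_n$ and $x' \in D^{[a \wedge b, a]}_m$ we must show $(x \oplus x') \vee b^{n+m} = (x \vee b^n) \oplus (x' \vee b^m)$. Since $b^{n+m} = b^n \oplus b^m$, this is the identity $(x \oplus x') \vee (b^n \oplus b^m) = (x \vee b^n) \oplus (x' \vee b^m)$, which is exactly the statement that $\oplus$ distributes over $\vee$ — again a basic directory identity. The same computation works for $\psi_n$ with $\wedge$ in place of $\vee$, using that $\oplus$ distributes over $\wedge$.

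For $GL_n(K_0)$-equivariance, condition~(2), observe that $b^n = b^{\oplus n}$ is fixed by the $GL_n(K_0)$-action (it is the image of $b \in D_1$ under the diagonal, or more simply: $b^{\oplus n}$ is the top of the interval $[a^{\oplus n}, b^{\oplus n}]$ in $D^{[0,b]}_n$, hence preserved by the action since that interval is a subdirectory and $\top$ is preserved). Since the $GL_n(K_0)$-action preserves $\vee$, for $g \in GL_n(K_0)$ we get $g \cdot (x \vee b^n) = (g \cdot x) \vee (g \cdot b^n) = (g \cdot x) \vee b^n$, which is equivariance of $\varphi_n$; similarly for $\psi_n$ using that the action preserves $\wedge$ and fixes $a^n$. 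This establishes that both $\varphi$ and $\psi$ are directory morphisms, and since $\varphi_n \circ \psi_n = \mathrm{id}$ and $\psi_n \circ \varphi_n = \mathrm{id}$ levelwise (the modular lattice isomorphism theorem), $\varphi$ is an isomorphism of directories with inverse $\psi$.

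I do not expect a serious obstacle here: the entire content is the levelwise modular-lattice isomorphism theorem together with the routine bookkeeping that $\oplus$ distributes over $\vee, \wedge$ and commutes with the $GL$-action in a directory. The one point worth being slightly careful about is that $b^{\oplus n}$ really is $GL_n(K_0)$-fixed and that $a^n \wedge b^n = (a\wedge b)^n$, $a^n \vee b^n = (a \vee b)^n$; but both follow from the definition of $\oplus$ as a connecting map of the directory (or can be cited from the proof of Proposition~\ref{prop:sq}, where the same intervals are manipulated). If one wanted a slicker argument in the case $D_\bullet = \Dir_\Cc(C)$, one could instead invoke Proposition~\ref{prop:sq}\ref{sq6-o} to identify $D^{[a\wedge b,a]}_\bullet \cong \Dir(A/(A\cap B))$ and $D^{[b, a\vee b]}_\bullet \cong \Dir((A+B)/B)$ and use the second isomorphism theorem in $\Cc$, then transfer to general directories by the synthetic-projective-geometry principle that every directory is of the form $\Dir_\Cc(C)$; but the direct levelwise verification above avoids that detour.
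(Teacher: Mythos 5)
Your proof is correct, but your primary argument is not the one the paper uses --- in fact the ``slicker argument'' you mention at the end and set aside as a detour is precisely the paper's proof: it writes $D_\bullet = \Dir(C)$ with $a,b$ corresponding to subobjects $A, B \subseteq C$, invokes Proposition~\ref{prop:sq}.\ref{sq6-o} to identify $D^{[a\wedge b,a]}_\bullet \cong \Dir(A/(A\cap B))$ and $D^{[b,a\vee b]}_\bullet \cong \Dir((A+B)/B)$, and then applies the second isomorphism theorem $A/(A\cap B) \cong (A+B)/B$. The trade-off is this: your direct levelwise verification works entirely inside the abstract directory and makes the map $x \mapsto x \vee b^n$ (and its inverse $y \mapsto y \wedge a^n$) completely explicit, but it leans on two facts the paper never isolates as lemmas --- that $\oplus$ is a lattice homomorphism $D_n \times D_m \to D_{n+m}$ (so that $(a\wedge b)^{\oplus n} = a^{\oplus n} \wedge b^{\oplus n}$, etc.) and that $b^{\oplus n}$ is $GL_n(K_0)$-fixed. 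Both are true and easily checked in $\Dir_\Cc(C)$ via a Mitchell embedding, so this is a matter of bookkeeping rather than a gap, but you should either prove them or cite the concrete model, since ``directory'' is defined by representability rather than by axioms. The paper's route buys exactly this economy: all the levelwise checking was already done once in the proof of Proposition~\ref{prop:sq}, and Part~\ref{sq6-o} converts the lattice-theoretic statement into a purely categorical one where the second isomorphism theorem does all the work.
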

\begin{proof}
  We may assume $D_\bullet = \Dir(C)$, and $a, b$ correspond to
  subobjects $A, B \subseteq C$.  By
  Proposition~\ref{prop:sq}.\ref{sq6-o}, the map in question is the
  isomorphism
  \begin{equation*}
    \Dir(C)^{[A \cap B, A]} \cong \Dir(A/(A \cap B)) \cong
    \Dir((A+B)/B) \cong \Dir(C)^{[B,A+B]},
  \end{equation*}
  where the middle isomorphism $\Dir(A/(A \cap B)) \cong
  \Dir((A+B)/B)$ is induced by the standard isomorphism $A/(A \cap B)
  \cong (A+B)/B$.
\end{proof}

\subsection{Products of directories}\label{sec:products}
If $M$ and $M'$ are two lattices, the product $M \times M'$ is
naturally a lattice (as lattices are an algebraic theory), and the
order on $M \times M'$ is determined as follows:
\begin{equation*}
  (a,a') \le (b,b') \iff (a \le b \textrm{ and } a' \le b').
\end{equation*}
If $\Cc, \Cc'$ are two abelian categories, the product category $\Cc
\times \Cc'$ is itself an abelian category (\cite{cat-sheaves},
Remark~8.3.6(i)).  For any object $(A,A') \in \Cc \times \Cc'$, one
has
\begin{equation*}
  \Sub_{\Cc \times \Cc'}(A,A') = \Sub_\Cc(A) \times \Sub_\Cc(A'),
\end{equation*}
because a subobject of $(A,A')$ is a pair $(B,B')$ where $B$ is a
subobject of $A$ and $B'$ is a subobject of $A'$.
\begin{lemmadefinition}
  If $D_\bullet$ and $D'_\bullet$ are two directories, there is a
  product directory $(D \times D')_\bullet$ given by
  \begin{equation*}
    (D_1 \times D'_1, D_2 \times D'_2, \cdots)
  \end{equation*}
  in which the structure maps are given componentwise; for example if
  $(V,V') \in D_n \times D'_n$ and $(W,W') \in D_m \times D'_m$, then
  \begin{equation*}
    (V,V') \oplus (W,W') := (V \oplus W, V' \oplus W') \in D_{n+m} \times D'_{n+m}.
  \end{equation*}
  Furthermore, the two projections from $D \times D'$ to $D$ and to
  $D'$ are both morphisms of directories.
\end{lemmadefinition}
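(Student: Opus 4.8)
The plan is to realize $(D \times D')_\bullet$ as the directory of an object in a product abelian category. By definition of ``directory,'' fix $K_0$-linear abelian categories $\Cc, \Cc'$ and objects $A \in \Cc$, $A' \in \Cc'$ together with isomorphisms $D_\bullet \cong \Dir_\Cc(A)$ and $D'_\bullet \cong \Dir_{\Cc'}(A')$. The product category $\Cc \times \Cc'$ is again abelian (\cite{cat-sheaves}, Remark~8.3.6(i)) and is clearly $K_0$-linear, since its Hom-sets are products of the Hom-sets of $\Cc$ and $\Cc'$. I claim that $\Dir_{\Cc \times \Cc'}\big((A,A')\big)$ is, componentwise and on the nose, the product structure $\Dir_\Cc(A) \times \Dir_{\Cc'}(A')$; composing with the product of the two given isomorphisms then exhibits $(D \times D')_\bullet$ as a directory.

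To verify the claim I would use that in $\Cc \times \Cc'$ all of the relevant constructions---finite (co)products, kernels, cokernels, and hence images---are computed componentwise, so that $(A,A')^n = (A^n,(A')^n)$ and, as already noted in \S\ref{sec:products}, $\Sub_{\Cc \times \Cc'}\big((A,A')^n\big) = \Sub_\Cc(A^n) \times \Sub_{\Cc'}\big((A')^n\big)$. Under this identification the meet and join of two subobjects are their intersection and sum, each computed componentwise; the connecting map $\oplus$ sends $\big((X,X'),(Y,Y')\big)$ to the subobject $(X \oplus Y,\, X' \oplus Y')$ of $(A^{n+m},(A')^{n+m})$; and an element $g \in GL_n(K_0)$ acts through the automorphism $(g_A,g_{A'})$ of $(A^n,(A')^n)$. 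In each case this reproduces exactly the componentwise structure map prescribed in the statement, so the two directories coincide.

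For the projections I would use the projection functor $\pi : \Cc \times \Cc' \to \Cc$, which is $K_0$-linear and exact (in particular left-exact) and sends $(A,A')$ to $A$. By Example~\ref{from-functor} it induces a morphism of directories $\pi_* : \Dir_{\Cc \times \Cc'}\big((A,A')\big) \to \Dir_\Cc(A)$, and unwinding the definition, $\pi_*$ sends the subobject of $(A,A')^n$ corresponding to a pair $(X,X')$ to $\pi(X,X') = X \hookrightarrow A^n$; under the identifications above this is precisely the first projection $D_n \times D'_n \to D_n$. Hence the first projection is a morphism of directories, and the second is handled symmetrically. The only step requiring real care---although it is routine---is the componentwise computation of the subobject-lattice operations in $\Cc \times \Cc'$, since this is what makes the identification with $(D \times D')_\bullet$ a literal equality rather than merely an isomorphism; beyond that I anticipate no obstacle.
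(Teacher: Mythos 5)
Your proposal is correct and follows the same route as the paper: realize the product as $\Dir_{\Cc \times \Cc'}\big((A,A')\big)$ using the componentwise identification $\Sub_{\Cc \times \Cc'}\big((A,A')\big) = \Sub_\Cc(A) \times \Sub_{\Cc'}(A')$, and obtain the projections from the (left-)exact projection functors via Example~\ref{from-functor}. Your extra care in checking that the lattice operations, $\oplus$, and the $GL_n(K_0)$-action are all computed componentwise is exactly the (implicit) content of the paper's argument.
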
\label{ld:products}
\begin{proof}
  Write $D_\bullet$ as $\Dir_{\Cc}(A)$ and $D'_\bullet$ as
  $\Dir_{\Cc'}(A')$.  Let $A''$ be the object $(A,A')$ in the product
  category $\Cc \times \Cc'$.  Then for any $n$,
  \begin{equation*}
    \Sub_{\Cc \times \Cc'}(A'') = \Sub_\Cc(A) \times \Sub_{\Cc'}(A').
  \end{equation*}
  Then $\Dir_{\Cc \times \Cc'}(A'')$ is the desired product directory.
  The projections of $\Dir_{\Cc \times \Cc'}(A'')$ onto $\Dir_\Cc(A)$
  and $\Dir_\Cc'(A')$ are induced by the (left-)exact projection
  functors
  \begin{align*}
    \Cc \times \Cc' & \to \Cc \\
    \Cc \times \Cc' & \to \Cc'.
  \end{align*}
  as in Example~\ref{from-functor}.  Therefore these projections are
  morphisms of directories.
\end{proof}
\begin{remark}\label{rem:whatever}
  If $A, A'$ are two objects in an abelian category $\Cc$, the (left-)exact functor
\begin{equation*}
  \oplus : \Cc \times \Cc \to \Cc
\end{equation*}
yields by Example~\ref{from-functor} a morphism of directories
\begin{align*}
  \Dir_\Cc(A) \times \Dir_\Cc(A') &\to \Dir_\Cc(A \oplus A') \\
  (V,W) & \mapsto V \oplus W
\end{align*}
by Example~\ref{from-functor}.\footnote{Actually, the morphism is
  $(V,W) \mapsto (V \oplus W)^T$, where $(-)^T$ is the transpose map
  $A^n \oplus (A')^n \to (A \oplus A')^n$.}  This morphism is rarely
an isomorphism, unless every subobject of $A^n \oplus (A')^n$ happens
to be of the form $V \oplus W$ for some $V \in \Sub_\Cc(A^n)$ and $W
\in \Sub_\Cc((A')^n)$.\footnote{This condition looks a lot like
  model-theoretic orthogonality.  When $A, A'$ are finite length, this
  condition should be equivalent to the condition that the simple
  factors of $A$ are pairwise non-isomorphic to the simple factors of
  $A'$.}
\end{remark}
\begin{remark}\label{ring-splitting}
  If $R_1, R_2$ are two rings, the category $(R_1 \times R_2)\Mod$ is
  equivalent to the product category $R_1\Mod \times R_2\Mod$.  If
  $M_i$ is an $R_i$-module for $i = 1, 2$, then
  \begin{equation*}
    \Dir_{R_1 \times R_2}(M_1 \times M_2) \cong \Dir_{R_1}(M_1) \times \Dir_{R_2}(M_2).
  \end{equation*}
  In particular,
  \begin{equation*}
    \Dir_{R_1 \times R_2}(R_1 \times R_2) \cong \Dir_{R_1}(R_1) \times
    \Dir_{R_2}(R_2).
  \end{equation*}
  Combined with Theorem~\ref{thm:semisimple-directories}, this implies
  that the semisimple directories are exactly those of the form
  \begin{equation*}
    \Dir_{k_1}(k_1^{d_1}) \times \cdots \times \Dir_{k_n}(k_n^{d_n})
  \end{equation*}
  where $k_1, \ldots, k_n$ are division algebras over $K_0$, and $d_1,
  \ldots, d_n$ are positive integers.
\end{remark}

Usually, we aren't very interested in category-theoretic constructions
in the category of directories.  Nevertheless, the following fact will
come in handy:
\begin{proposition}\label{prop:dir-prod}
  If $D^1, D^2$ are two directories, the product directory $D^1 \times
  D^2$ of Lemma-Definition~\ref{ld:products} is the category-theoretic
  product in the category of directories.
\end{proposition}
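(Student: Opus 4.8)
The plan is to verify directly the universal property of the categorical product. Write $\pi^1 : D^1 \times D^2 \to D^1$ and $\pi^2 : D^1 \times D^2 \to D^2$ for the two projection morphisms, which are morphisms of directories by Lemma-Definition~\ref{ld:products}. Given any directory $E$ together with morphisms $f : E \to D^1$ and $g : E \to D^2$, I must produce a unique morphism $h : E \to D^1 \times D^2$ with $\pi^1 \circ h = f$ and $\pi^2 \circ h = g$. The natural candidate is the level-wise pairing $h_n(x) = (f_n(x), g_n(x)) \in D^1_n \times D^2_n = (D^1 \times D^2)_n$, and the argument below will show this is the only one that works.

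First I would check that $h$ is indeed a morphism of directories, i.e.\ satisfies the three conditions of Definition~\ref{def:morphisms}. This is immediate from the fact that every piece of structure on $D^1 \times D^2$ is defined componentwise. The order on $D^1_n \times D^2_n$ is the product order, so $h_n$ is order-preserving because $f_n$ and $g_n$ are. The $GL_n(K_0)$-action is componentwise, so $h_n(M \cdot x) = (f_n(Mx), g_n(Mx)) = (M f_n(x), M g_n(x)) = M \cdot h_n(x)$. And $\oplus$ is componentwise, so $h_{n+m}(x \oplus y) = (f_{n+m}(x \oplus y), g_{n+m}(x \oplus y)) = (f_n(x) \oplus f_m(y), g_n(x) \oplus g_m(y)) = h_n(x) \oplus h_m(y)$. (Note that this only uses the weak notion of morphism, which preserves order rather than the full lattice structure; that is all that is needed.)

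Then the commutativity $\pi^1 \circ h = f$ and $\pi^2 \circ h = g$ holds by construction, since $\pi^i_n$ is the $i$-th coordinate projection on $D^1_n \times D^2_n$. For uniqueness, if $h' : E \to D^1 \times D^2$ is any morphism with $\pi^1 \circ h' = f$ and $\pi^2 \circ h' = g$, then for each $n$ and each $x \in E_n$, writing $h'_n(x) = (a,b)$ we get $a = \pi^1_n(h'_n(x)) = f_n(x)$ and $b = \pi^2_n(h'_n(x)) = g_n(x)$, so $h'_n(x) = h_n(x)$; hence $h' = h$.

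I do not expect any genuine obstacle here: since morphisms of directories are required only to preserve order, the $GL_\bullet(K_0)$-action, and $\oplus$, and since all three are computed componentwise on the product, the verification is entirely formal. The one point worth flagging is that it would \emph{not} be legitimate to deduce the universal property from the description $D^1 \times D^2 \cong \Dir_{\Cc \times \Cc'}(A,A')$ together with the universal property of products of abelian categories, because morphisms of directories need not be induced by functors of the underlying categories; the hands-on argument above is the clean route.
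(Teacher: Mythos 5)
Your proof is correct and takes essentially the same approach as the paper: both verify the universal property directly by observing that the order, the $GL_n(K_0)$-action, and $\oplus$ on $D^1 \times D^2$ are all defined componentwise, so the pairing $h_n(x) = (f_n(x), g_n(x))$ is the unique morphism factoring $f$ and $g$ through the projections. Your closing remark about why one cannot shortcut via the product of abelian categories is a sensible caution, and consistent with the paper, which likewise argues at the level of the componentwise structure rather than via functors.
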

\begin{proof}
  Let $T$ be any directory.  We must show bijectivity of the map
  \begin{equation*}
    \Hom(T,D^1 \times D^2) \to \Hom(T,D^1) \times \Hom(T,D^2)
  \end{equation*}
  induced by the projections $D^1 \times D^2 \to D^1, D^2$.
  Equivalently, if $f^1, f^2$ are two systems of maps
  \begin{align*}
    f^1_n : T_n &\to D^1_n \\
    f^2_n : T_n &\to D^2_n,
  \end{align*}
  and $g$ is the induced system of maps
  \begin{equation*}
    g_n : T_n \to D^1_n \times D^2_n,
  \end{equation*}
  then we must show that
  \begin{equation}
    g_\bullet \in \Hom(T,D^1 \times D^2) \iff (f^1_\bullet \in
    \Hom(T,D^1) \textrm{ and } f^2_\bullet \in
    \Hom(T,D^2)). \label{jk-trivial}
  \end{equation}
  But (\ref{jk-trivial}) is somehow automatic, because\ldots
  \begin{itemize}
  \item \ldots $D^1_n \times D^2_n$ is the category-theoretic product
    of $D^1_n$ and $D^2_n$ in the category of posets.
  \item \ldots $D^1_n \times D^2_n$ is the category-theoretic product
    of $D^1_n$ and $D^2_n$ in the category of sets with $GL_n(K_0)$-actions.
  \item \ldots $D^1_\bullet \times D^2_\bullet$ is the
    category-theoretic product of $D^1_\bullet$ and $D^2_\bullet$ in
    the category of multi-sorted structures $(M_1,M_2,\ldots)$ with
    operators $\oplus_{m,n} : M_m \times M_n \to M_{m+n}$.
  \end{itemize}
  So, somehow everything works because the structure on
  $D^1_\bullet \times D^2_\bullet$ was defined componentwise.
\end{proof}

\section{Directories from model theory} \label{sec:lambda}
Let $\Kk$ be a field, possibly with extra structure, assumed to be
very saturated and very homogeneous.  Let $K_0$ be a small, infinite
subfield.  For each $n$, let
\begin{itemize}
\item $\Delta_n$ denote the lattice of definable $K_0$-linear
  subspaces of $\Kk^n$.
\item $\Lambda_n$ denote the lattice of type-definable\footnote{We
  mean ``type-definable over a small set.''  Unless we have an
  inaccessible cardinal on hand, there is some ambiguity in ``small.''
  In what follows, it should suffice to fix a small cardinal $\kappa_0
  \gg \aleph_0$ such that $\Kk$ is $\kappa$-saturated for some $\kappa
  \gg \kappa_0$.  Then we can take $\Lambda$ to be the lattice of
  $K_0$-linear subspaces that are type-definable over a small model of
  size less than $\kappa_0$.  The small cardinal $\kappa_0$ needs to
  be bigger than the size of the theory, and larger than the size of a
  magic subfield.  Further properties, like being a regular cardinal
  or a limit cardinal, seem to be unnecessary.}  $K_0$-linear
  subspaces of $\Kk^n$.
\item $\Lambda^{00}_n$ denote the quotient of $\Lambda_n$ by the
  00-commensurability relation:
  \begin{equation*}
  G \approx H \iff \left(G/(G \cap H) \textrm{ and } H/(G \cap H) \textrm{
    are bounded} \right).
  \end{equation*}
\end{itemize}
Let $\Lambda_\bullet$ be the multi-sorted structure
\begin{equation*}
  (\Lambda_1, \Lambda_2, \Lambda_3, \ldots)
\end{equation*}
with the following functions and relations:
\begin{itemize}
\item The lattice structure on each $\Lambda_n$
\item For any $n, m$, the map
  \begin{equation*}
    \oplus : \Lambda_n \times \Lambda_m \to \Lambda_{n+m}
  \end{equation*}
\item For each $n$, the action of $GL_n(K_0)$ on $\Lambda_n$.
\end{itemize}
Define the structures $\Lambda_\bullet^{00}, \Delta_\bullet$
similarly.
\begin{theorem}\label{thm:dtd00-dir}
  The structures $\Delta_\bullet, \Lambda_\bullet,
  \Lambda^{00}_\bullet$ are ($K_0$-linear) directories.
\end{theorem}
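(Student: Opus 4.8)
The plan is to show that each of $\Delta_\bullet$, $\Lambda_\bullet$, $\Lambda^{00}_\bullet$ is isomorphic to $\Dir_\Cc(A)$ for a suitable $K_0$-linear abelian category $\Cc$ and object $A$. The natural candidates are the categories of \emph{definable}, \emph{type-definable}, and \emph{00-connected type-definable} $K_0$-linear subspaces of powers $\Kk^n$, with morphisms the definable (resp. type-definable) $K_0$-linear maps, and $A = \Kk$ viewed as the one-dimensional $K_0$-space $\Kk^1$. So the first step is to check that each of these three categories $\Cc$ is an abelian $K_0$-linear category. Additivity and $K_0$-linearity are routine: finite direct sums are given by $\oplus$ on the ambient powers, and $\Hom$-sets are $K_0$-vector spaces. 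The substantive point is the existence of kernels and cokernels and the statement that every mono-epi is an iso (the abelian axiom). For $\Lambda_\bullet$ this is standard: type-definable $K_0$-subspaces of $\Kk^n$ form an abelian category because kernels are intersections with a type-definable subspace, images/cokernels exist by compactness and saturation, and a type-definable bijective $K_0$-linear map is an isomorphism (its inverse is type-definable). For $\Delta_\bullet$ the same works with ``definable'' throughout. For $\Lambda^{00}_\bullet$ one passes to the quotient: a morphism is a type-definable linear map considered modulo 00-commensurability, and one uses the standard facts about $(-)^{00}$ for type-definable groups — notably that $G \mapsto G^{00}$ is functorial-ish, respects $+$ and finite products, and that 00-commensurability is a congruence with respect to $+, \cap$, and the $GL_n(K_0)$-action — to see that the quotient category is again abelian $K_0$-linear.

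The second step is to identify, in each case, $\Sub_\Cc(A^n)$ with the lattice $\Delta_n$ (resp. $\Lambda_n$, $\Lambda^{00}_n$). Here $A^n$ is the object $\Kk^n$ of $\Cc$, and a subobject of $\Kk^n$ in $\Cc$ is precisely an equivalence class of monomorphisms into $\Kk^n$, which by the description of monos in $\Cc$ corresponds exactly to a definable (resp. type-definable, resp. type-definable-mod-$\approx$) $K_0$-linear subspace of $\Kk^n$. One must check this correspondence is an isomorphism of \emph{lattices} — i.e. that categorical meet and join of subobjects agree with $\cap$ and $+$ of subspaces — which is immediate from the construction of kernels and images in $\Cc$.

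The third step is to match the remaining directory structure: the $\oplus$ maps $\Sub_\Cc(A^n) \times \Sub_\Cc(A^m) \to \Sub_\Cc(A^{n+m})$ and the $GL_n(K_0)$-actions. The $GL_n(K_0)$-action on $\Sub_\Cc(A^n)$ is the one induced by automorphisms of $A^n$ coming from matrices with entries in $K_0$ (these are morphisms in $\Cc$ since $K_0 \subseteq \dcl(\emptyset)$), and this literally is the action on $\Delta_n$, $\Lambda_n$, $\Lambda^{00}_n$ by change of coordinates. The $\oplus$ operation on subobjects in $\Dir_\Cc(A)$ — ``external direct sum of a subobject of $A^n$ and a subobject of $A^m$, landing in $A^{n+m}$'' — unwinds to exactly $(V, W) \mapsto V \times W \subseteq \Kk^{n+m}$, which is the $\oplus$ defined on the multi-sorted structures $\Delta_\bullet$, $\Lambda_\bullet$, $\Lambda^{00}_\bullet$. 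For $\Lambda^{00}_\bullet$ one should note in passing that $\oplus$ is well-defined modulo $\approx$, which again follows because $(-)^{00}$ commutes with finite products.

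The main obstacle is the abelian-category verification for $\Lambda^{00}_\bullet$: one has to be careful that morphisms (type-definable linear maps up to 00-commensurability of graphs, or equivalently modulo maps with bounded image) compose well and that kernels/cokernels computed via $(-)^{00}$ land back in the 00-connected world and satisfy the abelian axiom. This is exactly the kind of ``obvious'' bookkeeping that the paper defers to an appendix, and the cleanest route is probably to exhibit $\Lambda^{00}$ as a localization (Serre quotient) of the abelian category underlying $\Lambda$ by the Serre subcategory of bounded type-definable $K_0$-spaces, since a Serre quotient of an abelian category is abelian, $K_0$-linearity is inherited, and the quotient functor preserves $\oplus$ and the $GL_n(K_0)$-action; one then checks that $\Sub$ in this quotient is $\Lambda_n/{\approx}$. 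Modulo that, the remaining verifications for all three directories are routine once the categories are in hand.
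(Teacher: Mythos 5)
Your overall strategy is the one the paper follows: realize each structure as $\Dir_\Cc(\Kk)$ for a suitable $K_0$-linear abelian category $\Cc$, identify the subobject lattices of $\Kk^n$ with $\Delta_n$, $\Lambda_n$, $\Lambda^{00}_n$, match $\oplus$ and the $GL_n(K_0)$-actions, and obtain the $00$-case as a Serre quotient by the bounded objects. But there is a genuine gap in your choice of categories. You take the objects of $\Cc$ to be the definable (resp.\ type-definable) $K_0$-linear \emph{subspaces} of powers $\Kk^n$. Such a category has kernels and images, but it does not have cokernels: the cokernel of a monomorphism $B \hookrightarrow A$ with $B, A \le \Kk^n$ is the quotient $A/B$, which is a hyperdefinable (or interpretable) group that in general is not isomorphic to any type-definable (or definable) subspace of any $\Kk^m$. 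So the category you describe is not abelian, and the assertion that ``images/cokernels exist by compactness and saturation'' does not hold for it. This is exactly why the paper's category $\Hh$ takes as objects the quotients $A/B$ with $B \le A \in \Lambda_n$, with morphisms the $K_0$-linear maps whose graph is type-definable; the definable case is handled by the category $\Dd$ of interpretable $K_0$-vector spaces, which likewise contains the quotients.

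Once the objects are enlarged to include quotients, the remaining content is no longer routine in the way you suggest, but it is manageable: the paper verifies that $\Hh$ is abelian via a recognition criterion (a faithful additive functor to $K_0\Vect$ that detects kernels, cokernels, and the image/coimage comparison), checks that every $\Hh$-subobject of $\Kk^n/0$ is of the form $A'/0$ for $A' \in [0,\Kk^n] \subseteq \Lambda_n$ (this is what identifies $\Sub_\Hh(\Kk^n)$ with $\Lambda_n$ as lattices), and only then passes to the Serre quotient $\Hh^{00}$ to get $\Lambda^{00}_\bullet$. Your Serre-quotient step for the $00$-case is exactly right, but it presupposes the abelian category underlying $\Lambda_\bullet$, which is where your construction currently fails.
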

We give the proof over the next three sections
\S\ref{sec:def-setting}-\ref{sec:tdef00-setting}.  The cases of
$\Delta_\bullet$ and $\Lambda_\bullet$ are intuitively unsurprising,
though there are some details to check.  The case of
$\Lambda^{00}_\bullet$ is slightly more subtle.
\begin{remark}\label{rem:del00}
  One could define $\Delta^{00}_\bullet$ analogously, but it is always
  identical to $\Delta_\bullet$.  Indeed, if $G, H \in \Delta_n$ and
  $G \approx H$, then the groups $G/(G \cap H)$ and $H/(G \cap H)$ are
  bounded.  They are also interpretable, so they are \emph{finite}.
  They are also $K_0$-vector spaces, and $K_0$ is infinite, so $G/(G
  \cap H)$ and $H/(G \cap H)$ are in fact \emph{trivial}, implying $G
  = H$.  Thus $\approx$ is a trivial equivalence relation on
  $\Delta_n$, and $\Delta^{00}_n = \Delta_n$.
\end{remark}
\begin{remark}
  If $\Kk$ is NIP, we can alternatively view $\Lambda^{00}_n$ as the
  set of $G \in \Lambda_n$ such that $G = G^{00}$.  Note that
  $\Lambda^{00}_n$ is not a sublattice of $\Lambda_n$---the
  lattice operations are
  \begin{align*}
    G \vee H &= G + H \\ G \wedge H &= (G \cap H)^{00}.
  \end{align*}
\end{remark}
\begin{remark}
  If $\Kk$ is dp-finite and $K_0$ is a magic subfield, then the
  relation $\approx$ on $\Lambda_n$ is trivial, so
  $\Lambda^{00}_\bullet \cong \Lambda_\bullet$.
\end{remark}

\subsection{The definable case}\label{sec:def-setting}
Let $\Dd$ be the abelian category of interpretable $K_0$-vector
spaces.\footnote{There's an ambiguity here.  The most general sort of
  interpretable $K_0$-vector space would be a vector space
  $(V,+,\cdot)$ such that (1) the underlying set $V$ is interpretable
  in $\Kk$, (2) the addition operation $V \times V \to V$ is
  definable, and (3) for each $a \in K_0$, the map $V \to V$ sending
  $x$ to $ax$ is definable.  There is also a more restricted version
  where the map $x \mapsto ax$ is definable uniformly across $a$.  The
  choice of $\Dd$ ultimately doesn't matter, since the ``neighborhood'' of
  $\Kk$ in $\Dd$ is the same in each case (see \S\ref{sec:neighborhoods}).
  If you're unhappy with this state of affairs, you can also take the
  minimal necessary $\Dd$, which is defined similar to $\Hh$ in the
  next section, but replacing ``type-definable'' with ``definable.''}
We can view $\Kk$ as an object in $\Dd$, and then there is an obvious
isomorphism
\begin{equation*}
  \Delta_n \cong \Sub_\Dd(\Kk^n)
\end{equation*}
for each $n$.  Then $\Delta_\bullet$ is simply $\Dir_\Dd(\Kk)$.

\begin{remark}\label{rem:delta-variant}
  If $R$ is a definable subring of $\Kk$ containing $K_0$, then there
  is a smaller directory $\Delta^R_\bullet$, where $\Delta^R_n$ is the
  lattice of definable $R$-submodules of $\Kk^n$.  This structure is a
  directory because it is the directory of $\Kk$ in the category of
  definable $R$-modules.
\end{remark}
\subsection{The type-definable case}\label{sec:tdef-setting}
For $\Lambda_\bullet$, we should be able to proceed analogously to
$\Delta_\bullet$, replacing the category of interpretable $K_0$-vector
spaces with the category of hyper-interpretable $K_0$-vector spaces.
It's not clear to me that this category is well-defined or
well-behaved, so we instead build a minimal sufficient category $\Hh$.
Morally, $\Hh$ is the neighborhood of $\Kk$ in the full category of
hyper-interpretable $K_0$-vector spaces.

We prove the following proposition and theorem in \S\ref{sec:annoy}.
\begin{proposition}\label{prop:here-is-hh-copy}
  There is a $K_0$-linear pre-additive category $\Hh$ in which
  \begin{itemize}
  \item an object is a quotient $A/B$ where $B, A \in \Lambda_n$
    for some $n$, and $A \ge B$.
  \item a morphism from $A/B$ to $A'/B'$ is a $K_0$-linear function $f
    : A/B \to A'/B'$ such that the set
    \begin{equation*}
      \{(x,y) \in A \times A' : y + B' = f(x + B)\}
    \end{equation*}
    is type-definable.
  \item the composition of $f : A/B \to A'/B'$ and $g : A'/B' \to
    A''/B''$ is the usual composition $g \circ f$.
  \item the $K_0$-vector space structure on $\Hom_\Hh(A/B,A'/B')$ is
    induced by the usual operations, i.e., induced as a subspace of
    $\Hom_{K_0\Vect}(A/B,A'/B')$.
  \end{itemize}
\end{proposition}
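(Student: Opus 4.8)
The plan is to exhibit $\Hh$ as a (non-full) subcategory of the category $K_0\Vect$ of $K_0$-vector spaces, and to deduce all the axioms of a $K_0$-linear pre-additive category from those of $K_0\Vect$. Indeed, the objects of $\Hh$ are honest quotient vector spaces $A/B$, the morphisms are honest $K_0$-linear maps, composition is ordinary composition of maps, and the $K_0$-vector space structure on each $\Hom$-set is declared to be the one induced from $\Hom_{K_0\Vect}$. So associativity and unitality of composition, the abelian group / $K_0$-module structure on $\Hom$-sets, and bilinearity of composition over $K_0$ are all inherited from $K_0\Vect$ for free --- \emph{provided} three closure conditions hold: (i) for each $A/B$ the identity map $\id_{A/B}$ has type-definable graph; (ii) the composite of two morphisms of $\Hh$ is again a morphism of $\Hh$; and (iii) each $\Hom_\Hh(A/B,A'/B')$ is a $K_0$-linear subspace of $\Hom_{K_0\Vect}(A/B,A'/B')$, i.e.\ contains the zero map and is closed under addition and under $K_0$-scalar multiplication.

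Most of these are immediate bookkeeping. Writing $\Gamma_h$ for the graph of a map $h$: the graph of $\id_{A/B}$ is $\{(x,y)\in A\times A : y-x\in B\}$; the graph of the zero map $A/B\to A'/B'$ is $A\times B'$; and for $c\in K_0^\times$ the graph of $c\cdot f$ is the preimage of $\Gamma_f$ under the $K_0$-linear definable automorphism $(x,y)\mapsto(x,c^{-1}y)$. Each of these is type-definable over a small set because $A,B,A',B'$ are, because preimages of type-definable sets under definable maps are type-definable, and because a small set of parameters sits inside a small model; so these maps lie in $\Hh$. The real content is step (ii) --- and, by a parallel argument, closure of $\Hom$-sets under addition in (iii). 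Here one first checks the graph identities
\begin{gather*}
  \Gamma_{g\circ f} = \{(x,z) : \exists y\,((x,y)\in\Gamma_f \wedge (y,z)\in\Gamma_g)\}, \\
  \Gamma_{f_1+f_2} = \{(x,y) : \exists y_1\,((x,y_1)\in\Gamma_{f_1} \wedge (x,y-y_1)\in\Gamma_{f_2})\},
\end{gather*}
which use only that $f$ (resp.\ $f_1$) is a total map and that every element of a quotient $A'/B'$ has a representative in $A'$. In each case the right-hand side is the image of a type-definable subset of some $\Kk^N$ under a coordinate projection that forgets a single variable ranging over a \emph{full} sort ($y$, resp.\ $y_1$).

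The key lemma, then, is that in $\Kk$ --- which we are assuming very saturated --- the projection of a small type-definable set along a full-sort variable is again type-definable over a small set. This is a standard compactness argument: if $X=\bigcap_{i\in I}\theta_i(\xi,y)$ with each $\theta_i$ definable over a small $C$ and $|I|<\kappa_0$, then $\xi$ lies in the projection iff the partial type $\{\theta_i(\xi,y):i\in I\}$ is consistent, which by $\kappa$-saturation ($\kappa\gg\kappa_0$) holds iff it is finitely satisfiable, so the projection equals $\bigcap\{\xi : \exists y\,\bigwedge_{i\in I_0}\theta_i(\xi,y)\}$ over the finite $I_0\subseteq I$, an intersection of $|I|$ many $C$-definable sets. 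This is precisely where the standing saturation hypotheses --- and the bound on the number of formulas cutting out members of $\Lambda$ --- are used; it is also the only place I expect any friction, and the only subtlety there is to confirm that the parameter set stays small, so that the output genuinely lands back in $\Lambda$ rather than in some larger class. Feeding the two graph identities above into this lemma shows that $g\circ f$, $f_1+f_2$, and hence all the maps required in (ii) and (iii) are morphisms of $\Hh$, which completes the verification.
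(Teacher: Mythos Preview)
Your proof is correct and follows essentially the same approach as the paper: view $\Hh$ as a non-full subcategory of $K_0\Vect$, so that the category axioms are inherited, and then verify closure of the morphism class under identity, zero, composition, addition, and $K_0$-scaling via the same trace/graph formulas. The one difference is emphasis: the paper simply writes down the formulas (e.g.\ $T_{g\circ f}=\{(x,z):\exists y\in A'\,((x,y)\in T_f\wedge(y,z)\in T_g)\}$) and asserts they are type-definable, whereas you single out and prove the compactness lemma that projections of small type-definable sets along a full sort remain type-definable --- this is the genuine content behind those assertions, and making it explicit is a virtue rather than a deviation.
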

\begin{theorem}\label{thm:h-lambda}
  Let $\Hh$ be the category of Proposition~\ref{prop:here-is-hh-copy}.
  \begin{itemize}
  \item $\Hh$ is a $K_0$-linear abelian category.
  \item The forgetful functor $\Hh \to K_0\Vect$ is a $K_0$-linear
    exact functor.
  \item $\Lambda_\bullet$ is isomorphic to $\Dir_\Hh(\Kk)$, and is
    therefore a directory.
  \end{itemize}
\end{theorem}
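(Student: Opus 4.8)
The plan is to establish the three bullet points in turn; the second and third are fairly formal consequences of the first. The one genuinely non-formal ingredient is a consequence of saturation: in a sufficiently saturated model the image of a type-definable set under a definable map is type-definable, since on writing a type-definable set as a directed intersection of definable sets, compactness lets projections pass through the intersection. I would record this at the outset in the form I need it: for type-definable $K_0$-subspaces $G, H \le \Kk^k$ and a $K_0$-linear definable map $\varphi$, the subspaces $G + H$, $\varphi(G)$, and $\varphi^{-1}(H)$ are type-definable, and the same holds for the analogous subsets of the quotient sorts $A/B$ that form the objects of $\Hh$.

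For the first point, that $\Hh$ is abelian: by Proposition~\ref{prop:here-is-hh-copy}, $\Hh$ is already $K_0$-linear and pre-additive, so it is enough to produce a zero object, binary biproducts, kernels and cokernels, and then to check that every monomorphism is a kernel and every epimorphism is a cokernel. The zero object is $0/0$. For biproducts, given $A/B$ with $A, B \in \Lambda_n$ and $A'/B'$ with $A', B' \in \Lambda_m$, the object $(A \oplus A')/(B \oplus B') \subseteq \Kk^{n+m}$ together with the four evident linear maps (which have type-definable graphs) is a biproduct. For a morphism $f \colon A/B \to A'/B'$ with type-definable graph $\Gamma \subseteq A \times A'$, I would put
\[
  C = \pi_1\bigl(\Gamma \cap (A \times B')\bigr), \qquad D = \pi_2(\Gamma) + B',
\]
which by the projection fact are type-definable $K_0$-subspaces with $B \le C \le A$ and $B' \le D \le A'$; then $\ker f = C/B$ with the inclusion into $A/B$, and $\coker f = A'/D$ with the quotient of $A'/B'$, all relevant graphs being type-definable. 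Thus $\Hh$ is pre-abelian. The forgetful functor $U \colon \Hh \to K_0\Vect$ is faithful by construction and carries the zero object, biproducts, kernels and cokernels just described to the corresponding data in $K_0\Vect$ (the formulas are the naive ones). Since $\Hh$ is pre-additive with kernels, $f$ is mono iff $\ker f = 0$ iff $Uf$ is injective, and dually $f$ is epi iff $Uf$ is surjective. Moreover $U$ reflects isomorphisms: if $Uf$ is bijective, its set-theoretic inverse is a $K_0$-linear map whose graph is the coordinate swap of $\Gamma$, hence type-definable, so $f^{-1} \in \Hh$. Finally, for any $f$ the canonical comparison $\coim f \to \img f$ is sent by $U$ to the comparison $\coim(Uf) \to \img(Uf)$, an isomorphism in $K_0\Vect$; reflecting isomorphisms, $\coim f \to \img f$ is an isomorphism in $\Hh$, so $\Hh$ is abelian.

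The second point is then immediate: $U$ is $K_0$-linear by Proposition~\ref{prop:here-is-hh-copy}, and the explicit formulas for kernels and cokernels show it preserves them, hence preserves all finite limits and colimits, so it is exact. For the third point, note that $\Kk = \Kk/0$ is an object of $\Hh$ and $\Kk^n$ is its $n$-th power. A subobject of $\Kk^n$ in $\Hh$ is given by a monomorphism $C/D \hookrightarrow \Kk^n$, whose image is a type-definable $K_0$-subspace of $\Kk^n$, and two monomorphisms give the same subobject iff they have the same image; hence $\Sub_\Hh(\Kk^n) \cong \Lambda_n$. Under this identification the lattice operations $\vee, \wedge$ correspond to $+, \cap$, the biproduct maps $\Sub_\Hh(\Kk^n) \times \Sub_\Hh(\Kk^m) \to \Sub_\Hh(\Kk^{n+m})$ correspond to the $\oplus$ maps of $\Lambda_\bullet$, and the $GL_n(K_0)$-action (coming from automorphisms of $\Kk^n$ in $\Hh$) corresponds to the $GL_n(K_0)$-action on $\Lambda_n$; so $\Lambda_\bullet \cong \Dir_\Hh(\Kk)$, which is a directory by definition.

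The hard part will be the first point, and inside it the verification that the kernel, cokernel, image, and inverse constructions really stay inside $\Hh$, i.e.\ that all the sets produced are type-definable. That is precisely where saturation enters, via the fact that projections of type-definable sets are type-definable, which must be applied not only to subspaces of $\Kk^k$ but to the quotient sorts $A/B$; once that is packaged cleanly, the rest is the general principle that a pre-abelian $K_0$-linear category admitting a faithful, isomorphism-reflecting functor to $K_0\Vect$ that preserves kernels and cokernels is abelian.
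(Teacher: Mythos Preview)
Your proposal is correct and takes essentially the same approach as the paper: construct biproducts, kernels, and cokernels explicitly via type-definable sets (using saturation for projections), observe that the forgetful functor to $K_0\Vect$ is faithful and preserves these, and then use conservativity to reflect the coimage-to-image isomorphism from $K_0\Vect$ back to $\Hh$. The paper packages the last step as a standalone criterion (Lemma~\ref{lem:reflector}), verifying its hypotheses via separate lemmas on lifting factorizations through monos and epis, whereas you run the argument inline and check conservativity directly by swapping the graph; the substance is identical.
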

\begin{remark}
  There is a $K_0$-linear functor $\Kk\Vect^f \to \Hh$ sending
  $\Kk^n$ to the object $\Kk^n/0$ in $\Hh$.  This functor is exact and
  faithful (see Remark~\ref{out-of-finvec}).  The composition
  \begin{equation*}
    \Kk\Vect^f \to \Hh \to K_0\Vect
  \end{equation*}
  is equivalent to the forgetful functor $\Kk\Vect^f \to
  K_0\Vect$.  This configuration will be important in
  \S\ref{sec:pedestals}-\ref{sec:model-theory-inflator}.
\end{remark}

\subsection{Serre quotients and 00-commensurability}\label{sec:tdef00-setting}
In the category $\mathcal{H}$, the small/bounded objects form a Serre
subcategory.  We define $\mathcal{H}^{00}$ to be the Serre quotient.
(See \S\ref{serre-quot} for a review of Serre subcategories and
Serre quotients.)

Concretely, a morphism from $X$ to $Y$ in $\mathcal{H}^{00}$ is an equivalence class of type-definable
$K_0$-linear subspaces $\Gamma \subseteq X \times Y$ such that
\begin{itemize}
\item $\Gamma + (0 \times Y) \approx X \times Y$.
\item $\Gamma \cap (0 \times Y) \approx 0 \times 0$.
\end{itemize}
where $\approx$ denotes 00-commensurability.  Two subspaces $\Gamma$
and $\Gamma'$ are regarded as equivalent if $\Gamma \approx \Gamma'$.
These equivalence classes of relations are essentially the
``endogenies'' of (\cite{stab-groups}, \S1.5) or the
``quasi-homomorphisms'' of (\cite{frank-jan}, \S3).  By analogy with
abelian varieties, we call $\Hh^{00}$ the \emph{isogeny category}.

\begin{proposition} \label{prop:h-lambda-00}
  $\mathcal{H}^{00}$ is a well-defined $K_0$-linear abelian category,
  and
  \begin{equation}
    \Dir_{\mathcal{H}^{00}}(\Kk) \cong \Lambda_\bullet^{00}, \label{oh-that}
  \end{equation}
  so $\Lambda_\bullet^{00}$ is a directory.
\end{proposition}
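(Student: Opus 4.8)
The plan is to leverage Theorem~\ref{thm:h-lambda}, which already identifies $\Lambda_\bullet$ with $\Dir_\Hh(\Kk)$, and then to show that passing to the Serre quotient $\Hh \to \Hh^{00}$ is exactly what implements the 00-commensurability quotient at the level of directories. There are two things to establish: (a) that $\Hh^{00}$ is a well-defined $K_0$-linear abelian category, and (b) that the isomorphism (\ref{oh-that}) holds.

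For (a), I would appeal to the general theory of Serre quotients reviewed in \S\ref{serre-quot}. The key input is that the bounded (small) objects of $\Hh$ form a Serre subcategory $\Hh^b$: this requires checking that $\Hh^b$ is closed under subobjects, quotients, and extensions, which is immediate since a subobject or quotient of a bounded group is bounded, and an extension of bounded by bounded is bounded. Since $\Hh$ is $K_0$-linear abelian (by Theorem~\ref{thm:h-lambda}) and $\Hh^b$ is a Serre subcategory, the Serre quotient $\Hh^{00} := \Hh/\Hh^b$ is automatically $K_0$-linear abelian, with the localization functor $Q : \Hh \to \Hh^{00}$ being exact and $K_0$-linear. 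The concrete description of morphisms in $\Hh^{00}$ as 00-commensurability classes of type-definable subspaces $\Gamma \subseteq X \times Y$ should follow from the standard calculus of fractions description of Serre quotients, combined with the fact that in $\Hh$ a morphism is itself a type-definable subspace (graph): a roof $X \leftarrow X' \to Y$ with $X' \hookrightarrow X$ having bounded cokernel and kernel becomes, after pushing the graph of $X' \to Y$ forward into $X \times Y$, exactly a subspace $\Gamma$ with $\Gamma + (0 \times Y) \approx X \times Y$ and $\Gamma \cap (0 \times Y) \approx 0$, with two roofs giving the same morphism iff the resulting subspaces are 00-commensurable.

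For (b), the point is to compute $\Sub_{\Hh^{00}}(\Kk^n)$ and match it with $\Lambda^{00}_n$. Under the localization functor $Q$, subobjects of $Q(\Kk^n) = \Kk^n$ in $\Hh^{00}$ correspond to subobjects of $\Kk^n$ in $\Hh$ modulo the equivalence that identifies two subobjects when their ``difference'' lies in $\Hh^b$; concretely, $A, A' \le \Kk^n$ in $\Hh$ map to the same subobject of $\Kk^n$ in $\Hh^{00}$ precisely when $A/(A \cap A')$ and $A'/(A \cap A')$ are bounded, i.e.\ when $A \approx A'$. Since $\Sub_\Hh(\Kk^n) = \Lambda_n$ by Theorem~\ref{thm:h-lambda}, this gives a bijection $\Sub_{\Hh^{00}}(\Kk^n) \cong \Lambda_n/{\approx} = \Lambda^{00}_n$. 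One then checks that this bijection is compatible with the lattice structure (which in $\Lambda^{00}_n$ has $G \vee H = G+H$ and $G \wedge H = (G \cap H)^{00}$, matching join and meet of subobjects in the quotient category), with the $GL_n(K_0)$-action (inherited from $\Hh$, hence preserved by the exact functor $Q$), and with the connecting maps $\oplus$ (since $Q$ is $K_0$-linear and commutes with finite direct sums). Therefore $\Dir_{\Hh^{00}}(\Kk) \cong \Lambda^{00}_\bullet$, and since $\Hh^{00}$ is abelian, $\Lambda^{00}_\bullet$ is a directory.

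The main obstacle is the concrete identification of morphisms and subobjects in the Serre quotient $\Hh^{00}$ with the type-definable data described after the proposition statement---i.e.\ verifying that the abstract calculus-of-fractions description of $\Hh/\Hh^b$ unwinds to exactly the ``endogeny''/``quasi-homomorphism'' picture, and in particular that the equivalence relation on roofs matches 00-commensurability of the associated subspaces $\Gamma$. Everything else (Serre subcategory axioms for $\Hh^b$, exactness and linearity of $Q$, preservation of $\oplus$ and the $GL_n$-action) is routine given the earlier results, but this bookkeeping step genuinely requires care because it mediates between the category-theoretic construction and the model-theoretic objects; I would do it by hand, possibly relegating details to \S\ref{sec:annoy} alongside the proof of Theorem~\ref{thm:h-lambda}.
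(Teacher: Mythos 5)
Your proposal is correct and follows essentially the same route as the paper: the paper also defines $\Hh^{00}$ as the Serre quotient of $\Hh$ by the Serre subcategory of bounded objects, cites the general Serre-quotient facts of the appendix for well-definedness, exactness, and $K_0$-linearity of the localization functor, and derives the identification $\Sub_{\Hh^{00}}(\Kk^n)\cong\Lambda_n/{\approx}=\Lambda^{00}_n$ from exactly the subobject-lattice statement you invoke (Proposition~\ref{oh-that-2}), with compatibility of $\oplus$ and the $GL_n(K_0)$-action coming from exactness of the localization. The concrete ``endogeny'' description of morphisms that you flag as the delicate step is in fact not needed for the proposition itself, only the subobject picture.
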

This follows by basic facts about Serre quotients, discussed in
\S\ref{serre-quot}.  The identity (\ref{oh-that}) follows by
Proposition~\ref{oh-that-2}.

\begin{remark}\label{rem:that-requires-serre-to-be-exact}
  The localization functor $\mathcal{H} \to \mathcal{H}^{00}$ is
  exact, yielding a directory morphism
  \begin{equation*}
    \Lambda_\bullet \to \Lambda^{00}_\bullet
  \end{equation*}
  The $n$th component of this morphism,
  \begin{equation*}
    \Lambda_n \to \Lambda^{00}_n,
  \end{equation*}
  is simply the quotient map.
\end{remark}

Now assume $\Kk$ is dp-finite.  Recall Definition~\ref{def:magic}: a
subfield $K_0$ is \emph{magic} if it is large enough to ensure that
type-definable $K_0$-linear subspaces of $\Kk^n$ are 00-connected.
These exist by \cite{prdf}, Corollary~8.7.

Assuming $K_0$ is magic, it follows that the localization map
\begin{equation*}
  \mathcal{H} \to \mathcal{H}^{00}
\end{equation*}
is an equivalence of categories, because all non-trivial $A/B \in
\mathcal{H}$ are unbounded\footnote{Because $K_0$ is magic, $A =
  A^{00}$ and $B = B^{00}$.  If $A/B$ is bounded, then $A^{00} =
  B^{00}$, implying $A = B$, implying $A/B$ is trivial.}.  In this
case, $\Lambda_\bullet \cong \Lambda^{00}_\bullet$.

\section{Inflators: definition and examples}\label{sec:inflators}
Fix a small field $K_0$.  (Usually it will be $\Qq$ or a magic
subfield of a field of finite dp-rank.)
\begin{definition}
  Let $K/K_0$ be a field.  A \emph{$d$-inflator on $K$}
  is a morphism of directories
  \begin{equation*}
    \varsigma : \Dir_K(K) \to D_\bullet
  \end{equation*}
  to some semisimple directory $D_\bullet$ of length $d$, satisfying
  the following additional axiom:
  \begin{equation*}
    \ell(\varsigma_n(V)) = d \cdot \dim_K(V)
  \end{equation*}
  for any $V \subseteq K^n$.  In other words, the maps $\varsigma_n$
  scale lengths by a factor of $d$.
\end{definition}
Explicitly, then, a $d$-inflator on $K$ consists of a
family of maps
\begin{equation*}
  \varsigma_n : \Sub_K(K^n) \to \Sub_\Cc(B^n)
\end{equation*}
where $B$ is a semisimple object of length $d$ in a $K_0$-linear
abelian category, and the maps $\varsigma_n$ need to satisfy the
identities\footnote{Compare with Speculative Remark~10.10 in
  \cite{prdf}.  Inflators were called ``$r$-fold specializations'' in
  introductory \S1.2 of \cite{prdf}.}
\begin{align*}
  V \subseteq W & \implies \varsigma_n(V) \subseteq \varsigma_n(W) \\
  \varsigma_{n + m}(V \oplus W) &= \varsigma_n(V) \oplus \varsigma_m(W) \\
  \ell(\varsigma_n(V)) &= d \cdot \dim_K(V) \\
  \varsigma_n(\mu \cdot V) &= \mu \cdot \varsigma_n(V) \qquad \textrm{for } \mu \in GL_n(K_0).
\end{align*}
\begin{definition}
  Two $d$-inflators
  \begin{align*}
    \varsigma : \Dir_K(K) & \to D_\bullet \\
    \varsigma' : \Dir_K(K) & \to D'_\bullet
  \end{align*}
  are \emph{equivalent} if there is an isomorphism of directories $f :
  D_\bullet \to D'_\bullet$ such that $\varsigma' = f \circ \varsigma$.
\end{definition}
By Theorem~\ref{thm:semisimple-directories}, every $d$-inflator
is equivalent to one of the form
\begin{align*}
  \Dir_K(K) \to \Dir_R(A),
\end{align*}
where $R$ is a semisimple $K_0$-algebra and $A$ is a
finitely-generated $R$-module.  Moreover, we may assume either of the
following (but not both):
\begin{itemize}
\item $A = R$.
\item $R$ is a finite product of division algebras over $K_0$.
\end{itemize}

\subsection{1-inflators from field embeddings} \label{sec:aaron}
Let $K_0 \subseteq K_1 \subseteq K_2$ be a chain of field extensions.
The inclusion of $K_1$ into $K_2$ induces a 1-inflator from
$\Dir_{K_1}(K_1)$ to $\Dir_{K_2}(K_2)$
\begin{align*}
  \Sub_{K_1}(K_1^n) & \to \Sub_{K_2}(K_2^n) \\
  V & \mapsto K_2 \otimes_{K_1} V.
\end{align*}
This is a directory morphism because it arises as in
Example~\ref{from-functor} from the left-exact $K_0$-additive functor
$K_2 \otimes_{K_1} -$ from $K_1\Vect$ to $K_2\Vect$.

More generally, if $K_1, K_2$ are two fields extending $K_0$, any
$K_0$-linear embedding of $K_1$ into $K_2$ induces a 1-inflator from
$\Dir_{K_1}(K_1)$ to $\Dir_{K_2}(K_2)$.  Even more generally, we
can allow $K_2$ to be a $K_0$-division algebra.

\subsection{Inflators from restriction of scalars} \label{sec:beth}
Let $L/K$ be a finite field extension.  The forgetful functor from
$L\Vect$ to $K\Vect$ is exact, hence induces a morphism of directories
\begin{equation*}
  \Dir_L(L) \to \Dir_K(L).
\end{equation*}
This map clearly scales dimensions by a factor of $[L : K]$, yielding
an $[L : K]$-inflator on $L$.

\subsection{1-inflators from valuations} \label{sec:calvin}
Let $(K,\Oo,\mm)$ be a valued field.  Suppose $K$ extends $K_0$, and
the valuation on $K_0$ is trivial, so that $\Oo$ is a $K_0$-algebra.
Let $k$ be the residue field.

\begin{theorem}\label{thm:calvin}
  The family of maps
  \begin{align*}
    \varsigma_n : \Sub_K(K^n) & \to \Sub_k(k^n) \\
    V & \mapsto (V \cap \Oo^n + \mm^n)/\mm^n
  \end{align*}
  is a 1-inflator from $\Dir_K(K)$ to $\Dir_k(k)$.
\end{theorem}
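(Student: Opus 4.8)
The plan is to verify directly that the family $\varsigma_\bullet$ satisfies the four defining properties of a $1$-inflator: that each $\varsigma_n$ is order-preserving, $GL_n(K_0)$-equivariant, compatible with $\oplus$, and scales dimension by $1$ (i.e.\ $\dim_k(\varsigma_n(V)) = \dim_K(V)$). The cleanest way to organize this is to recognize that each step is a soft statement about the interaction of the operations $V \mapsto V \cap \Oo^n$, $V \mapsto V + \mm^n$, and the quotient $(-)/\mm^n$, combined with the fact that $\Oo^n/\mm^n \cong k^n$ as a $k$-vector space. I would first record the key lemma that for a finitely-generated $K$-subspace $V \subseteq K^n$, the module $V \cap \Oo^n$ is a free $\Oo$-module whose reduction mod $\mm$ has $k$-dimension exactly $\dim_K V$; this is the standard fact that a valuation ring is a Bézout (in fact, here we only need finitely-generated torsion-free $\Oo$-modules over a valuation ring are free) and that a basis of $V\cap\Oo^n$ adapted to the valuation reduces to a basis of its image. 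This simultaneously gives well-definedness of $\varsigma_n(V)$ as a subspace of $k^n$ and the dimension-scaling axiom.

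Next I would check order-preservation: if $V \subseteq W$ then $V \cap \Oo^n \subseteq W \cap \Oo^n$, so $V \cap \Oo^n + \mm^n \subseteq W \cap \Oo^n + \mm^n$, and passing to the quotient by $\mm^n$ preserves inclusion. For $GL_n(K_0)$-equivariance, the point is that the valuation on $K_0$ is trivial, so any $\mu \in GL_n(K_0)$ has entries that are units of $\Oo$ (indeed units of $K_0 \subseteq \Oo^\times \cup \{0\}$, and invertibility forces the matrix to lie in $GL_n(\Oo)$ with $\mu^{-1}$ also in $GL_n(\Oo)$); hence $\mu$ preserves $\Oo^n$ and $\mm^n$, so $\mu(V \cap \Oo^n + \mm^n) = (\mu V) \cap \Oo^n + \mm^n$, and $\mu$ descends to the reduction $\bar\mu \in GL_n(k)$ acting on $k^n$, giving $\varsigma_n(\mu V) = \bar\mu \cdot \varsigma_n(V) = \mu \cdot \varsigma_n(V)$. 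For compatibility with $\oplus$, I would use that $(V \oplus W) \cap \Oo^{n+m} = (V \cap \Oo^n) \oplus (W \cap \Oo^m)$ and $\mm^{n+m} = \mm^n \oplus \mm^m$, so the reduction splits as a direct sum; this is essentially formal once one observes that intersecting a direct sum of $K$-subspaces of $K^n$ and $K^m$ with the corresponding direct sum of $\Oo$-lattices commutes with the decomposition.

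Alternatively — and this is probably the slicker route I would actually write — I would identify $\varsigma_\bullet$ as arising from a composite of standard directory morphisms of the type catalogued in \S\ref{sec:directory-morphisms}, for which properties \ref{uc1}--\ref{uc3} come for free, leaving only the length-scaling axiom to check by hand. Concretely: the functor $\Oo \otimes_{?} -$ does not quite work since $\Oo$ is not flat over anything relevant, but the operation $V \mapsto V \cap \Oo^n$ is the pullback along the monomorphism $\Oo^n \hookrightarrow K^n$ in an appropriate category of $\Oo$-modules, and $V \mapsto V/\mm^n V$ corresponds to the right-exact functor $k \otimes_\Oo -$, or better, to the interval-retract construction of Proposition~\ref{prop:sq} applied inside $\Dir_\Oo(\Oo^n)$ followed by Proposition~\ref{prop:sq}.\ref{sq6-o}. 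The honest subtlety is that $\varsigma_n(V)$ is defined as $(V \cap \Oo^n + \mm^n)/\mm^n$ rather than as the image of $V \cap \Oo^n$ in $\Oo^n/\mm^n$ — but these coincide, and checking that $V \cap \Oo^n + \mm^n$ has the right $\Oo$-module length (namely that its image in $\Oo^n/\mm^n \cong k^n$ has dimension $\dim_K V$) is exactly the freeness/adapted-basis lemma above, which is the main technical obstacle and the only place real valuation theory enters.
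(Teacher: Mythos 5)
Your proposal is correct, and the ``slicker route'' you say you would actually write is exactly the paper's proof: the directory-morphism axioms are obtained by factoring $\varsigma$ as the forgetful map $\Dir_K(K)\hookrightarrow\Dir_\Oo(K)$ followed by pullback along $\Oo\hookrightarrow K$ and pushforward along $\Oo\twoheadrightarrow k$, and the dimension-scaling axiom is reduced to the adapted-basis fact, which is the paper's Lemma~\ref{v-o-lemma} (stated there as: some $\mu\in GL_n(\Oo)$ moves $V$ to $K^\ell\oplus 0^{n-\ell}$). Your identification of that lemma as the only place genuine valuation theory enters matches the paper, which likewise isolates it and proves it by the same residue-basis argument you sketch.
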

\begin{proof}
  First we verify that $\varsigma$ is a morphism of directories.
  Indeed, it is the composition
  \begin{equation*}
    \Dir_K(K) \stackrel{i}{\hookrightarrow} \Dir_\Oo(K)
    \stackrel{f}{\to} \Dir_\Oo(\Oo) \stackrel{g}{\to} Dir_\Oo(k),
  \end{equation*}
  where $i$ is induced by the forgetful functor $K\Vect \to \Oo\Mod$,
  where $f$ is pullback along the monomorphism $\Oo \hookrightarrow
  K$, and where $g$ is pushforward along the epimorphism $\Oo \to
  \Oo/\mm \cong k$.  See Examples~\ref{from-morphism} and
  \ref{from-functor}.

  Next, we must show that 
  \begin{equation}
    \label{gr-to-gr}
    \dim_k(\varsigma_n(V)) = \dim_K(V)
  \end{equation}
  for any
  subspace $V \subseteq K^n$.  By the following lemma, we may change
  coordinates and assume $V = K^\ell \oplus 0^{n-\ell}$, in which case
  (\ref{gr-to-gr}) is clear.
\end{proof}
\begin{lemma}\label{v-o-lemma}
  If $(K,\Oo,\mm,k)$ is a valued field and $V$ is a subspace of $K^n$,
  then there is $\mu \in GL_n(\Oo)$ such that $\mu \cdot V = K^\ell
  \oplus 0^{n-\ell}$ for some $\ell$.
\end{lemma}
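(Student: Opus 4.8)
The plan is to show that $W := V \cap \Oo^n$ is a direct summand of the free $\Oo$-module $\Oo^n$, free of rank equal to $\ell := \dim_K V$; a basis of $W$ together with a basis of a complement then assembles into the matrix $\mu^{-1}$.

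First I would observe that $\Oo^n/W$ is \emph{torsion-free}: if $c \in \Oo \setminus \{0\}$, $x \in \Oo^n$, and $cx \in W$, then $cx \in V$, hence $x \in V$ because $V$ is a $K$-subspace and $c$ is invertible in $K$; thus $x \in V \cap \Oo^n = W$. Since $\Oo^n/W$ is also finitely generated (being a quotient of $\Oo^n$), and a finitely generated torsion-free module over a valuation ring is free, $\Oo^n/W$ is free of some finite rank $n - \ell'$. Consequently the short exact sequence
\begin{equation*}
  0 \to W \to \Oo^n \to \Oo^n/W \to 0
\end{equation*}
splits, so $\Oo^n \cong W \oplus \Oo^{n - \ell'}$; in particular $W$ is finitely generated projective over the local ring $\Oo$, hence free, of rank $\ell'$. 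To identify $\ell'$ with $\ell$, note that scaling any $v \in V$ by a suitable element of $K^\times$ (one whose valuation cancels the least valuation among the coordinates of $v$) lands it inside $V \cap \Oo^n = W$; hence $V = K \cdot W$, and an $\Oo$-basis of $W$ is a $K$-basis of $V$, so $\ell' = \dim_K V = \ell$.

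To conclude, fix an $\Oo$-basis $v_1, \dots, v_\ell$ of $W$ and extend it by an $\Oo$-basis $v_{\ell+1}, \dots, v_n$ of a complement $C$ with $\Oo^n = W \oplus C$; then $v_1, \dots, v_n$ is an $\Oo$-basis of $\Oo^n$, so the matrix $\nu := [\, v_1 \mid \cdots \mid v_n \,]$ lies in $GL_n(\Oo)$ (it and its inverse, the change-of-basis matrix, have entries in $\Oo$). Setting $\mu := \nu^{-1} \in GL_n(\Oo)$ gives $\mu v_i = e_i$, so $\mu W = \Oo^\ell \oplus 0^{n-\ell}$; and since $\mu$ is $K$-linear and $V = K \cdot W$, we obtain $\mu \cdot V = K \cdot (\Oo^\ell \oplus 0^{n-\ell}) = K^\ell \oplus 0^{n-\ell}$, as desired.

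The argument is essentially routine; the one step that is not purely formal is the classical fact that a finitely generated torsion-free module over a valuation ring is free, and this is the only place where the hypothesis that $\Oo$ is a \emph{valuation} ring (rather than merely a $K_0$-subalgebra of $K$) is really used --- no discreteness or Noetherianity of $\Oo$ is needed. If one prefers to avoid invoking that fact, the same result follows by an easy induction on $n$: when $V \neq 0$ one extracts a unimodular vector of $W$ (after rescaling, some coordinate of a chosen generator has valuation $0$, hence is a unit), changes $\Oo$-coordinates to make it $e_n$, applies the inductive hypothesis to the image of $V$ under the projection $K^n \to K^{n-1}$, and finishes with a coordinate swap.
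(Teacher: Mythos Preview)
Your proof is correct and takes a genuinely different route from the paper's. The paper argues via the residue map $r:\Oo^n\twoheadrightarrow k^n$: it lifts a $k$-basis of $r(V\cap\Oo^n)$ to vectors in $V\cap\Oo^n$, extends to a lift of a basis of $k^n$, and observes that the resulting matrix is in $GL_n(\Oo)$ because its determinant has nonzero residue; equality $V=K^\ell\oplus 0^{n-\ell}$ is then checked by hand via a short contradiction argument. Your argument instead works purely module-theoretically, using that $\Oo^n/(V\cap\Oo^n)$ is torsion-free and hence free. The paper's approach is more self-contained (no structure theorem invoked) and keeps the residue field visible, which is natural given that the lemma is used immediately to prove the dimension-preservation $(\ref{gr-to-gr})$; your approach is conceptually cleaner and makes the role of $V\cap\Oo^n$ as a free direct summand explicit, at the cost of citing the classical fact about finitely generated torsion-free modules over valuation rings. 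The inductive alternative you sketch at the end is essentially a hybrid of the two.
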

Although this lemma is well-known\footnote{It is the reason why
  Grassmannian varieties are proper.}, we give a proof for
completeness.
\begin{proof}
  Let $r : \Oo^n \twoheadrightarrow k^n$ be the coordinatewise residue
  map.  Let $\vec{a}_1, \ldots, \vec{a}_\ell$ be elements of $V \cap
  \Oo^n$ such that $r(\vec{a}_1), \ldots, r(\vec{a}_\ell)$ form a basis
  for the $k$-linear subspace $r(V \cap \Oo^n)$, the image of $V \cap
  \Oo^n$ under $r$.  We may find $\vec{a}_{\ell+1},\ldots,\vec{a}_n \in
  \Oo^n$ such that $r(\vec{a}_1),\ldots,r(\vec{a}_n)$ are a basis of
  $k^n$.  Let $\nu$ be the matrix built out of
  $\vec{a}_1,\ldots,\vec{a}_n$.  Then $\nu$ is in $GL_n(\Oo)$ because
  its determinant has nonzero residue, hence is invertible.  Let $\mu
  = \nu^{-1}$.  Replacing $V$ with $\mu \cdot V$, we may assume that
  $\vec{a}_i$ is the $i$th standard basis vector $\vec{e}_i$.  For $1
  \le i \le \ell$,
  \begin{equation*}
    \vec{e}_i = \vec{a}_i \in V \cap \Oo^n \subseteq V,
  \end{equation*}
  so $K^\ell \oplus 0^{n-\ell} \subseteq V$.  If $V$ is strictly larger than
  $K^\ell \oplus 0^{n-\ell}$, we can find non-zero
  \begin{equation*}
    \vec{x} \in V \cap (0^\ell \oplus K^{n-\ell}).
  \end{equation*}
  Rescaling $\vec{x}$ by its coordinate of least valuation, we may
  assume $\vec{x} \in \Oo^n \setminus \mm^n$.  Then $\vec{x} \in V
  \cap \Oo^n$.  Also $r(\vec{x})$ is a non-zero element of $0^\ell \oplus
  k^{n-\ell}$, and therefore isn't in the span of the first $\ell$ standard
  basis vectors $r(\vec{a}_1),\ldots,r(\vec{a}_\ell)$.  But we chose the
  vectors $\vec{a}_1,\ldots,\vec{a}_\ell$ to ensure that
  $r(\vec{a}_1),\ldots,r(\vec{a}_\ell)$ span all of $r(V \cap \Oo^n)$.
\end{proof}

Composing the 1-inflator of Theorem~\ref{thm:calvin} with the
1-inflators from \S\ref{sec:aaron}, we get a more general class of
1-inflators:
\begin{theorem}\label{thm:1-fold-example-2}
  Let $\Oo$ be a valuation $K_0$-algebra, let $L$ be a $K_0$-division
  algebra, and $f$ be a $K_0$-linear embedding of the residue field of
  $\Oo$ into $L$.  Let $K = \Frac \Oo$.  Then there is a 1-inflator
  from $\Dir_K(K)$ to $\Dir_L(L)$ sending $V \subseteq
  K^n$ to
  \begin{equation*}
    L \otimes_{\Oo/\mm} ((V \cap \Oo^n + \mm^n)/\mm^n)
  \end{equation*}
\end{theorem}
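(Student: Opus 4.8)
The plan is to realize the desired 1-inflator as a composition of the valuation 1-inflator of Theorem~\ref{thm:calvin} with a field-embedding 1-inflator from \S\ref{sec:aaron}. This works because a composition of directory morphisms is again a directory morphism, and because the length-scaling factors of inflators multiply under composition, so a composite of a 1-inflator with a 1-inflator is a 1-inflator.

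First I would set $k := \Oo/\mm$ for the residue field. Since $\Oo$ is a valuation $K_0$-algebra, the valuation is trivial on $K_0$, so $\Oo$ is a $K_0$-algebra, $K_0$ embeds into $k$, and $K = \Frac \Oo$ extends $K_0$; hence Theorem~\ref{thm:calvin} applies and yields a 1-inflator $\varsigma : \Dir_K(K) \to \Dir_k(k)$ with $\varsigma_n(V) = (V \cap \Oo^n + \mm^n)/\mm^n$. Next, the $K_0$-linear embedding $f : k \hookrightarrow L$ gives, exactly as in \S\ref{sec:aaron} (via the $K_0$-linear functor $L \otimes_k - : k\Vect \to L\Mod$, which is left-exact since $k$ is a field, together with Example~\ref{from-functor}), a directory morphism $\tau : \Dir_k(k) \to \Dir_L(L)$ with $\tau_n(W) = L \otimes_k W$; since $\dim_L(L \otimes_k W) = \dim_k W$, this $\tau$ is a 1-inflator.

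I would then form the composite $\tau \circ \varsigma : \Dir_K(K) \to \Dir_L(L)$. To see it is a 1-inflator, I would check that (i) it is a directory morphism, each of the three defining axioms (order-preservation, $GL_n(K_0)$-equivariance, compatibility with $\oplus$) being manifestly closed under composition, so that $\tau\circ\varsigma$ is a morphism into the semisimple length-$1$ directory $\Dir_L(L)$; and (ii) it satisfies the length axiom, which follows by applying the length axiom for $\tau$ and then for $\varsigma$: $\ell_L\big(\tau_n(\varsigma_n(V))\big) = \dim_k\big(\varsigma_n(V)\big) = \dim_K(V)$. Finally, unwinding definitions gives $(\tau \circ \varsigma)_n(V) = L \otimes_k \big((V \cap \Oo^n + \mm^n)/\mm^n\big) = L \otimes_{\Oo/\mm}\big((V \cap \Oo^n + \mm^n)/\mm^n\big)$, the claimed formula.

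There is no serious obstacle: the real content is already in Theorem~\ref{thm:calvin} and the constructions of \S\ref{sec:aaron}. The only points deserving a line of verification are that composition preserves the directory-morphism axioms and multiplies inflator length-factors, and that $L \otimes_k -$ genuinely lands in $L\Mod$ and is left-exact even when $L$ is a noncommutative division algebra over $K_0$, so that Example~\ref{from-functor} applies.
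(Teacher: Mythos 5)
Your proposal is correct and is exactly the paper's route: the paper introduces Theorem~\ref{thm:1-fold-example-2} with the sentence ``Composing the 1-inflator of Theorem~\ref{thm:calvin} with the 1-inflators from \S\ref{sec:aaron}, we get a more general class of 1-inflators,'' which is precisely your composition $\tau\circ\varsigma$. Your added verifications (that the directory-morphism axioms are closed under composition and that the length factors multiply) are the right points to spell out and are consistent with what the paper leaves implicit.
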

In \S\ref{sec:1-fold-classify} we shall see that all 1-inflators have
this form.  Thus, a 1-inflator from $K$ to $L$ is equivalent to a
valuation ring on $K$ and an embedding of the residue field into $L$.

\subsection{Inflators from multivaluations}\label{sec:dorothy}
Let $K$ be a field extending $K_0$.  For $i = 1,\ldots,n$, let
\begin{equation*}
  \varsigma^i : \Dir_K(K) \to D^i
\end{equation*}
be a $d_i$-inflator.  Then the product map
\begin{align*}
  \varsigma : \Dir_K(K) &\to (D^1 \times \cdots \times D^n) \\ 
  V & \mapsto (\varsigma^1(V),\ldots,\varsigma^n(V))
\end{align*}
of Proposition~\ref{prop:dir-prod} is a $(d_1 + \cdots +
d_n)$-inflator.  The length equation is a consequence of the following
trivial fact:
\begin{remark}\label{obvious-additive}
  For $i = 1, \ldots, n$, let $M_i$ be a modular lattice of finite
  length and let $x_i$ be an element of $M_i$.  In the product lattice
  $\prod_{i = 1}^n M_i$, the length of $(x_1,x_2,\ldots,x_n)$ over the
  bottom element is exactly
  \begin{equation*}
    \sum_{i = 1}^n \ell_{M_i}(x_i/\bot).
  \end{equation*}
\end{remark}
\begin{example}\label{dorothy}
  If $K$ is a field extending $K_0$, and if $\Oo_1,
  \ldots, \Oo_n$ are valuation $K_0$-algebras on $K$ with residue fields
  $k_1, \ldots, k_n$, then there is an $n$-inflator
  \begin{equation*}
    \Dir_K(K) \to \prod_{i = 1}^n \Dir_{k_i}(k_i)
  \end{equation*}
  sending $V$ to $(\varsigma^1(V),\ldots,\varsigma^n(V))$, where
  $\varsigma^i$ is the 1-inflator coming from the $i$th
  valuation.
\end{example}

The original hope was that essentially \emph{all} $d$-inflators would
arise in this manner.  In the next section, we will see that this is
far from being the case.

\subsection{Unwanted examples}\label{garbage-ex}

\begin{lemma}\label{lem:corruptor}
  If $K$ is a field (extending $K_0$ as always), there is a
  length-preserving directory morphism
  \begin{align*}
    \tau : \Dir_K(K) \times \Dir_K(K) & \to
    \Dir_K(K) \times \Dir_K(K) \\ (V,W) & \mapsto (V + W,
    V \cap W).
  \end{align*}
\end{lemma}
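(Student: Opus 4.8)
The plan is to realize $\tau$ as a composite of the elementary directory morphisms of \S\ref{sec:directory-morphisms}, and then check the length equation by the classical dimension formula. First I would split $\tau$ into its two coordinates. Let $s : K^2 \twoheadrightarrow K$ be the addition map $(x,y) \mapsto x+y$, an epimorphism in $K\Vect$, and let $\delta : K \hookrightarrow K^2$ be the diagonal $x \mapsto (x,x)$, a monomorphism. By Remark~\ref{rem:whatever}, the exact functor $\oplus : K\Vect \times K\Vect \to K\Vect$ induces, via Example~\ref{from-functor}, a morphism of directories
\begin{equation*}
  e : \Dir_K(K) \times \Dir_K(K) \to \Dir_K(K^2), \qquad (V,W) \mapsto (V \oplus W)^T,
\end{equation*}
where $(-)^T$ is the coordinate regrouping $K^n \oplus K^n \to (K^2)^n$. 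Post-composing $e$ with the pushforward $s_*$ and with the pullback $\delta^*$ of Example~\ref{from-morphism} yields two directory morphisms
\begin{align*}
  \tau^+ := s_* \circ e &: \Dir_K(K) \times \Dir_K(K) \to \Dir_K(K), \\
  \tau^\cap := \delta^* \circ e &: \Dir_K(K) \times \Dir_K(K) \to \Dir_K(K).
\end{align*}
Unwinding the definitions---and noting that the transpose $(-)^T$ is exactly the identification of $(K^2)^n$ with $K^n \oplus K^n$ under which $s^{\oplus n}$ becomes $(\vec x,\vec y) \mapsto \vec x + \vec y$ and $\delta^{\oplus n}$ becomes $\vec x \mapsto (\vec x, \vec x)$---one reads off that $\tau^+_n(V,W) = V+W$ (the direct image of $V \oplus W$ along $s^{\oplus n}$) and $\tau^\cap_n(V,W) = V \cap W$ (the inverse image of $V \oplus W$ along $\delta^{\oplus n}$). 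Finally $\tau = (\tau^+, \tau^\cap)$, which is a directory morphism by the universal property of the product directory (Proposition~\ref{prop:dir-prod}).

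It then remains to verify that $\tau$ is length-preserving, i.e. $\ell(\tau_n(V,W)) = \ell((V,W))$ for every $(V,W) \in \Sub_K(K^n)^2$. By Remark~\ref{obvious-additive}, the length of $(V,W)$ over the bottom element of the product lattice $\Sub_K(K^n) \times \Sub_K(K^n)$ is $\dim_K(V) + \dim_K(W)$, and likewise the length of $\tau_n(V,W) = (V+W, V \cap W)$ is $\dim_K(V+W) + \dim_K(V \cap W)$. These coincide by the modular dimension identity $\dim_K(V) + \dim_K(W) = \dim_K(V+W) + \dim_K(V \cap W)$ recalled just before the statement of the lemma. Hence $\tau$ is a length-preserving directory morphism.

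I do not expect any serious obstacle: every input is either a standard directory construction or the elementary dimension formula. The only point demanding a moment's care is the bookkeeping of coordinate identifications---checking that the transpose map of Remark~\ref{rem:whatever} lines up with the componentwise identification $(K^2)^n \cong K^n \oplus K^n$ so that $s_* \circ e$ and $\delta^* \circ e$ genuinely compute $V+W$ and $V \cap W$ rather than some coordinate-permuted variants. If one prefers to avoid this packaging altogether, one can simply verify the three axioms of Definition~\ref{def:morphisms} by hand: order-preservation and $GL_n(K_0)$-equivariance are immediate, since $+$ and $\cap$ are monotone and commute with invertible linear maps, and $\oplus$-compatibility reduces---using that $\oplus$ in the product directory is taken componentwise (Lemma-Definition~\ref{ld:products})---to the evident identities $(V \oplus V') + (W \oplus W') = (V+W) \oplus (V'+W')$ and $(V \oplus V') \cap (W \oplus W') = (V \cap W) \oplus (V' \cap W')$ in $K^{n+m} = K^n \oplus K^m$; the length equation is then verified exactly as above.
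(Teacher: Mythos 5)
Your proposal is correct and follows essentially the same route as the paper: both factor $(V,W)\mapsto V+W$ and $(V,W)\mapsto V\cap W$ through the morphism of Remark~\ref{rem:whatever} followed by pushforward along addition and pullback along the diagonal, assemble them via Proposition~\ref{prop:dir-prod}, and read off length-preservation from Remark~\ref{obvious-additive} and the dimension identity. Your extra care with the transpose identification and the optional direct verification of the axioms are fine but not a different argument.
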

\begin{proof}
  First note that
  \begin{align*}
    \Dir_K(K) \times \Dir_K(K) &\to \Dir_K(K) \\
    (V,W) & \mapsto V + W
  \end{align*}
  is a directory morphism, because it is the composition of the morphism
  \begin{align*}
    \Dir_K(K) \times \Dir_K(K) &\to \Dir_K(K \oplus K) \\
    (V,W) & \mapsto V \oplus W
  \end{align*}
  from Remark~\ref{rem:whatever} with the morphism $\Dir_{K \oplus K}(K
  \oplus K) \to \Dir_K(K)$ obtained by pushforward along
  \begin{align*}
    K \oplus K & \to K \\
    (x,y) & \mapsto x+y.
  \end{align*}
  Similarly,
    \begin{align*}
    \Dir_K(K) \times \Dir_K(K) &\to \Dir_K(K) \\
    (V,W) & \mapsto V \cap W
  \end{align*}
  is a directory morphism, because it is the composition of the morphism
  \begin{align*}
    \Dir_K(K) \times \Dir_K(K) &\to \Dir_K(K \oplus K) \\
    (V,W) & \mapsto V \oplus W
  \end{align*}
  from Remark~\ref{rem:whatever} with the morphism $\Dir_{K \oplus K}(K \oplus K) \to
  \Dir_K(K)$ obtained by pullback along the diagonal inclusion
  \begin{align*}
    K & \hookrightarrow K \oplus K \\
    x & \mapsto (x,x).
  \end{align*}
  By Proposition~\ref{prop:dir-prod}, the product map $\tau$ is a
  morphism of directories.  The map $\tau$ is obviously
  length-preserving (note Remark~\ref{obvious-additive}).
\end{proof}
Using this, we can produce a number of perverse examples of 2-inflators.  Some (but not all) of these examples will be ruled
out by the ``malleability'' condition of \S\ref{sec:malleable}.

\begin{example}\label{eric}
  There is a 2-inflator
  \begin{align*}
    \Dir_{\mathbb{C}}(\mathbb{C}) &\to \Dir_{\mathbb{C}}(\mathbb{C}) \times \Dir_{\mathbb{C}}(\mathbb{C}) \\
    V &\mapsto (V + \overline{V}, V \cap \overline{V}),
  \end{align*}
  where $\overline{V}$ denotes the complex conjugate of $V$.
\end{example}
\begin{example}\label{fiona}
  In fact, both $V + \overline{V}$ and $V \cap \overline{V}$ descend
  to $\Rr$, yielding a 2-inflator
  \begin{align*}
    \Dir_{\mathbb{C}}(\mathbb{C}) &\to \Dir_{\mathbb{R}}(\mathbb{R}) \times \Dir_{\mathbb{R}}(\mathbb{R}) \\
    V &\mapsto (V + \overline{V}, V \cap \overline{V}),
  \end{align*}
\end{example}
\begin{example}\label{gerald}
  Let $K$ be a field with two independent valuation rings $\Oo_1,
  \Oo_2$.  Suppose both residue fields are isomorphic to some field
  $k$.  Then there is a 2-inflator
  \begin{align*}
    \Dir_K(K) &\to \Dir_k(k) \times \Dir_k(k) \\
    V & \mapsto (\varsigma^1(V) + \varsigma^2(V), \varsigma^1(V) \cap \varsigma^2(V))
  \end{align*}
  where $\varsigma^i : \Dir_K(K) \to \Dir_k(k)$ is the 1-inflator induced
  by $\Oo_i$.
\end{example}

\begin{example}\label{harriet}
Let
\begin{equation*}
  \ldots,p_{-2},p_{-1},p_0,p_1,p_2,\ldots
\end{equation*}
be some enumeration of $\{2,3,5,7,11,\ldots\}$.  Let
\begin{equation*}
  K = \Qq(\sqrt{2},\sqrt{3},\sqrt{5},\ldots).
\end{equation*}
For $i \in \Zz$, let $\sigma_i : K \to K$ be the automorphism of $K$
over $\Qq$ characterized by
\begin{equation*}
  \sigma_i(\sqrt{p_j}) = 
  \begin{cases}
    -\sqrt{p_j} & j = i \\
    \sqrt{p_j} & j \ne i
  \end{cases}
\end{equation*}
Extend $\sigma_i$ to a map $K^n \to K^n$ by coordinatewise
application.  Let $\tau_i$ be the map
\begin{align*}
  \Dir_K(K) \times \Dir_K(K) &\to \Dir_K(K) \times \Dir_K(K) \\
  (V,W) &\mapsto (V + \sigma_i(W), V \cap \sigma_i(W))
\end{align*}
Note that
\begin{equation*}
  (V',W') = \tau_i(V,W) \implies \dim(V') + \dim(W') = \dim(V) + \dim(W).
\end{equation*}
For any integers $i \le j$, there is a 2-inflator
$\varsigma^{i,j} : \Dir_K(K) \to \Dir_{K \times K}(K,K)$ given by
\begin{equation*}
  \varsigma^{i,j}(V) = (\tau_i \circ \tau_{i+1} \circ \cdots \circ \tau_j)(V,V)
\end{equation*}
Then we can define $\varsigma : \Dir_K(K) \to \Dir_{K \times K}(K,K)$ by
\begin{equation*}
  \varsigma(V) = \lim_{j \to \infty} \varsigma^{-j,j}(V)
\end{equation*}
The limit is well-defined because if $V$ is defined over
$\Qq(\sqrt{p_i},\sqrt{p_{i+1}},\ldots,\sqrt{p_j})$, then $\tau_k$ has
no effect for $k < i$ or $k > j$.
\end{example}
One can generalize the examples of this section to $d$-inflators for
$d > 2$, by using variants of $\tau$, such as the map
\begin{align*}
  \Dir_K(K^d) &\to \Dir_K(W) \times \Dir_K(K^d/W) \\
  V & \mapsto (V \cap W^n, (V + W^n)/W^n)
\end{align*}
for any $K$-linear subspace $W \subseteq K^d$.  (In the case of
Lemma~\ref{lem:corruptor}, $W$ is the diagonal in $K^2$.)

\section{Algebraic properties of inflators} \label{sec:basic-theory}
\subsection{Basic techniques}\label{sec:basic-tech}
Fix a $d$-inflator on $K$:
\begin{equation*}
  \varsigma : \Dir_K(K) \to \Dir_\Cc(B),
\end{equation*}
where $B$ is semisimple of length $d$.

We shall repeatedly use the following basic facts:
\begin{lemma}\label{duh}
  $\varsigma_n(0) = 0$ and $\varsigma_n(K^n) = B^n$.
\end{lemma}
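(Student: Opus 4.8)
The plan is to derive both identities purely from the inflator axioms, using the length-scaling axiom to pin down the extreme cases. First I would show $\varsigma_n(0) = 0$. The element $0 \in \Sub_K(K^n)$ has $\dim_K(0) = 0$, so the length axiom gives $\ell(\varsigma_n(0)) = d \cdot 0 = 0$. In a semisimple (atomic, finite-length) modular lattice, the only element of length $0$ over $\bot$ is $\bot$ itself, so $\varsigma_n(0) = 0$ in $\Sub_\Cc(B^n)$. Dually, $K^n \in \Sub_K(K^n)$ has $\dim_K(K^n) = n$, so $\ell(\varsigma_n(K^n)) = dn = \ell(B^n)$ (using that $\Dir_\Cc(B)$ has length $d$, hence $B^n$ has length $dn$). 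An element of a finite-length modular lattice whose length over $\bot$ equals the length of the whole lattice must be $\top$, so $\varsigma_n(K^n) = B^n$.

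The only point requiring a small amount of care is the lattice-theoretic fact that, in a modular lattice of finite length $N$, an element $x$ with $\ell(x/\bot) = 0$ satisfies $x = \bot$ and an element with $\ell(x/\bot) = N$ satisfies $x = \top$. For the first: $\ell(x/\bot) = 0$ means the maximal chain from $\bot$ to $x$ has length $0$, i.e. $\bot = x$. For the second: extend a maximal chain from $\bot$ to $x$ (length $N$) to a maximal chain from $\bot$ to $\top$; by Jordan--H\"older this chain also has length $N$, forcing $x = \top$. (One could also invoke that, in a semisimple directory, each $D_n$ is atomic of finite length $dn$, which is already recorded in the definitions above.) Neither order-preservation, $GL_n(K_0)$-equivariance, nor compatibility with $\oplus$ is needed for this lemma.

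I do not expect any genuine obstacle here; this is a warm-up lemma whose entire content is the length axiom plus the triviality that extreme lengths force extreme elements. If one wanted an even slicker argument, one could note that $\varsigma_\bullet$ is a morphism of directories, so $\varsigma_1(0 \oplus \cdots \oplus 0) = \varsigma_1(0) \oplus \cdots \oplus \varsigma_1(0)$ reduces the $n$ case to the $n=1$ case via $\oplus$-compatibility, but invoking the length axiom directly at level $n$ is cleaner and avoids any bootstrapping.
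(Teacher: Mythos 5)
Your argument is correct and is exactly the paper's proof: apply the length-scaling axiom to get $\ell(\varsigma_n(0))=0$ and $\ell(\varsigma_n(K^n))=nd=\ell(B^n)$, then observe that the only subobject of $B^n$ of length $0$ is $0$ and the only one of length $nd$ is $B^n$. Your extra paragraph justifying the lattice-theoretic triviality is fine but not needed beyond what the paper states.
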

\begin{proof}
  The map $\varsigma_n$ must scale lengths by a factor of $d$, so
  \begin{align*}
    \ell(\varsigma_n(0)) &= 0 \\
    \ell(\varsigma_n(K^n)) &= nd.
  \end{align*}
  But $\ell(B^n) = nd$.  The only subobject of $B^n$ having length $0$
  is $0$, and the only subobject having length $nd$ is $B^n$.
\end{proof}
\begin{lemma} \label{basic-tool}
  Let $V, W$ be subspaces of $K^n$.
  \begin{enumerate}
  \item \label{bt1} If
    \begin{equation*}
      \ell(\varsigma_n(V) \cap \varsigma_n(W)) = d \cdot \dim(V \cap W)
    \end{equation*}
    then
    \begin{equation*}
      \varsigma_n(V \cap W) = \varsigma_n(V) \cap \varsigma_n(W).
    \end{equation*}
  \item \label{bt2} Dually, if
    \begin{equation*}
      \ell(\varsigma_n(V) + \varsigma_n(W)) = d \cdot \dim(V + W)
    \end{equation*}
    then
    \begin{equation*}
      \varsigma_n(V + W) = \varsigma_n(V) + \varsigma_n(W).
    \end{equation*}
  \end{enumerate}
\end{lemma}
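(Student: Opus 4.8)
The plan is to deduce both parts from three facts already available: that each $\varsigma_n$ is order-preserving, that $\varsigma_n$ satisfies the length axiom $\ell(\varsigma_n(V)) = d\cdot\dim_K(V)$, and that in a modular lattice of finite length (here $D_n = \Sub_\Cc(B^n)$, which has length $dn$ by semisimplicity of the target directory) length is additive on intervals, so that if $X \le Y$ then $\ell(Y) = \ell(X) + \ell(Y/X)$; in particular $X \le Y$ together with $\ell(X) = \ell(Y)$ forces $X = Y$. Everything will be a one-step application of these observations, so no serious argument is needed.

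For part (\ref{bt1}), I would first note that $V \cap W \subseteq V$ and $V \cap W \subseteq W$, so order-preservation of $\varsigma_n$ gives $\varsigma_n(V \cap W) \subseteq \varsigma_n(V)$ and $\varsigma_n(V \cap W) \subseteq \varsigma_n(W)$, hence $\varsigma_n(V \cap W) \subseteq \varsigma_n(V) \cap \varsigma_n(W)$ inside the lattice $D_n$. By the length axiom, $\ell(\varsigma_n(V \cap W)) = d\cdot\dim_K(V \cap W)$, and by hypothesis this equals $\ell(\varsigma_n(V) \cap \varsigma_n(W))$. So $\varsigma_n(V\cap W)$ is a subobject of $\varsigma_n(V) \cap \varsigma_n(W)$ of the same finite length, forcing equality.

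For part (\ref{bt2}) I would run the mirror-image argument: $V \subseteq V + W$ and $W \subseteq V + W$ give, after applying the order-preserving $\varsigma_n$, the inclusion $\varsigma_n(V) + \varsigma_n(W) \subseteq \varsigma_n(V + W)$; the length axiom gives $\ell(\varsigma_n(V+W)) = d\cdot\dim_K(V+W)$, which by hypothesis equals $\ell(\varsigma_n(V) + \varsigma_n(W))$; and equal finite length forces equality. One could instead try to obtain (\ref{bt2}) from (\ref{bt1}) by passing to opposite categories, but since an inflator is not assumed to be self-dual it is cleaner to simply carry out the symmetric argument.

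There is essentially no obstacle here; the only point worth being explicit about is the implicit appeal to finite length of $D_n$, which is exactly the semisimplicity hypothesis on the target directory, and which is what makes ``subobject of equal length $\implies$ equal'' legitimate. Accordingly I expect the proof to be only a few lines long.
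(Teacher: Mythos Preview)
Your proposal is correct and matches the paper's own proof essentially line for line: use order-preservation to get the inclusion $\varsigma_n(V\cap W)\subseteq \varsigma_n(V)\cap\varsigma_n(W)$, invoke the length-scaling law to see both sides have the same length, and conclude equality; then say part~(\ref{bt2}) is similar. Your explicit remark that finite length of $D_n$ is what makes ``same length plus inclusion $\Rightarrow$ equality'' valid is a detail the paper leaves implicit.
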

\begin{proof}
  By order-preservation, the following holds in general:
  \begin{equation} \label{cap-one-way}
    \varsigma_n(V \cap W) \subseteq \varsigma_n(V) \cap \varsigma_n(W).
  \end{equation}
  Suppose that
  \begin{equation*}
    \ell(\varsigma_n(V) \cap \varsigma_n(W)) = d \cdot \dim(V \cap W)
  \end{equation*}
  holds.  By the length-scaling law, it follows that
  \begin{equation*}
    \ell(\varsigma_n(V) \cap \varsigma_n(W)) = \ell(\varsigma_n(V \cap W)).
  \end{equation*}
  Therefore, equality must hold in (\ref{cap-one-way})---the two sides
  have the same length.  This proves (\ref{bt1}), and (\ref{bt2}) is
  similar.
\end{proof}
Lemma~\ref{basic-tool} can be re-stated as follows: Assume
\begin{align*}
  \varsigma_n(V) &= V' \\
  \varsigma_n(W) &= W'.
\end{align*}
Then
\begin{align*}
  \varsigma_n(V \cap W) & = V' \cap W' \\
  \varsigma_n(V + W) &= V' + W',
\end{align*}
\emph{provided that} the lengths and dimensions satisfy the expected
requirements:
\begin{align*}
  \ell(V' \cap W') &\stackrel{?}{=} d \cdot \dim(V \cap W) \\
  \ell(V' + W') &\stackrel{?}{=} d \cdot \dim(V + W).
\end{align*}
  
\subsection{The fundamental ring and ideal}\label{sec:r-i}
Fix some inflator $\varsigma : K \to \Dir_\Cc(B)$.  For $\alpha \in K$ and
$\varphi \in \End_\Cc(B)$, let
\begin{align*}
  \Theta_\alpha &= \{(x,\alpha x) : x \in K\} = K \cdot (1,\alpha) \\
  \Theta_\varphi &= \{(y,\varphi(y)) : y \in B\}.
\end{align*}
Say that $\alpha \in K$ \emph{specializes to} $\varphi \in \End(B)$ if
\begin{equation*}
  \varsigma_2(\Theta_\alpha) = \Theta_\varphi.
\end{equation*}
\begin{lemma}\label{basic-composition}
  If $\alpha$ specializes to $\varphi$ and $\alpha'$ specializes to
  $\varphi'$, then $\alpha \cdot \alpha'$ specializes to $\varphi
  \circ \varphi'$.
\end{lemma}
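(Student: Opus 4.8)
The plan is to work with the concrete description of $\varsigma_2$ and $\varsigma_3$ applied to various ``graph-like'' subspaces of $K^2$ and $K^3$, and to exploit Lemma~\ref{basic-tool} together with $GL_n(K_0)$-equivariance and $\oplus$-compatibility. The key idea is the standard one for recovering multiplication from a linear structure: the composite $\varphi \circ \varphi'$ is encoded geometrically inside $B^3$ by intersecting $\Theta_{\varphi'} \oplus B$ (a copy of the graph of $\varphi'$ in the first two coordinates) with $B \oplus \Theta_\varphi$ (a copy of the graph of $\varphi$ in the last two coordinates), and then projecting to the outer coordinates. One has the analogous description on the $K$ side: the line $K\cdot(1,\alpha',\alpha\alpha')$ in $K^3$ is exactly $(\Theta_{\alpha'} \oplus K) \cap (K \oplus \Theta_\alpha)$, where here I abuse notation slightly for the appropriate reindexing of coordinates.

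Concretely, first I would set $U = K\cdot(1,\alpha') \oplus K = \{(x, \alpha' x, z) : x,z \in K\} \subseteq K^3$ and $U' = K \oplus K\cdot(1,\alpha) = \{(x, y, \alpha y) : x, y \in K\} \subseteq K^3$, so that $U \cap U' = K\cdot(1,\alpha',\alpha\alpha')$, a line. I would compute $\varsigma_3(U) = \Theta_{\varphi'} \oplus B$ and $\varsigma_3(U') = B \oplus \Theta_\varphi$ using $\oplus$-compatibility together with the hypotheses $\varsigma_2(\Theta_{\alpha'}) = \Theta_{\varphi'}$ and $\varsigma_2(\Theta_\alpha) = \Theta_\varphi$ (possibly after applying a coordinate permutation from $\mathcal{S}_3 \subseteq GL_3(K_0)$ to align the factors, using equivariance). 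Then I would check that $\varsigma_3(U) \cap \varsigma_3(U') = \Theta_{\varphi' , \varphi}$, the set $\{(y, \varphi'(y), \varphi(\varphi'(y))) : y \in B\}$, which has length $d$ (it is isomorphic to $B$ as a subobject of $B^3$). Since $\dim_K(U \cap U') = 1$, the length condition $\ell(\varsigma_3(U) \cap \varsigma_3(U')) = d \cdot \dim_K(U \cap U') = d$ is satisfied, so Lemma~\ref{basic-tool}.\ref{bt1} gives $\varsigma_3(U \cap U') = \varsigma_3(U) \cap \varsigma_3(U')$, i.e.\ $\varsigma_3(K\cdot(1,\alpha',\alpha\alpha')) = \Theta_{\varphi',\varphi}$.

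Finally, I would project back down to coordinates $1$ and $3$. The linear projection $K^3 \to K^2$, $(x,y,z) \mapsto (x,z)$, is not in $GL$, but one can instead use a monomorphism/epimorphism: compose an interval retract with an equivariant map, or more simply apply $\varsigma_2$ to the image. The cleanest route: note that the coordinate inclusion and the coordinate projection are morphisms in $\Cc$ (and on the $K$ side), so pushforward/pullback along them — as in Example~\ref{from-morphism} — commute appropriately with $\varsigma$; pushing $\Theta_{\varphi',\varphi} \subseteq B^3$ forward along $(y_1,y_2,y_3)\mapsto(y_1,y_3)$ yields $\Theta_{\varphi\circ\varphi'}$, and pushing $K\cdot(1,\alpha',\alpha\alpha')$ forward along $(x_1,x_2,x_3)\mapsto(x_1,x_3)$ yields $\Theta_{\alpha\alpha'}$. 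Hence $\varsigma_2(\Theta_{\alpha\alpha'}) = \Theta_{\varphi\circ\varphi'}$, which is the claim.

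The main obstacle I anticipate is purely bookkeeping: $GL_n(K_0)$-equivariance only handles \emph{invertible} $K_0$-linear reindexings, so the ``forget a coordinate'' projections must be handled by appealing to the pushforward-along-an-epimorphism formalism of Example~\ref{from-morphism} rather than by naive equivariance, and one must confirm that $\varsigma$ (a directory morphism) is compatible with these pushforwards — or, alternatively, that the relevant intersections and projections can all be rearranged into $\oplus$'s and $GL$-actions after passing to larger ambient powers. Verifying the set-theoretic identities $U \cap U' = K\cdot(1,\alpha',\alpha\alpha')$ and the corresponding $B^3$-identity, and checking the length of $\Theta_{\varphi',\varphi}$ is exactly $d$, are routine linear algebra and should be stated but not belabored.
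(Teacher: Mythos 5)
Your first two steps match the paper's proof almost exactly: you realize the composite as the intersection of two ``graph cylinders'' $U = \Theta_{\alpha'}\oplus K$ and $U' = K\oplus\Theta_\alpha$ (after a permutation), check that $U\cap U'$ is a line so that the length-scaling law forces $\ell(\varsigma_3(U)\cap\varsigma_3(U')) = d$, and invoke Lemma~\ref{basic-tool}.\ref{bt1} to commute $\varsigma_3$ with the intersection. That part is correct and is the paper's argument.

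The gap is in your final step. Your primary route is to push forward along the coordinate projection $(x,y,z)\mapsto(x,z)$ and to hope that $\varsigma$ ``commutes with these pushforwards.'' It does not, and nothing in the axioms gives you this: an inflator is only a directory morphism, so it is only guaranteed to respect the order, the $GL_n(K_0)$-action, and $\oplus$; it is not required to intertwine the pushforward $f_*:\Dir_K(K^3)\to\Dir_K(K^2)$ on the source with the corresponding pushforward on the target (indeed $\varsigma$ is not even a lattice homomorphism, so commuting with images of non-invertible maps is exactly the kind of thing that can fail, and the length-scaling law gives you no leverage here because a projection can drop dimension). The correct repair is the alternative you mention only parenthetically, and it is what the paper does: instead of deleting the middle coordinate, you \emph{fill it in} by joining with $0\oplus K\oplus 0$. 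This join is licensed by Lemma~\ref{basic-tool}.\ref{bt2}, since $\varsigma_3(0\oplus K\oplus 0)=0\oplus B\oplus 0$ by Lemma~\ref{duh} and $\oplus$-compatibility, and the sum on the $K$-side has dimension $2$ while the sum on the $B$-side has length $2d$. Having made the middle coordinate full, you permute it to the last slot by $GL_3(K_0)$-equivariance and cancel the resulting $\oplus\,B$ factor using $\oplus$-compatibility and the injectivity of $X\mapsto X\oplus B$ on subobjects. With that substitution for your last paragraph, the proof is complete.
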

\begin{proof}
  By compatibility with $\oplus$ we have
  \begin{align*}
    \varsigma_3(\{(x,\alpha x, y) ~|~ x, y \in K\}) &= \{(x,\varphi
    x, y) ~|~ x, y \in B\} \\
    \varsigma_3(\{(w,x, \alpha' x) ~|~ w, x \in K\}) &= \{(w,x,
    \varphi' x) ~|~ w, x \in B\}
  \end{align*}
  By Lemma~\ref{basic-tool}.\ref{bt1},
  \begin{equation*}
    \varsigma_3(\{(x, \alpha x, \alpha' \alpha x) ~|~ x \in K\}) =
    \{(x, \varphi x, \varphi' \varphi x) ~|~ x \in B\}.
  \end{equation*}
  Meanwhile, by Lemma~\ref{duh} and $\oplus$-compatibility,
  \begin{equation*}
    \varsigma_3(\{(0,y,0) ~|~ y \in K\}) = \varsigma_3(0 \oplus K
    \oplus 0) = \{(0,y,0)~|~ y \in B\}.
  \end{equation*}
  By Lemma~\ref{basic-tool}.\ref{bt2}, it follows that
  \begin{equation*}
    \varsigma_3(\{(x, y, \alpha' \alpha x) ~|~ x, y \in K\}) =
    \{(x,y, \varphi' \varphi x) ~|~ x, y \in B\}.
  \end{equation*}
  By equivariance under $GL_3(K_0)$, we can permute the coordinates:
  \begin{equation*}
    \varsigma_3(\{(x, \alpha' \alpha x, y) ~|~ x, y \in K\}) =
    \{(x, \varphi' \varphi x, y) ~|~ x, y \in B\}.
  \end{equation*}
  By $\oplus$-compatibility, this implies
  \begin{equation*}
    \varsigma_2(\{(x,\alpha' \alpha x) ~|~ x \in K\}) \oplus B =
    \{(x, \varphi' \varphi x) ~|~ x \in B\} \oplus B.
  \end{equation*}
  Therefore, the desired identity must hold:
  \begin{equation*}
    \varsigma_2(\{(x, \alpha' \alpha x) ~|~ x \in K\}) =
    \{(x, \varphi' \varphi x) ~|~ x \in B\}. \qedhere
  \end{equation*}
\end{proof}
\begin{lemma}
  If $\alpha$ specializes to $\varphi$ and $\alpha'$ specializes to
  $\varphi'$, then $\alpha + \alpha'$ specializes to $\varphi +
  \varphi'$.
\end{lemma}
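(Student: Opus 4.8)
The plan is to transcribe the proof of Lemma~\ref{basic-composition} with the multiplicative manoeuvres replaced by additive ones. All the real work happens inside $\Sub(K^3)$ and $\Sub(B^3)$: $\oplus$-compatibility moves data between level $3$ and level $2$, $GL_3(K_0)$-equivariance performs linear coordinate changes, and Lemma~\ref{basic-tool} computes $\varsigma_3$ of an intersection or a sum once the expected length count is verified. We assume throughout that $\alpha$ specializes to $\varphi$ and $\alpha'$ to $\varphi'$, i.e. $\varsigma_2(\Theta_\alpha) = \Theta_\varphi$ and $\varsigma_2(\Theta_{\alpha'}) = \Theta_{\varphi'}$.

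First I would record, exactly as in the opening of Lemma~\ref{basic-composition}, the two identities
\begin{align*}
  \varsigma_3(\{(x,\alpha x,y) : x,y \in K\}) &= \{(x,\varphi x,y) : x,y \in B\}, \\
  \varsigma_3(\{(x,y,\alpha' x) : x,y \in K\}) &= \{(x,y,\varphi' x) : x,y \in B\},
\end{align*}
where the first comes from $\varsigma_2(\Theta_\alpha) = \Theta_\varphi$ via $\oplus$-compatibility with a copy of $K$, and the second from $\varsigma_2(\Theta_{\alpha'}) = \Theta_{\varphi'}$ the same way plus the transposition $(2\ 3) \in GL_3(K_0)$. The two left-hand subspaces meet in the line $L := \{(x,\alpha x,\alpha' x) : x \in K\}$, which is $1$-dimensional, and the two right-hand subobjects meet in $\{(x,\varphi x,\varphi' x) : x \in B\}$, an isomorphic copy of $B$, hence of length $d = d\cdot\dim(L)$. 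So the hypothesis of Lemma~\ref{basic-tool}.\ref{bt1} holds and gives $\varsigma_3(L) = \{(x,\varphi x,\varphi' x) : x \in B\}$.

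Next I would let $\mu \in GL_3(K_0)$ be the elementary matrix adding the third coordinate to the second, so that $GL_3(K_0)$-equivariance turns the previous identity into
\begin{equation*}
  \varsigma_3(\{(x,(\alpha+\alpha')x,\alpha' x) : x \in K\}) = \{(x,(\varphi+\varphi')x,\varphi' x) : x \in B\};
\end{equation*}
here one uses that $\varphi x + \varphi' x = (\varphi+\varphi')(x)$ in $\End(B)$. Adding the subspace $0 \oplus 0 \oplus K$, whose $\varsigma_3$-image is $0 \oplus 0 \oplus B$ by Lemma~\ref{duh} and $\oplus$-compatibility, the $K$-side sum is $\{(x,(\alpha+\alpha')x,z) : x,z \in K\}$, which is $2$-dimensional, and the $B$-side sum is $\{(x,(\varphi+\varphi')x,z) : x,z \in B\}$, of length $2d$; so Lemma~\ref{basic-tool}.\ref{bt2} applies and yields
\begin{equation*}
  \varsigma_3(\{(x,(\alpha+\alpha')x,z) : x,z \in K\}) = \{(x,(\varphi+\varphi')x,z) : x,z \in B\}.
\end{equation*}
The left side is $\Theta_{\alpha+\alpha'} \oplus K$ and the right side is $\Theta_{\varphi+\varphi'} \oplus B$, so $\oplus$-compatibility gives $\varsigma_2(\Theta_{\alpha+\alpha'}) \oplus B = \Theta_{\varphi+\varphi'} \oplus B$ inside $B^3$; cancelling the common $\oplus B$ summand (a subobject of $B^2 \oplus B$ of the shape $X \oplus B$ determines $X$) leaves $\varsigma_2(\Theta_{\alpha+\alpha'}) = \Theta_{\varphi+\varphi'}$, which is the assertion.

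I do not expect a genuine obstacle: this is a near-verbatim adaptation of Lemma~\ref{basic-composition} with ``$+$'' in place of ``$\circ$''. The only slightly fiddly points are the two length/dimension checks that license Lemma~\ref{basic-tool} — that $\{(x,\varphi x,\varphi' x) : x \in B\}$ has length exactly $d$ (it is an isomorphic copy of $B$) and that $\{(x,(\varphi+\varphi')x,z) : x,z \in B\}$ has length exactly $2d$ — and the harmless observation that the middle coordinate change lies in $GL_3(K_0)$ rather than merely in the symmetric-group subgroup, so that it is the full $GL$-equivariance of a directory morphism that is being used.
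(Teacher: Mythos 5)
Your proof is correct and follows essentially the same route as the paper's: both first establish $\varsigma_3(\{(x,\alpha x,\alpha' x)\}) = \{(x,\varphi x,\varphi' x)\}$ and then use $GL_n(K_0)$-equivariance to manufacture the sum, followed by Lemma~\ref{duh}, Lemma~\ref{basic-tool}.\ref{bt2}, and $\oplus$-compatibility to strip away the auxiliary coordinates. The only (cosmetic) difference is that you stay in $K^3$ by overwriting the second coordinate with an elementary matrix, whereas the paper passes to $K^4$ and writes $\alpha x + \alpha' x$ into a fresh fourth coordinate.
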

\begin{proof}
  Using a method similar to the proof of
  Lemma~\ref{basic-composition}, we see that
  \begin{equation*}
    \varsigma_3(\{(x,\alpha x, \alpha' x) ~|~ x \in K\}) =
    \{(x,\varphi x, \varphi' x) ~|~ x \in B\}.
  \end{equation*}
  By $\oplus$-compatibility,
  \begin{equation*}
    \varsigma_4(\{(x,\alpha x, \alpha' x, 0) ~|~ x \in K\}) =
    \{(x,\varphi x, \varphi' x, 0) ~|~ x \in B\}.
  \end{equation*}
  By $GL_4(K_0)$-equivariance,
  \begin{equation*}
    \varsigma_4(\{(x,\alpha x, \alpha' x, \alpha x + \alpha' x) ~|~ x \in K\}) =
    \{(x,\varphi x, \varphi' x, \varphi x + \varphi' x) ~|~ x \in B\}.
  \end{equation*}
  Adding in $0 \oplus K \oplus K \oplus 0$, we see that
  \begin{equation*}
    \varsigma_4(\{(x,y, z, \alpha x + \alpha' x) ~|~ x,y,z \in K\}) =
    \{(x,y, z, \varphi x + \varphi' x) ~|~ x,y,z \in B\}.
  \end{equation*}
  Permuting coordinates and splitting things off via
  $\oplus$-compatibility, we get the desired identity
  \begin{equation*}
    \varsigma_2(\{(x,\alpha x + \alpha' x) ~|~ x \in K\}) =
    \{(x, \varphi x + \varphi' x) ~|~ x \in B\}. \qedhere
  \end{equation*}
\end{proof}
\begin{lemma}
  If $\alpha \in K_0$, then $\alpha$ specializes to $\alpha$ (i.e., to
  $\alpha$ times $id_B$).
\end{lemma}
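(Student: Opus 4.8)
The plan is to realize $\Theta_\alpha$ as the image of the ``standard'' line $K \oplus 0$ under an element of $GL_2(K_0)$, and then push this description through $\varsigma_2$ using equivariance. Concretely, since $\alpha \in K_0$, the shear matrix
\begin{equation*}
  \mu = \begin{pmatrix} 1 & 0 \\ \alpha & 1 \end{pmatrix}
\end{equation*}
lies in $GL_2(K_0)$, and $\mu$ carries $(x,0) \mapsto (x, \alpha x)$, so $\mu \cdot (K \oplus 0) = \Theta_\alpha$. The same matrix $\mu$ acts on $B^2$ by $(y_1,y_2) \mapsto (y_1, \alpha y_1 + y_2)$, hence $\mu \cdot (B \oplus 0) = \Theta_{\alpha \cdot \id_B}$.

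The key steps, in order, are: (1) compute $\varsigma_2(K \oplus 0)$ using $\oplus$-compatibility together with Lemma~\ref{duh}, giving $\varsigma_2(K \oplus 0) = \varsigma_1(K) \oplus \varsigma_1(0) = B \oplus 0$; (2) apply $GL_2(K_0)$-equivariance of $\varsigma_2$ to the identity $\Theta_\alpha = \mu \cdot (K \oplus 0)$, obtaining
\begin{equation*}
  \varsigma_2(\Theta_\alpha) = \varsigma_2(\mu \cdot (K \oplus 0)) = \mu \cdot \varsigma_2(K \oplus 0) = \mu \cdot (B \oplus 0);
\end{equation*}
(3) identify $\mu \cdot (B \oplus 0)$ with $\Theta_{\alpha \cdot \id_B}$ by the matrix computation above. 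Combining these gives $\varsigma_2(\Theta_\alpha) = \Theta_{\alpha \cdot \id_B}$, which is exactly the assertion that $\alpha$ specializes to $\alpha \cdot \id_B$.

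There is essentially no obstacle here; the only point requiring a moment's care is to confirm that the $GL_2(K_0)$-action on the directory $\Dir_\Cc(B)$ is the one induced by $\mu$ acting linearly on $B^2$ (so that the \emph{same} matrix $\mu$ does the right thing on both sides), which is immediate from Definition~\ref{def:the-directory}. Unlike the previous two lemmas, no appeal to Lemma~\ref{basic-tool} or coordinate-adding tricks is needed---everything follows from bare equivariance plus the trivial computation of $\varsigma$ on $K \oplus 0$.
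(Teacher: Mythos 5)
Your proposal is correct and is essentially identical to the paper's own proof: both compute $\varsigma_2(K \oplus 0) = B \oplus 0$ via $\oplus$-compatibility and then apply $GL_2(K_0)$-equivariance with the same shear matrix $\begin{pmatrix} 1 & 0 \\ \alpha & 1 \end{pmatrix}$. No issues.
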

\begin{proof}
  By $\oplus$-compatibility,
  \begin{equation*}
    \varsigma_2(\{(x,0) ~|~ x \in K\}) = \{(x,0) ~|~ x \in B\}.
  \end{equation*}
  Applying the matrix $\begin{pmatrix} 1 & 0 \\ \alpha &
    1\end{pmatrix}$, the $GL_2(K_0)$-equivariance implies
  \begin{equation*}
    \varsigma_2(\{(x,\alpha x) ~|~ x \in K\}) = \{(x, \alpha x) ~|~ x \in B\}. \qedhere
  \end{equation*}  
\end{proof}
\begin{lemma}\label{o-i-jac}
  If $\alpha \in K$ specializes to an automorphism $\varphi \in
  \Aut(B)$, then $\alpha^{-1}$ specializes to $\varphi^{-1}$.
\end{lemma}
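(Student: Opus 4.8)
The plan is to deduce this from $GL_2(K_0)$-equivariance, using the permutation matrix $s \in GL_2(K_0)$ that swaps the two coordinates of $K^2$ (and, via $\varsigma_2$, of $B^2$). First I would note that $\alpha \neq 0$: applying the lemma on $K_0$-scalars above to $0 \in K_0$ shows that $0$ specializes to the zero endomorphism of $B$, whose graph equals $\Theta_{\mathrm{id}_B}$ only when $B = 0$; so, setting aside the trivial case $B = 0$, the hypothesis forces $\alpha \neq 0$ and $\alpha^{-1}$ makes sense.

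Next I would identify what $s$ does to the relevant lines. On the field side, $s \cdot \Theta_\alpha = \{(\alpha x, x) : x \in K\}$, and the substitution $y = \alpha x$ (a bijection of $K$, since $\alpha$ is a unit) rewrites this as $\Theta_{\alpha^{-1}}$. On the module side, $s \cdot \Theta_\varphi = \{(\varphi(y), y) : y \in B\}$, and the substitution $z = \varphi(y)$ (a bijection of $B$, since $\varphi$ is an automorphism) rewrites this as $\Theta_{\varphi^{-1}}$.

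The computation is then immediate:
\[
  \varsigma_2(\Theta_{\alpha^{-1}}) = \varsigma_2(s \cdot \Theta_\alpha) = s \cdot \varsigma_2(\Theta_\alpha) = s \cdot \Theta_\varphi = \Theta_{\varphi^{-1}},
\]
where the middle equality is $GL_2(K_0)$-equivariance of $\varsigma_2$, the first and last use the two rewritings above, and the third is the hypothesis $\varsigma_2(\Theta_\alpha) = \Theta_\varphi$. By definition this says exactly that $\alpha^{-1}$ specializes to $\varphi^{-1}$.

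There is no real obstacle here: the only points needing care are the degenerate case $B = 0$ and verifying that the two substitutions are genuine bijections, which is precisely where invertibility of $\alpha$ and of $\varphi$ is used. One could instead try to argue via Lemma~\ref{basic-composition} applied to $\alpha \cdot \alpha^{-1} = 1$, but that would presuppose that $\alpha^{-1}$ already specializes to something; the swap-matrix argument avoids this and is self-contained.
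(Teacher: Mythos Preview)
Your proposal is correct and takes essentially the same approach as the paper: both use $GL_2(K_0)$-equivariance with the coordinate-swap matrix $\begin{pmatrix} 0 & 1 \\ 1 & 0 \end{pmatrix}$. The paper's proof is a one-line sketch, and you have simply filled in the details (including the harmless degenerate case $B=0$); your phrase ``whose graph equals $\Theta_{\mathrm{id}_B}$'' is a bit garbled---what you really want is that $0$ can only specialize to an automorphism when $B=0$---but the conclusion is right.
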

\begin{proof}
  This follows by $GL_2(K_0)$-equivariance using the matrix
  \begin{equation*}
    \begin{pmatrix}
      0 & 1 \\
      1 & 0
    \end{pmatrix} \qedhere
  \end{equation*}
\end{proof}
Putting together all the lemmas yields the
\begin{proposition} \label{o-i}
  Let $R$ be the set of $x \in K$ specializing to any endomorphism of
  $B$.  Let $I$ be the set of $x \in K$ specializing to 0.  Let
  \begin{equation*}
    \widehat{\res} : R \to \End(B)
  \end{equation*}
  be the map sending $x$ to $\varphi$ if $x$ specializes to $\varphi$.
  \begin{enumerate}
  \item $R$ is a $K_0$-subalgebra of $K$.
  \item $\widehat{\res} : R \to \End(B)$ is a $K_0$-algebra homomorphism.
  \item $I$ is the kernel of $\widehat{\res}$, so $I$ is an ideal in
    $R$.
  \item \label{o-i-jac2} $I$ is contained in the Jacobson radical of
    $R$.
  \end{enumerate}
\end{proposition}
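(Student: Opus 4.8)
The plan is to bootstrap the four preceding lemmas into the statement, after two preliminary observations. First, specialization is a \emph{partial function}: if $\alpha \in K$ specializes to both $\varphi$ and $\varphi'$, then $\Theta_\varphi = \varsigma_2(\Theta_\alpha) = \Theta_{\varphi'}$, and a graph determines its function, so $\varphi = \varphi'$; hence $\widehat{\res}$ is well-defined as a (partial) map. Second, $0$ specializes to $0$ (by $\oplus$-compatibility and Lemma~\ref{duh}, $\varsigma_2(K \oplus 0) = B \oplus 0 = \Theta_0$), and $1$ specializes to $\id_B$ (the lemma that elements of $K_0$ specialize to themselves, applied to $\alpha = 1$).

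Parts (1), (2), and (3) are then mostly repackaging. Closure of $R$ under addition and additivity of $\widehat{\res}$ are exactly the additivity lemma above; closure of $R$ under multiplication and multiplicativity of $\widehat{\res}$ are Lemma~\ref{basic-composition}, recalling that the ring product on $\End_\Cc(B)$ is composition; and $K_0 \subseteq R$ with $\widehat{\res}(\lambda) = \lambda \cdot \id_B$ for $\lambda \in K_0$ is the $K_0$-lemma. For $K_0$-linearity, given $\lambda \in K_0$ and $x \in R$ specializing to $\varphi$, Lemma~\ref{basic-composition} shows $\lambda x$ specializes to $(\lambda \cdot \id_B) \circ \varphi = \lambda \varphi$. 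Together with $\widehat{\res}(1) = \id_B$ this gives that $R$ is a $K_0$-subalgebra of $K$ and $\widehat{\res} : R \to \End_\Cc(B)$ a unital $K_0$-algebra homomorphism. For (3), an element specializing to $0$ in particular specializes to an endomorphism, so $I \subseteq R$, and visibly $I = \widehat{\res}^{-1}(0) = \ker \widehat{\res}$; the kernel of a ring homomorphism out of the commutative ring $R$ is an ideal of $R$.

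For part (4) I would use the characterization $J(R) = \{x \in R : 1 - rx \in R^\times \textrm{ for all } r \in R\}$, valid since $R$ is commutative. Fix $x \in I$ and $r \in R$. Then $rx \in I$ and $-rx \in I$ by part (3), so $-rx$ specializes to $0$, and by additivity $1 - rx$ specializes to $\id_B + 0 = \id_B$, which is an automorphism of $B$. Note $1 - rx \neq 0$ in $K$: otherwise $1 \in I$ would specialize to $0$, contradicting $1 \mapsto \id_B$ unless $B = 0$ (a degenerate case, $d = 0$, which we set aside). Hence $1 - rx$ is invertible in $K$, and Lemma~\ref{o-i-jac} gives that $(1 - rx)^{-1}$ specializes to $\id_B^{-1} = \id_B$; in particular $(1 - rx)^{-1} \in R$, so $1 - rx \in R^\times$. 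As $r \in R$ was arbitrary, $x \in J(R)$, which proves $I \subseteq J(R)$.

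There is no serious obstacle here: the substance is entirely contained in the four lemmas, and the rest is bookkeeping. The points that need a little care are the well-definedness remark (the partial-function property), matching the ring multiplication on $\End_\Cc(B)$ with composition so that $\widehat{\res}$ is genuinely multiplicative, invoking the correct description of the Jacobson radical in part (4), and observing that the case $B = 0$ must be excluded (or handled trivially).
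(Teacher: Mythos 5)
Your proof is correct and follows essentially the same route as the paper: parts (1)--(3) are direct repackagings of the preceding lemmas, and part (4) reduces to showing $1 + I \subseteq R^\times$ (your ``$1 - rx$ for all $r \in R$'' version is equivalent since $I$ is already an ideal) via Lemma~\ref{o-i-jac}. Your extra care about well-definedness, the nonvanishing of $1 - rx$, and the degenerate case $B = 0$ goes slightly beyond the paper's one-line proof but does not change the argument.
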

\begin{proof}
  The only subtle point is the last one.  By basic commutative
  algebra, it suffices to show that $1 + I \subseteq R^\times$,
  which follows from Lemma~\ref{o-i-jac}.
\end{proof}
Note that the properties of $R$ and $I$ are analogous to the
properties of the ring $R_J$ and ideal $I_J$ appearing in
(\cite{prdf}, Proposition~10.15).  In fact, the $R_J$ and $I_J$ of
\cite{prdf} are the fundamental ring and ideal of a specific inflator
we will construct later in Theorem~\ref{yeah-main}.

\begin{definition}
  If $\varsigma$ is a $d$-inflator, the \emph{fundamental ring} and
  \emph{fundamental ideal} of $\varsigma$ are the ring $R$ and ideal
  $I$ appearing in Proposition~\ref{o-i}.  The natural map
  \begin{equation*}
    \widehat{\res} : R \to \End(B)
  \end{equation*}
  is the \emph{generalized residue map}; its kernel is $I$.
\end{definition}
\begin{lemma}\label{o-alt-criterion}
  Fix an inflator $K \to \Dir_\Cc(B)$.  Let $\alpha$ be an element of
  $K$, and let $\Theta_\alpha = K \cdot (1, \alpha)$.  The following
  are equivalent:
  \begin{itemize}
  \item $\alpha \in R$, i.e., $\varsigma_2(\Theta_\alpha)$ is the graph of
    an endomorphism on $B$.
  \item $\varsigma_2(\Theta_\alpha) + (0 \oplus B) = B \oplus B$, i.e., the projection
    of $\varsigma_2(\Theta_\alpha)$ onto the first coordinate is onto.
  \item $\varsigma_2(\Theta_\alpha) \cap (0 \oplus B) = 0 \oplus 0$, i.e., $\varsigma_2(\Theta_\alpha)$ is
    the graph of a partial endomorphism.
  \end{itemize}
\end{lemma}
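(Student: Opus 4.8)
The plan is to translate all three conditions into statements about a single morphism and then invoke the length‑scaling axiom. Set $P := \varsigma_2(\Theta_\alpha)$, a subobject of $B \oplus B$. Since $\Theta_\alpha = K \cdot (1,\alpha)$ is one‑dimensional over $K$, the length‑scaling axiom gives $\ell(P) = d \cdot \dim_K(\Theta_\alpha) = d = \ell(B)$. Let $\pi_1 \colon B \oplus B \to B$ be the first projection, with $\ker \pi_1 = 0 \oplus B$; write $\iota \colon P \hookrightarrow B \oplus B$ for the inclusion and put $g := \pi_1 \circ \iota \colon P \to B$.

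First I would set up the dictionary between the bullets of the lemma and properties of $g$. Since $\ker g = P \cap \ker \pi_1 = P \cap (0 \oplus B)$, the map $g$ is a monomorphism exactly when the third bullet holds. Since $\pi_1$ induces an order isomorphism from the subobjects of $B \oplus B$ containing $\ker \pi_1 = 0 \oplus B$ onto the subobjects of $B$, we have $\img g = \pi_1(P) = B$ exactly when $P + (0 \oplus B) = B \oplus B$, so $g$ is an epimorphism exactly when the second bullet holds. Finally, $g$ is an isomorphism exactly when the first bullet holds: if $g$ is an isomorphism and $\varphi := \pi_2 \circ \iota \circ g^{-1}$, then $\iota \circ g^{-1} = (\id_B,\varphi)$, whence $P = \img \iota = \img(\iota \circ g^{-1}) = \Theta_\varphi$; conversely, if $P = \Theta_\varphi = \img(\id_B,\varphi)$ for some $\varphi \in \End(B)$, then $\pi_1 \circ (\id_B,\varphi) = \id_B$ identifies $g$ with $\id_B$. (This is just the standard description of graphs of morphisms in an abelian category.)

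The main step is then the length argument. Because $\ell(P) = \ell(B) < \infty$, the short exact sequences $0 \to \ker g \to P \to \img g \to 0$ and $0 \to \img g \to B \to \coker g \to 0$ give $\ell(\ker g) = \ell(P) - \ell(\img g) = \ell(B) - \ell(\img g) = \ell(\coker g)$. Since an object of length $0$ in a finite‑length category is the zero object (we are working inside the neighborhood of $B$, where every object has finite length), $\ker g = 0$ if and only if $\coker g = 0$; that is, $g$ is mono iff it is epi, and when both hold $g$ is an isomorphism. Combining this with the dictionary of the previous paragraph yields the equivalence of all three conditions. I do not expect a genuine obstacle here; the only point that needs care is the identification of ``$P$ is the graph of an endomorphism'' with ``$g$ is an isomorphism,'' and the harmless observation that the ambient category has finite length so that the length bookkeeping is legitimate.
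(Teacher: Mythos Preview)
Your proof is correct and follows essentially the same approach as the paper: the first bullet is equivalent to the conjunction of the other two, and the length-scaling law $\ell(\varsigma_2(\Theta_\alpha)) = d = \ell(B)$ forces the second and third bullets to be equivalent. You simply make this explicit by introducing the projection $g \colon P \to B$ and translating the three bullets into $g$ being iso/epi/mono, which is a nice way to spell out what the paper's terse two-sentence argument is doing.
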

\begin{proof}
  The first condition is equivalent to the conjunction of the second
  and third conditions.  By the length-scaling law, $\varsigma_2(\Theta_\alpha)$ has
  half the length of $B \oplus B$.  This ensures that the second and
  third conditions are, in fact, equivalent.
\end{proof}
\begin{lemma}\label{i-alt-criterion}
  Fix an inflator $K \to \Dir_\Cc(B)$.  Let $\alpha$ be an element of
  $K$, and let $\Theta_\alpha = K \cdot (1, \alpha)$.  The following
  are equivalent:
  \begin{itemize}
  \item $\alpha \in I$, i.e., $\varsigma_2(\Theta_\alpha) = B \oplus 0$
  \item $\varsigma_2(\Theta_\alpha) \supseteq B \oplus 0$.
  \item $\varsigma_2(\Theta_\alpha) \subseteq B \oplus 0$.
  \end{itemize}
\end{lemma}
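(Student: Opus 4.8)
The plan is to run the same length-counting argument already used for Lemma~\ref{o-alt-criterion}. The implication from the first bullet to the other two is immediate: if $\varsigma_2(\Theta_\alpha) = B \oplus 0$, then certainly $\varsigma_2(\Theta_\alpha) \supseteq B \oplus 0$ and $\varsigma_2(\Theta_\alpha) \subseteq B \oplus 0$. So all the content is in the two converses, and both come down to comparing lengths.

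First I would record the relevant length computations. The line $\Theta_\alpha = K \cdot (1,\alpha)$ is a one-dimensional $K$-subspace of $K^2$, so by the length-scaling axiom for a $d$-inflator, $\ell(\varsigma_2(\Theta_\alpha)) = d \cdot \dim_K(\Theta_\alpha) = d$. On the other side, $B \oplus 0$ is a subobject of $B^2$ isomorphic to $B$, hence of length $\ell(B) = d$ as well. Since $\varsigma$ takes values in a semisimple directory, $\Sub_\Cc(B^2)$ is a modular lattice of finite length, and in such a lattice a containment $X \subseteq Y$ with $\ell(X) = \ell(Y)$ forces $X = Y$. (This is the same mechanism underlying Lemma~\ref{duh} and Lemma~\ref{basic-tool}.)

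Now, if $\varsigma_2(\Theta_\alpha) \supseteq B \oplus 0$, then both sides have length $d$, so equality holds: $\varsigma_2(\Theta_\alpha) = B \oplus 0$, i.e.\ $\alpha$ specializes to $0$ and $\alpha \in I$. Dually, if $\varsigma_2(\Theta_\alpha) \subseteq B \oplus 0$, comparing the length $d$ of $\varsigma_2(\Theta_\alpha)$ with the length $d$ of $B \oplus 0$ again gives equality. This closes the cycle of implications. There is no real obstacle here; the only points to be careful about are to invoke the length-scaling axiom for $\varsigma_2$ and the identity $\ell(B \oplus 0) = \ell(B) = d$ inside $\Sub_\Cc(B^2)$, exactly in parallel with the proof of Lemma~\ref{o-alt-criterion}.
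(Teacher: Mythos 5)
Your proof is correct and matches the paper's intended argument: the paper simply says the proof is ``similar to Lemma~\ref{o-alt-criterion}, but easier,'' and the length-counting you carry out — $\ell(\varsigma_2(\Theta_\alpha)) = d = \ell(B \oplus 0)$, so any containment in either direction forces equality — is exactly that argument spelled out.
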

\begin{proof}
  Similar to Lemma~\ref{o-alt-criterion}, but easier.
\end{proof}

\subsection{Examples}\label{sec:r-i-ex}
\begin{example}\label{aaron-ring}
  Let $K \hookrightarrow L$ be a ($K_0$-linear) embedding of $K$ into
  a division ring $L$.  As in \S\ref{sec:aaron}, this gives a 1-inflator
  \begin{align*}
    \varsigma: \Sub_K(K^n) & \to \Sub_L(L^n) \\
    V & \mapsto L \otimes_K V.
  \end{align*}
  If $a \in K$ and $\Theta_a = K \cdot (1,a)$, then
  $\varsigma(\Theta_a)$ is just $L \cdot (1,a)$.  In particular, the
  fundamental ring of $\varsigma$ is all of $K$, and the generalized residue
  map is the embedding of $K$ into $L$.
\end{example}
\begin{example}\label{beth-ring}
  Let $[L : K]$ be a finite field extension of degree $d$ and let
  $\varsigma : \Dir_L(L) \to \Dir_K(L)$ be the restriction of
  scalars $d$-inflator, as in \S\ref{sec:beth}.  For any $a
  \in L$, let $\Theta_a = L \cdot (1,a)$.  Then $\varsigma(\Theta_a)$
  is the graph of the $K$-linear map $L \to L$ induced by
  multiplication by $a$.  Thus the fundamental ring of $\varsigma$ is
  all of $L$, and the generalized residue map is the obvious forgetful map
  $L \to \End_K(L)$.
\end{example}
\begin{example}\label{calvin-ring}
  Let $\Oo$ be a valuation ring on a field $K$, and let
  \begin{align*}
    \varsigma : \Dir_K(K) &\to \Dir_k(k) \\
    \varsigma_n(V) &= (V \cap \Oo^n + \mm^n)/\mm^n
  \end{align*}
  be the induced 1-inflator as in Theorem~\ref{thm:calvin}.

  For $a \in K$, let $\Theta_a = K \cdot (1,a)$.  Note that
  \begin{equation*}
    \Theta_a \cap \Oo^2 = 
    \begin{cases}
      \Oo \cdot (1,a) & a \in \Oo \\
      \Oo \cdot (a^{-1},1) & a \notin \Oo.
    \end{cases}
  \end{equation*}
  Then $\varsigma_2(\Theta_a)$ is the image of $\Theta_a \cap \Oo^2$
  under the residue map $\Oo^2 \twoheadrightarrow k^2$, which is
  \begin{equation*}
    \varsigma_2(\Theta_a) = 
    \begin{cases}
      k \cdot (1, \res(a)) & a \in \Oo \\
      k \cdot (0,1) & a \notin \Oo.
    \end{cases}
  \end{equation*}
  It follows that $\Oo$ is the fundamental ring of $\varsigma$, $\mm$
  is the fundamental ideal, and the canonical map
  \begin{equation*}
    \Oo/\mm \to \End_k(k) = k
  \end{equation*}
  is the residue map.
\end{example}
\begin{example}\label{oops}
  More generally, let $\varsigma : \Dir_K(K) \to \Dir_L(L)$ be a
  1-inflator on $K$ constructed from a valuation ring $\Oo$ on $K$ and
  an embedding of the residue field into a division ring $L$.  Then
  the fundamental ring of $\varsigma$ is $\Oo$, the generalized residue
  map is the composition
  \begin{equation*}
    \Oo \twoheadrightarrow k \hookrightarrow L,
  \end{equation*}
  and the fundamental ideal is $\mm$.  We leave the details as an
  exercise to the reader.
\end{example}
\begin{proposition}
  For $i = 1, \ldots, n$, let $\varsigma^i : \Dir_K(K) \to
  \Dir_{\mathcal{C}_i}(B_i)$ be a $d_i$-inflator.  Let
  $R_i$ and $I_i$ be the fundamental ring and ideal of $\varsigma^i$.
  Let $\varsigma$ be the product inflator of \S\ref{sec:dorothy}:
  \begin{align*}
    \varsigma : \Dir_K(K) &\to \prod_{i = 1}^n \Dir_{\mathcal{C}_i}(B_i) \\
    V & \mapsto (\varsigma^1(V),\ldots,\varsigma^n(V)).
  \end{align*}
  Then the fundamental ring and ideal of $\varsigma$ are exactly
  \begin{align*}
    R &= \bigcap_{i = 1}^n R_i \\
    I &= \bigcap_{i =1}^n I_i,
  \end{align*}
  and the canonical map
  \begin{equation*}
    R/I \to \prod_{i = 1}^n \End_{\mathcal{C}_i}(B_i)
  \end{equation*}
  is the product of the canonical maps for the $\varsigma^i$.
\end{proposition}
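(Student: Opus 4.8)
The plan is to unwind the $\Theta$-line description of the fundamental ring and ideal from \S\ref{sec:r-i} and compare coordinatewise through the product decomposition. First I would write $\Cc = \Cc_1 \times \cdots \times \Cc_n$ and let $B = (B_1,\ldots,B_n)$ be the evident object of $\Cc$. By the discussion around Lemma-Definition~\ref{ld:products} and Remark~\ref{ring-splitting}, the codomain directory $\prod_{i=1}^n \Dir_{\Cc_i}(B_i)$ is $\Dir_\Cc(B)$, so $\varsigma$ is a single $d$-inflator $\Dir_K(K) \to \Dir_\Cc(B)$ with $d = d_1 + \cdots + d_n$, and under the identification $\Sub_\Cc(B^m) \cong \prod_{i=1}^n \Sub_{\Cc_i}(B_i^m)$ each map $\varsigma_m$ is the tuple $(\varsigma^1_m,\ldots,\varsigma^n_m)$. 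We also have $\End_\Cc(B) \cong \prod_{i=1}^n \End_{\Cc_i}(B_i)$, and for $\varphi = (\varphi_1,\ldots,\varphi_n)$ the graph $\Theta_\varphi \subseteq B^2$ corresponds under the decomposition to the tuple $(\Theta_{\varphi_1},\ldots,\Theta_{\varphi_n})$; similarly $B \oplus 0 \subseteq B^2$ corresponds to $(B_1 \oplus 0, \ldots, B_n \oplus 0)$.

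Next, fix $\alpha \in K$ and put $\Theta_\alpha = K \cdot (1,\alpha) \subseteq K^2$. By the above, $\varsigma_2(\Theta_\alpha) \in \Sub_\Cc(B^2)$ is the tuple $(\varsigma^1_2(\Theta_\alpha),\ldots,\varsigma^n_2(\Theta_\alpha))$. Comparing coordinatewise: $\alpha$ specializes to $\varphi$ under $\varsigma$ (i.e.\ $\varsigma_2(\Theta_\alpha) = \Theta_\varphi$) if and only if, for every $i$, $\alpha$ specializes to $\varphi_i$ under $\varsigma^i$; and $\varsigma_2(\Theta_\alpha) = B \oplus 0$ if and only if $\varsigma^i_2(\Theta_\alpha) = B_i \oplus 0$ for every $i$. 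Hence $\alpha$ lies in the fundamental ring $R$ of $\varsigma$ exactly when $\alpha \in R_i$ for all $i$, and $\alpha$ lies in the fundamental ideal $I$ exactly when $\alpha \in I_i$ for all $i$; that is, $R = \bigcap_i R_i$ and $I = \bigcap_i I_i$. Moreover, for $\alpha \in R$ the equivalence shows $\widehat{\res}(\alpha) = (\widehat{\res}^1(\alpha),\ldots,\widehat{\res}^n(\alpha))$ under $\End_\Cc(B) \cong \prod_i \End_{\Cc_i}(B_i)$, so $\widehat{\res}$ is the map whose $i$-th component is the composite $R \to R_i \to \End_{\Cc_i}(B_i)$; passing to the quotient by $I$ yields the asserted description of $R/I \to \prod_i \End_{\Cc_i}(B_i)$.

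I do not expect a genuine obstacle here: the entire content is the bookkeeping of the product decomposition of subobject lattices, together with the already-recorded fact (Remark~\ref{ring-splitting}) that a product of directories is the directory of the product category. The only point that deserves to be stated explicitly is the identification of the graph $\Theta_\varphi = \{(x,\varphi(x)) : x \in B\}$ with the tuple $(\Theta_{\varphi_i})_i$, which is immediate from applying the isomorphism $\Sub_\Cc(B^2) \cong \prod_i \Sub_{\Cc_i}(B_i^2)$ to this subobject, and similarly for $B \oplus 0$. One could alternatively phrase the whole argument via the criteria of Lemmas~\ref{o-alt-criterion} and \ref{i-alt-criterion}, which are likewise preserved coordinatewise for the same reason.
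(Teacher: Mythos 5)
Your proposal is correct and follows essentially the same route as the paper: identify the product directory with $\Dir_{\Cc_1 \times \cdots \times \Cc_n}((B_1,\ldots,B_n))$, observe that $\varsigma_2(\Theta_\alpha)$, the graphs $\Theta_\varphi$, and the criteria for membership in $R$ and $I$ all decompose coordinatewise, and conclude. The paper phrases the membership check through Lemmas~\ref{o-alt-criterion} and \ref{i-alt-criterion} (the route you mention as an alternative), but this is the same bookkeeping.
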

\begin{proof}
  Note that for $a \in K$ and $\Theta_a = K \cdot (1,a)$, we have
  \begin{equation*}
    \varsigma(\Theta_a) = (\varsigma^1(\Theta_a),\ldots,\varsigma^n(\Theta_a)).
  \end{equation*}
  By Lemma~\ref{o-alt-criterion}, $a \in R$ if and only if
  \begin{equation*}
    \varsigma(\Theta_a) \cap (0 \oplus (B_1,\ldots,B_n)) = 0 \oplus 0,
  \end{equation*}
  i.e., if and only if
  \begin{equation*}
    (\varsigma^1(\Theta_a) \cap (0 \oplus B_1),\ldots,
    \varsigma^n(\Theta_a) \cap (0 \oplus B_n)) = (0 \oplus 0, \ldots,
    0 \oplus 0).
  \end{equation*}
  By Lemma~\ref{o-alt-criterion} this holds if and only if $a \in R_i$
  for all $i$.  A similar argument using Lemma~\ref{i-alt-criterion}
  shows $I = \bigcap_{i = 1}^n I_i$.  Finally, if $a \in R$, and $a$
  specializes to $\varphi_i \in \End_{\Cc_i}(B_i)$ for each $i$, then
  \begin{equation*}
    \varsigma^i(\Theta_a) = \Theta_{\varphi_i},
  \end{equation*}
  so
  \begin{equation*}
    \varsigma(\Theta_a) = (\Theta_{\varphi_1},\ldots,\Theta_{\varphi_n}).
  \end{equation*}
  But the right hand side is the graph of the endomorphism
  $(\varphi_1,\ldots,\varphi_n)$ in
  \begin{equation*}
    \End_{\Cc_1 \times \cdots \times \Cc_n}((B_1,\ldots,B_n)) = \End_{\Cc_1}(B_1) \times \cdots \times \End_{\Cc_n}(B_n). \qedhere
  \end{equation*}
\end{proof}
\begin{example}\label{dorothy-ring}
  Suppose $\Oo_1, \ldots, \Oo_n$ are valuation $K_0$-algebras on a
  field $K$, and $k_i$ is the residue field of $\Oo_i$.  Let $\varsigma_i : \Dir_K(K) \to \Dir_{k_i}(k_i)$ be the 1-inflator from $\Oo_i$, and let
  \begin{equation*}
    \varsigma(V) = (\varsigma_1(V),\ldots,\varsigma_n(V))
  \end{equation*}
  be the resulting $n$-inflator from $\Dir_K(K)$ to
  $\prod_{i = 1}^n \Dir_{k_i}(k_i)$, as in Example~\ref{dorothy}.
  Then the fundamental ring of $\varsigma$ is $\bigcap_{i = 1}^n
  \Oo_i$, the fundamental ideal is $\bigcap_{i = 1}^n \mm_i$, and the
  fundamental map
  \begin{equation*}
    R/I \to \prod_{i = 1}^n \End_{k_i}(k_i)
  \end{equation*}
  is the obvious map sending $x$ to $(\res_1(x),\ldots,\res_n(x))$.
\end{example}

\begin{example}\label{fiona-ring}
  Consider the 2-inflator
\begin{align*}
  \varsigma : \Dir_{\mathbb{C}}(\mathbb{C}) &\to \Dir_{\mathbb{R}}(\mathbb{R}) \times \Dir_{\mathbb{R}}(\mathbb{R}) \\
  V & \mapsto (V + \overline{V}, V \cap \overline{V}).
\end{align*}
of Example~\ref{fiona}.

If $a \in \mathbb{C}$ and $\Theta_a = \mathbb{C} \cdot (1,a)$, then
$\overline{\Theta}_a = \mathbb{C} \cdot (1,\overline{a})$, so
\begin{equation*}
  \varsigma(\Theta_a) = 
  \begin{cases}
    (\Rr \cdot (1,\overline{a}),\Rr \cdot (1,\overline{a})) & a \in \Rr \\
    (\Rr^2,0) & a \notin \Rr.
  \end{cases}
\end{equation*}
Let $R$ and $I$ be the fundamental ring and ideal of $\varsigma$.  By
Lemmas~\ref{o-alt-criterion} and \ref{i-alt-criterion},
\begin{align*}
  a \in R &\iff \varsigma(\Theta_a) \cap (0 \oplus \Rr, 0 \oplus \Rr) = (0 \oplus 0,0 \oplus 0) \\
  & \iff a \in \Rr \\
  a \in I & \iff \varsigma(\Theta_a) \subseteq (\Rr \oplus 0, \Rr \oplus 0) \\
  & \iff a = 0
\end{align*}
Thus $R = \Rr$ and $I = 0$.
\end{example}

\begin{example}\label{gerald-ring}
  Let $K, k$ be two fields extending $K_0$.  Suppose $\Oo_1, \Oo_2$ are two valuation $K_0$-algebras on $K$ and the two residue fields are both isomorphic to $k$.  Let
  \begin{align*}
    \varsigma : \Dir_K(K) &\to \Dir_k(k) \times \Dir_k(k) \\
    \varsigma(V) &= (\varsigma^1(V) + \varsigma^2(V), \varsigma^1(V) \cap \varsigma^2(V))
  \end{align*}
  be the 2-inflator of Example~\ref{gerald}, where
  $\varsigma^i$ is the inflator from the valuation ring $\Oo_i$.

  Let us calculate the fundamental ring and ideal.  If $a \in K$ and
  $\Theta_a = K \cdot (1,a)$, then
  \begin{align*}
    \varsigma^1(\Theta_a) &= 
    \begin{cases}
      k \cdot (1,\res_1(a)) & a \in \Oo_1 \\
      k \cdot (0,1) & a \notin \Oo_1
    \end{cases} \\
    \varsigma^2(\Theta_a) &= 
    \begin{cases}
      k \cdot (1,\res_2(a)) & a \in \Oo_2 \\
      k \cdot (0,1) & a \notin \Oo_2
    \end{cases} \\
    \varsigma(\Theta_a) &= (\varsigma^1(\Theta_a) + \varsigma^2(\Theta_a), \varsigma^1(\Theta_a) \cap \varsigma^2(\Theta_a))
  \end{align*}
  Now $\varsigma^1(\Theta_a) + \varsigma^2(\Theta_a)$ will be all of
  $k^2$ unless $\res_1(a) = \res_2(a)$ (including the possibility that
  $\res_1(a) = \infty = \res_2(a)$).  Using this, one sees that the
  fundamental ring and ideal are
  \begin{align*}
    R &= \{x \in \Oo_1 \cap \Oo_2 : \res_1(a) = \res_2(a)\} \\
    I &= \{x \in \Oo_1 \cap \Oo_2 : \res_1(a) = \res_2(a) = 0\}.
  \end{align*}
\end{example}
In both Example~\ref{fiona-ring} and \ref{gerald-ring},
$\varsigma(\Theta_a)$ tends to be some fixed value not depending on
$a$.  In other words the maps
\begin{equation*}
  \varsigma_n : \Sub_K(K^n) \to \Sub_{\mathcal{C}}(B^n)
\end{equation*}
are a bit trivial for $n = 2$.  This suggests that the inflators
in question are degenerate without much interesting structure, at
least compared to their unclobbered counterparts
\begin{align*}
  \Dir_{\mathbb{C}}(\mathbb{C}) &\to \Dir_{\mathbb{C}}(\mathbb{C}) \times \Dir_{\mathbb{C}}(\mathbb{C}) \\
  V & \mapsto (V,\overline{V}) \\
  \\
  \Dir_K(K) & \to \Dir_k(k) \times \Dir_k(k) \\
  V & \mapsto (\varsigma_1(V),\varsigma_2(V))
\end{align*}
However, we will see in \S\ref{sec:mut4} that in both cases, the original
unclobbered counterpart can be recovered from the clobbered version,
by considering the maps $\varsigma_n$ for $n > 2$.  The information is
still there; it is merely hidden.

\subsection{1-inflators}\label{sec:1-fold-classify}
\begin{proposition}\label{prop:1-fold}
  Let $\varsigma$ be a 1-inflator from $K$ to $\Dir(B)$.  Then the
  fundamental ring $\Oo$ is a valuation ring on $K$.  The fundamental
  ideal $I$ is the maximal ideal of $\Oo$.  The induced map
  \begin{equation*}
    \Oo/I \to \End(B)
  \end{equation*}
  is the inclusion of the residue field into a division ring.
\end{proposition}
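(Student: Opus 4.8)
The plan is to deduce everything from the alternative membership criteria of Lemmas~\ref{o-alt-criterion} and~\ref{i-alt-criterion}, using two features of a length-$1$ object $B$: it is simple, so $\Sub(B) = \{0,B\}$, and $\End(B)$ is a division ring by Schur's lemma. The work is concentrated in proving that $\Oo$ satisfies the defining condition of a valuation ring, after which the identifications $I = \mm$ and $\Oo/I \hookrightarrow \End(B)$ are quick.

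For the valuation-ring property, fix $\alpha \in K^\times$ and set $W = \varsigma_2(\Theta_\alpha) \subseteq B^2$, where $\Theta_\alpha = K\cdot(1,\alpha)$. Since $\dim_K \Theta_\alpha = 1$ and $\varsigma$ is a $1$-inflator, $\ell(W) = 1$. Let $\pi_1 : B^2 \to B$ be the first projection; then $\pi_1(W)$ is a subobject of the simple object $B$, hence equals $B$ or $0$. If $\pi_1(W) = B$, the projection of $W$ onto the first coordinate is onto, so $\alpha \in \Oo$ by Lemma~\ref{o-alt-criterion}. If $\pi_1(W) = 0$, then $W \subseteq 0 \oplus B$, and as both have length $1$ we get $W = 0 \oplus B$; applying the coordinate-swap matrix $\begin{pmatrix} 0 & 1 \\ 1 & 0 \end{pmatrix} \in GL_2(K_0)$, which carries $\Theta_\alpha$ to $\Theta_{\alpha^{-1}}$ and $0 \oplus B$ to $B \oplus 0$, equivariance gives $\varsigma_2(\Theta_{\alpha^{-1}}) = B \oplus 0$, so $\alpha^{-1} \in I$ by Lemma~\ref{i-alt-criterion}. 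In either case $\alpha \in \Oo$ or $\alpha^{-1} \in \Oo$, so $\Oo$ is a valuation ring on $K$; and we have in fact proved the sharper implication $\alpha \notin \Oo \implies \alpha^{-1} \in I$.

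It remains to identify $I$ with $\mm$ and read off the residue map. Since $\widehat{\res}(1) = \id_B \ne 0$, the ideal $I = \ker\widehat{\res}$ is proper, hence contained in $\mm$ because $\Oo$ is local (this is also Proposition~\ref{o-i}.\ref{o-i-jac2}). Conversely, given $x \in \mm$ with $x \ne 0$, $x$ is not a unit of $\Oo$, so $x^{-1} \notin \Oo$, and the sharper implication above applied to $\alpha = x^{-1}$ yields $x = (x^{-1})^{-1} \in I$; thus $\mm \subseteq I$ and $I = \mm$. Consequently $\Oo/I = \Oo/\mm$ is the residue field, and the generalized residue map $\widehat{\res} : \Oo/\mm \to \End(B)$ is a ring homomorphism with kernel $I/I = 0$, i.e.\ an embedding of the residue field into the division ring $\End(B)$, as claimed. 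The one step that is not purely formal is the ``bad'' case $W = 0 \oplus B$: the idea there is simply that swapping coordinates turns this configuration for $\alpha$ into the fundamental-ideal configuration for $\alpha^{-1}$, so no genuine obstacle arises.
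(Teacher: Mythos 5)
Your proof is correct and follows essentially the same route as the paper's: the dichotomy on the length-one subobject $\varsigma_2(\Theta_\alpha)\subseteq B^2$ (graph of an endomorphism versus $0\oplus B$), the coordinate swap to convert the second case into $\alpha^{-1}\in I$, and the resulting identification $I=\mm$. The only cosmetic difference is that you phrase the dichotomy via the image under the first projection and the criteria of Lemmas~\ref{o-alt-criterion} and \ref{i-alt-criterion}, whereas the paper reduces to $D$-modules and lists the possible forms of a one-dimensional submodule of $D^2$ explicitly.
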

\begin{proof}
  The fact that $\varsigma$ is a 1-inflator implies that $B$ is simple
  ($\ell(B) = 1$), so $\End(B)$ is indeed a division ring.  We may
  assume that the ambient abelian category is the category of
  $D$-modules, and $B = D$.  For any $\alpha \in K$,
  \begin{equation*}
    \varsigma_2(\{(x, \alpha x) ~|~ x \in K\})
  \end{equation*}
  must be a 1-dimensional submodule of $D^2$.  This \emph{must} have
  one of the following forms:
  \begin{align*}
    \{(x, x \cdot \beta) ~&|~ x \in D\} \\
    \{(x \cdot \beta, x) ~&|~ x \in D\}
  \end{align*}
  for some $\beta \in D$.  So, replacing $\alpha$ with $\alpha^{-1}$ if
  necessary, we get $\alpha \in \Oo$.  Thus $\Oo$ is a valuation ring.  Let
  $\mm$ be the maximal ideal.

  By Proposition~\ref{o-i}.\ref{o-i-jac2}, we know $I \subseteq \mm$.
  Conversely, suppose $\alpha \in \mm$.  Then $\alpha^{-1} \notin
  \Oo$.  Therefore,
  \begin{equation*}
    \varsigma_2(\{(x, \alpha^{-1} x) ~|~ x \in K\})
  \end{equation*}
  is a 1-dimensional subspace of $D^2$, \emph{not} of the form
  \begin{equation*}
    \{(x, x \cdot \beta) ~|~ x \in D\}.
  \end{equation*}
  There is only one such subspace, namely $0 \oplus D$.  Therefore
  \begin{equation*}
    \varsigma_2(\{(x, \alpha^{-1} x) ~|~ x \in K\}) = 0 \oplus D.
  \end{equation*}
  Swapping the coordinates (using $GL_2(K_0)$-equivariance), we see
  \begin{equation*}
    \varsigma_2(\{(x,\alpha x) ~|~ x \in K\}) =
    \varsigma_2(\{(\alpha^{-1} x, x) ~|~ x \in K\}) = D \oplus 0.
  \end{equation*}
  Thus $\alpha$ specializes to 0, meaning that $\alpha \in I$.  Thus
  $\mm = I$.

  Now $\Oo/I$ is a field and $\Oo/I \hookrightarrow D$ is an injective
  ring homomorphism, because $I := \ker(\Oo \to D)$.  So $D$ is a
  division ring extending the residue field $\Oo/I$.
\end{proof}

We can now classify 1-inflators:
\begin{theorem}\label{thm:1-fold}
  Every 1-inflator $\varsigma : \Dir_K(K) \to \Dir_L(L)$ is
  of the form constructed in Theorem~\ref{thm:1-fold-example-2}.
\end{theorem}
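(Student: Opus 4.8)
The plan is to read off the valuation data encoded in $\varsigma$ via Proposition~\ref{prop:1-fold}, build the corresponding inflator $\varsigma'$ of Theorem~\ref{thm:1-fold-example-2}, and then check $\varsigma = \varsigma'$ levelwise. By Proposition~\ref{prop:1-fold}, the fundamental ring $\Oo$ of $\varsigma$ is a valuation ring on $K$ whose maximal ideal $\mm$ equals the fundamental ideal; write $k := \Oo/\mm$ for the residue field. Since $\varsigma$ is a $1$-inflator the codomain directory has length $1$, so by Theorem~\ref{thm:semisimple-directories} we may assume it is $\Dir_L(L)$ for some $K_0$-division algebra $L$, and the generalized residue map $\widehat{\res}$ is then a $K_0$-algebra homomorphism $\Oo \to L$ factoring as $\Oo \twoheadrightarrow k \hookrightarrow L$. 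Let $\varsigma' \colon \Dir_K(K) \to \Dir_L(L)$ be the $1$-inflator associated to this valuation ring and residue-field embedding as in Theorem~\ref{thm:1-fold-example-2} (legitimate since $\Frac\Oo = K$); by Examples~\ref{calvin-ring} and \ref{oops}, $\varsigma'$ has the same fundamental ring, fundamental ideal, and generalized residue map. It remains to prove $\varsigma_n = \varsigma'_n$ for every $n$.

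The crux is a ``relative equivariance'' property: for every $\mu \in GL_n(\Oo)$ and every $V \subseteq K^n$,
\[
  \varsigma_n(\mu \cdot V) = \widehat{\res}(\mu) \cdot \varsigma_n(V),
\]
where $\widehat{\res}(\mu) \in GL_n(L)$ is the entrywise image of $\mu$, invertible because $\widehat{\res}$ is a unital ring homomorphism. I would prove this by first establishing the matrix analogue of the specialization relations of \S\ref{sec:r-i}: writing $\Gamma_\mu = \{(x,\mu x) : x \in K^n\} \subseteq K^{2n}$ for the graph, one shows $\varsigma_{2n}(\Gamma_\mu) = \Gamma_{\widehat{\res}(\mu)}$. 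This is done in the style of the proof of Lemma~\ref{basic-composition}: express $\Gamma_\mu$ as the intersection over $i$ of the subspaces $\{z_i = \sum_j \mu_{ij} x_j\}$; compute $\varsigma$ of each such subspace by reducing, via $\oplus$-compatibility and $GL(K_0)$-equivariance, to the single-variable relations $\varsigma_2(\Theta_{\mu_{ij}}) = \Theta_{\widehat{\res}(\mu_{ij})}$ (valid since $\mu_{ij} \in \Oo = R$); and then intersect using Lemma~\ref{basic-tool}. As in \S\ref{sec:r-i}, the recurring device is that whenever an auxiliary coordinate must be eliminated, one first ``frees it up'' by joining with the corresponding coordinate line — an instance of Lemma~\ref{basic-tool}.\ref{bt2} with the expected dimensions — and then cancels it off by $\oplus$-compatibility. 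The length-scaling axiom of a $1$-inflator makes all the bookkeeping go through: invertibility of $\mu$ (hence of $\widehat{\res}(\mu)$) guarantees that each partial intersection has exactly the dimension and length demanded by Lemma~\ref{basic-tool}. Once $\varsigma_{2n}(\Gamma_\mu) = \Gamma_{\widehat{\res}(\mu)}$ is known, the displayed identity follows by intersecting $\Gamma_\mu$ with $V \oplus K^n$ and joining with $K^n \oplus 0^n$, again controlled by Lemma~\ref{basic-tool} and $\oplus$-cancellation.

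Granting relative equivariance, the theorem is immediate. Given $V \subseteq K^n$, Lemma~\ref{v-o-lemma} provides $\mu \in GL_n(\Oo)$ with $\mu \cdot V = K^\ell \oplus 0^{n-\ell}$ for some $\ell$. By Lemma~\ref{duh} and $\oplus$-compatibility, $\varsigma_n(K^\ell \oplus 0^{n-\ell}) = L^\ell \oplus 0^{n-\ell}$, so relative equivariance gives $\varsigma_n(V) = \widehat{\res}(\mu)^{-1} \cdot (L^\ell \oplus 0^{n-\ell})$. The same computation applies verbatim to $\varsigma'$: directly from its definition $\varsigma'$ is also $GL_n(\Oo)$-equivariant relative to $\widehat{\res}$, since $\mu$ preserves $\Oo^n$ and $\mm^n$ and $\widehat{\res}|_\Oo$ is the reduction $\Oo \to k$ followed by $k \hookrightarrow L$, and likewise $\varsigma'_n(K^\ell \oplus 0^{n-\ell}) = L^\ell \oplus 0^{n-\ell}$. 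Hence $\varsigma_n(V) = \widehat{\res}(\mu)^{-1} \cdot (L^\ell \oplus 0^{n-\ell}) = \varsigma'_n(V)$.

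The main obstacle is the relative-equivariance lemma: genuinely promoting the $GL_n(K_0)$-equivariance that is built into the notion of inflator to $GL_n(\Oo)$-equivariance compatible with $\widehat{\res}$. This is the one place where real work is needed — carefully carrying out the ``free-up-and-cancel'' bookkeeping so that Lemma~\ref{basic-tool} applies at each step — whereas the remaining steps are routine manipulation of Lemma~\ref{v-o-lemma} and Lemma~\ref{duh}.
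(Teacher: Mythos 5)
Your proposal is correct, and it reorganizes the argument around a different intermediate statement than the paper uses. The paper's proof does not isolate a relative-equivariance lemma: instead it uses Lemma~\ref{v-o-lemma} to choose a basis $\vec{a}_1,\ldots,\vec{a}_m$ of $V$ that simultaneously generates $V \cap \Oo^n$ over $\Oo$ and whose residues span $\varsigma'(V)$, encodes membership in $V$ through auxiliary coordinate subspaces $W_{i,j} = \{y_{i,j} = a_{i,j}x_i\}$, $U_j = \{z_j = \sum_i y_{i,j}\}$, and $M = \{\vec{z}=0\}$ inside $K^m \times K^{nm} \times K^n$, and then computes $\varsigma$ of the relevant intersection-and-join directly, finishing with a dimension count as in Lemma~\ref{basic-tool}.\ref{bt2}. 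Your route instead promotes the single-variable specialization relations to matrix graphs, $\varsigma_{2n}(\Gamma_\mu) = \Gamma_{\widehat{\res}(\mu)}$ for $\mu \in GL_n(\Oo)$, deduces $\varsigma_n(\mu V) = \widehat{\res}(\mu)\cdot\varsigma_n(V)$, and then reduces to the trivial case $V = K^\ell \oplus 0^{n-\ell}$; this is legitimate because $\mu^{-1} \in GL_n(\Oo)$, so $\widehat{\res}(\mu)$ is invertible and the length counts in Lemma~\ref{basic-tool} all come out as expected, and because $\varsigma'$ visibly satisfies the same equivariance since $\mu$ preserves $\Oo^n$ and $\mm^n$. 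Both proofs run on the same engine (the free-up-and-cancel device of \S\ref{sec:r-i}, $\oplus$-cancellation, and Lemma~\ref{v-o-lemma}); what yours buys is a reusable, conceptually cleaner lemma (a $GL_n(\Oo)$-equivariance refining the built-in $GL_n(K_0)$-equivariance) at the cost of somewhat heavier matrix bookkeeping in the graph computation, whereas the paper's version only ever manipulates graphs of scalars $a_{i,j} \in \Oo$ and a single linear functional per coordinate.
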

\begin{proof}
  By Proposition~\ref{prop:1-fold}, there is a valuation ring $\Oo$ on
  $K$ and an embedding of the residue field $k = \Oo/\mm$ into $L$
  such that
  \begin{itemize}
  \item $\Oo$ is the fundamental ring of $\varsigma$
  \item $\mm$ is the fundamental ideal
  \item The composition
    \begin{equation*}
      \Oo \twoheadrightarrow \Oo/\mm = k = k^{op} \hookrightarrow L^{op}
    \end{equation*}
    is the generalized residue map $\Oo \to \End_L(L) = L^{op}$.
  \end{itemize}
  By Theorem~\ref{thm:1-fold-example-2}, this induces a 1-inflator
  $\varsigma' : \Dir_K(K) \to \Dir_L(L)$, arising as a
  composition
  \begin{equation*}
    \Dir_K(K) \stackrel{\varsigma''}{\to} \Dir_k(k) \stackrel{L
      \otimes_k -}\hookrightarrow \Dir_L(L)
  \end{equation*}
  where $\varsigma''$ is the inflator
  \begin{equation*}
    V \mapsto (V \cap \Oo^n + \mm^n)/\mm^n
  \end{equation*}
  of Theorem~\ref{thm:calvin}.
  
  We claim that $\varsigma = \varsigma'$.  Let $V$ be a
  $m$-dimensional $K$-linear subspace of $K^n$.  We will show
  $\varsigma_n(V) = \varsigma'_n(V)$.  By Lemma~\ref{v-o-lemma}, there
  are vectors $\vec{a}_1, \ldots, \vec{a}_m \in V$ such that
  \begin{itemize}
  \item $\vec{a}_1,\ldots,\vec{a}_m$ freely generate $V$ as a
    $K$-vector space.
  \item $\vec{a}_1,\ldots,\vec{a}_m$ freely generate $V \cap \Oo^n$ as
    an $\Oo$-module.
  \item The residues of $\vec{a}_1,\ldots,\vec{a}_m$ freely generate
    the $m$-dimensional $k$-subspace
    \begin{equation*}
      \varsigma''(V) = (V \cap \Oo^n + \mm^n)/\mm^n \subseteq k^n.
    \end{equation*}
  \end{itemize}
  Let $\rho$ be the induced composition $\Oo \to \Oo/\mm = k
  \hookrightarrow L$.  Then $\rho(\vec{a}_1), \ldots, \rho(\vec{a_m})$
  freely generate $\varsigma'(V)$.

  Let $\vec{a}_i = (a_{i,1},\ldots,a_{i,n})$.  Let $b_{i,j} =
  \rho(a_{i,j})$.  Because $\rho$ is the generalized residue map from
  $\Oo$ to $L^{op} = \End_L(L)$,
  \begin{equation*}
    \varsigma(\{(x,a_{i,j}x) : x \in K\}) = \{(x,xb_{i,j}) : x \in L\}
  \end{equation*}
  Inside $K^m \times K^{nm} \times K^n$, consider the following
  subspaces:
  \begin{align*}
    W_{i,j} &= \{(\vec{x},\vec{y},\vec{z}) : y_{i,j} = a_{i,j}x_i\} \\
    U_j &= \{(\vec{x},\vec{y},\vec{z}) : z_j = y_{1,j} + \cdots + y_{m,j}\} \\
    M &= \{(\vec{x},\vec{y},\vec{z}) : \vec{z} = 0\}
  \end{align*}
  By the techniques of \S\ref{sec:basic-tech}-\ref{sec:r-i},
  \begin{align*}
    \varsigma(W_{i,j}) &= \{(\vec{x},\vec{y},\vec{z}) : y_{i,j} = x_ib_{i,j}\} \\
    \varsigma(U_j) &= \{(\vec{x},\vec{y},\vec{z}) : z_j = y_{1,j} + \cdots + y_{m,j}\} \\
    \varsigma(M) &= \{(\vec{x},\vec{y},\vec{z}) : \vec{z} = 0\}
  \end{align*}
  Note that the intersection of all the $W_{i,j}$ and $U_j$ is the set
  of tuples $(\vec{x},\vec{y},\vec{z})$ where
  \begin{align*}
    y_{i,j} &= a_{i,j}x_i \\
    z_j &= y_{1,j} + \cdots + y_{m,j} = a_{1,j}x_1 + \cdots + a_{m,j}x_m.
  \end{align*}
  By Lemma~\ref{basic-tool}.\ref{bt1},
  \begin{equation*}
    \varsigma \left(\bigcap_{i,j} W_{i,j} \cap \bigcap_j U_j \right) =
    \bigcap_{i,j} \varsigma(W_{i,j}) \cap \bigcap_j \varsigma(U_j),
  \end{equation*}
  which is the set of tuples $(\vec{x},\vec{y},\vec{z}) \in L^m \times
  L^{nm} \times L^n$ such that
  \begin{align*}
    y_{i,j} &= x_ib_{i,j} \\
    z_j &= x_1b_{1,j} + \cdots + x_mb_{m,j}.
  \end{align*}
  Then
  \begin{equation*}
    M + \left(\bigcap_{i,j} W_{i,j} \cap \bigcap_j U_j\right)
  \end{equation*}
  is the set of $(\vec{x},\vec{y},\vec{z})$ such that $\vec{z}$ is in
  the span of $\vec{a}_1,\ldots,\vec{a}_m$, i.e., $K^m \times K^{nm}
  \times V$.  Similarly,
  \begin{equation*}
    \varsigma(M) + \left( \bigcap_{i,j} \varsigma(W_{i,j}) \cap
    \bigcap_j \varsigma(U_j) \right)
  \end{equation*}
  is the set of $(\vec{x},\vec{y},\vec{z})$ such that $\vec{z}$ is in
  the $L$-module generated by $\vec{b}_1,\ldots,\vec{b}_m$.  We chose
  $\vec{a}_1,\ldots,\vec{a}_m$ to ensure that this module is
  $\varsigma'(V)$.  The identity
  \begin{equation*}
    \varsigma(M) + \left( \bigcap_{i,j} \varsigma(W_{i,j}) \cap
    \bigcap_j \varsigma(U_j) \right) \subseteq \varsigma\left( M +
    \left(\bigcap_{i,j} W_{i,j} \cap \bigcap_j U_j\right) \right)
  \end{equation*}
  therefore says
  \begin{equation*}
    L^n \times L^{nm} \times \varsigma'(V) \subseteq \varsigma(K^n
    \times K^{nm} \times V).
  \end{equation*}
  As in the proof of Lemma~\ref{basic-tool}.\ref{bt2}, equality holds,
  because both sides have the same dimension.  Therefore $\varsigma(V)
  = \varsigma'(V)$ as desired.
\end{proof}

\begin{corollary}
  A 1-inflator from $\Dir_K(K)$ to $\Dir_L(L)$ is
  equivalent to the following data:
  \begin{itemize}
  \item A valuation ring $\Oo$ on $K$, with trivial valuation on
    $K_0$.
  \item An embedding (over $K_0$) of the residue field of $\Oo$ into
    $L$.
  \end{itemize}
\end{corollary}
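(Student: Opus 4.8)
The plan is to assemble Theorem~\ref{thm:1-fold}, Proposition~\ref{prop:1-fold}, and Example~\ref{oops}, adding one bookkeeping observation: the fundamental ring, fundamental ideal, and generalized residue map depend only on the equivalence class of the inflator. Granting these, the Corollary is essentially immediate.

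In the forward direction I would begin with a 1-inflator $\varsigma : \Dir_K(K) \to \Dir_L(L)$. By Proposition~\ref{prop:1-fold} its fundamental ring $\Oo$ is a valuation ring on $K$, its fundamental ideal is the maximal ideal $\mm$ of $\Oo$, and the generalized residue map factors as $\Oo \twoheadrightarrow \Oo/\mm \hookrightarrow \End_L(L)$. Since $\Oo/\mm$ is a commutative field and $\End_L(L) \cong L^{op}$, the second arrow is equivalently a $K_0$-linear field embedding $\Oo/\mm \hookrightarrow L$. The valuation on $K_0$ is automatically trivial: by Proposition~\ref{o-i} we have $K_0 \subseteq \Oo$, and for $a \in K_0^\times$ also $a^{-1} \in K_0 \subseteq \Oo$, so $K_0^\times \subseteq \Oo^\times$. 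This produces the required data from $\varsigma$.

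For the inverse I would recall that Theorem~\ref{thm:1-fold-example-2} turns a pair (valuation ring $\Oo$ on $K$, embedding $\Oo/\mm \hookrightarrow L$) into a 1-inflator $\Dir_K(K) \to \Dir_L(L)$, and that Example~\ref{oops} (the mild generalization of Example~\ref{calvin-ring}) computes that the fundamental ring of this inflator is $\Oo$, its fundamental ideal is $\mm$, and its generalized residue map is precisely the chosen composition $\Oo \twoheadrightarrow \Oo/\mm \hookrightarrow L$. Hence the round trip data $\to$ inflator $\to$ data is the identity. In the other direction, Theorem~\ref{thm:1-fold} says every 1-inflator is equivalent to one built from such data, so the assignment (data) $\mapsto$ (equivalence class of inflator) is surjective.

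The remaining point — and the only real obstacle — is that equivalent 1-inflators yield the same data, so that the correspondence descends to equivalence classes and is injective. The fundamental ring and ideal are honest subsets of $K$, and they are unchanged when we postcompose $\varsigma$ with an isomorphism $f : \Dir_L(L) \to \Dir_L(L)$: by Lemma~\ref{o-alt-criterion}, "$\varsigma_2(\Theta_\alpha)$ is the graph of an endomorphism" amounts to $\varsigma_2(\Theta_\alpha) \vee (0 \oplus B) = B \oplus B$ together with $\varsigma_2(\Theta_\alpha) \wedge (0 \oplus B) = 0$, and "$\alpha$ specializes to $0$" similarly; these are conditions in the directory structure alone ($\vee$, $\wedge$, $\oplus$, $\top$, $\bot$), hence preserved by any directory isomorphism. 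So $R_\varsigma$ and $I_\varsigma$ are equivalence invariants, and the generalized residue maps of equivalent inflators agree up to the identification on endomorphism rings induced by $f$. Combining these gives the desired bijection between equivalence classes of 1-inflators $\Dir_K(K) \to \Dir_L(L)$ and pairs consisting of a valuation ring on $K$ trivial on $K_0$ together with a $K_0$-embedding of its residue field into $L$.
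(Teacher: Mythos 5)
Your proposal is correct and follows essentially the same route as the paper, which simply cites Theorem~\ref{thm:1-fold-example-2} (data determines an inflator), Theorem~\ref{thm:1-fold} (every 1-inflator arises this way), and Example~\ref{oops} (the data is recoverable from the inflator). The extra bookkeeping you supply --- that the fundamental ring, ideal, and residue map are invariants of the equivalence class, and that the valuation is automatically trivial on $K_0$ --- is correct and makes explicit what the paper leaves implicit.
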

\begin{proof}
  Every such datum determines a 1-inflator
  (Theorem~\ref{thm:1-fold-example-2}), every 1-inflator arises in
  this way (Theorem~\ref{thm:1-fold}), and the data can be recovered
  from the 1-inflator (Example~\ref{oops}).
\end{proof}

\subsection{Tame locus}\label{sec:tame-locus}
Fix a $d$-inflator $\varsigma : \Dir_K(K) \to \Dir(B)$.  Let
$R$ and $I$ be the fundamental ring and ideal.
\begin{lemma}\label{tame-wild}
  For any $\alpha \in K$, consider the set
  \begin{equation*}
    S_\alpha = \{\alpha\} \cup \left\{ \frac{1}{\alpha - q} ~|~ q \in K_0\right\}.
  \end{equation*}
  Then one of two things happens:
  \begin{itemize}
  \item No element of $S_\alpha$ is in $R$.
  \item Every element of $S_\alpha$ is in $R$, with at most $d$
    exceptions.
  \end{itemize}
\end{lemma}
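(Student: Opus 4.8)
The plan is to reduce the statement to an ``eigenvalue bound'' for endomorphisms of the finite-length object $B$, and then finish by elementary bookkeeping using that $R$ is a $K_0$-algebra, that $\widehat{\res}$ is a homomorphism (Proposition~\ref{o-i}), and Lemma~\ref{o-i-jac}. The key claim I would isolate is: for any $\psi \in \End_\Cc(B)$, the set
\[
  E(\psi) := \{\lambda \in K_0 : \psi - \lambda\cdot\id_B \notin \Aut_\Cc(B)\}
\]
has at most $d = \ell(B)$ elements. Granting this, we argue as follows. If no element of $S_\alpha$ lies in $R$ we are in the first alternative and there is nothing to prove, so assume some element does. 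If $\alpha \in K_0$ then $S_\alpha \subseteq K_0 \subseteq R$ has no exceptions, so we may assume $\alpha \notin K_0$; then every $\alpha - q$ is a nonzero element of $K$, so each $1/(\alpha - q)$ is defined.

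To prove the claim I would first observe that, since $B$ has finite length, an endomorphism of $B$ is an automorphism iff it is monic iff it has zero kernel (using $\ell(B) = \ell(\ker)+\ell(\img)$ and $\img \hookrightarrow B$); hence $E(\psi) = \{\lambda : \ker(\psi-\lambda) \neq 0\}$. For distinct $\lambda_1, \dots, \lambda_k \in E(\psi)$ I would show by induction on $k$ that $\ell\big(\sum_i \ker(\psi-\lambda_i)\big) = \sum_i \ell(\ker(\psi-\lambda_i)) \geq k$. In the inductive step, put $N = \sum_{i<k}\ker(\psi-\lambda_i)$: each $\ker(\psi-\lambda_i)$ is $\psi$-stable, hence so is $N$, and $N$ is annihilated by $\prod_{i<k}(\psi-\lambda_i)$; on the other hand $\psi$ acts on $N \cap \ker(\psi-\lambda_k)$ as the scalar $\lambda_k$, so $\prod_{i<k}(\psi-\lambda_i)$ acts there both as $0$ and as $\prod_{i<k}(\lambda_k-\lambda_i) \in K_0^\times$, forcing $N \cap \ker(\psi-\lambda_k) = 0$; then length is additive. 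So $k \leq \ell(B) = d$.

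Now the dichotomy. If $\alpha \in R$, write $\bar\alpha = \widehat{\res}(\alpha)$; since $R$ is a $K_0$-algebra and $\widehat{\res}$ a homomorphism, each $\alpha - q$ lies in $R$ and specializes to $\bar\alpha - q$, and whenever $\bar\alpha - q \in \Aut_\Cc(B)$ Lemma~\ref{o-i-jac} gives $1/(\alpha-q) \in R$. Hence $\{q : 1/(\alpha-q) \notin R\} \subseteq E(\bar\alpha)$, which has at most $d$ elements by the claim, while $\alpha$ itself is not an exception; so $S_\alpha$ has at most $d$ exceptions. If instead $\alpha \notin R$, then $\alpha$ is one exception, and by assumption $\gamma := 1/(\alpha - q_0) \in R$ for some $q_0 \in K_0$; here $\bar\gamma := \widehat{\res}(\gamma) \notin \Aut_\Cc(B)$, since otherwise $\gamma^{-1} = \alpha - q_0$ would lie in $R$ by Lemma~\ref{o-i-jac}, forcing $\alpha \in R$. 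For $q \in K_0$ set $s = q - q_0$; a short computation in the field $K$ gives $1/(\alpha-q) = \gamma(1-s\gamma)^{-1}$, legitimate because $1 - s\gamma \neq 0$ (using $\alpha \notin K_0$). Since $1 - s\gamma \in R$ specializes to $1 - s\bar\gamma$, whenever $1 - s\bar\gamma \in \Aut_\Cc(B)$ we get $(1-s\gamma)^{-1} \in R$ and hence $1/(\alpha-q) \in R$. For $s \neq 0$ we have $1 - s\bar\gamma = -s(\bar\gamma - s^{-1})$, which fails to be an automorphism exactly when $s^{-1} \in E(\bar\gamma)$; as $0 \in E(\bar\gamma)$, the claim gives $|E(\bar\gamma) \setminus \{0\}| \leq d - 1$, so at most $d-1$ further exceptions occur among the $1/(\alpha-q)$, for a total of at most $d$.

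The one non-routine ingredient is the eigenvalue claim, and its mild subtlety is that it must be carried out in an abstract $K_0$-linear abelian category: with no elements at hand, the independence of the eigen-subobjects has to be argued via $\psi$-stability, polynomial identities in $\psi$, and the action of scalars on objects, rather than by the usual linear-algebra manipulation. Everything else is formal. (The case $d = 0$ is vacuous: then $B = 0$, $\End_\Cc(B)$ is the zero ring, and $R = K$, so $S_\alpha$ has no exceptions.)
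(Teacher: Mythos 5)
Your proof is correct, and its core ingredient --- the bound $|E(\psi)|\le d$ for the set of $\lambda\in K_0$ with $\ker(\psi-\lambda)\ne 0$, proved by showing the eigen-subobjects are independent --- is exactly the content of the first half of the paper's proof (there the independence of the $V_q=\ker(\varphi-q)$ is checked by the elementwise relation $t_1+\cdots+t_n=0$; your version via annihilating polynomials and $\psi$-stability is an equivalent, category-safe phrasing). Where you diverge is in reducing the general case to this bound. The paper works with the action of $PGL_2(K_0)$ on $K\cup\{\infty\}$: it observes that $\{\gamma_q\}_{q\in K_0\cup\{\infty\}}$ with $\gamma_q(x)=1/(x-q)$, $\gamma_\infty=\mathrm{id}$, is a set of coset representatives for the affine subgroup $\mathrm{Aff}(K_0)$, which preserves $R$ because $R$ is a $K_0$-algebra; composing with $\gamma_{q_0}$ permutes these cosets, so the ``all but $d$'' conclusion for $\gamma_{q_0}(\alpha)\in R$ transfers verbatim to $\alpha$. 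You instead substitute the explicit M\"obius identity $1/(\alpha-q)=\gamma(1-(q-q_0)\gamma)^{-1}$ with $\gamma=1/(\alpha-q_0)$, apply the eigenvalue bound to $\widehat{\res}(\gamma)$, and exploit that $0$ is already a ``bad'' value for $\widehat{\res}(\gamma)$ to recover the count $1+(d-1)=d$. The paper's route is more conceptual and avoids any computation in the second case; yours is more elementary and self-contained, at the cost of a little bookkeeping (which you carry out correctly, including the checks that $1-(q-q_0)\gamma\ne0$ and that Lemma~\ref{o-i-jac} applies). Both are sound, and your handling of the degenerate cases $\alpha\in K_0$ and $d=0$ is if anything more careful than the paper's.
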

\begin{proof}
  First suppose that $\alpha \in R$, so $\alpha$ specializes to some
  $\varphi \in \End(B)$.  Then for any $q \in K_0$, the following
  identies hold
  \begin{align*}
    \varsigma_2(\{(x, \alpha x) ~|~ x \in K) &= \{(x, \varphi x) ~|~
    x \in B\} \\
    \varsigma_2(\{(x, \alpha x - qx) ~|~ x \in K) &= \{(x, \varphi x
    - qx) ~|~ x \in B\} \\
    \varsigma_2(\{((\alpha - q)x, x) ~|~ x \in K) &= \{((\varphi -
    q)x, x) ~|~ x \in B\}
  \end{align*}
  where each line follows from the preceding one by
  $GL_2(K_0)$-equivariance.  Let
  \begin{equation*}
    B_q := \{((\varphi - q)x, x) ~|~ x \in B\}
  \end{equation*}
  Then $(\alpha - q)^{-1} \in R$ if and only if $B_q$ is the graph
  of an endomorphism.  By Lemma~\ref{o-alt-criterion},
  \begin{equation*}
    (\alpha - q)^{-1} \in R \iff B_q \cap (0 \oplus B) = 0 \oplus 0.
  \end{equation*}
  Thus $(\alpha - q)^{-1} \notin R$ if and only if there is a
  non-zero $x \in B$ annihilated by $\varphi - q$.  For any $q$, let
  $V_q \subseteq B$ be the eigenspace $\ker(\varphi - q)$.  We have
  seen
  \begin{equation*}
    (\alpha - q)^{-1} \notin R \iff V_q \ne 0.
  \end{equation*}
  But as usual, the $V_q$ are lattice-theoretically independent: if
  $q_1, q_2, \ldots$ is a sequence of distinct values in $K_0$, then
  \begin{equation*}
    V_{q_n} \cap \sum_{i < n} V_{q_i} = 0.
  \end{equation*}
  Otherwise, take $n$ minimal for which this fails.  Then there is an
  equation
  \begin{equation*}
    t_1 + \cdots + t_n = 0
  \end{equation*}
  where $t_i \in V_{q_i}$ and $t_n \ne 0$.  By definition of $V_q$, we
  have
  \begin{equation*}
    0 = \varphi(t_1 + \cdots + t_n) = q_1t_1 + \cdots + q_nt_n.
  \end{equation*}
  Then
  \begin{equation*}
    (q_1 - q_n)t_1 + (q_2 - q_n)t_2 + \cdots + (q_{n-1} - q_n)t_{n-1} = 0,
  \end{equation*}
  contradicting the choice of $n$.

  By independence of the $V_q$, it follows that at most $d$ of the
  $V_q$ can be non-trivial, so $(\alpha - q)^{-1} \in R$ for all but
  at most $d$ values of $q \in K_0$.

  We now turn to the general case.  Let $PGL_2(K_0)$ be the group of
  fractional linear transformations over $K_0$ and let $Aff(K_0)$ be
  the subgroup of affine transformations.  Both groups act on $K \cup
  \{\infty\}$.  Note that $Aff(K_0)$ fixes $R$ setwise, because $R$ is
  a $K_0$-algebra.  Set
  \begin{align*}
    \gamma_q(x) &= \frac{1}{x - q} \\
    \gamma_\infty(x) &= x
  \end{align*}
  for $q \in K_0$.  Then $\{\gamma_q : q \in K_0 \cup \{\infty\}\}$ is
  a set of coset representatives for $Aff(K_0)$: every $\gamma \in
  PGL_2(K_0)$ can be uniquely written as
  \begin{equation*}
    \tau \cdot \gamma_q
  \end{equation*}
  for some $\tau \in Aff(K_0)$ and $q \in K_0 \cup \{\infty\}$.
  Suppose that $\gamma_{q_0}(\alpha) \in R$ for at least one $q_0$.
  By the first case we considered,
  \begin{equation*}
    \gamma_q(\gamma_{q_0})(\alpha) \in R
  \end{equation*}
  for all $q \in K_0 \cup \{\infty\}$, with at most $d$ exceptions.
  Now $\{ \gamma_q \cdot \gamma_{q_0}\}$ is also a set of coset
  representatives.  Therefore, there is a bijection and a map
  \begin{align*}
    \pi : K_0 \cup \{\infty\} & \stackrel{\sim}{\to} K_0 \cup \{\infty\} \\
    \tau : K_0 \cup \{\infty\} & \to Aff(K_0)
  \end{align*}
  such that
  \begin{equation*}
    \gamma_q \cdot \gamma_{q_0} = \tau(q) \cdot \gamma_{\pi(q)}
  \end{equation*}
  for all $q \in K_0 \cup \{\infty\}$.  As $Aff(K_0)$ preserves $R$,
  we see that
  \begin{equation*}
    \gamma_q(\gamma_{q_0}(\alpha)) \in R \implies \gamma_{\pi(q)}(\alpha) \in R.
  \end{equation*}
  Thus
  \begin{equation*}
    \gamma_{\pi(q)}(\alpha) \in R
  \end{equation*}
  for all $q$, with at most $d$ exceptions.  As $\pi$ is a
  permutation, it follows that $\gamma_q(\alpha) \in R$ for all $q$,
  with at most $d$ exceptions.  This is the statement of the lemma.
\end{proof}
\begin{definition}\label{def:tame}
  We say that $\alpha \in K$ is \emph{tame} if $\frac{1}{\alpha - q}
  \in R$ for almost all $q \in K_0$, and \emph{wild} otherwise.
\end{definition}
By the lemma, all elements of $R$ are tame.

\begin{lemma}\label{algebra-case}
  Let $K$ be a field, $K_0$ be a subfield, and $R$ be a
  $K_0$-subalgebra.  Let $q_1, \ldots, q_n$ be $n$ distinct elements
  of $K_0$.  The following are equivalent:
  \begin{itemize}
  \item $R$ is an intersection of $n$ or fewer valuation rings on $K$.
  \item For any $x \in K$, at least one of
    \begin{equation*}
      x, \frac{1}{x - q_1}, \ldots, \frac{1}{x - q_n}
    \end{equation*}
    is in $R$.
  \end{itemize}
\end{lemma}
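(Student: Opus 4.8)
The plan is to prove the two implications separately; the implication from ``$R$ is an intersection of $\le n$ valuation rings'' to the divisibility condition is a short pigeonhole argument, while the converse carries the real content. For the first implication, suppose $R = \Oo_1 \cap \cdots \cap \Oo_m$ with $m \le n$ valuation rings on $K$. Since $R \supseteq K_0$, each $\Oo_i$ contains $K_0$, so the residue map $\res_i \colon \Oo_i \to k_i$ embeds $K_0$ into $k_i$ and in particular separates $q_1, \ldots, q_n$. Given $x \in K$ (we may assume $x \ne q_j$ for all $j$, else $x \in K_0 \subseteq R$), I would note that $x \notin \Oo_i$ iff $\res_i(x) = \infty$, while $\tfrac{1}{x - q_j} \notin \Oo_i$ iff $\res_i(x) = q_j$. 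Since $\res_i(x)$ is a single element of $k_i \cup \{\infty\}$ and $\infty, q_1, \ldots, q_n$ are $n+1$ distinct values there, at most one of the $n+1$ elements $x, \tfrac{1}{x-q_1}, \ldots, \tfrac{1}{x-q_n}$ fails to lie in $\Oo_i$; ranging over $i = 1, \ldots, m \le n$, at most $n$ of them fail somewhere, so at least one lies in every $\Oo_i$, hence in $R$.

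For the converse, assume the divisibility condition. I would first observe $\Frac(R) = K$ (if $\tfrac{1}{x-q_j} \in R$ then $x = q_j + (\tfrac{1}{x-q_j})^{-1} \in \Frac(R)$), and then show that each localization $R_\mm$ at a maximal ideal $\mm$ is a valuation ring on $K$. Given $x \in K \setminus R_\mm$, in particular $x \notin R$, so some $u := \tfrac{1}{x - q_j}$ lies in $R$. If $u \notin \mm$ then $u \in R_\mm^\times$, forcing $x = q_j + u^{-1} \in R_\mm$, a contradiction; hence $u \in \mm$. Then $x^{-1} = \tfrac{u}{q_j u + 1}$, and since $q_j u \in \mm$ the element $q_j u + 1$ is a unit in $R_\mm$, so $x^{-1} \in R_\mm$ (indeed in $\mm R_\mm$). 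Thus $R_\mm$ is a valuation ring of $\Frac(R_\mm) = K$; in particular $R$ is a Pr\"ufer domain, and $R = \bigcap_\mm R_\mm$ over its maximal ideals by the standard fact that a domain is the intersection of its localizations at maximal ideals.

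It remains to show $R$ has at most $n$ maximal ideals, which I would prove by contradiction. Suppose $\mm_0, \ldots, \mm_n$ are distinct maximal ideals; they are pairwise comaximal, so the Chinese Remainder Theorem lets me choose $t, s \in R$ with $\res_{\mm_0}(t) = 0$, $\res_{\mm_i}(t) = 1$ for $i \ge 1$, and $\res_{\mm_0}(s) = 1$, $\res_{\mm_i}(s) = q_i$ for $i \ge 1$ — except that, for the at most one index $i_0$ with $q_{i_0} = 0$, I instead require $\res_{\mm_{i_0}}(s) = 0$. Put $x := s/t \in K$. Since $t \in \mm_0 R_{\mm_0}$ and $s \in R_{\mm_0}^\times$, we get $x \notin R_{\mm_0}$, hence $x \notin R$. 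For each $i \ge 1$, a short residue computation inside the valuation ring $R_{\mm_i}$ gives $\res_{\mm_i}(x) = q_i$, so $x - q_i$ is a nonzero element of $\mm_i R_{\mm_i}$ and therefore $\tfrac{1}{x - q_i} \notin R_{\mm_i} \supseteq R$. Thus none of $x, \tfrac1{x-q_1}, \ldots, \tfrac1{x-q_n}$ lies in $R$, contradicting the hypothesis. Hence $R$ has some $m \le n$ maximal ideals, and $R = \bigcap_{i=1}^m R_{\mm_i}$ exhibits $R$ as an intersection of $\le n$ valuation rings on $K$. I expect no serious obstacle; the only delicate points are the algebraic identity $x^{-1} = u/(q_j u + 1)$ in the localization step and the residue bookkeeping — especially the special role of a vanishing $q_i$ — in the Chinese Remainder step.
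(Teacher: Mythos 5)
Your proof is correct. The forward direction is the same pigeonhole-on-residues argument as the paper's. For the converse, however, you take a genuinely different route. The paper first shows that every finitely generated $R$-submodule of $K$ is singly generated (by applying the hypothesis to $x = a/b$), concludes that $R$ is a Bezout domain with $\Frac(R)=K$, runs the same Chinese-Remainder argument you do to bound the number of maximal ideals by $n$, and then invokes Proposition~6.9 and Corollary~6.7 of \cite{prdf2} to pass from ``Bezout domain with finitely many maximal ideals'' to ``intersection of at most $n$ valuation rings.'' You instead prove directly that each localization $R_\mm$ is a valuation ring of $K$ (via the identity $x^{-1} = u/(q_ju+1)$ with $u = 1/(x-q_j) \in \mm$), so that $R$ is Pr\"ufer and $R = \bigcap_\mm R_\mm$, and then the same CRT bound on the number of maximal ideals finishes the job. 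Your version is more self-contained --- it replaces the external citations with a short, standard localization argument --- at the cost of not recording the Bezout property, which the paper's surrounding development actually uses elsewhere. One cosmetic remark: your ``exception'' for the index $i_0$ with $q_{i_0}=0$ in the CRT step is vacuous, since requiring $s \equiv q_{i_0} \pmod{\mm_{i_0}}$ already means $s \equiv 0$; no special treatment is needed, and the congruences are solvable for arbitrary prescribed residues because distinct maximal ideals are comaximal.
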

\begin{proof}
  First suppose that $R = \Oo_1 \cap \cdots \cap \Oo_m$ for some $m
  \le n$.  Given $x \in K$ and $1 \le i \le m$, note that almost all
  of the values
  \begin{equation*}
    x, \frac{1}{x - q_1}, \ldots, \frac{1}{x - q_n}
  \end{equation*}
  are in $\Oo_i$, with at most one exception.  Indeed,
  \begin{itemize}
  \item $x \notin \Oo_i \iff \res_i(x) = \infty$.
  \item $\frac{1}{x - q_j} \notin \Oo_i \iff \res_i(x) = q_j$.
  \end{itemize}
  As there are $n+1$ total numbers, and at most $n$ valuation rings,
  at least one number remains in all the $\Oo_i$, hence in $R$.

  Conversely, suppose that for any $x \in K$, at least one of the
  numbers
  \begin{equation*}
    x, \frac{1}{x - q_1}, \ldots, \frac{1}{x - q_n}
  \end{equation*}
  is in $R$.  (We repeat the arguments following \cite{prdf},
  Claim~10.26.)  Then first of all, any finitely-generated
  $R$-submodule of $K$ is singly-generated.  It suffices to consider
  $aR + bR$ for some $a, b \in K$.  By the hypothesis (applied to $x =
  a/b$), at least one of the numbers
  \begin{equation*}
    a/b, \frac{b}{a - q_1b}, \ldots, \frac{b}{a - q_nb}
  \end{equation*}
  lies in $R$.  If $a/b \in R$, then $aR + bR = bR$.  If $b/(a - q_ib)
  \in R$, then
  \begin{equation*}
    aR + bR = (a - q_ib)R.
  \end{equation*}
  As in the proof of \cite{prdf2}, Proposition~6.2, it follows that
  $\Frac(R) = K$ and $R$ is a Bezout domain.  We claim that $R$ has
  finitely many maximal ideals, in fact, no more than $n$ maximal
  ideals.  Otherwise, let $\mm_0, \ldots, \mm_n$ be $n+1$ distinct
  maximal ideals.  By the Chinese remainder theorem, find elements $a,
  b \in R$ such that
  \begin{align*}
    a & \equiv 1 \pmod{\mm_0} \\
    b & \equiv 0 \pmod{\mm_0} \\
    a & \equiv q_i \pmod{\mm_i} \\
    b & \equiv 1 \pmod{\mm_i}.
  \end{align*}
  Then $a/b$ cannot be in $R$, or else
  \begin{equation*}
    b \equiv 0 \pmod{\mm_0} \implies a \equiv 0 \pmod{\mm_0}.
  \end{equation*}
  Similarly, $b/(a - q_ib)$ cannot be in $R$ or else
  \begin{equation*}
    a - q_ib \equiv 0 \pmod{\mm_i} \implies b \equiv 0 \pmod{\mm_i}.
  \end{equation*}
  So this contradicts the hypothesis.

  Therefore, $R$ has at most $n$ maximal ideals.  By Proposition~6.9
  in \cite{prdf2}, $R$ is a multi-valuation ring.  By Corollary~6.7 in
  \cite{prdf2}, $R$ is an intersection of at most $n$ valuation rings
  on $K$.
\end{proof}

\begin{proposition}
  Let $\varsigma$ be a $d$-inflator on a field $K$, with
  fundamental ring $R$.  The following are equivalent:
  \begin{itemize}
  \item The ring $R$ is a multi-valuation ring.
  \item The ring $R$ is an intersection of $d$ or fewer valuation
    rings.
  \item Every $\alpha \in K$ is tame.
  \end{itemize}
\end{proposition}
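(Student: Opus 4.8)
The plan is to prove the cycle $(2) \Rightarrow (1) \Rightarrow (3) \Rightarrow (2)$. The implication $(2) \Rightarrow (1)$ is immediate, since an intersection of $d$ or fewer valuation rings is in particular a finite intersection of valuation rings, hence a multi-valuation ring. The only non-routine point is a small collision argument inside $(3) \Rightarrow (2)$; everything else is bookkeeping with Lemma~\ref{tame-wild} and Lemma~\ref{algebra-case}.

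For $(1) \Rightarrow (3)$, write $R = \Oo_1 \cap \cdots \cap \Oo_m$ as a finite intersection of valuation rings on $K$, and fix $\alpha \in K$. I want to bound the set of $q \in K_0$ with $\frac{1}{\alpha - q} \notin R$. For each fixed $i$, the element $\frac{1}{\alpha - q}$ can fail to lie in $\Oo_i$ only when $\alpha \in \Oo_i$ and $\alpha - q \in \mm_i$, that is, only when $q$ is the unique element of $K_0$ (if one exists) whose image in the residue field of $\Oo_i$ equals $\res_i(\alpha)$; here we use that the valuation of $\Oo_i$ is trivial on $K_0$. So each $\Oo_i$ rules out at most one value of $q$, and since $\frac{1}{\alpha-q} \in R$ iff $\frac{1}{\alpha-q} \in \Oo_i$ for all $i$, we get $\frac{1}{\alpha - q} \in R$ for all but at most $m$ values of $q$. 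As $K_0$ is infinite, $\alpha$ is tame; since $\alpha$ was arbitrary, $(3)$ holds.

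For $(3) \Rightarrow (2)$, fix any $d$ distinct elements $q_1, \ldots, q_d \in K_0$; by Lemma~\ref{algebra-case} it is enough to show that for every $x \in K$ at least one of $x, \frac{1}{x - q_1}, \ldots, \frac{1}{x - q_d}$ lies in $R$. If $x \in R$ we are done, so assume $x \notin R$; then $x \notin K_0$ (as $K_0 \subseteq R$), so all the $\frac{1}{x - q_i}$ are defined and are pairwise distinct. Since $x$ is tame, $S_x \cap R \neq \emptyset$, so Lemma~\ref{tame-wild} gives $|S_x \setminus R| \le d$. The $d+1$ elements $x, \frac{1}{x - q_1}, \ldots, \frac{1}{x - q_d}$ all lie in $S_x$, so if they are pairwise distinct they cannot all avoid $R$; as $x \notin R$, some $\frac{1}{x - q_i}$ lies in $R$, as needed. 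Thus it remains to rule out the one possible coincidence, namely $x = \frac{1}{x - q_j}$ for some $j$.

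Suppose $x = \frac{1}{x - q_j}$, so $x^2 - q_j x - 1 = 0$. Since $x \notin K_0$, the polynomial $T^2 - q_j T - 1$ is the minimal polynomial of $x$ over $K_0$ and is therefore irreducible, so $L := K_0[x]$ is a quadratic field extension of $K_0$ inside $K$. Now $R \cap L$ is a $K_0$-subalgebra of $L$, hence a $K_0$-subspace of the two-dimensional $K_0$-vector space $L$ that contains the line $K_0 \cdot 1$; so $R \cap L$ is either $K_0$ or $L$, and since $x \notin R$ it must be $K_0$. But for every $q \in K_0$ the element $\frac{1}{x - q}$ lies in the field $L$ and is not in $K_0$ (it would be only if $x \in K_0$), so $\frac{1}{x - q} \notin R$ for every $q \in K_0$. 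As $K_0$ is infinite, $x$ is wild, contradicting $(3)$. This rules out the coincidence and finishes $(3) \Rightarrow (2)$, and hence the proposition. The main obstacle is exactly this last paragraph: a naive pigeonhole on the $d+1$ values $x, \frac{1}{x-q_1}, \ldots, \frac{1}{x-q_d}$ breaks down precisely when one of them equals $x$, and eliminating that case is what forces the quadratic-extension argument.
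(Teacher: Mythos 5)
Your proof is correct and follows the same route as the paper, whose entire proof is the citation ``Lemmas~\ref{tame-wild} and \ref{algebra-case}'': $(2)\Rightarrow(1)$ is trivial, $(1)\Rightarrow(3)$ is the first half of the argument for Lemma~\ref{algebra-case}, and $(3)\Rightarrow(2)$ is the pigeonhole via Lemma~\ref{tame-wild}. The one place you diverge is the final paragraph handling the coincidence $x = 1/(x-q_j)$. That case is a non-issue under the reading of Lemma~\ref{tame-wild} that its proof actually establishes: the exceptions are counted over the indices $q \in K_0 \cup \{\infty\}$ (via the maps $\gamma_q$), not over the elements of the set $S_\alpha$, so the $d+1$ indices $\infty, q_1, \ldots, q_d$ already give the conclusion even if two of the corresponding field elements coincide. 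Your quadratic-extension argument is nevertheless a correct patch if one insists on the literal, set-based phrasing of the lemma, so nothing is wrong --- it is just extra work the intended reading makes unnecessary.
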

\begin{proof}
  Lemmas~\ref{tame-wild} and \ref{algebra-case}.
\end{proof}
\begin{definition}\label{def:mvt}
  A $d$-inflator is of \emph{multi-valuation type} if the
  fundamental ring $R$ is a multi-valuation ring.  Equivalently,
  every $\alpha \in K$ is tame.
\end{definition}
\begin{definition}\label{def-almost}
  A $d$-inflator is \emph{weakly of multi-valuation type}
  if the fundamental ring $R$ contains a nonzero ideal of some
  multi-valuation ring.
\end{definition}
\begin{example}
  The multi-valuation inflators of Example~\ref{dorothy} are of
  multi-valuation type, by Example~\ref{dorothy-ring}.
\end{example}

\subsection{Malleable $d$-inflators}\label{sec:malleable}
\begin{definition}
  Let $f : \mathcal{L} \to \mathcal{L}'$ be an order-preserving
  $\{\bot,\top\}$-preserving map between two finite-length modular
  lattices.  Say that $f$ has \emph{atom-lifting} if for every atom
  $x' \in \mathcal{L}'$, there is an atom $x \in \mathcal{L}$ such
  that $f(x) \ge x'$.
\end{definition}
\begin{definition}
  Let $f : \mathcal{L} \to \mathcal{L}'$ be an order-preserving map
  between two finite-length modular lattices.  Say that $f$ is
  \emph{malleable} if for every interval $[x,y] \subseteq
  \mathcal{L}$, the map
    \begin{equation*}
      [x,y] \to [f(x),f(y)]
    \end{equation*}
    has atom-lifting.
\end{definition}
Concretely, malleability of $f$ means that for any $x, z \in \mathcal{L}$
and $y \in \mathcal{L}'$ such that
\begin{equation*}
  x \le z
\end{equation*}
\begin{equation*}
  f(x) \le y \le f(z)
\end{equation*}
\begin{equation*}
  \ell(y) = 1 + \ell(f(x))
\end{equation*}
there exists $y' \in \mathcal{L}$ such that
\[ x \le y' \le z \]
\[ f(x) \le y \le f(y') \le f(z)\]
\[ \ell(y') = \ell(x) + 1.\]
\begin{definition}\label{def:mal}
  We say that a morphism of directories $f : D_\bullet \to D'_\bullet$
  is \emph{malleable} if every $n$ the map $f_n : D_n \to D'_n$ is
  malleable.  We say that a $d$-inflator on $K$ is
  \emph{malleable} if the associated directory morphism
  $\Dir_K(K) \to D_\bullet$ is malleable.
\end{definition}

\begin{proposition}
  A 1-inflator is malleable if and only if it is
  (equivalent to) a 1-inflator
  \begin{equation*}
    \Dir_K(K) \to \Dir_k(k)
  \end{equation*}
  induced as in Theorem~\ref{thm:calvin} by a valuation $K_0$-algebra
  $\Oo \subseteq K$ with residue field $k$.
\end{proposition}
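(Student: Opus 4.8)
The plan is to combine the classification of $1$-inflators (Theorem~\ref{thm:1-fold}, via Theorem~\ref{thm:1-fold-example-2}) with a direct analysis of malleability. By that classification, an arbitrary $1$-inflator $\varsigma : \Dir_K(K) \to \Dir_L(L)$ is determined up to equivalence by a valuation $K_0$-algebra $\Oo$ on $K$ with residue field $k$ together with a $K_0$-embedding $\iota : k \hookrightarrow L$ into a division algebra $L$, via $\varsigma_n(V) = L \otimes_k \bigl( (V \cap \Oo^n + \mm^n)/\mm^n \bigr)$. Since a directory isomorphism is a levelwise lattice isomorphism, it is trivially malleable, and malleability is therefore invariant under equivalence of inflators. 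So it suffices to prove: (a) if $\iota$ is an isomorphism — equivalently, if $\varsigma$ is the Theorem~\ref{thm:calvin} inflator up to the directory isomorphism $\Dir_k(k) \cong \Dir_L(L)$ induced by $\iota$ — then $\varsigma$ is malleable; and (b) if $\iota$ is not surjective, then $\varsigma$ is not malleable. Together these give the proposition.

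For (b), I would assume $\varsigma$ malleable and apply malleability of $\varsigma_2$ to the interval $[0, K^2] \subseteq \Sub_K(K^2)$. Atom-lifting for $[0,K^2] \to [\varsigma_2(0),\varsigma_2(K^2)] = [0, L^2] = \Sub_L(L^2)$ says every $1$-dimensional $L$-subspace $U \subseteq L^2$ satisfies $U \subseteq \varsigma_2(z)$ for some $1$-dimensional $K$-subspace $z \subseteq K^2$; since $\varsigma_2(z)$ is again $1$-dimensional by the length-scaling law, this forces $U = \varsigma_2(z)$. Thus $\varsigma_2$ maps the projective line $\Pp^1(K) = \{K\cdot(1,a) : a \in K\} \cup \{0 \oplus K\}$ \emph{onto} $\Pp^1(L)$. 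But computing $\varsigma_2$ on lines exactly as in Example~\ref{calvin-ring}, then tensoring up to $L$ (cf.\ Example~\ref{oops}), gives $\varsigma_2(K\cdot(1,a)) = L \cdot (1, \iota(\bar a))$ when $a \in \Oo$, while $\varsigma_2(K\cdot(1,a)) = 0 \oplus L = \varsigma_2(0 \oplus K)$ when $a \notin \Oo$. Hence the image of $\varsigma_2$ on $\Pp^1(K)$ is $\{L\cdot(1,b) : b \in \iota(k)\} \cup \{0 \oplus L\}$, which is all of $\Pp^1(L)$ only if $\iota(k) = L$. So malleability forces $\iota$ to be onto, i.e.\ $\varsigma$ is equivalent to a Theorem~\ref{thm:calvin} inflator.

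For (a), I would work directly with $\varsigma_n(V) = (V \cap \Oo^n + \mm^n)/\mm^n$, identified with $r(V \cap \Oo^n) \subseteq k^n$ for the coordinatewise residue map $r : \Oo^n \to k^n$. Fix an interval $[x,z] \subseteq \Sub_K(K^n)$ and a subspace $w$ with $\varsigma_n(x) \subseteq w \subseteq \varsigma_n(z)$ and $\dim_k w = \dim_k \varsigma_n(x) + 1$; the goal is a $y'$ with $x \subseteq y' \subseteq z$, $\dim_K y' = \dim_K x + 1$, and $\varsigma_n(y') \supseteq w$. Applying Lemma~\ref{v-o-lemma} first to $z$, then to $x$ inside the coordinate subspace that $z$ has become, yields $\mu \in GL_n(\Oo)$ with $\mu z = K^{p+q} \oplus 0$ and $\mu x = K^p \oplus 0$, where $p = \dim_K x$, $p+q = \dim_K z$; since $\mu\Oo^n = \Oo^n$ and $r \circ \mu = \bar\mu \circ r$ with $\bar\mu \in GL_n(k)$, the $\varsigma_n$ transform equivariantly, so replacing $(x,z,w)$ by $(\mu x,\mu z,\bar\mu w)$ I may assume $x = K^p \oplus 0$ and $z = K^{p+q} \oplus 0$. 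Then $\varsigma_n(x) = k^p \oplus 0$ and $\varsigma_n(z) = k^{p+q} \oplus 0$, so $w = (k^p \oplus 0) + k\bar v$ for some nonzero $\bar v \in 0^p \oplus k^q \oplus 0^{n-p-q}$. Lift $\bar v$ through $r$ to $v \in 0^p \oplus \Oo^q \oplus 0$ and put $y' = (K^p \oplus 0) + Kv$: the middle block of $v$ is nonzero so $v \notin K^p \oplus 0$ and $\dim_K y' = p+1$; clearly $x \subseteq y' \subseteq z$; and $y' \cap \Oo^n \supseteq (\Oo^p \oplus 0) + \Oo v$, so $\varsigma_n(y') = r(y' \cap \Oo^n) \supseteq (k^p \oplus 0) + k\bar v = w$. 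This gives atom-lifting for every interval in every $\Sub_K(K^n)$, hence $\varsigma$ is malleable.

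The only step needing genuine care is the coordinate normalization in (a): one must know that a nested pair $x \subseteq z \subseteq K^n$ can be put simultaneously into standard coordinate position by an element of $GL_n(\Oo)$ — equivalently, that $x \cap \Oo^n$ has a basis extending to a basis of $z \cap \Oo^n$ — which is the two-fold application of Lemma~\ref{v-o-lemma} indicated above (using that $z\cap\Oo^n$ is free and $x\cap\Oo^n$ is a direct summand of it). Everything else is a routine length/dimension bookkeeping.
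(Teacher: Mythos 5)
Your proof is correct and follows essentially the same route as the paper: the forward direction comes down to lifting a residue vector from $W \cap \Oo^n$ and adjoining its $K$-span to $V$ (the paper does this directly, so your preliminary simultaneous $GL_n(\Oo)$-normalization of the nested pair — which is sound via the double application of Lemma~\ref{v-o-lemma} — is extra work you don't actually need), and the reverse direction applies atom-lifting on $[0,K^2]$ to force every line in $L^2$ to be $\varsigma_2$ of a line in $K^2$, hence $\iota(k)=L$. The paper phrases the latter by exhibiting a single line $L\cdot(1,a)$ with $a \in L\setminus k$ that is not of the form $L\otimes_k W$, whereas you compute the full image of $\Pp^1(K)$; these are the same argument.
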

\begin{proof}
  First suppose that $\varsigma : \Dir_K(K) \to
  \Dir_k(k)$ comes from a valuation ring $\Oo$.  Then
  \begin{equation*}
    \varsigma_n(V) = (V \cap \Oo^n + \mm^n)/\mm^n.
  \end{equation*}
  for any $n, V \in \Sub_K(K^n)$.  Let $r : \Oo^n \twoheadrightarrow
  \Oo^n/\mm^n$ be the coordinatewise residue map.  Then
  $\varsigma_n(V)$ is the direct image $r(V \cap \Oo^n)$.

  We check malleability.  Suppose $V, W$ are subspaces of $K^n$ with
  $V \subseteq W$.  Suppose $X$ is a subspace of $k^n$ with
  \begin{equation*}
    r(V \cap \Oo^n) \subseteq X \subseteq r(W \cap \Oo^n),
  \end{equation*}
  and $\dim_k(X) = 1 + \dim_k(r(V \cap \Oo^n))$.  Take $\vec{a} \in W
  \cap \Oo^n$ such that $r(\vec{a}) \in X \setminus r(V \cap \Oo^n)$.  Then
  \begin{equation*}
    X = k \cdot r(\vec{a}) + r(V \cap \Oo^n).
  \end{equation*}
  Let $V' = V + K \cdot \vec{a}$.  Then $V \subseteq V' \subseteq W$
  and $\dim(V') = \dim(V) + 1$.  Also, $\vec{a} \in V' \cap \Oo^n$,
  and so
  \begin{equation*}
    r(\vec{a}) \in r(V' \cap \Oo^n)
  \end{equation*}
  It follows that
  \begin{equation*}
    r(V \cap \Oo^n) \subseteq X \subseteq r(V' \cap \Oo^n),
  \end{equation*}
  proving malleability.

  Now suppose that $\varsigma$ is any 1-inflator on $K$.
  By Theorem~\ref{thm:1-fold}, we may assume $\varsigma$ is a composition
  \begin{equation*}
    \Dir_K(K) \to \Dir_k(k) \to \Dir_L(L)
  \end{equation*}
  where the first map arises from a valuation ring $\Oo$ on $K$ with
  residue field $k$, as in Theorem~\ref{thm:calvin}, and the second
  map arises from a field extension $L/k$.  Specifically, the second
  map sends $V \in \Sub_k(k^n)$ to $L \otimes_k V \in \Sub_L(L^n)$.

  Assume $\varsigma$ is malleable; we must show that $k = L$.  If not,
  take $a \in L \setminus k$ and let
  \begin{equation*}
    \Theta_a = L \cdot (1,a) = \{(x,xa) : x \in L\}.
  \end{equation*}
  Then
  \begin{equation*}
    \varsigma(0 \oplus 0) = 0 \oplus 0 \subseteq \Theta_a \subseteq L
    \oplus L = \varsigma(K \oplus K)
  \end{equation*}
  By malleability, there is a 1-dimensional subspace $V \subseteq K^2$
  such that
  \begin{equation*}
    \Theta_a \subseteq \varsigma(V).
  \end{equation*}
  The right-hand side must have dimension equal to $1 \cdot \dim_K(V)
  = 1$, so equality must hold: $\Theta_a = \varsigma(V)$.  However,
  $\varsigma(V)$ is of the form $L \otimes_k W$ for some subspace $W
  \subseteq k^2$.  The subspace $\Theta_a$ does not have this form, by
  choice of $a$.  This contradiction shows that $k = L$.  Thus
  $\varsigma$ is a 1-inflator coming from a valuation as in
  Theorem~\ref{thm:calvin}.
\end{proof}

\begin{example}\label{eric-not-malleable}
  Consider the 2-inflator
  \begin{align*}
    \varsigma : \Dir_{\mathbb{C}}(\mathbb{C}) &\to \Dir_{\mathbb{C}}(\mathbb{C}) \times \Dir_{\mathbb{C}}(\mathbb{C}) \\
    V & \mapsto (V + \overline{V}, V \cap \overline{V})
  \end{align*}
  from Example~\ref{eric}.  Then $\varsigma$ is \emph{not} malleable.
  Otherwise, from the chain
  \begin{equation*}
    \varsigma_2(0^2) = (0^2,0^2) \subseteq (0^2,\mathbb{C} \cdot (1,i)) \subseteq (\mathbb{C}^2, \mathbb{C}^2) = \varsigma_2(\mathbb{C}^2),
  \end{equation*}
  malleability implies the existence of a line $L \subseteq \mathbb{C}^2$ such that
  \begin{equation*}
    (0^2,\mathbb{C} \cdot (1,i)) \subseteq (L + \overline{L}, L \cap \overline{L}).
  \end{equation*}
  Then $(1,i) \in L \cap \overline{L}$, so $\overline{L}$ contains
  both $(1,i)$ and $(1,-i)$, failing to be a line.
\end{example}
\begin{example}\label{fiona-malleable}
  Consider the 2-inflator
  \begin{align*}
    \varsigma : \Dir_{\mathbb{C}}(\mathbb{C}) &\to \Dir_{\mathbb{R}}(\mathbb{R}) \times \Dir_{\mathbb{R}}(\mathbb{R}) \\
    V & \mapsto (V + \overline{V}, V \cap \overline{V})
  \end{align*}
  from Example~\ref{fiona}.  Then $\varsigma$ \emph{is} malleable.  First
  note that we can rewrite this example more accurately as
  \begin{align*}
    \varsigma : \Dir_{\mathbb{C}}(\mathbb{C}) &\to \Dir_{\mathbb{R}}(\mathbb{R}) \times \Dir_{\mathbb{R}}(\mathbb{R}) \\
    V & \mapsto (\pi(V),V \cap \Rr^n)
  \end{align*}
  where $\pi : \mathbb{C}^n \to \Rr^n$ is the projection onto the real
  axis (the coordinatewise real part map).  Now suppose that $V \subseteq W \subseteq
  \mathbb{C}^n$, and
  \begin{equation*}
    (\pi(V),V \cap \Rr^n) \subseteq (H_1,H_2) \subseteq (\pi(W),W \cap \Rr^n),
  \end{equation*}
  where the length of $(H_1,H_2)$ over $(\pi(V),V \cap \Rr^n)$ is 1.
  This length is exactly
  \begin{equation*}
    \dim_\Rr(H_1/\pi(V)) + \dim_\Rr(H_2/(V \cap \Rr^n)),
  \end{equation*}
  so we are in one of two cases:
  \begin{enumerate}
  \item $\dim_\Rr(H_1/\pi(V)) = 1$ and $H_2 = V \cap \Rr^n$.  Take
    $\vec{a} \in H_1 \setminus \pi(V)$.  The dimension of $H_1/\pi(V)$
    ensures that $H_1 = \pi(V) + \Rr \cdot \vec{a}$.  The fact that
    $H_1 \subseteq \pi(W)$ implies that $\vec{a} = \pi(\vec{b})$ for
    some $\vec{b} \in W$.  Let $V' = V + \mathbb{C} \cdot \vec{b}$.
    Then
    \begin{itemize}
    \item $V \subseteq V' \subseteq W$, and $\dim_{\mathbb{C}}(V') = 1 + \dim_{\mathbb{C}}(V)$.
    \item $\pi(V') \supseteq \pi(V)$ and $\vec{a} \in \pi(V')$, so
      $\pi(V') \supseteq H_1$.
    \end{itemize}
    Then
    \begin{equation*}
      (\pi(V),V \cap \Rr^n) \subseteq (H_1,H_2) = (H_1,V \cap \Rr^n)
      \subseteq (\pi(V'),V' \cap \Rr^n)
    \end{equation*}
    verifying malleability.
  \item $H_1 = \pi(V)$ and $\dim_\Rr(H_2/(V \cap \Rr^n)) = 1$.  Take
    $\vec{a} \in H_2 \setminus (V \cap \Rr^n)$.  Then $\vec{a} \in H_2
    \subseteq W \cap \Rr^n$.  Also $H_2 = V \cap \Rr^n + \Rr \cdot
    \vec{a}$.  Let $V' = V + \mathbb{C} \cdot \vec{a}$.  Then
    $\dim_{\mathbb{C}}(V') = 1 + \dim_{\mathbb{C}}(V)$.  We have $V'
    \subseteq W$, because $\vec{a} \in W$.  Note that $V' \cap \Rr^n$
    contains both $V \cap \Rr^n$ (trivially) and $\vec{a}$ (because
    $\vec{a} \in \Rr^n$).  Thus $V' \cap \Rr^n$ must contain $V \cap
    \Rr^n + \Rr \cdot \vec{a} = H_2$, ensuring that
    \begin{equation*}
      (\pi(V),V \cap \Rr^n) \subseteq (H_1,H_2) = (\pi(V),H_2)
      \subseteq (\pi(V'),V' \cap \Rr^n).
    \end{equation*}
  \end{enumerate}
\end{example}

\part{Inflators on dp-finite fields}  \label{part:2}
In Part~\ref{part:2}, we carry out the main construction of inflators
on dp-finite fields.  Here is a brief synopsis.

Let $\Kk$ be a saturated, unstable, dp-finite field.  Let $\Lambda_1$
be the lattice of type-definable additive subgroups---or rather,
type-definable $K_0$-subspaces.  If $K_0$ is a magic field, there is a
uniform bound on the size of cubes in the lattice $\Lambda_1$; we say
that $\Lambda_1$ is a \emph{cube-bounded} modular lattice.

The notion of cube-boundedness naturally extends to directories and
abelian categories: a directory $D_\bullet$ is cube-bounded if every
$D_n$ is cube-bounded; an abelian category is cube-bounded if every
subobject lattice is cube-bounded.  In a dp-finite setting, the
category $\Hh$ of \S\ref{sec:tdef-setting} and the directory $\Lambda$
of \S\ref{sec:lambda} are both cube-bounded.  For abelian categories,
cube-boundedness is equivalent to the existence of a subadditive
finite rank function on objects.  Ultimately, the category $\Hh$ is
cube-bounded because of dp-rank.

In \S9.4 of \cite{prdf}, we constructed a modular pregeometry on the
``quasi-atoms'' in any lower-bounded modular lattice
$(M,\vee,\wedge,\bot)$.  If $M$ is a cube-bounded lattice, this
pregeometry has finite rank.  The lattice of closed sets is therefore
a semisimple modular lattice $M^\flat$.  We call $M^\flat$ the
\emph{flattening} of $M$.  There is an analogue of flattening for
abelian categories and directories as well.  In each case, flattening
turns something cube-bounded into something semisimple.  If $D_\bullet
= (D_1,D_2,D_3,\ldots)$ is a cube-bounded directory, we show that the
levelwise flattenings $(D_1)^\flat, (D_2)^\flat, (D_3)^\flat,\ldots$
naturally assemble into a semisimple directory
\begin{equation*}
  D^\flat_\bullet = (D_1^\flat, D_2^\flat, D_3^\flat, \ldots).
\end{equation*}
Moreover, there is a ``flattening map,'' a directory morphism
$D_\bullet \to D^\flat_\bullet$.

Applying this to the dp-finite field $\Kk$, for any type-definable
subgroup $A \subseteq \Kk$ we get a composition
\begin{equation} \label{the-big-one}
  \Dir_\Kk(\Kk) \hookrightarrow \Dir_\Hh(\Kk) \twoheadrightarrow
  \Dir_\Hh(\Kk/A) \to \left(\Dir_\Hh(\Kk/A)\right)^\flat
\end{equation}
in which the maps are respectively induced by
\begin{enumerate}
\item The faithful functor $\Kk\Vect^f \hookrightarrow \Hh$.
\item Pushforward along $\Kk \twoheadrightarrow \Kk/A$.
\item Flattening.
\end{enumerate}
For ``suitable'' $A$, the composition of (\ref{the-big-one}) turns out
to be an inflator, as predicted by Speculative Remark~10.10 in
\cite{prdf}.

Let $r$ be maximal such that a cube of size $2^r$ exists in
$\Lambda_1$.  Then $r$ is at most the dp-rank of $\Kk$.  The
``suitability'' requirement on $A$ is that $A$ is a \emph{pedestal}, i.e.,
the base of an $r$-cube in $\Lambda_1$.  The resulting inflator is an
$r$-inflator.  Moreover, this $r$-inflator knows something about
$A$---the fundamental ring of the inflator is exactly
\begin{equation*}
  \Stab(A) = \{x \in \Kk : x \cdot A \subseteq A\}.
\end{equation*}
The main construction actually works in greater generality than dp-finite
fields.  Abstractly, the only thing we need is a $K_0$-linear functor
from $K\Vect^f$ to a cube-bounded $K_0$-linear abelian category
$\Cc$.  For the dp-finite case, $K$ is $\Kk$ and $\Cc$ is $\Hh$.  But
the machinery also applies when $K$ is a field of finite burden,
and $\Cc$ is the isogeny category $\Hh^{00}$ of
\S\ref{sec:tdef00-setting}.

Flattening turns out to be closely related to pro-categories.  If $M$
is a cube-bounded modular lattice, then $\Pro M$ is also a modular
lattice, and the pregeometry on quasi-atoms in $M$ is equivalent to
the geometry on atoms in $\Pro M$.  The flattening operation on a
cube-bounded abelian category $\Cc$ is essentially the socle functor
on $\Pro \Cc$.

Using pro-categories, the composition of (\ref{the-big-one}) can be
expressed less opaquely as
\begin{equation*}
  \Dir_\Kk(\Kk) \hookrightarrow \Dir_{\Pro \Hh}(\Kk) \twoheadrightarrow \Dir_{\Pro \Hh}(A^+/A)
\end{equation*}
where
\begin{itemize}
\item $A^+/A$ is the socle of $\Kk/A$ in the category $\Pro \Hh$.
\item The first map is induced by the faithful exact functor
  \begin{equation*}
    \Kk\Vect^f \to \Hh \to \Pro \Hh.
  \end{equation*}
\item The second map is the interval retract onto the subquotient
  $A^+/A$ of $\Kk$.
\end{itemize}

We discuss cube-boundedness in \S\ref{sec:cube-bounded}, flattening in
\S\ref{sec:flattening}, the abstract form of the main construction in
\S\ref{sec:pedestals}, and the dp-finite case in
\S\ref{sec:model-theory-inflator}.

\section{Cube-boundedness} \label{sec:cube-bounded}

\subsection{Cube bounded lattices}
Recall the notion of strict $r$-cubes and reduced rank (Definitions
9.13, 9.17 in \cite{prdf}).  A strict $r$-cube in a modular lattice
$M$ is an injective homomorphism of (unbounded) lattices from the
powerset of $r$ to $M$.  The \emph{base} of the cube is the image of
$\emptyset$ under this homomorphism.  Equivalently, a strict $r$-cube
is an (unbounded) sublattice isomorphic to the boolean algebra of size
$2^r$, and the base of the cube is the minimum element of the
sublattice.

The reduced rank $\redrk(M)$ is the maximum $r$ such that a strict
$r$-cube exists in $M$, or $\infty$ if there is no maximum.  If $a \ge
b$ are elements of $M$, then $\redrk(a/b)$ is the reduced rank of the
sublattice $[b,a] \subseteq M$.

\begin{definition}
  \label{def:cube-bounded}
  A bounded modular lattice $(M,\vee,\wedge,\bot,\top)$ is
  \emph{cube-bounded} if $\redrk(M) < \infty$, i.e., there is a
  uniform bound on the size of strict cubes in $M$.
\end{definition}
\begin{remark}\label{rem:mod-fin-cb}
  A modular lattice of finite length is always cube-bounded, because a
  strict $n$-cube contains a chain of length $n$.
\end{remark}
Cube-boundedness is equivalent to something like a uniform
Baldwin-Saxl property:
\begin{proposition}\label{prop:bs}
  Let $M$ be a bounded modular lattice and $n > 1$ be an integer.
  The following are equivalent:
  \begin{enumerate}
  \item \label{bs1} There is a strict $n$-cube in $M$.
  \item \label{bs2} There are $a_1, \ldots, a_n \in M$ such that for
    every $1 \le i \le n$,
    \begin{equation*}
      a_1 \vee \cdots \vee a_n \ne a_1 \vee \cdots \vee \widehat{a_i} \vee \cdots \vee a_n
    \end{equation*}
  \item \label{bs3} There are $a_1, \ldots, a_n \in M$ such that for
    every $1 \le i \le n$,
    \begin{equation*}
      a_1 \wedge \cdots \wedge a_n \ne a_1 \wedge \cdots \wedge
      \widehat{a_i} \wedge \cdots \wedge a_n
    \end{equation*}    
  \end{enumerate}
\end{proposition}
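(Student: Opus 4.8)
The plan is to establish the cycle of equivalences as follows. The implications $(1)\Rightarrow(2)$ and $(1)\Rightarrow(3)$ are formal: if $\psi\colon\mathcal{P}(\{1,\dots,n\})\to M$ is an injective lattice homomorphism witnessing a strict $n$-cube, then setting $a_i:=\psi(\{i\})$ gives $\bigvee_j a_j=\psi(\{1,\dots,n\})$ and $\bigvee_{j\ne i}a_j=\psi(\{1,\dots,n\}\setminus\{i\})$, which are distinct by injectivity, so $(2)$ holds; and setting $a_i:=\psi(\{1,\dots,n\}\setminus\{i\})$ gives $\bigwedge_j a_j=\psi(\emptyset)$ and $\bigwedge_{j\ne i}a_j=\psi(\{i\})$, so $(3)$ holds. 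Moreover condition $(2)$ for $M$ is condition $(3)$ for the opposite lattice $M^{op}$ (again bounded and modular), and a subset of $M$ is a strict $n$-cube in $M$ iff it is one in $M^{op}$, since finite Boolean algebras are self-dual. Hence, once $(2)\Rightarrow(1)$ is proved for every bounded modular lattice, applying it to $M^{op}$ yields $(3)\Rightarrow(1)$. So the whole proposition reduces to proving $(2)\Rightarrow(1)$.

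I would prove $(2)\Rightarrow(1)$ by induction on $n$. The base case $n=2$ is clear: $(2)$ says $a_1$ and $a_2$ are incomparable, and then $\{a_1\wedge a_2,\,a_1,\,a_2,\,a_1\vee a_2\}$ is a four-element sublattice isomorphic to $\mathcal{P}(\{1,2\})$, i.e.\ a strict $2$-cube. For the inductive step, let $n\ge 3$ and suppose $a_1,\dots,a_n$ satisfy $(2)$. Write $t=\bigvee_j a_j$ and $t_i=\bigvee_{j\ne i}a_j$; since $t=a_i\vee t_i$, the hypothesis $t_i\ne t$ is equivalent to $a_i\not\le t_i$. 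Set $b_1:=a_1\wedge t_1$ and pass to the bounded modular interval $[b_1,t_1]$.

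The first thing to check is that the $n-1$ elements $a_2\vee b_1,\dots,a_n\vee b_1$ satisfy condition $(2)$ inside $[b_1,t_1]$. Their join is $b_1\vee t_1=t_1$; and if $b_1\vee\bigvee_{2\le j\ne i}a_j$ were equal to $t_1$ for some $i\ge 2$, then, using $b_1\le a_1$, we would get $t_1\le a_1\vee\bigvee_{2\le j\ne i}a_j=t_i$, hence $a_i\le t_1\le t_i$, contradicting $(2)$. By the inductive hypothesis, $[b_1,t_1]$ contains a strict $(n-1)$-cube $\{e_T:T\subseteq\{2,\dots,n\}\}$, so $b_1\le e_T\le t_1$ for every $T$. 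Now define $\phi\colon\mathcal{P}(\{1,\dots,n\})\to M$ by $\phi(S)=e_S$ when $1\notin S$ and $\phi(S)=e_{S\setminus\{1\}}\vee a_1$ when $1\in S$, and verify that $\phi$ is an injective lattice homomorphism, hence a strict $n$-cube. Monotonicity and preservation of $\vee$ are routine. Preservation of $\wedge$ and injectivity rest on the modular-law identities $(e_A\vee a_1)\wedge t_1=e_A\vee b_1=e_A$ and, consequently, $(e_A\vee a_1)\wedge(e_B\vee a_1)=a_1\vee\bigl(e_A\wedge(e_B\vee a_1)\bigr)=a_1\vee(e_A\wedge e_B)=e_{A\cap B}\vee a_1$ for $A,B\subseteq\{2,\dots,n\}$; injectivity further uses that $\phi(S)\ge a_1\not\le t_1\ge\phi(S')$ whenever $1\in S$ and $1\notin S'$, and that $\phi$ is injective on each of the two halves because $e_\bullet$ is and because $(e_A\vee a_1)\wedge t_1=e_A$.

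The crux of the argument is the choice of interval: one must work in $[b_1,t_1]$ rather than in $[\bot,t_1]$. Sitting above $b_1=a_1\wedge t_1$ is precisely what forces $\vee a_1$ to interact distributively with the inductively produced cube (yielding $(e_A\vee a_1)\wedge t_1=e_A$), and it is exactly in checking that the shifted generators $a_j\vee b_1$ still witness condition $(2)$ inside $[b_1,t_1]$ that the full strength of $(2)$ for the original family is used. Everything after that is bookkeeping with the modular law.
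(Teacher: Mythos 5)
Your proof is correct. It takes a somewhat different route from the paper's. The paper proves $(3)\Rightarrow(1)$ by setting $b_i=a_1\wedge\cdots\wedge\widehat{a_i}\wedge\cdots\wedge a_n$ and $c=\bigwedge_j a_j$, checking in one line that $(b_1\vee\cdots\vee b_{k-1})\wedge b_k\le a_k\wedge b_k=c$, so that $b_1,\dots,b_n$ is an independent sequence over $c$, and then citing Proposition~9.15 of \cite{prdf} for the fact that an independent sequence generates a strict cube; the reverse implications and the $(2)$-form are obtained by duality, just as you do. You instead prove $(2)\Rightarrow(1)$ from scratch by induction on $n$, passing to the interval $[a_1\wedge t_1,\,t_1]$ and splicing $\vee\,a_1$ onto the inductively produced $(n-1)$-cube via the modular-law identity $(e_A\vee a_1)\wedge t_1=e_A$. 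In effect you are re-proving (the dual of) the cited cube-generation lemma rather than invoking it. What the paper's approach buys is brevity and reuse of machinery already developed in \cite{prdf}; what yours buys is self-containedness, and your verification that the shifted generators $a_j\vee b_1$ still satisfy condition $(2)$ inside the interval is exactly where the independence condition of the paper's argument is hiding. Both arguments are sound; the difference is organizational rather than mathematical.
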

\begin{proof}
  We give the implications
  (\ref{bs3})$\implies$(\ref{bs1})$\implies$(\ref{bs2}); the reverse
  implications follow by duality.  First suppose (\ref{bs3}) holds.
  For each $i$, let
  \begin{equation*}
    b_i = a_1 \wedge \cdots \wedge \widehat{a_i} \wedge \cdots \wedge a_n.
  \end{equation*}
  and let $c = a_1 \wedge \cdots \wedge a_n$.  Note that $b_i \ge c$
  for each $i$.  By assumption, $b_i > c$.  We claim that the sequence
  $b_1, \ldots, b_n$ is independent over $c$ (see \cite{prdf},
  Definition~9.11.1).  Indeed, for any $k$,
  \begin{equation*}
    c \le (b_1 \vee \cdots \vee b_{k-1}) \wedge b_k  \le a_k \wedge b_k = c,
  \end{equation*}
  because $b_i = b_i \wedge a_k \le a_k$ for $i \ne k$.  Thus $\{b_1,
  \ldots, b_n\}$ is an independent sequence over $c$.  By
  Proposition~9.15 in \cite{prdf}, the $b_i$ generate a strict
  $n$-cube in $M$.

  Next suppose (\ref{bs1}) holds.  Then there is a strict $n$-cube
  $\{b_S\}_{S \subseteq \{1,\ldots,n\}}$ in $M$.  Let $a_i =
  b_{\{i\}}$.  The map $S \mapsto b_S$ preserves $\vee$, so for any $i$,
  \begin{equation*}
    a_1 \vee \cdots \vee \widehat{a_i} \vee \cdots \vee a_n =
    b_{\{1,\ldots,\widehat{i},\ldots,n\}} < b_{\{1,\ldots,n\}} = a_1 \vee
    \cdots \vee a_n.
  \end{equation*}
  Therefore (\ref{bs2}) holds.
\end{proof}

\begin{proposition}\label{prop-cs}
  Let $G$ be a definable abelian group with finite burden, and $M$ be
  the lattice of type-definable subgroups of $G$, modulo
  00-commensurability.  Then $M$ is cube-bounded; in fact $\redrk(M)$
  is at most the burden of $G$.
\end{proposition}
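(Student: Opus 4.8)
The plan is to argue by contraposition: assuming $\bdn(G) = n$, I will show $\redrk(M) \le n$ by ruling out a strict $(n+1)$-cube in $M$. By Proposition~\ref{prop:bs}.\ref{bs3}, a strict $(n+1)$-cube would yield elements $a_1, \ldots, a_{n+1}$ of $M$ with $a_1 \wedge \cdots \wedge a_{n+1} \ne a_1 \wedge \cdots \wedge \widehat{a_i} \wedge \cdots \wedge a_{n+1}$ for every $i$. Since the meet in $M$ is induced by intersection of type-definable subgroups, and inequality in $M$ means non-00-commensurability, this unwinds to: type-definable subgroups $H_1, \ldots, H_{n+1} \le G$ such that, writing $K_i := \bigcap_{j \ne i} H_j$ and $H := \bigcap_j H_j = K_i \cap H_i$, each quotient $K_i/H$ is unbounded, hence infinite. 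From this I will build an inp-pattern of depth $n+1$ in the formula ``$x \in G$'', contradicting $\bdn(G) = n$.

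To build the pattern, first fix, for each $i$, a decreasing chain $D_i^0 \supseteq D_i^1 \supseteq \cdots$ of definable symmetric sets containing $0$ with $\bigcap_m D_i^m = H_i$ and $D_i^{m+1} - D_i^{m+1} \subseteq D_i^m$; these exist by compactness since $H_i$ is a type-definable subgroup. The $i$-th row will use the single formula $\psi_i(x\,;\,y,z) := (x - y \in D_i^z)$. For a prescribed finite width $w$: using that $K_i/H$ is infinite, choose $a_{i,1}, \ldots, a_{i,w} \in K_i$ pairwise incongruent modulo $H_i$; then, there being only finitely many pairs, choose $m_i$ with $a_{i,t} - a_{i,s} \notin D_i^{m_i}$ for all $t \ne s$, and $m_i'$ with $D_i^{m_i'} - D_i^{m_i'} \subseteq D_i^{m_i}$; the row-$i$ parameters are $(a_{i,t}, m_i')$, $1 \le t \le w$. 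Row $i$ is then $2$-inconsistent, because the translates $a_{i,t} + D_i^{m_i'}$ are pairwise disjoint. For path-consistency along any $f : \{1,\ldots,n+1\} \to \{1,\ldots,w\}$, the point $b := \sum_i a_{i,f(i)} \in G$ works: for each $k$, $b - a_{k,f(k)} = \sum_{i \ne k} a_{i,f(i)} \in H_k$, since $a_{i,f(i)} \in K_i \subseteq H_k$ whenever $i \ne k$, and $H_k \subseteq D_k^{m_k'}$. This gives, for every finite $w$, a width-$w$ inp-pattern of depth $n+1$ over the fixed formulas $\psi_1, \ldots, \psi_{n+1}$ with inconsistency constant $2$; a standard compactness (and Ramsey) argument then upgrades this to a genuine depth-$(n+1)$ inp-pattern, so $\bdn(G) \ge n+1$ — the desired contradiction. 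Hence $\redrk(M) \le n = \bdn(G)$.

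The main point requiring care is getting a single definable approximation $D_i^{m_i'}$, and thus a single formula, to handle an entire row; this is why I fix a finite width $w$ first, let $m_i'$ depend on $w$, and only pass to width $\omega$ at the end via compactness, rather than trying to fix everything simultaneously. An alternative that avoids the finite-width detour is to extract an indiscernible sequence inside the type-definable set $K_i$ along which a single level works for all pairs at once, but the finite-width route seems cleaner and less delicate. The remaining points — that $b$ and all the translates lie inside $G$, so the pattern genuinely witnesses the burden of ``$x \in G$'', and that commutativity of $G$ is exactly what makes the identity $b - a_{k,f(k)} = \sum_{i \ne k} a_{i,f(i)}$ hold — are routine, but worth noting since the hypothesis that $G$ is abelian enters precisely there.
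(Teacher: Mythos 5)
Your reduction is the same as the paper's: use Proposition~\ref{prop:bs} to turn a strict $(n+1)$-cube in $M$ into type-definable subgroups $H_1,\dots,H_{n+1}$ with each $K_i/H$ unbounded, and then derive a contradiction with $\bdn(G)=n$. The paper stops there and cites the chain condition from Proposition~4.5.2 of \cite{CKS}; you instead prove it by exhibiting an inp-pattern, which is essentially the argument behind the cited result. Your pattern itself is fine: the translates $a_{i,t}+D_i^{m_i'}$ are pairwise disjoint by the $D'-D'\subseteq D$ trick, and $b=\sum_i a_{i,f(i)}$ witnesses path-consistency because $a_{i,f(i)}\in K_i\subseteq H_k$ for $i\ne k$.

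The gap is in the final step of your preferred route. An inp-pattern of depth $n+1$ requires, for each row, a \emph{single} formula $\phi_i(x;y)$ together with an \emph{infinite} array of parameters; the standard compactness argument that upgrades arbitrarily large finite-width arrays to an infinite one works only when the row formulas and inconsistency constants are fixed independently of the width. In your construction the formula for row $i$ is $x-y\in D_i^{m_i'(w)}$ with $m_i'(w)$ depending on $w$ (and necessarily so: no single finite level need separate infinitely many $H_i$-incongruent elements pairwise), so the objects you produce for different $w$ are patterns for \emph{different} formulas, and there is no "standard compactness" step that merges them. Worse, the uniform condition "$y_t-y_s\notin H_i$" is the complement of a type-definable set, so you cannot finitely approximate it inside a partial type either. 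The correct repair is exactly the "alternative" you set aside: since $K_i/H$ is \emph{unbounded} (not merely infinite --- this is where the full strength of the hypothesis is used), you can choose a sufficiently large family of pairwise $H_i$-incongruent elements of $K_i$, color pairs by a definable superset of $H_i$ witnessing incongruence, and apply Erd\H{o}s--Rado (equivalently, extract a suitable indiscernible sequence) to obtain one fixed level $m_i$ and an infinite subfamily that is pairwise separated by $D_i^{m_i}$. That gives a genuinely fixed formula per row and an infinite array, and the rest of your argument then goes through verbatim. (The assumption of a countable chain with $\bigcap_m D_i^m=H_i$ is also unwarranted for a general type-definable subgroup, but this is cosmetic: work with the directed family of definable symmetric supersets instead.)
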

\begin{proof}
  Let $n = \bdn(G)$.  By the proof of Proposition~4.5.2 in \cite{CKS},
  one knows that if $A_1, \ldots, A_{n+1}$ are type-definable
  subgroups of $G$, then there is an $i$ such that
  \begin{equation*}
    A_1 \cap \cdots \cap \widehat{A_i} \cap \cdots \cap A_{n+1}
    \approx A_1 \cap \cdots \cap A_{n+1}.
  \end{equation*}
  (Alternatively, the dual statement in terms of sums rather than
  intersections is essentially Lemma~2.6 in \cite{frank-jan}.)  Then
  the desired bound $\redrk(M) < n + 1$ follows by
  Proposition~\ref{prop:bs}.
\end{proof}

\begin{lemma}\label{meta-lol}
  Let $R = \Oo_1 \cap \cdots \cap \Oo_n$ be an intersection of $n$
  pairwise incomparable valuation rings on a field $K$.  Then
  $\Sub_R(K)$ has reduced rank exactly $n$.
\end{lemma}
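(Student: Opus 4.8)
The plan is to establish the two inequalities $\redrk(\Sub_R(K)) \ge n$ and $\redrk(\Sub_R(K)) \le n$ separately. By \S 6 of \cite{prdf2}, $R$ is a Bezout domain with fraction field $K$ whose maximal ideals are, after reindexing, $\mm_1, \dots, \mm_n$, and $\Oo_i = R_{\mm_i}$; I will use this structure freely. If $n = 1$ both inequalities are immediate, since $\Sub_{\Oo_1}(K)$ is a chain containing the distinct elements $0 \ne K$, so I may assume $n \ge 2$.

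For the lower bound I will exhibit a strict $n$-cube in $\Sub_R(K)$. The maximal ideals $\mm_1, \dots, \mm_n$ are $R$-submodules of $R \subseteq K$, hence elements of $\Sub_R(K)$. Distinct maximal ideals are comaximal, so for each $k$ the ideal $\mm_k$ is comaximal with $\bigcap_{i \ne k}\mm_i$; in particular $\bigcap_{i \ne k}\mm_i \not\subseteq \mm_k$, and therefore
\[
  \mm_1 \cap \cdots \cap \mm_n \;=\; \mm_k \cap \bigcap_{i \ne k}\mm_i \;\subsetneq\; \bigcap_{i \ne k}\mm_i .
\]
Thus $\mm_1, \dots, \mm_n$ satisfy the hypothesis of Proposition~\ref{prop:bs}(\ref{bs3}), so they generate a strict $n$-cube in $\Sub_R(K)$, giving $\redrk(\Sub_R(K)) \ge n$.

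For the upper bound I will embed $\Sub_R(K)$ as a sublattice of a product of $n$ chains. Consider the map
\[
  \phi : \Sub_R(K) \longrightarrow \prod_{i=1}^n \Sub_{\Oo_i}(K), \qquad A \longmapsto (\Oo_1 A, \dots, \Oo_n A),
\]
where $\Oo_i A$ is the image of $\Oo_i \otimes_R A$ in $K$, i.e.\ the localization of $A$ at $\mm_i$. Localization is exact, so it preserves finite intersections of $R$-submodules of $K$, and it plainly preserves sums; hence $\phi$ is a homomorphism of lattices. It is injective because $A = \bigcap_{i=1}^n \Oo_i A$ for every $A \in \Sub_R(K)$: if $x \in \bigcap_i \Oo_i A$, pick $a_i \in A$ with $x \in \Oo_i a_i$; since $R$ is Bezout there is $a \in a_1 R + \cdots + a_n R \subseteq A$ with $aR = a_1 R + \cdots + a_n R$, and then $x \in \Oo_i a_i \subseteq \Oo_i a$ for every $i$, so
\[
  x \in \bigcap_{i=1}^n \Oo_i a = \Bigl(\bigcap_{i=1}^n \Oo_i\Bigr) a = R a \subseteq A .
\]
So $\Sub_R(K)$ is a sublattice of $\prod_i \Sub_{\Oo_i}(K)$, and it suffices to show the latter has reduced rank at most $n$. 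Each $\Sub_{\Oo_i}(K)$ is a chain, since $\Oo_i$ is a valuation ring. If $\prod_i \Sub_{\Oo_i}(K)$ contained a strict $m$-cube, then by Proposition~\ref{prop:bs}(\ref{bs2}) there would be $a_1, \dots, a_m$ with $\bigvee_{j \ne k}a_j \ne \bigvee_j a_j$ for each $k$; choosing for each $k$ a coordinate $i(k)$ in which the two sides differ, we get $a_k^{(i(k))} \not\le \bigvee_{j \ne k} a_j^{(i(k))}$. If $i(k) = i(k')$ for some $k \ne k'$, then, that coordinate being a chain, we may assume $a_k^{(i(k))} \le a_{k'}^{(i(k))} \le \bigvee_{j\ne k}a_j^{(i(k))}$, a contradiction. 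Hence $k \mapsto i(k)$ is injective, $m \le n$, and so $\redrk(\Sub_R(K)) \le n$. Together with the lower bound this gives $\redrk(\Sub_R(K)) = n$.

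I expect the only genuinely delicate point to be the identity $A = \bigcap_i \Oo_i A$ used for injectivity of $\phi$ — the statement that a submodule of $K$ is determined by its localizations at the maximal ideals. The Bezout hypothesis is exactly what reduces the potentially infinite intersection to the single principal submodule $Ra$, so this step should go through cleanly once the commutative-algebra facts about $R$ (Bezout, finitely many maximal ideals, $\Oo_i = R_{\mm_i}$) are quoted from \cite{prdf2}. Everything else is a routine application of the Baldwin-Saxl characterization of reduced rank in Proposition~\ref{prop:bs}.
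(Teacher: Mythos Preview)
Your proof is correct, but the upper bound is handled quite differently from the paper.

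For the lower bound, the paper uses the valuation rings $\Oo_i$ themselves rather than the maximal ideals of $R$: it invokes Corollary~6.7 of \cite{prdf2} to get $\bigcap_i \Oo_i \ne \bigcap_{i \ne k} \Oo_i$ and then applies Proposition~\ref{prop:bs} just as you do. Your variant with the maximal ideals is equally good and arguably more self-contained.

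For the upper bound the approaches diverge. The paper argues directly by contradiction: given $A_0,\dots,A_n$ witnessing a strict $(n{+}1)$-cube via Proposition~\ref{prop:bs}(\ref{bs3}), it invokes the structure theorem for $R$-submodules of $K$ (Proposition~6.2.4 of \cite{prdf2}), writing each $A_j$ as $\bigcap_i \{x : \val_i(x) > \Xi_{i,j}\}$ for cuts $\Xi_{i,j}$ in the value groups. Then $\bigcap_j A_j$ depends only on the maximum cut in each of the $n$ coordinates, and pigeonhole gives the contradiction. Your route --- embedding $\Sub_R(K)$ as a sublattice of the product of chains $\prod_i \Sub_{\Oo_i}(K)$ via localization and then bounding the reduced rank of such a product --- is more structural and avoids the explicit cut description. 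The localization embedding and the paper's cut description are really two faces of the same fact (an $R$-submodule of $K$ is determined by $n$ pieces of totally-ordered data), but your packaging has the advantage of not needing that specific result from \cite{prdf2}; the Bezout property alone carries the injectivity of $\phi$. One small remark: your notation $\mm_i$ for the maximal ideals of $R$ clashes with the paper's convention ($\mm_i$ is the maximal ideal of $\Oo_i$, and $M_i = R \cap \mm_i$), so you may want to adjust.
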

\begin{proof}
    First note that
  \begin{equation*}
    \Oo_1 \cap \cdots \cap \Oo_n \ne \Oo_1 \cap \cdots \cap \widehat{\Oo_i}
    \cap \cdots \cap \Oo_n
  \end{equation*}
  for any $1 \le i \le n$, by \cite{prdf2}, Corollary~6.7.  By the
  implication (\ref{bs3}$\implies$\ref{bs1}) of
  Proposition~\ref{prop:bs}, it follows that $\Sub_R(K)$ has reduced
  rank at least $n$.  Now suppose for the sake of contradiction that
  $\Sub_R(K)$ has reduced rank greater than $n$.  By the implication
  (\ref{bs1}$\implies$\ref{bs3}) of Proposition~\ref{prop:bs}, there
  are $R$-submodules $A_0, \ldots, A_n \le K$ such that
  \begin{equation} \label{as-if}
    A_0 \cap \cdots \cap A_n \ne A_0 \cap \cdots \cap \widehat{A_j} \cap \cdots \cap A_n
  \end{equation}
  for each $0 \le j \le n$.  Let $\val_i : \Kk \to \Gamma_i$ be the
  valuation associated to $\Oo_i$.  By Proposition~6.2.4 in
  \cite{prdf2}, there exist cuts $\Xi_{i,j}$ in $\Gamma_i$ for $0 \le
  j \le n$ such that
  \begin{equation*}
    A_j = \bigcap_{i = 1}^n \{x \in \Kk : \val_i(x) > \Gamma_{i,j}\}.
  \end{equation*}
  Then
  \begin{equation*}
    \bigcap_{j = 0}^n A_j = \bigcap_{i = 1}^n \bigcap_{j = 0}^n \{x
    \in \Kk : \val_i(x) > \Gamma_{i,j}\} = \bigcap_{i = 1}^n \{x \in
    \Kk : \val_i(x) > \Gamma'_i\},
  \end{equation*}
  where
  \begin{equation*}
    \Gamma'_i = \max_{0 \le j \le n} \Gamma_{i,j}.
  \end{equation*}
  Take $f : \{1,\ldots,n\} \to \{0,\ldots,n\}$ such that $\Gamma'_i =
  \Gamma_{i,f(i)}$.  Then
  \begin{equation*}
    A_0 \cap \cdots \cap A_n = A_{f(1)} \cap \cdots \cap A_{f(n)},
  \end{equation*}
  contradicting (\ref{as-if}), since $f$ is not a surjection.
\end{proof}

\subsection{Cube-bounded objects} \label{sec:cbo}
Recall that in an abelian category, the subobject poset $\Sub(A)$ is
always a bounded modular lattice.
\begin{definition}\label{def:redrk}
  Let $\Cc$ be an abelian category.
  \begin{enumerate}
  \item If $A \in \Cc$, then the \emph{reduced rank} of $A$ is defined
    to be
    \begin{equation*}
      \redrk(A) := \redrk(\Sub_\Cc(A)),
    \end{equation*}
    which is possibly infinite.
  \item If $A \in \Cc$, then $A$ is \emph{cube-bounded} if $\redrk(A)
    < \infty$, i.e., $\Sub_\Cc(A)$ is a cube-bounded lattice.
  \item $\Cc$ itself is \emph{cube-bounded} if every object $A \in
    \Cc$ is cube-bounded.
  \end{enumerate}
\end{definition}
\begin{remark}\label{rem:cb-descr}
  The reduced rank of $A$ has a very concrete meaning: $\redrk(A) \ge
  r$ if and only if there are $C \le B \le A$ such that the
  subquotient $B/C$ is a direct sum of $r$ non-trivial objects.
\end{remark}
\begin{remark}
  By Remark~\ref{rem:mod-fin-cb}, any object of finite length is
  cube-bounded.
\end{remark}
Reduced rank behaves a bit like dp-rank:
\begin{proposition}\label{redrk}
  Let $\Cc$ be an abelian category.
  \begin{enumerate}
  \item \label{rr1} If $f : A \to B$ is an epimorphism, then
    $\redrk(A) \ge \redrk(B)$.
  \item \label{rr2} If $f : A \to B$ is a monomorphism, then
    $\redrk(A) \le \redrk(B)$.
  \item \label{rr3} If
    \begin{equation*}
      0 \to A \to B \to C \to 0
    \end{equation*}
    is a short exact sequence, then
    \begin{equation*}
      \redrk(B) \le \redrk(A) + \redrk(C).
    \end{equation*}
    Equality holds if the sequence splits.
  \item \label{rr4} $\redrk(A) = 0 \iff A \cong 0$.
  \end{enumerate}
\end{proposition}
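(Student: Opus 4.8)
The plan is to settle (1), (2), (4) directly from the interval description of subobject lattices, and to reduce (3) to subadditivity of the uniform (Goldie) dimension.

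For (2): a monomorphism $f \colon A \to B$ exhibits $\Sub_\Cc(A)$ as the interval $[0,\img f]$ of $\Sub_\Cc(B)$ (Proposition~\ref{prop:sq}.\ref{sq6-o} at level $1$, with ambient object $B$ and subobjects $0 \le \img f$), and an interval is a sublattice; since a strict $r$-cube in a sublattice is a strict $r$-cube in the ambient lattice, $\redrk(A) \le \redrk(B)$. For (1): an epimorphism $f \colon A \to B$ gives $B \cong A/\ker f$, and $\Sub_\Cc(B)$ is then the interval $[\ker f, A]$ of $\Sub_\Cc(A)$ (again Proposition~\ref{prop:sq}.\ref{sq6-o}), so $\redrk(B) \le \redrk(A)$. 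Combining (1) and (2), whenever $X$ is a subquotient of $Y$ one has $\redrk(X) \le \redrk(Y)$. For (4): by Definition~\ref{def:redrk}, $\redrk(A) = 0$ iff $\Sub_\Cc(A)$ has no strict $1$-cube, i.e. $\Sub_\Cc(A)$ has a single element, i.e. $\bot = \top$ in $\Sub_\Cc(A)$, i.e. $A \cong 0$.

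The content is (3). By Remark~\ref{rem:cb-descr} it suffices to prove, for each finite $r$: if $B$ has a subquotient $E/D$ ($D \le E \le B$) isomorphic to $S_1 \oplus \cdots \oplus S_r$ with every $S_i \ne 0$, then $r \le \redrk(A) + \redrk(C)$ (if $\redrk(B) = \infty$ this makes $\redrk(A)+\redrk(C)=\infty$). Replacing $B$ by $E/D$, $A$ by $(A \cap E + D)/D$, and $C$ by $E/(A \cap E + D)$ keeps the sequence short exact and replaces $A$, $C$ by subquotients of themselves (which does not increase either reduced rank by the previous step), so we may assume $B = S_1 \oplus \cdots \oplus S_r$ with $S_i \ne 0$; then $\redrk(B) = r$, so in particular the uniform dimension $u(B)$ is finite, where $u(X)$ is the supremum of sizes of independent families of nonzero subobjects of $X$ (equivalently, the largest $n$ with an internal direct sum of $n$ nonzero subobjects). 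Always $u(X) \le \redrk(X)$, and here $u(B) \ge r$. Because $u(B) < \infty$, a greedy construction — repeatedly adjoining to a growing internal direct sum a nonzero subobject $V$ with $V \cap A = 0$ and $V$ disjoint from the current sum, which keeps the summands independent and so must stop after at most $r$ steps — produces $A^{\perp} \le B$ with $A \cap A^{\perp} = 0$ and $A \oplus A^{\perp}$ essential in $B$. Essential extensions preserve uniform dimension and uniform dimension is subadditive on direct sums, so
\[
  r \le u(B) = u(A \oplus A^{\perp}) \le u(A) + u(A^{\perp}) \le \redrk(A) + u(A^{\perp}).
\]
Finally the composite $A^{\perp} \hookrightarrow B \twoheadrightarrow B/A = C$ is a monomorphism (as $A \cap A^{\perp} = 0$), so $u(A^{\perp}) \le u(C) \le \redrk(C)$, giving $r \le \redrk(A) + \redrk(C)$. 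For the equality clause: if $B \cong A \oplus C$, pick subquotients of $A$ and of $C$ realizing $\redrk(A)$ and $\redrk(C)$ (finite, else use (1)--(2) since $A,C$ are both sub- and quotient objects of $B$); their direct sum is a subquotient of $A \oplus C = B$ that is a direct sum of $\redrk(A)+\redrk(C)$ nonzero objects, so $\redrk(B) \ge \redrk(A)+\redrk(C)$, forcing equality.

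I expect the main obstacle to be the heart of (3), namely the two facts about uniform dimension used above: that essential extensions preserve it and that it is subadditive on direct sums. These are standard for modules and for modular lattices, but the usual proofs go through complements, which in an arbitrary abelian category need not exist for lack of Zorn's lemma; the reduction to the case $\redrk(B) < \infty$ is precisely what lets us construct the complement $A^{\perp}$ by a terminating greedy process instead. (Alternatively, one could invoke the subadditive finite rank function of Appendix~\ref{app:ranks} directly.)
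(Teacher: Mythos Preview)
Your reduction in (3) is correct up through ``we may assume $B = S_1 \oplus \cdots \oplus S_r$ with $S_i \ne 0$,'' but the next assertion, ``then $\redrk(B) = r$,'' is false: the $S_i$ are arbitrary nonzero objects and may themselves have large or infinite reduced rank, so you only know $\redrk(B) \ge r$.  This wrecks the stated termination bound for the greedy process.  Termination can be repaired by instead assuming without loss of generality that $\redrk(C) < \infty$ and requiring each new $V$ to satisfy $V \cap (A + V_1 + \cdots + V_{j-1}) = 0$ (the two separate conditions you wrote, $V \cap A = 0$ and $V$ disjoint from $V_1 + \cdots + V_{j-1}$, do \emph{not} force $A \cap A^{\perp} = 0$); with this stronger condition the images $\bar V_j$ are independent nonzero in $C$, so the process halts after at most $u(C) \le \redrk(C)$ steps.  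But a second gap remains: the inequality $u(A \oplus A^{\perp}) \le u(A) + u(A^{\perp})$ is itself nontrivial.  You flag this and say the reduction to $\redrk(B) < \infty$ is ``precisely what lets us construct the complement $A^{\perp}$'' --- but (a) the reduction does not give $\redrk(B) < \infty$, and (b) constructing $A^{\perp}$ addresses termination, not subadditivity.  Proving subadditivity of uniform dimension in a bare modular lattice needs the existence of uniform subobjects under every nonzero element and an exchange argument, i.e.\ exactly the modular-pregeometry machinery of \cite{prdf}, \S9.4, that you were trying to avoid.

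The paper's route is much shorter and sidesteps uniform dimension entirely: view $A$ as a subobject of $B$, so $\Sub(A) \cong [0,A]$ and $\Sub(C) \cong [A,B]$ inside $\Sub(B)$, and then invoke the purely lattice-theoretic inequality
\[
  \max(\redrk(a/\bot),\redrk(\top/a)) \;\le\; \redrk(\top/\bot) \;\le\; \redrk(a/\bot) + \redrk(\top/a)
\]
from \cite{prdf}, Proposition~9.28.1.  This single citation yields (1), (2), and the subadditivity half of (3) at once; the same proposition also gives additivity when $a$ has a complement, handling the split case.  Your parts (1), (2), (4) are fine and match the paper.
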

\begin{proof}
  First note that if $A \cong A'$, then $\Sub(A) \cong \Sub(A')$ so
  $\redrk(A) = \redrk(A')$.  In other words, reduced rank is an
  isomorphism invariant.

  Let $0 \to A \to B \to C \to 0$ be a short exact sequence; view $A$
  as a subobject of $B$.  We have isomorphisms
  \begin{align*}
    \Sub(A) \cong [0,A] &\subseteq \Sub(B) \\
    \Sub(C) \cong [A,B] &\subseteq \Sub(B)
  \end{align*}
  by the third isomorphism theorem.  Now
  \begin{equation*}
    \max(\redrk(A/0),\redrk(B/A)) \le \redrk(B/0) \le \redrk(A/0) + \redrk(B/A)
  \end{equation*}
  by \cite{prdf}, Proposition~9.28.1.  This implies (\ref{rr1}),
  (\ref{rr2}), and the first half of (\ref{rr3}).  If the sequence
  splits, there is a subobject $A' \le B$ complementary to $A \le B$.
  Then
  \begin{equation*}
    \redrk(B/0) = \redrk(A/0) + \redrk(A'/0)
  \end{equation*}
  by \cite{prdf}, Proposition~9.28.1.  As $A' \cong C$, it follows that
  \begin{equation*}
    \redrk(B) = \redrk(A) + \redrk(A') = \redrk(A) + \redrk(C).
  \end{equation*}

  Finally, part (\ref{rr4}) is trivial: there is a 1-cube in a modular
  lattice if and only if the modular lattice contains more than one
  element, and $A \not \cong 0$ if and only if $A$ has more than
  one subobject.
\end{proof}
\begin{corollary}\label{cor:cb-serre}
  Given a short exact sequence
  \begin{equation*}
    0 \to A \to B \to C \to 0,
  \end{equation*}
  in an abelian category $\Cc$, the following are equivalent:
  \begin{itemize}
  \item $B$ is cube-bounded.
  \item $A$ and $C$ are cube-bounded.
  \end{itemize}  
\end{corollary}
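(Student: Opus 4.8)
The plan is to read this off directly from Proposition~\ref{redrk}, which already packages the three inequalities we need. Recall that ``cube-bounded'' for an object $X$ means exactly $\redrk(X) < \infty$, so the statement is purely about finiteness of reduced ranks along the short exact sequence.

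First I would handle the forward direction. Suppose $B$ is cube-bounded, so $\redrk(B) < \infty$. The inclusion $A \hookrightarrow B$ is a monomorphism, so Proposition~\ref{redrk}.\ref{rr2} gives $\redrk(A) \le \redrk(B) < \infty$; and the quotient map $B \twoheadrightarrow C$ is an epimorphism, so Proposition~\ref{redrk}.\ref{rr1} gives $\redrk(C) \le \redrk(B) < \infty$. Hence both $A$ and $C$ are cube-bounded.

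For the converse, suppose $A$ and $C$ are cube-bounded, i.e.\ $\redrk(A)$ and $\redrk(C)$ are both finite. The given short exact sequence together with Proposition~\ref{redrk}.\ref{rr3} yields $\redrk(B) \le \redrk(A) + \redrk(C) < \infty$, so $B$ is cube-bounded.

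There is no real obstacle here: the content is entirely in Proposition~\ref{redrk}, whose proof in turn reduces to the lattice-theoretic estimate $\max(\redrk(A/0),\redrk(B/A)) \le \redrk(B/0) \le \redrk(A/0) + \redrk(B/A)$ from \cite{prdf}, Proposition~9.28.1, applied along the subobject filtration $0 \le A \le B$ and the isomorphisms $\Sub(A) \cong [0,A]$, $\Sub(C) \cong [A,B]$ inside $\Sub(B)$. The only thing worth a moment's care is that we really only need parts (\ref{rr1})--(\ref{rr3}) of Proposition~\ref{redrk} and not the ``equality holds if the sequence splits'' clause, since here we make no splitting assumption.
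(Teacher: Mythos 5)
Your proof is correct and is essentially the paper's own argument: the paper cites the combined identity $\max(\redrk(A),\redrk(C)) \le \redrk(B) \le \redrk(A) + \redrk(C)$, which is exactly the content of Proposition~\ref{redrk}.\ref{rr1}--\ref{rr3} that you invoke. Nothing is missing.
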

\begin{proof}
  This follows from the identity
  \begin{equation*}
    \max(\redrk(A),\redrk(C)) \le \redrk(B) \le \redrk(A) + \redrk(C). \qedhere
  \end{equation*}
\end{proof}
\begin{corollary}
  If $A$ is a cube-bounded object in an abelian category $\Cc$, then
  the neighborhood of $A$ is a cube-bounded abelian category
  (\S\ref{sec:neighborhoods}).
\end{corollary}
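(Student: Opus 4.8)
The plan is to show that every object of the neighborhood $\Cc'$ of $A$ has reduced rank at most $n\cdot\redrk(A)$ for a suitable $n$, hence is cube-bounded. First I would observe that reduced rank is unchanged on passing to $\Cc'$: since $\Cc'$ is a full subcategory of $\Cc$ closed under subobjects, we have $\Sub_{\Cc'}(B)=\Sub_\Cc(B)$ for every $B\in\Cc'$, so $\redrk(\Sub_{\Cc'}(B))=\redrk(\Sub_\Cc(B))$; that is, the reduced rank of $B$ is the same whether computed in $\Cc$ or in $\Cc'$. Thus it suffices to bound $\redrk(\Sub_\Cc(B))$ for objects $B$ that are subquotients of finite powers of $A$.

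Next I would compute $\redrk(A^n)=n\cdot\redrk(A)$ by induction on $n$, applying the equality case of Proposition~\ref{redrk}.\ref{rr3} to the split short exact sequence $0\to A\to A^n\to A^{n-1}\to 0$. Since $\redrk(A)<\infty$ by hypothesis, each finite power $A^n$ is cube-bounded. (Alternatively, one can get this directly from Corollary~\ref{cor:cb-serre} by induction, without needing the exact value.)

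Finally, let $B$ be an arbitrary object of $\Cc'$. Up to isomorphism, $B=X/Y$ for some subobjects $Y\le X\le A^n$. The inclusion $X\hookrightarrow A^n$ is a monomorphism and the quotient map $X\twoheadrightarrow X/Y$ is an epimorphism, so by Proposition~\ref{redrk}.\ref{rr2} and Proposition~\ref{redrk}.\ref{rr1},
\[
  \redrk(B)=\redrk(X/Y)\le\redrk(X)\le\redrk(A^n)=n\cdot\redrk(A)<\infty .
\]
Hence $B$ is cube-bounded, and therefore $\Cc'$ is a cube-bounded abelian category (it is abelian by the discussion in \S\ref{sec:neighborhoods}).

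There is no real obstacle here: the statement is a routine bookkeeping consequence of the monotonicity and subadditivity of reduced rank recorded in Proposition~\ref{redrk}. The only point that requires a moment's thought is the compatibility of the subobject lattice with the passage to the full subcategory $\Cc'$, which holds precisely because $\Cc'$ is by construction closed under subobjects.
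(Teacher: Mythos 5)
Your argument is correct and is exactly the intended one: the paper states this corollary without proof as an immediate consequence of Proposition~\ref{redrk} and Corollary~\ref{cor:cb-serre}, namely cube-boundedness of $A^n$ by induction on split exact sequences, followed by monotonicity of $\redrk$ under monomorphisms and epimorphisms to handle subquotients. Your preliminary remark that subobject lattices (hence reduced ranks) agree between $\Cc$ and the neighborhood $\Cc'$ is a worthwhile point the paper leaves implicit.
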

\begin{remark}
  An abelian category $\Cc$ is cube-bounded if and only if it has a
  ``rank'' function from objects to $\Nn$ satisfying certain axioms.
  See Appendix~\ref{app:ranks}.
\end{remark}
The following variant of reduced rank will be important in
\S\ref{sec:flattening}.
\begin{definition}[= \cite{prdf}, Definition~9.47]
  Let $M$ be a bounded modular lattice.  Then $\botrk(M)$ is the
  maximum $n$ such that a strict $n$-cube exists in $M$ with base
  $\bot$.  If no maximum exists, $\botrk(M) = \infty$.
\end{definition}
\begin{definition}\label{def:botrk}
  If $A$ is an object in an abelian category, then $\botrk(A) :=
  \botrk(\Sub(A))$.
\end{definition}
\begin{remark}\label{botrk-rem}
  Let $A$ be an object in a cube-bounded abelian category.
  \begin{enumerate}
  \item $\botrk(A)$ is the supremum over all $n$ such that some
    subobject of $A$ is a direct sum of $n$ non-trivial objects.
    (Compare with Remark~\ref{rem:cb-descr}).
  \item\label{br2} $\botrk(A) \le \redrk(A) < \infty$.
  \item If $A$ is non-trivial, then $1 \le \botrk(A)$.
  \end{enumerate}
\end{remark}

\subsection{Cube-bounded directories}
\begin{definition}
  A directory $D_\bullet \cong \Dir_\Cc(A)$ is \emph{cube-bounded} if
  it satisfies one of the following equivalent conditions:
  \begin{itemize}
  \item $D_1$ is a cube-bounded lattice.
  \item $D_n$ is a cube-bounded lattice for all $n$.
  \item $A$ is a cube-bounded object.
  \item The neighborhood of $A$ in $\Cc$ is a cube-bounded abelian
    category.
  \end{itemize}
\end{definition}
These are equivalent by Corollary~\ref{cor:cb-serre}.  Note that for a
directory $D_\bullet$,
\begin{equation*}
  \textrm{semisimple} \implies \textrm{finite-length} \implies
  \textrm{cube-bounded}
\end{equation*}

\subsection{Cube-boundedness in the model-theoretic case}
\begin{proposition}\label{prop:genug}
  Let $\Kk$ be a saturated field of finite burden extending a small
  infinite field $K_0$.  Let the directories $\Lambda_\bullet,
  \Lambda^{00}_\bullet, \Delta_\bullet$ be as in \S\ref{sec:lambda}.
  Let the categories $\mathcal{D}, \Hh, \Hh^{00}$ be as in
  \S\ref{sec:def-setting}-\ref{sec:tdef00-setting}.
  \begin{itemize}
  \item The categories $\mathcal{D}$ and $\Hh^{00}$ are cube-bounded.
  \item The directories $\Delta_\bullet$ and $\Lambda^{00}_\bullet$
    are cube-bounded; in fact
    \begin{align*}
      \redrk(\Delta_n) &\le n \cdot \bdn(\Kk) \\
      \redrk(\Lambda^{00}_n) & \le n \cdot \bdn(\Kk).
    \end{align*}
  \end{itemize}
  Suppose moreover that $\Kk$ is NIP and $K_0$ is a magic subfield.
  \begin{itemize}
  \item The category $\Hh$ is cube-bounded.
  \item The directory $\Lambda_\bullet$ is cube-bounded; in fact
    \begin{equation*}
      \redrk(\Lambda_n) \le n \cdot \dpr(\Kk).
    \end{equation*}
  \end{itemize}
\end{proposition}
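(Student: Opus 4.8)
The plan is to reduce every assertion to Proposition~\ref{prop-cs} and Corollary~\ref{cor:cb-serre}, using the identifications $\Delta_n \cong \Sub_\Dd(\Kk^n)$, $\Lambda_n \cong \Sub_\Hh(\Kk^n)$, $\Lambda^{00}_n \cong \Sub_{\Hh^{00}}(\Kk^n)$ of \S\ref{sec:def-setting}--\ref{sec:tdef00-setting}. I would first record two routine preliminaries. (a) Every interpretable set in $\Kk$ has finite burden, and by sub-additivity of burden $\bdn(\Kk^n) \le n \cdot \bdn(\Kk)$. (b) For an interpretable abelian group $G$ carrying a $K_0$-action, consider the lattice of all type-definable subgroups of $G$ modulo 00-commensurability. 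Since 00-commensurability is a lattice congruence and intersection and sum preserve $K_0$-linearity, the classes of the $K_0$-linear subgroups form a sublattice of it; and $\redrk$ is monotone along any sublattice inclusion, because a strict cube in a sublattice is a strict cube in the ambient lattice.

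With these in hand, I would apply Proposition~\ref{prop-cs} to the definable group $\Kk^n$ (of finite burden by (a)), obtaining that the lattice of type-definable subgroups of $\Kk^n$ modulo 00-commensurability is cube-bounded with reduced rank at most $\bdn(\Kk^n)$; by (b), $\Lambda^{00}_n$ is a sublattice of it, so $\redrk(\Lambda^{00}_n) \le \bdn(\Kk^n) \le n \cdot \bdn(\Kk)$, which proves that $\Lambda^{00}_\bullet$ is cube-bounded with the stated bound. For $\Delta_\bullet$: by Remark~\ref{rem:del00}, 00-commensurability is trivial on definable $K_0$-linear subgroups, so $G \mapsto [G]$ embeds $\Delta_n$ as a sublattice of $\Lambda^{00}_n$, whence $\redrk(\Delta_n) \le \redrk(\Lambda^{00}_n) \le n \cdot \bdn(\Kk)$. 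For the categories: every object of $\Hh$, hence of its Serre quotient $\Hh^{00}$, is a subquotient of some $\Kk^n$ by Proposition~\ref{prop:here-is-hh-copy}, and $\Sub_{\Hh^{00}}(\Kk^n) \cong \Lambda^{00}_n$ is cube-bounded; since subquotients of cube-bounded objects are cube-bounded (Corollary~\ref{cor:cb-serre}), $\Hh^{00}$ is cube-bounded. If $\Dd$ is taken to be the neighborhood of $\Kk$ the same argument handles $\Dd$; in general, for any interpretable $K_0$-vector space $V$ one has $\bdn(V) < \infty$ by (a), and Proposition~\ref{prop-cs} applied to $V$ together with (b) (and Remark~\ref{rem:del00} for the injectivity of $\Sub_\Dd(V) \hookrightarrow$ the lattice of type-definable subgroups of $V$ modulo 00-commensurability) gives $\redrk(\Sub_\Dd(V)) \le \bdn(V) < \infty$, so $\Dd$ is cube-bounded.

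For the NIP part, assume also that $K_0$ is magic. Then 00-commensurability is already trivial on $\Lambda_n$: if $G \approx H$ with $G, H$ type-definable and $K_0$-linear, then $G = G^{00}$ and $H = H^{00}$ because $K_0$ is magic, while $G \approx H$ forces $G^{00} = H^{00}$, so $G = H$. Hence, arguing as in \S\ref{sec:tdef00-setting}, the localization $\Hh \to \Hh^{00}$ is an equivalence, $\Lambda_\bullet \cong \Lambda^{00}_\bullet$, and $\Hh$ is cube-bounded; moreover $\redrk(\Lambda_n) = \redrk(\Lambda^{00}_n) \le \bdn(\Kk^n) \le n \cdot \bdn(\Kk)$. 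Finally, since burden and dp-rank coincide for NIP theories, $\bdn(\Kk) = \dpr(\Kk)$, so $\redrk(\Lambda_n) \le n \cdot \dpr(\Kk)$.

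There is no deep obstacle here once Propositions~\ref{prop-cs} and \ref{redrk} and Corollary~\ref{cor:cb-serre} are available; the two points that need genuine care are verifying that $\Lambda^{00}_n$, $\Delta_n$, and $\Sub_\Dd(V)$ really occur as sublattices of the lattices covered by Proposition~\ref{prop-cs} (so that the reduced-rank bounds transfer), and, in the NIP case, upgrading the bound from $\bdn$ to $\dpr$ via the equality of burden and dp-rank. Everything else is bookkeeping with the functoriality of $\redrk$ recorded in Proposition~\ref{redrk}.
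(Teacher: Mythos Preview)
Your overall strategy is sound and close to the paper's, but there is one genuine gap: in step (a) you invoke ``sub-additivity of burden'' to conclude $\bdn(\Kk^n) \le n \cdot \bdn(\Kk)$. This is not known for general finite-burden theories; the paper explicitly flags it as open in the Remark immediately following the proposition, and in Appendix~\ref{app:ranks} works only with Chernikov's sub-\emph{multiplicativity} $\bdn(A \times B) \le \bdn(A) + \bdn(B) + \bdn(A)\bdn(B)$. Your argument therefore still yields cube-boundedness of $\Lambda^{00}_n$ (sub-multiplicativity gives $\bdn(\Kk^n) < \infty$, and then Proposition~\ref{prop-cs} applies), but the explicit bound $\redrk(\Lambda^{00}_n) \le n \cdot \bdn(\Kk)$ does not follow as written.

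The paper sidesteps this by applying Proposition~\ref{prop-cs} only to $\Kk$ itself, obtaining $\redrk(\Kk) \le \bdn(\Kk)$ in the category $\Hh^{00}$, and then invoking sub-additivity of \emph{reduced rank} (Proposition~\ref{redrk}.\ref{rr3}, which holds in any abelian category) to get $\redrk(\Lambda^{00}_n) = \redrk(\Kk^n) \le n \cdot \redrk(\Kk) \le n \cdot \bdn(\Kk)$. The same maneuver handles $\Delta_n$: the paper proves $\redrk(A) \le \bdn(A)$ directly for every $A \in \Dd$ via a short claim (if $\redrk(A) \ge m$ then $A$ has a subquotient splitting as a direct sum of $m$ nonzero pieces, each of positive burden since $K_0$ is infinite, forcing $\bdn(A) \ge m$), and then multiplies out via Proposition~\ref{redrk}.\ref{rr3}. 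Your alternative routings---$\Delta_n \hookrightarrow \Lambda^{00}_n$ via Remark~\ref{rem:del00}, and $\Dd$ via Proposition~\ref{prop-cs} applied object-by-object---are perfectly valid once the burden-sub-additivity step is replaced by the $\redrk$-sub-additivity step.
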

\begin{proof}
  We first consider the category $\Dd$.  The objects of $\Dd$ are
  interpretable groups, so each has a well-defined finite burden.
  (Finiteness follows by sub-multiplicativity of burden, Corollary 2.6
  in \cite{Ch}.)  Burden satisfies the following well-known
  properties:
  \begin{enumerate}
  \item If $A \twoheadrightarrow B$ is an epimorphism, then $\bdn(A)
    \ge \bdn(B)$.
  \item If $A \hookrightarrow B$ is a monomorphism, then $\bdn(A) \le
    \bdn(B)$.
  \item $\bdn(A \times B) \ge \bdn(A) + \bdn(B)$.
  \item $\bdn(A) > 0$ iff $A$ is infinite.  Since $A$ is a
    $K_0$-vector space and $K_0$ is infinite,
    \begin{equation*}
      \bdn(A) > 0 \iff A \not\cong 0.
    \end{equation*}
  \end{enumerate}
  \begin{claim}\label{genug-claim}
    For any $A \in \mathcal{D}$, $\bdn(A) \ge \redrk(A)$.
  \end{claim}
  \begin{claimproof}
    If $\redrk(A) \ge n$, then there are subobjects $C \subseteq B
    \subseteq A$ and a direct sum decomposition
    \begin{equation*}
      B/C \cong D_1 \oplus \cdots \oplus D_n
    \end{equation*}
    with the $D_i \not\cong 0$.  Then
    \begin{equation*}
      \bdn(A) \ge \bdn(B) \ge \bdn(B/C) = \bdn(D_1 \oplus \cdots
      \oplus D_n) \ge \sum_{i = 1}^n \bdn(D_i) \ge \sum_{i = 1}^n 1 =
      n.
    \end{equation*}
    Thus $\bdn(A) \ge n$ for any $n \le \redrk(A)$.
  \end{claimproof}
  It follows that $\mathcal{D}$ is a cube-bounded abelian category.
  Moreover, $\redrk(\Kk) \le \bdn(\Kk)$.  By
  Proposition~\ref{redrk}.\ref{rr3},
  \begin{equation*}
    \redrk(\Delta_n) = \redrk(\Sub_{\mathcal{D}}(\Kk^n)) =
    \redrk(\Kk^n) \le n \cdot \redrk(\Kk) \le n \cdot \bdn(\Kk).
  \end{equation*}
  Next consider the lattice $\Lambda_1^{00}$.  The lattice
  $\Lambda_1^{00}$ embeds into the lattice of type-definable subgroups
  of $\Kk$ modulo 00-commensurability, so
  \begin{equation*}
    \redrk(\Lambda_1^{00}) \le \bdn(\Kk)
  \end{equation*}
  by Proposition~\ref{prop-cs}.  Using
  Proposition~\ref{prop:h-lambda-00} to relate $\Lambda^{00}_\bullet$
  to $\Hh^{00}$, it follows that in the category $\Hh^{00}$,
  \begin{equation*}
    \redrk(\Kk) = \redrk(\Sub_{\Hh^{00}}(\Kk)) = \redrk(\Lambda_1^{00})
    \le \bdn(\Kk).
  \end{equation*}
  By Proposition~\ref{redrk}.\ref{rr3},
  \begin{equation*}
    \redrk(\Lambda_n^{00}) = \redrk(\Sub_{\Hh^{00}}(\Kk^n)) =
    \redrk(\Kk^n) \le n \cdot \redrk(\Kk) \le n \cdot \bdn(\Kk).
  \end{equation*}
  By construction of $\Hh$ and $\Hh^{00}$, every object of $\Hh^{00}$
  is a subquotient of $\Kk^n$, so $\Hh^{00}$ is cube-bounded by
  Proposition~\ref{redrk}.\ref{rr1}-\ref{rr2}.

  Lastly, if $\Kk$ is NIP and $K_0$ is a magic subfield, then
  $\Lambda_\bullet$ is isomorphic to $\Lambda^{00}_\bullet$ and $\Hh$
  is equivalent to $\Hh^{00}$, by the discussion at the end of
  \S\ref{sec:tdef00-setting}.  In an NIP context, burden agrees with
  dp-rank.
\end{proof}
\begin{remark}
  There is something funny about the situation with $\mathcal{D}$ in
  the finite burden case.  By Proposition~\ref{redrk}, there is a rank
  function $\redrk : \mathcal{D} \to \Zz_{\ge 0}$ which satisfies the
  sub-additivity properties of dp-rank.  For example, if
  \begin{equation*}
    0 \to A \to B \to C \to 0
  \end{equation*}
  is an exact sequence in $\Dd$, then
  \begin{equation*}
    \redrk(B) \le \redrk(A) + \redrk(C).
  \end{equation*}
  The analogous property for burden is unknown.  So somehow we found a
  way to upgrade weakly sub-additive ranks into strongly sub-additive
  ranks.\footnote{On the other hand, the new rank is only defined on
    interpretable \emph{abelian groups} or vector spaces, rather than
    on interpretable \emph{sets}.  Nothing miraculous is going on
    here.}  For a more general statement, see
  Appendix~\ref{app:ranks}.
\end{remark}

\section{Flattening}\label{sec:flattening}
In this section, we carry out three parallel ``flattening'' constructions.

If $M$ is a cube-bounded modular lattice, we will define a semisimple
modular lattice $M^\flat$ called the \emph{flattening} of $M$, as well
as a \emph{flattening map} $M \to M^\flat$ that is surjective and
preserves $\wedge$.

If $\Cc$ is a cube-bounded abelian category, we will define a
semisimple abelian category $\Cc^\flat$, called the \emph{flattening}
of $\Cc$, as well as a \emph{quasi-socle functor} $\qsoc : \Cc \to
\Cc^\flat$, that is essentially surjective and left-exact.  The
induced maps on subobject-lattices
\begin{equation*}
  \Sub_\Cc(A) \to \Sub_{\Cc^\flat}(\qsoc(A))
\end{equation*}
are flattening maps on modular lattices.

If $D_\bullet$ is a cube-bounded directory, we will define a
semisimple directory $D^\flat_\bullet$, called the \emph{flattening}
of $D_\bullet$, as well as a directory morphism $D_\bullet \to
D^\flat_\bullet$, called the \emph{flattening map}.  At each level, the map
\begin{equation*}
  D_n \to D^\flat_n
\end{equation*}
is a flattening map on modular lattices.  One constructs
$D_\bullet^\flat$ by choosing an isomorphism $D_\bullet \cong
\Dir_\Cc(A)$, and setting $D^\flat_\bullet =
\Dir_{\Cc^\flat}(\qsoc(A))$.

For the case of lattices, flattening is essentially the ``modular
pregeometry on quasi-atoms'' constructed in \S9.4 of \cite{prdf}.  But
we will see that the pro-construction in category theory gives a
better way to understand flattening.

\subsection{Flattening a lattice}
Let $(M,\wedge,\vee,\bot)$ be a modular lattice with minimum element
$\bot$.  Recall that a ``quasi-atom'' in $M$ (Definition~9.32 in \cite{prdf})
is an element $q > \bot$ such that the interval $(\bot,q]$ is closed
under intersection (i.e., a sublattice).  In \S9.4 of \cite{prdf}, we
constructed a (finitary) modular pregeometry on the set of
quasi-atoms, characterized by the fact that a finite set
$\{q_1,\ldots,q_n\}$ is independent in the pregeometry if and only if
it is ``lattice-theoretically independent:''
\begin{equation*}
  q_1 \wedge q_2 = \bot, ~ (q_1 \vee q_2) \wedge q_3 = \bot, ~ (q_1
  \vee q_2 \vee q_3) \wedge q_4 = \bot, ~ \ldots
\end{equation*}
See Corollary~9.39 and Proposition~9.41 in \cite{prdf} for details.
For any $x \in M$, the set
\begin{equation*}
  V(x) = \{q \in M : q \textrm{ is a quasi-atom and } q \wedge x >
  \bot\}
\end{equation*}
is a closed set in this pregeometry (\cite{prdf}, Corollary~9.39).  If
the pregeometry has finite rank, then every closed set is of this form
(\cite{prdf}, Proposition~9.45.2).

\begin{proposition}\label{soft-cube-bound}
  Let $(M,\wedge,\vee,\bot)$ be a modular lattice with minimum element
  $\bot$.  The following are equivalent:
  \begin{enumerate}
  \item \label{kj1} There is no infinite sequence $a_1, a_2, a_3,
    \ldots > \bot$ that is independent, in the sense that
    \begin{equation*}
      a_1 \wedge a_2 = \bot, ~ (a_1 \vee a_2) \wedge a_3 = \bot, ~
      (a_1 \vee a_2 \vee a_3) \wedge a_4 = \bot, ~ \ldots
    \end{equation*}
  \item \label{kj2} For every $a > \bot$, there is a quasi-atom $q \le
    a$, and the pregeometry on quasi-atoms has finite rank.
  \item \label{kj3} $\botrk(M) < \infty$, i.e., there is a finite
    bound on the length of independent sequence as in (\ref{kj1}).
  \end{enumerate}
\end{proposition}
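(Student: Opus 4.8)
The plan is to prove the cycle of implications (\ref{kj3})$\implies$(\ref{kj1})$\implies$(\ref{kj2})$\implies$(\ref{kj3}), leaning on the description of the pregeometry on quasi-atoms from \S9.4 of \cite{prdf} and on the fact that strict $n$-cubes with base $\bot$ correspond exactly to length-$n$ independent sequences $a_1,\ldots,a_n > \bot$ in the sense of (\ref{kj1}): one direction is \cite{prdf}, Proposition~9.15, and the other is the easy extraction $a_i := b_{\{i\}}$ from a cube $\{b_S\}$, exactly as in the proof of Proposition~\ref{prop:bs}. Under this translation $\botrk(M)$ is the supremum of lengths of such sequences, so (\ref{kj3})$\implies$(\ref{kj1}) is immediate: an infinite independent sequence would have initial segments of every finite length, forcing $\botrk(M) = \infty$.

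The substantive implication is (\ref{kj1})$\implies$(\ref{kj2}). For the finite-rank clause I would argue by contraposition: if the pregeometry on quasi-atoms had infinite rank, a greedy construction produces an infinite set $\{q_i\}_{i \ge 1}$ of quasi-atoms --- at stage $n$ an independent set of $n$ quasi-atoms cannot span the pregeometry, so it extends to one of size $n+1$ --- every finite subset of which is lattice-theoretically independent; then $q_1,q_2,\ldots$ is an infinite independent sequence in $M$, contradicting (\ref{kj1}). For the clause that every $a > \bot$ contains a quasi-atom, I would again contrapose. Suppose $a_0 > \bot$ contains no quasi-atom. Since ``contains a quasi-atom'' is monotone, no $x$ with $\bot < x \le a_0$ is a quasi-atom, so for each such $x$ there are $b,c$ with $\bot < b,c \le x$ and $b \wedge c = \bot$. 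Set $c_0 := a_0$ and recursively choose $b_{n+1}, c_{n+1}$ with $\bot < b_{n+1}, c_{n+1} \le c_n$ and $b_{n+1} \wedge c_{n+1} = \bot$. A short induction using the modular law shows $(b_1 \vee \cdots \vee b_n) \wedge c_n = \bot$ for all $n$: the inductive step computes $(b_1 \vee \cdots \vee b_{n+1}) \wedge c_n = b_{n+1} \vee \big((b_1 \vee \cdots \vee b_n) \wedge c_n\big) = b_{n+1}$ since $b_{n+1} \le c_n$, and then meeting with $c_{n+1} \le c_n$ leaves $b_{n+1} \wedge c_{n+1} = \bot$. As $b_{n+1} \le c_n$, this gives $(b_1 \vee \cdots \vee b_n) \wedge b_{n+1} = \bot$ for all $n$, so $b_1, b_2, \ldots$ is an infinite independent sequence with every term $> \bot$ --- contradiction. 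I expect this recursive splitting to be the only real obstacle; the rest is bookkeeping.

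Finally, for (\ref{kj2})$\implies$(\ref{kj3}): let $r$ be the finite rank of the pregeometry, and take any length-$n$ independent sequence $a_1,\ldots,a_n$ over $\bot$ (so each $a_i > \bot$). Using the first clause of (\ref{kj2}) pick a quasi-atom $q_i \le a_i$ for each $i$. Then $q_1,\ldots,q_n$ is lattice-theoretically independent, because $(q_1 \vee \cdots \vee q_{k-1}) \wedge q_k \le (a_1 \vee \cdots \vee a_{k-1}) \wedge a_k = \bot$; and the $q_i$ are pairwise distinct, since $q_i = q_j$ with $i < j$ would force $(q_1 \vee \cdots \vee q_{j-1}) \wedge q_j \ge q_i > \bot$. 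Hence $\{q_1,\ldots,q_n\}$ is an independent set of $n$ distinct quasi-atoms, so $n \le r$. Therefore every independent sequence over $\bot$ --- equivalently, every strict cube with base $\bot$ --- has length at most $r$, i.e. $\botrk(M) \le r < \infty$, closing the cycle.
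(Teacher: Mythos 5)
Your proof is correct and follows essentially the same route as the paper: the same cycle of implications, the same recursive splitting of a "bad" element into an infinite independent sequence for (\ref{kj1})$\implies$(\ref{kj2}), and the same extraction of quasi-atoms $q_i \le a_i$ for (\ref{kj2})$\implies$(\ref{kj3}). The only (harmless) difference is that you verify independence of the $b_i$ directly by a modular-law induction, whereas the paper first shows the reversed sequence is independent and then invokes symmetry of independence.
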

\begin{proof}
  We will prove
  (\ref{kj1})$\implies$(\ref{kj2})$\implies$(\ref{kj3})$\implies$(\ref{kj1}).
  
  Assume (\ref{kj1}) holds.  The pregeometry on quasi-atoms certainly
  has finite rank; otherwise we could find an infinite independent
  set, which would yield an infinite independent sequence.  Say that
  an element $a \in M$ is ``bad'' if $a > \bot$ but there is no
  quasi-atom $q \le a$.  Let $B \subseteq M$ be the set of bad
  elements.  We claim $B$ is empty.  Note that if $a \in B$ then $a$
  is not a quasi-atom, so there exist $\bot < b,c \le a$ such that $b
  \wedge c = \bot$.  A fortiori, both $b$ and $c$ are bad.  If $B$ is
  non-empty, recursively build two sequences of bad elements
  \begin{align*}
    &a_0, a_1, a_2, \ldots \\
    &b_0, b_1, b_2, \ldots
  \end{align*}
  where
  \begin{itemize}
  \item $a_0$ is some bad element.
  \item For each $i \ge 0$, we have $\bot < b_i, a_{i+1} \le a_i$ and
    $b_i \wedge a_{i+1} = \bot$.
  \end{itemize}
  Then
  \begin{equation*}
    a_0 \ge a_1 \ge a_2 \ge \cdots
  \end{equation*}
  and $b_i \le a_i$ for all $i$.  Then for any $i < j$,
  \begin{equation*}
    b_i \wedge (b_{i+1} \vee \cdots \vee b_j) \le b_i \wedge (a_{i+1} \vee \cdots \vee a_j) = b_i \wedge a_{i+1} = \bot.
  \end{equation*}
  It follows that for any $j$, the sequence
  \begin{equation*}
    b_j, b_{j-1}, \ldots, b_1, b_0
  \end{equation*}
  is independent.  By symmetry of independence (\cite{prdf}, Proposition 9.3), the sequence
  \begin{equation*}
    b_0, b_1, \ldots, b_j
  \end{equation*}
  is independent.  This holds for all $j$, so the sequence of $b_i$'s
  is an infinite independent sequence, contradicting (\ref{kj1}).

  Next suppose (\ref{kj2}) holds.  Let $n$ be the rank of the
  pregeometry on quasi-atoms.  We claim that there is no independent
  sequence $b_1, b_2, \ldots, b_{n+1} > \bot$.  Otherwise, take $q_i$
  a quasi-atom below $b_i$.  Then the sequence $q_1, q_2, \ldots,
  q_{n+1}$ is an independent sequence in the pregeometry,
  contradicting the choice of $n$.  Thus
  (\ref{kj2})$\implies$(\ref{kj3}).  Finally, the implication
  (\ref{kj3})$\implies$(\ref{kj1}) is trivial.
\end{proof}
Cube-bounded lattice satisfy the equivalent conditions of
Proposition~\ref{soft-cube-bound}.  So do Noetherian modular lattices:
if $b_1, b_2, \ldots$ were an infinite independent sequence of
elements $b_i > \bot$, then for each $n$ the initial subsequence $b_1,
\ldots, b_n$ generates a strict $n$-cube (by \cite{prdf},
Proposition~9.15.3), and so
\begin{equation*}
  b_1 < b_1 \vee b_2 < b_1 \vee b_2 \vee b_3 < \cdots < (b_1 \vee \cdots \vee b_n).
\end{equation*}
Thus the sequence $b_1, b_1 \vee b_2, \ldots$ is an infinite ascending
sequence.

\begin{definition}
  Let $M$ be a bounded modular lattice with $\botrk(M) < \infty$.  Let
  $M^\flat$ be the lattice of closed sets in the pregeometry on
  quasi-atoms.  The \emph{standard flattening map} is the map
  \begin{equation*}
    V : M \to M^\flat
  \end{equation*}
  sending $x$ to the set $V(x)$ of quasi-atoms $q$ with $q \wedge x >
  \bot$.

  More generally, if $(P,\le)$ is a poset and $f : M \to P$ is a
  function, we say that $f$ is a \emph{flattening map} if it is
  isomorphic to $V$, i.e., there is a poset isomorphism $g : M^\flat
  \to P$ such that $f = g \circ V$.
\end{definition}
The punchline of the next few sections is that if
\begin{equation*}
  D_\bullet = (D_1,D_2,\ldots)
\end{equation*}
is a cube-bounded directory, then so is
\begin{equation*}
  D^\flat_\bullet = (D_1^\flat, D_2^\flat, \ldots),
\end{equation*}
and there is a morphism of directories $D_\bullet \to D_\bullet^\flat$
whose $n$th component is the flattening map
\begin{equation*}
  D_n \to D_n^\flat.
\end{equation*}
\begin{proposition}\label{fattening-facts}
  Let $M$ be a bounded modular lattice with $\botrk(M) < \infty$.  Let
  $f : M \to M'$ be a flattening map.
  \begin{enumerate}
  \item\label{ff1} $M'$ is a semisimple modular lattice of length equal to
    $\botrk(M)$.
  \item \label{ff2} $f$ is surjective.
  \item If $x \ge y$, then $f(x) \ge f(y)$.
  \item\label{ff4} $f(x \wedge y) = f(x) \wedge f(y)$.
  \item $f(x \vee y) \ge f(x) \vee f(y)$.
  \item \label{ff6} $x > \bot \iff f(x) > \bot$.
  \end{enumerate}
\end{proposition}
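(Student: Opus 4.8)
The plan is to reduce everything to the standard flattening map $V : M \to M^\flat$. By definition a flattening map is isomorphic, as a map of posets, to $V$, and each of the six assertions is preserved under pre- and post-composition with poset isomorphisms; so it suffices to verify them for $V$ itself, with $M' = M^\flat$ the lattice of closed sets in the modular pregeometry on quasi-atoms of $M$.

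For assertion (\ref{ff1}), the hypothesis $\botrk(M) < \infty$ places $M$ among the lattices of Proposition~\ref{soft-cube-bound}, so the pregeometry on quasi-atoms has finite rank $\rho$; the lattice of closed sets of a finitary modular pregeometry of finite rank $\rho$ is an atomic modular lattice of length $\rho$, i.e. semisimple. To see $\rho = \botrk(M)$: any independent set of quasi-atoms $\{q_1,\ldots,q_n\}$ is, by the characterization of pregeometry-independence as lattice-theoretic independence (Corollary~9.39 and Proposition~9.41 of \cite{prdf}), an independent sequence of elements $> \bot$, so $\rho \le \botrk(M)$; conversely, given an independent sequence $a_1,\ldots,a_n > \bot$, Proposition~\ref{soft-cube-bound}.\ref{kj2} supplies quasi-atoms $q_i \le a_i$, and from $q_i \wedge (q_1 \vee \cdots \vee q_{i-1}) \le a_i \wedge (a_1 \vee \cdots \vee a_{i-1}) = \bot$ one gets that $q_1,\ldots,q_n$ is independent, so $n \le \rho$. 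Assertion (\ref{ff2}), surjectivity, is exactly Proposition~9.45.2 of \cite{prdf}: in finite rank every closed set has the form $V(x)$.

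The remaining items are short. Monotonicity (item 3) holds because $q \wedge y > \bot$ and $x \ge y$ give $q \wedge x \ge q \wedge y > \bot$. For (\ref{ff4}), the meet in $M^\flat$ is set-theoretic intersection; $V(x \wedge y) \subseteq V(x) \cap V(y)$ by monotonicity, and conversely if $q$ is a quasi-atom with $q \wedge x > \bot$ and $q \wedge y > \bot$ then $q \wedge x$ and $q \wedge y$ both lie in the interval $(\bot, q]$, which is closed under $\wedge$ by the defining property of a quasi-atom, so $q \wedge x \wedge y = (q \wedge x) \wedge (q \wedge y) > \bot$, hence $q \in V(x \wedge y)$. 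Item 5 is monotonicity applied to $x \vee y \ge x$ and $x \vee y \ge y$. Finally, for (\ref{ff6}): $V(\bot) = \emptyset$ is the least element of $M^\flat$, and if $x > \bot$ then Proposition~\ref{soft-cube-bound}.\ref{kj2} yields a quasi-atom $q \le x$, so $q \in V(x)$ and $V(x) \ne \emptyset$.

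The only steps that require more than bookkeeping are the identification $\rho = \botrk(M)$ in (\ref{ff1}), which relies on the dictionary between lattice-theoretic and pregeometry-theoretic independence from \S9.4 of \cite{prdf}, and the use of the quasi-atom axiom in (\ref{ff4}); I expect the former to be the main point to get right.
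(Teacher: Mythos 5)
Your proof is correct and follows essentially the same route as the paper's: reduce to the standard map $V$, cite Proposition~9.45.2 of \cite{prdf} for surjectivity, use the quasi-atom axiom ($(\bot,q]$ closed under $\wedge$) for item (4), and Proposition~\ref{soft-cube-bound} for item (6). The only divergence is in item (1), where the paper simply cites \cite{prdf}, Remark~9.48.3 for $\ell(M^\flat)=\botrk(M)$ while you prove it directly via the dictionary between pregeometry-theoretic and lattice-theoretic independence; that argument is sound.
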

\begin{proof}
  By definition, we may assume $f = V$ and $M' = M^\flat$.  Then
  $M^\flat$ is a modular lattice because the pregeometry on
  quasi-atoms is modular (\cite{prdf}, Proposition~9.41), and the
  length of $M^\flat$ is finite and equal to $\botrk(M)$ by
  \cite{prdf}, Remark~9.48.3.  The map $V$ is surjective because every
  closed set of $M$ is of the form $V(x)$ (\cite{prdf},
  Proposition~9.45.2).  If $x \le y$, then
  \begin{equation*}
    V(x) = \{q : q \wedge x > \bot\} \subseteq V(y) = \{q : q \wedge y > \bot\}.
  \end{equation*}
  This implies the identities
  \begin{align*}
    V(x \wedge y) &\subseteq V(x) \cap V(y) \\
    V(x \vee y) &\subseteq V(x) \vee V(y)
  \end{align*}
  where $V(x) \vee V(y)$ is the closed set generated by $V(x) \cup
  V(y)$.  The reverse inclusion
  \begin{equation*}
    V(x) \cap V(y) \subseteq V(x \wedge y)
  \end{equation*}
  is Lemma~9.37 in \cite{prdf}, or can be seen as follows: if $q
  \wedge x > \bot$ and $q \wedge y > \bot$, then the two elements $q
  \wedge x$ and $q \wedge y$ are non-trivial elements below $q$.  As
  $q$ is a quasi-atom, their meet $(q \wedge x) \wedge (q \wedge y)$
  is also non-trivial, implying that $(x \wedge y) \wedge q > \bot$
  and $q \in V(x \wedge y)$.

  If $x = \bot$, then $x \wedge q = \bot$ for all $q$, so $V(x) =
  \emptyset$.  Conversely, if $x > \bot$, then there is some
  quasi-atom $q \le x$ by Proposition~\ref{soft-cube-bound}.  Then $q
  \wedge x = q > \bot$, implying $q \in V(x)$ and $V(x) > \emptyset$.
\end{proof}

\subsection{Quasi-atoms as pro-objects}
If $\Cc$ is a category, then $\Pro \Cc$ denotes the category of
pro-objects.  See Appendix~\ref{app:ind} for a review of pro-objects
and ind-objects.  We will use the facts listed in \S\ref{sec:pro}.

Let $M$ be a bounded lattice, viewed as a poset, viewed as a category.
The category $\Pro M$ is dual to the poset of filters, ordered by
inclusion.  Here, a \emph{filter} is a subset $F \subseteq M$ such
that
\begin{align*}
  \top \in F \\
  x, y \in F \implies &x \wedge y \in F \\
  (x \in F \textrm{ and } y \ge x) \implies & y \in F
\end{align*}
The embedding of $M$ into $\Pro M$ sends an element $a \in M$ to the
principal filter
\begin{equation*}
  \{x \in M : x \ge a\}.
\end{equation*}
Note that $\Pro M$ is itself a (complete!) bounded lattice.

\begin{lemma}\label{pro-hom}
  The embedding $M \hookrightarrow \Pro M$ is a homomorphism of
  bounded lattices.
\end{lemma}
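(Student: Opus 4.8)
The plan is to unwind the definitions and verify the four compatibilities ($\top$, $\bot$, $\vee$, $\wedge$) by a short filter computation. Recall that $\Pro M$ is, by definition, the poset of filters of $M$ under \emph{reverse} inclusion, and that the embedding $\iota : M \hookrightarrow \Pro M$ sends $a$ to its principal filter $\uparrow a := \{x \in M : x \ge a\}$. First I would re-record that $\iota$ is an order-embedding: $\uparrow b \subseteq \uparrow a$ iff $b \ge a$, and reverse inclusion is the order on $\Pro M$. Since in any poset a pair of elements has at most one meet and at most one join, it then suffices to identify the bounded-lattice operations of $\Pro M$ in filter-theoretic terms and compare them with the images under $\iota$.

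The key translation (being careful that $\Pro$ reverses the order, hence swaps $\top \leftrightarrow \bot$ and $\vee \leftrightarrow \wedge$ at the level of filters under $\subseteq$) is: the top of $\Pro M$ is the least filter $\{\top\}$; the bottom of $\Pro M$ is the greatest filter, namely all of $M$; the join $F \vee_{\Pro M} G$ is the $\subseteq$-largest filter contained in both, which is the intersection $F \cap G$ (still a filter); and the meet $F \wedge_{\Pro M} G$ is the $\subseteq$-least filter containing $F \cup G$, namely $\langle F \cup G\rangle$, the filter generated by $F \cup G$. Here $\langle S \rangle$ consists of all $x$ lying above some finite meet of elements of $S$.

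Finally I would compute the four cases directly. We have $\iota(\top) = \uparrow\top = \{\top\}$ and $\iota(\bot) = \uparrow\bot = M$, so $\iota$ preserves $\top$ and $\bot$. For joins, $\iota(a) \vee_{\Pro M} \iota(b) = \uparrow a \cap \uparrow b = \{x : x \ge a \text{ and } x \ge b\} = \uparrow(a \vee b) = \iota(a \vee b)$. For meets, $\iota(a) \wedge_{\Pro M} \iota(b) = \langle \uparrow a \cup \uparrow b\rangle$; this equals $\uparrow(a \wedge b) = \iota(a \wedge b)$ since every finite meet of elements of $\uparrow a \cup \uparrow b$ lies above $a \wedge b$, while conversely $a \wedge b$ is itself the meet of $a \in \uparrow a$ and $b \in \uparrow b$, hence lies in $\langle \uparrow a \cup \uparrow b\rangle$. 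This gives the lemma. There is no real obstacle here beyond bookkeeping; the only subtle point is keeping the order-reversal in $\Pro$ straight, which is why I would spell out the filter-theoretic descriptions of the operations before computing. (Alternatively, one could invoke the general fact from \S\ref{sec:pro} that the canonical functor $\Cc \to \Pro\Cc$ preserves finite limits and all colimits existing in $\Cc$; applied to $M$, in which all finite meets and joins exist, this immediately yields that $\iota$ preserves $\top$, $\bot$, $\wedge$, $\vee$.)
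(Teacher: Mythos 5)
Your proof is correct and follows essentially the same route as the paper: identify $\Pro M$ with filters under reverse inclusion, note that principal filters give the embedding, and check the four operations by the same filter computations (top/bottom are $\{\top\}$ and $M$, join of principal filters is their intersection, meet is the filter generated by the union, which coincides with the filter generated by $a \wedge b$). The parenthetical appeal to limit/colimit preservation of $\Cc \to \Pro\Cc$ is a fine alternative but is not what the paper does, and the appendix only records that fact explicitly for abelian categories, so the direct computation is the safer argument.
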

\begin{proof}
  If $a = \top$, then the principal filter generated by $a$ is
  $\{\top\}$, which is clearly the minimum filter.

  If $a = \bot$, then the principal filter generated by $a$ is $M$,
  clearly the maximum filter.

  If $a, b$ are two elements, then
  \begin{equation*}
    \{x \in M : x \ge a\} \cap \{x \in M : x \ge b\} = \{x \in M : x \ge (a \vee b)\}
  \end{equation*}
  So the embedding $M \hookrightarrow \Pro M$ preserves $\vee$.

  It remains to show that the filter generated by
  \begin{equation*}
    \{x \in M : x \ge a\} \cup \{x \in M : x \ge b\}
  \end{equation*}
  is the filter generated by $a \wedge b$.  In other words, we must
  show that the filter generated by $\{a,b\}$ is the filter generated
  by $\{a \wedge b\}$.  From the definition of filter, it is clear
  that if $F$ is a filter, then
  \begin{equation*}
    F \supseteq \{a, b\} \iff F \supseteq \{a \wedge b\},
  \end{equation*}
  implying the desired statement.  Thus the embedding $M
  \hookrightarrow \Pro M$ preserves $\wedge$.
\end{proof}
\begin{proposition}
  If $M$ is a modular lattice, then $\Pro M$ is a modular lattice.
\end{proposition}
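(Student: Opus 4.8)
The plan is to work with the concrete description of $\Pro M$ recorded just before Lemma~\ref{pro-hom}: $\Pro M$ is the opposite of the poset $\mathcal{F}(M)$ of filters of $M$ ordered by inclusion. First I would unwind this into formulas for the lattice operations of $\Pro M$: for filters $F,G$ one has $F \vee G = F \cap G$ and $F \wedge G = \langle F \cup G\rangle$, where $\langle S\rangle$ is the smallest filter containing $S$, and $F \le G$ in $\Pro M$ means $F \supseteq G$. I would also note that, since filters are closed under finite meets and contain $\top$, a finite meet of elements of a union $G \cup K$ of two filters always dominates $g \wedge k$ for some $g \in G$, $k \in K$, so $\langle G \cup K\rangle = \{z : z \ge g \wedge k \text{ for some } g \in G,\ k \in K\}$, and likewise for $\langle (F\cap G)\cup K\rangle$. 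That $\Pro M$ is already a (complete, bounded) lattice is part of the preceding material, so only modularity remains.

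With these formulas in hand, the task reduces to verifying the modular law in $\mathcal{F}(M)^{op}$: for filters $F \supseteq K$ and an arbitrary filter $G$, one must show
$$F \cap \langle G \cup K\rangle = \langle (F\cap G)\cup K\rangle.$$
The inclusion $\supseteq$ is routine: if $z \ge w \wedge k$ with $w \in F\cap G$ and $k \in K$, then $w$ and $k$ both lie in $F$ (the latter since $K \subseteq F$), so $w \wedge k \in F$ and hence $z \in F$, while $w \in G$, $k \in K$ give $z \in \langle G\cup K\rangle$; this uses only upward-closedness of $F$, not modularity of $M$. The inclusion $\subseteq$ is the crux: given $z \in F$ with $z \ge g\wedge k$ for some $g \in G$, $k \in K$, I need $w \in F\cap G$ and $k' \in K$ with $z \ge w\wedge k'$. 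The naive guess $w = g\vee z$, $k' = k$ fails (one does not have $(g\vee z)\wedge k \le z$), and the single point where modularity enters is the corrected choice $k' = k$ and $w = g \vee (z\wedge k)$: then $z\wedge k \in F$ forces $w \in F$, $w \ge g$ forces $w \in G$, and the modular law of $M$ applied to $z\wedge k \le k$ yields
$$w \wedge k = \bigl(g \vee (z\wedge k)\bigr)\wedge k = (z\wedge k)\vee (g\wedge k) \le z,$$
since $z\wedge k \le z$ and $g \wedge k \le z$ by hypothesis.

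So the only obstacle is spotting that correct witness $w$; after that the verification is a two-line computation, and the modular law follows, proving $\Pro M$ modular. I would close by remarking that this is exactly the statement that the ideal lattice of a modular lattice is modular, transported along $M \rightsquigarrow M^{\mathrm{op}}$ (filters of $M$ being ideals of $M^{\mathrm{op}}$) together with the self-duality of the modular law.
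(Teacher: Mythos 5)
Your proof is correct and takes essentially the same route as the paper: both identify the join of two filters as the upward closure of $\{g\wedge k\}$ and then verify the modular law via the witness $g\vee(z\wedge k)$ (the paper's $c\vee b$ with $b=x\wedge a$), invoking modularity of $M$ at exactly the same point. The only cosmetic difference is that you work directly in the opposite order on filters, whereas the paper proves modularity of the filter lattice itself and appeals to self-duality of the modular law.
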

\begin{proof}
  The dual of a modular lattice is a modular lattice, so it suffices
  to show that the lattice of filters is modular.  If $A, B$ are two
  filters on $M$, let $A + B$ denote the upwards closure of the set
  \begin{equation*}
    S := \{a \wedge b : a \in A, ~ b \in B\}
  \end{equation*}
  Then $A + B$ is a filter:
  \begin{itemize}
  \item As $A, B$ are filters, $\top \in A, \top \in B$, and so $\top
    = \top \wedge \top \in S \subseteq A + B$.
  \item Suppose $x_1, x_2 \in A + B$.  Then $x_i \ge a_i \wedge b_i$
    for some $a_1, a_2 \in A$ and $b_1, b_2 \in B$.  Then
    \begin{equation*}
      x_1 \wedge x_2 \ge (a_1 \wedge b_1) \wedge (a_2 \wedge b_2) = (a_1 \wedge a_2) \wedge (b_1 \wedge b_2) \in S.
    \end{equation*}
  \item $A + B$ is upwards-closed by fiat.
  \end{itemize}
  Also $A \cup B \subseteq A + B$, because of terms like $a \wedge
  \top$ and $\top \wedge b$.  From all this, it follows that $A + B$
  is exactly the filter generated by $A \cup B$.

  To show modularity, suppose $A, B, C$ are filters on $M$ and $A
  \subseteq B$.  We must show
  \begin{equation*}
    (C + A) \cap B \stackrel{?}{\subseteq} (C \cap B) + A.
  \end{equation*}
  Suppose $x \in (C + A) \cap B$.  Then $x \ge c \wedge a$ for some $c
  \in C$ and $a \in A$.  Take $b = x \wedge a$.  Then $x \ge b \le a$, and so
  \begin{equation*}
    (x \ge c \wedge a \textrm{ and } x \ge b) \implies x \ge (c \wedge
    a) \vee b \stackrel{!}{=} (c \vee b) \wedge a,
  \end{equation*}
  where $\stackrel{!}{=}$ is by modularity of $M$.  Also $x \in B$ and
  $a \in A \subseteq B$, so $b = x \wedge a \in B$ because $B$ is a
  filter.  Then $c \vee b \in C \cap B$ and $a \in A$, so
  \begin{equation*}
    x \ge (c \vee b) \wedge a \implies x \in (C \cap B) + A.
  \end{equation*}
  This proves that the lattice of filters on $M$ is modular, which in
  turn implies $\Pro M$ is modular.
\end{proof}
\begin{lemma}\label{enough-atoms}
  If $M$ is a modular lattice, the lattice $\Pro M$ has enough atoms:
  if $x \in \Pro M$ and $x > \bot$, there is an atom $a \in \Pro M$
  with $a \le x$.
\end{lemma}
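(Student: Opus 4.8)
The plan is to dualize the problem into the language of filters. As recalled above, $\Pro M$ is (canonically identified with) the opposite of the poset of filters of $M$ ordered by inclusion; under this identification the bottom element $\bot$ of $\Pro M$ corresponds to the improper filter $M$ itself, and $u \le v$ in $\Pro M$ means $F_u \supseteq F_v$ for the corresponding filters. In these terms an atom of $\Pro M$ is exactly a \emph{maximal proper filter} of $M$, and ``$x > \bot$'' says that the corresponding filter $F_x$ is proper. So the statement to prove becomes: every proper filter of $M$ is contained in a maximal proper filter.

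First I would record the elementary fact that a filter $G \subseteq M$ equals $M$ if and only if $\bot_M \in G$ (by upward closure of filters); hence the proper filters are precisely those not containing $\bot_M$, and consequently a union of a chain of proper filters is again a proper filter. Given $x \in \Pro M$ with $x > \bot$, let $F = F_x$ be the corresponding proper filter. I would then apply Zorn's lemma to the poset of proper filters containing $F$, ordered by inclusion: it is nonempty (it contains $F$) and closed under unions of chains (by the previous observation), so it has a maximal element $F'$. Let $a \in \Pro M$ be the element corresponding to $F'$. Then $a \le x$ because $F' \supseteq F$, and $a > \bot$ because $F'$ is proper.

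The remaining step is to verify that $a$ is an atom of $\Pro M$: if $\bot < y \le a$ with corresponding filter $G$, then $F' \subseteq G \subsetneq M$ and $G$ is a proper filter containing $F$ with $G \supseteq F'$, so maximality of $F'$ forces $G = F'$, i.e.\ $y = a$. Thus $a$ is an atom below $x$, as required. There is no substantive obstacle here; the only things to be careful about are the direction of the order-reversal between $\Pro M$ and inclusion of filters, and the closure of proper filters under chain-unions (which is what makes Zorn's lemma applicable). Note that modularity of $M$ is not used in this argument — it was needed only in the preceding proposition, to see that $\Pro M$ is itself a modular lattice.
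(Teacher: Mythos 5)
Your proof is correct and is essentially the paper's own argument: the paper also notes that a filter is proper iff it omits $\bot_M$ and then invokes Zorn's lemma to place every proper filter inside a maximal proper filter. You simply spell out the details (the dualization, chain-unions, and the atom verification) that the paper leaves implicit.
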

\begin{proof}
  A filter $F \subseteq M$ is proper if and only if $\bot \notin F$.
  By Zorn's lemma, every proper filter is contained in a maximal
  proper filter.
\end{proof}

\begin{lemma}\label{lem:qa-filters-one}
  Let $M$ be a bounded modular lattice with $\botrk(M) < \infty$, and
  $Q$ be the set of quasi-atoms in $M$.
  \begin{enumerate}
  \item If $q \in Q$, the set
    \begin{equation*}
      F_q := \{x \in M : x \wedge q > \bot\} = \{x \in M : q \in V(x)\}
    \end{equation*}
    is a proper filter on $M$, containing $q$.
  \item If $q \in Q$ and $F'$ is a proper filter containing $q$, then
    $F' \subseteq F_q$.  Therefore, $F_q$ is a maximal proper filter.
  \item Every maximal proper filter is of the form $F_q$ for some $q
    \in Q$.
  \end{enumerate}
\end{lemma}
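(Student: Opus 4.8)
The plan is to handle the three parts in turn, with essentially all the work in part (3). For part (1), I would check the filter axioms directly. We have $\top \wedge q = q > \bot$, so $\top \in F_q$; if $x \le y$ then $x \wedge q \le y \wedge q$, so $F_q$ is upward-closed; $\bot \wedge q = \bot$, so $\bot \notin F_q$ and $F_q$ is proper; and $q \wedge q = q > \bot$, so $q \in F_q$. The only axiom using the quasi-atom hypothesis is closure under $\wedge$: for $x, y \in F_q$, the elements $x \wedge q$ and $y \wedge q$ lie in the interval $(\bot, q]$, which is closed under $\wedge$ by the definition of quasi-atom, so $(x \wedge y) \wedge q = (x \wedge q) \wedge (y \wedge q) > \bot$. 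For part (2), if $F'$ is a proper filter containing $q$ and $x \in F'$, then $x \wedge q \in F'$, hence $x \wedge q \neq \bot$, i.e.\ $x \in F_q$; so $F' \subseteq F_q$. Maximality of $F_q$ is then formal: any proper filter containing $F_q$ contains $q$, hence is contained in $F_q$ by what was just shown.

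For part (3), let $F$ be a maximal proper filter, and consider the family of closed sets $\{V(x) : x \in F\}$ in the pregeometry of quasi-atoms, where $V : M \to M^\flat$ is the standard flattening map. By Proposition~\ref{fattening-facts}.\ref{ff4}, $V(x \wedge y) = V(x) \cap V(y)$, so this family is downward directed under inclusion, as $F$ is closed under $\wedge$. Since $\botrk(M) < \infty$, the pregeometry on quasi-atoms has finite rank (Proposition~\ref{soft-cube-bound}), so its closed sets satisfy the descending chain condition: a strict inclusion of finite-rank closed sets strictly decreases the rank. Hence the directed family $\{V(x) : x \in F\}$ has a minimum $V(x_0)$ for some $x_0 \in F$, and $V(x_0) = \bigcap_{x \in F} V(x)$, which is nonempty because $x_0 > \bot$ (Proposition~\ref{fattening-facts}.\ref{ff6}). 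Picking a quasi-atom $q \in V(x_0)$ gives $q \wedge x > \bot$ for every $x \in F$, i.e.\ $F \subseteq F_q$; since $F_q$ is a proper filter by part (1) and $F$ is maximal proper, $F = F_q$.

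The main obstacle is the nonemptiness of $\bigcap_{x \in F} V(x)$ in part (3): without a finiteness hypothesis the intersection of a directed family of nonempty closed sets can be empty, so cube-boundedness---through $\botrk(M) < \infty$ and the resulting finite-rank pregeometry---is doing the essential work. One could alternatively argue via the duality of \S\ref{sec:flattening}, identifying maximal proper filters with atoms of the complete lattice $\Pro M$ and atoms of $\Pro M$ with quasi-atoms of $M$; but the direct argument above avoids setting up that dictionary.
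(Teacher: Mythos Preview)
Your proof is correct and follows essentially the same route as the paper: parts (1)--(2) are verified exactly as in the paper, and for part (3) you argue, as the paper does, that the image $\{V(x) : x \in F\}$ is a downward-directed family in the finite-length lattice $M^\flat$, hence has a minimum $V(x_0)$ with $x_0 \in F$, and any quasi-atom $q \in V(x_0)$ witnesses $F \subseteq F_q$. The only cosmetic difference is that you invoke Proposition~\ref{fattening-facts}.\ref{ff4} ($V$ preserves $\wedge$) where the paper appeals merely to $V$ being order-preserving, but either suffices.
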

\begin{proof}
  \begin{enumerate}
  \item $F_q$ is clearly upwards-closed.  We check that it is closed
    under intersection.  Suppose $x \wedge q > \bot$ and $y \wedge q >
    \bot$.  Then $\{x \wedge q, y \wedge q\}$ is a subset of
    $(\bot,q]$.  By definition of quasi-atom, $(\bot,q]$ is closed
        under $\wedge$, and so
    \begin{equation*}
      (x \wedge y) \wedge q = (x \wedge q) \wedge (y \wedge q) \in (\bot,q].
    \end{equation*}
    Therefore $x \wedge y \in F_q$.  It is clear that $q \in F_q$ and
    $\bot \notin F_q$.
  \item Suppose $q \in Q \cap F'$ but $F' \not \subseteq F_q$.  Take
    $a \in F' \setminus F_q$.  Then $a \wedge q = \bot$ by definition
    of $F_q$.  On the other hand, $F'$ contains $a$ and $q$, so it
    must contain $\bot$, therefore failing to be a proper filter.
  \item Let $F$ be a maximal proper filter.  The flattening map $V : M \to
    M^\flat$ is order-preserving, so the image $V(F)$ of $F$ under
    this map is downwards directed.  As $M^\flat$ has finite length,
    it follows that $V(F)$ contains a minimum element.  Thus, there is
    $a \in F$ such that $V(a) \subseteq V(x)$ for any $x \in F$.
    Properness of $F$ implies $a > \bot$, which implies $V(a) \ne
    \emptyset$ by Proposition~\ref{fattening-facts}.\ref{ff6}.
    Take $q$ one of the quasi-atoms in the set $V(a)$.  Then $q \in
    V(a) \subseteq V(x)$ for all $x \in F$, implying that $F \subseteq
    F_q$. \qedhere
  \end{enumerate}
\end{proof}
\begin{lemma}\label{lem:qa-filters-two}
  Let $M$ be a modular lattice with $\botrk(M) < \infty$.  Let $V : M
  \to M^\flat$ be the flattening map.
  \begin{enumerate}
  \item For every $A \in M^\flat$, the set
    \begin{equation*}
      F_A = \{x \in M : V(x) \ge A\} 
    \end{equation*}
    is a filter on $M$.
  \item The resulting map
    \begin{equation*}
      A \mapsto F_A
    \end{equation*}
    is an order-reversing isomorphism (of posets) from $M^\flat$ to its image, and
    satisfies the identity
    \begin{equation*}
      F_{A \vee B} = F_A \cap F_B.
    \end{equation*}
  \item A filter $F \subseteq M$ is of the form $F_A$ for some $A \in
    M^\flat$ if and only if $F$ is a finite intersection of zero or
    more maximal proper filters.
  \end{enumerate}
\end{lemma}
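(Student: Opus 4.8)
I would treat the three parts in order. Parts~(1) and~(2) are essentially definition-chasing on top of Proposition~\ref{fattening-facts}; Part~(3) needs the dictionary between maximal proper filters of $M$ and atoms of $M^\flat$ supplied by Lemma~\ref{lem:qa-filters-one} and the pregeometry of \S9.4 of \cite{prdf}.

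For Part~(1), I would check the three filter axioms for $F_A=\{x\in M : V(x)\ge A\}$ directly. Upward closure is immediate from order-preservation of the flattening map $V$. Closure under $\wedge$ follows from $V(x\wedge y)=V(x)\wedge V(y)$ (Proposition~\ref{fattening-facts}.\ref{ff4}): if $V(x)\ge A$ and $V(y)\ge A$ then $V(x\wedge y)\ge A$. And $\top\in F_A$ because $V(\top)=\{q : q\wedge\top>\bot\}$ is the set of all quasi-atoms, i.e. the top of $M^\flat$, so $V(\top)\ge A$ trivially.

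For Part~(2), order-reversal ($A\le B\implies F_B\subseteq F_A$) is immediate from the definition, and the identity $F_{A\vee B}=F_A\cap F_B$ holds because in the lattice $M^\flat$ one has $V(x)\ge A\vee B$ iff $V(x)\ge A$ and $V(x)\ge B$. The only real content is that $A\mapsto F_A$ is injective and reflects the order, and here I would use surjectivity of $V$ (Proposition~\ref{fattening-facts}.\ref{ff2}): given $A$, pick $a\in M$ with $V(a)=A$; then $a\in F_A$, so if $F_B\subseteq F_A$ then, choosing $b$ with $V(b)=B$, we get $b\in F_B\subseteq F_A$, hence $B=V(b)\ge A$. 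Combined with order-reversal this gives $F_B\subseteq F_A\iff A\le B$, which is exactly the statement that $A\mapsto F_A$ is an order-reversing isomorphism onto its image.

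For Part~(3), the key step is to identify the maximal proper filters of $M$ with the filters $F_a$, $a$ an atom of $M^\flat$. By Lemma~\ref{lem:qa-filters-one} the maximal proper filters are exactly the $F_q=\{x : q\wedge x>\bot\}=\{x : q\in V(x)\}$ for quasi-atoms $q$. On the other hand $V(x)$ is a closed set (\cite{prdf}, Corollary~9.39), and the atoms of $M^\flat$ are precisely the rank-one closed sets, each of the form $V(q)=\cl(\{q\})$ for a quasi-atom $q$ it contains; since $V(x)$ is closed, $\cl(\{q\})\subseteq V(x)\iff q\in V(x)$, so $F_{\cl(\{q\})}=F_q$. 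Thus the maximal proper filters are exactly the $F_a$ with $a$ an atom of $M^\flat$. Now Part~(3) follows: by Proposition~\ref{fattening-facts}.\ref{ff1}, $M^\flat$ is semisimple of finite length, so any $A\in M^\flat$ is a finite join $a_1\vee\cdots\vee a_k$ of atoms (with $k=0$ meaning $A=\bot$ and $F_A=M$), and by the iterated identity of Part~(2), $F_A=F_{a_1}\cap\cdots\cap F_{a_k}$ is a finite intersection of maximal proper filters; conversely $F_{q_1}\cap\cdots\cap F_{q_k}=F_{a_1}\cap\cdots\cap F_{a_k}=F_{a_1\vee\cdots\vee a_k}$ with $a_i=\cl(\{q_i\})$, hence is of the form $F_A$. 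The main obstacle is the bookkeeping in this last part: correctly matching the two descriptions of maximal proper filters via the pregeometry facts of \S9.4 of \cite{prdf}, and handling the degenerate $k=0$ case (empty join $=\bot$ in $M^\flat$, empty intersection $=M$). Everything else is routine unwinding of definitions together with Proposition~\ref{fattening-facts}.
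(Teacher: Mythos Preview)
Your proposal is correct and follows essentially the same route as the paper: both use the identity $F_{A\vee B}=F_A\cap F_B$, the surjectivity of $V$ for the order-reflection in Part~(2), and the identification (via Lemma~\ref{lem:qa-filters-one}) of maximal proper filters with the $F_a$ for atoms $a\in M^\flat$ to handle Part~(3). The one small difference is in Part~(1): you verify the filter axioms for general $A$ directly from $V(x\wedge y)=V(x)\wedge V(y)$ (Proposition~\ref{fattening-facts}.\ref{ff4}), whereas the paper first establishes the atom case via Lemma~\ref{lem:qa-filters-one} and then deduces the general case as an intersection of those---your shortcut is slightly cleaner.
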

\begin{proof}
  For $A, A' \in M^\flat$, note that
  \begin{align*}
    F_{A \vee A'} = &\{x \in M : V(x) \ge A \vee A'\} \\ = & \{x \in M :
    V(x) \ge A\} \cap \{x \in M : V(x) \ge A'\} = F_A \cap F_{A'}.
  \end{align*}
  If $A$ is an atom in $M^\flat$, then $A$ is the closure of $\{q\}$ for some
  quasi-atom $q \in M$, and
  \begin{align*}
    F_A &= \{x \in M : V(x) \supseteq A\} \\ &= \{x \in M : V(x)
    \supseteq \{q\}\} \\ &= \{x \in M : V(x) \ni q\} \\ &= \{x \in M :
    x \wedge q > \bot\},
  \end{align*}
  which is a filter by Lemma~\ref{lem:qa-filters-one}.  In general, we
  can write $B = A_1 \vee \cdots \vee A_n$ where the $A_i$ are atoms,
  and
  \begin{equation*}
    F_B = \bigcap F_{A_i}.
  \end{equation*}
  Each $F_{A_i}$ is a maximal proper filter, and every maximal proper
  filter is of the form $F_A$ for some atom $A$, by
  Lemma~\ref{lem:qa-filters-one}.  So as $B$ ranges over $M^\flat$,
  $F_B$ ranges over finite intersections of maximal proper filters,
  proving the first and third points.

  It remains to show that $A \mapsto F_A$ is strictly order-reversing.
  For $A, A' \in M^\flat$, we have equivalences
  \begin{align*}
    F_{A'} \subseteq F_A & \iff \forall x \in M : \left(V(x) \ge A' \implies V(x) \ge A\right) \\
    & \iff \forall B \in M^\flat : \left(B \ge A' \implies B \ge A\right) \\
    & \iff A' \ge A,
  \end{align*}
  because $V : M \to M^\flat$ is surjective.
\end{proof}
\begin{remark}
  Let $M$ be a module over some ring.  Recall that ``semisimple''
  means ``semisimple of finite length.''  Let $A, B$ be submodules of
  $M$.  One has the following well-known facts:
  \begin{itemize}
  \item If $A, B$ are both semisimple, then $A + B$ is semisimple.
  \item If $A \subseteq B$ and $B$ is semisimple, then $A$ is
    semisimple.
  \end{itemize}
  The proofs generalize to modular lattices.  Let $M$ be a bounded
  modular lattice.  Let $x, y$ be elements of $M$.  The interval
  $[\bot,x]$ is a sublattice of $M$, which is semisimple if and only
  if $x$ is a finite join of atoms.
  \begin{itemize}
  \item If $[\bot, x]$ and $[\bot, y]$ are semisimple, then so is
    $[\bot, x \vee y]$.
  \item If $x \le y$ and $[\bot, y]$ is semisimple, then so is $[\bot,
    x]$.
  \end{itemize}
\end{remark}
\begin{theorem}\label{thm:nice}
  Let $M$ be a bounded modular lattice.  Suppose $\botrk(M) < \infty$.
  \begin{enumerate}
  \item \label{tn-soc} There is a unique maximum $s \in \Pro M$ such that $[\bot,s]
    \subseteq \Pro M$ is a semisimple sublattice of $\Pro M$.
  \item \label{tn-2} The composition
    \begin{equation*}
      M \hookrightarrow \Pro M \stackrel{x \mapsto x \wedge s}{\longrightarrow} [\bot,s]
    \end{equation*}
    is a flattening map.
  \item In particular, the lattice $[\bot,s]$ is isomorphic to the
    lattice $M^\flat$ of closed sets in the pregeometry on quasi-atoms in $M$.
  \item In particular, equivalence classes of quasi-atoms in $M$
    correspond exactly to atoms in $\Pro M$.
  \end{enumerate}
\end{theorem}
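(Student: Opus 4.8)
The plan is to work inside $\Pro M$, which by the preceding results is a complete modular lattice with enough atoms (Lemma~\ref{enough-atoms}), dual to the lattice of filters on $M$ ordered by reverse inclusion, with $M$ sitting inside it as a bounded sublattice via principal filters (Lemma~\ref{pro-hom}). Under this duality the atoms of $\Pro M$ are exactly the maximal proper filters of $M$, and by Lemma~\ref{lem:qa-filters-one} these are precisely the filters $F_q$ attached to quasi-atoms $q \in M$, with $F_q = F_{q'}$ iff $q$ and $q'$ lie in the same closed set of the pregeometry on quasi-atoms. This bijection between atoms of $\Pro M$ and equivalence classes of quasi-atoms is exactly part~(4), so it comes for free once the setup is in place.

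For parts (1)--(3) I would take $s \in \Pro M$ to be the socle, i.e. the join in $\Pro M$ of all its atoms; dually, $s$ is the filter $\bigcap_q F_q = \{x \in M : V(x) = \top_{M^\flat}\} = F_{\top_{M^\flat}}$, where $V : M \to M^\flat$ is the standard flattening map. The heart of the argument is to identify the interval $[\bot,s] \subseteq \Pro M$ with $M^\flat$. By Lemma~\ref{lem:qa-filters-two}, $A \mapsto F_A$ is an order-reversing isomorphism of posets from $M^\flat$ onto the set of finite intersections of maximal proper filters, and it converts $\vee$ in $M^\flat$ into intersection of filters; rephrased inside $\Pro M$ this says $A \mapsto F_A$ is a strict order embedding $M^\flat \to \Pro M$ that lands in $[\bot,s]$ (since $F_A \supseteq F_{\top_{M^\flat}} = s$) and preserves $\vee$. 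Because $[\bot,s]$ is an interval, hence a sublattice of $\Pro M$, it is enough to show this embedding is onto $[\bot,s]$: then $A \mapsto F_A$ is an order isomorphism between two lattices and therefore a lattice isomorphism $M^\flat \xrightarrow{\sim} [\bot,s]$. Surjectivity comes down to showing $[\bot,s]$ is atomic of finite length --- a finite join of atoms $F_{q_1} \vee \cdots \vee F_{q_k} = F_{q_1}\cap\cdots\cap F_{q_k}$ equals $F_A$ for $A$ the closure of $\{q_1,\dots,q_k\}$, so once every element of $[\bot,s]$ is known to be a finite join of atoms we are done; here I would use that every nonzero element of $\Pro M$ dominates an atom (Lemma~\ref{enough-atoms}) together with the finite rank $\botrk(M)$ of the quasi-atom pregeometry (Proposition~\ref{soft-cube-bound} and \cite{prdf}, Remark~9.48.3) to rule out infinite independent families of atoms below a fixed element. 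Granting this, $[\bot,s] \cong M^\flat$ is a semisimple modular lattice of length $\botrk(M)$ by Proposition~\ref{fattening-facts}.\ref{ff1}, which gives the existence statement in part~(1) and all of part~(3). For uniqueness in part~(1): if $[\bot,s']$ is any semisimple sublattice of $\Pro M$, then each atom of $[\bot,s']$ is an atom of $\Pro M$ (it is an interval whose bottom is $\bot_{\Pro M}$), so $s'$ is a finite join of atoms of $\Pro M$, and hence $s' \le s$.

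For part~(2), the composition sends $x \in M$ to $x \wedge_{\Pro M} s \in [\bot,s]$. Since $[\bot,s]$ is atomic of finite length, $x \wedge_{\Pro M} s$ is the join of the atoms of $\Pro M$ lying below it; every atom lies below $s$, and an atom $F_q$ lies below $x$ in $\Pro M$ iff $F_q$ contains the principal filter of $x$, i.e. iff $x \in F_q$, i.e. iff $x \wedge q > \bot$, i.e. iff $q \in V(x)$. Therefore $x \wedge_{\Pro M} s = \bigvee_{\Pro M}\{F_q : q \in V(x)\}$, which under the isomorphism $[\bot,s] \cong M^\flat$ corresponds to the closure of $V(x)$, i.e. to $V(x)$ itself. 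Thus the composition equals $V : M \to M^\flat$ up to the isomorphism $[\bot,s] \cong M^\flat$, so it is a flattening map.

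I expect the main obstacle to be the surjectivity/atomicity step in the second paragraph: proving that every element of the socle interval $[\bot,s] \subseteq \Pro M$ is a finite join of atoms (equivalently, is of the form $F_A$). This is the one place where the hypothesis $\botrk(M) < \infty$ is genuinely used, and it is where one must combine the ``enough atoms'' property of $\Pro M$ with the finite rank of the pregeometry on quasi-atoms; everything else is bookkeeping with the filter/reverse-inclusion duality.
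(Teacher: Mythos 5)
Your proposal is correct and follows essentially the same route as the paper: both rest on Lemmas~\ref{lem:qa-filters-one} and \ref{lem:qa-filters-two}, identify $M^\flat$ with the set of finite joins of atoms of $\Pro M$ via $A \mapsto F_A$, and reduce everything to showing that this set is exactly the interval $[\bot,s]$. The only local differences are that you define $s$ as the join of all atoms and re-derive the semisimplicity of $[\bot,s]$ directly (the step you rightly flag as the crux, which the paper instead delegates to the unproved remark that lower intervals below finite joins of atoms are semisimple), and that your verification of $x \wedge s = g(V(x))$ by identifying the atoms below $x$ is more direct than the paper's chain of order-theoretic equivalences.
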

\begin{proof}
  The lattice $\Pro M$ is dual to the lattice of filters on $M$.  By
  Lemma~\ref{lem:qa-filters-two}, there is therefore a map $g :
  M^\flat \to \Pro M$ with the following properties:
  \begin{itemize}
  \item $g$ is an isomorphism onto its image: for any $A, B \in M^\flat$,
    \begin{equation*}
      A \subseteq B \iff g(A) \subseteq g(B)
    \end{equation*}
  \item $g(A \vee B) = g(A) \vee g(B)$.
  \item An element $x \in \Pro M$ is in the image of $g$ if and only
    if $x$ is a finite join of atoms, or equivalently, if and only if
    $[\bot,x]$ is a semisimple modular lattice.
  \end{itemize}
  As $M^\flat$ has a maximal element, there is a maximal $s \in \Pro
  M$ such that $[\bot,s]$ is semisimple.  Then for any $x \in \Pro M$,
  the following are equivalent:
  \begin{itemize}
  \item $x$ is in the image of $g$
  \item $[\bot,x]$ is semisimple
  \item $x$ is in $[\bot,s]$.
  \end{itemize}
  Therefore the image of $g$ is $[\bot,s]$.  Then $g$ is an
  isomorphism of posets from $M^\flat$ to $[\bot,s]$, hence an
  isomorphism of lattices, and $g \circ V : M \to [\bot,s]$ is a
  flattening map.

  It remains to prove the formula
  \begin{equation*}
    g(V(x)) \stackrel{?}{=} x \wedge s.
  \end{equation*}
  For any $A \in M^\flat$, $g(A)$ is dual to the filter
  \begin{equation*}
    F_A = \{z \in M : V(z) \supseteq A\}
  \end{equation*}
  by definition of $g$.  For any $x \in M$, the element $x \in \Pro M$
  is dual the principal filter
  \begin{equation*}
    \{z \in M : z \ge x\}
  \end{equation*}
  generated by $x$.  Therefore, for $x, y \in M$ we have an equivalence
  \begin{align*}
    x \ge g(V(y)) &\iff \{z \in M : z \ge x\} \subseteq \{z \in M :
    V(z) \supseteq V(y)\} \\ &\iff x \in \{z \in M : V(z) \supseteq
    V(y)\} \\ & \iff V(x) \supseteq V(y).
  \end{align*}
  As $g$ is strictly order-preserving,
  \begin{equation*}
    x \ge g(V(y)) \iff g(V(x)) \ge g(V(y)).
  \end{equation*}
  Now $g \circ V : M \to [\bot,s]$ is surjective
  (Proposition~\ref{fattening-facts}.\ref{ff2}), so for any $x \in M$
  and $w \in [\bot,s]$,
  \begin{equation*}
    x \ge w \iff g(V(x)) \ge w.
  \end{equation*}
  Then for any $x \in M$ and $w \in [\bot,s]$,
  \begin{equation*}
    x \wedge s \ge w \iff x \ge w \iff g(V(x)) \ge w,
  \end{equation*}
  implying that $x \wedge s = g(V(x))$.
\end{proof}

\subsection{Flattening an abelian category}\label{s7.3}
\begin{fact}\label{nonsense}
  Let $\Cc, \mathcal{D}$ be categories with finite limits, and $F :
  \Cc \to \mathcal{D}$ be a functor.  Then $F$ preserves finite limits
  if and only if for every $A \in \mathcal{D}$, the functor
  \begin{align*}
    \Cc &\to \Set \\
    X &\mapsto \Hom_{\mathcal{D}}(A,F(X))
  \end{align*}
  preserves finite limits.
\end{fact}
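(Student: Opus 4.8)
The plan is to prove the two implications separately, the key external input being the standard fact that for any object $A$ of $\mathcal{D}$ the representable functor $\Hom_{\mathcal{D}}(A,-) : \mathcal{D} \to \Set$ preserves all limits that exist in $\mathcal{D}$.

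For the forward direction, suppose $F$ preserves finite limits and fix $A \in \mathcal{D}$. The functor $X \mapsto \Hom_{\mathcal{D}}(A, F(X))$ is the composite $\Hom_{\mathcal{D}}(A,-) \circ F$. Given a finite diagram $D : J \to \Cc$ with limiting cone $\pi_j : L \to D(j)$, the functor $F$ sends this to a limiting cone over $F \circ D$, and then $\Hom_{\mathcal{D}}(A,-)$ sends that to a limiting cone over $j \mapsto \Hom_{\mathcal{D}}(A, F(D(j)))$; so the composite preserves finite limits.

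For the converse, assume $\Hom_{\mathcal{D}}(A, F(-))$ preserves finite limits for every $A$. Let $D : J \to \Cc$ be a finite diagram with limiting cone $\pi_j : L \to D(j)$; I want to show the cone $(F\pi_j : F(L) \to F(D(j)))_j$ is a limiting cone over $F \circ D$, which proves $F$ preserves the limit of $D$. I would check the universal property directly: fix $A \in \mathcal{D}$ together with a cone $(f_j : A \to F(D(j)))_j$ over $F \circ D$. Such a cone is the same thing as an element of the set $\lim_j \Hom_{\mathcal{D}}(A, F(D(j)))$ formed in $\Set$ (the usual description of a limit in $\Set$ as the set of compatible families). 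The hypothesis says exactly that applying $\Hom_{\mathcal{D}}(A, F(-))$ to the limit cone $(\pi_j)$ yields a limit cone in $\Set$, i.e.\ the comparison map
\[
  \Hom_{\mathcal{D}}(A, F(L)) \longrightarrow \lim_j \Hom_{\mathcal{D}}(A, F(D(j))), \qquad f \longmapsto (F\pi_j \circ f)_j ,
\]
is a bijection. Hence there is a unique $f : A \to F(L)$ with $F\pi_j \circ f = f_j$ for all $j$, which is precisely the universal property of the limit of $F \circ D$.

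The only thing needing care is matching up the definitions: that a cone over $F \circ D$ with apex $A$ is literally an element of $\lim_j \Hom_{\mathcal{D}}(A, F(D(j)))$, and that ``$\Hom_{\mathcal{D}}(A, F(-))$ preserves the limit $L$'' unwinds to bijectivity of the displayed map $f \mapsto (F\pi_j \circ f)_j$. This is the standard bookkeeping behind the slogan that representable functors jointly detect limits, and I expect it to be the only (routine) obstacle. One could instead reduce to checking the statement for finite products and equalizers separately, but the diagram-wise argument above handles all finite shapes at once, and in fact does not even use that $\mathcal{D}$ has finite limits.
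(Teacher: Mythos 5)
Your proof is correct. The paper states this as a \emph{Fact} and gives no proof at all---it is quoted as standard category theory---so there is nothing to compare against; your argument (representables preserve limits for the forward direction, and jointly detect them via the bijection $\Hom_{\mathcal{D}}(A,F(L)) \to \lim_j \Hom_{\mathcal{D}}(A,F(D(j)))$ for the converse) is exactly the standard one the paper is implicitly invoking. Your closing observation that the hypothesis that $\mathcal{D}$ has finite limits is not actually needed is also accurate.
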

The following assumption will be in force for all of \S\ref{s7.3}
\begin{assumption}\label{asm:0}
  $\Cc$ is a small $K_0$-linear abelian category that is cube-bounded,
  or satisfies the following weaker condition:
  \begin{equation*}
    \forall A \in \Cc : \botrk(A) < \infty.
  \end{equation*}
  For example, $\Cc$ could be the category of modules over a
  Noetherian $K_0$-algebra.
\end{assumption}
As discussed in Appendix~\ref{sec:pro}, the category $\Pro \Cc$ is
naturally a $K_0$-linear abelian category, and the embedding $\Cc \to
\Pro \Cc$ is fully faithful and exact.  Moreover, for any object $A
\in \Cc$, there is an isomorphism
\begin{equation*}
  \Sub_{\Pro \Cc}(A) \cong \Pro \Sub_\Cc(A).
\end{equation*}
By Theorem~\ref{thm:nice}.\ref{tn-soc} in the previous section, if $A
\in \Cc$, then the pro-object $A \in \Pro \Cc$ has a socle---a maximum
semisimple\footnote{As always, ``semisimple'' means ``semisimple of
  finite length,'' even though $\Pro \Cc$ might have a more general
  notion of semisimplicity.} subobject.  We define the
\emph{quasi-socle} $\qsoc(A)$ to be the socle of $A$-as-a-pro-object.
\begin{remark}\label{doy}
  If $B \subseteq A$ in $\Cc$, then
  \begin{equation*}
    \qsoc(B) = B \cap \qsoc(A)
  \end{equation*}
  where the intersection is taken inside $\Pro \Cc$.  If $A, B$ are
  arbitrary objects in $\Cc$, then
  \begin{equation*}
    \qsoc(A \oplus B) \cong \qsoc(A) \oplus \qsoc(B).
  \end{equation*}
  Both statements follow from general facts about socles.
\end{remark}
\begin{lemma}\label{lem:heyhey} (Under \ref{asm:0}.) For every $A \in \Cc$, the induced
  map
  \begin{align*}
    \Sub_\Cc(A) & \to \Sub_{\Pro\Cc}(\qsoc(A)) \\
    B & \mapsto \qsoc(B) = B \cap \qsoc(A)
  \end{align*}
  is a flattening map.
\end{lemma}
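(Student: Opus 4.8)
The plan is to reduce the assertion to Theorem~\ref{thm:nice} applied to the bounded modular lattice $M := \Sub_\Cc(A)$. By Definition~\ref{def:botrk} we have $\botrk(M) = \botrk(A)$, and this is finite: it is finite directly under the weaker form of Assumption~\ref{asm:0}, and under the stronger (cube-bounded) form it is finite because $\botrk(A) \le \redrk(A) < \infty$ by Remark~\ref{botrk-rem}.\ref{br2}. So Theorem~\ref{thm:nice} applies to $M$, producing the distinguished element $s \in \Pro M$ of Theorem~\ref{thm:nice}.\ref{tn-soc}, for which $M \hookrightarrow \Pro M \stackrel{x \mapsto x \wedge s}{\longrightarrow} [\bot,s]$ is a flattening map. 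First I would set up the dictionary: the embedding $\Cc \hookrightarrow \Pro\Cc$ is exact and fully faithful, so it identifies $\Sub_\Cc(A)$ with a sublattice of $\Sub_{\Pro\Cc}(A)$, and the isomorphism $\Sub_{\Pro\Cc}(A) \cong \Pro\Sub_\Cc(A) = \Pro M$ recalled above carries this inclusion to the canonical embedding $M \hookrightarrow \Pro M$ and carries intersection of subobjects (computed in $\Pro\Cc$) to the meet $\wedge$ of $\Pro M$.

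Next I would identify the subobject $\qsoc(A) \in \Sub_{\Pro\Cc}(A)$ with the element $s \in \Pro M$. By the conventions of the paper, a subobject $X \subseteq A$ in $\Pro\Cc$ is semisimple (of finite length) if and only if $\Sub_{\Pro\Cc}(X)$ is a semisimple modular lattice; and if $x \in \Pro M$ is the element corresponding to $X$, then by the third isomorphism theorem $\Sub_{\Pro\Cc}(X)$ is the interval $[\bot,x]$ inside $\Pro M$. Thus the semisimple subobjects of $A$ in $\Pro\Cc$ correspond precisely to those $x \in \Pro M$ with $[\bot,x]$ a semisimple sublattice of $\Pro M$. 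Since $\qsoc(A)$ is by definition the maximum semisimple subobject of $A$-as-a-pro-object (its existence being exactly Theorem~\ref{thm:nice}.\ref{tn-soc}), it corresponds to the maximum such $x$, namely $s$. Consequently $\Sub_{\Pro\Cc}(\qsoc(A))$ is identified with the interval $[\bot,s] \subseteq \Pro M$.

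Finally, under all these identifications the map in the statement, $B \mapsto B \cap \qsoc(A)$, becomes the composite $M \hookrightarrow \Pro M \stackrel{x \mapsto x \wedge s}{\longrightarrow} [\bot,s]$, which is a flattening map by Theorem~\ref{thm:nice}.\ref{tn-2}; this is exactly what was to be shown. The one step that genuinely needs care is the translation between ``$X$ is a semisimple subobject of the pro-object $A$'' and ``$[\bot,x]$ is a semisimple sublattice of $\Pro M$''---i.e.\ matching up the categorical and lattice-theoretic notions of semisimplicity---and I expect this (together with checking that the isomorphism $\Sub_{\Pro\Cc}(A)\cong\Pro M$ is one of lattices restricting correctly on $\Cc$) to be the main, though fairly minor, obstacle; the rest is bookkeeping.
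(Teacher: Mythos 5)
Your proposal is correct and follows essentially the same route as the paper, whose proof simply cites Theorem~\ref{thm:nice}.\ref{tn-2} together with Remark~\ref{doy}; you have merely spelled out the identifications ($\Sub_{\Pro\Cc}(A)\cong\Pro\Sub_\Cc(A)$, $\qsoc(A)\leftrightarrow s$, intersection $\leftrightarrow\wedge$) that the paper leaves implicit. The extra care about finiteness of $\botrk$ under both forms of Assumption~\ref{asm:0} is fine but not a new idea.
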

\begin{proof}
  Theorem~\ref{thm:nice}.\ref{tn-2} and Remark~\ref{doy}.
\end{proof}
\begin{theorem}\label{thm:hey}
  (Under \ref{asm:0}.)  Let $\Cc^\flat$ be
  the full subcategory of $\Pro \Cc$ consisting of quasi-socles
  $\qsoc(A)$ for $A \in \Cc$.  Then
  \begin{enumerate}
  \item $\Cc^\flat$ is a small semisimple $K_0$-linear abelian
    category.
  \item \label{th2} The functor $\qsoc : \Cc \to \Cc^\flat$ is left exact and
    essentially surjective.
  \item \label{th3} For every $A \in \Cc$, the induced map
    \begin{align*}
      \Sub_\Cc(A) & \to \Sub_{\Cc^\flat}(\qsoc(A)) \\
      B & \mapsto \qsoc(B)
    \end{align*}
    is a flattening map.
  \end{enumerate}
\end{theorem}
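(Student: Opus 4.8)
The plan is to deduce all three statements from Lemma~\ref{lem:heyhey}, the surjectivity clause of Proposition~\ref{fattening-facts}, and the socle identities of Remark~\ref{doy}, using throughout that $\Cc \hookrightarrow \Pro\Cc$ is fully faithful and exact. The crucial first observation is that $\Cc^\flat$ is closed under subobjects in $\Pro\Cc$. Indeed, for $A \in \Cc$, Lemma~\ref{lem:heyhey} says the map $\Sub_\Cc(A) \to \Sub_{\Pro\Cc}(\qsoc(A))$, $B \mapsto \qsoc(B) = B \cap \qsoc(A)$, is a flattening map, hence surjective by Proposition~\ref{fattening-facts}.\ref{ff2}. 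So every subobject of $\qsoc(A)$ in $\Pro\Cc$ is of the form $\qsoc(B)$ for some $B \subseteq A$, and therefore lies in $\Cc^\flat$. Since $\Cc^\flat$ is a full subcategory of $\Pro\Cc$, this yields a poset isomorphism $\Sub_{\Cc^\flat}(\qsoc(A)) \cong \Sub_{\Pro\Cc}(\qsoc(A))$, and statement~(3) is then exactly Lemma~\ref{lem:heyhey}.

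For statement~(1): by Theorem~\ref{thm:nice}.\ref{tn-soc} applied to $M = \Sub_\Cc(A)$ (which has $\botrk(M) < \infty$ by Assumption~\ref{asm:0}), the pro-object $\qsoc(A)$ is semisimple of finite length $\botrk(A)$ in $\Pro\Cc$, so every object of $\Cc^\flat$ is semisimple of finite length. To see that $\Cc^\flat$ is abelian I would check that it is closed in $\Pro\Cc$ under kernels, cokernels, and finite biproducts: closure under subobjects was just established, so kernels of maps $\qsoc(A) \to \qsoc(B)$ stay in $\Cc^\flat$; a cokernel of such a map is a quotient of the semisimple object $\qsoc(B)$, hence a direct summand of it, hence up to isomorphism a subobject of $\qsoc(B)$, hence again in $\Cc^\flat$; and $\qsoc(A) \oplus \qsoc(B) \cong \qsoc(A \oplus B) \in \Cc^\flat$ by Remark~\ref{doy}, with $0 = \qsoc(0)$. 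Thus $\Cc^\flat$ is a full, exact abelian subcategory of the $K_0$-linear abelian category $\Pro\Cc$, so it is itself $K_0$-linear; it is small because its objects form a set, namely the image of the objects of $\Cc$ under $\qsoc$; and it is semisimple in the paper's sense because each $\qsoc(A)$ is a finite direct sum of simple objects, all of which lie in $\Cc^\flat$ and remain simple there.

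For statement~(2): first, $\qsoc$ really is a functor $\Cc \to \Cc^\flat$, since a morphism $f : A \to B$ of $\Cc$, read in $\Pro\Cc$, maps the semisimple subobject $\qsoc(A)$ onto a quotient of it, which is semisimple of finite length and therefore contained in the socle $\qsoc(B)$, giving $\qsoc(f) : \qsoc(A) \to \qsoc(B)$; functoriality is then immediate. Essential surjectivity holds by the definition of $\Cc^\flat$. For left-exactness it suffices to show that $\qsoc$ preserves kernels and finite products. Finite products: $\qsoc(A \times B) = \qsoc(A \oplus B) \cong \qsoc(A) \oplus \qsoc(B)$ by Remark~\ref{doy}, and $\qsoc(0) = 0$. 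Kernels: if $K \hookrightarrow A$ is the kernel of $f : A \to B$ in $\Cc$, then by exactness of $\Cc \hookrightarrow \Pro\Cc$ it is also $\ker f$ in $\Pro\Cc$, whence $\ker(\qsoc(f)) = \qsoc(A) \cap K = \qsoc(K)$, the last equality again by Remark~\ref{doy}. Hence $\qsoc$ preserves all finite limits, i.e., is left exact. (One could alternatively invoke Fact~\ref{nonsense}.)

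The step I expect to carry the weight is the first one: using surjectivity of the flattening map to see that $\Cc^\flat$ is closed under subobjects in $\Pro\Cc$. Once that closure property and Remark~\ref{doy} are in hand, the remaining verifications --- abelianness, semisimplicity, $K_0$-linearity, smallness, and left-exactness of $\qsoc$ --- are routine, and the only point needing a little care is that ``semisimple'' for the pro-objects $\qsoc(A)$ (semisimple \emph{of finite length}) correctly matches the paper's notion of a semisimple abelian category, which is exactly where the finiteness of $\botrk(A)$ from Assumption~\ref{asm:0} is used.
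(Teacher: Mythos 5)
Your proposal is correct and follows essentially the same route as the paper: the load-bearing step in both is that surjectivity of the flattening map (Lemma~\ref{lem:heyhey} plus Proposition~\ref{fattening-facts}.\ref{ff2}) forces $\Cc^\flat$ to be closed under subobjects in $\Pro\Cc$, after which semisimplicity, the identification $\Sub_{\Cc^\flat}(\qsoc(A)) \cong \Sub_{\Pro\Cc}(\qsoc(A))$, and statement~(3) all fall out. The only divergence is cosmetic: for left-exactness you verify preservation of kernels and finite products directly via Remark~\ref{doy}, whereas the paper routes through the natural isomorphism $\Hom_{\Cc^\flat}(B,\qsoc(A)) \cong \Hom_{\Pro\Cc}(B,A)$ and Fact~\ref{nonsense} --- the alternative you yourself flag --- and both arguments are sound.
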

\begin{proof}
  First note that for any $A \in \Cc$, the map
  \begin{align*}
      \Sub_\Cc(A) & \to \Sub_{\Pro \Cc}(\qsoc(A)) \\
      B & \mapsto \qsoc(B) = B \cap \qsoc(A)
  \end{align*}
  is a flattening map, by Lemma~\ref{lem:heyhey}.  As
  flattening maps are surjective
  (Proposition~\ref{fattening-facts}.\ref{ff2}), it then follows that
  $\Cc^\flat$ is closed under taking subobjects in $\Pro \Cc$.  It is
  also closed under direct sums, by Remark~\ref{doy}.  Therefore
  $\Cc^\flat$ is a semisimple abelian category, and
  \begin{equation*}
    \Sub_{\Cc^\flat}(\qsoc(A)) \cong \Sub_{\Pro \Cc}(\qsoc(A)).
  \end{equation*}
  So the natural map $\Sub_\Cc(A) \to \Sub_{\Cc^\flat}(\qsoc(A))$ is a
  flattening map.  The functor $\qsoc(A)$ is essentially surjective by
  definition of $\Cc^\flat$.  Note that for $A \in \Cc$ and $B \in
  \Cc^\flat$, there is a natural isomorphism
  \begin{equation*}
    \Hom_{\Cc^\flat}(B,\qsoc(A)) = \Hom_{\Pro \Cc}(B,\qsoc(A)) \cong
    \Hom_{\Pro \Cc}(B,A).
  \end{equation*}
  By Fact~\ref{nonsense}, the (left-)exactness of the embedding $\Cc
  \hookrightarrow \Pro \Cc$ implies the left-exactness of $\qsoc : \Cc
  \to \Cc^\flat$.
\end{proof}
We call $\Cc^\flat$ the \emph{flattening of $\Cc$}.
\begin{remark}
  There is an alternative approach to construct $\Cc^\flat$,
  proceeding as follows: say that a monomorphism $f : A \to B$ in
  $\Cc$ is ``dense'' if the image $\img(f)$ non-trivially intersects every
  non-trivial subobject of $B$.  The class of dense monomorphisms
  turns out to admit a calculus of right fractions, and $\Cc \to
  \Cc^\flat$ is the localization of $\Cc$ obtained by inverting the
  dense monomorphisms.  We prefer the approach using $\Pro \Cc$
  because it makes the calculations easier.
\end{remark}

\begin{proposition}\label{prop:qsoc-botrk} (Under \ref{asm:0}.)
  If $A \in \Cc$, then the length of $\qsoc(A)$ in $\Cc^\flat$ is
  exactly $\botrk(A)$ in $\Cc$.
\end{proposition}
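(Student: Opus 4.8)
The plan is to reduce the statement entirely to properties of flattening maps that have already been established. The key observation is bookkeeping: by the conventions of the paper, the length of an object in an abelian category is by definition the length of its lattice of subobjects, so it suffices to show that the bounded modular lattice $\Sub_{\Cc^\flat}(\qsoc(A))$ has length exactly $\botrk(A) = \botrk(\Sub_\Cc(A))$.

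First I would invoke Theorem~\ref{thm:hey}.\ref{th3}, which tells us that the natural map
\[
  \Sub_\Cc(A) \longrightarrow \Sub_{\Cc^\flat}(\qsoc(A)), \qquad B \mapsto \qsoc(B),
\]
is a flattening map. Note that $\botrk(\Sub_\Cc(A)) = \botrk(A) < \infty$: this holds under Assumption~\ref{asm:0} either because $\botrk(A) < \infty$ is assumed outright, or, in the cube-bounded case, because $\botrk(A) \le \redrk(A) < \infty$ by Remark~\ref{botrk-rem}.\ref{br2}. Hence Proposition~\ref{fattening-facts}.\ref{ff1} applies to the above map, with $M = \Sub_\Cc(A)$: the target $\Sub_{\Cc^\flat}(\qsoc(A))$ is a semisimple modular lattice whose length equals $\botrk(\Sub_\Cc(A))$. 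By Definition~\ref{def:botrk} this is $\botrk(A)$, and since the length of the object $\qsoc(A)$ in $\Cc^\flat$ is, by definition, the length of $\Sub_{\Cc^\flat}(\qsoc(A))$, we conclude $\ell_{\Cc^\flat}(\qsoc(A)) = \botrk(A)$.

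There is essentially no obstacle here: the real work was done in constructing $\Cc^\flat$ inside $\Pro\Cc$ and in proving Proposition~\ref{fattening-facts}, and this proposition merely repackages those results. The only points requiring a moment's care are the unwinding of "length of an object" as "length of its subobject lattice" and the observation that $\Cc^\flat$ is genuinely semisimple (the first part of Theorem~\ref{thm:hey}), so that this notion of length is unambiguous and finite.
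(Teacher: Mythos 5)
Your proof is correct and is essentially identical to the paper's: both invoke Theorem~\ref{thm:hey}.\ref{th3} to get the flattening map $\Sub_\Cc(A) \to \Sub_{\Cc^\flat}(\qsoc(A))$ and then apply Proposition~\ref{fattening-facts}.\ref{ff1} to equate the target's length with $\botrk(\Sub_\Cc(A))$. The extra remarks on finiteness and on unwinding ``length of an object'' are fine but not needed beyond what the paper already records.
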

\begin{proof}
  There is a flattening map
  \begin{equation*}
    \Sub_\Cc(A) \to \Sub_{\Cc^\flat}(\qsoc(A)),
  \end{equation*}
  so by Proposition~\ref{fattening-facts}.\ref{ff1},
  \begin{equation*}
    \botrk(A) := \botrk(\Sub_\Cc(A)) =
    \ell(\Sub_{\Cc^\flat}(\qsoc(A))) =: \ell(\qsoc(A)). \qedhere
  \end{equation*}
\end{proof}

\begin{lemma}[Intersection lemma]\label{lem:qsoc-prop} (Under \ref{asm:0}.)
  If $A \in \Cc$ and $B_1, B_2$ are two subobjects, then
  \begin{equation*}
    \qsoc(B_1) \cap \qsoc(B_2) = \qsoc(B_1 \cap B_2).
  \end{equation*}
  Consequently,
  \begin{itemize}
  \item The length of $\qsoc(B_1) \cap \qsoc(B_2)$ is $\botrk(B_1 \cap B_2)$.
  \item $\qsoc(B_1) \cap \qsoc(B_2) = 0 \iff B_1 \cap B_2 = 0$
  \end{itemize}
\end{lemma}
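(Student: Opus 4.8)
The plan is to read the identity off the description $\qsoc(B) = B \cap \qsoc(A)$ of Remark~\ref{doy}, where the intersection is taken inside $\Pro\Cc$. The one preliminary point I would record is that, since the embedding $\Cc \hookrightarrow \Pro\Cc$ is exact (and fully faithful), it preserves finite limits; hence for subobjects $B_1, B_2 \subseteq A$ in $\Cc$ the intersection $B_1 \cap B_2$ is the same whether computed in $\Cc$ or in $\Pro\Cc$, and $\Sub_\Cc(A)$ sits inside $\Sub_{\Pro\Cc}(A)$ as a sublattice, not merely as a subposet. Granting that, the calculation is immediate, using only associativity, commutativity and idempotence of $\cap$ in the lattice $\Sub_{\Pro\Cc}(A)$:
\begin{equation*}
  \qsoc(B_1) \cap \qsoc(B_2) = (B_1 \cap \qsoc(A)) \cap (B_2 \cap \qsoc(A)) = (B_1 \cap B_2) \cap \qsoc(A) = \qsoc(B_1 \cap B_2).
\end{equation*}
Equivalently one may phrase this through Lemma~\ref{lem:heyhey}: the assignment $B \mapsto \qsoc(B)$ is a flattening map $\Sub_\Cc(A) \to \Sub_{\Pro\Cc}(\qsoc(A))$, so by Proposition~\ref{fattening-facts}.\ref{ff4} it preserves $\wedge$; as the meet on the source is $B_1 \cap B_2$ and on the target is intersection in $\Pro\Cc$, this gives $\qsoc(B_1 \cap B_2) = \qsoc(B_1) \cap \qsoc(B_2)$.

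For the consequences I would proceed as follows. Applying Proposition~\ref{prop:qsoc-botrk} to the object $B_1 \cap B_2 \in \Cc$, the length of $\qsoc(B_1 \cap B_2)$ in $\Cc^\flat$ equals $\botrk(B_1 \cap B_2)$; by the identity just proved this is the length of $\qsoc(B_1) \cap \qsoc(B_2)$, giving the first bullet. For the second bullet, apply Proposition~\ref{fattening-facts}.\ref{ff6} (for a flattening map $f$ one has $x > \bot \iff f(x) > \bot$) with $x = B_1 \cap B_2$: this yields $\qsoc(B_1 \cap B_2) = 0 \iff B_1 \cap B_2 = 0$, and the left-hand side is $\qsoc(B_1) \cap \qsoc(B_2)$ by the lemma. (Alternatively, $\botrk(C) = 0 \iff C \cong 0$ by Remark~\ref{botrk-rem} together with Proposition~\ref{redrk}.\ref{rr4}, which gives the same conclusion.)

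There is no genuine obstacle here; the whole content is the two-line display above plus two bookkeeping appeals to Proposition~\ref{fattening-facts}. If anything deserves a sentence of care it is the compatibility of intersections across $\Cc \hookrightarrow \Pro\Cc$ used at the start --- i.e.\ the exactness of that embedding, recorded in \S\ref{sec:pro} --- without which the first and third equalities in the display would not even be meaningful.
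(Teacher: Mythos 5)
Your proof is correct and matches the paper's: the paper's own proof is a one-line citation of Proposition~\ref{fattening-facts}.\ref{ff4} and \ref{ff6}, which is exactly your second formulation (the direct computation via Remark~\ref{doy} is just an unwinding of the same fact). The extra care about intersections being computed consistently in $\Cc$ versus $\Pro\Cc$ is sound and harmless, though the paper leaves it implicit.
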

\begin{proof}
  This follows by properties of flattening maps, namely
  Proposition~\ref{fattening-facts}.\ref{ff4},\ref{ff6}.
\end{proof}

\begin{proposition}\label{finite-rank-case}
  Suppose every object of $\Cc$ has finite length.
  \begin{enumerate}
  \item For any $A \in \Cc$, the induced map
    \begin{equation*}
      \Sub_\Cc(A) \hookrightarrow \Sub_{\Pro \Cc}(A)
    \end{equation*}
    is an isomorphism of lattices.
  \item The quasi-socle $\qsoc(A)$ is the ordinary socle $\soc(A)$.
    In particular, the pro-object $\qsoc(A) \in \Pro \Cc$ is an
    ordinary object in $\Cc$.
  \item $\Cc^\flat$ is equivalent to the category of semisimple
    objects in $\Cc$, and $\qsoc : \Cc \to \Cc^\flat$ is equivalent to
    the ordinary socle functor.
  \item In particular, if $\Cc$ is a semisimple abelian category, then
    $\Cc \to \Cc^\flat$ is an equivalence of categories.
  \end{enumerate}
\end{proposition}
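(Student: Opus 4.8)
The plan is to deduce everything from a single lattice-theoretic observation: when a modular lattice $M$ has finite length, the canonical embedding $M \hookrightarrow \Pro M$ is already an isomorphism. Granting this, part (1) follows immediately, and parts (2)--(4) cascade formally, since the abstract socle of Theorem~\ref{thm:nice} and the category $\Cc^\flat$ of Theorem~\ref{thm:hey} are built out of $\Pro\Cc$, which in the finite-length case collapses back onto $\Cc$.

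For part (1): recall (Appendix~\ref{app:ind} and \S\ref{s7.3}) that $\Sub_{\Pro\Cc}(A) \cong \Pro\Sub_\Cc(A)$ as lattices, that $\Pro L$ for a lattice $L$ is dual to the lattice of filters on $L$, and that under this description the embedding $L \hookrightarrow \Pro L$ of Lemma~\ref{pro-hom} sends $a$ to the principal filter $\{x \in L : x \ge a\}$. Since $A$ has finite length, $L := \Sub_\Cc(A)$ is a modular lattice of finite length, hence satisfies the descending chain condition. So it suffices to show every filter $F \subseteq L$ is principal: $F$ is nonempty (it contains $\top$), so it has a minimal element $a$; for any $b \in F$ we have $a \wedge b \in F$ and $a \wedge b \le a$, whence $a \wedge b = a$ by minimality, i.e.\ $a \le b$. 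Thus $a = \min F$ and $F$ is the principal filter on $a$. Therefore $L \hookrightarrow \Pro L$ is surjective, hence an isomorphism, so the induced bounded-lattice homomorphism $\Sub_\Cc(A) \to \Sub_{\Pro\Cc}(A)$ is an isomorphism.

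Parts (2)--(4) now follow quickly. By Theorem~\ref{thm:nice}.\ref{tn-soc}, $\qsoc(A)$ is the largest subobject $s$ of $A$ in $\Pro\Cc$ with $[\bot,s]$ a semisimple sublattice; transporting along the isomorphism $\Sub_{\Pro\Cc}(A) \cong \Sub_\Cc(A)$ of part (1), this is the largest $s \in \Sub_\Cc(A)$ that is a finite join of atoms, which is precisely the ordinary socle $\soc(A)$ — in particular an ordinary object of $\Cc$, giving part (2). For part (3), $\Cc^\flat$ is by definition the full subcategory of $\Pro\Cc$ on the objects $\qsoc(A) = \soc(A)$; since every semisimple $S \in \Cc$ equals $\soc(S)$, these objects are exactly the semisimple objects of $\Cc$, and as $\Cc \hookrightarrow \Pro\Cc$ is fully faithful, $\Cc^\flat$ is identified with the full subcategory of $\Cc$ on semisimple objects and $\qsoc$ with the ordinary socle functor $\soc : \Cc \to \Cc^\flat$. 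Part (4) is then immediate: if $\Cc$ is semisimple, every object is its own socle, so $\Cc^\flat = \Cc$ as full subcategories of $\Pro\Cc$ and $\qsoc = \soc$ is naturally isomorphic to $\id_\Cc$; combined with essential surjectivity from Theorem~\ref{thm:hey}.\ref{th2}, the functor $\Cc \to \Cc^\flat$ is an equivalence.

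The only delicate point — and the mildest possible obstacle — is the bookkeeping in part (2): one must check that ``semisimple subobject of $A$ in $\Pro\Cc$'' and ``semisimple subobject of $A$ in $\Cc$'' designate the same elements of the common lattice. This is handled entirely by part (1), after which the identification of $\qsoc$ with $\soc$ is purely formal; no genuine category theory beyond the cited facts about $\Pro$ is needed.
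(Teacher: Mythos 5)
Your proof is correct and follows essentially the same route as the paper's: reduce everything to the observation that filters on a finite-length modular lattice are principal (you supply the DCC argument the paper leaves implicit), so $\Sub_\Cc(A) \cong \Sub_{\Pro\Cc}(A)$, and then parts (2)--(4) follow formally. No gaps.
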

\begin{proof}
  If $M$ is a modular lattice of finite length, every filter on $M$
  has a minimum element, hence is principal.  Therefore the induced
  embedding $M \hookrightarrow \Pro M$ is an isomorphism.  As
  \begin{equation*}
    \Sub_{\Pro \Cc}(A) \cong \Pro \Sub_\Cc(A),
  \end{equation*}
  this proves the first point.

  Then for any $A \in \Cc$, the sub-proobjects of $A$ are the same
  thing as ordinary subobjects from $\Cc$.  The quasi-socle is
  therefore the largest semsimple subobject of $\Cc$, which is the
  ordinary socle, proving the second point.

  The essential image of the socle functor is exactly the category of
  semisimple objects in $\Cc$, because every semisimple object is its
  own socle.  This proves the third point.  The fourth point is then
  clear.
\end{proof}

\subsection{Flattening directories}\label{sec:dirflat}
\begin{corollary} \label{cor:dir-flat}
  Let $D_\bullet$ be a cube-bounded directory.  Then there is a
  directory $D^\flat_\bullet$ and a morphism of directories $D_\bullet
  \to D^\flat_\bullet$ such that each map
  \begin{equation*}
    D_n \to D^\flat_n
  \end{equation*}
  is a flattening map.  The resulting structure is unique up to
  isomorphism.
\end{corollary}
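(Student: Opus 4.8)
The plan is to realize $D_\bullet$ as the directory of an object in an abelian category satisfying Assumption~\ref{asm:0}, apply Theorem~\ref{thm:hey}, and push forward along the quasi-socle functor. Choose an isomorphism $D_\bullet \cong \Dir_\Cc(A)$ for some $K_0$-linear abelian category $\Cc$ and object $A \in \Cc$, as permitted by the definition of a directory. Since $D_\bullet$ is cube-bounded, $A$ is a cube-bounded object, so I would replace $\Cc$ by the neighborhood of $A$ (see \S\ref{sec:neighborhoods}): this does not change $\Dir_\Cc(A)$, it is cube-bounded, and it is essentially small because the subquotients of the various $A^n$ form a set, so we may assume $\Cc$ is a small cube-bounded $K_0$-linear abelian category. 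Then Assumption~\ref{asm:0} holds, and Theorem~\ref{thm:hey} produces a small semisimple $K_0$-linear abelian category $\Cc^\flat$ together with a left-exact functor $\qsoc : \Cc \to \Cc^\flat$, which is also $K_0$-linear, being a subfunctor of the identity on the $K_0$-linear category $\Pro\Cc$. Set
\[
  D^\flat_\bullet := \Dir_{\Cc^\flat}(\qsoc(A)),
\]
a directory, and let $\varphi : D_\bullet \to D^\flat_\bullet$ be the directory morphism $\qsoc_*$ induced by $\qsoc$ as in Example~\ref{from-functor}; that this is a morphism of directories is exactly the content of Example~\ref{from-functor}, given left-exactness and $K_0$-linearity of $\qsoc$.

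Next I would verify that each component $\varphi_n : D_n \to D^\flat_n$ is a flattening map. By Remark~\ref{doy}, $\qsoc$ is compatible with finite direct sums, so the canonical isomorphism $A^n \cong A \oplus \cdots \oplus A$ induces $\qsoc(A^n) \cong \qsoc(A)^n$, identifying $D^\flat_n = \Sub_{\Cc^\flat}(\qsoc(A)^n)$ with $\Sub_{\Cc^\flat}(\qsoc(A^n))$. Under this identification, $\varphi_n$ sends a subobject $B \subseteq A^n$ to the image of $\qsoc(B) \hookrightarrow \qsoc(A^n)$ under $\qsoc$, which by the first part of Remark~\ref{doy} (equivalently, by left-exactness of $\qsoc$) is just $\qsoc(B) = B \cap \qsoc(A^n)$. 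By Theorem~\ref{thm:hey}.\ref{th3} applied to $A^n$, the map $B \mapsto \qsoc(B)$ is a flattening map $\Sub_\Cc(A^n) \to \Sub_{\Cc^\flat}(\qsoc(A^n))$. Hence every $\varphi_n$ is a flattening map, as required. This is the step that carries the main (if modest) burden: one must make sure the abstractly defined morphism $\qsoc_*$, whose $n$th component a priori only ``applies $\qsoc$ to subobjects of $A^n$,'' really coincides levelwise with the flattening maps of Theorem~\ref{thm:hey}.\ref{th3}, which hinges precisely on the $\oplus$-compatibility and the intersection formula of Remark~\ref{doy}.

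Finally, for uniqueness, suppose $\psi : D_\bullet \to E_\bullet$ is another directory morphism whose components are all flattening maps. For each $n$, both $\varphi_n$ and $\psi_n$ are, by definition, isomorphic to the standard flattening map $V : D_n \to D_n^\flat$; picking poset isomorphisms $a_n : D^\flat_n \to D_n^\flat$ and $b_n : E_n \to D_n^\flat$ with $a_n\circ\varphi_n = V = b_n\circ\psi_n$, set $h_n := b_n^{-1}\circ a_n : D^\flat_n \to E_n$, a poset isomorphism with $h_n\circ\varphi_n = \psi_n$. Since flattening maps are surjective (Proposition~\ref{fattening-facts}.\ref{ff2}), the $\oplus$-operation and $GL_n(K_0)$-action on $D^\flat_\bullet$ and on $E_\bullet$ are determined by those of $D_\bullet$ via $\varphi_\bullet$ and $\psi_\bullet$; comparing through $h_\bullet$ shows $h_\bullet$ respects $\oplus$ and the group actions, hence is an isomorphism of directories intertwining $\varphi$ and $\psi$. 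In particular the construction is independent of the chosen presentation $D_\bullet\cong\Dir_\Cc(A)$, which gives the asserted uniqueness up to isomorphism.
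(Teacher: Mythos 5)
Your proposal is correct and follows essentially the same route as the paper: realize $D_\bullet$ as $\Dir_\Cc(A)$, pass to the neighborhood of $A$ to get a small cube-bounded category, push forward along the quasi-socle functor of Theorem~\ref{thm:hey}, and deduce uniqueness from the uniqueness of flattening maps together with surjectivity, which forces the $\oplus$-operators and $GL_n(K_0)$-actions on the target to be the ones induced from $D_\bullet$. Your extra care in identifying $\qsoc(A^n)$ with $\qsoc(A)^n$ via Remark~\ref{doy} is a welcome but inessential elaboration of the step the paper handles by citing Theorem~\ref{thm:hey}.\ref{th3}.
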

\begin{proof}
  We may assume $D_\bullet = \Dir_\Cc(A)$ for some object $A$ in a
  $K_0$-linear abelian category $\Cc$.  Then $A$ is a cube-bounded
  object in $\Cc$.  Replacing $\Cc$ with the neighborhood of $A$, we may
  assume $\Cc$ is cube-bounded.  Then we can construct $\qsoc : \Cc
  \to \Cc^\flat$ as in Theorem~\ref{thm:hey}.  By
  Theorem~\ref{thm:hey}.\ref{th2}, the functor $\qsoc$ is left-exact,
  hence induces a morphism
  \begin{align*}
    \Dir_\Cc(A) & \to \Dir_{\Cc^\flat}(\qsoc(A)) \\
    \Sub_\Cc(A^n) & \to \Sub_{\Cc^\flat}(\qsoc(A)^n) \\
    B & \mapsto \qsoc(B)
  \end{align*}
  as in Example~\ref{from-functor}.  Let $D^\flat_\bullet$ denote
  $\Dir_{\Cc^\flat}(\qsoc(A))$.  By Theorem~\ref{thm:hey}.\ref{th3},
  the morphism $D_\bullet \to D^\flat_\bullet$ is a levelwise
  flattening map, proving existence.

  Alternatively, we can form the composition
  \begin{equation*}
    \Dir_\Cc(A) \to \Dir_{\Pro \Cc}(A) \to \Dir_\Cc(\qsoc(A)),
  \end{equation*}
  where the first map is induced by the exact functor $\Cc
  \hookrightarrow \Pro \Cc$, and the second map is pullback along
  $\qsoc(A) \hookrightarrow A$.  This composition is a levelwise
  flattening map by Lemma~\ref{lem:heyhey}.

  For uniqueness, suppose that $D_\bullet \to D'_\bullet$ is some
  other directory morphism such that each map $D_n \to D'_n$ is a
  flattening map.  Because flattening maps are unique up to
  isomorphism, we may move $D'_\bullet$ by an isomorphism and arrange
  the following:
  \begin{itemize}
  \item $D'_n$ and $D^\flat_n$ have the same underlying poset
  \item The functions $D_n \to D'_n$ and $D_n \to D^\flat_n$ are the
    same underlying function.
  \end{itemize}
  After arranging this, we claim that $D'_\bullet = D^\flat_\bullet$.
  The underlying sets are the same, and we only need to check that the
  directory structures agree, specifically the $\oplus$ operators and the
  $GL_n(K_0)$-actions.  The two maps
  \begin{align*}
    D_n &\twoheadrightarrow D'_n \\
    D_n &\twoheadrightarrow D^\flat_n
  \end{align*}
  are surjective and $GL_n(K_0)$-equivariant, so $D'_n$ and
  $D^\flat_n$ must have the same $GL_n(K_0)$-action, namely, the
  action induced by the action on $D_n$.  A similar argument shows
  that $D^\flat_\bullet$ and $D'_\bullet$ have the same $\oplus$
  operators.  Thus $D^\flat_\bullet = D'_\bullet$.
\end{proof}
Apparently, Corollary~\ref{cor:dir-flat} also works if $D_\bullet$ is
a Noetherian directory, in the sense that the ascending chaing
condition holds in $D_1$.  (This implies the ascending chain condition
in $D_n$ for all $n$.)

We call the morphism $D_\bullet \to D^\flat_\bullet$ of
Corollary~\ref{cor:dir-flat} the \emph{flattening morphism}.
\begin{remark}\label{alt-forms}
  From the proof of Corollary~\ref{cor:dir-flat}, we get two explicit
  descriptions of flattening.  If $D_\bullet = \Dir_\Cc(A)$, the
  flattening $D^\flat_\bullet$ can be described as the morphism
  \begin{equation*}
    \Dir_\Cc(A) \to \Dir_{\Cc^\flat}(\qsoc(A))
  \end{equation*}
  induced by the left-exact functor $\qsoc(A)$.  It can also be
  described as the morphism
  \begin{equation*}
    \Dir_\Cc(A) \to \Dir_{\Pro \Cc}(A) \to \Dir_{\Pro \Cc}(\qsoc(A))
  \end{equation*}
  where the first map is induced by the exact functor $\Cc
  \hookrightarrow \Pro \Cc$, and the second map is intersection with
  $\qsoc(A)$ (i.e., pullback along the monomorphism $\qsoc(A)
  \hookrightarrow A$).  These two descriptions are helpful when doing
  calculations.
\end{remark}

\subsection{Balanced objects}
Let $\Cc$ be a cube-bounded abelian category.
\begin{remark} \label{bot-red-bound}
  If $A \in \Cc$, then
  \begin{equation*}
    \ell(\qsoc(A)) = \botrk(A) \le \redrk(A)
  \end{equation*}
  by Proposition~\ref{prop:qsoc-botrk} and
  Remark~\ref{botrk-rem}.\ref{br2}.
\end{remark}
\begin{definition}\label{def:balanced}
  An object $A \in \Cc$ is \emph{balanced} if
  \begin{equation*}
    \ell(\qsoc(A)) = \redrk(A).
  \end{equation*}
\end{definition}
\begin{lemma}\label{sub-balanced}
  Subobjects of balanced objects are balanced.
\end{lemma}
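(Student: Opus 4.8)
The plan is to reduce, via Remark~\ref{bot-red-bound} (which gives $\botrk(X)\le\redrk(X)$ for every $X$), to the reverse inequality $\redrk(B)\le\botrk(B)$; in fact I will prove the stronger ``monotonicity of the defect'': for a subobject $B\le A$ of \emph{any} object $A$ (not yet assuming $A$ balanced),
\[
  \redrk(A)\ \ge\ \redrk(B)+\bigl(\botrk(A)-\botrk(B)\bigr),
\]
and then invoke balancedness of $A$, i.e.\ $\redrk(A)=\botrk(A)$ (this is what the definition says, combined with Remark~\ref{bot-red-bound}), to cancel $\botrk(A)$ and conclude $\redrk(B)\le\botrk(B)$. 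Note $\botrk(A)-\botrk(B)\ge 0$ because $\qsoc(B)=B\cap\qsoc(A)\le\qsoc(A)$ (Remark~\ref{doy}) and $\ell(\qsoc(\cdot))=\botrk(\cdot)$ (Proposition~\ref{prop:qsoc-botrk}).

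To prove the displayed inequality I pad a strict cube in $\Sub_\Cc(B)$ by fresh quasi-atoms of $A$. Use the flattening map $f\colon\Sub_\Cc(A)\to L:=\Sub_{\Cc^\flat}(\qsoc(A))$ of Theorem~\ref{thm:hey}: $L$ is semisimple of length $\botrk(A)$, $f$ is order-preserving, preserves $\wedge$, and satisfies $f(x)>\bot\iff x>\bot$ (Proposition~\ref{fattening-facts}); moreover $\qsoc$ respects finite direct sums (Remark~\ref{doy}), and every atom of $L$ is the flattening class of some quasi-atom of $\Sub_\Cc(A)$ (Theorem~\ref{thm:nice}). Put $m:=\botrk(A)-\botrk(B)$ and choose a complement $P$ of $f(B)$ in $L$, written $P=\mathfrak a_1\vee\cdots\vee\mathfrak a_m$ with the $\mathfrak a_j$ atoms independent over $\bot$. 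Pick quasi-atoms $q_1,\dots,q_m\in\Sub_\Cc(A)$ with $f(q_j)=\mathfrak a_j$, in any order; I claim one automatically gets $q_j\wedge\bigl(B\vee q_1\vee\cdots\vee q_{j-1}\bigr)=\bot$ in $\Sub_\Cc(A)$. Indeed, by induction the previous steps give $B\vee q_1\vee\cdots\vee q_{j-1}\cong B\oplus q_1\oplus\cdots\oplus q_{j-1}$, so by Remark~\ref{doy} its $f$-image is $f(B)\oplus\mathfrak a_1\oplus\cdots\oplus\mathfrak a_{j-1}$, which meets $\mathfrak a_j$ trivially in the semisimple lattice $L$ (as $\mathfrak a_j\le P$, $P$ is complementary to $f(B)$, and the $\mathfrak a_l$ are independent); since $f$ preserves $\wedge$ and detects $\bot$, the intersection itself is $\bot$.

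Finally let $r:=\redrk(B)$; by Remark~\ref{rem:cb-descr} there are $C\le B''\le B$ with $B''/C$ a direct sum of $r$ nonzero objects, and pulling back the summands gives subobjects $F_1,\dots,F_r$ with $C\le F_i\le B''$, each $F_i>C$, independent over $C$. A short modular-lattice computation — using the disjointness of the $q_j$ just arranged, the modular law (everything in sight lies above $C$), and the original cube's independence — shows that
\[
  F_1,\ \dots,\ F_r,\ \ C\vee q_1,\ \dots,\ C\vee q_m
\]
is an independent-over-$C$ sequence of $r+m$ elements of $\Sub_\Cc(A)$, each strictly above $C$; by \cite{prdf}, Proposition~9.15 it generates a strict $(r+m)$-cube, so $\redrk(A)\ge r+m$, which is exactly the displayed inequality. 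The step I expect to be the real obstacle is the padding itself: one must see that the correct number of new generators to add is $m=\botrk(A)-\botrk(B)$, and that such mutually $B$-disjoint quasi-atoms genuinely exist inside $A$ — this is where the flattening machinery (Theorems~\ref{thm:hey} and~\ref{thm:nice}, Proposition~\ref{fattening-facts}) is essential, since honest simple subobjects need not be available; the subsequent lattice bookkeeping and the final appeal to balancedness are routine.
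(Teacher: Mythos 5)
Your proof is correct. It rests on the same pivot as the paper's: choose a complement of $\qsoc(B)$ inside the semisimple $\qsoc(A)$ and pull it back through the surjective, $\wedge$-preserving, $\bot$-detecting flattening map. Where you diverge is in how the complement is lifted and how the rank inequality is then extracted. The paper lifts the complement in one piece to a subobject $C\le A$ with $C\cap B=0$ (via the Intersection Lemma~\ref{lem:qsoc-prop}) and finishes purely by rank arithmetic: $\redrk(B)+\redrk(C)=\redrk(B+C)\le\redrk(A)$ from Proposition~\ref{redrk}, together with $\ell(\qsoc(B))+\ell(\qsoc(C))=\ell(\qsoc(A))=\redrk(A)$ and $\ell(\qsoc(\cdot))\le\redrk(\cdot)$, squeezing everything to equality. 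You instead lift the complement atom-by-atom to quasi-atoms $q_1,\dots,q_m$ and hand-build a strict $(\redrk(B)+m)$-cube over the base $C$ of a maximal cube in $\Sub_\Cc(B)$. The bookkeeping you sketch does go through: the induction gives $q_j\wedge(B\vee q_{<j})=\bot$ because the $f$-image of the internal direct sum $B+q_1+\cdots+q_{j-1}$ is $f(B)\vee\mathfrak a_1\vee\cdots\vee\mathfrak a_{j-1}$ (the internal form of Remark~\ref{doy}), and the final independence check reduces by modularity to $(G\vee q_{<j})\wedge(C\vee q_j)=C\vee\bigl(q_j\wedge(G\vee q_{<j})\bigr)\le C\vee\bigl(q_j\wedge(B\vee q_{<j})\bigr)=C$. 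Your route is longer but isolates the explicit inequality $\redrk(A)\ge\redrk(B)+\botrk(A)-\botrk(B)$ for an arbitrary subobject, which the paper's squeeze also yields but does not state; the paper's route avoids any cube construction by outsourcing it to the additivity of $\redrk$ on split exact sequences.
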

\begin{proof}
  Let $B \subseteq A$ with $A$ balanced.  As $\Cc^\flat$ is a
  semisimple category, there is some $C' \subseteq \qsoc(A)$
  complementary to $\qsoc(B) \subseteq \qsoc(A)$.  The induced map
  \begin{equation*}
    \Sub_\Cc(A) \to \Sub_{\Cc^\flat}(\qsoc(A))
  \end{equation*}
  is a flattening map, hence surjective, so we may take $C' =
  \qsoc(C)$ for some $C \subseteq A$.  Then
  \begin{equation*}
    \qsoc(C) \cap \qsoc(B) = C' \cap \qsoc(B) = 0.
  \end{equation*}
  By Intersection Lemma~\ref{lem:qsoc-prop}, $C \cap B = 0$.  It
  follows that $C \oplus B \cong C + B \subseteq A$.  Then
  \begin{equation*}
    \redrk(C) + \redrk(B) = \redrk(C + B) \le \redrk(A)
  \end{equation*}
  by Proposition~\ref{redrk}.\ref{rr3}.  Note that
  \begin{equation*}
    \ell(\qsoc(C)) + \ell(\qsoc(B)) = \ell(\qsoc(A)).
  \end{equation*}
  because $\qsoc(C)$ and $\qsoc(B)$ are complementary inside
  $\qsoc(A)$.  Meanwhile,
  \begin{align*}
    \ell(\qsoc(C)) &\le \redrk(C) \\
    \ell(\qsoc(B)) &\le \redrk(B)    
  \end{align*}
  by Remark~\ref{bot-red-bound}.  By assumption, $\redrk(A) =
  \ell(\qsoc(A))$.  Putting everything together,
  \begin{align*}
    \redrk(C) + \redrk(B) &\ge \ell(\qsoc(C)) + \ell(\qsoc(B)) \\ &=
    \ell(\qsoc(A)) = \redrk(A) \\ &\ge \redrk(C) + \redrk(B).
  \end{align*}
  Therefore equality holds:
  \begin{align*}
    \ell(\qsoc(C)) &= \redrk(C) \\
    \ell(\qsoc(B)) &= \redrk(B). \qedhere
  \end{align*}
\end{proof}
\begin{remark}\label{add-balanced}
  If $A, B$ are balanced, then $A \oplus B$ is balanced.  Indeed,
  \begin{align*}
    \ell(\qsoc(A \oplus B)) &= \ell(\qsoc(A) \oplus \qsoc(B)) =
    \ell(\qsoc(A)) + \ell(\qsoc(B)) \\ &= \redrk(A) + \redrk(B) = \redrk(A
    \oplus B).
  \end{align*}
\end{remark}

\section{The pedestal machine} \label{sec:pedestals}
We give an abstract machine for generating inflators from cube-bounded
lattices on fields.
\subsection{The cube-bounded configuration}\label{sec:cbc}
If $K$ is a field, let $K\Vect^f$ denote the category of
finite-dimensional $K$-vector spaces.  For
\S\ref{sec:cbc}-\ref{yo-sec} we assume the following:
\begin{assumption}[Cube-bounded configuration] \label{asm:1}
  We have the following configuration:
  \begin{enumerate}
  \item A field $K$ extending the small field $K_0$.
  \item A $K_0$-linear abelian category $\Cc$
  \item A $K_0$-linear functor $F : K\Vect^f \to \Cc$ such that
    $F(K)$ is non-trivial but cube-bounded.
  \end{enumerate}
\end{assumption}
By Remark~\ref{out-of-finvec}, $F$ is faithful, exact, and
conservative.  Abusing notation, we view $K\Vect^f$ as a
subcategory of $\Cc$, and suppress $F$.

The motivating example is where $K$ is an infinite, saturated field of
finite burden, $\Cc$ is the category $\Hh^{00}$ of
\S\ref{sec:tdef00-setting}, and $F$ is the composition
\begin{equation*}
  K\Vect^f \to \Hh \to \Hh^{00}.
\end{equation*}
If $K$ is NIP and $K_0$ is a magic subfield, then the second map is an
equivalence of categories, and we can take $\Cc = \Hh$ instead.  We
give another example in \S\ref{sec:another-example}.

Let $d = \redrk(K)$ (in the category $\Cc$).  By assumption $d <
\infty$.
\begin{definition}\label{def:pedestal1.5}
  A $\Cc$-subobject $A \subseteq K$ is a \emph{pedestal} if
  $\botrk(K/A) = d$, i.e., $A$ is the base of a strict $d$-cube in the
  lattice $\Sub_\Cc(K)$.
\end{definition}

\begin{remark}\label{special-balanced1.5}
  If $A$ is a pedestal, then
  \begin{equation*}
    \ell(\qsoc(K/A)) = \botrk(K/A) = d = \redrk(K) \ge \redrk(K/A) \ge \botrk(K/A),
  \end{equation*}
  so $K/A$ is \emph{balanced} (Definition~\ref{def:balanced}).
\end{remark}

\subsection{The inflator}\label{sec:the-machine}
Continue Assumption~\ref{asm:1}.
\begin{lemma}[$\approx$ Lemma~10.9 in \cite{prdf}]\label{lem:miracle}
  Let $d = \redrk(K)$ and $A \subseteq K$ be a pedestal.  Then for
  any $V \in \Sub_K(K^n)$, we have
  \begin{equation*}
    \ell(\qsoc((V + A^n)/A^n)) = \botrk((V + A^n)/A^n) = \redrk((V +
    A^n)/A^n) = d \cdot \dim_K(V).
  \end{equation*}
\end{lemma}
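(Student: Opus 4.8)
The plan is to prove all four quantities equal $d\cdot\dim_K(V)$ by pinning down the middle one, $\redrk((V+A^n)/A^n)$, and then reading off the other three. Write $m=\dim_K(V)$ and $M:=(V+A^n)/A^n$; here $A^n$ denotes the subobject $A^{\oplus n}\subseteq K^{\oplus n}=K^n$ of $\Cc$. Throughout I work inside the neighborhood of $F(K)$, which is a cube-bounded abelian category (it contains $M$ and everything else below, all being subquotients of powers of $F(K)$), so Assumption~\ref{asm:0} is in force and $\qsoc$ is available. The identity $\ell(\qsoc(M))=\botrk(M)$ is then Proposition~\ref{prop:qsoc-botrk}. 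For $\botrk(M)=\redrk(M)$ I show $M$ is balanced: since $A$ is a pedestal, $K/A$ is balanced (Remark~\ref{special-balanced1.5}), hence $K^n/A^n\cong(K/A)^{\oplus n}$ is balanced by Remark~\ref{add-balanced}, and $M$ is a subobject of $K^n/A^n$, so $M$ is balanced by Lemma~\ref{sub-balanced}. Thus it suffices to prove $\redrk(M)=md$.

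For the upper bound: by the isomorphism theorems $M\cong V/(V\cap A^n)$, a quotient of $V$ in $\Cc$. Since $F$ is $K_0$-linear and $V\cong K^m$ in $K\Vect^f$, the subobject $V$ of $K^n$ is isomorphic to $F(K)^{\oplus m}$, and $\redrk(F(K)^{\oplus m})=m\,\redrk(F(K))=md$ by iterating the split case of Proposition~\ref{redrk}.\ref{rr3}. Hence $\redrk(M)\le\redrk(V)=md$ by Proposition~\ref{redrk}.\ref{rr1}. I also record that $\redrk(K/A)=d$: it is $\le\redrk(K)=d$ since $K/A$ is a quotient of $K$, and $\ge\botrk(K/A)=d$ since $A$ is a pedestal, using Remark~\ref{botrk-rem}.\ref{br2}; consequently $\redrk((K/A)^{\oplus m})=md$ as well.

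The real content is the lower bound $\redrk(M)\ge md$. Pick a $K$-basis $v_1,\dots,v_m$ of $V$ and extend it to a $K$-basis $v_1,\dots,v_n$ of $K^n$. This identifies $V$ with $K^m$ through the $K$-linear inclusion $\iota\colon K^m\hookrightarrow K^n$, $\iota(c)=\sum_i c_iv_i$; under this identification the subobject $V\cap A^n$ of $V$ becomes $B:=\iota^{-1}(A^n)\subseteq K^m$, so $M\cong K^m/B$. The matrix of $\iota$ is an $n\times m$ matrix over $K$ of full column rank $m$, so some $m$ of its rows are $K$-linearly independent; retaining only those rows yields a $K$-linear automorphism $\mu$ of $K^m$. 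The defining conditions of $B'':=\mu^{-1}(A^m)$ form a sub-collection of the defining conditions of $B$, so $B\subseteq B''$, giving an epimorphism $K^m/B\twoheadrightarrow K^m/B''$ and hence $\redrk(M)=\redrk(K^m/B)\ge\redrk(K^m/B'')$ by Proposition~\ref{redrk}.\ref{rr1}. Finally $F(\mu)$ maps $B''$ onto $A^m$, inducing an isomorphism $K^m/B''\cong K^m/A^m=(K/A)^{\oplus m}$, whose reduced rank is $md$ by the previous paragraph. So $\redrk(M)\ge md$, hence $\redrk(M)=md$, and combining with the first paragraph all four displayed quantities equal $d\cdot\dim_K(V)$.

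The step I expect to be the main obstacle is precisely this lower bound. The naive idea—apply an element of $GL_n(K)$ to move $V$ to a coordinate subspace—fails because $A^n$ is merely a $K_0$-subobject and is not $GL_n(K)$-invariant. The row-selection trick sidesteps this: transported into $K^m$, the group $V\cap A^n$ lands inside some $GL_m(K)$-translate of $A^m$, and that is enough because reduced rank can only decrease under the quotient $K^m/B\twoheadrightarrow K^m/B''$. A secondary point requiring care is the bookkeeping that the neighborhood of $F(K)$ really does satisfy Assumption~\ref{asm:0}, so that $\qsoc$, balancedness, and Proposition~\ref{prop:qsoc-botrk} apply to every object in sight.
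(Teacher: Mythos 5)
Your proof is correct, and the first two equalities plus the upper bound $\redrk((V+A^n)/A^n)\le d\cdot\dim_K(V)$ are handled exactly as in the paper (Proposition~\ref{prop:qsoc-botrk}, balancedness via Remark~\ref{special-balanced1.5}, Remark~\ref{add-balanced} and Lemma~\ref{sub-balanced}, and the epimorphism $V\twoheadrightarrow(V+A^n)/A^n$ with $\redrk(V)=md$). For the lower bound, however, you take a genuinely different route. The paper chooses a complementary subspace $W$ with $V\oplus W=K^n$, bounds both $\redrk((V+A^n)/A^n)\le d\dim V$ and $\redrk((W+A^n)/A^n)\le d\dim W$, and then squeezes: since $K^n/A^n$ is the join of these two subobjects, $dn=\redrk(K^n/A^n)\le\redrk((V+A^n)/A^n)+\redrk((W+A^n)/A^n)\le dn$, forcing equality everywhere. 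You instead produce an explicit epimorphism $(V+A^n)/A^n\cong K^m/\iota^*(A^n)\twoheadrightarrow K^m/\mu^*(A^m)\cong(K/A)^{\oplus m}$ by selecting $m$ independent rows of the inclusion matrix of $V$, and conclude $\redrk\ge md$ from Proposition~\ref{redrk}.\ref{rr1}. Both arguments rest on the same toolkit; the paper's version is shorter and proves the statement for $V$ and its complement simultaneously, while yours is more constructive (it exhibits a concrete surjection onto $(K/A)^{\oplus m}$ rather than inferring equality from a counting squeeze) and correctly diagnoses why the naive $GL_n(K)$-change-of-coordinates fails. The only point worth making explicit in your write-up is that the containment $B\subseteq B''$ should be phrased categorically, e.g.\ as $\iota^*(A^{\oplus n})\le\iota^*\pi_S^*(A^{\oplus m})$ using that pullback of subobjects preserves finite meets and is order-preserving, since $A^n$ is only a $\Cc$-subobject and ``defining conditions'' is elementwise language; this is routine and does not affect correctness.
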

\begin{proof}
  The first equality is Proposition~\ref{prop:qsoc-botrk}.  The second
  holds by Lemma~\ref{sub-balanced} because $(V + A^n)/A^n$ is
  isomorphic to a subobject of $K^n/A^n$, which is balanced by
  Remark~\ref{add-balanced}.  It remains to show the third equality.
  Let $W \in \Sub_K(K^n)$ be a complementary subspace, so
  \begin{align*}
    V + W & = K^n \\
    V \cap W & = \{0\} \\
    \dim(V) + \dim(W) &= n.
  \end{align*}
  The isomorphisms
  \begin{align*}
    V &\cong K^{\dim V} \\
    W &\cong K^{\dim W}
  \end{align*}
  imply that in $\mathcal{C}$,
  \begin{align*}
    \redrk(V) &= d \cdot \dim(V) \\
    \redrk(W) &= d \cdot \dim(W).
  \end{align*}
  The surjection $V \twoheadrightarrow (V + A^n)/A^n$ implies
  \begin{align*}
    \redrk((V + A^n)/A^n) & \le d \cdot \dim(V) \\
    \redrk((W + A^n)/A^n) & \le d \cdot \dim(W).
  \end{align*}
  Now $K^n/A^n$ is the join of the subobjects $(V + A^n)/A^n$ and $(W
  + A^n)/A^n$, so
  \begin{equation*}
    \redrk(K^n/A^n) \le \redrk((V + A^n)/A^n) + \redrk((W +
    A^n)/A^n).
  \end{equation*}
  Assembling all the inequalities,
  \begin{align*}
    dn &= n \cdot \redrk(K/A) = \redrk(K^n/A^n) \\ & \le \redrk((V +
    A^n)/A^n) + \redrk((W + A^n)/A^n) \\ &\le d \cdot \dim(V) + d \cdot
    \dim(W) = dn.
  \end{align*}
  Therefore all the inequalities are equalities and
  \begin{align*}
    \redrk((V + A^n)/A^n) & = d \cdot \dim(V) \\
    \redrk((W + A^n)/A^n) & = d \cdot \dim(W). \qedhere
  \end{align*}
\end{proof}

\begin{theorem}[Pedestal machine] \label{thm:actually-important}
  Under Assumption~\ref{asm:1}, if $d = \redrk(K)$ and $A \in
  \Sub_\Cc(K)$ is a pedestal, then the following composition is a
  $d$-inflator on $K$:
  \begin{equation*}
    \Dir_K(K) \stackrel{=}{\to} \Dir_{K\Vect^f}(K) \hookrightarrow
    \Dir_\Cc(K) \to \Dir_\Cc(K/A) \to \Dir_\Cc(K/A)^\flat
  \end{equation*}
  where the second map comes from the exact functor $K\Vect^f \to
  \Cc$, the third map comes from the pushforward along $K
  \twoheadrightarrow K/A$, and the final map is the flattening map of
  Corollary~\ref{cor:dir-flat}.
\end{theorem}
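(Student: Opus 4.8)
The plan is to verify the three defining properties of a $d$-inflator for the composition in question, call it $\varsigma$: that it is a morphism of directories, that its codomain is a semisimple directory of length $d$, and that each $\varsigma_n$ scales lengths by the factor $d$. The first two are essentially bookkeeping, and the third is an immediate consequence of Lemma~\ref{lem:miracle}.

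First I would check that $\varsigma$ is a morphism of directories, because each of its four constituents is. The leftmost arrow $\Dir_K(K) \stackrel{=}{\to} \Dir_{K\Vect^f}(K)$ is the identity, since every $K$-subspace of $K^n$ is finite-dimensional, so $\Sub_{K\Vect^f}(K^n) = \Sub_K(K^n)$ with the same $\oplus$ and $GL_n(K_0)$-structure. The arrow $\Dir_{K\Vect^f}(K) \hookrightarrow \Dir_\Cc(K)$ arises from the functor $F\colon K\Vect^f \to \Cc$, which is $K_0$-linear and exact by Remark~\ref{out-of-finvec}, so it is a directory morphism by Example~\ref{from-functor} (using $F(K^n) \cong F(K)^n$ to identify the levels, and suppressing $F$ as in \S\ref{sec:cbc}). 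The arrow $\Dir_\Cc(K) \to \Dir_\Cc(K/A)$ is pushforward along the epimorphism $K \twoheadrightarrow K/A$, a directory morphism by Example~\ref{from-morphism}. Finally, $K = F(K)$ is cube-bounded in $\Cc$ by Assumption~\ref{asm:1}, so its quotient $K/A$ is cube-bounded by Proposition~\ref{redrk}.\ref{rr1}; hence $\Dir_\Cc(K/A)$ is a cube-bounded directory, and Corollary~\ref{cor:dir-flat} supplies the flattening morphism $\Dir_\Cc(K/A) \to \Dir_\Cc(K/A)^\flat$. The composite $\varsigma$ is therefore a morphism of directories.

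Second I would identify the codomain. By the construction of flattening (Corollary~\ref{cor:dir-flat} and Remark~\ref{alt-forms}), $\Dir_\Cc(K/A)^\flat$ is $\Dir_{\Cc^\flat}(\qsoc(K/A))$, which is a semisimple directory by Theorem~\ref{thm:hey}. Its length is $\ell(\qsoc(K/A))$, which equals $\botrk(K/A)$ by Proposition~\ref{prop:qsoc-botrk}; and $\botrk(K/A) = d$ precisely because $A$ is a pedestal (Definition~\ref{def:pedestal1.5}). So the codomain is a semisimple directory of length $d$, as required.

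Third, I would compute $\varsigma_n$ on subobjects and invoke Lemma~\ref{lem:miracle}. Tracing through the maps, $V \in \Sub_K(K^n)$ is sent to $V \subseteq K^n$ in $\Cc$, then to its direct image under $K^n \twoheadrightarrow (K/A)^n$, which (the kernel being $A^n$) is $(V + A^n)/A^n$, and then to its image under the level-$n$ flattening map, namely $\qsoc\big((V + A^n)/A^n\big)$, regarded as a subobject of $\qsoc((K/A)^n) \cong \qsoc(K/A)^n$ (Remark~\ref{doy}). Hence Lemma~\ref{lem:miracle} gives
\[
  \ell\big(\varsigma_n(V)\big) = \ell\big(\qsoc((V + A^n)/A^n)\big) = d \cdot \dim_K(V),
\]
which is the length-scaling axiom, completing the verification. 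Since Lemma~\ref{lem:miracle} carries essentially all the weight, I do not expect any substantial obstacle; the only points needing care are the identification of $\varsigma_n$ on subobjects as the assignment $V \mapsto \qsoc((V+A^n)/A^n)$, and the two uses of the pedestal hypothesis (pinning down the codomain length and guaranteeing the applicability of Lemma~\ref{lem:miracle} via Remark~\ref{special-balanced1.5}).
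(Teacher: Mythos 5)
Your proposal is correct and follows essentially the same route as the paper: the paper's proof simply identifies the composite as $V \mapsto \qsoc((V+A^n)/A^n)$ via Remark~\ref{alt-forms} and cites Lemma~\ref{lem:miracle} for the length-scaling, leaving the directory-morphism and codomain-length checks implicit where you spell them out.
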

\begin{proof}
  By Remark~\ref{alt-forms}, we can identify $\Dir_\Cc(K/A)^\flat$ with
  $\Dir_{\Cc^\flat}(\qsoc(K/A))$, and the directory morphism in question is
  simply the map
  \begin{equation*}
    V \mapsto (V + A^n)/A^n \mapsto \qsoc((V + A^n)/A^n).
  \end{equation*}
  Lemma~\ref{lem:miracle} ensures that this is a $d$-inflator.
\end{proof}
\begin{proposition}\label{bleal-form}
  Under Assumption~\ref{asm:1}, the $d$-inflator of
  Theorem~\ref{thm:actually-important} can also be described as
  \begin{equation*}
    \Dir_K(K) \hookrightarrow \Dir_{\Pro \Cc}(K) \twoheadrightarrow
    \Dir_{\Pro \Cc}(A^+/A),
  \end{equation*}
  where
  \begin{itemize}
  \item $A^+$ is the pro-subobject of $K$ such that $A^+/A$ is the
    socle of $K/A$.
  \item the first map is induced by the exact functor $K\Vect^f
    \hookrightarrow \Pro \Cc$.
  \item the second map is an interval retract onto the interval
    $[A,A^+]$, as in Definition~\ref{def:intervals}.
  \end{itemize}
\end{proposition}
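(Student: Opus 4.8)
The plan is to unwind both composite directory morphisms to explicit formulas on the subobject lattices and check they coincide. Since a morphism of directories \emph{is} just its underlying system of maps $D_n\to D'_n$ (together with the codomain being a directory), and since the domain in both descriptions is $\Dir_K(K)=\Dir_{K\Vect^f}(K)$, it suffices to verify that for each $n$ and each $K$-subspace $V\subseteq K^n$ the two recipes assign to $V$ the same object, \emph{and} that the two named codomains are literally the same directory (not merely abstractly isomorphic) — this last point is the whole content of the proposition, that the second composite is a genuine rewriting of the first.

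First I would settle the codomains. By definition $A^+$ is the pro-subobject of $K$ containing $A$ with $A^+/A=\soc_{\Pro\Cc}(K/A)=\qsoc(K/A)$; it exists by pulling back the subobject $\qsoc(K/A)\subseteq K/A$ of $\Pro\Cc$ along $K\twoheadrightarrow K/A$. Then $\Dir_{\Pro\Cc}(K)^{[A,A^+]}\cong\Dir_{\Pro\Cc}(A^+/A)$ by Proposition~\ref{prop:sq}.\ref{sq6-o}, while $\Dir_{\Pro\Cc}(A^+/A)=\Dir_{\Pro\Cc}(\qsoc(K/A))\cong\Dir_{\Cc^\flat}(\qsoc(K/A))=\Dir_\Cc(K/A)^\flat$, because $\Cc^\flat$ is a full abelian subcategory of $\Pro\Cc$ closed under subobjects and direct sums (proof of Theorem~\ref{thm:hey}) and a directory depends only on the neighbourhood (\S\ref{sec:neighborhoods}). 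I would also record $\qsoc((K/A)^n)=\qsoc(K/A)^n$ by Remark~\ref{doy}, so that $(A^+/A)^n=\qsoc((K/A)^n)$.

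Next I would compute the two maps on $V\subseteq K^n$. On the pedestal-machine side, the proof of Theorem~\ref{thm:actually-important} (using Remark~\ref{alt-forms}) already identifies the image of $V$ as $\qsoc\big((V+A^n)/A^n\big)$, where $(V+A^n)/A^n$ is an honest subobject of $(K/A)^n$ in $\Cc$ and $\qsoc$ is taken in $\Pro\Cc$. On the other side, the first arrow $\Dir_K(K)\hookrightarrow\Dir_{\Pro\Cc}(K)$, induced by the exact faithful functor $K\Vect^f\to\Cc\hookrightarrow\Pro\Cc$, merely reinterprets $V$ as a finite-length sub-pro-object of $K^n$; the interval retract of Proposition~\ref{prop:sq} sends it to $(V+A^n)\cap(A^+)^n$; and the isomorphism $\Dir_{\Pro\Cc}(K)^{[A,A^+]}\cong\Dir_{\Pro\Cc}(A^+/A)$ sends that to $\big((V+A^n)\cap(A^+)^n\big)/A^n$. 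Now modularity — i.e.\ the lattice isomorphism $[A^n,K^n]\cong\Sub_{\Pro\Cc}((K/A)^n)$ — gives $\big((V+A^n)\cap(A^+)^n\big)/A^n=\big((V+A^n)/A^n\big)\cap\big((A^+)^n/A^n\big)=\big((V+A^n)/A^n\big)\cap\qsoc((K/A)^n)$, and Remark~\ref{doy} applied to the subobject $(V+A^n)/A^n\subseteq(K/A)^n$ shows this last expression is exactly $\qsoc\big((V+A^n)/A^n\big)$. Hence the two systems of maps agree, proving the proposition.

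I expect no serious obstacle: all the substance is already packaged in Remark~\ref{alt-forms} (which ties flattening to the socle in $\Pro\Cc$), in Proposition~\ref{prop:sq} (the interval-retract formula and its identification with $\Dir_{\Pro\Cc}(A^+/A)$), and in Remark~\ref{doy} (that $\qsoc$ commutes with $\cap$ and with finite direct sums). The one place to be careful is the second paragraph — the chain of identifications of codomains — since the proposition claims equality of inflators, not just equivalence; an alternative to that bookkeeping is to first prove that pushforward commutes with the exact embedding $\Cc\hookrightarrow\Pro\Cc$ (exact functors preserve images) and that $(-)_*$ is functorial in the functor variable, then recognise "pushforward along $K\twoheadrightarrow K/A$ followed by pullback along $\qsoc(K/A)\hookrightarrow K/A$" as the interval retract onto $[A,A^+]$, but the direct subobject-level computation above avoids even that.
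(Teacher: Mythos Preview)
Your proposal is correct and follows essentially the same route as the paper: both invoke Remark~\ref{alt-forms} to identify $\Dir_\Cc(K/A)^\flat$ with $\Dir_{\Pro\Cc}(A^+/A)$ and then match the two descriptions at the level of the explicit formula $V\mapsto (V+A^n)/A^n\cap (A^+)^n/A^n$. The only cosmetic difference is that the paper packages the comparison into a commuting square $\Dir_\Cc(K)\to\Dir_\Cc(K/A)$ over $\Dir_{\Pro\Cc}(K)\to\Dir_{\Pro\Cc}(K/A)$ and then recognises the last two arrows as the interval retract, whereas you unwind the interval retract directly and match subobject formulas via Remark~\ref{doy}; you even mention the paper's variant as your ``alternative'' in the final paragraph.
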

\begin{proof}
  By Remark~\ref{alt-forms}, we can identify $\Dir_\Cc(K/A)^\flat$ with
  $\Dir_{\Pro \Cc}(\qsoc(K/A)) = \Dir_{\Pro \Cc}(A^+/A)$, and the map
  in question is
  \begin{equation*}
    V \mapsto (V + A^n)/A^n \cap (A^+)^n/A^n.
  \end{equation*}
  The notation is unambiguous because the embedding $\Cc
  \hookrightarrow \Pro \Cc$ is an exact functor which preserves
  everything.  In other words, there is a commutative diagram
  \begin{equation*}
    \xymatrix{ \Dir_\Cc(K) \ar[r] \ar[d] & \Dir_\Cc(K/A) \ar[d]
      \\ \Dir_{\Pro \Cc}(K) \ar[r]  & \Dir_{\Pro
        \Cc}(K/A). }
  \end{equation*}
  So the map in question is the composition
  \begin{equation*}
    \Dir_K(K) \to \Dir_{K\Vect^f}(K) \to \Dir_\Cc(K) \to
    \Dir_{\Pro \Cc}(K) \to \Dir_{\Pro \Cc}(K/A) \to \Dir_{\Pro
      \Cc}(A^+/A).
  \end{equation*}
  The composition of the final two maps is the interval retract.
\end{proof}

\subsection{A special case}
Continue Assumption~\ref{asm:1}.
\begin{lemma}
  If $M$ is a modular lattice of finite length, then $M \cong \Pro M$.
\end{lemma}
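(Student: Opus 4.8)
The plan is to reuse the concrete description of $\Pro M$ established earlier in this section. For a bounded lattice $M$, the category $\Pro M$ is dual to the poset of filters on $M$ ordered by inclusion, and the canonical embedding $M \hookrightarrow \Pro M$ carries $a$ to its principal filter $\{x \in M : x \ge a\}$; by Lemma~\ref{pro-hom} this embedding is a homomorphism of bounded lattices. Since a bijective lattice homomorphism is automatically a lattice isomorphism, the statement reduces to showing that this embedding is surjective, i.e. that every filter $F \subseteq M$ is principal.

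To prove that, I would first observe that a modular lattice of finite length satisfies the descending chain condition (a descending chain is in particular a chain, and finite length bounds the length of chains). Hence the nonempty set $F$ has a minimal element $m$. The key point is that $m$ is actually the least element of $F$: for any $x \in F$, the element $m \wedge x$ again lies in $F$ because filters are closed under $\wedge$, and $m \wedge x \le m$, so minimality of $m$ forces $m \wedge x = m$, i.e. $m \le x$. Thus $F = \{x \in M : x \ge m\}$ is the principal filter on $m$, so the embedding $M \hookrightarrow \Pro M$ is onto; combined with the previous paragraph this gives $M \cong \Pro M$. This is precisely the argument already sketched inside the proof of Proposition~\ref{finite-rank-case}, merely isolated here as a standalone lemma.

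There is essentially no obstacle; the only place requiring care is the distinction between a \emph{minimal} element of $F$ and the \emph{least} element of $F$, and closure of filters under $\wedge$ is exactly what bridges the gap. One could alternatively avoid filters entirely: finite length makes $M$ complete, and $\bigwedge F$ is realized as the minimum of the down-directed set of finite meets of elements of $F$, hence lies in any filter $F$; but the filter description is already available in the text and is cleaner to cite.
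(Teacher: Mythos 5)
Your argument is correct and is exactly the paper's proof, which simply states that every filter has a minimum element and is therefore principal; you have merely filled in the (routine) details of why finite length plus closure of filters under $\wedge$ yields a least element.
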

\begin{proof}
  Every filter has a minimum element, and is therefore principal.
\end{proof}
\begin{corollary}\label{cor:c-like-pro-c}
  If $A$ is an object of finite length in an abelian category $\Cc$,
  then
  \begin{equation*}
    \Sub_\Cc(A) \to \Sub_{\Pro \Cc}(A)
  \end{equation*}
  is an isomorphism.  In particular, $A$ has the same length in $\Cc$
  and $\Pro \Cc$, and $A$ is semisimple in $\Cc$ if and only if $A$ is
  semisimple in $\Pro \Cc$.
\end{corollary}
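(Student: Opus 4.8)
The plan is to deduce this directly from the preceding Lemma via the identification of subobject lattices recalled at the start of \S\ref{s7.3}. First I would invoke the fact, listed in \S\ref{sec:pro}, that for any object $A \in \Cc$ the fully faithful exact embedding $\Cc \hookrightarrow \Pro \Cc$ induces an isomorphism
\begin{equation*}
  \Sub_{\Pro \Cc}(A) \cong \Pro \Sub_\Cc(A),
\end{equation*}
under which the canonical map $\Sub_\Cc(A) \to \Sub_{\Pro \Cc}(A)$---sending a $\Cc$-subobject of $A$ to the same monomorphism regarded in $\Pro \Cc$---corresponds to the canonical embedding $M \hookrightarrow \Pro M$ for the modular lattice $M = \Sub_\Cc(A)$. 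Since $A$ has finite length in $\Cc$, the lattice $M$ has finite length, so by the preceding Lemma the embedding $M \hookrightarrow \Pro M$ is an isomorphism. Composing, the canonical map $\Sub_\Cc(A) \to \Sub_{\Pro \Cc}(A)$ is an isomorphism of posets, hence of bounded modular lattices.

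Then I would read off the ``in particular'' assertions from this lattice isomorphism. The length of an object and whether it is semisimple are both invariants of its subobject lattice---$\ell(A) = \ell(\Sub(A))$, and $A$ is semisimple exactly when $\Sub(A)$ is an atomic modular lattice of finite length---so $\Sub_\Cc(A) \cong \Sub_{\Pro \Cc}(A)$ immediately yields that $A$ has the same length whether computed in $\Cc$ or in $\Pro \Cc$, and that $A$ is semisimple in $\Cc$ if and only if it is semisimple in $\Pro \Cc$.

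The only point requiring a moment's care is the compatibility claim in the first step: that the abstract isomorphism $\Sub_{\Pro \Cc}(A) \cong \Pro \Sub_\Cc(A)$ carries the inclusion of $\Cc$-subobjects onto the canonical embedding $\Sub_\Cc(A) \hookrightarrow \Pro \Sub_\Cc(A)$. This is exactly how that isomorphism is constructed in the appendix material on pro-objects, so there is no genuine obstacle---the corollary is essentially a restatement of the preceding Lemma transported along the identification of subobject lattices.
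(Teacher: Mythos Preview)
Your proposal is correct and matches the paper's intended argument exactly: the corollary has no explicit proof in the paper because it is meant to follow immediately from the preceding Lemma (that $M \cong \Pro M$ for finite-length $M$) together with the identification $\Sub_{\Pro \Cc}(A) \cong \Pro \Sub_\Cc(A)$ from \S\ref{sec:pro}. Your additional remark about compatibility of the two embeddings is a reasonable point of care, though the paper leaves it implicit.
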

\begin{proposition}\label{prop:special-case}
  Under Assumption~\ref{asm:1}, suppose there are subobjects $A \le
  A^+ \le K$ such that $A^+/A$ is semisimple of length $d =
  \redrk(K)$.  Then $A$ is a pedestal and the associated $d$-inflator
  is isomorphic to
  \begin{equation*}
    \Dir_K(K) \to \Dir_\Cc(K) \to \Dir_\Cc(A^+/A),
  \end{equation*}
  where the first map is induced by the inclusion $K\Vect^f
  \hookrightarrow \Cc$ and the second map is the interval retract
  \begin{align*}
    \Sub_\Cc(K^n) & \to \Sub_\Cc((A^+/A)^n) \\
    V & \mapsto (V \cap (A^+)^n + A^n)/A^n.
  \end{align*}
\end{proposition}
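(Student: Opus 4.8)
The plan is to reduce to the general pedestal machine (Theorem~\ref{thm:actually-important}) by checking that $A$ really is a pedestal and then simplifying the flattening step. First I would observe that since $A^+/A$ is semisimple of length $d$, it is in particular a direct sum of $d$ nontrivial simple objects, so by the concrete description of $\botrk$ (Remark~\ref{botrk-rem}.1 or Remark~\ref{rem:cb-descr}) we get $\botrk(K/A) \ge d$. On the other hand $\botrk(K/A) \le \redrk(K/A) \le \redrk(K) = d$ using Remark~\ref{botrk-rem}.\ref{br2}, Proposition~\ref{redrk}.\ref{rr1} (the epimorphism $K \twoheadrightarrow K/A$), and the definition $d = \redrk(K)$. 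Hence $\botrk(K/A) = d$, which is exactly the definition of $A$ being a pedestal (Definition~\ref{def:pedestal1.5}). So Theorem~\ref{thm:actually-important} applies and the associated $d$-inflator is the composition $\Dir_K(K) \to \Dir_\Cc(K) \to \Dir_\Cc(K/A) \to \Dir_\Cc(K/A)^\flat$.

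Next I would identify the flattening $\Dir_\Cc(K/A)^\flat$ with something living inside $\Cc$ rather than $\Pro\Cc$. The point is that $\qsoc(K/A)$, the socle of $K/A$ computed in $\Pro\Cc$, coincides with $A^+/A$: by hypothesis $A^+/A$ is a semisimple subobject of $K/A$ of length $d = \botrk(K/A)$, hence it is a maximal semisimple subobject (its length already achieves the bound $\botrk$ given by Proposition~\ref{prop:qsoc-botrk}), and since $A^+/A$ already lies in $\Cc$ and is of finite length, Corollary~\ref{cor:c-like-pro-c} says it is semisimple and of the same length in $\Pro\Cc$, so it must be the full quasi-socle. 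Thus $\qsoc(K/A) = A^+/A$ is an honest object of $\Cc$, and by Proposition~\ref{finite-rank-case} (applied to the finite-length object $A^+/A$, or directly to its neighborhood) the flattening $\Cc^\flat$-map $\Sub_\Cc(K^n/A^n) \to \Sub_{\Cc^\flat}(\qsoc(K^n/A^n))$ is just intersection with $(A^+/A)^n$ inside $\Cc$. Concatenating with the isomorphism-theorem identifications, the inflator sends $V \in \Sub_K(K^n)$ to $((V + A^n)/A^n) \cap ((A^+)^n/A^n)$, which by the usual correspondence is $(V \cap (A^+)^n + A^n)/A^n$.

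Finally I would recognize this as an interval retract. By Proposition~\ref{prop:sq}.\ref{sq6-o}, the interval subdirectory $\Dir_\Cc(K)^{[A,A^+]}$ is isomorphic to $\Dir_\Cc(A^+/A)$ via the isomorphism theorems, and by Proposition~\ref{prop:sq}.4 the retract $r : \Dir_\Cc(K) \to \Dir_\Cc(K)^{[A,A^+]}$ is given at level $n$ by $W \mapsto (W \vee A^n) \wedge (A^+)^n = (W \wedge (A^+)^n) \vee A^n$, i.e. $W \mapsto (W \cap (A^+)^n + A^n)/A^n$ after translating through the isomorphism. So the composite $\Dir_K(K) \to \Dir_{K\Vect^f}(K) \hookrightarrow \Dir_\Cc(K) \xrightarrow{r} \Dir_\Cc(A^+/A)$ agrees with the inflator just computed, which is the claimed description. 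I do not anticipate a genuine obstacle here; the only place requiring care is the identification $\qsoc(K/A) = A^+/A$, where one must make sure that ``maximal semisimple subobject in $\Pro\Cc$'' is really witnessed by the finite-length object $A^+/A$ and not by some strictly larger pro-object — this is exactly where the length bound $\ell(\qsoc(K/A)) = \botrk(K/A) = d = \ell(A^+/A)$ from Proposition~\ref{prop:qsoc-botrk} does the work, forcing equality.
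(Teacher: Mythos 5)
Your proposal is correct and follows essentially the same route as the paper: both arguments show $A^+/A \subseteq \qsoc(K/A)$ forces $\botrk(K/A) = d$ and hence $\qsoc(K/A) = A^+/A$, then use Corollary~\ref{cor:c-like-pro-c} to descend the subobject lattice of $A^+/A$ from $\Pro\Cc$ back to $\Cc$. The only quibble is your appeal to Proposition~\ref{finite-rank-case}, whose blanket hypothesis (all objects of $\Cc$ have finite length) does not hold here; the correct citation is Corollary~\ref{cor:c-like-pro-c} applied to the single finite-length object $A^+/A$, which you also invoke, so nothing is actually missing.
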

\begin{proof}
  In the category $\Pro \Cc$, we see that $A^+/A$ is semisimple.  Therefore,
  \begin{equation*}
    A^+/A \subseteq \qsoc(K/A),
  \end{equation*}
  and then $\botrk(K/A) = \ell(\qsoc(K/A)) \ge \ell(A^+/A) = d$,
  showing that $A$ is a pedestal.  Equality holds, so $A^+/A =
  \qsoc(K/A)$.  Then the induced $d$-inflator is
  \begin{align*}
    \Sub_K(K^n) & \to \Sub_{\Pro \Cc}((A^+/A)^n) \\
    V & \mapsto (V \cap (A^+)^n + A^n)/A^n.
  \end{align*}
  Since $A^+$ comes from $\Cc$,
  this map factors through the natural map
  \begin{equation*}
    \Sub_\Cc((A^+/A)^n) \hookrightarrow \Sub_{\Pro \Cc}((A^+/A)^n),
  \end{equation*}
  which is an isomorphism by Corollary~\ref{cor:c-like-pro-c}.
\end{proof}

\subsection{The ring and the ideal}
Continue Assumption~\ref{asm:1}.

Note that there is an action of $\End_\Cc(K)$ on $\Sub_\Cc(K)$, by
direct images.  Via the embedding $K = \End_K(K)
\hookrightarrow \End_\Cc(K)$, we get an action of the monoid
$(K,\cdot)$ on the poset $\Sub_\Cc(K)$.

In the concrete cases of interest, $a \in K$ sends a type-definable
subgroup $A \subseteq K$ to its rescaling $a \cdot A$.
\begin{proposition}\label{prop:r-i}
  Under Assumption~\ref{asm:1}, let $A$ be a pedestal and let $\varsigma$
  be the $d$-inflator of Theorem~\ref{thm:actually-important}.  Then
  the fundamental ring is the ``stabilizer''
  \begin{equation*}
    R = \{x \in K : xA \subseteq A\}
  \end{equation*}
  and the fundamental ideal is
  \begin{equation*}
    I = \{x \in R : \redrk(A/xA) = d\}.
  \end{equation*}
\end{proposition}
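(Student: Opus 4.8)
The plan is to make everything explicit at level $2$ and feed the result into the two elementary criteria of Lemmas~\ref{o-alt-criterion} and~\ref{i-alt-criterion}. Write $B := \qsoc(K/A) = A^+/A$; by Remark~\ref{special-balanced1.5} and Proposition~\ref{prop:qsoc-botrk} this is a semisimple object of length $d = \redrk(K)$. By Remark~\ref{alt-forms}/Theorem~\ref{thm:actually-important}, $\varsigma_n$ is the composite $V \mapsto (V+A^n)/A^n \mapsto \qsoc\big((V+A^n)/A^n\big) \subseteq B^n$, where the quasi-socle maps form a directory morphism whose components preserve $\wedge$, are surjective, and satisfy $\qsoc(X)=0 \iff X = 0$ (Proposition~\ref{fattening-facts}, parts \ref{ff4} and~\ref{ff6}), and commute with $\oplus$ (Remark~\ref{doy}). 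For $\alpha\in K$ put $\bar\Theta_\alpha := (\Theta_\alpha + A^2)/A^2 = \{(\bar x,\overline{\alpha x}) : x\in K\}\subseteq (K/A)^2$, so $\varsigma_2(\Theta_\alpha) = \qsoc(\bar\Theta_\alpha)$.

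First I would compute $\varsigma_2(\Theta_\alpha)\wedge(0\oplus B)$. Inside $\Sub_\Cc\big((K/A)^2\big)$ one has $\bar\Theta_\alpha\wedge(0\oplus(K/A)) = 0\oplus\big((\alpha A+A)/A\big)$, and since the quasi-socle maps preserve $\wedge$ and $\qsoc(0\oplus(K/A)) = 0\oplus B$, this gives $\varsigma_2(\Theta_\alpha)\wedge(0\oplus B) = 0\oplus\qsoc\big((\alpha A+A)/A\big)$, which vanishes exactly when $(\alpha A+A)/A = 0$, i.e.\ when $\alpha A\subseteq A$. By Lemma~\ref{o-alt-criterion}, $\alpha$ lies in the fundamental ring $R$ iff $\varsigma_2(\Theta_\alpha)\wedge(0\oplus B)=0$, so $R = \{x\in K : xA\subseteq A\}$. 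For such $\alpha$, multiplication by $\alpha$ descends to $\bar\alpha:K/A\to K/A$, hence $\bar\Theta_\alpha = \Theta_{\bar\alpha}$; applying the left-exact additive functor $\qsoc$, which carries the split monomorphism $(\id,\bar\alpha):K/A\to(K/A)^2$ to $(\id,\qsoc(\bar\alpha))$, we get $\varsigma_2(\Theta_\alpha) = \Theta_{\bar\varphi}$ with $\bar\varphi := \qsoc(\bar\alpha)\in\End_{\Cc^\flat}(B)$, namely the restriction of $\bar\alpha$ to the socle $B$ of $K/A$. So $\widehat{\res}(\alpha) = \bar\varphi$, and in particular $\alpha A^+\subseteq A^+$.

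The heart of the matter is the ideal. By Proposition~\ref{o-i}, $I = \ker\widehat{\res} = \{\alpha\in R : \bar\varphi = 0\}$, so I must show that for $\alpha\in R$ one has $\bar\varphi = 0 \iff \redrk(A/\alpha A) = d$. For $\alpha\neq 0$, multiplication by $\alpha$ is an automorphism of the object $K$ taking $A$ to $\alpha A$ and $A^+$ to $\alpha A^+$, hence induces an isomorphism $K/A\xrightarrow{\sim}K/\alpha A$ carrying $B = A^+/A$ onto $\alpha A^+/\alpha A$, so $\qsoc(K/\alpha A) = \alpha A^+/\alpha A$. Since $K/A$ is balanced (Remark~\ref{special-balanced1.5}), so is $K/\alpha A$, hence so is its subobject $A/\alpha A$ (Lemma~\ref{sub-balanced}); therefore
\begin{equation*}
  \redrk(A/\alpha A) = \ell\big(\qsoc(A/\alpha A)\big) = \ell\big((A/\alpha A)\wedge\qsoc(K/\alpha A)\big) = \ell\big((A\cap\alpha A^+)/\alpha A\big),
\end{equation*}
using Remark~\ref{doy}. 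Transporting back along the isomorphism $A^+/A\cong\alpha A^+/\alpha A$, the subobject $(A\cap\alpha A^+)/\alpha A$ corresponds to $\ker\bar\varphi$, so $\redrk(A/\alpha A) = \ell(\ker\bar\varphi) = d - \ell(\img\bar\varphi)$ by the rank--nullity identity in the semisimple object $B$. Hence $\redrk(A/\alpha A) = d \iff \img\bar\varphi = 0 \iff \bar\varphi = 0 \iff \alpha\in I$; for $\alpha = 0$ this holds trivially ($0\in I$). This yields $I = \{x\in R : \redrk(A/xA) = d\}$.

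The step I expect to be the main obstacle is this last computation of $\redrk(A/\alpha A)$: reduced rank is only ``soft'' information, and the only genuine handle on it is balancedness, which converts $\redrk$ into $\ell\circ\qsoc$. So the argument hinges on chaining together the facts that $A$ is a pedestal (so $K/A$, hence $K/\alpha A$, is balanced) with the stability of balancedness under passage to subobjects (Lemma~\ref{sub-balanced}). I would also take care over the bookkeeping around $\qsoc$ of graphs and direct sums — that the quasi-socle maps preserve $\wedge$, commute with $\oplus$, and reflect the property ``$\neq 0$'' — and note that the genuinely degenerate case is $A = 0$, where $R = K$ and the assertion about $I$ is read with the understanding that $0\in I$.
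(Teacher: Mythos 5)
Your proof is correct, and the two halves line up with the paper to different degrees. For the fundamental ring, your computation is essentially the paper's: both reduce, via the Intersection Lemma~\ref{lem:qsoc-prop} (quasi-socle preserves $\wedge$ and reflects non-vanishing) and modularity, to $\Theta_\alpha\cap(A\oplus K)\subseteq A^2$, i.e.\ $\alpha A\subseteq A$. For the fundamental ideal the packaging differs: the paper applies Lemma~\ref{i-alt-criterion} directly, manipulates $\qsoc((\Theta_b+A^2)/A^2)\cap\qsoc((K\oplus A)/A^2)$ down to $\botrk(b^{-1}A/A)\ge d$, and then invokes balancedness; you instead identify $\widehat{\res}(\alpha)$ explicitly as the restriction $\bar\varphi$ of multiplication-by-$\alpha$ to the socle $B$ (using that the socle is characteristic, so $\bar\alpha(B)\subseteq B$) and conclude via rank--nullity in the semisimple object $B$. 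The two computations are secretly the same object --- your $\ker\bar\varphi=(A^+\cap\alpha^{-1}A)/A$ is exactly the paper's $\qsoc(b^{-1}A/A)$ --- and both hinge on the same crucial ingredient, namely balancedness of $K/A$ plus Lemma~\ref{sub-balanced}; but your route has the modest extra payoff of an explicit formula for the generalized residue map. One shared caveat: both your argument and the paper's implicitly assume $\alpha\neq 0$ in the key identification (the paper's diagram chase to $b^{-1}A/A$, your multiplication-by-$\alpha$ isomorphism $K/A\cong K/\alpha A$), and the stated formula for $I$ is genuinely problematic at $x=0$ when $A=0$ (then $\redrk(A/xA)=0\neq d$ although $0\in I$); you are right to flag this as a convention rather than try to prove it.
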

\begin{proof}
  Let $b$ be an element of $K$ and let $\Theta_b = K \cdot (1,b)$.
  Then $\varsigma_2(\Theta_b)$ is exactly
  \begin{equation*}
    \qsoc((\Theta_b + A^2)/A^2).
  \end{equation*}
  By Lemma~\ref{o-alt-criterion}, Intersection
  Lemma~\ref{lem:qsoc-prop}, and modularity of subobject lattices,
  \begin{align*}
    b \in R & \iff \qsoc((\Theta_b + A^2)/A^2) \cap (0 \oplus \qsoc(K/A)) = 0 \\
    & \iff \qsoc((\Theta_b + A^2)/A^2) \cap \qsoc((A \oplus K)/A^2) = 0 \\
    & \iff (\Theta_b + A^2)/A^2 \cap (A \oplus K)/A^2 = 0 \\
    & \iff (\Theta_b + A^2) \cap (A \oplus K) = A^2 \\
    & \iff (\Theta_b \cap (A \oplus K)) + A^2 = A^2 \\
    & \iff \Theta_b \cap (A \oplus K) \subseteq A^2.
  \end{align*}
  The final condition is equivalent to $bA \subseteq A$.  Next suppose
  $b \in R$, so $bA \subseteq A$.  By Lemma~\ref{i-alt-criterion}, the
  Intersection Lemma~\ref{lem:qsoc-prop}, and modularity of subobject
  lattices,
  \begin{align*}
    b \in I & \iff \qsoc((\Theta_b + A^2)/A^2) \supseteq (\qsoc(K/A) \oplus 0) \\
    & \iff \ell(\qsoc((\Theta_b + A^2)/A^2) \cap (\qsoc(K/A) \oplus 0)) \ge \ell(\qsoc(K/A)) \\
    & \iff \ell(\qsoc((\Theta_b + A^2)/A^2) \cap (\qsoc(K/A) \oplus 0)) \ge d \\
    & \iff \ell(\qsoc((\Theta_b + A^2)/A^2) \cap \qsoc((K \oplus A)/A^2)) \ge d \\
    & \iff \botrk((\Theta_b + A^2)/A^2 \cap (K \oplus A)/A^2) \ge d \\
    & \iff \botrk(((\Theta_b + A^2) \cap (K \oplus A))/A^2) \ge d \\
    & \iff \botrk((\Theta_b \cap (K \oplus A) + A^2)/A^2) \ge d
  \end{align*}
  By a diagram chase,
  \begin{equation*}
    (\Theta_b \cap (K \oplus A) + A^2)/A^2 \cong (\Theta_b \cap (K
    \oplus A))/(\Theta_b \cap A^2) \cong b^{-1}A/A.
  \end{equation*}
  But $b^{-1}A/A$ is a subobject of the balanced object $K/A$, so by
  Lemma~\ref{sub-balanced},
  \begin{equation*}
    \botrk(b^{-1}A/A) = \redrk(b^{-1}A/A).
  \end{equation*}
  Thus
  \begin{equation*}
    b \in I \iff \botrk(b^{-1}A/A) \ge d \iff \redrk(b^{-1}A/A) \ge d
    \iff \redrk(A/bA) \ge d. \qedhere
  \end{equation*}
\end{proof}
\begin{warning}
  In the $\mathcal{H}^{00}$ case, the notation must be understood
  modulo commensurability.  For example, the fundamental ring is the
  set of $x \in K$ such that $xA$ is below $A$ in the lattice
  $\Lambda^{00}$, i.e., $xA \cap A$ has bounded index in $xA$.
\end{warning}
In the good case of dp-finite fields, the ring and ideal appearing in
Proposition~\ref{prop:r-i} are the same ring and ideal appearing in
\cite{prdf}, Proposition~10.15.

\subsection{Malleability}\label{yo-sec}
To obtain malleability from the pedestal machine, we need an
additional assumption:
\begin{assumption}[Special cube-bounded configuration] \label{asm:2}
  Assumption~\ref{asm:1} holds, and
  there is a faithful exact $K_0$-linear functor $G : \Cc \to K_0\Vect$
  such that the composition
  \begin{equation*}
    K\Vect^f \stackrel{F}{\to} \Cc \stackrel{G}{\to} K_0\Vect
  \end{equation*}
  is isomorphic to the forgetful functor $K\Vect^f \to K_0\Vect$.
\end{assumption}
For example, Assumption~\ref{asm:2} holds for
\begin{equation*}
  F : \Kk\Vect^f \to \Hh^{00}
\end{equation*}
when $\Kk$ is NIP and $K_0$ is a magic subfield.  In this case,
$\Hh^{00} = \Hh$ and we can take $G$ to be the natural forgetful
functor to $K_0\Vect$.

\begin{proposition}\label{prop:ped-mal}
  Under Assumption~\ref{asm:2}, the $d$-inflator of
  Theorem~\ref{thm:actually-important} is malleable.
\end{proposition}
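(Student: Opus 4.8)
The plan is to prove malleability of $\varsigma$ working entirely inside $\Cc$, starting from the concrete description of the flattening map as the passage to closed sets of the pregeometry on quasi-atoms, and to invoke the auxiliary functor $G$ of Assumption~\ref{asm:2} only at the single point where an honest vector must be produced.

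First I would unwind $\varsigma_n$. By Corollary~\ref{cor:dir-flat} and the construction of flattening in \S\ref{sec:flattening}, $\varsigma_n$ sends $V \in \Sub_K(K^n)$ to the closed set
\[
  \varsigma_n(V) \;=\; V\big((V+A^n)/A^n\big) \;=\; \{\, q : q \textrm{ a quasi-atom of } \Sub_\Cc(K^n/A^n),\ q \wedge (V+A^n)/A^n > 0 \,\}
\]
in the pregeometry on quasi-atoms of $\Sub_\Cc(K^n/A^n)$. Malleability of $\varsigma$ is then the assertion that for every $n$ and every interval $[V,W] \subseteq \Sub_K(K^n)$, each element $Y$ covering $\varsigma_n(V)$ with $\varsigma_n(V) < Y \le \varsigma_n(W)$ admits a $V'$ with $V < V' \le W$, $\dim_K V' = \dim_K V + 1$, and $\varsigma_n(V') \ge Y$.

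So fix such $V, W, Y$. Since closed sets are genuine sets of quasi-atoms, I would pick a quasi-atom $q_0 \in Y \setminus \varsigma_n(V)$; as $Y \le \varsigma_n(W)$, this $q_0$ lies in $\varsigma_n(W)$. Let $\bar q_0 \in [A^n, K^n]$ be the preimage of $q_0$ along $K^n \twoheadrightarrow K^n/A^n$; under the third-isomorphism-theorem identification $\Sub_\Cc(K^n/A^n) \cong [A^n,K^n]$ the conditions on $q_0$ read $\bar q_0 \cap (V+A^n) = A^n$ and $\bar q_0 \cap (W+A^n) \supsetneq A^n$. Put $\bar C := W \cap \bar q_0$ and $N := W \cap A^n$; a short modular-lattice computation (using $A^n \subseteq \bar q_0$ and $V \subseteq W$) then yields $\bar C \supsetneq N$ and $(V+N) \cap \bar C = N$. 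Now apply the faithful exact $K_0$-linear functor $G$: it identifies the object $K^n$ of $\Cc$ with the honest $K_0$-vector space $K^n$ (retaining its $K$-structure) and each $K$-subspace with itself, and, being exact, it converts the previous two relations into $G(\bar C) \supsetneq G(N)$ and $(V + G(N)) \cap G(\bar C) = G(N)$, where $G(\bar C)$ and $G(N)$ are $K_0$-subspaces of $K^n$ contained in $W$. Hence $G(\bar C) \setminus G(N) = G(\bar C) \setminus (V + G(N))$ is nonempty; choose a vector $\vec c$ in it and set $V' := V + K\vec c$. Then $\vec c \in G(\bar C) \subseteq W$ and $\vec c \notin V$, so $V < V' \le W$ with $\dim_K V' = \dim_K V + 1$; and $\vec c \in (V' + G(N)) \cap G(\bar C)$ while $\vec c \notin G(N)$, so $(V' + G(N)) \cap G(\bar C) \supsetneq G(N)$. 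Faithfulness and exactness of $G$ reflect this back to $(V' + N) \cap \bar C \supsetneq N$, hence $\bar q_0 \cap (V' + A^n) \supsetneq A^n$, i.e.\ $q_0 \wedge (V'+A^n)/A^n > 0$ and $q_0 \in \varsigma_n(V')$. Thus $q_0 \in \varsigma_n(V') \cap Y$ while $q_0 \notin \varsigma_n(V)$, so $\varsigma_n(V) < \varsigma_n(V') \cap Y \le Y$; since $Y$ covers $\varsigma_n(V)$, this forces $\varsigma_n(V') \cap Y = Y$, i.e.\ $Y \le \varsigma_n(V') \le \varsigma_n(W)$, and $V'$ is the required lift.

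I expect the genuine difficulty to be confined to the single step where $G$ is used: converting the abstract $\Cc$-subobject $\bar C$ into a $K_0$-subspace of $K^n$ with honest elements, so that an actual vector $\vec c$ can be selected and a $K$-line $K\vec c$ formed. This is exactly what Assumption~\ref{asm:2} provides, and it is why malleability — unlike the bare inflator property of Theorem~\ref{thm:actually-important} — requires the extra functor. The remaining ingredients (the unwinding of $\varsigma_n$ via Corollary~\ref{cor:dir-flat}, the dictionary between $\Sub_\Cc(K^n/A^n)$ and $[A^n,K^n]$, and the modular identities $\bar C \supsetneq N$, $(V+N)\cap\bar C = N$) are routine.
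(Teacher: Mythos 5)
Your proof is correct and follows essentially the same route as the paper's: isolate a length-one ``new piece'' of the target ($S=\qsoc(Q/A^n)$ in the paper, the quasi-atom $q_0$ in yours), lift it to a $\Cc$-subobject between $A^n$ and $W+A^n$, use the functor $G$ of Assumption~\ref{asm:2} to extract an honest vector, adjoin the $K$-line it spans, and conclude from the non-trivial intersection with the lifted piece (the paper's Intersection Lemma~\ref{lem:qsoc-prop} is exactly your quasi-atom computation $V(x)\cap V(y)=V(x\wedge y)$). The only cosmetic difference is that you work in the closed-sets-of-quasi-atoms model of the flattening and take $\bar q_0\cap W$ as the lift, where the paper works with $\qsoc$ in $\Cc^\flat$ and obtains its lift $Q$ from surjectivity of the flattening map.
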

\begin{proof}
  The faithful exact functors $F$ and $G$ allow us to view objects of
  $\Kk\Vect^f$ and $\Cc$ as $K_0$-vector spaces with extra
  structure.  (On the other hand, $\Cc^\flat$ is still opaque.)  We
  can then suppress $F$ and $G$ from the notation.  The fact that $G
  \circ F$ is the usual forgetful functor $\Kk\Vect^f \to
  K_0\Vect$ ensures that this is notationally safe.

  Let $B = \qsoc(K/A)$, so that the $d$-inflator is
  \begin{align*}
    \Dir_K(K) & \to \Dir_{\Cc^\flat}(B) \\
    V & \mapsto \qsoc((V + A^n)/A^n).
  \end{align*}
  Suppose $V \subseteq W \subseteq K^n$ and $L$ is a subobject of
  $B^n$ such that
  \begin{equation*}
    \qsoc((V + A^n)/A^n) \subseteq L \subseteq \qsoc((W + A^n)/A^n)
  \end{equation*}
  with the length of $L/\qsoc((V + A^n)/A^n)$ equal to 1.  We must
  find a $K$-linear subspace $V' \subseteq W$ such that
  \begin{align*}
    V &\subseteq V' \\
    L &\subseteq \qsoc((V' + A^n)/A^n) \\
    \dim_K(V') &\le \dim_K(V) + 1.
  \end{align*}
  Because $\Cc^{\flat}$ is a semisimple category, we can find a
  subobject $S$ of $L$ complementary to $\qsoc((V + A^n)/A^n)$.  Then
  $S$ has length 1, and $L$ is generated by $\qsoc((V + A^n)/A^n)$ and
  $S$.  Now the map
  \begin{equation*}
    \Sub_\Cc((W + A^n)/A^n) \to \Sub_{\Cc^\flat}(\qsoc((W + A^n)/A^n))
  \end{equation*}
  is a flattening map (Theorem~\ref{thm:hey}.\ref{th3}) and flattening
  maps are surjective (Proposition~\ref{fattening-facts},\ref{ff2}),
  so there is a $\Cc$-subobject $Q$ of $W + A^n$ such that
  \begin{equation*}
    A^n \subseteq Q \subseteq W + A^n
  \end{equation*}
  and $\qsoc(Q/A^n) = S$.  Then $\ell(\qsoc(Q/A^n)) = 1$, so $Q/A^n
  \ne 0$.  Take $x_0 \in Q \setminus A^n$.  We can write $x_0$ as $x_1
  + x_2$, where $x_1 \in W$ and $x_2 \in A^n$.  Then $x_1 \in W$
  and $x_1 \in Q \setminus A^n$.  Let $V'$ be the $K$-linear subspace
  of $W$ generated by $V$ and $x_1$.  Then
  \begin{align*}
    x_1 & \in V' + A^n \\
    x_1 & \in Q \\
    x_1 & \notin A^n.
  \end{align*}
  It follows that $(V' + A^n)/A^n$ and $Q/A^n$ have non-trivial
  intersection.  By the Intersection Lemma~\ref{lem:qsoc-prop},
  \begin{equation*}
    \qsoc((V' + A^n)/A^n) \cap \qsoc(Q/A^n) \ne 0.
  \end{equation*}
  But $\qsoc(Q/A^n)$ has length 1, so this in fact implies
  \begin{equation*}
    \qsoc((V' + A^n)/A^n) \supseteq \qsoc(Q/A^n) = S.
  \end{equation*}
  Also $V' \supseteq V$, so
  \begin{equation*}
    \qsoc((V' + A^n)/A^n) \supseteq \qsoc((V + A^n)/A^n) + S = L.
  \end{equation*}
  Thus $V'$ has all the desired properties.
\end{proof}

\subsection{An example}\label{sec:another-example}
Let $K$ be a field, $K_0$ be a subfield, and let $\Oo_1, \ldots,
\Oo_n$ be pairwise incomparable valuation rings on $K$, with $K_0
\subseteq \Oo_i$.  Let $R$ be the multivaluation ring $\Oo_1 \cap
\cdots \cap \Oo_n$; this is a $K_0$-subalgebra of $K$.  Let $\Cc$ be
the category $R\Mod$, and let
\begin{equation*}
  K\Vect^f \stackrel{F}{\to} R\Mod \stackrel{G}{\to} K_0\Mod
\end{equation*}
be the forgetful functors.  Then we are in the Special Cube-bounded
Configuration of Assumption~\ref{asm:2}.  Indeed, the reduced rank $d$
of $K \in R\Mod$ is exactly $n$, by Lemma~\ref{meta-lol}.

For each $i$, let $\mm_i$ be the maximal ideal of $\Oo_i$, and let $J$ be the intersection
\begin{equation*}
  J = \mm_1 \cap \cdots \cap \mm_n.
\end{equation*}
By \cite{prdf2}, Proposition~6.2, $R$ has exactly $n$ distinct maximal
ideals $M_i = R \cap \mm_i$, the quotients $R/M_i$ are isomorphic to
$k_i := \Oo/\mm_i$, and $J = M_1 \cap \cdots \cap M_n$ is the Jacobson
radical of $R$.  The quotient $R/J$ is isomorphic to $k_1 \times
\cdots \times k_n$.  By Propositions~\ref{prop:special-case},
\ref{prop:ped-mal}, we see that $J$ is a pedestal, and there is a
malleable $n$-inflator
\begin{align*}
  \Dir_K(K) &\to \Dir_R(R/J) \\
  V & \mapsto (V \cap R^n + J^n)/J^n.
\end{align*}
If $n = 1$ and $R$ is a valuation ring, this is the 1-inflator of
Theorem~\ref{thm:calvin}.  If $n > 1$, note that
\begin{equation*}
  \Dir_R(R/J) = \Dir_{R/J}(R/J) = \prod_{i = 1}^n \Dir_{k_i}(k_i) = \prod_{i = 1}^n \Dir_R(k_i)
\end{equation*}
by Remark~\ref{ring-splitting} and the isomorphism $R/J \cong k_1
\times \cdots \times k_n$.  The $i$th projection map $\Dir_R(R/J) \to
\Dir_R(k_i)$ can be described in several ways; one of them is that it
is the pushforward along the quotient map $R/J \to R/M_i$.  Thus the
composition
\begin{equation*}
  \Dir_K(K) \to \Dir_R(R/J) \to \Dir_R(k_i)
\end{equation*}
is given by
\begin{equation*}
  V \mapsto (V \cap R^n + M_i^n)/M_i^n.
\end{equation*}
Let $f$ be the $R$-module morphism from $R/M_i$ to $\Oo_i/\mm_i$
induced by the inclusion.  By Proposition~6.2.6 in \cite{prdf2}, $f$
is an isomorphism.  Let $f^{\oplus n} : (R/M_i)^n \to (\Oo_i/\mm_i)^n$
be defined componentwise.
\begin{claim}\label{lone-claim}
  For any subspace $V \le K^n$, the direct image of
  \begin{equation*}
    (V \cap R^n + M_i^n)/M_i^n 
  \end{equation*}
  under $f^{\oplus n}$ is contained in
  \begin{equation*}
    (V \cap \Oo_i^n + \mm_i^n)/\mm_i^n.
  \end{equation*}
\end{claim}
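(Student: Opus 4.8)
The plan is to prove the containment directly and elementwise, the only real input being the trivial inclusion $R \subseteq \Oo_i$, i.e.\ $R^n \subseteq \Oo_i^n$.

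First I would unwind the submodule $(V \cap R^n + M_i^n)/M_i^n$. Since $V$ is a $K$-subspace of $K^n$ it is in particular an $R$-submodule, and $R^n$ is an $R$-submodule of $K^n$, so $V \cap R^n$ is an $R$-submodule of $R^n$. Adding $M_i^n$ and passing to the quotient by $M_i^n$, the elements of $(V \cap R^n + M_i^n)/M_i^n$ are exactly the cosets $v + M_i^n$ with $v \in V \cap R^n$ (adding an element of $M_i^n$ does not change the coset).

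Next I would pin down $f^{\oplus n}$. The isomorphism $f : R/M_i \to \Oo_i/\mm_i$ is induced by the inclusion $R \hookrightarrow \Oo_i$; it is well-defined precisely because $M_i = R \cap \mm_i$ (Proposition~6.2 in \cite{prdf2}), so that $M_i$ is carried into $\mm_i$. Hence $f^{\oplus n} : R^n/M_i^n \to \Oo_i^n/\mm_i^n$ is induced by the componentwise inclusion $R^n \hookrightarrow \Oo_i^n$, and it sends $v + M_i^n$ to $v + \mm_i^n$, where on the right $v$ is regarded as an element of $\Oo_i^n$ via $R^n \subseteq \Oo_i^n$.

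Finally I would conclude. For $v \in V \cap R^n$ we have $v \in V$ and $v \in R^n \subseteq \Oo_i^n$, hence $v \in V \cap \Oo_i^n$, so $v + \mm_i^n \in (V \cap \Oo_i^n + \mm_i^n)/\mm_i^n$. As $v$ ranges over $V \cap R^n$ this exhausts $f^{\oplus n}\big((V \cap R^n + M_i^n)/M_i^n\big)$, which is therefore contained in $(V \cap \Oo_i^n + \mm_i^n)/\mm_i^n$, as claimed. I do not expect any obstacle here: the argument is pure bookkeeping around the inclusion $R \subseteq \Oo_i$ together with the already-cited description of $M_i$. (The inclusion is typically strict, since the reverse containment would need a version of Lemma~\ref{v-o-lemma} valid over the non-valuation ring $R$, which one should not expect; but only the stated direction is needed.)
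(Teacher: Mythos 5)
Your proof is correct and follows essentially the same route as the paper: identify $f^{\oplus n}$ as the map induced by the componentwise inclusion $R^n \hookrightarrow \Oo_i^n$, then use $R \subseteq \Oo_i$ (and implicitly $M_i \subseteq \mm_i$) to land in $(V \cap \Oo_i^n + \mm_i^n)/\mm_i^n$. The paper phrases the computation at the level of submodules (the direct image of $X/M_i^n$ is $(X+\mm_i^n)/\mm_i^n$) while you argue elementwise on cosets, but the content is identical.
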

\begin{proof}
  First, note that
  for any $M_i^n \le X \le R^n$, the direct image of $X/M_i^n$ under
  $f^{\oplus n}$ is exactly $(X + \mm_i^n)/\mm_i^n$, because the
  composition
  \begin{equation*}
    R^n \to R^n/M_i^n \stackrel{f^{\oplus n}}{\longrightarrow} \Oo_i^n/\mm_i^n
  \end{equation*}
  is the same as the composition $R^n \hookrightarrow \Oo_i^n
  \twoheadrightarrow \Oo_i^n/\mm_i^n$, by definition of $f$.  In
  particular, the direct image along $f^{\oplus n}$ sends
  \begin{equation*}
    (V \cap R^n +M_i^n)/M_i^n \mapsto ((V \cap R^n + M_i^n) + \mm_i^n)/\mm_i^n
  \end{equation*}
  Then it remains to show that for any $V \le K^n$,
  \begin{equation*}
    (V \cap R^n + M_i^n) + \mm_i^n \subseteq V \cap \Oo_i^n + \mm_i^n.
  \end{equation*}
  This is clear, since $M_i \subseteq \mm_i$ and $R \subseteq \Oo_i$.
\end{proof}
But then for any $V$,
\begin{align*}
  n \cdot \dim_K(V) & = \sum_{i = 1}^n \dim_{k_i}((V \cap R^n +
  \mm_i^n)/\mm_i^n) \\ & \le \sum_{i = 1}^n \dim_{k_i}((V \cap \Oo_i^n
  + \mm_i^n)/\mm_i^n) \stackrel{*}{=} n \cdot \dim_K(V),
\end{align*}
where the starred equality holds by Theorem~\ref{thm:calvin}.  So
equality holds in Claim~\ref{lone-claim}.  Thus the $n$-inflator
derived from $J$ is exactly the multi-valuation $n$-inflator of
Example~\ref{dorothy}.

\section{Fields of finite burden} \label{sec:model-theory-inflator}

Fix a saturated field $(\Kk,+,\cdot,0,1,\ldots)$ of finite burden, as
well as a small subfield $K_0$.  Recall from \S\ref{sec:lambda} the
lattice $\Lambda = \Lambda_1$ of type-definable $K_0$-linear subspaces
of $\Kk$, and the lattice $\Lambda^{00} = \Lambda^{00}_1$ obtained by
quotienting out by 00-commensurability.  By
Proposition~\ref{prop:genug}, the lattice $\Lambda^{00}$ is
cube-bounded.

\begin{definition}
  Let $r$ be the reduced rank of $\Lambda^{00}$.  A
  \emph{$K_0$-pedestal} is a group $A \in \Lambda$ whose image in
  $\Lambda^{00}$ is the base of a strict $r$-cube in $\Lambda^{00}$.
\end{definition}
By Proposition~\ref{prop-cs}, $r \le \dpr(\Kk)$.
\begin{remark}
  If $\Kk$ is NIP and $K_0$ is a magic subfield, then $\Lambda =
  \Lambda^{00}$, and so
  \begin{itemize}
  \item $\Lambda$ is a cube-bounded lattice of reduced rank $r$.
  \item A $K_0$-pedestal is a group $A \in \Lambda$ that is the base
    of a strict $r$-cube in $\Lambda$.
  \end{itemize}
  Thus this definition of ``$K_0$-pedestal'' generalizes
  Definition~8.4 in \cite{prdf2}.
\end{remark}

\subsection{The NIP case}

Recall the categories $\Hh$ and $\Hh^{00}$ of
\S\ref{sec:tdef-setting}-\ref{sec:tdef00-setting}.
\begin{theorem}\label{yeah-main}
  Suppose $\Kk$ is NIP, $K_0$ is a magic subfield, and $A$ is a
  $K_0$-pedestal.  Then there is a malleable $r$-inflator
  \begin{equation*}
    \varsigma : \Dir_\Kk(\Kk) \to \Dir_{\Hh^\flat}(\qsoc(\Kk/A))
  \end{equation*}
  given by
  \begin{equation*}
    \varsigma_n(V) = \qsoc((V + A^n)/A^n) \subseteq \qsoc(\Kk^n/A^n) \cong
    \qsoc(\Kk/A)^n,
  \end{equation*}
  where $\qsoc : \Hh \to \Hh^{\flat}$ is the quasi-socle functor of
  Theorem~\ref{thm:hey}.  The fundamental ring and ideal of $\varsigma$
  (see Proposition~\ref{o-i}) are
  \begin{align*}
    R &= \{ x \in \Kk : x \cdot A \subseteq A\} \\
    I &= \{ x \in R : \redrk(A / x \cdot A) = r\},
  \end{align*}
  where $r$ is $\redrk(\Kk)$ (in the category $\Hh$), or equivalently
  $\redrk(\Lambda)$.
\end{theorem}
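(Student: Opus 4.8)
The statement (Theorem~\ref{yeah-main}) is essentially a direct specialization of the abstract machinery developed in \S\ref{sec:pedestals}, so the plan is to verify that the NIP+magic hypotheses put us in the Special Cube-bounded Configuration of Assumption~\ref{asm:2}, then quote Theorem~\ref{thm:actually-important}, Proposition~\ref{prop:ped-mal}, and Proposition~\ref{prop:r-i} essentially verbatim. The only real work is setting up the dictionary between the model-theoretic objects here and the abstract objects there.

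\textbf{Step 1: Set up the configuration.} Take $K = \Kk$, take $\Cc = \Hh$ (the category of Proposition~\ref{prop:here-is-hh-copy}), and take $F : \Kk\Vect^f \to \Hh$ to be the faithful exact $K_0$-linear functor sending $\Kk^n$ to $\Kk^n/0$ (described in the remark following Theorem~\ref{thm:h-lambda}). By Proposition~\ref{prop:genug}, since $\Kk$ is NIP of finite burden and $K_0$ is magic, the category $\Hh$ is cube-bounded, so $F(\Kk) = \Kk$ is a nontrivial cube-bounded object of $\Hh$; this gives Assumption~\ref{asm:1}. For Assumption~\ref{asm:2}, take $G : \Hh \to K_0\Vect$ to be the forgetful functor of Theorem~\ref{thm:h-lambda}, which is $K_0$-linear and exact; faithfulness is clear since a morphism in $\Hh$ is by definition a $K_0$-linear map between the underlying vector spaces. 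That $G \circ F$ is the usual forgetful functor $\Kk\Vect^f \to K_0\Vect$ is also recorded in the remark after Theorem~\ref{thm:h-lambda}.

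\textbf{Step 2: Identify the invariants and the pedestal.} Under the identification $\Lambda_1 = \Sub_\Hh(\Kk)$ from Theorem~\ref{thm:h-lambda}, and using that $K_0$ magic forces $\Lambda_\bullet \cong \Lambda^{00}_\bullet$ (end of \S\ref{sec:tdef00-setting}), the reduced rank $d := \redrk(\Kk)$ in $\Cc = \Hh$ equals $\redrk(\Lambda_1) = \redrk(\Lambda^{00}) = r$. Consequently Definition~\ref{def:pedestal1.5} of ``pedestal'' in $\Sub_\Hh(\Kk)$ coincides with the definition of ``$K_0$-pedestal'' given just above: $A$ is the base of a strict $r$-cube in $\Lambda$. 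So a $K_0$-pedestal $A$ is exactly a pedestal in the sense of Assumption~\ref{asm:1}.

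\textbf{Step 3: Invoke the machine.} Now Theorem~\ref{thm:actually-important} (the Pedestal Machine) applied to this configuration and this $A$ yields a $d$-inflator $\Dir_\Kk(\Kk) \to \Dir_{\Hh}(\Kk/A)^\flat$, which by Remark~\ref{alt-forms} is identified with $\Dir_{\Hh^\flat}(\qsoc(\Kk/A))$ and is given levelwise by $V \mapsto \qsoc((V + A^n)/A^n)$; since $d = r$ this is an $r$-inflator, matching the stated formula (the isomorphism $\qsoc(\Kk^n/A^n) \cong \qsoc(\Kk/A)^n$ is $\oplus$-compatibility of $\qsoc$ from Remark~\ref{doy}). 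Malleability follows from Proposition~\ref{prop:ped-mal}, since we checked Assumption~\ref{asm:2}. Finally, Proposition~\ref{prop:r-i} computes the fundamental ring as $R = \{x \in \Kk : xA \subseteq A\}$ and the fundamental ideal as $I = \{x \in R : \redrk(A/xA) = d\}$; here the action of $(\Kk,\cdot)$ on $\Sub_\Hh(\Kk)$ via $\Kk = \End_\Kk(\Kk) \hookrightarrow \End_\Hh(\Kk)$ is, as noted before Proposition~\ref{prop:r-i}, genuine rescaling $A \mapsto xA$ of the type-definable group, so this is literally the asserted $R$ and $I$ (with everything read modulo 00-commensurability, as in the Warning after Proposition~\ref{prop:r-i}, which is vacuous here since $\approx$ is trivial). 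Substituting $d = r = \redrk(\Lambda)$ gives the final display.

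\textbf{Main obstacle.} There is no serious obstacle: the theorem is a corollary, and the entire content is the bookkeeping of matching the model-theoretic setup to Assumptions~\ref{asm:1}--\ref{asm:2}. The one point demanding a little care is confirming $\redrk(\Kk) = r$ in the \emph{category} $\Hh$ (as opposed to in the lattice $\Lambda$): this needs the identification $\Lambda_1 \cong \Sub_\Hh(\Kk)$ and then the collapse $\Lambda \cong \Lambda^{00}$ under magicness, both already established; and one should double-check that ``reduced rank of an object'' in Definition~\ref{def:redrk} is being applied to $\Kk$ itself and not to some power, which is immediate from Proposition~\ref{redrk}.\ref{rr3} if needed. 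I would also remark explicitly that $\Hh^\flat$ is the flattening $\Cc^\flat$ of Theorem~\ref{thm:hey}, so the notation $\Dir_{\Hh^\flat}(\qsoc(\Kk/A))$ in the statement is unambiguous.
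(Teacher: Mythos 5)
Your proposal is correct and is essentially identical to the paper's own proof, which likewise observes that $\Kk\Vect^f \to \Hh \to K_0\Vect$ is an instance of the Special Cube-bounded Configuration (via Proposition~\ref{prop:genug}) and then cites Theorem~\ref{thm:actually-important}, Proposition~\ref{prop:r-i}, and Proposition~\ref{prop:ped-mal}. Your extra bookkeeping (identifying $d=r$ and matching the two notions of pedestal) is just a more explicit spelling-out of the same argument.
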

\begin{proof}
  Note that
  \begin{equation*}
    \Kk\Vect^f \stackrel{F}{\to} \Hh \stackrel{G}{\to} K_0\Vect
  \end{equation*}
  is an instance of the Special Cube-bounded Configuration of
  Assumption~\ref{asm:2}, by Proposition~\ref{prop:genug}.  Then
  everything follows from Theorem~\ref{thm:actually-important},
  Proposition~\ref{prop:r-i}, and Proposition~\ref{prop:ped-mal}.
\end{proof}
\begin{remark}
  Theorem~\ref{yeah-main} verifies the structure predicted in
  \cite{prdf}, Speculative Remark~10.10.
\end{remark}
\begin{remark}
  The ring $R$ and ideal $I$ appearing in Theorem~\ref{yeah-main} are
  the same ring and ideal appearing in Proposition~10.15 in
  \cite{prdf}.
\end{remark}
\begin{remark}
  Let $\Kk$ be a saturated unstable dp-finite field.  In \cite{prdf2},
  we defined a notion of $\Kk$ having ``valuation type,'' meaning that
  the canonical topology is a V-topology.  We showed that if all
  dp-finite fields are either stable or valuation type, then the
  expected Shelah and henselianity conjectures hold, implying the
  expected classification of dp-finite fields.

  Inflators give a way to detect multi-valuation type: let $K_0$ be a
  magic subfield and $A$ be a $K_0$-pedestal.  Let $\varsigma$ be the
  induced inflator.  If $\varsigma$ is weakly of multi-valuation type
  (Definition~\ref{def-almost}), then there is a multi-valuation ring
  $R'$ on $\Kk$ and a non-trivial $R'$-submodule $M \le \Kk$ such that
  \begin{equation*}
    x \in M, y \in A \implies x \cdot y \in A.
  \end{equation*}
  Assuming $A \ne 0$, this implies that $A$ itself contains a
  non-trivial $R'$-submodule of $\Kk$.  By Theorem~8.11 in
  \cite{prdf2}, this implies that $\Kk$ has valuation type.  The
  degenerate case where $A = 0$ works as well; see Lemma~\ref{true-pc}
  below.

  The original hope for inflators was that every inflator would be
  weakly of multi-valuation type, completing the proof.  The examples
  of \S\ref{garbage-ex} show that this fails to hold, in general.  The
  remaining sections \S\ref{sec:mutation}-\ref{sec:mut4} show how we
  can partially fix the problem, by changing $A$ to a new pedestal
  $A'$ whose inflator is closer to being weakly multi-valuation type.
  This strategy successfully yields a valuation ring, but fails to
  prove the valuation conjecture (Conjecture~\ref{val-conj}).
\end{remark}

\subsection{The general case}

In the general finite burden case, we apparently lose malleability,
and need to consider everything up to 00-commensurability:
\begin{theorem}\label{thm:not-so-useful}
  Suppose $\Kk$ has finite burden, $K_0$ is a small subfield, and $A$
  is a $K_0$-pedestal.  Then there is an $r$-inflator
  \begin{equation*}
    \varsigma : \Dir_\Kk(\Kk) \to
    \Dir_{(\Hh^{00})^\flat}(\qsoc(\Kk/A))
  \end{equation*}
  given by
  \begin{equation*}
    \varsigma_n(V) = \qsoc((V + A^n)/A^n) \subseteq \qsoc(\Kk^n/A^n) \cong \qsoc(\Kk/A)^n
  \end{equation*}
  where $\qsoc : \Hh^{00} \to (\Hh^{00})^\flat$ is the quasi-socle
  functor of Theorem~\ref{thm:hey}.  The fundamental ring and ideal of
  $\varsigma$ are
  \begin{align*}
    R &= \{ x \in \Kk : x \cdot A \le A\} \\
    I &= \{ x \in R : \redrk(A / x \cdot A) = r\},
  \end{align*}
  where the $\le$ and $\redrk$ are in the lattice
  $\Lambda^{00}$ modulo commensurability.
\end{theorem}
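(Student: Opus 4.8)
The plan is to recognize Theorem~\ref{thm:not-so-useful} as a direct instance of the Pedestal machine (Theorem~\ref{thm:actually-important}) together with Proposition~\ref{prop:r-i}, applied not to the category $\Hh$ but to the isogeny category $\Hh^{00}$ of \S\ref{sec:tdef00-setting}. Concretely, I would take $K = \Kk$, $\Cc = \Hh^{00}$, and for $F$ the composite $K_0$-linear functor
\[
  \Kk\Vect^f \to \Hh \to \Hh^{00},
\]
where the first arrow is the faithful exact functor of the Remark following Theorem~\ref{thm:h-lambda} and the second is the Serre localization functor. First I would verify that this falls under Assumption~\ref{asm:1}: by Proposition~\ref{prop:genug} the category $\Hh^{00}$ is cube-bounded (in fact $\redrk(\Lambda^{00}_n) \le n \cdot \bdn(\Kk)$), so $F(\Kk)$ is cube-bounded, and $F(\Kk)$ is non-trivial in $\Hh^{00}$ because its image in $\Lambda^{00}_1$ is the top element, which differs from $\bot$ (as $\Kk$ is unbounded). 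So we are in the Cube-bounded Configuration.

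Next I would pin down the parameter $d = \redrk(K)$ appearing in Assumption~\ref{asm:1}. By Proposition~\ref{prop:h-lambda-00}, $\Dir_{\Hh^{00}}(\Kk) \cong \Lambda^{00}_\bullet$, hence $\Sub_{\Hh^{00}}(\Kk) \cong \Lambda^{00}_1 = \Lambda^{00}$ and $\redrk_{\Hh^{00}}(\Kk) = \redrk(\Lambda^{00}) = r$. With this identification, the notion of ``$K_0$-pedestal'' from the definition preceding the theorem is exactly the abstract notion of pedestal from Definition~\ref{def:pedestal1.5}: $A$ is a $K_0$-pedestal iff its image in $\Lambda^{00}$ is the base of a strict $r$-cube iff $\botrk(\Kk/A) = r = d$ in $\Hh^{00}$. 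Then Theorem~\ref{thm:actually-important} immediately produces an $r$-inflator, namely the composition
\[
  \Dir_\Kk(\Kk) \to \Dir_{\Kk\Vect^f}(\Kk) \hookrightarrow \Dir_{\Hh^{00}}(\Kk) \to \Dir_{\Hh^{00}}(\Kk/A) \to \Dir_{\Hh^{00}}(\Kk/A)^\flat,
\]
and by Remark~\ref{alt-forms} the codomain is $\Dir_{(\Hh^{00})^\flat}(\qsoc(\Kk/A))$ with $\varsigma_n(V) = \qsoc((V + A^n)/A^n)$, which is precisely the map in the statement. The description of the fundamental ring $R$ and fundamental ideal $I$ is then read off from Proposition~\ref{prop:r-i}; as flagged in the Warning after that proposition, in the $\Hh^{00}$ setting the containment ``$x \cdot A \le A$'' and the quantity $\redrk(A / x \cdot A)$ are to be computed in $\Sub_{\Hh^{00}}(\Kk) = \Lambda^{00}$, i.e.\ modulo $00$-commensurability, which is exactly what the theorem asserts.

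The one genuine difference from the NIP case (Theorem~\ref{yeah-main}), and the reason the present theorem does \emph{not} claim malleability, is that the configuration above is only an instance of Assumption~\ref{asm:1}, not of Assumption~\ref{asm:2}: there is no faithful exact $K_0$-linear $G : \Hh^{00} \to K_0\Vect$ with $G \circ F$ the forgetful functor, since the forgetful functor $\Hh \to K_0\Vect$ sends a nonzero bounded object of $\Hh$ to a nonzero (finite) $K_0$-vector space and hence does not descend through the localization $\Hh \to \Hh^{00}$. So Proposition~\ref{prop:ped-mal} is unavailable and the malleability conclusion is simply dropped. I expect this to be the only real point of care: the existence and shape of the inflator are automatic from the abstract machinery, and the work is entirely in bookkeeping—keeping everything inside the isogeny category and therefore only defined up to commensurability, and resisting the temptation to import the malleability statement. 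When $\Kk$ is NIP and $K_0$ is magic, $\Hh^{00} = \Hh$, Assumption~\ref{asm:2} is restored, and Theorem~\ref{thm:not-so-useful} specializes back to Theorem~\ref{yeah-main}.
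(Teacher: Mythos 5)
Your proposal is correct and is exactly the route the paper intends: Theorem~\ref{thm:not-so-useful} is the instance of the pedestal machine (Theorem~\ref{thm:actually-important} plus Proposition~\ref{prop:r-i}) for the configuration $\Kk\Vect^f \to \Hh \to \Hh^{00}$, with cube-boundedness from Proposition~\ref{prop:genug}, the identification $\redrk_{\Hh^{00}}(\Kk) = \redrk(\Lambda^{00}) = r$ from Proposition~\ref{prop:h-lambda-00}, and the ring/ideal read off modulo $00$-commensurability as in the Warning after Proposition~\ref{prop:r-i}. Your explanation of why Assumption~\ref{asm:2} (hence malleability) is lost in the general case also matches the paper's remarks.
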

For instance, $R$ should be understood as
\begin{equation*}
  \{x \in \Kk : (x \cdot A)/((x \cdot A) \cap A) \textrm{ is bounded}.\}
\end{equation*}
Combining Theorem~\ref{thm:not-so-useful} with earlier
Proposition~\ref{prop:1-fold} gives an extremely roundabout proof of
the following (easy) fact:
\begin{corollary}
  If $\Kk$ has burden 1 and $A$ is a type-definable $K_0$-linear
  subspace of $\Kk$, then
  \begin{equation*}
    \{ \alpha \in \Kk ~|~ \alpha A \le A\}
  \end{equation*}
  is a valuation ring on $\Kk$, where $A \le B$ means ``$A \cap B$ has
  bounded index in $A$.''
\end{corollary}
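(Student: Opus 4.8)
The plan is to deduce this from the $1$-inflator classification of \S\ref{sec:basic-theory} via the pedestal machine of the previous section. First I would pin down the reduced rank: since $\Kk$ has burden $1$, Proposition~\ref{prop:genug} gives $r := \redrk(\Lambda^{00}) \le \bdn(\Kk) = 1$, and since $\Kk$ is an infinite (indeed saturated) field, $\Kk$ and $0$ are not $00$-commensurable, so $\Lambda^{00}$ has more than one element and $r \ge 1$. Hence $r = 1$.

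Next I would split on whether $\Kk/A$ is bounded. If it is --- i.e.\ $A$ is $00$-commensurable with $\Kk$ --- then for every $\alpha \in \Kk$ the subspace $\alpha A$ is again commensurable with $\Kk$, hence with $A$ (for $\alpha \neq 0$ use that multiplication by $\alpha$ is a definable additive automorphism of $\Kk$ carrying $\Kk/A$ isomorphically onto $\Kk/\alpha A$; for $\alpha = 0$ the inclusion $0 \cdot A = 0 \le A$ is immediate). So $\alpha A \le A$ for all $\alpha$, the ring in question is all of $\Kk$, and $\Kk$ is (trivially) a valuation ring on itself.

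Otherwise $\Kk/A$ is unbounded, so the image of $A$ in $\Lambda^{00}$ lies strictly below the top element and is therefore the base of a strict $1$-cube; as $r = 1$, this says exactly that $A$ is a $K_0$-pedestal in the sense of \S\ref{sec:model-theory-inflator}. Theorem~\ref{thm:not-so-useful} then provides a $1$-inflator $\varsigma \colon \Dir_\Kk(\Kk) \to \Dir_{(\Hh^{00})^\flat}(\qsoc(\Kk/A))$ whose fundamental ring is $\{\alpha \in \Kk : \alpha A \le A\}$, with $\le$ taken in $\Lambda^{00}$ --- which unwinds to the condition that $\alpha A \cap A$ has bounded index in $\alpha A$, matching the statement of the corollary. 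By Proposition~\ref{prop:1-fold}, the fundamental ring of a $1$-inflator is a valuation ring on the source field, and the corollary follows.

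The only genuinely load-bearing inputs are Theorem~\ref{thm:not-so-useful} and Proposition~\ref{prop:1-fold}; everything else is bookkeeping. The one place that needs care is that Theorem~\ref{thm:not-so-useful} only applies when $A$ is a $K_0$-pedestal, which is why the degenerate case $A \approx \Kk$ must be peeled off by hand; but once $r = 1$ is known, being a $K_0$-pedestal is equivalent to $A \not\approx \Kk$, so this exhausts the case analysis and there is no real obstacle.
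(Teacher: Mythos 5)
Your proof is correct and takes exactly the route the paper intends: the paper's entire ``proof'' is the remark that combining Theorem~\ref{thm:not-so-useful} with Proposition~\ref{prop:1-fold} yields the corollary, which is precisely your argument. Your explicit peeling-off of the degenerate case $A \approx \Kk$ (where $A$ fails to be a $K_0$-pedestal and the stabilizer is all of $\Kk$) is a detail the paper glosses over but which is genuinely needed for the theorem to apply, so it is a welcome addition rather than a deviation.
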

Of course this is true more generally without the $K_0$-linearity
assumption, for the simple reason that the lattice (modulo
commensurability) is totally ordered.  So for any $\alpha \in
\Kk^\times$, either
\begin{equation*}
  \alpha A \le A \textrm{ or } \alpha A \ge A,
\end{equation*}
and so
\begin{equation*}
  \alpha A \le A \textrm{ or } \alpha^{-1} A \le A.
\end{equation*}

\part{From inflators to multi-valuation rings}\label{part:3}
We rework \S 10.3 of \cite{prdf} in the language of inflators,
culminating in the construction of weakly definable valuation rings on
unstable dp-finite fields (Theorem~10.28 of \cite{prdf}).  This is
part of a more general construction of multi-valuation rings from
inflators.

Let $\varsigma$ be an $r$-inflator on $K$, with fundamental ring $R$.  In
\S\ref{sec:tame-locus} we defined a notion of \emph{tame} and
\emph{wild} elements of $K$, and showed that $R$ is a multi-valuation
ring when every element is tame.  It turns out that for any $a \in K$,
we can twist or ``mutate'' $\varsigma$ and obtain a new $r$-inflator
$\varsigma'$ whose fundamental ring and tame locus are larger than those
of $\varsigma$---and specifically $a$ is now tame.

The rough idea of mutation is as follows: given a line $L \le K^m$ and an $r$-inflator
\begin{equation*}
  \varsigma_n : \Sub_K(K^n) \to \Sub_S(M^n),
\end{equation*}
we define a new $r$-inflator $\varsigma'_\bullet$ by the formula
\begin{equation*}
  \varsigma'_n(V) = \varsigma'_{mn}(V \otimes L),
\end{equation*}
where we view $\otimes$ as a map
\begin{equation*}
  \otimes : \Sub_K(K^n) \times \Sub_K(K^m) \to \Sub_K(K^{mn}).
\end{equation*}
When trying to make a wild element $a \in K$ become tame, we mutate
along the line $K \cdot (1,a,a^2,\ldots,a^{r-1})$.  This approach
works because of a simple argument using Vandermonde matrices.

Now, if $\varsigma$ is an inflator, we define the \emph{limiting ring} of
$\varsigma$ to be the union
\begin{equation*}
  R^\infty_\varsigma := \bigcup \{R_{\varsigma'} : \varsigma' \textrm{ a mutation of } \varsigma\},
\end{equation*}
i.e., the union of the fundamental rings of the mutations of $\varsigma$.
The union turns out to be directed, so this is indeed a ring.  Because
we can make any element become tame via mutation, the ring
$R^\infty_\varsigma$ turns out to be a multi-valuation ring.  Because
$\varsigma$ is a trivial mutation of itself, $R^\infty_\varsigma \supseteq
R_\varsigma$.

One can also define the \emph{limiting ideal} of $\varsigma$ to be the
union
\begin{equation*}
  I^\infty_\varsigma := \bigcup \{I_{\varsigma'} : \varsigma' \textrm{ a mutation of } \varsigma\}.
\end{equation*}
Again, the union is directed, implying that $I^\infty_\varsigma$ is an
ideal in $R^\infty_\varsigma$, contained in the Jacobson radical of
$R^\infty_\varsigma$.  (Compare with Proposition~\ref{o-i}.)  Moreover,
$I^\infty_\varsigma \supseteq I_\varsigma$.

The upshot is that if $I_\varsigma$ is non-zero, then $R^\infty_\varsigma$ is
a multi-valuation ring with a non-zero Jacobson radical; therefore
$R^\infty_\varsigma$ is a non-trivial multi-valuation ring determining
finitely many non-trivial valuation rings.

If $\varsigma$ is one of the inflators on unstable dp-finite fields
constructed via Theorem~\ref{yeah-main}, then $I_\varsigma$ is
non-trivial, and this gives non-trivial weakly definable valuation
rings.  This is essentially the same construction of weakly definable
valuation rings as in Theorem~10.28 in \cite{prdf}.

Section \ref{sec:mutation} works through the construction of
multi-valuation rings from inflators.  Section \S\ref{sec:mut-def}
defines mutation, \S\ref{sec:mut-r-i} shows that mutation increases
the fundamental ring and ideal, \S\ref{sec:transitive} shows that
mutation is transitive (a mutation of a mutation is a mutation), and
\S\ref{sec:the-end} shows that mutation can make any element tame
(implying that the limiting ring is a multi-valuation ring).  The
additional sections \S\ref{sec:mut-mal}-\ref{sec:mut-ped} show that
mutation preserves malleability and the property ``comes from a
pedestal via the main construction.''

In \S\ref{sec:apps}, we apply these facts to fields of finite dp-rank
and finite burden.  For unstable dp-finite fields we recover the
construction of non-trivial valuation rings.  For fields of finite
burden, we recover non-trivial valuation rings only when the lattice
of type-definable subgroups is sufficiently rich.  We also discuss the
strategy of attacking Valuation Conjecture~\ref{val-conj} by trying to
show that malleable inflators can be mutated to have weakly
multi-valuation type.

Finally, \S\ref{sec:mut4} works through examples of mutation in some
of the inflators from Part~\ref{part:1}.  We see that mutation helps
undo the destruction wrought by the map
\begin{equation*}
  (V,W) \mapsto (V + W, V \cap W).
\end{equation*}

\section{Mutation}\label{sec:mutation}
\subsection{The definition of mutation}\label{sec:mut-def}
Let $K$ be a field.  For any line $L = K \cdot (a_1,\ldots,a_m) \le
K^m$, let $\xi^L_n : \Sub_K(K^n) \to \Sub_K(K^{mn})$ be the map
\begin{equation*}
  \xi^L_n(V) = \{(a_1\vec{x},\ldots,a_m \vec{x})^T : \vec{x} \in V\}
\end{equation*}
where $(-)^T$ denotes transpose:
\begin{equation*}
  (x_{1,1},x_{1,2},\ldots,x_{1,n},x_{2,1},\ldots,x_{m,n})^T =
  (x_{1,1},x_{2,1},\ldots,x_{m,1},x_{1,2},\ldots,x_{m,n}).
\end{equation*}
Note that $\xi^L_n(V)$ can be thought of as $V \otimes L \le K^n
\otimes K^m$.  In particular, $\xi^L_n(-)$ depends only on $L$, and
not on $\vec{a}$.

By inspection, the maps $\xi^L_n : \Sub_K(K^n) \to \Sub_K(K^{mn})$
satisfy the following properties:
\begin{align}
  V \subseteq W & \implies \xi^L_n(V) \subseteq \xi^L_n(W) \label{xi-orpres} \\
  \xi^L_{\ell + n}(V \oplus W) & = \xi^L_\ell(V) \oplus \xi^L_n(W) \label{xi-oplus} \\
  \xi^L_n(\mu \cdot V) &= (\mu \otimes I_m) \cdot \xi^L_n(V) \qquad \textrm{for } \mu \in GL_n(K) \label{xi-gl} \\
  \dim_K(\xi^L_n(V)) &= \dim_K(V). \label{xi-dim}
\end{align}
In fact, $\xi^L_\bullet$ is essentially the directory morphism
\begin{equation*}
  \Dir_K(K) \to \Dir_K(K^m)
\end{equation*}
obtained by pushforward along the morphism
\begin{align*}
  K & \to K^m \\
  x & \mapsto (a_1x, \ldots, a_mx).
\end{align*}
\begin{theorem}\label{mutations-exist}
  Let $\varsigma : \Dir_K(K) \to \Dir_R(M)$ be a ($K_0$-linear)
  $d$-inflator, where $R$ is a semisimple $K_0$-algebra and $M$ is an
  $R$-module of length $d$.  Let $L = K \cdot (a_1,\ldots,a_m)$ be a
  line in $K^m$.  Let $M' = \varsigma_m(L)$; so $M'$ is a submodule of
  $M^m$.  For $V \in \Sub_K(K^n)$, define
  \begin{equation*}
    \varsigma'_n(V) = \varsigma_{mn}(\xi^L_n(V)).
  \end{equation*}
  Then $\varsigma'_n(V)$ is an $R$-submodule of $(M')^n$ for each $n$, and
  the family $\varsigma'_\bullet$ forms a $d$-inflator
  \begin{equation*}
    \varsigma'_\bullet : \Dir_K(K) \to \Dir_R(M').
  \end{equation*}
\end{theorem}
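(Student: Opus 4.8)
The plan is to verify that $\varsigma'_\bullet$ satisfies the four defining axioms of a $d$-inflator: order-preservation, compatibility with $\oplus$, $GL_n(K_0)$-equivariance, and the length-scaling law $\ell(\varsigma'_n(V)) = d \cdot \dim_K(V)$. The construction $\varsigma'_n(V) = \varsigma_{mn}(\xi^L_n(V))$ is manifestly a composition of two directory-like families of maps: the maps $\xi^L_\bullet : \Sub_K(K^n) \to \Sub_K(K^{mn})$ and the inflator maps $\varsigma_\bullet$. The cleanest route is to recognize, as the statement itself hints, that $\xi^L_\bullet$ is (essentially) the directory morphism $\Dir_K(K) \to \Dir_K(K^m)$ given by pushforward along the morphism $x \mapsto (a_1 x, \ldots, a_m x)$ from Example~\ref{from-morphism}, but composed with a re-indexing of coordinates (the transpose $(-)^T$) that identifies $(K^m)^n$ with $K^{mn}$ in the way compatible with the directory structure. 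So the first task is to say precisely what "directory morphism $\Dir_K(K) \to \Dir_K(K^m)$" means here: the codomain has $n$th sort $\Sub_K((K^m)^n)$, and under the isomorphism $(K^m)^n \cong K^{mn}$ we get a structure isomorphic to a directory, so $\xi^L$ lands in the right place.

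First I would observe that $\xi^L_\bullet$ itself is order-preserving, compatible with $\oplus$, and behaves well under the $GL_n(K_0)$-action: properties~(\ref{xi-orpres}), (\ref{xi-oplus}), (\ref{xi-gl}) are already recorded, and the $\mu \otimes I_m$ in~(\ref{xi-gl}) is exactly how $GL_n(K_0)$ acts on the $n$th sort of $\Dir_K(K^m)$ after transposing. Since $\mu \in GL_n(K_0) \subseteq GL_n(K)$, the matrix $\mu \otimes I_m$ lies in $GL_{mn}(K_0)$, so the composition $\varsigma_{mn} \circ \xi^L_n$ carries the $GL_n(K_0)$-action on $\Sub_K(K^n)$ to the action of the subgroup $\{\mu \otimes I_m : \mu \in GL_n(K_0)\} \le GL_{mn}(K_0)$ on $\Sub_R(M^{mn})$, which is precisely the $GL_n(K_0)$-action on the $n$th sort of $\Dir_R(M')$ once we have identified that sort correctly. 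So equivariance and order-preservation follow immediately from the corresponding properties of $\varsigma$ (axioms~(\ref{cu1}), (2) for $\varsigma$) together with (\ref{xi-orpres}) and (\ref{xi-gl}); compatibility with $\oplus$ follows from (\ref{xi-oplus}) and axiom~(\ref{cu3}) for $\varsigma$.

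Next I would pin down that $\varsigma'_n(V) \subseteq (M')^n$, where $M' = \varsigma_m(L)$. Here one uses that $\xi^L_n(V) \subseteq \xi^L_n(K^n)$ when $V \subseteq K^n$, and that $\xi^L_n(K^n) = L^{\oplus n}$ (the $n$-fold $\oplus$ of $L$ with itself, under the transpose identification), so by order-preservation and $\oplus$-compatibility of $\varsigma$, $\varsigma_{mn}(\xi^L_n(V)) \subseteq \varsigma_{mn}(L^{\oplus n}) = \varsigma_m(L)^{\oplus n} = (M')^{\oplus n}$. This also explains why $\Dir_R(M')$ is the correct codomain directory: $M'$ is a subobject of $M^m$, hence finitely generated and semisimple (being a submodule of a semisimple module over a semisimple ring), and the interval-subdirectory machinery of Proposition~\ref{prop:sq}, in particular part~\ref{sq6-o}, identifies $\Dir_R(M')$ with the interval subdirectory $[0, (M')^{\oplus \bullet}]$ inside $\Dir_R(M^m)$; the maps $\varsigma'_n$ factor through this interval.

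The real content, and the one step I expect to require genuine work, is the length-scaling law: $\ell_R(\varsigma'_n(V)) = d \cdot \dim_K(V)$. We know $\ell_R(\varsigma_{mn}(\xi^L_n(V))) = d \cdot \dim_K(\xi^L_n(V)) = d \cdot \dim_K(V)$ by the length law for $\varsigma$ together with~(\ref{xi-dim}). So in fact the length law is essentially automatic once~(\ref{xi-dim}) is in hand — the subtlety is only to confirm that $d$ is still the right inflation factor, i.e. that $M'$ has length exactly $d$ (not less). But taking $V = K^n$ gives $\ell_R((M')^n) = \ell_R(\varsigma'_n(K^n)) = d n$, so $\ell_R(M') = d$, confirming $\varsigma'$ is genuinely a $d$-inflator rather than a $d'$-inflator for some $d' < d$. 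Concretely: $\ell_R(M') = \ell_R(\varsigma_m(L)) = d \cdot \dim_K(L) = d \cdot 1 = d$ directly from the length law for $\varsigma$. So the only thing to be careful about is the bookkeeping with the transpose $(-)^T$ and the identification $(K^m)^n \cong K^{mn}$, $(M^m)^n \cong M^{mn}$, checking that these permutation-of-coordinates isomorphisms are compatible with $\oplus$ and the $GL$-actions in the way required for $\varsigma'_\bullet$ to be a bona fide morphism of directories; this is routine but fiddly, and I would handle it by appealing to Example~\ref{from-morphism} (pushforward along $x \mapsto (a_1x,\ldots,a_mx)$ composed with the isomorphism $\Dir_K(K^m) \cong$ the transposed version) rather than by bare-hands index-chasing.
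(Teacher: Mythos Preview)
Your proposal is correct and follows essentially the same approach as the paper's proof: both verify the four inflator axioms by composing the properties (\ref{xi-orpres})--(\ref{xi-dim}) of $\xi^L_\bullet$ with the corresponding axioms for $\varsigma$, including the key observation that the $(\mu \otimes I_m)$-action on $M^{mn}$ restricts to the $\mu$-action on $(M')^n$. Your invocation of the interval-subdirectory framework (Proposition~\ref{prop:sq}) is a small conceptual embellishment the paper omits, but otherwise the arguments are the same.
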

\begin{proof}
  Let $V$ be a subspace of $K^n$.  By (\ref{xi-oplus}),
  \begin{equation*}
    \xi^L_n(K^n) = \xi^L_n(K^{\oplus n}) = (\xi^L_1(K))^{\oplus n}.
  \end{equation*}
  Now $\xi^L_1(K)$ is $L \le K^m$, so
  \begin{equation*}
    \xi^L_n(K^n) = L^n \le (K^m)^n.
  \end{equation*}
  By (\ref{xi-orpres}),
  \begin{equation*}
    V \le K^n \implies \xi^L_n(V) \le \xi^L_n(K^n).
  \end{equation*}
  Thus $\xi^L_n(V) \le \xi^L_n(K^n) = L^n$.
  Because $\varsigma$ is a directory morphism,
  \begin{equation*}
    \varsigma_{mn}(\xi^L_n(V)) \subseteq \varsigma_{mn}(L^n) =
    \varsigma_{mn}(L^{\oplus n}) = (\varsigma_m(L))^{\oplus n} = (M')^n.
  \end{equation*}
  This shows that $\varsigma'_n(V) \in \Sub_R((M')^n)$ for each $n$.
  Next, we verify that the $\varsigma'_\bullet$ maps constitute a morphism
  of directories:
  \begin{enumerate}
  \item The map $\varsigma'_n$ is order-preserving: it is the composition
    of $\varsigma_{mn}$, which is order preserving because $\varsigma_\bullet$
    is a directory morphism, and $\xi^L_n$, which is order-preserving
    by (\ref{xi-orpres}).
  \item The maps $\varsigma'_\bullet$ are compatible with $\oplus$: this
    follows from the analogous properties of $\varsigma_\bullet$ (it is a
    directory morphism) and $\xi^L_n$ (Equation (\ref{xi-oplus}) above):
    \begin{align*}
      \varsigma'_{\ell + n}(V \oplus W) & = \varsigma_{m \ell + m n}(\xi^L_{\ell
        + n}(V \oplus W)) = \varsigma_{m \ell + mn}(\xi^L_\ell(V) \oplus
      \xi^L_n(W)) \\ & = \varsigma_{m \ell}(\xi^L_\ell(V)) \oplus \varsigma_{m
        n}(\xi^L_n(V)) = \varsigma'_\ell(V) \oplus \varsigma'_n(W).
    \end{align*}
  \item The map $\varsigma'_n$ is compatible with the action of
    $GL_n(K_0)$.  This one is the most complicated.  Given $V \in
    \Sub_K(K^n)$ and $\mu \in GL_n(K_0)$, note
    \begin{equation*}
      \varsigma'_n(\mu \cdot V) = \varsigma_{mn}(\xi^L_n(\mu \cdot V))
    \end{equation*}
    By (\ref{xi-gl}) and the fact that $\varsigma_\bullet$ is a directory
    morphism,
    \begin{equation*}
      \varsigma_{mn}(\xi^L_n(\mu \cdot V)) = \varsigma_{mn}((\mu \otimes I_m)
      \cdot \xi^L_n(V)) = (\mu \otimes I_m) \cdot
      \varsigma_{mn}(\xi^L_n(V)) \in \Sub_R(M^{mn})
    \end{equation*}
    Now the subtle point is that the action of $(\mu \otimes I_m)$
    on $M^{mn}$ is the same as the action of $\mu$ on $(M^m)^n$, which
    restricts to the action of $\mu$ on $(M')^n$.  Thus
    \begin{equation*}
      \varsigma'_n(\mu \cdot V) = (\mu \otimes I_m) \cdot
      \varsigma_{mn}(\xi^L_n(V)) = \mu \cdot \varsigma'_n(V).
    \end{equation*}
  \end{enumerate}
  Thus $\varsigma'_\bullet$ is a valid directory morphism.

  Next, we verify that $\varsigma'_\bullet$ is a $d$-inflator.  First of
  all,
  \begin{equation*}
    \ell_R(M') = \ell_R(\varsigma_m(L)) = d \cdot \dim_K(L) = d.
  \end{equation*}
  Finally, for any $V \in \Sub_K(K^n)$,
  \begin{equation*}
    \ell_R(\varsigma'_n(V)) = \ell_R(\varsigma_{mn}(\xi^L_n(V))) = d \cdot
    \dim_K(\xi^L_n(V)) = d \cdot \dim_K(V),
  \end{equation*}
  using Equation~(\ref{xi-dim}) above.
\end{proof}
\begin{definition}
  If $\varsigma : \Dir_K(K) \to \Dir_R(M)$ is a $d$-inflator and $L \le
  K^m$ is a one-dimensional subspace, the \emph{mutation of $\varsigma$
    along $L$} is the $d$-inflator $\varsigma' : \Dir_K(K) \to \Dir_R(M')$
  constructed as in Theorem~\ref{mutations-exist}.
\end{definition}

\subsection{Mutation and the fundamental ring}\label{sec:mut-r-i}
Recall the notion of ``$b \in K$ specializes to $\varphi \in \End_R(M)$''
of \S\ref{sec:r-i}.
\begin{lemma}\label{gr-in-mutation}
  Let $\varsigma : \Dir_K(K) \to \Dir_R(M)$ be a $d$-inflator, let $L = K
  \cdot (a_1,\ldots,a_m)$ be a line in $K^m$, and let $\varsigma' :
  \Dir_K(K) \to \Dir_R(M')$ be the mutation, where $M' = \varsigma_m(L)$.

  If $b \in K$ specializes (with respect to $\varsigma$) to an
  endomorphism $\varphi \in \End_R(M)$, then the map
  \begin{align*}
    \varphi^{\oplus m} & \in \End_R(M^m) \\
    \varphi^{\oplus m}(x_1,\ldots,x_m) &= (\varphi(x_1),\ldots,\varphi(x_m))
  \end{align*}
  maps $M'$ into $M'$, and therefore induces an endomorphism $\varphi'
  \in \End_R(M')$.  The element $b$ specializes (with respect to
  $\varsigma'$) to $\varphi'$.
\end{lemma}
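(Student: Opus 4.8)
The plan is to reduce the whole statement to a single application of the intersection half of Lemma~\ref{basic-tool} for the original inflator $\varsigma$ at level $2m$, combined with the fact (Theorem~\ref{mutations-exist}) that the mutation $\varsigma'_\bullet$ automatically takes its values in the sublattices $\Sub_R((M')^\bullet)$. So I would not try to build $\varphi'$ by hand first; instead $\varphi^{\oplus m}(M')\subseteq M'$ will fall out of the computation.

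First I would rewrite the subspace $\xi^L_2(\Theta_b)\subseteq K^{2m}$. Identify $K^{2m}=(K^m)^2$, put $\vec{a}=(a_1,\ldots,a_m)$, and set $\Theta_b^{(m)}:=\{(w,bw):w\in K^m\}$. Using that $w\in L$ forces $bw\in L$, one checks directly that
\[ \xi^L_2(\Theta_b)=\{(x\vec{a},bx\vec{a}):x\in K\}=\Theta_b^{(m)}\cap(L\oplus K^m), \]
a one-dimensional $K$-subspace. By definition of the mutation, $\varsigma'_2(\Theta_b)=\varsigma_{2m}\big(\Theta_b^{(m)}\cap(L\oplus K^m)\big)$.

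Next I would evaluate $\varsigma_{2m}$ on each intersectand. Regrouping the $2m$ coordinates into $m$ consecutive pairs identifies $\Theta_b^{(m)}$ with $(\Theta_b)^{\oplus m}$ up to a permutation matrix in $GL_{2m}(K_0)$; so $\oplus$-compatibility and $GL_{2m}(K_0)$-equivariance of $\varsigma$, together with the hypothesis $\varsigma_2(\Theta_b)=\Theta_\varphi$, give $\varsigma_{2m}(\Theta_b^{(m)})=\Theta_{\varphi^{\oplus m}}=\{(y,\varphi^{\oplus m}(y)):y\in M^m\}$. For the other factor, $\oplus$-compatibility, the definition $M'=\varsigma_m(L)$, and $\varsigma_m(K^m)=M^m$ (Lemma~\ref{duh}) give $\varsigma_{2m}(L\oplus K^m)=M'\oplus M^m$. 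Now $\Theta_{\varphi^{\oplus m}}\cap(M'\oplus M^m)$ is carried isomorphically onto $M'$ by the first projection (which is injective on a graph), so it has length $\ell_R(M')=d=d\cdot\dim_K\big(\xi^L_2(\Theta_b)\big)$. This is exactly the length hypothesis of Lemma~\ref{basic-tool}.\ref{bt1}, hence
\[ \varsigma'_2(\Theta_b)=\varsigma_{2m}\big(\Theta_b^{(m)}\cap(L\oplus K^m)\big)=\Theta_{\varphi^{\oplus m}}\cap(M'\oplus M^m)=\{(y,\varphi^{\oplus m}(y)):y\in M'\}. \]
Since $\varsigma'_2(\Theta_b)\subseteq(M')^2=M'\oplus M'$ by Theorem~\ref{mutations-exist}, the displayed equality forces $\varphi^{\oplus m}(y)\in M'$ for all $y\in M'$; thus $\varphi^{\oplus m}$ restricts to an endomorphism $\varphi'\in\End_R(M')$, and the equality becomes $\varsigma'_2(\Theta_b)=\Theta_{\varphi'}$, i.e.\ $b$ specializes to $\varphi'$ with respect to $\varsigma'$.

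The step requiring the most care is the coordinate bookkeeping in the third paragraph: one must keep the identifications $K^{2m}\cong(K^m)^2\cong(K^2)^{\oplus m}$ and their analogues over $M$ consistent so that the appeals to $\oplus$-compatibility and to $GL_{2m}(K_0)$-equivariance are literally valid (the permutation matrix realizing the regrouping lies in $GL_{2m}(K_0)$ because $K_0$ contains $0$ and $1$). It is also worth flagging why I intersect $\Theta_b^{(m)}$ with the asymmetric $L\oplus K^m$ rather than with $L\oplus L=\xi^L_2(K^2)$: the asymmetric choice keeps the first projection injective on $\varsigma_{2m}(\Theta_b^{(m)})=\Theta_{\varphi^{\oplus m}}$, which lets us compute the length of the intersection \emph{before} knowing $\varphi^{\oplus m}(M')\subseteq M'$, avoiding a circular argument.
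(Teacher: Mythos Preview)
Your proof is correct and follows the same overall architecture as the paper's: compute $\varsigma_{2m}$ on the ``graph of $b$ in $K^m$'' (the paper calls this $V$, you call it $\Theta_b^{(m)}$), intersect with an auxiliary subspace built from $L$, and compare lengths. The one genuine difference is your choice of auxiliary subspace. The paper intersects with $L\oplus L$, obtaining $\{(\vec{x},\varphi^{\oplus m}(\vec{x})):\vec{x}\in M',\ \varphi^{\oplus m}(\vec{x})\in M'\}$; it then argues this has length $\le d$ with equality precisely when $\varphi^{\oplus m}(M')\subseteq M'$, and concludes equality from the length-scaling law. You instead intersect with the larger $L\oplus K^m$, so the awkward second condition disappears and the intersection visibly has length $d$; Lemma~\ref{basic-tool} applies cleanly, and you recover $\varphi^{\oplus m}(M')\subseteq M'$ afterwards from the containment $\varsigma'_2(\Theta_b)\subseteq (M')^2$ guaranteed by Theorem~\ref{mutations-exist}. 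Your route is slightly more modular (it outsources the stability of $M'$ to a result already proved), while the paper's route is self-contained and establishes $\varphi^{\oplus m}(M')\subseteq M'$ without appealing to Theorem~\ref{mutations-exist}. Either is fine; your remark about why the asymmetric intersectand avoids circularity is exactly the point.
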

\begin{proof}
  Let $\Theta_b = K \cdot (1,b)$.  By definition of ``specializes to,''
  \begin{equation*}
    \varsigma_2(\Theta_b) = \{(x,\varphi(x)) : x \in M\} \le M^2.
  \end{equation*}
  Because $\varsigma_\bullet$ is compatible with $\oplus$,
  \begin{align*}
    \Theta_b^{\oplus m} &= \{(x_1, bx_1, x_2, bx_2, \ldots, x_m, bx_m) : \vec{x} \in K^m\} \\
    \varsigma_{2m}(\Theta_b^{\oplus m}) &= \{(x_1, \varphi(x_1), x_2, \varphi(x_2), \ldots, x_m, \varphi(x_m)) : \vec{x} \in M^m\}.
  \end{align*}
  Consider the subspace
  \begin{equation*}
    V = \{(x_1,x_2,\ldots,x_m,bx_1,bx_2,\ldots,bx_m) : \vec{x} \in K^m\} \le K^{2m}.
  \end{equation*}
  Because $\varsigma_{2m}$ preserves permutations,
  \begin{equation*}
    \varsigma_{2m}(V) =
    \{(x_1,x_2,\ldots,x_m,\varphi(x_1),\varphi(x_2),\ldots,\varphi(x_m)) :
    \vec{x} \in M^m\} = \{(\vec{x},\varphi^{\oplus m}(\vec{x})) : \vec{x} \in M^m\}.
  \end{equation*}
  Meanwhile, 
  \begin{align*}
    L &= \{(a_1x, a_2x, \ldots, a_mx) : x \in K\} \\
    \varsigma_m(L) & =: M'.
  \end{align*}
  By compatibility with $\oplus$,
  \begin{align*}
    L \oplus L &= \{(a_1x, a_2x, \ldots, a_mx, a_1y, a_2y, \ldots, a_my) : x, y \in K\} \\
    \varsigma_{2m}(L \oplus L) &= \{(x_1,\ldots,x_m, y_1,\ldots, y_m) : \vec{x}, \vec{y} \in M'\}.
  \end{align*}
  Now $\varsigma_{2m}(-)$ is order-preserving, so
  \begin{equation*}
    \varsigma_{2m}(V \cap (L \oplus L)) \subseteq \varsigma_{2m}(V) \cap \varsigma_{2m}(L).
  \end{equation*}
  By the above identifications of $V, L \oplus L, \varsigma_{2m}(V),$ and
  $\varsigma_{2m}(L \oplus L)$, we have
  \begin{align*}
    V \cap (L \oplus L) &= \{(a_1x, a_2x, \ldots, a_mx, a_1bx, a_2bx, \ldots, a_mbx) : x \in K\} \\
    \varsigma_{2m}(V) \cap \varsigma_{2m}(L \oplus L) &= \{(\vec{x}, \varphi^{\oplus m}(\vec{x})) : \vec{x} \in M',~ \varphi^{\oplus m}(\vec{x}) \in M'\}.
  \end{align*}
  The dimension of $V \cap (L \oplus L)$ as a $K$-vector space is 1.
  By the length-scaling law,
  \begin{equation*}
    \ell_R(\varsigma_{2m}(V \cap (L \oplus L))) = d \cdot \dim_K(V \cap (L \oplus L)) = d.
  \end{equation*}
  On the other hand
  \begin{equation*}
    \ell_R(\{(\vec{x}, \varphi^{\oplus m}(\vec{x})) : \vec{x} \in M', ~
    \varphi^{\oplus m}(\vec{x}) \in M'\}) \le \ell_R(M') = d,
  \end{equation*}
  with equality only if
  \begin{equation*}
    \vec{x} \in M' \implies \varphi^{\oplus m}(\vec{x}) \in M'.
  \end{equation*}
  Therefore, equality holds, $\varphi^{\oplus m}(\vec{x})$ maps $M'$
  into $M'$, and
  \begin{equation*}
    \varsigma_{2m}(V \cap (L \oplus L)) = \varsigma_{2m}(V) \cap \varsigma_{2m}(L
    \oplus L) = \{(\vec{x}, \varphi^{\oplus m}(\vec{x})) : \vec{x} \in
    M'\}.
  \end{equation*}
  Now
  \begin{align*}
    \xi^L_2(\Theta_b) &= \{(a_1\vec{x},\ldots,a_m\vec{x})^T : \vec{x} \in \Theta_b\} \\
    &= \{(a_1x, a_1bx, a_2x, a_2bx, \ldots, a_mx, a_mbx)^T : x \in K\} \\
    &= \{(a_1x, a_2x, \ldots, a_mx, a_1bx, a_2bx, \ldots, a_mbx) : x \in K\} \\ &= V \cap (L \oplus L).
  \end{align*}
  Therefore,
  \begin{align*}
    \varsigma'_2(\Theta_b) &= \varsigma_{2m}(\xi^L_2(\Theta_B)) = \varsigma_{2m}(V
    \cap (L \oplus L)) = \{(\vec{x},\varphi^{\oplus m}(\vec{x})) :
    \vec{x} \in M'\}.
  \end{align*}
  So $b$ specializes (with respect to $\varsigma'$) to the endomorphism of
  $M'$ induced by $\varphi^{\oplus m}$.
\end{proof}
\begin{proposition}[$\approx$ Lemma~10.20 in \cite{prdf}]\label{prop:growth}
  Let $\varsigma$ be a $d$-inflator on $K$, let $L = K \cdot (a_1, \ldots,
  a_m)$ be a line in $K^m$, and let $\varsigma'$ be the mutation of
  $\varsigma$ along $L$.  Let $R, R'$ be the fundamental rings of $\varsigma,
  \varsigma'$, and let $I, I'$ be the fundamental ideals of $\varsigma,
  \varsigma'$.  Then
  \begin{align*}
    R & \subseteq R' \\
    I & \subseteq I'.
  \end{align*}
\end{proposition}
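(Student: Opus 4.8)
The plan is to reduce the statement entirely to Lemma~\ref{gr-in-mutation}, which already contains the essential work. First I would recall the description of the fundamental objects from Proposition~\ref{o-i}: the fundamental ring $R$ of $\varsigma$ is exactly the set of $b \in K$ that specialize, with respect to $\varsigma$, to some endomorphism of $M$, and the fundamental ideal $I = \ker(\widehat{\res})$ is the set of $b$ specializing to $0 \in \End_R(M)$. The same descriptions apply to $R'$ and $I'$ in terms of specialization with respect to the mutated inflator $\varsigma'$. So it suffices to track how the relation ``$b$ specializes to $\varphi$'' behaves under mutation along the line $L$.

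For the inclusion $R \subseteq R'$, I would take $b \in R$, so that $b$ specializes (with respect to $\varsigma$) to some $\varphi \in \End_R(M)$. Lemma~\ref{gr-in-mutation} then provides that $\varphi^{\oplus m} \in \End_R(M^m)$ carries $M' = \varsigma_m(L)$ into itself, hence induces an endomorphism $\varphi' \in \End_R(M')$, and moreover that $b$ specializes (with respect to $\varsigma'$) to $\varphi'$. In particular $b \in R'$, as desired.

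For $I \subseteq I'$ I would run exactly the same argument with the special value $\varphi = 0$. If $b \in I$, then $b$ specializes to $0 \in \End_R(M)$ with respect to $\varsigma$; applying Lemma~\ref{gr-in-mutation} with $\varphi = 0$, the induced endomorphism $\varphi'$ of $M'$ is the restriction of $(0)^{\oplus m} = 0$, so $\varphi' = 0$, and hence $b$ specializes to $0$ with respect to $\varsigma'$, i.e.\ $b \in I'$.

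The one genuinely nontrivial point—that $\varphi^{\oplus m}$ preserves the submodule $M'$—was already settled inside the proof of Lemma~\ref{gr-in-mutation} by a length count against the length-scaling law, so no real obstacle remains at this stage; what is left is just unwinding the definitions of $R$, $I$, $R'$, $I'$ and invoking that lemma twice.
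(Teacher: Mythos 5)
Your proposal is correct and is essentially identical to the paper's own proof: both reduce the statement to Lemma~\ref{gr-in-mutation}, obtaining $R \subseteq R'$ from the fact that a $b$ specializing to $\varphi$ under $\varsigma$ specializes under $\varsigma'$ to the restriction of $\varphi^{\oplus m}$ to $M'$, and $I \subseteq I'$ from the observation that this restriction is $0$ when $\varphi = 0$. No differences worth noting.
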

\begin{proof}
  If $b \in R$, then by definition $b$ specializes (with respect to
  $\varsigma$) to some $\varphi \in \End_R(M)$.  By
  Lemma~\ref{gr-in-mutation}, $b$ specializes (with respect to
  $\varsigma'$) to some endomorphism $\varphi' \in \End_R(M')$.  Thus $b \in
  R \implies b \in R'$.  Moreover, $\varphi'$ is given by the restriction
  to $M'$ of $\varphi^{\oplus m} : M^m \to M^m$.  Thus, if $\varphi = 0$,
  then $\varphi' = 0$.  In other words, $b \in I \implies b \in I'$.
\end{proof}

\subsection{Iterated mutation}\label{sec:transitive}
\begin{proposition}\label{prop:iter}
  Suppose
  \begin{align*}
    \varsigma : \Dir_K(K) \to \Dir_R(M) \\
    \varsigma' : \Dir_K(K) \to \Dir_R(M') \\
    \varsigma'' : \Dir_K(K) \to \Dir_R(M'')
  \end{align*}
  are three $d$-inflators on $K$.  Suppose $\varsigma'$ is the mutation of
  $\varsigma$ along a line $L_1 \le K^{m_1}$.  Suppose $\varsigma''$ is the
  mutation of $\varsigma'$ along a line $L_2 \le K^{m_2}$.  Then $\varsigma''$
  is isomorphic to the mutation of $\varsigma$ along the line $L_2 \otimes
  L_1 \le K^{m_1m_2}$.
\end{proposition}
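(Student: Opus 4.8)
The plan is to unwind both inflators into explicit composites of the maps $\xi^L_\bullet$ and $\varsigma_\bullet$ and then recognise that the only discrepancy between them is the associativity isomorphism for tensor products, which on coordinates is a permutation matrix over $K_0$ and hence is harmless. By construction, $\varsigma'_n(W) = \varsigma_{m_1 n}(\xi^{L_1}_n(W))$ with target $\Dir_R(M')$, $M' = \varsigma_{m_1}(L_1)$, and $\varsigma''_n(V) = \varsigma'_{m_2 n}(\xi^{L_2}_n(V))$ with target $\Dir_R(M'')$, $M'' = \varsigma'_{m_2}(L_2)$; therefore
\[
  \varsigma''_n(V) = \varsigma_{m_1 m_2 n}\bigl(\xi^{L_1}_{m_2 n}(\xi^{L_2}_n(V))\bigr), \qquad M'' = \varsigma_{m_1 m_2}\bigl(\xi^{L_1}_{m_2}(L_2)\bigr).
\]
On the other hand, writing $\tilde\varsigma$ for the mutation of $\varsigma$ along $L_2 \otimes L_1 \le K^{m_1 m_2}$, we have $\tilde\varsigma_n(V) = \varsigma_{m_1 m_2 n}(\xi^{L_2 \otimes L_1}_n(V))$ with target $\Dir_R(\tilde M)$, $\tilde M = \varsigma_{m_1 m_2}(L_2 \otimes L_1)$. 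So the statement reduces to comparing $\xi^{L_1}_{m_2 n}\circ \xi^{L_2}_n$ with $\xi^{L_2 \otimes L_1}_n$.

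The key step is the coordinate bookkeeping. Since $\xi^L_n(V)$ is just $V \otimes L$ sitting inside $K^n \otimes K^m = K^{nm}$, the composite $\xi^{L_1}_{m_2 n}\circ \xi^{L_2}_n$ sends $V$ to $(V \otimes L_2)\otimes L_1$ and $\xi^{L_2 \otimes L_1}_n$ sends $V$ to $V \otimes (L_2 \otimes L_1)$, where $L_2 \otimes L_1 \le K^{m_2}\otimes K^{m_1} = K^{m_1 m_2}$. With a consistent lexicographic convention (outer tensor factor indexing the outer block of coordinates) these are literally the same subspace of $K^{m_1 m_2 n}$, and likewise $\xi^{L_1}_{m_2}(L_2) = L_2 \otimes L_1$ as lines in $K^{m_1 m_2}$, so $M'' = \tilde M$ and $\varsigma'' = \tilde\varsigma$ on the nose. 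For an arbitrary presentation of $L_2 \otimes L_1$ as a concrete line in $K^{m_1 m_2}$ — or to absorb the transposes $(-)^T$ built into the definition of $\xi^L$ — the two differ only by a fixed coordinate permutation $\pi \in GL_{m_1 m_2}(K_0)$. I would record the easy observation that, for any $\pi \in GL_{m}(K_0)$, the mutation of $\varsigma$ along $\pi L$ is isomorphic to the mutation along $L$: the $R$-module map $\varsigma_m(L) \to \varsigma_m(\pi L) = \pi\cdot\varsigma_m(L)$ induced by $\pi$ gives an isomorphism of directories (Example~\ref{from-morphism}), and it intertwines the two mutation morphisms because $\xi^{\pi L}_n(V)$ differs from $\xi^L_n(V)$ by the $GL_{mn}(K_0)$-element $I_n \otimes \pi$ (using the structure of $\xi$ as in (\ref{xi-gl})) and $\varsigma_{mn}$ is $GL_{mn}(K_0)$-equivariant. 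Applying this with $m = m_1 m_2$ converts the literal equality $\varsigma'' = \tilde\varsigma$ obtained in the convenient convention into the asserted isomorphism of $d$-inflators in general.

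The main obstacle is purely notational: keeping straight the three tensor slots $(n, m_2, m_1)$, the role of the transpose in $\xi^L$, and the convention by which a tensor product of two lines is presented in coordinates, so that "differ by a permutation matrix over $K_0$" is made precise. Once the indexing is fixed, nothing deep remains: each of order-preservation, $\oplus$-compatibility, $GL_n(K_0)$-equivariance and the length-scaling law for $\varsigma''$ and for $\tilde\varsigma$ is inherited from the corresponding property of $\varsigma$ and of the $\xi$ maps exactly as in the proof of Theorem~\ref{mutations-exist}, and the isomorphism exhibited above is checked on each structure map by the same formal manipulations.
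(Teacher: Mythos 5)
Your proposal is correct and follows essentially the same route as the paper: the paper's proof simply unwinds $\varsigma''_n(V) = \varsigma'_{m_2n}(V\otimes L_2) = \varsigma_{m_1m_2n}(V\otimes L_2\otimes L_1)$ and checks $M'' = \varsigma_{m_1m_2}(L_2\otimes L_1)$, treating $\xi^L_n(V)$ as $V\otimes L$ and leaving the coordinate/associativity bookkeeping implicit. The extra observation you supply --- that mutating along $\pi\cdot L$ for $\pi\in GL_m(K_0)$ gives an equivalent inflator, which absorbs any permutation-matrix discrepancy --- is exactly the content of the paper's Lemma~\ref{lem:the-same}, stated immediately after this proposition, so your more careful treatment is consistent with (and slightly more complete than) the paper's own argument.
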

\begin{proof}
  Recall that we can think of $\xi^L_n(V)$ as $V \otimes L$.
  Therefore, for $V \in \Sub_K(K^n)$,
  \begin{equation*}
    \varsigma''_n(V) = \varsigma'_{m_2n}(V \otimes L_2) =
    \varsigma_{m_1m_2n}(V \otimes L_2 \otimes L_1).
  \end{equation*}
  Also,
  \begin{equation*}
    M'' = \varsigma''_1(K) = \varsigma_{m_1m_2}(K \otimes L_2 \otimes
    L_1) = \varsigma_{m_1m_2}(L_2 \otimes L_1),
  \end{equation*}
  so $M''$ is the expected submodule of $M^{m_1m_2}$.
\end{proof}

\begin{lemma}\label{lem:the-same}
  Let $L_1, L_2$ be two lines in $K^m$.  Suppose $L_2 = \mu \cdot L_1$
  for some $\mu \in GL_m(K_0)$.  Let $\varsigma$ be a $d$-inflator on $K$,
  and let $\varsigma', \varsigma''$ be the mutations of $\varsigma$ along $L_1$
  and $L_2$, respectively.  Then $\varsigma'$ is equivalent to $\varsigma''$.
\end{lemma}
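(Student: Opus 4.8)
The plan is to produce an explicit isomorphism of directories $g : \Dir_R(M') \to \Dir_R(M'')$ with $\varsigma'' = g \circ \varsigma'$, where $M' = \varsigma_m(L_1)$ and $M'' = \varsigma_m(L_2)$ are the codomains produced by Theorem~\ref{mutations-exist}. The whole argument is a bookkeeping exercise with the Kronecker-product identifications used in the proof of Theorem~\ref{mutations-exist}; I do not expect a real obstacle, only the need to keep the two tensor factors straight.

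First I would record the key identity at the level of the maps $\xi^L_\bullet$. Writing $\xi^L_n(V) = V \otimes L$ inside $K^n \otimes K^m \cong K^{mn}$ (with the $n$-factor first and the $m$-factor second, matching the conventions of the proof of Theorem~\ref{mutations-exist}), the hypothesis $L_2 = \mu \cdot L_1$ with $\mu \in GL_m(K_0)$ gives
\[
  \xi^{L_2}_n(V) \;=\; V \otimes (\mu \cdot L_1) \;=\; (I_n \otimes \mu) \cdot \xi^{L_1}_n(V),
\]
where $I_n \otimes \mu \in GL_{mn}(K_0)$. Applying $\varsigma_{mn}$ and using that $\varsigma$ is $GL_{mn}(K_0)$-equivariant,
\[
  \varsigma''_n(V) \;=\; \varsigma_{mn}\bigl(\xi^{L_2}_n(V)\bigr) \;=\; (I_n \otimes \mu) \cdot \varsigma_{mn}\bigl(\xi^{L_1}_n(V)\bigr) \;=\; (I_n \otimes \mu) \cdot \varsigma'_n(V).
\]
By $GL_m(K_0)$-equivariance of $\varsigma$ at level $m$ we also get $M'' = \varsigma_m(\mu \cdot L_1) = \mu \cdot M'$, so the automorphism of $M^m$ induced by $\mu$ (it is $R$-linear since $\mu$ has entries in $K_0$, which acts $R$-linearly on $M$) restricts to an $R$-module isomorphism $\bar\mu : M' \to M''$.

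Finally I would invoke Example~\ref{from-morphism}: the isomorphism $\bar\mu : M' \to M''$ induces an isomorphism of directories $g := \bar\mu_* : \Dir_R(M') \to \Dir_R(M'')$, whose $n$-th component is direct image along $\bar\mu^{\oplus n} : (M')^n \to (M'')^n$. The only point needing care is the observation --- which is exactly the ``subtle point'' already flagged in the proof of Theorem~\ref{mutations-exist} --- that applying $\bar\mu$ coordinatewise to $(M')^n \subseteq (M^m)^n = M^{mn}$ is literally the restriction of the $GL_{mn}(K_0)$-action by $I_n \otimes \mu$ to the submodule $(M')^n$. Granting this, $g_n(\varsigma'_n(V)) = (I_n \otimes \mu) \cdot \varsigma'_n(V) = \varsigma''_n(V)$ for all $n$ and all $V$, so $\varsigma'' = g \circ \varsigma'$ with $g$ an isomorphism of directories; hence $\varsigma'$ and $\varsigma''$ are equivalent $d$-inflators.
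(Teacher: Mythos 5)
Your proposal is correct and follows essentially the same route as the paper's proof: both observe that $M'' = \mu \cdot M'$, that the induced isomorphism $\Dir_R(M') \to \Dir_R(M'')$ acts at level $n$ by $I_n \otimes \mu$, and that $GL_{mn}(K_0)$-equivariance of $\varsigma$ gives $(I_n\otimes\mu)\cdot\varsigma'_n(V) = \varsigma_{mn}(V\otimes(\mu\cdot L_1)) = \varsigma''_n(V)$. No gaps.
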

\begin{proof}
  Let $\Dir_R(M)$ be the target of $\varsigma$.  Then
  \begin{align*}
    \varsigma'_n(V) &= \varsigma_{mn}(\xi^{L_1}_n(V)) \in \Sub_R((M')^n) \\
    \varsigma'_n(V) &= \varsigma_{mn}(\xi^{L_2}_n(V)) \in \Sub_R((M'')^n)
  \end{align*}
  where $M' = \varsigma_m(L_1)$ and $M'' = \varsigma_m(L_2)$.  Then
  \begin{equation*}
    M'' = \varsigma_m(L_2) = \varsigma_m(\mu \cdot L_2) = \mu \cdot \varsigma_m(L_2).
  \end{equation*}
  So $\mu : M^m \to M^m$ induces an isomorphism from $M'$ to $M''$.
  This in turn induces an isomorphism
  \begin{align*}
    \Dir_R(M') & \to \Dir_R(M'') \\
    \Sub_R((M')^n) & \to \Sub_R((M'')^n) \\
    V & \mapsto (I_n \otimes \mu) \cdot V.
  \end{align*}
  Now for any $V \in \Sub_K(K^n)$, we have
  \begin{align*}
    (I_n \otimes \mu) \cdot \varsigma'_n(V) &= (I_n \otimes \mu) \cdot
    \varsigma_{mn}(V \otimes L_1) \\ &= \varsigma_{mn}((I_n \otimes \mu) \cdot
    (V \otimes L_1)) \\ &= \varsigma_{mn}(V \otimes (\mu \cdot L_1)) \\ &=
    \varsigma_{mn}(V \otimes L_2) = \varsigma''_n(V).
  \end{align*}
  Therefore the diagram commutes
  \begin{equation*}
    \xymatrix{ \Dir_K(K) \ar[r]^{\varsigma'} \ar[dr]_{\varsigma''} &
      \Dir_R(M') \ar[d] \\ & \Dir_R(M'')}
  \end{equation*}
  and the vertical map on the right is an isomorphism.
\end{proof}
\begin{remark}\label{rem:commute}
  If $L_1$ is a line in $K^{m_1}$ and $L_2$ is a line in $K^{m_2}$, then
  the two lines $L_1 \otimes L_2$ and $L_2 \otimes L_1$ in $K^{m_1m_2}$
  are related by a permutation matrix, so they induce equivalent
  mutations.  Therefore, up to equivalence, mutating along $L_1$
  commutes with mutating along $L_2$.
\end{remark}

\subsection{The limiting ring} \label{sec:the-end}

\begin{proposition}\label{limitors}
  Let $\varsigma$ be a $d$-inflator on $K$.  For any $m$ and any line $L
  \le K^m$, let $\varsigma^L$ denote the mutation of $\varsigma$ along $L$.
  Let $R_L$ and $I_L$ denote the fundamental ring and ideal of
  $\varsigma^L$.  Then the following are directed unions:
  \begin{align*}
    R_\infty &= \bigcup_L R_L \\
    I_\infty &= \bigcup_L I_L.
  \end{align*}
  Therefore $R_\infty$ is a subring of $K$ and $I_\infty$ is an ideal
  in $R_\infty$.  Furthermore, $1 + I_\infty \subseteq
  R_\infty^\times$, so $I_\infty$ is in the Jacobson radical of
  $I_\infty$.
\end{proposition}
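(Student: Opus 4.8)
The plan is to show that the collection $\{R_L\}$ of fundamental rings of mutations is directed under inclusion, and likewise for $\{I_L\}$, so that the unions $R_\infty = \bigcup_L R_L$ and $I_\infty = \bigcup_L I_L$ are each an increasing union of subrings (resp. ideals-in-a-ring). Directedness is the crux: given two mutations $\varsigma^{L_1}$ along $L_1 \le K^{m_1}$ and $\varsigma^{L_2}$ along $L_2 \le K^{m_2}$, I would exhibit a single mutation $\varsigma^{L}$ whose fundamental ring contains both $R_{L_1}$ and $R_{L_2}$. The natural candidate is $L = L_1 \otimes L_2 \le K^{m_1 m_2}$. By Proposition~\ref{prop:iter}, mutating $\varsigma$ along $L_1$ and then mutating the result along $L_2$ gives (up to isomorphism) the mutation of $\varsigma$ along $L_2 \otimes L_1$, which by Remark~\ref{rem:commute} is equivalent to the mutation along $L_1 \otimes L_2 = L$. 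Now apply Proposition~\ref{prop:growth} twice: $R_{L_1} \subseteq R_{(L_1 \text{ then } L_2)} = R_L$ and similarly, mutating in the other order (legitimate by Remark~\ref{rem:commute}), $R_{L_2} \subseteq R_L$; the same double application gives $I_{L_1} \subseteq I_L$ and $I_{L_2} \subseteq I_L$. Also $\varsigma$ itself is the mutation of $\varsigma$ along the line $K \le K^1$, so the index set is nonempty and every $R_L$ contains $R = R_\varsigma$. This establishes directedness.

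Once directedness is in hand, the ring/ideal structure is formal. If $x, y \in R_\infty$, pick $L_1, L_2$ with $x \in R_{L_1}$, $y \in R_{L_2}$; by directedness there is $L$ with $R_{L_1} \cup R_{L_2} \subseteq R_L$, so $x, y \in R_L$, hence $x - y, xy \in R_L \subseteq R_\infty$; and $1 \in R \subseteq R_\infty$. Thus $R_\infty$ is a subring of $K$. Similarly, for $a \in I_\infty$ and $x \in R_\infty$, choose $L$ with $a \in I_L$ and $x \in R_L$ (using directedness of the union of all the $R_L$ and $I_L$ together, or just enlarging twice); since $I_L$ is an ideal of $R_L$ by Proposition~\ref{o-i}, $xa, ax \in I_L \subseteq I_\infty$, and $I_\infty$ is closed under subtraction by the same directedness argument. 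Hence $I_\infty$ is a (two-sided) ideal in $R_\infty$.

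For the final assertion, that $1 + I_\infty \subseteq R_\infty^\times$ and consequently $I_\infty$ lies in the Jacobson radical of $R_\infty$: given $a \in I_\infty$, choose $L$ with $a \in I_L$. By Proposition~\ref{o-i}.\ref{o-i-jac2}, $I_L$ is contained in the Jacobson radical of $R_L$, so $1 + a \in R_L^\times$; the inverse $(1+a)^{-1}$ lies in $R_L \subseteq R_\infty$. Therefore $1 + a$ is a unit in $R_\infty$. Since $I_\infty$ is an ideal of $R_\infty$ with $1 + I_\infty \subseteq R_\infty^\times$, the standard characterization of the Jacobson radical (an ideal $J$ lies in the radical iff $1 + J$ consists of units) shows $I_\infty$ is contained in the Jacobson radical of $R_\infty$.

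The main obstacle is purely the directedness step, and within it the bookkeeping around the ``up to equivalence'' and ``up to isomorphism'' qualifiers in Propositions~\ref{prop:iter} and~\ref{prop:growth} and Remark~\ref{rem:commute}: one must be careful that an equivalence of inflators (an isomorphism of target directories commuting with the maps to $\Dir_K(K)$) carries the fundamental ring and fundamental ideal to themselves, so that $R_{L}$ and $I_{L}$ are genuinely well-defined invariants of the equivalence class and the inclusions $R_{L_i} \subseteq R_L$ transport correctly. This is essentially immediate from the definition of the fundamental ring via the specialization relation $\varsigma_2(\Theta_b) = \Theta_\varphi$, since an equivalence identifies $\Theta_\varphi$ with $\Theta_{f\varphi f^{-1}}$ and preserves ``is the graph of an endomorphism'' and ``is $B \oplus 0$,'' but it should be remarked on explicitly. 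Everything else is routine.
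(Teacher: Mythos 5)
Your proof is correct and follows essentially the same route as the paper's, which simply cites Propositions~\ref{prop:growth} and~\ref{prop:iter} together with Remark~\ref{rem:commute} for directedness and Proposition~\ref{o-i} for the remaining ring, ideal, and Jacobson-radical properties. Your explicit observation that the fundamental ring and ideal are invariants of the equivalence class of an inflator (so that the inclusions transport correctly across the ``up to equivalence'' identifications) is a worthwhile detail the paper leaves implicit.
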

\begin{proof}
  The unions are directed by Proposition~\ref{prop:growth},
  Proposition~\ref{prop:iter}, and Remark~\ref{rem:commute}.  The
  remaining properties follow by Proposition~\ref{o-i}.
\end{proof}
\begin{definition}
  The \emph{limiting ring} and \emph{limiting ideal} of a $d$-inflator
  $\varsigma$ are the ring $R_\infty$ and ideal $I_\infty$ of
  Proposition~\ref{limitors}.
\end{definition}
Since $\varsigma$ is a trivial mutation of itself (along the line $K^1 \le
K^1$), one has
\begin{align*}
  R_\infty &\supseteq R \\
  I_\infty &\supseteq I
\end{align*}
where $R, I$ are the fundamental ring and ideal of $\varsigma$.

\begin{lemma}\label{hey}
  Let $\varsigma : \Dir_K(K) \to \Dir_R(M)$ be a $d$-inflator on $K$.  Let
  $a$ be an element of $K$ and $q$ be an element of $K_0$ such that $a
  \ne q$.  Let $\varsigma'$ be the mutation of $\varsigma$ along the line $K
  \cdot (1,a, \ldots, a^{d-1})$, and let $R'$ be the fundamental ring.
  If $1/(a-q) \notin R'$, then there is non-zero $\epsilon \in M$ such
  that
  \begin{equation*}
    (\epsilon, q \epsilon, \ldots, q^d \epsilon) \in
    \varsigma_{d+1}(\{(x,ax,a^2x,\ldots,a^dx) : x \in K\}).
  \end{equation*}
\end{lemma}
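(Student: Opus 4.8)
\textit{Plan.} The idea is to decode the hypothesis using \S\ref{sec:r-i}, reduce the whole statement to a single ``base-change'' identity for $\varsigma$, and then read off $\epsilon$ by matching coordinates. Write $L = K\cdot(1,a,\dots,a^{d-1})\le K^d$ and $M' = \varsigma_d(L)$, so $\varsigma'$ maps into $\Dir_R(M')$ and $\varsigma'_n(V) = \varsigma_{dn}(\xi^L_n(V))$. Since $1/(a-q)\notin R'$, Lemma~\ref{o-alt-criterion} produces a non-zero element of $\varsigma'_2(\Theta_{1/(a-q)})\cap(0\oplus M')$. First I would apply the matrix $\mu = \begin{pmatrix}0&1\\1&q\end{pmatrix}\in GL_2(K_0)$: a one-line computation gives $\mu\cdot\Theta_{1/(a-q)} = \Theta_a$ and $\mu\cdot(0\oplus M') = \{(\eta,q\eta):\eta\in M'\}$, so $GL_2(K_0)$-equivariance of the directory morphism $\varsigma'$ yields a non-zero $\eta_0\in M' = \varsigma_d(L)$ with $(\eta_0,q\eta_0)\in\varsigma'_2(\Theta_a) = \varsigma_{2d}(\xi^L_2(\Theta_a))$. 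Inspecting coordinates, $\xi^L_2(\Theta_a)$ is exactly $\iota(\Phi)$, where $\Phi = \{(x,ax,\dots,a^dx):x\in K\}\le K^{d+1}$ is the line appearing in the conclusion and $\iota\colon K^{d+1}\to K^{2d}$ is the $K_0$-linear duplication map $(y_0,\dots,y_d)\mapsto(y_0,\dots,y_{d-1},y_1,\dots,y_d)$. So after Step~1 we know $(\eta_0,q\eta_0)\in\varsigma_{2d}(\iota(\Phi))$ with $\eta_0\neq 0$.

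\textit{The crux.} The main step is the base-change identity
\[
  \varsigma_{2d}(\iota(V)) = \iota_M(\varsigma_{d+1}(V))\qquad\text{for every }V\in\Sub_K(K^{d+1}),
\]
where $\iota_M\colon M^{d+1}\to M^{2d}$ is given by the same formula as $\iota$. I would prove this by the technique of \S\ref{sec:basic-tech}--\S\ref{sec:r-i}, exactly as in the proof of Theorem~\ref{thm:1-fold}. Let $G = \{(\vec y,\iota(\vec y)):\vec y\in K^{d+1}\}\le K^{d+1}\oplus K^{2d}$ be the graph of $\iota$; it is the intersection of $2d$ coordinate-equality hyperplanes $\{z_i = z_j\}$, each of which is $GL_{3d+1}(K_0)$-conjugate to $0\oplus K^{3d}$, so by equivariance, $\oplus$-compatibility and Lemma~\ref{duh}, $\varsigma_{3d+1}(G)$ is contained in, hence by the length-scaling law equal to, the graph of $\iota_M$. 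Intersecting $G$ with $V\oplus K^{2d}$ and invoking Lemma~\ref{basic-tool}.\ref{bt1} gives $\varsigma_{3d+1}(\{(\vec y,\iota\vec y):\vec y\in V\}) = \{(\vec w,\iota_M\vec w):\vec w\in\varsigma_{d+1}(V)\}$; then adding $K^{d+1}\oplus 0$ and invoking Lemma~\ref{basic-tool}.\ref{bt2} gives $\varsigma_{3d+1}(K^{d+1}\oplus\iota(V)) = M^{d+1}\oplus\iota_M(\varsigma_{d+1}(V))$, while $\oplus$-compatibility and Lemma~\ref{duh} compute the same object as $M^{d+1}\oplus\varsigma_{2d}(\iota(V))$. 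Intersecting both descriptions with $0\oplus M^{2d}$ yields the identity. The length bookkeeping needed to apply Lemma~\ref{basic-tool} works out because $\iota_M$ is injective, hence length-preserving, and $\varsigma$ multiplies lengths by $d$.

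\textit{Reading off $\epsilon$.} Apply the identity with $V = \Phi$: then $(\eta_0,q\eta_0)\in\iota_M(\varsigma_{d+1}(\Phi))$, say $(\eta_0,q\eta_0) = \iota_M(\vec u)$ with $\vec u = (u_0,\dots,u_d)\in\varsigma_{d+1}(\Phi)$. Writing $\eta_0 = (\eta_0^{(1)},\dots,\eta_0^{(d)})$ and matching the $2d$ coordinates of $\iota_M(\vec u) = (u_0,\dots,u_{d-1},u_1,\dots,u_d)$ against $(\eta_0^{(1)},\dots,\eta_0^{(d)},q\eta_0^{(1)},\dots,q\eta_0^{(d)})$ gives $u_{j-1} = \eta_0^{(j)}$ and $u_j = q\,\eta_0^{(j)}$ for $1\le j\le d$; a one-line induction then forces $\eta_0^{(j)} = q^{\,j-1}\eta_0^{(1)}$ for all $j$, and hence $\vec u = \epsilon\cdot(1,q,\dots,q^d)$ with $\epsilon := \eta_0^{(1)}\in M$. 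Since $\eta_0\neq 0$, we cannot have $\epsilon = 0$, and $\vec u\in\varsigma_{d+1}(\Phi)$ is precisely the element claimed.

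\textit{Main obstacle.} The real content is the base-change identity in the middle paragraph. Its proof is just the ``graph, intersect, then project'' routine already used in the proof of Theorem~\ref{thm:1-fold}, so the only genuine work is setting up the coordinate indexing of $\iota$, identifying the $2d$ hyperplanes cutting out $G$, and checking the two length equalities required to apply Lemma~\ref{basic-tool}. Steps~1 and~3 are routine bookkeeping.
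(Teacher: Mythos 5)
Your proposal is correct and follows essentially the same route as the paper: both arguments hinge on recognizing $\xi^L_2$ of the relevant line as the image of $N=\Phi=\{(x,ax,\dots,a^dx)\}$ under an injective $K_0$-linear map $K^{d+1}\to K^{2d}$, transporting that identity through $\varsigma$ (the paper cites ``the techniques of \S\ref{sec:basic-tech}--\ref{sec:r-i}'' where you spell the base-change identity out), and then reading off $\epsilon$ coordinatewise. Your preliminary $GL_2(K_0)$ conjugation merely moves the $q$-dependence from the linear map into the target subspace; the paper instead works directly with $\Theta=K\cdot(a-q,1)$ and the map $(x_0,\dots,x_d)\mapsto(x_1-qx_0,\dots,x_d-qx_{d-1};x_0,\dots,x_{d-1})$, which is the same argument up to a change of coordinates.
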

\begin{proof}
  Let $M' = \varsigma_d(K \cdot (1, a, \ldots, a^{d-1}))$, so that
  $\Dir_R(M')$ is the codomain of $\varsigma'$.

  Let $\Theta = K \cdot (1, 1/(a - q)) = K \cdot (a - q, 1)$.  By
  definition of $R'$ and by Lemma~\ref{o-alt-criterion},
  \begin{equation*}
    1/(a-q) \notin R' \implies \varsigma'_2(\Theta) \cap (0 \oplus M) > 0 \oplus 0.
  \end{equation*}
  Now
  \begin{align*}
    \varsigma'_2(\Theta) = \varsigma_{2d}(\{((a-q)x, (a-q)ax, (a-q)a^2x,
    \ldots, (a-q)a^{d-1}x; & \\  x, ax, \ldots, a^{d-1}x)& : x \in K\}).
  \end{align*}
  Let $N = \{(x, ax, a^2x, \ldots, a^d x) : x \in K\}$.  Note that
  \begin{align*}
    & \{((a-q)x, (a-q)ax, \ldots, (a-q)a^{d-1}x; x, ax, \ldots,
    a^{d-1}x) : x \in K\} \\ =& \{(x_1 - qx_0, x_2 - qx_1, \ldots, x_d -
    qx_{d-1}; x_0, x_1, \ldots, x_{d-1}) : (x_0,\ldots,x_d) \in N\}.
  \end{align*}
  By the techniques of \S\ref{sec:basic-tech}-\ref{sec:r-i},
  \begin{align*}
    \varsigma'_2(\Theta) = \{(x_1 - qx_0, x_2 - qx_2, \ldots, x_d -
    qx_{d-1};& \\ x_0, x_1, \ldots, x_{d-1})& : (x_0, \ldots, x_d) \in
    \varsigma_{d+1}(N)\}.
  \end{align*}
  So the assumption that $\varsigma'_2(\Theta) \cap (0 \oplus M)$ is
  non-trivial implies that there are $(x_0, \ldots, x_d) \in
  \varsigma_{d+1}(N)$ such that
  \begin{align*}
    \forall i &: x_{i+1} = qx_i \\
    \exists i &: x_i \ne 0.
  \end{align*}
  Evidently, then $x_i = q^i \epsilon$ for some non-zero $\epsilon \in
  M$.  Then
  \begin{equation*}
    (\epsilon, q \epsilon, q^2 \epsilon, \ldots, q^d \epsilon) \in \varsigma_{d+1}(N). \qedhere
  \end{equation*}
\end{proof}

\begin{lemma}[$\approx$ Lemma~10.21 in \cite{prdf}]
  Let $\varsigma : \Dir_K(K) \to \Dir_R(M)$ be a $d$-inflator on $K$.  Let
  $a$ be an element of $K$.  Let $\varsigma'$ be the mutation of $\varsigma$
  along the line $K \cdot (1, a, a^2, \ldots, a^{d-1})$.  Then $a$ is
  tame with respect to $\varsigma'$, i.e., $1/(a - q)$ is in the
  fundamental ring of $\varsigma'$ for almost all $q \in K_0$.
\end{lemma}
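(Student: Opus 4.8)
The plan is to combine Lemma~\ref{hey} with a Vandermonde-independence argument, in the style of the proof of Lemma~\ref{tame-wild}. Set $N = \{(x,ax,a^2x,\ldots,a^dx) : x \in K\} \le K^{d+1}$; since $\dim_K(N) = 1$, the length-scaling law gives that $P := \varsigma_{d+1}(N) \subseteq M^{d+1}$ is an $R$-submodule with $\ell_R(P) = d$. For each $q \in K_0$ put
\begin{equation*}
  W_q := \{(y,qy,q^2y,\ldots,q^dy) : y \in M\} \le M^{d+1},
\end{equation*}
which is an $R$-submodule (the scalars $q^j$ lie in $K_0$, hence act $R$-linearly) isomorphic to $M$. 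With $R'$ the fundamental ring of $\varsigma'$, Lemma~\ref{hey} says precisely that for $q \in K_0$ with $q \ne a$ one has $1/(a-q) \notin R' \implies P \cap W_q \ne 0$.

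First I would verify that for any distinct $q_0,\ldots,q_n \in K_0$ with $n \le d$, the submodules $W_{q_0},\ldots,W_{q_n}$ are independent in $\Sub_R(M^{d+1})$, i.e.\ their sum is direct. The coordinatewise map $M^{n+1} \to M^{d+1}$, $(y_0,\ldots,y_n) \mapsto \sum_{i} (y_i, q_iy_i,\ldots,q_i^dy_i)$, whose image is $\sum_i W_{q_i}$, is given by the $(d+1)\times(n+1)$ matrix $(q_i^j)$ over $K_0$. Taking rows $j = 0,\ldots,n$ exhibits an invertible $(n+1)\times(n+1)$ Vandermonde submatrix, since the $q_i$ are distinct and $K_0$ is a field; hence $(q_i^j)$ has a left inverse over $K_0$, the map is injective, and $\sum_i W_{q_i}$ is direct. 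Since independence passes to subobjects, it follows that whenever the $q_i$ are distinct with $n \le d$, the submodules $P \cap W_{q_0},\ldots,P \cap W_{q_n}$ of $P$ are independent.

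Now suppose for contradiction that $1/(a-q) \notin R'$ for more than $d$ values $q \in K_0 \setminus \{a\}$, and pick $d+1$ distinct such values $q_0,\ldots,q_d$. By Lemma~\ref{hey} each $P \cap W_{q_i}$ is nonzero, hence of length at least $1$; by the previous step these $d+1$ submodules of $P$ are independent, so
\begin{equation*}
  d + 1 \le \sum_{i=0}^d \ell_R(P \cap W_{q_i}) = \ell_R\!\left(\bigoplus_{i=0}^d (P \cap W_{q_i})\right) \le \ell_R(P) = d,
\end{equation*}
a contradiction. Therefore $1/(a-q) \in R'$ for all but at most $d$ values of $q \in K_0$ with $q \ne a$, hence (allowing also the single value $q = a$) for almost all $q \in K_0$; that is, $a$ is tame with respect to $\varsigma'$.

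The only step requiring genuine content is the independence of the $W_q$ — the promised Vandermonde input — and it exactly parallels the independence of the eigenspaces $V_q = \ker(\varphi - q)$ used in the proof of Lemma~\ref{tame-wild}; the rest is bookkeeping with lengths of finite-length $R$-modules together with a direct appeal to Lemma~\ref{hey}.
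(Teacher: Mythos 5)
Your proof is correct and follows essentially the same route as the paper: both invoke Lemma~\ref{hey} to produce nonzero elements $(\epsilon,q\epsilon,\ldots,q^d\epsilon)$ in $\varsigma_{d+1}(K\cdot(1,a,\ldots,a^d))$ and then use invertibility of a Vandermonde matrix over $K_0$ to force $d+1$ independent nonzero submodules inside a module of length $d$. The only cosmetic difference is that the paper applies the inverse Vandermonde matrix as an element of $GL_{d+1}(K_0)$ to send the vectors to standard basis vectors, whereas you establish independence of the lines $W_q$ directly; the two arguments are interchangeable.
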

\begin{proof}
  Choose $q_0, q_1, \ldots, q_d$ distinct elements of $K_0$.  If $1/(a
  - q_i)$ is not in the fundamental ring of $\varsigma'$ for each $i$, then
  by Lemma~\ref{hey}, there exist $\epsilon_0, \ldots, \epsilon_d \in
  M$, all non-zero, such that for each $i$,
  \begin{equation*}
    (\epsilon_i, q_i \epsilon_i, q_i^2 \epsilon_i, \ldots, q_i^d
    \epsilon_i) \in \varsigma_{d+1}(K \cdot (1,a,\ldots,a^d)).
  \end{equation*}
  By invertibility of Vandermonde matrices, there is a matrix $\mu \in
  GL_{d+1}(K_0)$ such that
  \begin{equation*}
    \mu \cdot (1, q_i, \ldots, q_i^d) = \vec{u}_i,
  \end{equation*}
  where $\vec{u}_i$ is the $i$th standard basis vector.  Then
  \begin{equation*}
    \epsilon_i \vec{u}_i \in \mu \cdot \varsigma_{d+1}(K \cdot (1,a,\ldots,a^d))
  \end{equation*}
  for each $i$.  But the right side is an $R$-module of length $d
  \cdot \dim_K(K \cdot (1,a,\ldots,a^d)) = d$.  The elements on the
  left side generate an $R$-module
  \begin{equation*}
    (R\epsilon_0) \oplus (R\epsilon_1) \oplus \cdots \oplus (R\epsilon_d) \le M^{d+1}
  \end{equation*}
  of length at least $d + 1$, a contradiction.
\end{proof}

\begin{theorem}[$\approx$ Theorem~10.25 in \cite{prdf}]\label{thm:generate-mvals}
  If $\varsigma$ is a $d$-inflator of $K$, the limiting ring $R_\infty$ is a
  multi-valuation ring.  In fact, it is an intersection of at most $d$
  valuation rings.  If the fundamental ideal of $\varsigma$ is non-trivial,
  then $R_\infty$ is a non-trivial multi-valuation ring, i.e.,
  $R_\infty$ is a proper subring of $K$.
\end{theorem}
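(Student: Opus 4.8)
The strategy is to verify that $R_\infty$ meets the algebraic criterion of Lemma~\ref{algebra-case} for being an intersection of at most $d$ valuation rings, and then to read off non-triviality from the fact that $I_\infty$ sits inside the Jacobson radical of $R_\infty$. First I would normalize: replacing $\varsigma$ by an equivalent inflator, assume $\varsigma : \Dir_K(K) \to \Dir_R(M)$ with $R$ a semisimple $K_0$-algebra and $M$ of length $d$, so that mutations are available via Theorem~\ref{mutations-exist}. Recall from Proposition~\ref{limitors} that $R_\infty = \bigcup_L R_{\varsigma^L}$ is a $K_0$-subalgebra of $K$ (it contains $K_0 \subseteq R_\varsigma$), that $I_\infty = \bigcup_L I_{\varsigma^L} \supseteq I_\varsigma$, and that $1 + I_\infty \subseteq R_\infty^\times$.

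Next, fix $d$ distinct elements $q_1,\ldots,q_d \in K_0$ (possible since $K_0$ is infinite). By Lemma~\ref{algebra-case} with $n = d$, it suffices to show that for every $a \in K$ at least one of
\[
  a,\ \frac{1}{a-q_1},\ \ldots,\ \frac{1}{a-q_d}
\]
lies in $R_\infty$. Given $a$, let $\varsigma'$ be the mutation of $\varsigma$ along the line $L = K\cdot(1,a,a^2,\ldots,a^{d-1}) \le K^d$, with fundamental ring $R' := R_{\varsigma'} = R_{\varsigma^L} \subseteq R_\infty$. By the lemma immediately preceding this theorem ($\approx$ Lemma~10.21 of \cite{prdf}), $a$ is tame with respect to $\varsigma'$, so by Lemma~\ref{tame-wild} at most $d$ members of $S_a = \{a\} \cup \{1/(a-q) : q \in K_0\}$ fail to lie in $R'$. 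The $d+1$ displayed elements all belong to $S_a$, so at least one of them lies in $R' \subseteq R_\infty$. (If $a \ne 1/(a-q_i)$ for all $i$, these are $d+1$ distinct members of $S_a$ and the count is immediate; the one degenerate case, $a = 1/(a-q_{i_0})$ for some $i_0$, is handled by looking inside the proof of Lemma~\ref{tame-wild}: if moreover $a \notin R'$, then some $\beta = 1/(a-q_0) \in R'$ with $\widehat{\res}(\beta)$ non-invertible, and independence of the eigenspaces of $\widehat{\res}(\beta)$ — together with its nonzero kernel — bounds the number of $q$ with $1/(a-q) \notin R'$ by $d-1$, so one of $q_1,\ldots,q_d$ still works.) Hence $R_\infty = \Oo_1 \cap \cdots \cap \Oo_m$ for some $m \le d$ valuation rings on $K$; in particular $R_\infty$ is a multi-valuation ring.

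Finally, suppose the fundamental ideal $I = I_\varsigma$ is non-trivial. Then $I_\infty \supseteq I \ne 0$, and by Proposition~\ref{limitors} $I_\infty$ is contained in the Jacobson radical of $R_\infty$. If $R_\infty$ were all of $K$, its Jacobson radical would be $0$, forcing $I_\infty = 0$, a contradiction. Therefore $R_\infty$ is a proper subring of $K$, i.e., a non-trivial multi-valuation ring.

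The main obstacle is the combinatorial bookkeeping in the middle step: one must control the size of the exceptional subset of $S_a$ and dispose of the coincidence $a = 1/(a-q_{i_0})$, which requires the eigenspace-independence argument from the \emph{proof} of Lemma~\ref{tame-wild} rather than merely its statement. Everything else is a routine assembly of Theorem~\ref{mutations-exist}, Proposition~\ref{prop:growth}, Proposition~\ref{limitors}, and Lemma~\ref{algebra-case}.
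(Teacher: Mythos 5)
Your proof is correct and follows essentially the same route as the paper: mutate along $K\cdot(1,a,a^2,\ldots,a^{d-1})$ to make $a$ tame, combine Lemma~\ref{tame-wild} with Lemma~\ref{algebra-case} to conclude that $R_\infty$ is an intersection of at most $d$ valuation rings, and use $0 \ne I_\varsigma \subseteq I_\infty \subseteq$ (Jacobson radical of $R_\infty$) for non-triviality. Your parenthetical about the coincidence $a = 1/(a-q_{i_0})$ does address a genuine (if tiny) imprecision that the paper's own proof passes over silently, though the cleaner fix is to note that the proof of Lemma~\ref{tame-wild} actually bounds the exceptions counted by the parameter $q \in K_0 \cup \{\infty\}$ rather than by distinct elements of $S_a$, which handles the degenerate case without your eigenspace detour.
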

\begin{proof}
  Fix $q_1, q_2, \ldots, q_d$ distinct elements of $K_0$.  For any $a
  \in K$, there is a mutation $\varsigma'$, namely the mutation along $K
  \cdot (1, a, a^2, \ldots, a^{d-1})$, such that $a$ is tame with
  respect to $\varsigma'$.  By Lemma~\ref{tame-wild}, one of the elements
  \begin{equation*}
    x, \frac{1}{x - q_1}, \ldots, \frac{1}{x - q_d}
  \end{equation*}
  is in the fundamental ring $R_{\varsigma'}$ of $\varsigma'$, and therefore
  in the limiting ring $R^\infty_\varsigma$ of $\varsigma$.  By
  Lemma~\ref{algebra-case}, $R^\infty = R^\infty_\varsigma$ is a
  multi-valuation ring, an intersection of at most $d$ valuation rings
  on $K$.

  Now if the fundamental ideal $I_\varsigma$ is non-trivial, then the
  limiting ideal $I^\infty_\varsigma$ is non-trivial.  By
  Proposition~\ref{limitors}, $R^\infty$ has a non-trivial Jacobson
  radical.  The Jacobson radical of a field is trivial, so $R^\infty$
  must be a non-trivial multi-valuation ring.
\end{proof}

\subsection{Mutation and malleability}\label{sec:mut-mal}
The following fact isn't strictly necessary for the applications we
have in mind,\footnote{Because of Proposition~\ref{prop:ped-mal} above
  and Proposition~\ref{prop:still} below.} but it's nice to know conceptually.
\begin{proposition}\label{prop:presmal}
  Let $\varsigma$ be a malleable $d$-inflator and $\varsigma'$ be a mutation.
  Then $\varsigma'$ is malleable.
\end{proposition}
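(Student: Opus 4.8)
The plan is to reduce malleability of $\varsigma'$ to malleability of $\varsigma$ by exploiting the explicit formula $\varsigma'_n(V) = \varsigma_{mn}(\xi^L_n(V))$, together with the fact that $\xi^L_n$ is, at each level, the pushforward along the embedding $K \hookrightarrow K^m$, $x \mapsto (a_1 x, \ldots, a_m x)$. In particular $\xi^L_n$ is injective, order-preserving, and identifies $\Sub_K(K^n)$ with the interval $[0, L^n]$ in $\Sub_K(K^{mn})$; under this identification, $\varsigma'_n$ is just the restriction of $\varsigma_{mn}$ to that interval. So I would first record the lemma: if $f : D_\bullet \to D'_\bullet$ is a malleable directory morphism and $[a^{\oplus n}, b^{\oplus n}]$ is an interval subdirectory (Proposition~\ref{prop:sq}), then the restricted morphism $D^{[a,b]}_\bullet \to D'_\bullet$ (compose the interval inclusion $i$ with $f$) is again malleable. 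This is essentially immediate from the definitions: an interval of $D^{[a,b]}_n$ is an interval of $D_n$, and the target interval $[f(i(x)), f(i(y))]$ is the same whether computed from $D_n$ or $D^{[a,b]}_n$, so atom-lifting transfers verbatim.

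Next I would observe that mutation along $L$ factors as
\begin{equation*}
  \Dir_K(K) \xrightarrow{\ \xi^L\ } \Dir_K(K^m)^{[0,L]} \xrightarrow{\ \varsigma|\ } \Dir_R(M'),
\end{equation*}
where $\Dir_K(K^m)^{[0,L]}$ is the interval subdirectory of $\Dir_K(K^m) = \Dir_{K\Vect^f}(K^m)$ cut out by $0 \le L$ (using that $L$ is a one-dimensional subspace of $K^m$, and $L^{\oplus n} = \xi^L_n(K^n)$ as computed in the proof of Theorem~\ref{mutations-exist}), and $\varsigma|$ is the restriction of $\varsigma_{\bullet m}$. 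More precisely: the morphism $\varsigma : \Dir_K(K) \to \Dir_R(M)$ induces, by pushforward along $K \hookrightarrow K^m$ and a change of which copies of $M$ we track, a morphism $\Dir_K(K^m) \to \Dir_R(M^m)$ whose $n$th component is $V \mapsto \varsigma_{mn}(V)$ (after the standard reindexing $M^{mn} \cong (M^m)^n$); call this $\widetilde\varsigma$. I would check that $\widetilde\varsigma$ is malleable: this is again essentially the observation that $\widetilde\varsigma_n$ is just $\varsigma_{mn}$ up to a fixed relabeling of coordinates, and relabeling coordinates is an isomorphism of the relevant subobject lattices that commutes with everything in sight, so malleability of each $\varsigma_N$ gives malleability of each $\widetilde\varsigma_n$. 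Then $\varsigma' = \widetilde\varsigma \circ \xi^L = $ (restriction of $\widetilde\varsigma$ to the interval subdirectory $[0,L]$), which is malleable by the interval lemma of the previous paragraph, since $\varsigma'_n(V) = \widetilde\varsigma_n(\xi^L_n(V))$ and $\xi^L_n$ is precisely the interval inclusion $\Sub_K(K^n) \cong [0, L^{\oplus n}] \hookrightarrow \Sub_K((K^m)^n)$ by the computations in Theorem~\ref{mutations-exist}.

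The main obstacle I expect is bookkeeping rather than conceptual depth: one must be careful that the codomain of the mutation is $\Dir_R(M')$ with $M' = \varsigma_m(L) \subseteq M^m$, not all of $\Dir_R(M^m)$, so the target of the relevant morphism is itself an interval subdirectory $[0, M']$ of $\Dir_R(M^m)$, and one needs malleability of an order-preserving map into an interval. But malleability is a statement purely about intervals $[f(x), f(y)]$ in the codomain, and since $\varsigma'_n(V) = \widetilde\varsigma_n(\xi^L_n(V))$ always lands in $(M')^n = \widetilde\varsigma_n(L^{\oplus n})$, the relevant target intervals are the same computed in $\Sub_R((M')^n)$ or in $\Sub_R((M^m)^n)$, so this causes no trouble; one just notes that $(M')^n$ is a sublattice-interval and atom-lifting is insensitive to the ambient lattice above the top of the interval. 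The only genuinely routine-but-fiddly step is verifying that the identification $M^{mn} \cong (M^m)^n$ used to turn $\varsigma_{mn}$ into $\widetilde\varsigma_n$ is compatible with $\oplus$ and the $GL_n$-actions in the way needed — but this is exactly the kind of coordinate-permutation compatibility already used (and waved through) in the proof of Theorem~\ref{mutations-exist}, so I would cite that proof and the transpose-matrix discussion there rather than redo it. I therefore expect the whole proof to be short: state the interval-restriction-preserves-malleability lemma, note mutation is such a restriction (of a coordinate-relabeled $\varsigma$), done.
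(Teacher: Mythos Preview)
Your proposal is correct and takes essentially the same approach as the paper: both reduce to malleability of $\varsigma_{mn}$ via the fact that $\xi^L_n$ is an order-isomorphism onto an interval of $\Sub_K(K^{mn})$. The paper states this as a separate Lemma (your ``interval lemma'' is its content, specialized to the situation at hand) and then carries out the verification of malleability directly rather than packaging it as ``interval restriction preserves malleability,'' but the argument is the same.
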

\begin{proof}
  Let $\varsigma$ be a morphism from $\Dir_K(K)$ to $\Dir_R(M)$ and let
  $\varsigma' : \Dir_K(K) \to \Dir_R(M')$ be the mutation along a line $L
  \le K^m$, where $M' = \varsigma_m(L) \le M^m$.

  Let $X, Z$ be subspaces of $K^n$ with $X \subseteq Z$.  Let $Y$ be a
  submodule of $(M')^n$ such that
  \begin{equation*}
    \varsigma_{mn}(\xi^L_n(X)) = \varsigma'_n(X) \subseteq Y \subseteq
    \varsigma'_n(Z) = \varsigma_{mn}(\xi^L_n(Z)).
  \end{equation*}
  \begin{equation*}
    \ell_R(Y) = \ell_R(\varsigma'_n(X)) + 1 = \ell_R(\varsigma_{mn}(\xi^L_n(X))) + 1.
  \end{equation*}
  By malleability of the map $\varsigma_{mn} : \Sub_K(K^{mn}) \to
  \Sub_R((M^m)^n)$, there is a subspace $Y' \le K^{mn}$ such that
  \[ \xi^L_n(X) \le Y' \le \xi^L_n(Z) \]
  \[ Y \le \varsigma_{mn}(Y') \]
  \[ \dim_K(Y') = \dim_K(\xi^L_n(X)) + 1.\]
  By Lemma~\ref{silly} below, the map $\xi^L_n(-)$ induces an
  isomorphism from the interval $[X,Z] \subseteq \Sub_K(K^n)$ to the
  interval $[\xi^L_n(X),\xi^L_n(Z)] \subseteq \Sub_K(K^{mn})$.
  Therefore $Y' = \xi^L_n(Y'')$ for a unique subspace $Y'' \le K^n$
  such that
  \begin{equation*}
    X \le Y'' \le Z.
  \end{equation*}
  Moreover, $\xi^L_n(-)$ preserves dimensions, by
  Equation~(\ref{xi-dim}) in \S\ref{sec:mut-def}.  Thus
  \begin{equation*}
    \dim_K(Y'') = \dim_K(\xi^L_n(Y')) = \dim_K(\xi^L_n(X)) + 1 = \dim_K(X) + 1.
  \end{equation*}
  So we have found $Y''$ between $X$ and $Z$ such that $\dim_K(Y''/X) = 1$ and
  \begin{equation*}
    Y \le \varsigma_{mn}(Y') = \varsigma_{mn}(\xi^L_n(Y'')) = \varsigma'_n(Y'').
  \end{equation*}
  This is the exact configuration required by malleability.
\end{proof}
\begin{lemma}\label{silly}
  For any $n$, for any line $L$, for any subspaces $X, Z \in
  \Sub_K(K^n)$ with $X \le Z$, the map $\xi^L_n(-)$ induces an
  isomorphism from the interval
  \begin{equation*}
    [X,Z] \subseteq \Sub_K(K^n)
  \end{equation*}
  to the interval
  \begin{equation*}
    [\xi^L_n(X),\xi^L_n(Z)] \subseteq \Sub_K(K^{nm}).
  \end{equation*}
\end{lemma}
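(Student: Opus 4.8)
The plan is to realize $\xi^L_n(-)$ as the direct-image map along a single injective $K$-linear map, and then invoke the elementary fact that direct image along an injection induces an isomorphism of subspace lattices onto an initial segment. Concretely, I would fix a nonzero vector $\vec{a} = (a_1,\ldots,a_m)$ spanning $L$, and let $\phi : K^n \to K^{nm}$ be the $K$-linear map $\vec{x} \mapsto (a_1\vec{x},\ldots,a_m\vec{x})^T$. Since some $a_i \neq 0$, the map $\phi$ is injective, and by the very definition of $\xi^L_n$ we have $\xi^L_n(V) = \phi(V)$ for every $V \in \Sub_K(K^n)$; this is also exactly the statement that $\xi^L_\bullet$ is pushforward along $x \mapsto (a_1 x,\ldots,a_m x)$.

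Next I would record the elementary properties of direct image along the injection $\phi$. The map $V \mapsto \phi(V)$ is clearly order-preserving, and it is order-reflecting: if $\phi(V_1) \subseteq \phi(V_2)$ then, applying $\phi^{-1}(-)$ and using injectivity of $\phi$, we get $V_1 = \phi^{-1}(\phi(V_1)) \subseteq \phi^{-1}(\phi(V_2)) = V_2$. In particular it is injective. Hence it restricts to an injective, order-preserving, order-reflecting map $[X,Z] \to [\xi^L_n(X),\xi^L_n(Z)]$.

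It remains to check surjectivity onto $[\xi^L_n(X),\xi^L_n(Z)]$. Given $W$ with $\xi^L_n(X) = \phi(X) \subseteq W \subseteq \phi(Z) = \xi^L_n(Z)$, set $V := \phi^{-1}(W)$. Since $W \subseteq \phi(Z) \subseteq \img \phi$, we have $\phi(V) = \phi(\phi^{-1}(W)) = W$. From $\phi(X) \subseteq W$ and injectivity of $\phi$ we get $X \subseteq \phi^{-1}(\phi(X)) \subseteq \phi^{-1}(W) = V$, and also $V = \phi^{-1}(W) \subseteq \phi^{-1}(\phi(Z)) = Z$; thus $V \in [X,Z]$ and $\xi^L_n(V) = W$. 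A bijective, order-preserving, order-reflecting map of posets is a poset isomorphism, hence a lattice isomorphism, so $\xi^L_n$ induces the claimed isomorphism of intervals. I do not expect any serious obstacle here; the only point requiring a little care is matching the coordinate bookkeeping in the definition of $\xi^L_n$ (the transpose $(-)^T$) with the clean description $V \mapsto \phi(V)$, after which the argument reduces to the one-line observation that $\phi$ is injective.
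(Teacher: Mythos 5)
Your proof is correct and rests on the same essential fact as the paper's: $\xi^L_n$ is pushforward along the injective linear map $\vec{x}\mapsto(a_1\vec{x},\ldots,a_m\vec{x})^T$, and direct image along an injection induces an isomorphism of subspace lattices onto the initial segment below the image, hence on intervals. The paper merely adds a cosmetic step, composing with an element of $GL_{mn}(K)$ to turn $\xi^L_n(V)$ into $V\oplus 0^{mn-n}$ before declaring the interval property obvious; your direct verification of order-reflection and surjectivity makes that normalization unnecessary.
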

\begin{proof}
  Let $L = K \cdot (a_1, \ldots, a_m)$.  Recall that $\xi^L_n(-)$ is
  given by
  \begin{equation*}
    \xi^L_n(V) = \{(a_1\vec{x},\ldots,a_m\vec{x})^T : \vec{x} \in V\}.
  \end{equation*}
  Choose some $\mu \in GL_m(K)$ such that $\mu \cdot (a_1,\ldots,a_m)
  = (1,0,0,\ldots,0)$.  Let $\tau \in GL_{mn}(K)$ be the permutation
  matrix inducing the transpose operation
  \begin{equation*}
    \tau \cdot (a_{1,1},a_{1,2},\ldots,a_{1,n},a_{2,1},\ldots,a_{m,n}) =
    (a_{1,1},a_{2,1},\ldots,a_{m,1},a_{1,2},\ldots,a_{m,n}).
  \end{equation*}
  Then $(\mu \otimes I_n) \cdot \tau$ is an invertible matrix in
  $GL_{mn}(K)$, inducing an automorphism of the $K$-vector space
  $K^{mn}$.  This automorphism sends
  \begin{equation*}
    (a_1\vec{x},a_2\vec{x},\ldots,a_m\vec{x})^T \mapsto
    (\vec{x},\underbrace{\vec{0},\ldots,\vec{0}}_{m-1 \textrm{ times}}).
  \end{equation*}
  The automorphism of $K^{mn}$ induces an automorphism of the lattice
  $\Sub_K(K^{mn})$.  This automorphism sends
  \begin{equation*}
    \xi^L_n(V) \mapsto V \oplus 0^{mn-n}.
  \end{equation*}
  Now the map
  \begin{align*}
    \Sub_K(K^n) \to \Sub_K(K^{mn}) \\
    V \mapsto V \oplus 0^{mn - n}
  \end{align*}
  clearly has the property of mapping intervals isomorphically onto
  intervals.
\end{proof}

\subsection{Mutation and pedestals}\label{sec:mut-ped}
Suppose we are in the Cube-bounded Configuration of
Assumption~\ref{asm:1}, so
\begin{itemize}
\item There is a faithful exact embedding of $K\Vect^f$ into an abelian category $\Cc$.
\item In the category $\Cc$, the object $K$ is cube-bounded, of
  reduced rank $d$.
\end{itemize}
Recall that a \emph{pedestal} is a subobject $A \in \Sub_\Cc(K)$ such that
\begin{equation*}
  \botrk(K/A) = \redrk(K) = d,
\end{equation*}
where the ranks are calculated in $\Cc$.  By the pedestal machine
(Theorem~\ref{thm:actually-important} and
  Proposition~\ref{bleal-form}), any pedestal $A$ determines a
  $d$-inflator $\varsigma$:
\begin{align*}
  \Dir_K(K) & \to \Dir_{\Pro \Cc}(B/A) \\
  V & \mapsto (V+A^n)/A^n \cap B^n/A^n = (V \cap B^n + A^n)/A^n,
\end{align*}
where $B/A$ is the socle of $K/A$ in the category $\Pro \Cc$.

\begin{proposition}[$\approx$ Lemma~10.20 in \cite{prdf}]\label{prop:still}
  Let $A \in \Sub_\Cc(K)$ be a pedestal, let $L = K \cdot
  (a_1,\ldots,a_m)$ be a line in $K^m$.  Then
  \begin{equation*}
    A' = a_1^{-1} A \cap a_2^{-1} A \cap \cdots \cap a_m^{-1} A
  \end{equation*}
  is a pedestal.  Moreover, if $\varsigma, \varsigma'$ are the $d$-inflators
  obtained from $A$ and $A'$, then $\varsigma'$ is the mutation of $\varsigma$
  along the line $L$.
\end{proposition}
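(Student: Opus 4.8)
The idea is to use the $\Pro\Cc$-description of the pedestal inflator from Proposition~\ref{bleal-form}, where everything is phrased via honest monomorphisms, and to transport the mutation across the monomorphism $\phi = \xi^L_1 : K \to K^m$ given by $x \mapsto (a_1 x,\ldots,a_m x)$. Since $L$ is one-dimensional, $(a_1,\ldots,a_m)\ne 0$, so $\phi$ is a monomorphism of $K$-vector spaces with image $L$, and — after the transpose identification $K^{mn} = (K^m)^n$ built into the definition of $\xi^L$ — the map $\xi^L_n$ is just $\phi^{\oplus n}$; i.e. $\xi^L_\bullet$ is the directory morphism $\Dir_K(K)\to\Dir_K(K^m)$ obtained by pushforward along $\phi$, exactly as noted before Theorem~\ref{mutations-exist}. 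Write $A^+$ for the pro-subobject of $K$ with $A^+/A = \soc_{\Pro\Cc}(K/A) = \qsoc(K/A)$, as in Proposition~\ref{bleal-form}.

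The second step is to record the elementary identities $\phi^{-1}(A^m) = A'$, $\ \phi^{\oplus n}(K^n) = L^n \subseteq K^{mn}$, $\ (\phi^{\oplus n})^{-1}(A^{mn}) = (A')^n$, and, defining the pro-subobject $(A')^+ := \phi^{-1}((A^+)^m)$ of $K$, $\ (\phi^{\oplus n})^{-1}((A^+)^{mn}) = ((A')^+)^n$; injectivity of $\phi^{\oplus n}$ then gives $\phi^{\oplus n}\bigl(V\cap ((A')^+)^n\bigr) = \phi^{\oplus n}(V)\cap (A^+)^{mn}$ for every $V\in\Sub_K(K^n)$, together with $\phi^{\oplus n}((A')^n)\subseteq A^{mn}$.

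Next I would show $A'$ is a pedestal. The monomorphism $\phi$ with $\phi^{-1}(A^m)=A'$ induces an isomorphism $K/A' \cong (L+A^m)/A^m$ in $\Cc$. Since $\qsoc$ commutes with finite direct sums and with pullback along monomorphisms in $\Cc$ (Remark~\ref{doy}), one computes $\qsoc\bigl((L+A^m)/A^m\bigr) = (L+A^m)/A^m \cap (A^+/A)^m$, and pulling this back along the above isomorphism identifies $\soc_{\Pro\Cc}(K/A') = \qsoc(K/A')$ with $(A')^+/A'$. By Lemma~\ref{lem:miracle} applied with $n=m$ and $V=L$, its length is $d\cdot\dim_K L = d$, so $\botrk(K/A') = \ell(\qsoc(K/A')) = d = \redrk(K)$ by Proposition~\ref{prop:qsoc-botrk}; as $K/A'$ is a quotient of $K$ this forces $A'$ to be a pedestal.

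Finally I would match the two inflators. By Proposition~\ref{bleal-form}, $\varsigma_n(V) = (V\cap(A^+)^n + A^n)/A^n$ and the inflator $\varsigma'$ attached to $A'$ is $V\mapsto (V\cap((A')^+)^n + (A')^n)/(A')^n$, landing in $\Dir_{\Pro\Cc}((A')^+/A')$. Using the preimage identities above, Lemma~\ref{silly}, and the isomorphism theorems (Proposition~\ref{prop:sq}), the directory morphism $\xi^L_\bullet$ (pushforward along $\phi$) induces an isomorphism of directories $\Dir_{\Pro\Cc}((A')^+/A') \cong \Dir_{\Pro\Cc}(\varsigma_m(L))$, under which $\varsigma'_n(V)$ corresponds to $\bigl(\phi^{\oplus n}(V)\cap(A^+)^{mn} + A^{mn}\bigr)/A^{mn} = \varsigma_{mn}(\xi^L_n(V))$; that is, $\varsigma'$ is precisely the mutation of $\varsigma$ along $L$ in the sense of Theorem~\ref{mutations-exist}. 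The main obstacle is the socle bookkeeping in the third paragraph — verifying that the socle in $\Pro\Cc$ commutes with pullback along $\phi$, so that $\soc_{\Pro\Cc}(K/A') = (A')^+/A'$ — together with keeping the transpose indexing of $\xi^L$ straight so that $\xi^L_\bullet$ genuinely is pushforward along $\phi$; granting these, the identification of the inflators is a short diagram chase, the $\oplus$- and $GL_n(K_0)$-compatibilities of the induced isomorphism being automatic from $\xi^L_\bullet$ being a directory morphism.
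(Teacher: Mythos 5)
Your proposal is correct and follows essentially the same route as the paper: transport everything across the monomorphism $x \mapsto (a_1x,\ldots,a_mx)$ using the $\Pro \Cc$ description of the pedestal inflator from Proposition~\ref{bleal-form}, with the pullback identities $\phi^{-1}(A^m)=A'$ and $\phi^{-1}((A^+)^m)=(A')^+$ doing the work (the paper packages the resulting diagram chase as Lemma~\ref{its-a-good-time}). The only organizational difference is that you establish pedestal-hood of $A'$ up front from Remark~\ref{doy} and Lemma~\ref{lem:miracle} applied to $V=L$, whereas the paper deduces it at the end from the chain $d=\ell(B'/A')\le\botrk(K/A')\le\redrk(K)=d$ after first identifying $B'/A'$ with $\varsigma_m(L)$; both reduce to the same length count, and the "socle commutes with pullback" worry you flag is already resolved by your own reduction to Remark~\ref{doy} plus invariance of the socle under isomorphism.
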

\begin{proof}
  Let $\eta : K \to K^m$ be the map $x \mapsto
  (a_1x,a_2x,\ldots,a_mx)$.  Let $\xi^L : \Dir_K(K) \to \Dir_K(K^m)$
  be pushforward along $\eta$.  For each $n$, the map $\eta^{\oplus n}
  : K^n \to K^{mn}$ is
  \begin{align*}
    \vec{x} = (x_1,\ldots,x_n) &\mapsto (a_1x_1, a_2x_1, \ldots, a_mx_1, a_1x_2, a_2x_2, \ldots, a_mx_2, a_1x_3, \ldots, a_mx_n) \\
    & = (a_1x_1, a_1x_2, \ldots, a_1x_n, a_2x_1, \ldots, a_mx_n)^T \\
    & = (a_1\vec{x}, a_2\vec{x}, \ldots, a_m\vec{x})^T.
  \end{align*}
  Therefore $\xi^L_n : \Sub_K(K^n) \to \Sub_K(K^{mn})$ agrees with our
  earlier notation(!)

  We can also view $\eta$ as a morphism in $\Pro \Cc$, because of the
  faithful exact embeddings $K\Vect^f \hookrightarrow \Cc
  \hookrightarrow \Pro \Cc$.  Let
  \begin{equation*}
    \hat{\xi}^L : \Dir_{\Pro \Cc}(K) \to \Dir_{\Pro \Cc}(K^m)
  \end{equation*}
  be pushforward along $\eta$ in the category $\Pro \Cc$.  Let $B$ be
  the $\Pro \Cc$-subobject of $K$ such that $B/A$ is the socle of
  $K/A$.  Also let $B', A'$ be the pullbacks of $B^m$ and $A^m$ along
  $\eta : K \to K^m$.  Thus $B'$ is a $\Pro \Cc$-subobject of $K$, and
  $A'$ is a $\Cc$-subobject of $K$.  In fact,
  \begin{align*}
    A' &= a_1^{-1} A \cap \cdots \cap a_m^{-1} A \\
    B' &= a_1^{-1} B \cap \cdots \cap a_m^{-1} B,
  \end{align*}
  so $A'$ agrees with the $A'$ in the statement of the proposition.
  The map $\eta$ induces a monomorphism
  \begin{equation*}
    \iota : B'/A' \hookrightarrow B/A
  \end{equation*}
  in the category $\Pro \Cc$.  Let $\iota_*$ denote pushforward
  along this map.

  There is a commuting diagram
  \begin{equation} \label{xcom}
    \xymatrix{
      \Dir_K(K) \ar[r]^{\xi^L} \ar[d] & \Dir_K(K^m) \ar[d] \\
      \Dir_{\Pro \Cc}(K) \ar[r]^{\hat{\xi}^L} \ar@{->>}[d] & \Dir_{\Pro \Cc}(K^m) \ar@{->>}[d] \\
      \Dir_{\Pro \Cc}(B'/A') \ar[r]_{\iota_*} & \Dir_{\Pro \Cc}(B^m/A^m).
    }
  \end{equation}
  in which the $\twoheadrightarrow$ maps are interval retracts.  The
  bottom square commutes by Lemma~\ref{its-a-good-time} below, and the
  top square commutes because the embedding $K\Vect^f
  \hookrightarrow \Pro \Cc$ is exact (or because $\eta$ is a
  monomorphism).

  Let $\rho : \Dir_K(K) \to \Dir_{\Pro \Cc}(B^m/A^m)$ be the
  composition of the maps in the diagram.  Thus
  \begin{align*}
    \rho_n(V)& = \iota_{*,n}((V \cap (B')^n + (A')^n)/(A')^n) \\
    \rho_n(V)& = (\xi^L_n(V) \cap B^{mn} + A^{mn})/A^{mn} =
    \varsigma_{mn}(\xi^L_n(V)).
  \end{align*}
  Let $Q = \varsigma_m(L)$.  Then $Q$ is a semisimple subobject of
  $B^m/A^m$ of length $d \cdot \dim_K(L) = d$.  Let
  \begin{equation*}
    \varsigma'' : \Dir_K(K) \to \Dir_{\Pro \Cc}(Q)
  \end{equation*}
  be the mutation of $\varsigma$ along $L$.  Then
  \begin{equation*}
    \varsigma''_n(V) = \varsigma_{mn}(\xi^L_n(V)) = \rho_n(V).
  \end{equation*}
  So $\varsigma''$ and $\rho$ are identical except for their codomain.

  Let $\varsigma'$ be the composition of the left vertical maps in (\ref{xcom})
  \begin{equation*}
    \Dir_K(K) \to \Dir_{\Pro \Cc}(K) \twoheadrightarrow \Dir_{\Pro \Cc}(B'/A')
  \end{equation*}
  \begin{equation*}
    V \mapsto (V \cap (B')^n + (A')^n)/(A')^n.
  \end{equation*}
  By the commutative diagram, $\rho = \iota_* \circ \varsigma'$.  Then for
  any $V \in \Sub_K(K^n)$,
  \begin{equation}\label{cb7}
    \iota_{*,n}(\varsigma'_n(V)) = \rho_n(V) = \varsigma''_n(V).
  \end{equation}
  Taking $n = 1$ and $V = K$,
  \begin{equation*}
    \iota_*(B'/A') = \iota_*((K \cap B' + A')/A') =
    \iota_*(\varsigma'_1(K)) = \varsigma''_1(K).
  \end{equation*}
  Because $\varsigma'' : \Dir_K(K) \to \Dir_{\Pro \Cc}(Q)$ is an inflator,
  \begin{equation*}
    \iota_*(B'/A') = \varsigma''_1(K) = Q.
  \end{equation*}
  Therefore the image of the monomorphism
  \begin{equation*}
    \iota : B'/A' \to B/A
  \end{equation*}
  is exactly $Q$, and $\iota$ is an isomorphism from $B'/A'$ to $Q$.
  Then (\ref{cb7}) gives a commutative diagram in which the bottom map
  is an isomorphism
  \begin{equation*}
    \xymatrix{ \Dir_K(K) \ar[dr]^{\varsigma''} \ar[d]_{\varsigma'} & \\ \Dir_{\Pro \Cc}(B'/A') \ar[r]^{\sim} & \Dir_{\Pro \Cc}(Q)}.
  \end{equation*}
  Therefore $\varsigma'$ is a $d$-inflator equivalent to $\varsigma''$.  Also,
  $B'/A'$ is a semisimple object of length $d$ (because of the
  isomorphism to $Q$).  Since $A'$ is a $\Cc$-subobject of $K$, not
  merely a $\Pro \Cc$-subobject, the pro-object $K/A'$ has a socle
  $B''/A'$.  Semisimplicity of $B'/A'$ implies $B' \subseteq B''$.  Then
  \begin{equation*}
    d = \ell(B'/A') \le \ell(B''/A') = \ell(\qsoc(K/A')) =
    \botrk(K/A') \le \redrk(K/A') \le \redrk(K) = d.
  \end{equation*}
  Therefore equality holds, forcing $B' = B''$ and $\botrk(K/A') = d$.
  It follows that $A'$ is a pedestal.  Moreover, $B'/A' =
  \qsoc(K/A')$, and so $\varsigma'$ is the $d$-inflator associated to $A'$:
  \begin{align*}
    \varsigma'_n(V) = (V \cap (B')^n + (A')^n)/(A')^n & = (V +
    (A')^n)/(A')^n \cap (B')^n/(A')^n \\ &= \qsoc((V + (A')^n)/(A')^n).
  \end{align*}
  We have shown that $\zeta'$ (the inflator derived from $A'$) is
  equivalent to $\zeta''$ (the mutation along $L$).
\end{proof}
\begin{lemma}\label{its-a-good-time}
  Let $M$ be an object in an abelian category $\Cc$.  Let $A \le B \le
  M$ be subobjects.  Let $f : N \hookrightarrow M$ be a monomorphism.
  Let $A' = f^{-1}(A)$ and $B' = f^{-1}(B)$.  Let $f' : A'/B' \to A/B$
  be the monomorphism induced by $f$.  Then the diagram commutes
  \begin{equation*}
    \xymatrix{
    \Dir_\Cc(N) \ar[r]^{f_*} \ar@{->>}[d] & \Dir_\Cc(M) \ar@{->>}[d] \\
    \Dir_\Cc(B'/A') \ar[r]_{f'_*} & \Dir_\Cc(B/A)
    }
  \end{equation*}
  where the vertical maps are interval retracts and the horizontal
  maps are pushforwards.
\end{lemma}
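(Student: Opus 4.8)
The plan is to reduce the claimed commutativity to a single statement about direct and inverse images along a monomorphism, and then dispatch that statement by a short generalized-element chase. Since all four arrows are morphisms of directories, and such a morphism is determined by its underlying maps on subobjects (composites being computed levelwise), it suffices to prove commutativity at each level $n$. Fix $n$ and write $g = f^{\oplus n} : N^n \to M^n$; then $g$ is a monomorphism, the horizontal maps at level $n$ are direct image along $g$ and along $(f')^{\oplus n}$ (Example~\ref{from-morphism}), and the vertical maps are the interval retracts of Proposition~\ref{prop:sq}. Here we read $f'$ as the monomorphism $B'/A' \to B/A$ induced by $f$, which makes sense because $A' = f^{-1}(A) \le f^{-1}(B) = B'$. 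Note also that $(A')^n = (f^{-1}(A))^n = g^{-1}(A^n)$ and $(B')^n = g^{-1}(B^n)$, since forming preimages commutes with finite direct sums. Abbreviating $P = N^n$, $Q = M^n$, $a = A^n \le b = B^n$ in $Q$, $a' = g^{-1}(a)$, $b' = g^{-1}(b)$ in $P$, and $g' : b'/a' \to b/a$ for the induced monomorphism, I must show that for every $V \le P$,
\begin{equation*}
  g'\bigl(((V + a') \cap b')/a'\bigr) = ((g(V) + a) \cap b)/a
\end{equation*}
as subobjects of $b/a$.

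First I would record how pushforward along $g'$ acts: for $a' \le X \le b'$, the image of $X/a'$ under $g'$ is the image of the composite $X \hookrightarrow b' \twoheadrightarrow b'/a' \xrightarrow{g'} b/a$, which factors through $g(X) \le b$, so $g'(X/a') = (g(X) + a)/a$. Taking $X = (V+a') \cap b'$, the displayed identity reduces to an equality of subobjects of $b$:
\begin{equation*}
  g((V+a') \cap b') + a = (g(V) + a) \cap b.
\end{equation*}
The inclusion ``$\le$'' is formal: $g((V+a')\cap b') \le g(V+a') = g(V) + g(a') \le g(V) + a$ because $g(a') = g(g^{-1}(a)) \le a$, and $g((V+a')\cap b') \le g(b') \le b$; since $a \le b$, adding $a$ gives $g((V+a')\cap b') + a \le (g(V)+a)\cap b$.

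The reverse inclusion is the only step that uses monicity of $g$, and it is the heart of the matter. Arguing with generalized elements (justified by the embedding theorem, or by rephrasing via pullback squares), take an element of $(g(V) + a) \cap b$ written as $g(v) + \alpha$ with $v$ an element of $V$ and $\alpha$ an element of $a$. Since $\alpha \in a \le b$ and $g(v) + \alpha \in b$, we get $g(v) \in b$; because $g$ is a monomorphism this forces $v \in g^{-1}(b) = b'$, so $v \in V \cap b' \le (V + a') \cap b'$ and hence $g(v) + \alpha \in g((V+a')\cap b') + a$. This gives the equality, and the lemma follows. The only obstacle is bookkeeping — making precise the identifications $(f^{-1}(A))^n = g^{-1}(A^n)$ and the description of $g'$, and recasting the last generalized-element chase in purely categorical terms — but none of this presents a genuine difficulty.
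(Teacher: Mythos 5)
Your proof is correct and follows essentially the same route as the paper: both reduce the claim to a levelwise identity of subobjects and verify it by direct computation with the explicit formulas for interval retracts and pushforwards. The paper merely organizes the verification differently --- it factors each vertical retract through $\Dir_\Cc(B')$ and $\Dir_\Cc(B)$, splitting the square into two smaller squares, one commuting by the trivial identity $X \cap (B')^n = X \cap B^n \cap N^n = X \cap B^n$ for $X \le N^n$ and the other by pure functoriality of pushforward, which sidesteps your generalized-element chase (itself replaceable by one application of the modular law).
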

\begin{proof}
  Without loss of generality $N \subseteq M$, so $A' = A \cap N$ and
  $B' = B \cap N$.  Consider the square
  \begin{equation*}
    \xymatrix{
    \Dir_\Cc(N) \ar[r]^{f_*} \ar@{->>}[d] & \Dir_\Cc(M) \ar@{->>}[d] \\
    \Dir_\Cc(B') \ar[r]_{f'_*} & \Dir_\Cc(B)
    }    
  \end{equation*}
  where the vertical maps are interval retracts onto $[0,B'] \subseteq
  \Sub_\Cc(N)$ and $[0,B] \subseteq \Sub_\Cc(M)$, and the bottom map
  is induced by the inclusion $B' = B \cap N \hookrightarrow B$.  Then
  the square commutes, because the upper right path sends $X \in
  \Sub_\Cc(N^n)$ like so:
  \begin{equation*}
    X \mapsto X \mapsto X \cap B^n;
  \end{equation*}
  the bottom left path sends $X$ like so:
  \begin{equation*}
    X \mapsto X \cap (B')^n \mapsto X \cap (B')^n;
  \end{equation*}
  and $X \cap (B')^n = X \cap B^n \cap N^n = X \cap B^n$ for $X \subseteq N^n$.

  Also, the square
  \begin{equation*}
    \xymatrix{
    \Dir_\Cc(B') \ar[r]^{f_*} \ar@{->>}[d] & \Dir_\Cc(B) \ar@{->>}[d] \\
    \Dir_\Cc(B'/A') \ar[r]_{f'_*} & \Dir_\Cc(B/A)
    }
  \end{equation*}
  commutes, because the maps are the pushforwards along the following
  commuting diagram in $\Cc$:
  \begin{equation*}
    \xymatrix{
      B \cap N \ar[r] \ar[d] & B \ar[d] \\
      (B \cap N)/(A \cap N) \ar[r] & B/A.}
  \end{equation*}
  Glueing the two squares together gives the desired commuting square
  of directories.
\end{proof}

\section{Application to fields of finite burden}\label{sec:apps}
\begin{theorem}[= Theorem~10.28 in \cite{prdf}] \label{thm:dp1}
  Let $\Kk$ be a saturated unstable dp-finite field.  Then $\Kk$
  admits a non-trivial $\Aut(\Kk/S)$-invariant valuation ring, for
  some small subset $S \subseteq \Kk$.
\end{theorem}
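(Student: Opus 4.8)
The plan is to run the pedestal machine of Part~\ref{part:2} to produce an inflator on $\Kk$, to extract from it a non-trivial invariant multi-valuation ring via the mutation machinery of Part~\ref{part:3}, and finally to isolate a single invariant valuation ring.

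First I would fix a magic subfield $K_0 \preceq \Kk$; these exist since $\Kk$ is saturated and, being dp-finite, NIP (Definition~\ref{def:magic} and the remarks after it). In the cube-bounded lattice $\Lambda = \Lambda_1$ of type-definable $K_0$-linear subspaces of $\Kk$, set $r := \redrk(\Lambda) \le \dpr(\Kk)$ (Proposition~\ref{prop:genug}), and pick a $K_0$-pedestal $A \in \Lambda$, i.e.\ a group that is the base of a strict $r$-cube; such an $A$ exists because $r$ is finite, and it is type-definable over some small $S_0 \supseteq K_0$. By Theorem~\ref{yeah-main}, $A$ gives a malleable $r$-inflator
\begin{equation*}
  \varsigma \colon \Dir_\Kk(\Kk) \longrightarrow \Dir_{\Hh^\flat}(\qsoc(\Kk/A))
\end{equation*}
whose fundamental ring is $R_\varsigma = \Stab(A) = \{x \in \Kk : xA \subseteq A\}$ and whose fundamental ideal is $I_\varsigma = \{x \in \Stab(A) : \redrk(A/xA) = r\}$.

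Now Theorem~\ref{thm:generate-mvals} tells us that the limiting ring $R_\infty$ of $\varsigma$ (Proposition~\ref{limitors}) is a multi-valuation ring on $\Kk$, an intersection of at most $r$ valuation rings, and that $R_\infty$ is a \emph{proper} subring of $\Kk$ as soon as $I_\varsigma \ne 0$. Since $R_\infty = \bigcup_L R_L$ is assembled canonically from $\varsigma$, hence from $A$ and $K_0$, it is $\Aut(\Kk/S_0)$-invariant. Writing $R_\infty = \Oo_1 \cap \cdots \cap \Oo_k$ for the localizations of $R_\infty$ at its maximal ideals — a canonical decomposition — the group $\Aut(\Kk/S_0)$ permutes the finite set $\{\Oo_1,\ldots,\Oo_k\}$, and each $\Oo_i$ is invariant over some small set; choosing a small $S$ over which a \emph{non-trivial} $\Oo_i$ is invariant (one exists, as $R_\infty \subsetneq \Kk$) yields the desired conclusion — provided $I_\varsigma \ne 0$.

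The crux, then, is to arrange a pedestal $A$ for which $I_\varsigma \ne 0$, i.e.\ to find a non-zero $x \in \Kk$ with $xA \subseteq A$ and $\redrk(A/xA) = r$. Note that $\redrk(A/xA) \le r$ automatically: multiplication by $x \ne 0$ is a monomorphism of $\Kk$, so $A \cong xA$ and $\Kk/xA \cong \Kk/A$, and the latter has reduced rank $r$ because $A$ is a pedestal, while $A/xA$ embeds in $\Kk/xA$ (Proposition~\ref{redrk}.\ref{rr2}); so what must be shown is the reverse inequality for a well-chosen $A$ and $x$. This is exactly where unstability of $\Kk$ is used: since $\Kk$ is unstable there is a non-trivial type-definable group of infinitesimals $I_K$ of full dp-rank (as recalled in the introduction, following \cite{prdf,prdf2}), and taking the pedestal $A$ in relation to $I_K$ one checks that multiplying by an appropriate infinitesimal collapses $A$ by full reduced rank, placing that infinitesimal in $I_\varsigma$; equivalently, $R_\varsigma$ and $I_\varsigma$ coincide with the ring $R_J$ and ideal $I_J$ of \cite{prdf}, Proposition~10.15, whose non-triviality in the unstable dp-finite case is the content of (the relevant part of) Theorems~10.25 and~10.28 of \cite{prdf}. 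I expect this verification of $I_\varsigma \ne 0$ to be the main obstacle; granting it, the construction above delivers the non-trivial $\Aut(\Kk/S)$-invariant valuation ring.
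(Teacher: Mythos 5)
Your proposal is correct and follows essentially the same route as the paper: pick a magic subfield and a (non-zero) $K_0$-pedestal $A$, apply Theorem~\ref{yeah-main} to get the inflator with fundamental ring $\Stab(A)$ and ideal $I$, invoke Proposition~10.15 of \cite{prdf} for $I \ne 0$ (the paper cites 10.15.5, which gives $I \supseteq I_K$, exactly the crux you flag), then Theorem~\ref{thm:generate-mvals} for a non-trivial invariant multi-valuation ring, and finally pass to a small set $S$ fixing each valuation ring in its canonical decomposition. The only detail worth making explicit is that one must choose $A \ne 0$ (the paper does this via Proposition~10.4.1 of \cite{prdf}), since otherwise the fundamental ideal is trivial and the argument gives nothing.
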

\begin{proof}
  Fix a magic subfield $K_0 \preceq \Kk$.  Let $\Lambda = \Lambda_1$
  be the lattice of type-definable $K_0$-linear subspaces of $\Kk$.
  By Proposition~10.4.1 in \cite{prdf}, we can find a non-zero
  pedestal $A \in \Lambda$.\footnote{The point is that (1) pedestals
    certainly exist, (2) $\botrk(\Lambda) = 1$ because the groups of
    infinitesimals $I_K$ are co-initial among nonzero elements of
    $\Lambda$, (3) therefore 0 cannot be a pedestal unless
    $\redrk(\Lambda) = 1$, (4) if $\redrk(\Lambda) = 1$, then
    $\Lambda$ is totally ordered and every group is a pedestal.}
  Let $K$ be a small model containing $K_0$ and type-defining $A$.
  Let $\varsigma$ be the $d$-inflator derived from $A$.  Let $R, I$ be the
  fundamental ring and ideal of $\varsigma$.  By Theorem~\ref{yeah-main},
  \begin{align*}
    R &= \{b \in \Kk : b \cdot A \subseteq A\} \\
    I &= \{b \in R : \redrk(A/b \cdot A) = d\}.
  \end{align*}
  By Proposition~10.15.5 in \cite{prdf}, $I \ne 0$; in fact $I$
  contains the $K$-infinitesimals $I_K$.  Let $R_\infty$ be the
  limiting ring of $\varsigma$.  By Theorem~\ref{thm:generate-mvals},
  $R_\infty$ is a non-trivial multi-valuation ring.  The ring
  $R_\infty$ is $\Aut(\Kk/K)$-invariant by construction.  The ring
  $R_\infty$ can be written as an intersection of incomparable
  valuation rings in a unique way, by Corollary~6.7 in \cite{prdf2}:
  \begin{equation*}
    R_\infty = \Oo_1 \cap \cdots \cap \Oo_n.
  \end{equation*}
  Non-triviality of $R_\infty$ implies non-triviality of the $\Oo_i$.
  The group $\Aut(\Kk/K)$ might permute the $\Oo_i$, but there is a
  finite set $S_0$ such that $\Aut(\Kk/S_0K)$ fixes each $\Oo_i$.
  Then each $\Oo_i$ is a non-trivial $\Aut(\Kk/S)$-invariant valuation
  ring, for $S = S_0 \cup K$.
\end{proof}

For general fields of finite burden, we can prove some weaker statements:
\begin{theorem}\label{thm:fb1}
  Let $\Kk$ be a saturated field of finite burden.  Let $K_0$ be a
  small infinite subfield.  Let $\Lambda$ be the lattice of
  type-definable $K_0$-linear subspaces of $\Kk$, and let
  $\Lambda^{00}$ be the quotient modulo 00-commensurability.  Suppose
  there is some $G \in \Lambda$ and non-zero $\epsilon \in \Kk$ such
  that in the lattice $\Lambda^{00}$,
  \begin{equation*}
    \epsilon \cdot G \le G
  \end{equation*}
  \begin{equation*}
    \redrk(G/\epsilon \cdot G) = \redrk(\Lambda^{00}).
  \end{equation*}
  Then $\Kk$ admits a non-trivial $\Aut(\Kk/S)$-invariant valuation
  ring for some small set $S \subseteq \Kk$.
\end{theorem}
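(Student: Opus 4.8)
The plan is to mimic the proof of Theorem~\ref{thm:dp1}, the only new ingredient being the construction of a suitable $K_0$-pedestal out of the hypothesis (in the dp-finite case this role was played automatically by the infinitesimals $I_K$). Write $r = \redrk(\Lambda^{00})$; by Proposition~\ref{prop:h-lambda-00} we have $\Lambda^{00} = \Sub_{\Hh^{00}}(\Kk)$, so $r = \redrk(\Kk)$ computed in the isogeny category $\Hh^{00}$, which is cube-bounded by Proposition~\ref{prop:genug}. All occurrences of $\redrk$ and $\botrk$ below are taken in $\Hh^{00}$, equivalently in $\Lambda^{00}$ modulo commensurability; note $r \ge 1$ since $\Kk \not\cong 0$ in $\Hh^{00}$.

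The heart of the argument is to produce the pedestal. Since $\redrk(G/\epsilon G) = r$, Remark~\ref{rem:cb-descr} supplies subobjects $\epsilon G \le C \le B \le G$ in $\Lambda^{00}$ (choose representatives in $\Lambda$) such that $B/C$ is a direct sum of $r$ nontrivial objects of $\Hh^{00}$. Then $B/C$ embeds in $\Kk/C$, so $\botrk(\Kk/C) \ge r$, while $\botrk(\Kk/C) \le \redrk(\Kk/C) \le \redrk(\Kk) = r$ by Remark~\ref{botrk-rem} and Proposition~\ref{redrk}; hence $\botrk(\Kk/C) = r$ and $C$ is a $K_0$-pedestal. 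Let $\varsigma$ be the associated $r$-inflator of Theorem~\ref{thm:not-so-useful}, with fundamental ring $R$ and ideal $I$. I claim $\epsilon \in I$. Multiplication by the unit $\epsilon$ is an automorphism $m_\epsilon$ of $\Kk$ in $\Hh^{00}$, so $\epsilon C \le \epsilon G \le C$, giving $\epsilon \in R$; moreover $\epsilon C \le \epsilon B \le \epsilon G \le C$, so $\epsilon B/\epsilon C$ is a subobject of $C/\epsilon C$, and $m_\epsilon$ carries the subquotient $B/C$ isomorphically onto $\epsilon B/\epsilon C$, so the latter is again a direct sum of $r$ nontrivial objects. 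Therefore $\botrk(C/\epsilon C) \ge r$, so $\redrk(C/\epsilon C) \ge r$; combined with $\redrk(C/\epsilon C) \le \redrk(C) \le \redrk(\Kk) = r$ we get equality, and the formula for $I$ in Theorem~\ref{thm:not-so-useful} yields $\epsilon \in I$. In particular $I \ne 0$.

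Now I would turn the crank of the mutation machinery. By Theorem~\ref{thm:generate-mvals}, the limiting ring $R_\infty$ of $\varsigma$ is an intersection of at most $r$ valuation rings on $\Kk$, and since $I \ne 0$ it is a non-trivial multi-valuation ring, i.e.\ a proper subring of $\Kk$. Fixing a small model $K \preceq \Kk$ that contains $K_0$ and type-defines $C$, the inflator $\varsigma$ is $\Aut(\Kk/K)$-invariant, and an automorphism in $\Aut(\Kk/K)$ merely permutes the mutations of $\varsigma$, so the directed union $R_\infty$ is $\Aut(\Kk/K)$-invariant, exactly as in Theorem~\ref{thm:dp1}. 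Writing $R_\infty = \Oo_1 \cap \cdots \cap \Oo_n$ as the unique intersection of pairwise incomparable valuation rings (Corollary~6.7 in \cite{prdf2}), non-triviality of $R_\infty$ forces each $\Oo_i$ to be non-trivial, and $\Aut(\Kk/K)$ permutes the finite set $\{\Oo_1,\dots,\Oo_n\}$; enlarging $K$ by a finite set $S_0$ pinning down each $\Oo_i$, we conclude that $\Oo_1$ is a non-trivial $\Aut(\Kk/S)$-invariant valuation ring for $S = S_0 \cup K$. The one genuinely delicate point is the verification that $\redrk(C/\epsilon C) = r$ in the second paragraph — resolved by transporting the semisimple subquotient $B/C$ inside $C/\epsilon C$ via $m_\epsilon$ — after which the remainder is a transcription of the dp-finite proof.
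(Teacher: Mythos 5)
Your proof is correct and follows essentially the same route as the paper: take the base $C$ of a maximal strict cube inside $[\epsilon G, G]$ (your direct-sum subquotient $B/C$ is just the cube phrased via Remark~\ref{rem:cb-descr}), transport it by multiplication by $\epsilon$ to see $\redrk(C/\epsilon C)=r$, conclude $\epsilon$ lies in the fundamental ideal of the associated inflator via Theorem~\ref{thm:not-so-useful}, and then run the mutation machinery exactly as in Theorem~\ref{thm:dp1}. The paper's version phrases the transport step as an isomorphism of intervals $[H,\epsilon^{-1}H]\cong[\epsilon H,H]$, which is the same observation.
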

\begin{proof}
  Let $d = \redrk(\Lambda^{00})$.  Take a strict $d$-cube in the
  interval $[\epsilon \cdot G, G] \subseteq \Lambda^{00}$.  Let $H \in
  \Lambda$ represent the base of the cube.  Then $H$ is a pedestal in
  $\Lambda^{00}$.  Moreover,
  \begin{equation*}
    \epsilon \cdot G \le H \le G,
  \end{equation*}
  and so
  \begin{equation*}
    \epsilon \cdot H \le \epsilon \cdot G \le H \le G \le \epsilon^{-1} \cdot H.
  \end{equation*}
  By choice of $H$, there is a strict $d$-cube in the interval $[H,G]
  \subseteq [H, \epsilon^{-1} \cdot H]$.  There is an isomorphism of
  intervals
  \begin{equation*}
    [H, \epsilon^{-1} \cdot H] \cong [\epsilon \cdot H, H].
  \end{equation*}
  Therefore $\redrk(H/\epsilon \cdot H) = d = \redrk(\Lambda^{00})$.

  Replacing $G$ with $H$, we may assume that $G$ is a pedestal.  Let
  $\varsigma$ be the associated $d$-inflator.  Let $R, I$ be the
  fundamental ring and ideal, and $R_\infty, I_\infty$ be the limiting
  ring and ideal.  By Theorem~\ref{thm:not-so-useful}
  \begin{equation*}
    \redrk(H/\epsilon \cdot H) = \redrk(\Lambda^{00}) \implies
    \epsilon \in I,
  \end{equation*}
  so $I$ is non-trivial.  By Theorem~\ref{thm:generate-mvals},
  $R_\infty$ is a non-trivial multi-valuation ring.  Proceed as in the
  proof of Theorem~\ref{thm:dp1}.
\end{proof}
\begin{theorem}\label{thm:fb2}
  Let $\Kk$ be a saturated field of finite burden.  Let $K_0$ be a
  small infinite subfield.  Let $\Delta$ be the lattice of definable
  $K_0$-linear subspaces of $\Kk$.  Let $G$ be an element of $\Delta$.
  Suppose that $\epsilon \cdot G \subseteq G$ for some non-zero
  $\epsilon \in \Kk$, and moreover
  \begin{equation*}
    \redrk(G/\epsilon \cdot G) = \redrk(\Delta)
  \end{equation*}
  in the lattice $\Delta$.  Then $\Kk$ admits a non-trivial
  $\Aut(\Kk/S)$-invariant valuation ring for some small set $S
  \subseteq \Kk$.
\end{theorem}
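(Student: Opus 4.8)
The plan is to run the pedestal machine of \S\ref{sec:pedestals} inside the category $\Dd$ of interpretable $K_0$-vector spaces, mimicking the proofs of Theorems~\ref{thm:dp1} and \ref{thm:fb1}. Recall from \S\ref{sec:def-setting} that $\Delta_\bullet \cong \Dir_\Dd(\Kk)$, and that by Proposition~\ref{prop:genug} the object $\Kk \in \Dd$ is cube-bounded, with $\redrk(\Kk) = \redrk(\Delta_1) = \redrk(\Delta) =: d < \infty$. So the faithful exact embedding $\Kk\Vect^f \hookrightarrow \Dd$ (with $\Kk^n \mapsto \Kk^n$) is an instance of the Cube-bounded Configuration of Assumption~\ref{asm:1}; in fact of the Special Cube-bounded Configuration of Assumption~\ref{asm:2}, via the forgetful functor $\Dd \to K_0\Vect$, although malleability is not needed here.

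First I would upgrade $G$ to an honest pedestal. Since $\redrk(G/\epsilon G) = d$, the interval $[\epsilon G, G] \subseteq \Delta$ contains a strict $d$-cube; let $H \in \Delta$ be its base, so $\epsilon G \subseteq H \subseteq G$. Viewing this cube inside $[H,\Kk] \cong \Sub_\Dd(\Kk/H)$ gives $\botrk(\Kk/H) \ge d$, and since $\botrk(\Kk/H) \le \redrk(\Kk/H) \le \redrk(\Kk) = d$ by Proposition~\ref{redrk}, we get $\botrk(\Kk/H) = d$, i.e.\ $H$ is a pedestal in the sense of Definition~\ref{def:pedestal1.5}. From $H \subseteq G$ and $\epsilon G \subseteq H$ we obtain the chain $\epsilon H \subseteq \epsilon G \subseteq H \subseteq G \subseteq \epsilon^{-1} H$; the chosen $d$-cube lies in $[H,G] \subseteq [H,\epsilon^{-1}H]$, and multiplication by $\epsilon$ is a lattice isomorphism $[H,\epsilon^{-1}H] \cong [\epsilon H, H]$, so $\redrk(H/\epsilon H) \ge d$, while $\redrk(H/\epsilon H) \le \redrk(\Kk/\epsilon H) \le \redrk(\Kk) = d$. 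Hence $\epsilon H \subseteq H$ and $\redrk(H/\epsilon H) = d$. Replacing $G$ by $H$, we may assume $G$ is a pedestal with $\epsilon G \subseteq G$ and $\redrk(G/\epsilon G) = d$.

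Next I would take $\varsigma$ to be the $d$-inflator derived from the pedestal $G$ via Theorem~\ref{thm:actually-important}. By Proposition~\ref{prop:r-i} its fundamental ring and ideal are
\[
R = \{x \in \Kk : xG \subseteq G\} = \Stab(G), \qquad I = \{x \in R : \redrk(G/xG) = d\}.
\]
By the previous paragraph $\epsilon \in I$, so $I \neq 0$. By Theorem~\ref{thm:generate-mvals}, the limiting ring $R_\infty$ of $\varsigma$ is therefore a non-trivial multi-valuation ring on $\Kk$ — an intersection of at most $d$ valuation rings, and a proper subring of $\Kk$.

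Finally I would extract the invariant valuation ring exactly as in the proof of Theorem~\ref{thm:dp1}. The subspace $G$ is definable over some finite parameter set $S_1$, and the construction of $\varsigma$ and of every mutation $\varsigma^L$ depends only on $G$ and on the line $L \le \Kk^m$; an automorphism fixing $S_1$ merely permutes the lines, hence permutes the fundamental rings $R_L$, so $R_\infty = \bigcup_L R_L$ is $\Aut(\Kk/S_1)$-invariant. Writing $R_\infty = \Oo_1 \cap \cdots \cap \Oo_n$ as a finite intersection of pairwise incomparable valuation rings — uniquely, by Corollary~6.7 in \cite{prdf2} — non-triviality of $R_\infty$ forces non-triviality of each $\Oo_i$, and uniqueness of the decomposition shows $\Aut(\Kk/S_1)$ permutes the $\Oo_i$, so some finite $S_0$ makes $\Aut(\Kk/S_0 \cup S_1)$ fix each $\Oo_i$; then every $\Oo_i$ is a non-trivial $\Aut(\Kk/S)$-invariant valuation ring, with $S = S_0 \cup S_1$. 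The whole argument is essentially a transcription of the proof of Theorem~\ref{thm:fb1}, simplified because $\Delta = \Delta^{00}$ (Remark~\ref{rem:del00}) eliminates any passage to commensurability classes; the only genuine input is that $\Kk$ has finite reduced rank in $\Dd$, which is Proposition~\ref{prop:genug}, so I expect no real obstacle beyond bookkeeping.
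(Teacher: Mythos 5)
Your proposal is correct and takes exactly the route the paper intends: the paper's proof of this theorem is literally ``Similar to Theorem~\ref{thm:fb1},'' and you have faithfully transcribed that argument to the definable category $\Dd$, where Remark~\ref{rem:del00} removes the commensurability bookkeeping. The only cosmetic point is that the invariance base $S$ should also absorb $K_0$ (as in the proof of Theorem~\ref{thm:dp1}, where $S = S_0 \cup K$ with $K \supseteq K_0$), since the directory structure and the mutations are defined relative to $K_0$; this does not affect smallness of $S$.
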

\begin{proof}
  Similar to Theorem~\ref{thm:fb1}.
\end{proof}
\begin{remark}
  In Theorem~\ref{thm:fb2}, it may be possible to show that the
  invariant valuation rings are, in fact \emph{definable}.
\end{remark}

We are also interested in the question of whether mutation terminates
in finitely many steps.  Recall from (\cite{prdf2}, Theorem~8.11) that
a dp-finite field $\Kk$ has \emph{valuation type} if it satisfies the
following equivalent conditions:
\begin{itemize}
\item The canonical topology on $\Kk$ is a V-topology.
\item For any small $K \preceq \Kk$, the $K$-infinitesimals $I_K$ are
  the maximal ideal of a valuation ring $\Oo_K$ on $\Kk$.
\item For some small $K \preceq \Kk$, the $K$-infinitesimals $I_K$
  contain a non-zero ideal of a multi-valuation ring on $\Kk$.
\item Some ``bounded'' group $J \subseteq (\Kk,+)$ contains a non-zero
  ideal of a multi-valuation ring on $\Kk$.
\end{itemize}
In \cite{prdf2}, we showed how to complete the classification of
dp-finite fields assuming the (unlikely) conjecture that unstable
dp-finite fields have valuation type.
\begin{lemma}[Pedestal criterion]
  \label{true-pc}
  Let $\Kk$ be a saturated dp-finite field.  Let $K_0$ be a magic
  subfield.  Let $A$ be a $K_0$-pedestal.
  \begin{enumerate}
  \item If $A$ contains a non-zero multi-valuation ideal, then $\Kk$
    has valuation type.
  \item If $\Stab(A) := \{x \in \Kk : x \cdot A \subseteq A\}$
    contains a non-zero multi-valuation ideal, then $\Kk$ has
    valuation type.
  \end{enumerate}
\end{lemma}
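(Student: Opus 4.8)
The plan is to reduce everything to the statement that $\Kk$ has valuation type if and only if the infinitesimals $I_K$ contain a non-zero ideal of a multi-valuation ring on $\Kk$, which is one of the equivalent conditions recalled above (\cite{prdf2}, Theorem~8.11). Part~(1) is then precisely the first point of the Pedestal criterion of \S\ref{sec:has-pc}, which is essentially Theorem~8.11 of \cite{prdf2}, so I would just cite it; when $A = 0$ it holds vacuously, since $\{0\}$ has no non-zero subset. For part~(2) I would first handle the generic case $A \ne 0$ by reducing it to part~(1). The key preliminary observation is that, since the groups $I_K$ are coinitial among the non-zero elements of $\Lambda$ (as in the footnote to Theorem~\ref{thm:dp1}) and $I_K$ only shrinks as $K$ grows, one may choose the small model $K$ large enough that $A$ is type-definable over $K$ and $I_K \subseteq A$.

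Now suppose $A \ne 0$ and $M \subseteq \Stab(A)$ is a non-zero $R'$-submodule of $\Kk$ for some multi-valuation ring $R'$. I would consider the additive subgroup $M \cdot I_K \le (\Kk,+)$ generated by the products $mi$ with $m \in M$ and $i \in I_K$. Because $R'M \subseteq M$ we get $R'(M \cdot I_K) \subseteq M \cdot I_K$, so $M \cdot I_K$ is again an $R'$-submodule of $\Kk$; it is non-zero since $M$ and $I_K$ are; and since $M \subseteq \Stab(A)$ while $I_K \subseteq A$, we have $M \cdot I_K \subseteq M \cdot A \subseteq A$. Thus $A$ itself contains a non-zero multi-valuation ideal, and part~(1) applies. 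This is the whole content of part~(2) in the case $A \ne 0$.

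It remains to treat part~(2) when $A = 0$; this is the promised subtlety, since then $\Stab(A) = \Kk$ makes the hypothesis automatic while the reduction above breaks down because $I_K \not\subseteq A$. Here I would argue directly from the fact that $0$ is a $K_0$-pedestal: by the analysis behind Proposition~10.4.1 of \cite{prdf} (recalled in the footnote to Theorem~\ref{thm:dp1}), coinitiality of the $I_K$'s forces $\botrk(\Lambda) = 1$, so $0$ can be the base of a strict $\redrk(\Lambda)$-cube only if $\redrk(\Lambda) = 1$, i.e.\ only if $\Lambda$ is totally ordered. In that case, for every $a \in \Kk^\times$ the type-definable $K_0$-subspace $a I_K$ is comparable to $I_K$, so $a I_K \subseteq I_K$ or $a^{-1} I_K \subseteq I_K$; hence $\Stab(I_K)$ is a valuation ring on $\Kk$ and $I_K$ is a non-zero ideal in it, so $\Kk$ has valuation type. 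The main obstacle here is not anything in the argument above but the result being invoked, namely the Pedestal criterion / Theorem~8.11 of \cite{prdf2}, which is what actually converts a multi-valuation module structure on the large group $A$ into one on the infinitesimals; the remaining work is just the boundary case $A = 0$ and the bookkeeping relating the two parts.
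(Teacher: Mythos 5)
Your proof is correct and follows the same overall decomposition as the paper's. Two remarks on part (1): the paper's one-line justification is that $K_0$-pedestals are \emph{bounded} (Remark~8.6 in \cite{prdf2}), so that the fourth equivalent condition for valuation type (``some bounded subgroup contains a non-zero multi-valuation ideal'') applies directly to $A$; you should lean on that rather than on the ``Pedestal criterion'' of \S\ref{sec:has-pc}, which is only an informal preview of the present lemma and would make the citation circular. For part (2) with $A \ne 0$, the paper's reduction is leaner than yours: it multiplies $M$ by a single non-zero element $a_0 \in A$, giving the non-zero $R'$-submodule $M \cdot a_0 \subseteq \Stab(A) \cdot A \subseteq A$, with no need to enlarge $K$ so that $I_K \subseteq A$; your variant with $M \cdot I_K$ is equally valid. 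For $A = 0$ both arguments first deduce $\redrk(\Lambda) = \botrk(\Lambda) = 1$, hence that $\Lambda$ is totally ordered. From there you conclude directly that $\Stab(I_K)$ is a valuation ring by the comparability argument ($aI_K \subseteq I_K$ or $a^{-1}I_K \subseteq I_K$), so that $I_K$ is itself a non-zero valuation ideal; the paper instead observes that $I_{K_0}$ is a non-zero pedestal, invokes Theorem~\ref{yeah-main} and Proposition~\ref{prop:1-fold} to see that $\Stab(I_{K_0})$ is a valuation ring, and reduces back to the $A \ne 0$ case. Your ending is the more elementary and self-contained of the two; the paper's keeps the argument uniform with the inflator formalism. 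No gaps.
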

\begin{proof}
  The first point holds because $K_0$-pedestals are bounded
  (Remark~8.6 in \cite{prdf2}).  For the second point, suppose $R$ is
  a multi-valuation ring on $K$, $M$ is a non-zero $R$-submodule of
  $K$, and $\Stab(A) \supseteq M$.  There are two cases.  If $A \ne
  0$, take a non-zero $a_0 \in A$.  Then
  \begin{equation*}
    M \cdot a_0 \subseteq \Stab(A) \cdot A \subseteq A,
  \end{equation*}
  so $A$ contains the non-zero $R$-submodule $M \cdot a_0$.  Therefore
  $\Kk$ has valuation type by the first point.

  Otherwise, $A = 0$.  Let $\Lambda$ be the lattice of type-definable
  $K_0$-linear subspaces of $\Kk$, and let $d = \redrk(\Lambda)$.  By
  definition of pedestal, $d = \botrk([A,\Kk])$.  But $A = 0$, so the
  interval $[A,\Kk]$ is all of $\Lambda$.  Then $d = \botrk(\Lambda)$.
  By Proposition~10.1.6 in \cite{prdf}, $\botrk(\Lambda) = 1$.  Thus
  $d = 1$ and $\Lambda$ is totally ordered.  Therefore every element
  of $\Lambda$ is a pedestal other than $\Kk$.  In particular,
  $I_{K_0}$ is a pedestal.  If $\varsigma$ is the 1-inflator derived from
  $I_{K_0}$, then $\Stab(I_{K_0}) = R_\varsigma$ and $R_\varsigma$ is a
  valuation ring, by Theorem~\ref{yeah-main} and
  Proposition~\ref{prop:1-fold}.  Therefore $\Stab(I_{K_0})$ contains
  a non-zero multi-valuation ideal.  Replacing $A$ with $I_{K_0}$, we
  reduce to the $A \ne 0$ case.
\end{proof}

\begin{theorem}\label{thm:dp2}
  Let $\Kk$ be a saturated dp-finite field.  Let $K_0$ be a
  magic subfield.  At least one of the following holds:
  \begin{enumerate}
  \item $\Kk$ is stable.
  \item $\Kk$ has valuation type.
  \item \label{dp22} There is a $K_0$-pedestal $A$ such that the
    associated inflator $\varsigma$ has the following properties:
    \begin{itemize}
    \item $\varsigma$ is malleable
    \item $\varsigma$ is not weakly multi-valuation type
      (Definition~\ref{def-almost}).
    \item Every mutation of $\varsigma$ is malleable
    \item No mutation of $\varsigma$ is weakly multi-valuation type.
    \end{itemize}
    Moreover, all $K_0$-pedestals $A$ have this property.
  \end{enumerate}
\end{theorem}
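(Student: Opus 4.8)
The plan is to establish the trichotomy by assuming that $\Kk$ is unstable (otherwise we are in case~(1)) and that $\Kk$ does not have valuation type (otherwise we are in case~(2)), and then showing that case~(3) holds, uniformly for every $K_0$-pedestal. Writing $r = \redrk(\Lambda)$ and using that $\Lambda \cong \Lambda^{00}$ (since $\Kk$ is NIP and $K_0$ is magic) is cube-bounded with $r = \redrk(\Lambda^{00}) < \infty$ by Proposition~\ref{prop-cs}, a strict $r$-cube exists in $\Lambda$ by the very definition of reduced rank, so $K_0$-pedestals exist. It then suffices to fix an arbitrary $K_0$-pedestal $A$ and verify the four bulleted properties for its associated inflator $\varsigma$; the "moreover" clause and the existential statement both follow.

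By Theorem~\ref{yeah-main}, the inflator $\varsigma$ attached to $A$ is an $r$-inflator, it is malleable, and its fundamental ring is $R_\varsigma = \Stab(A) = \{x \in \Kk : x \cdot A \subseteq A\}$. Proposition~\ref{prop:presmal} then yields that every mutation of $\varsigma$ is malleable, which supplies two of the four properties (the one for $\varsigma$ itself being the trivial mutation along $\Kk^1 \le \Kk^1$). For the other two, the key input is Proposition~\ref{prop:still}: a mutation of $\varsigma$ along a line $L = \Kk\cdot(a_1,\dots,a_m) \le \Kk^m$ is equivalent to the $r$-inflator derived from $A' = a_1^{-1}A \cap \cdots \cap a_m^{-1}A$, and $A'$ is itself a $K_0$-pedestal. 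Here one first reduces to the case that all $a_i$ are nonzero, moving any zero coordinates to the end by a permutation matrix (Lemma~\ref{lem:the-same}) and discarding them using $\oplus$-compatibility of $\xi^L$; and one uses that, in the category $\Hh$, being a pedestal in the sense of the pedestal machine coincides with being a $K_0$-pedestal, since $\Sub_\Hh(\Kk^n) = \Lambda_n$. Consequently the fundamental ring of the mutation is $\Stab(A')$, again by Theorem~\ref{yeah-main} applied to $A'$.

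Now suppose, toward a contradiction, that some mutation $\varsigma'$ of $\varsigma$ is weakly of multi-valuation type, this including $\varsigma$ itself as the trivial mutation. By the previous paragraph $\varsigma'$ arises from a $K_0$-pedestal $A'$ with fundamental ring $\Stab(A')$, and "weakly multi-valuation type" says exactly that $\Stab(A')$ contains a non-zero multi-valuation ideal on $\Kk$. The Pedestal Criterion, Lemma~\ref{true-pc}(2), applied to the $K_0$-pedestal $A'$, then forces $\Kk$ to have valuation type, contradicting our standing assumption. Hence no mutation of $\varsigma$, and in particular $\varsigma$ itself, is weakly of multi-valuation type. This completes the verification of case~(3) for $A$, and since $A$ was arbitrary it proves the full statement.

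The argument is essentially a reassembly of earlier results, so I do not expect a serious obstacle; the one delicate point is the bookkeeping in the middle paragraph — confirming that a mutation of the inflator attached to a $K_0$-pedestal is again the inflator attached to a $K_0$-pedestal, so that the Pedestal Criterion is applicable in the form stated, and handling the degenerate features of mutation (zero coordinates in $L$, the trivial mutation). The unstable hypothesis is used only through the Pedestal Criterion; in the stable case one genuinely lands in alternative~(1), as an algebraically closed field shows — there the only $K_0$-pedestal is $0$ and the associated inflator is the identity, whose fundamental ring is all of $\Kk$, so it is trivially weakly of multi-valuation type and case~(3) fails.
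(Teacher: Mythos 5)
Your proposal is correct and follows essentially the same route as the paper: assume unstable and not of valuation type, use Theorem~\ref{yeah-main} to identify $R_\varsigma = \Stab(A)$ and get malleability, apply the Pedestal Criterion (Lemma~\ref{true-pc}) to rule out weak multi-valuation type, and use Proposition~\ref{prop:still} to propagate both properties to all mutations. The extra bookkeeping you supply (existence of pedestals, zero coordinates in $L$, matching the abstract notion of pedestal in $\Hh$ with ``$K_0$-pedestal,'' and the choice between Proposition~\ref{prop:presmal} and re-applying Theorem~\ref{yeah-main} to $A'$ for malleability of mutations) is sound and only fills in details the paper leaves implicit.
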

\begin{proof}
  Suppose $\Kk$ is unstable and not of valuation type.  Let $d$ be the
  reduced rank of the lattice $\Lambda$ of type-definable $K_0$-linear
  subspaces of $\Kk$.  If $A$ is a pedestal and $\varsigma$ is the
  associated $d$-inflator, then
  \begin{equation*}
    \Stab(A) = R_\varsigma.
  \end{equation*}
  By the Pedestal Criterion (Lemma~\ref{true-pc}), it follows that
  $R_\varsigma$ is \emph{not} weakly multi-valuation type.  Meanwhile,
  $\varsigma$ is malleable by Theorem~\ref{yeah-main}.

  If $\varsigma'$ is any mutation of $\varsigma$, then $\varsigma'$ comes from a
  pedestal $A'$, by Proposition~\ref{prop:still}.  Therefore $\varsigma'$
  continues to be malleable and continues to fail to be weakly
  multi-valuation type.
\end{proof}
\begin{definition}
  An inflator $\varsigma$ on a field $K$ is \emph{wicked} if $\varsigma$ is
  malleable and no mutation of $\varsigma$ is weakly multi-valuation
  type.
\end{definition}
By Propositions~\ref{prop:iter} and \ref{prop:presmal}, any mutation
of a wicked inflator is wicked.  Theorem~\ref{thm:dp2} arguably
reduces the analysis of dp-finite fields to the algebraic analysis of
wicked inflators.  In a subsequent paper, we will demonstrate this
idea by analyzing wicked 2-inflators on fields of characteristic 0.
While we do not obtain a proof of the Shelah or henselianity
conjectures for fields of rank 2, we do prove the following useful
facts:
\begin{theorem}[to appear in \cite{prdf4}]
  Let $\Kk$ be a saturated unstable dp-finite field of characteristic
  0.
  \begin{itemize}
  \item The canonical topology on $\Kk$ is definable.
  \item There is a unique definable V-topology on $\Kk$.
  \end{itemize}
\end{theorem}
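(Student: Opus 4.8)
The plan is to run, at arbitrary dp-rank $d = \redrk(\Lambda)$, the same kind of structural analysis that \cite{prdf4} carries out for $d = 2$, and to extract from it the two topological conclusions. Fix a magic subfield $K_0 \preceq \Kk$, a $K_0$-pedestal $A$, and the associated malleable $d$-inflator $\varsigma$ of Theorem~\ref{yeah-main}. There are two cases. If $\Kk$ has valuation type, then by the Pedestal Criterion (Lemma~\ref{true-pc}) the canonical topology $\mathcal{T}_{\mathrm{can}}$ is itself a V-topology, and both claims reduce to the known machinery for dp-finite valued fields: definability of $\mathcal{T}_{\mathrm{can}}$ follows from canonical henselian valuations (\cite{JK}, \cite{halevi-hasson-jahnke}), and uniqueness of the definable V-topology from the general fact that an NIP field carries at most one definable V-topology. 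So assume $\Kk$ is not of valuation type; by Theorem~\ref{thm:dp2} every $K_0$-pedestal then gives a \emph{wicked} inflator, none of whose mutations is weakly of multi-valuation type.

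For the existence half I would first produce a candidate V-topology. By Theorem~\ref{thm:generate-mvals} the limiting ring $R_\infty$ of $\varsigma$ is a non-trivial multi-valuation ring $\Oo_1 \cap \dots \cap \Oo_n$, and by Theorem~\ref{thm:dp1} each $\Oo_i$ is a non-trivial $\Aut(\Kk/S)$-invariant valuation ring for a small $S$; fix $\Oo := \Oo_1$ and let $\mathcal{T}$ be its V-topology. Since $I_K \subseteq I_\varsigma \subseteq R_\infty$, the fractional ideals of $\Oo$ are obtained from $I_K$ by scaling, so $\mathcal{T}$ is coarser than $\mathcal{T}_{\mathrm{can}}$ and basic $\mathcal{T}$-neighbourhoods of $0$ are $\Aut(\Kk/S')$-invariant for small $S'$; the content is to upgrade this to outright definability. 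Here characteristic $0$ should be used to obtain a clean normal form: the plan is to show that a wicked $d$-inflator in characteristic $0$ forces $(\Kk,+,\cdot,I_K)$ to be elementarily equivalent to $(L,+,\cdot,J)$ where $L$ carries a valuation $\val$ and $d-1$ commuting derivations $\partial_1, \dots, \partial_{d-1}$ (or a single iterative derivation of length $d-1$), with $J = \{x : \val(x) > 0 \text{ and } \val(\partial_i x) \ge 0 \text{ for all } i\}$. This generalizes the rank-$2$ description, and from it one reads off that the definable V-topology is exactly the $\val$-topology, that $\val$ is recovered from the definable structure, and hence that both $\mathcal{T}$ and $\mathcal{T}_{\mathrm{can}}$ are definable with $\mathcal{T}_{\mathrm{can}}$ refining $\mathcal{T}$.

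Uniqueness would then follow by combining this with general facts: two definable V-topologies on an NIP field are comparable, and every definable V-topology on $\Kk$ must be refined by $\mathcal{T}_{\mathrm{can}}$ (since $\mathcal{T}_{\mathrm{can}}$ is the finest topology admitting a small-invariant basis of subgroups) while itself refining the $\val$-topology (which is already as coarse as a non-trivial definable V-topology on $\Kk$ can be, given the normal form). Hence the $\val$-topology is the unique definable V-topology.

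The step I expect to be the real obstacle is precisely the extraction of the normal form for wicked $d$-inflators when $d > 2$. The mutation calculus of \S\ref{sec:mutation}, and the interaction of the fundamental ring with the flattening $\Cc^\flat$, become much more delicate at higher rank; there is no known clean classification of wicked $d$-inflators, and since Dream~\ref{dream-1} is expected to fail one cannot hope for a fully uniform statement. The realistic target is therefore to prove only as much structure as is needed to locate \emph{some} definable V-topology (not necessarily the ``intended'' valuation that the Valuation Conjecture predicts), which is what survives at all ranks in char.\ $0$; making this precise, and checking that the differential/valuation data one extracts is genuinely interpretable in $(\Kk,+,\cdot,I_K)$ uniformly in $d$, is where the bulk of the work lies.
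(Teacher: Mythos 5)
This statement is not proved in the paper at all: it is an announcement of a result deferred to the sequel \cite{prdf4}, stated immediately after the reduction (Theorem~\ref{thm:dp2}) of the dp-finite analysis to the study of wicked inflators, and the surrounding text makes clear that the intended hypothesis is dp-rank $2$ (the paper explicitly says the analysis is of wicked $2$-inflators, and that generalizing the second bullet to higher ranks would yield the Shelah and henselianity conjectures). So there is no proof here to compare against, and your proposal cannot be checked against one.

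More importantly, your proposal does not close the gap either; it relocates it. The entire mathematical content of the theorem lives in the step you yourself flag as ``the real obstacle'': extracting a normal form $(L,+,\cdot,J)$ with a valuation and derivation(s) from a wicked inflator. That classification is precisely what \cite{prdf4} is announced to do for $d=2$ in characteristic $0$, and nothing in the present paper (mutation calculus, malleability, the pedestal machine) supplies it; asserting that it ``should'' generalize to $d-1$ commuting derivations for arbitrary $d$ is speculation that, if true, would prove the Shelah conjecture, which the paper states is open. Two subsidiary steps are also unjustified as stated: (i) you invoke ``an NIP field carries at most one definable V-topology'' as a general fact, but uniqueness of the definable V-topology is exactly the second bullet being proved, not a known black box (what is available in general is only that distinct V-topologies are independent, plus approximation-theorem arguments that still require work in this setting); and (ii) the passage from the $\Aut(\Kk/S)$-invariant valuation rings $\Oo_i$ of Theorem~\ref{thm:dp1} to a \emph{definable} topology is the whole difficulty --- invariance over a small set does not give definability, and your sketch offers no mechanism for this upgrade other than the unproven normal form. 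As a proof, therefore, the proposal is incomplete at its central step; as a research plan it is broadly consistent with the program the paper describes, but for general $d$ it overreaches what the paper claims even conjecturally.
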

If the second point could be generalized to higher ranks, the Shelah
and henselianity conjectures for dp-finite fields would follow.

\section{Mutation examples} \label{sec:mut4}
\subsection{$\Rr$ and $\mathbb{C}$}
Consider the 2-inflator
\begin{align*}
  \varsigma : \Dir_{\mathbb{C}}(\mathbb{C}) &\to \Dir_{\mathbb{R}}(\mathbb{R}) \times \Dir_{\mathbb{R}}(\mathbb{R}) \\
  V & \mapsto (V + \overline{V}, V \cap \overline{V}).
\end{align*}
of Example~\ref{fiona}.  In Example~\ref{fiona-ring}, we saw that the
fundamental ring and ideal are $R = \Rr$ and $I = 0$.  Note that $\Rr$
is \emph{not} a multi-valuation ring on $\mathbb{C}$, by
Proposition~6.2.3 in \cite{prdf2}.

Let $\varsigma'$ be the mutation along $\mathbb{C} \cdot (1,i)$.
Then for any subspace $V \subseteq \mathbb{C}^n$, we have
\begin{equation*}
  \varsigma'(V) = \varsigma(\{(\vec{x},i\vec{x})^T : \vec{x} \in V\}).
\end{equation*}
The first component of $\varsigma'(V)$ is
\begin{equation*}
  W_1 := \{(\vec{x},i\vec{x})^T : \vec{x} \in V\} + \{(\vec{y},-i\vec{y})^T : \vec{y} \in \overline{V}\}
\end{equation*}
Thus $(\vec{a},\vec{b})^T \in W_1$ if and only if
\begin{align*}
  \vec{x} = \frac{1}{2}(\vec{a} - i\vec{b}) \in V \\
  \vec{y} = \frac{1}{2}(\vec{a} + i\vec{b}) \in \overline{V}.
\end{align*}
If $\vec{a}, \vec{b} \in \Rr^n$, these are equivalent, and so the
first component is essentially the image of $V$ under the forgetful
functor $\Dir_{\mathbb{C}}(\mathbb{C}) \to \Dir_\Rr(\mathbb{C})$.  On the
other hand, the second component of $\varsigma'(V)$ is
\begin{equation*}
  W_2 := \{(\vec{x},i\vec{x})^T : \vec{x} \in V\} \cap \{(\vec{y},-i\vec{x})^T : \vec{x} \in \overline{V}\}
\end{equation*}
This vanishes, since $\vec{x} = \vec{y}$ and $i\vec{x} = -i\vec{y}$
together imply $\vec{x} = \vec{y} = 0$.

So $\varsigma'$ is equivalent to the forgetful 2-inflator
\begin{equation*}
  \Dir_{\mathbb{C}}(\mathbb{C}) \to \Dir_\Rr(\mathbb{C}).
\end{equation*}
The new fundamental ring and ideal are $\mathbb{C}$ and 0.

\subsection{Another corrupted example}
Let $K$ be a field with two independent valuations, both of which have
residue field $k$, as in Examples~\ref{gerald}, \ref{gerald-ring}.  Let
$\varsigma$ be the 2-inflator
\begin{align*}
  \varsigma : \Dir_K(K) &\to \Dir_k(k) \times \Dir_k(k) \\
  \varsigma(V) &= (\varsigma^1(V) + \varsigma^2(V), \varsigma^1(V) \cap \varsigma^2(V))
\end{align*}
be the 2-inflator of Example~\ref{gerald}, where
$\varsigma^i$ is the 1-inflator from the $i$th valuation.

In Example~\ref{gerald-ring}, we saw that the fundamental ring and
ideal are exactly
\begin{align*}
  R &= \{x \in \Oo_1 \cap \Oo_2 : \res_1(a) = \res_2(a)\} \\
  I &= \{x \in \Oo_1 \cap \Oo_2 : \res_1(a) = \res_2(a) = 0\}.
\end{align*}
The ring $R$ is \emph{not} a multi-valuation ring on $K$.  Indeed, if
we choose $a \in K$ such that $\res_1(a) = 1$ and $\res_2(a) = 2$,
then one can check directly that none of the elements $a, 1/(a-q)$ are
in $R$, for any $q \in \Qq$.  So the criterion of
Lemma~\ref{algebra-case} fails to hold.

Fix such an $a$ and mutate along the line $\Theta_a = K \cdot (1,a)
\subseteq K^2$.  This yields a new mutation $\varsigma' : \Dir_K(K) \to
\Dir_k(B_1) \times \Dir_k(B_2)$, where
\begin{equation*}
  (B_1,B_2) = \varsigma(\Theta_a)) = (k^2,0).
\end{equation*}
Thus $B_2 = 0$, and what we really have is a map
\begin{equation*}
  \varsigma' : \Dir_K(K) \to \Dir_k(k^2).
\end{equation*}
The map is given by
\begin{equation*}
  V \mapsto \varsigma(V \otimes \Theta_a) = \varsigma^1(V \otimes
  \Theta_a) + \varsigma^2(V \otimes \Theta_a).
\end{equation*}
since the second component of $\varsigma(V \otimes \Theta_a)$
vanishes.  Now for $j = 1, 2$,
\begin{align*}
  \varsigma^j(V \otimes \Theta_a) &=
  \varsigma^j(\{(x_1,ax_1,x_2,ax_2,\ldots,x_n,ax_n) : (x_1,\ldots,x_n)
  \in V\}) \\
  &= \{(y_1,jy_1,y_2,jy_2,\ldots,y_n,jy_n) : (y_1,\ldots,y_n) \in \varsigma^j(V)\}
\end{align*}
since $a$ specializes to $j$ under $\varsigma^j$.  Thus,
\begin{equation*}
  \varsigma'(V) =
  \{(y_1+z_1,y_1+2z_1,y_2+z_2,y_2+2z_2,\ldots,y_n+z_n,y_n+2z_n) :
  \vec{y} \in \varsigma^1(V), \vec{z} \in \varsigma^2(V)\}.
\end{equation*}
After changing coordinates on $k^2$, this is just
\begin{equation*}
  V \mapsto \{(y_1,z_1,y_2,z_2,\ldots,y_n,z_n) : \vec{y} \in
  \varsigma^1(V), \vec{z} \in \varsigma^2(V)\}.
\end{equation*}
This is the composition
\begin{equation*}
  \Dir_K(K) \to \Dir_k(k) \times \Dir_k(k) \to \Dir_k(k^2)
\end{equation*}
where the first map is the product of $\varsigma^1$ and $\varsigma^2$,
and the second map is the map
\begin{equation*}
  (V,W) \mapsto V \oplus W
\end{equation*}
from Remark~\ref{rem:whatever}.

In particular, the fundamental ring and ideal are $\Oo_1 \cap \Oo_2$
and $\mm_1 \cap \mm_2$.  So again we see that mutation has more or
less undone the corrupting influence of the map
\begin{equation*}
  (V,W) \to (V + W, V \cap W).
\end{equation*}

\subsection{Endless mutation}\label{sec:ivan}
We give an example (promised in \cite{prdf}, Speculative
Remark~10.10.5) of an inflator which fails to have multi-valuation
type after any finite amount of mutation.

Let $\Gamma$ be the ordered abelian group $\Zz[1/3]$ (the ring
generated by $1/3$, with the order coming from the embedding into
$\Qq$ or $\Rr$).  Note that $(\Gamma,\le,+)$ is dp-minimal, and
$\Gamma/2\Gamma \cong \Zz/2\Zz$.

Let $K$ be the Hahn field $\mathbb{C}((t^\Gamma))$.  It is dp-minimal
as a pure field (by Theorem~1.1 in \cite{arxiv-myself}).  Moreover,
the valuation is definable from the pure field structure.\footnote{If
  $x \in K$ then $\sqrt{x}$ exists if and only if $\val(x) \in
  2\Gamma$.  (This can be seen using henselianity and the fact that
  the residue field $\mathbb{C}$ is algebraically closed.)  For any
  non-square $a \in K$, let $B_a$ be the definable set of $x \in K$
  such that $\sqrt{1 + x^2/a}$ exists.  Then $B_a = \{x \in K : \val(x)
  > \val(a)/2\}$.  The ``stabilizer'' $\{y \in K : y \cdot B_a
  \subseteq B_a\}$ is exactly the valuation ring, independent of the
  choice of $a$.}

Let $F$ be the subfield $\mathbb{C}((t^{2\Gamma}))$ inside $K$.  Then
$[K : F] = 2$, and $F$ is also dp-minimal.  The expansion of $K$ by a
predicate for $F$ has dp-rank 2, because of the bi-interpretation with
the pure field $F$.

The set $\{1,t\}$ is an $F$-linear basis of $K$.  So $K$ is an
internal direct sum of $F$ and $F \cdot t$.  Let $g, h : K \to K$ be
the projections onto $F$ and $F \cdot t$.  Then for any Hahn series $x
\in K$,
\begin{itemize}
\item $g(x)$ is the ``even component'' of $x$---the terms with even exponents
\item $h(x)$ is the ``odd component'' of $x$---the terms with odd
  exponents.
\end{itemize}
The functions $g$ and $h$ are definable in the rank-2 structure
$(K,F)$.  Note that
\begin{align*}
  g(xy) &= g(x)g(y) + h(x)h(y) \\
  h(xy) &= g(x)h(y) + h(x)g(y) \\
  \val(x) &= \min(\val(g(x)),\val(h(x))).
\end{align*}
Let $K_0 = \Qq$.  Let $R$ be the following definable $\Qq$-linear
subspace of $K$:
\begin{equation*}
  R = \{x \in K : \val(g(x)) \ge 0, ~ \val(h(x)) \ge 2\}.
\end{equation*}
Then $R$ is a pedestal in the definable picture $\Delta_\bullet$,
because it is the intersection of the following two incomparable
elements of $\Delta_1$:
\begin{equation*}
  \{x \in K : \val(g(x)) \ge 0\} \cap \{ x \in K : \val(h(x)) \ge 2\}.
\end{equation*}
\begin{claim}\label{r-is-r}
  $R$ is a unital ring, and
  \begin{equation*}
    R = \Stab(R) = \{x \in K : x \cdot R \subseteq R\}.
  \end{equation*}
\end{claim}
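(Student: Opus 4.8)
The plan is to prove the two assertions in order: first that $R$ is a unital subring of $K$, and then to observe that $\Stab(R) = R$ follows formally for any unital subring.

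For the ring structure, the only delicate point is closure under multiplication; everything else is immediate. The set $R$ is an additive subgroup because $g$ and $h$ are additive and valuation satisfies the ultrametric inequality, and $1 \in R$ since $g(1) = 1$ has valuation $0$ while $h(1) = 0$. To see $R$ is closed under multiplication, I would invoke the identities recorded just above the claim,
\begin{align*}
  g(xy) &= g(x)g(y) + h(x)h(y), \\
  h(xy) &= g(x)h(y) + h(x)g(y),
\end{align*}
together with additivity of $\val$ on products and the ultrametric inequality on sums. If $x, y \in R$, then $\val(g(x)), \val(g(y)) \ge 0$ and $\val(h(x)), \val(h(y)) \ge 2$, so $\val(g(x)g(y)) \ge 0$ and $\val(h(x)h(y)) \ge 4$, giving $\val(g(xy)) \ge 0$; similarly $\val(g(x)h(y)) \ge 2$ and $\val(h(x)g(y)) \ge 2$, giving $\val(h(xy)) \ge 2$. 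Hence $xy \in R$, so $R$ is a unital subring of $K$.

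For the second assertion: since $R$ is closed under multiplication, $x \cdot R \subseteq R$ for every $x \in R$, so $R \subseteq \Stab(R)$; conversely, if $x \in \Stab(R)$ then $x = x \cdot 1 \in x \cdot R \subseteq R$ because $1 \in R$, so $\Stab(R) \subseteq R$. Thus $\Stab(R) = R$. The only thing resembling an obstacle here is bookkeeping in the multiplication step --- tracking the asymmetric conditions ($\val \ge 0$ on the even component, $\val \ge 2$ on the odd component) through the cross-terms, and noticing that a product of two odd components contributes to the even component with doubled, hence still nonnegative, valuation. No deeper difficulty arises; in particular the valuation identity $\val(x) = \min(\val(g(x)),\val(h(x)))$ is not needed for this claim, and the specific constant $2$ in the definition of $R$ plays no role beyond being nonnegative.
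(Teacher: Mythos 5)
Your proof is correct and follows essentially the same route as the paper: closure under multiplication via the identities for $g(xy)$ and $h(xy)$ together with the ultrametric inequality (yielding the bounds $\min(0+0,2+2)=0$ and $\min(0+2,2+0)=2$), the observation that $1\in R$ since $h(1)=0$, and the formal two-line argument that $\Stab(R)=R$ for any unital subring.
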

\begin{proof}
  If $x, y \in R$ then
  \begin{align*}
    \val(g(xy)) &= \val(g(x)g(y) + h(x)h(y)) \\ & \ge \min(\val(g(x))
    + \val(g(y)), \val(h(x)) + \val(h(y))) \\ & \ge \min(0 + 0, 2 + 2)
    = 0,
  \end{align*}
  and similarly
  \begin{align*}
    \val(h(xy)) &= \val(g(x)h(y) + h(x)g(y)) \\ & \ge \min(\val(g(x))
    + \val(h(y)), \val(h(x)) + \val(g(y))) \\ & \ge \min(0 + 2, 2 + 0)
    = 2.
  \end{align*}
  Thus $x, y \in R$, proving that $R$ is a ring.  Also $1 \in R$
  because $\val(g(1)) = 0 \ge 0$ and $\val(h(1)) = +\infty > 2$.  So
  $R$ is a unital ring.  Then $R \subseteq \Stab(R)$.  On the other
  hand, if $a \in \Stab(R)$ then $1 \in R \implies a \cdot 1 \in R$,
  so $\Stab(R) \subseteq R$.
\end{proof}
Let $\varsigma$ be the inflator associated to $R$ by the pedestal machine
(Theorem~\ref{thm:actually-important}), using the definable picture
$K\Vect^f \to \mathcal{D}$, rather than the usual type-definable
picture $K\Vect^f \to \mathcal{H}$.  By Proposition~\ref{prop:r-i}
and Claim~\ref{r-is-r}, the fundamental ring is $R$.
\begin{lemma}\label{lem:r-to-o}
  The integral closure of $R$ is the valuation ring $\Oo := \{x \in K
  : \val(x) \ge 0\}$.
\end{lemma}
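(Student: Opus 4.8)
The plan is to prove the two inclusions separately, the easy one coming for free from the fact that $\Oo$ is a valuation ring on $K$.

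First, $R \subseteq \Oo$: for $x \in R$ we have $\val(x) = \min(\val(g(x)),\val(h(x))) \ge \min(0,2) = 0$, using the identity $\val(x) = \min(\val(g(x)),\val(h(x)))$ recorded just before the lemma. Since $\Oo$ is a valuation ring on $K$ with $\Frac(\Oo) = K$, it is integrally closed in $K$; hence the integral closure of $R$ in $K$ is contained in the integral closure of $\Oo$, which is $\Oo$ itself. This gives one inclusion.

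For the reverse inclusion it suffices to show that every $x \in \Oo$ is integral over $R$. The key preliminary observation is that $\Oo_F := \{y \in F : \val(y) \ge 0\}$ is contained in $R$: for $y \in F$ one has $g(y) = y$ and $h(y) = 0$, so the conditions $\val(g(y)) = \val(y) \ge 0$ and $\val(h(y)) = +\infty \ge 2$ both hold. Next, since the support of $g(x)$ (resp.\ of $h(x)$) is a subset of the support of $x$, we get $\val(g(x)) \ge \val(x) \ge 0$ and $\val(h(x)) \ge \val(x) \ge 0$. Now decompose $x = g(x) + h(x)$. Here $g(x) \in F$ with $\val(g(x)) \ge 0$, so $g(x) \in \Oo_F \subseteq R$ and in particular $g(x)$ is integral over $R$. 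On the other hand $h(x) \in F \cdot t$, so $h(x)^2 \in F$ with $\val(h(x)^2) = 2\val(h(x)) \ge 0$, whence $h(x)^2 \in \Oo_F \subseteq R$; thus $h(x)$ is a root of the monic polynomial $Y^2 - h(x)^2 \in R[Y]$ and is integral over $R$. Since the integral closure of $R$ in $K$ is a subring and contains both $g(x)$ and $h(x)$, it contains $x = g(x)+h(x)$. Hence $\Oo$ is contained in the integral closure of $R$, and combined with the first inclusion this proves the lemma.

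The argument is essentially routine; the only step that requires a moment's thought is the handling of the ``odd part'' $h(x)$. The constraint $\val(h(x)) \ge 2$ built into the definition of $R$ is destroyed by squaring, but squaring carries $h(x)$ into $F$, where $R$ already contains \emph{all} of $\Oo_F$, and that is precisely what makes $h(x)$ integral over $R$ anyway.
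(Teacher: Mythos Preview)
Your proof is correct and follows essentially the same approach as the paper: show $R \subseteq \Oo$ so that the integral closure sits inside the integrally closed ring $\Oo$, then for $x \in \Oo$ split $x = g(x) + h(x)$, observe $g(x) \in R$ directly (since $g(x) \in F \cap \Oo$), and show $h(x)$ is integral over $R$ because $h(x)^2 \in F \cap \Oo \subseteq R$. The paper additionally pauses to verify $\Frac(R) = K$ so that ``the integral closure of $R$'' is unambiguous, but since you explicitly work with the integral closure in $K$ throughout, this point does not affect your argument.
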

\begin{proof}
  We first observe that $\Frac(R) = K$, so there is no ambiguity in
  the integral closure.  Indeed, given any element $x \in K^\times$,
  choose $y \in \Oo$ such that $\val(y) > 2$ and $\val(yx) > 2$.  Then
  $y$ and $yx$ are both in $R$, because of the general identity
  \begin{equation}
    \label{even-odd-split}
    \val(z) = \min(\val(g(z)),\val(h(z)).
  \end{equation}
  Thus $x \in \Frac(R)$.
  
  Let $R'$ be the integral closure of $R$.  As $\Oo$ is integrally
  closed and contains $R$, we must have $\Oo \supseteq R'$.
  Conversely, suppose $x \in \Oo$.  We claim $x \in R'$.  By the
  identity (\ref{even-odd-split}), both $g(x)$ and $h(x)$ are in $\Oo$.
  Now $g(x) \in F$ and $h$ vanishes on $F$, so $h(g(x)) = 0$.
  Then \[g(x) \in \Oo \implies g(x) \in R \implies g(x) \in R'.\] On
  the other hand, $h(x)^2$ is also in $F$, so $h(h(x)^2) = 0$ and
  similarly
  \begin{equation*}
    h(x) \in \Oo \implies h(x)^2 \in \Oo \implies h(x)^2 \in R
    \implies h(x) \in R'.
  \end{equation*}
  Then the fact that both $g(x)$ and $h(x)$ are in $R'$ implies that
  their sum $g(x) + h(x) = x$ is in $R'$.  This shows $\Oo \subseteq
  R'$, so $\Oo = R'$.
\end{proof}
\begin{proposition}\label{prop:endless}
  If $L$ is any line in $K^m$, then the mutation of $\varsigma$
  along $L$ fails to be of multi-valuation type.
\end{proposition}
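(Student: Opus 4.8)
The plan is to show that the fundamental ring of \emph{any} mutation of $\varsigma$ fails to be integrally closed in $K$, which suffices since multi-valuation rings are integrally closed in their fraction field. By Proposition~\ref{prop:still}, applied in the definable picture $K\Vect^f \to \mathcal{D}$, every mutation of $\varsigma$ along a line $L = K \cdot (a_1,\ldots,a_m) \le K^m$ is the $d$-inflator associated to the pedestal
\[
  A' = a_1^{-1}R \cap \cdots \cap a_m^{-1}R,
\]
and by Proposition~\ref{prop:r-i} its fundamental ring is $\Stab(A') = \{x \in K : x A' \subseteq A'\}$. So I need to show: for every such $A'$, the ring $\Stab(A')$ is not a multi-valuation ring on $K$.

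\textbf{Step 1: reduce to $\Stab(A') \ne \Oo$.} First observe $R \subseteq \Stab(A') \subseteq \Oo$. The left inclusion is automatic (if $xR \subseteq R$ then $xA'\subseteq A'$). For the right inclusion, note $A' \ne 0$ because $R \supseteq \{x : \val(x) \ge 2\}$, so $A' \supseteq \{x : \val(x) \ge 2 + \max_i(-\val(a_i))\}$; picking a nonzero $\epsilon \in A'$ we get $\Stab(A') \cdot \epsilon \subseteq A' \subseteq a_1^{-1}\Oo$, so the subring $\Stab(A')$ of $K$ has valuations bounded below, hence lies in $\Oo$ since $\Gamma = \Zz[1/3]$ is archimedean. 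Now $\Frac(\Stab(A')) = K$ (as $R \subseteq \Stab(A')$ and $\Frac(R)=K$), and by Lemma~\ref{lem:r-to-o} the integral closure of $R$ in $K$ is $\Oo$; since $R \subseteq \Stab(A') \subseteq \Oo$ and $\Oo$ is integrally closed, the integral closure of $\Stab(A')$ in $K$ is exactly $\Oo$. A multi-valuation ring is a finite intersection of valuation rings, hence integrally closed in $K$, so if $\Stab(A')$ were a multi-valuation ring we would have $\Stab(A') = \Oo$. Thus it remains to prove $\Stab(A') \ne \Oo$, i.e.\ that $A'$ is not an $\Oo$-submodule of $K$.

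\textbf{Step 2: the parity obstruction.} Here I would use the decomposition $K = F \oplus tF$ and the formulas $g(ax) = g(a)g(x) + h(a)h(x)$, $h(ax) = g(a)h(x)+h(a)g(x)$. Every $\Oo$-submodule of $K$ is a valuation ball $\{x : \val(x) \ge c\}$ (as $\val\colon K \to \Gamma$ is surjective), and such a ball is the internal direct sum of its slices $\{y \in F : \val(y) \ge c\}$ and $\{tz : \val(tz)\ge c\}$, because $F$ and $tF$ carry series with disjoint (even, resp.\ odd) supports, so $\val(y+tz) = \min(\val(y),\val(tz))$. A direct computation for each intersectand $a_i^{-1}R$ then gives $A' \cap F = \{y \in F : \val(y) \ge c_F\}$ and $A' \cap tF = \{tz : \val(tz) \ge c_{tF}\}$, where, with $U^* = \min_i \val(g(a_i)) \in 2\Gamma \cup \{\infty\}$ and $V^* = \min_i \val(h(a_i)) \in (1+2\Gamma)\cup\{\infty\}$ (not both $\infty$, since each $a_i \ne 0$),
\[
  c_F = \max(-U^*,\ 2 - V^*), \qquad c_{tF} = \max(-V^*,\ 2 - U^*).
\]
If $A'$ were an $\Oo$-module, both slices would be balls at the same valuation $c$, forcing $c_F = c_{tF}$. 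But this never happens: if one of $U^*, V^*$ is $\infty$ a one-line check gives $|c_F - c_{tF}| = 2$; and if both are finite, then $c_F \in \{-U^*,\ 2-V^*\}$ and $c_{tF} \in \{-V^*,\ 2-U^*\}$, while none of the four possible cross-equalities $-U^* = -V^*$, $-U^* = 2-U^*$, $2-V^* = -V^*$, $2-V^* = 2-U^*$ can hold --- the first and last because $U^* \in 2\Gamma$ and $V^* \in 1+2\Gamma$ lie in different cosets of $2\Gamma$, the middle two because $2 \ne 0$. Hence $c_F \ne c_{tF}$ always, so $A'$ is not an $\Oo$-module, $\Stab(A') \subsetneq \Oo$, and $\Stab(A')$ is not a multi-valuation ring. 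Since every mutation of $\varsigma$ has fundamental ring of this form, no mutation of $\varsigma$ is of multi-valuation type.

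\textbf{Main difficulty.} The argument is short once it is set up; the only real care needed is in Step~2. Since $2\Gamma$ is dense in $\Gamma = \Zz[1/3]$, the phrase ``$A'$ is a valuation ball'' must be handled via cuts rather than attained valuations, but in fact only the equality of $c_F$ and $c_{tF}$ \emph{as real numbers} is required, and that fails robustly. Conceptually, the heart of the proof --- and the only place where the specific choice of $R$ and of $\Gamma$ with $\Gamma/2\Gamma \cong \Zz/2\Zz$ enters --- is the observation that the ``even'' and ``odd'' defects of $R$ sit in different cosets of $2\Gamma$, and that multiplying by an element of $K^\times$ merely translates these defects by its valuation, which can never equalize them; this is exactly why the map $(V,W)\mapsto (V+W, V\cap W)$ built into $\varsigma$ cannot be undone by mutation in this example.
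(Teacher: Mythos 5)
Your proof is correct and follows essentially the same route as the paper: reduce via integral closedness and Lemma~\ref{lem:r-to-o} to showing that the pedestal $A'$ of Proposition~\ref{prop:still} is not an $\Oo$-module, then compute $A'$ explicitly and exploit the even/odd parity mismatch. The paper normalizes $L = K\cdot(1,a_2,\ldots,a_m)$ with $a_i \in \Oo$ and exhibits explicit witnesses $x \in F$, $y \in tF$ with $\gamma < \val(x) < \val(y) < 2$, which is just the special case $c_F = \gamma < 2 = c_{tF}$ of your slice computation, so the difference is purely presentational.
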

\begin{proof}
  Without loss of generality $L = K \cdot (1,a_2,a_3,\ldots,a_m)$,
  where the $a_i$ are in $\Oo$.  Let $\varsigma'$ be the mutation of
  $\varsigma$ along $L$, and let $R'$ be the fundamental ring.  We will
  show that $R'$ is not a multi-valuation ring.  By
  Proposition~\ref{prop:growth}, $R' \supseteq R$.  Assume for the
  sake of contradiction that $R'$ is a multi-valuation ring.  Then
  $R'$ is integrally closed, hence contains the valuation ring $\Oo$,
  by Lemma~\ref{lem:r-to-o}.

  By Proposition~\ref{prop:still}, $\varsigma'$ is the 2-inflator obtained
  from the pedestal
  \begin{equation*}
    A' = R \cap a_2^{-1}R \cap \cdots \cap a_m^{-1}R.
  \end{equation*}
  By Proposition~\ref{prop:r-i},
  \begin{equation*}
    R' = \{x \in K : xA' \subseteq A'\}.
  \end{equation*}
  By the assumption that $R'$ is integrally closed, it follows that
  $R' \supseteq \Oo$ and so
  \begin{equation}
    \Oo \cdot A' \subseteq A' \label{yeah-right}
  \end{equation}
  Note that $x \in K$ belongs to $A'$ if and only if all of the
  following are true:
  \begin{enumerate}
  \item $\val(g(x)) \ge 0$
  \item \label{arr2} $\val(h(x)) \ge 2$
  \item \label{irr3} For every $2 \le i \le m$,
    \begin{equation*}
      \val(g(a_ix)) \equiv \val(g(a_i)g(x) + h(a_i)h(x)) \stackrel{?}{\ge} 0
    \end{equation*}
  \item \label{arr4} For every $2 \le i \le m$,
    \begin{equation*}
      \val(h(a_ix)) \equiv \val(g(a_i)h(x) + h(a_i)g(x)) \stackrel{?}{\ge} 2.
    \end{equation*}
  \end{enumerate}
  We may drop (\ref{irr3}) since it follows from the others---recall
  that the $a_i$ are in $\Oo$, so $\val(g(a_i)) \ge 0$ and
  $\val(h(a_i)) \ge 0$.  Assuming (\ref{arr2}), the term $g(a_i)h(x)$
  in (\ref{arr4}) always has valuation at least 2, hence is
  irrelevant.  So we see that $x \in A'$ if and only if the following
  conditions hold
  \begin{enumerate}
  \item $\val(g(x)) \ge 0$
  \item $\val(h(x)) \ge 2$.
  \item For every $2 \le i \le m$,
    \begin{equation*}
      \val(h(a_i)g(x)) \ge 2.
    \end{equation*}
  \end{enumerate}
  Thus
  \begin{equation*}
    A' = \{x \in K : \val(g(x)) \ge \gamma \textrm{ and } \val(h(x)) \ge 2\},
  \end{equation*}
  where $\gamma$ is the maximum of the set
  \begin{equation*}
    \{0\} \cup \{2 - \val(h(a_i)) : 2 \le i \le m\}.
  \end{equation*}
  Note that $\val(h(a_i)) > 0$, because $a_i \in \Oo$ and
  $\val(h(a_i))$ is odd.  Thus $\gamma < 2$.

  Because the value group $\Gamma$ is dense, we may find $x \in F$ and
  $y \in tF$ such that
  \begin{equation*}
    \gamma < \val(x) < \val(y) < 2.
  \end{equation*}
  Then $x \in A'$, $y \notin A'$ and $y/x \in \Oo$, contradicting
  (\ref{yeah-right}).
\end{proof}
Proposition~\ref{prop:endless} says that no amount of mutation
will convert $\varsigma$ into a 2-inflator of
multi-valuation type.

On the other hand, $\varsigma$ is \emph{already} of weak
multi-valuation type, because for $x \in K$,
\begin{equation*}
  \val(x) > 2 \iff \min(\val(g(x)),\val(h(x))) > 2 \implies x \in R,
\end{equation*}
and so $R$ contains a valuation ideal.

\begin{remark}
  I believe the 2-inflator of Example~\ref{harriet} also
  has ``endless mutation,'' in the sense that no mutation is of
  multi-valuation type.  The problem is that any line $L$ in $K^m$
  only involves finitely many $\sqrt{p}$, and therefore fails to undo
  the twisting by $\tau_i$ for $|i| \gg 0$.  Unlike the example of
  this section, Example~\ref{harriet} fails to be malleable.
\end{remark}

\appendix
\part{Appendices}\label{part:app}
\section{Review of abelian categories}\label{app:ab}
For a textbook account of abelian categories, see Chapter 8 of
\cite{cat-sheaves}.

A \emph{pre-additive category} is a category with the extra structure
of an abelian group on each hom-set $\Hom(A,B)$, such that for any objects $A, B, C$ the
composition operation
\begin{equation*}
  \Hom(A,B) \times \Hom(B,C) \to \Hom(A,C)
\end{equation*}
is bilinear.  If $K$ is a field, a $K$-linear pre-additive category is
defined the same way, using $K$-vector spaces rather than abelian
groups.  Note that a $K$-linear pre-additive category is a
pre-additive category with extra structure, and a pre-additive
category is a category with extra structure.

Let $\Cc$ be a pre-additive category.  If $A, B$ are two objects in
$\Cc$, there is a correspondence between the following three types of
data:
\begin{enumerate}
\item Product diagrams
  \begin{equation*}
    \xymatrix{ & X \ar[dl] \ar[dr] & \\ A & & B.}
  \end{equation*}
\item Co-product diagrams
  \begin{equation*}
    \xymatrix{ A \ar[dr] & & B \ar[dl] \\ & X. & }
  \end{equation*}
\item \label{bip} Diagrams of the form
  \begin{equation*}
    \xymatrix{A \ar[dr]^{\iota_A} & & B \ar[dl]_{\iota_B} \\ & X
      \ar[dl]_{\pi_A} \ar[dr]^{\pi_B} & \\ A & & B}
  \end{equation*}
  such that
  \begin{align*}
    \pi_A \circ \iota_A &= id_A \\
    \pi_A \circ \iota_B &= 0 \in \Hom(B,A) \\
    \pi_B \circ \iota_A &= 0 \in \Hom(A,B) \\
    \pi_B \circ \iota_B &= id_B.
  \end{align*}
\end{enumerate}
Thus binary products are equivalent to binary coproducts.  The
configurations of (\ref{bip}) are sometimes called ``biproducts.''

More generally, $n$-ary products are equivalent to $n$-ary coproducts
for any finite $n \ge 0$.  A pre-additive category $\Cc$ is
\emph{additive} if all finite products/coproducts exist.  The
existence of 0-ary and 2-ary products/coproducts is
sufficient.\footnote{For $n = 0$, a nullary product is a terminal
  object, and a nullary coproduct is an initial object.}

If $\Cc$ is an additive category, the pre-additive category structure
is determined by the underlying pure category structure
(\cite{cat-sheaves}, Theorem~8.2.14).  The analogue for $K$-linear
additive categories fails.

If $f : A \to B$ is a map in a pre-additive category, a \emph{kernel}
of $f$ is an equalizer of $f$ and the zero morphism $0 \in \Hom(A,B)$.
Cokernels are defined similarly.  An equalizer of two parallel maps
$f, g : A \to B$ is equivalent to a kernel of $f - g$.  Thus, a
pre-additive category has all equalizers if and only if it has all
kernels.  The same holds for coequalizers and cokernels.

A pre-additive category $\Cc$ is \emph{pre-abelian} if it has all
finite limits.  Equivalently, $\Cc$ is preabelian if $\Cc$ is additive
and has all kernels and cokernels.

If $\Cc$ is pre-abelian, one defines the \emph{image} of any morphism
$f : A \to B$ to be the kernel of the cokernel of $f$.  The
\emph{coimage} is defined dually as the cokernel of the kernel.  There
is a natural factorization of $f$ into
\begin{equation*}
  A \to \coim(f) \to \img(f) \to B.
\end{equation*}
One says that $\Cc$ is \emph{abelian} if the natural map $\coim(f) \to
\img(f)$ is an isomorphism for every $f$.

For $\Cc$ pre-abelian, it turns out that $\Cc$ is abelian if and only
if every monomorphism is a kernel and every epimorphism is a cokernel.

The standard example of an abelian category is the category $R\Mod$ of
(left) $R$-modules for any ring $R$, possibly noncommutative.  For $R
= \Zz$, this is the category $\Ab$ of abelian groups.  For $R = K$,
this is the category $K\Vect$ of $K$-vector spaces.

The category of free $\Zz$-modules of finite rank is an example of a
pre-abelian category that fails to be abelian.  Kernels exist because
submodules of free $\Zz$-modules are free.  Cokernels exist because
the category is self-dual.  On the other hand, the coimage/image
factorization of the inclusion $2\Zz \hookrightarrow \Zz$ turns out to
be
\begin{equation*}
  2\Zz \stackrel{=}{\longrightarrow} 2\Zz \stackrel{\not\cong}{\hookrightarrow} \Zz
  \stackrel{=}{\longrightarrow} \Zz.
\end{equation*}

\subsection{Additive and exact functors}
If $\Cc, \Dd$ are two pre-additive categories, a functor $F : \Cc \to
\Dd$ is \emph{additive} if the induced map on Hom-sets
\begin{equation*}
  \Hom_\Cc(A,B) \to \Hom_\Dd(F(A),F(B))
\end{equation*}
is a group homomorphism.  If $\Cc, \Dd$ are $K$-linear pre-additive
categories, one can analogously say that $F$ is a $K$-linear functor
if the map on Hom-sets is $K$-linear.

If $F : \Cc \to \Dd$ is an additive functor between \emph{additive}
categories, then $F$ preserves finite products/coproducts.  The
analogue for $K$-linear functors holds a fortiori.  Conversely, if $F
: \Cc \to \Dd$ is a product-preserving functor between two additive
categories, then $F$ is additive.  The analogue for $K$-linear
functors fails, since being $K$-linear is a strictly stronger
condition than being $\Zz$-linear.

Let $F : \Cc \to \Dd$ be an additive functor between abelian
categories.  One says that $F$ is \emph{left exact} if it satisfies
the following equivalent conditions:
\begin{itemize}
\item $F$ preserves kernels
\item $F$ preserves finite limits
\end{itemize}
Dually, one says that $F$ is \emph{right exact} if it satisfies the
following equivalent conditions:
\begin{itemize}
\item $F$ preserves cokernels
\item $F$ preserves finite colimits 
\end{itemize}
Finally, one says that $F$ is \emph{exact} if it satisfies the
following equivalent conditions:
\begin{itemize}
\item $F$ is left exact and right exact
\item $F$ preserves exact sequences
\item $F$ preserves short exact sequences.
\end{itemize}
If $\Cc, \Dd$ are general categories, a functor $F : \mathcal{C} \to
\mathcal{D}$ is said to be
\begin{itemize}
\item \emph{faithful} if for any $A, B \in \mathcal{C}$, the map
  \begin{equation}
    \Hom_\Cc(A,B) \to \Hom_{\mathcal{D}}(F(A),F(B)) \label{apply-f}
  \end{equation}
  is injective.
\item \emph{fully faithful} if for any $A, B \in \mathcal{C}$, the map
  (\ref{apply-f}) is bijective.
\item \emph{conservative} if for any $f : A \to B$ in $\mathcal{C}$,
  $f$ is an isomorphism if and only if $F(f)$ is an isomorphism.
\end{itemize}
If $\mathcal{C}, \mathcal{D}$ are abelian categories, and $F$ is an
exact functor, then the following are equivalent, by
(\cite{cat-sheaves}, Exercise~8.25).
\begin{itemize}
\item $F$ is faithful.
\item For every $A \in F$, $A = 0$ if and only if $F(A) = 0$.
\item $F$ is conservative.
\end{itemize}
\begin{remark} \label{out-of-finvec}
  Let $K\Vect^f$ be the category of finite-dimensional $K$-vector
  spaces.  If $F : K\Vect^f \to \mathcal{D}$ is an additive
  functor, then
  \begin{enumerate}
  \item $F$ is automatically exact, because every short exact sequence
    in $K\Vect^f$ splits.
  \item If $F(K) = 0$, then $F(X) = 0$ for all $X \in K\Vect^f$,
    because $X \cong K^{\dim(X)}$.
  \item If $F(K) \ne 0$, then $F$ is conservative and faithful.
  \end{enumerate}
\end{remark}
\begin{fact}[Mitchell embedding theorem]
  Every $K$-linear abelian category admits a fully faithful exact
  $K$-linear functor to a category $R\Mod$ of $R$-modules, for some
  $K$-algebra $R$.
\end{fact}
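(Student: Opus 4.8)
The plan is to bootstrap off the classical Freyd--Mitchell embedding theorem, carrying the $K$-linear structure along at each step; that structure turns out to propagate for free, so the only real content is the classical one. A preliminary reduction: every abelian category we ever need to embed is essentially small --- it is the neighborhood of a single object (\S\ref{sec:neighborhoods}), or one of $\Hh$, $\Hh^{00}$ from \S\ref{sec:tdef-setting}--\ref{sec:tdef00-setting} --- so I would assume $\Cc$ small; in general $\Cc$ is the filtered union of its small $K$-linear abelian subcategories, and an embedding of each of those is all our applications require. Granting smallness, the first step is to embed $\Cc$ into a $K$-linear Grothendieck category: take $\mathcal{G} := \Ind(\Cc)$, the ind-completion reviewed in Appendix~\ref{app:ind} (see also \cite{cat-sheaves}). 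Since $\Cc$ is small, $K$-linear and abelian, $\mathcal{G}$ is again $K$-linear and abelian, it has small filtered colimits and these are exact, the objects of $\Cc$ form a small generating family, and the canonical functor $\iota : \Cc \to \mathcal{G}$ is $K$-linear, fully faithful and exact; thus $\mathcal{G}$ is a $K$-linear Grothendieck category. (One could equally take $\mathcal{G} = \Lex(\Cc^{op}, K\Vect)$.)

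The second step is Gabriel--Popescu. Fix a generator $G$ of $\mathcal{G}$ and set $R := \End_{\mathcal{G}}(G)$, which is a $K$-algebra since $\mathcal{G}$ is $K$-linear. The Gabriel--Popescu theorem then provides that $h_G := \Hom_{\mathcal{G}}(G,-) : \mathcal{G} \to R\Mod$ is fully faithful and admits an exact left adjoint $\ell$ with $\ell \circ h_G \cong \id$. Composing, $h_G \circ \iota : \Cc \to R\Mod$ is $K$-linear, fully faithful and left exact.

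The remaining --- and, I expect, only genuinely delicate --- step is to upgrade left-exactness to exactness without destroying full faithfulness. The functor $h_G \circ \iota$ need not be right exact, since the generator $G$ of $\mathcal{G}$ need not be projective (for instance, when $\Cc$ is the category of finitely presented modules over a non-Noetherian $K$-algebra, the image of $\Cc$ under $h_G\circ\iota$ fails to be closed under quotients). To repair this I would run the classical Freyd--Mitchell bookkeeping: bring in an injective cogenerator $E$ of $\mathcal{G}$ --- equivalently a projective generator of $\mathcal{G}^{op}$ --- for which the contravariant functor $\Hom_{\mathcal{G}}(-,E)$ is exact and faithful, and play it against Step~2 applied both to $\Cc$ and to $\Cc^{op}$ (again a small $K$-linear abelian category), using that a faithful exact functor reflects exactness, so as to extract a single $K$-linear functor $\Cc \to R\Mod$ that is at once fully faithful and exact. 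Throughout, every auxiliary object --- functor categories, endomorphism rings, (co)generators, the ind-completion, the localization adjunction --- inherits its $K$-linear structure automatically, so the $K$-linear refinement costs nothing beyond the classical statement. Steps~1 and 2 are formal consequences of the theory of ind-objects and Grothendieck categories recalled in the appendices; the hard part is the fullness-plus-exactness bookkeeping of Step~3, and for that I would simply invoke the classical Freyd--Mitchell argument (\cite{cat-sheaves}; see also Freyd's \emph{Abelian Categories}).
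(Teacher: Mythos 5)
The paper does not prove this statement at all --- it records it as a Fact and cites Theorem~9.6.10 of \cite{cat-sheaves} --- and your outline (ind-completion to a Grothendieck category, Gabriel--Popescu, then the classical Freyd--Mitchell bookkeeping, with the $K$-linear structure carried along for free) is exactly the standard argument given in that reference, so you are taking essentially the same route. Your remark that the statement really requires $\Cc$ to be (essentially) small, which the paper's phrasing elides but its applications all satisfy, is a correct and worthwhile caveat.
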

See Theorem~9.6.10 in \cite{cat-sheaves} for a proof.

\subsection{Subobjects}
If $A$ is an object in an arbitrary category $\Cc$, a ``subobject'' of
$A$ is an equivalence class of monomorphisms $X \hookrightarrow A$.
There is a partial order on subobjects, induced by the
``factors-through'' preorder on monomorphisms.  In the concrete
setting of $R\Mod$, subobjects correspond exactly to $R$-submodules.

In an abelian category, the subobject poset $\Sub(A)$ is always a
bounded modular lattice, with the bounded lattice operations given as
follows:
\begin{itemize}
\item The intersection of $X \hookrightarrow A$ and $Y \hookrightarrow
  A$ is the pullback $X \times_A Y \hookrightarrow A$.
\item The join of $X \hookrightarrow A$ and $Y \hookrightarrow A$ is
  the image of $X \oplus Y \to A$.
\item The bottom element is the zero monomorphism $0 \hookrightarrow A$.
\item The top element is the identity $id_A : A \to A$.
\end{itemize}
The fact that these operations satisfy the axioms of bounded modular
lattices can be checked using the Mitchell embedding theorem to reduce
to the case of $R\Mod$.

Because the subobject lattice $\Sub_\Cc(A)$ is modular, it makes sense
to talk about the length of $A$, and to talk about $A$ having
``finite-length.''

Also, if $F : \Cc \to R\Mod$ is a Mitchell embedding, or even a
faithful exact functor, then there is an embedding of bounded lattices
\begin{equation}\label{too-bad}
  \Sub_\Cc(A) \hookrightarrow \Sub_R(F(A))
\end{equation}
for any $A \in \Cc$.  Thus, one can view a subobject of $A$ as an
$R$-submodule of $F(A)$.  In general, the map (\ref{too-bad}) is not
surjective, so not all $R$-submodules of $F(A)$ correspond to
subobjects of $A$.\footnote{If the map were always surjective, it would
  always be an isomorphism of lattices.  This would force
  $\Sub_\Cc(A)$ to be a complete lattice.  However, this cannot hold
  in general, essentially because the class of abelian categories is
  an elementary class, and the class of complete lattices is not.  For
  example, in the category of finite abelian groups, for
  each $n$ we can find an object $A_n$ (namely $\Zz/2^{n-1}\Zz$) whose
  subobject lattice is the totally ordered set of size $n$.  In an
  elementary extension, we can find an object $A$ for which $\Sub(A)$
  is a pseudo-finite total order.  Pseudo-finite total orders are
  \emph{never} complete lattices.}

\subsection{Serre quotients}\label{serre-quot}
Let $\Cc$ be an abelian category.  A \emph{Serre subcategory} is a
full subcategory $\mathcal{S} \subseteq \Cc$ containing 0, such that
for any short exact sequence,
\begin{equation*}
  0 \to X \to Y \to Z \to 0
\end{equation*}
in $\Cc$, the following holds:
\begin{equation*}
  Y \in \mathcal{S} \iff (X \in \mathcal{S} \textrm{ and } Y \in \mathcal{S}).
\end{equation*}
This implies:
\begin{itemize}
\item If $X, Y \in \mathcal{S}$, then $X \oplus Y \in \mathcal{S}$
\item If $X$ is a subobject or quotient object of $Y$, then $Y \in
  \mathcal{S} \implies X \in \mathcal{S}$.
\item If $X \cong Y$, then $X \in \mathcal{S} \iff Y \in \mathcal{S}$.
\end{itemize}
If $\mathcal{S}$ is a Serre subcategory, let $W_{\mathcal{S}}$ be the
set of morphisms $f : X \to Y$ in $\Cc$ such that the kernel and
cokernel are both in $\mathcal{S}$.  The class $W_{\mathcal{S}}$ has a
calculus of left fractions and right fractions (\cite{cat-sheaves},
Exercise~8.12).   The \emph{Serre quotient} is the localization
\begin{equation*}
  \Cc/\mathcal{S} := \Cc[W_{\mathcal{S}}^{-1}].
\end{equation*}
See \S7.1 of \cite{cat-sheaves} for information on localization.
(There, a calculus of fractions is called a multiplicative system.)

By Exercises~8.11-8.12 in \cite{cat-sheaves}, the Serre quotient
$\Cc/\mathcal{S}$ is itself an abelian category, and the localization
functor
\begin{equation*}
  \Cc \to \Cc/\mathcal{S}
\end{equation*}
is exact.  If $X, Y \in \mathcal{C}$, then the morphisms $X \to Y$ in
$\Cc/\mathcal{S}$ can be represented by diagrams
\begin{equation*}
  X \longleftarrow X' \longrightarrow Y
\end{equation*}
where $X' \to X$ is in $W_{\mathcal{S}}$, or alternatively by diagrams
\begin{equation*}
  X \longrightarrow Y' \longleftarrow Y,
\end{equation*}
where $Y \to Y'$ is in $W_{\mathcal{S}}$.
\begin{lemma}\label{serro}
  If $A$ is an object in $\Cc$, then $A \cong 0$ holds in
  $\Cc/\mathcal{S}$ if and only if $A \in \mathcal{S}$.
\end{lemma}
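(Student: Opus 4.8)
The plan is to unwind the definition of the Serre quotient $\Cc/\mathcal{S} = \Cc[W_{\mathcal{S}}^{-1}]$, where $W_{\mathcal{S}}$ is the class of morphisms whose kernel and cokernel both lie in $\mathcal{S}$, using the calculus of fractions on $W_{\mathcal{S}}$ recorded in the excerpt (\cite{cat-sheaves}, Exercises~8.11--8.12).

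For the easy direction, suppose $A \in \mathcal{S}$. First I would observe that the unique morphism $A \to 0$ lies in $W_{\mathcal{S}}$: its kernel is $A$, which is in $\mathcal{S}$ by hypothesis, and its cokernel is $0$, which is in $\mathcal{S}$ by definition of a Serre subcategory. Since the localization functor $Q : \Cc \to \Cc/\mathcal{S}$ sends every morphism of $W_{\mathcal{S}}$ to an isomorphism, $Q(A \to 0)$ is an isomorphism, so $A \cong 0$ in $\Cc/\mathcal{S}$.

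For the converse, suppose $A \cong 0$ in $\Cc/\mathcal{S}$. Because $0$ is a zero object of $\Cc/\mathcal{S}$, this is equivalent to $\mathrm{id}_A = 0$ in $\Hom_{\Cc/\mathcal{S}}(A,A)$; that is, $Q$ identifies the two parallel morphisms $\mathrm{id}_A, 0_A : A \to A$. Now I would invoke the cancellation criterion for a localization at a calculus of fractions: using the calculus of right fractions for $W_{\mathcal{S}}$, two parallel morphisms $f, g : A \to A$ in $\Cc$ satisfy $Q(f) = Q(g)$ if and only if there is some $w : A' \to A$ in $W_{\mathcal{S}}$ with $f \circ w = g \circ w$. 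Applying this to $f = \mathrm{id}_A$ and $g = 0_A$ yields $w = 0 \in \Hom_\Cc(A', A)$ with $w \in W_{\mathcal{S}}$. But the cokernel of the zero morphism $A' \to A$ is $A$ itself, and membership in $W_{\mathcal{S}}$ forces this cokernel to lie in $\mathcal{S}$; hence $A \in \mathcal{S}$. (Equivalently, one can run the same argument with left fractions, where $w : A \to A'$ is zero and one reads off that its kernel $A$ lies in $\mathcal{S}$, or phrase everything directly in terms of roof diagrams $A \leftarrow A' \to 0$.)

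The only delicate point---the main obstacle---is invoking the correct form of the equality-of-morphisms criterion and matching the left/right conventions to those implicit in \cite{cat-sheaves}. Since $W_{\mathcal{S}}$ admits both a calculus of left fractions and a calculus of right fractions, either convention suffices, so this is a bookkeeping issue rather than a genuine difficulty; everything else is immediate from the definition of a Serre subcategory.
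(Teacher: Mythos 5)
Your proof is correct and follows essentially the same route as the paper: both hinge on the calculus of fractions for $W_{\mathcal{S}}$ together with the observation that a zero morphism lies in $W_{\mathcal{S}}$ exactly when its kernel and cokernel (i.e., its source and target) lie in $\mathcal{S}$. The only cosmetic difference is in the converse direction, where the paper cites the criterion ``$A \cong 0$ in the localization iff some zero morphism out of $A$ lies in $W_{\mathcal{S}}$'' directly, while you derive the same conclusion from the equally standard criterion that $Q(f)=Q(g)$ iff $f,g$ are coequalized by a member of $W_{\mathcal{S}}$, applied to $\mathrm{id}_A$ and $0$.
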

\begin{proof}
  The following statements are equivalent:
  \begin{enumerate}
  \item \label{ser1} $A \cong 0$ in $\Cc/\mathcal{S}$
  \item \label{ser2} There is an object $B \in \Cc$ such that the zero
    morphism $0_{A,B} : A \to B$ is in $W_{\mathcal{S}}$.
  \item \label{ser3} There is an object $B \in \Cc$ such that $A, B
    \in \mathcal{S}$.
  \item \label{ser4} $A \in \mathcal{S}$.
  \end{enumerate}
  The equivalence (\ref{ser1})$\iff$(\ref{ser2}) holds by the
  calculus of fractions; see Exercise~7.3 in \cite{cat-sheaves}.
  The equivalence (\ref{ser2})$\iff$(\ref{ser3}) is by definition
  of $W_{\mathcal{S}}$.  The equivalence
  (\ref{ser3})$\iff$(\ref{ser4}) is trivial.
\end{proof}
\begin{lemma}\label{serre-lift}
  Let $A, B$ be objects of $\Cc$ and $f : A \to B$ be a morphism in
  $\Cc/\mathcal{S}$.
  \begin{enumerate}
  \item There is an isomorphism $B \cong B'$ in $\Cc/\mathcal{S}$ such
    that $A \to B'$ lifts to a morphism in $\Cc$.
  \item If $f$ is a monomorphism in $\Cc/\mathcal{S}$, there is an
    isomorphism $A' \cong A$ in $\Cc/\mathcal{S}$ such that $A' \to B$
    lifts to a monomorphism in $\Cc$.
  \end{enumerate}
  The dual statements hold as well.
\end{lemma}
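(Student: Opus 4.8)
The plan is to exploit the calculus of left and right fractions for the class $W_{\mathcal{S}}$, together with exactness of the localization functor $\iota : \Cc \to \Cc/\mathcal{S}$ and Lemma~\ref{serro} (which identifies the objects isomorphic to $0$ in $\Cc/\mathcal{S}$ as exactly the objects of $\mathcal{S}$). Recall that every morphism of $W_{\mathcal{S}}$ becomes an isomorphism after applying $\iota$.

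For part (1), I would represent $f$ as a left fraction: a diagram $A \xrightarrow{g} B' \xleftarrow{s} B$ in $\Cc$ with $s \in W_{\mathcal{S}}$, so that $f = \iota(s)^{-1} \circ \iota(g)$. Then $\iota(s) : B \to B'$ is an isomorphism in $\Cc/\mathcal{S}$, giving the required $B \cong B'$, and $g : A \to B'$ is a morphism in $\Cc$ whose image $\iota(g) = \iota(s) \circ f$ is $f$ composed with that isomorphism. Thus $g$ is the desired lift.

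For part (2), I would instead represent $f$ as a right fraction $A \xleftarrow{s} A' \xrightarrow{g} B$ with $s \in W_{\mathcal{S}}$, so $\iota(s) : A' \to A$ is an isomorphism and $\iota(g) = f \circ \iota(s)$ is a monomorphism in $\Cc/\mathcal{S}$. The subtlety — and the main obstacle — is that $g$ itself need not be a monomorphism in $\Cc$; only its image under $\iota$ is. To repair this, let $K = \ker(g)$ computed in $\Cc$. Exactness of $\iota$ gives $\iota(K) = \ker(\iota(g)) = 0$, since $\iota(g)$ is monic, whence $K \in \mathcal{S}$ by Lemma~\ref{serro}. The coimage projection $q : A' \to \coim(g) = A'/K$ then has kernel $K \in \mathcal{S}$ and zero cokernel, so $q \in W_{\mathcal{S}}$; hence $\iota(q)$ is an isomorphism and $\coim(g) \cong A' \cong A$ in $\Cc/\mathcal{S}$. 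Writing $g = \bar g \circ q$ with $\bar g : \coim(g) \to B$ the induced map, the fact that $\Cc$ is abelian makes the canonical arrow $\coim(g) \to \img(g)$ an isomorphism, so $\bar g$ factors as $\coim(g) \xrightarrow{\sim} \img(g) \hookrightarrow B$ and is therefore a monomorphism in $\Cc$. From $\iota(\bar g) \circ \iota(q) = \iota(g) = f \circ \iota(s)$ one gets $\iota(\bar g) = f \circ (\iota(s) \circ \iota(q)^{-1})$ with $\iota(s)\circ\iota(q)^{-1} : \coim(g) \to A$ an isomorphism. Renaming $\coim(g)$ as $A'$, this is the sought-after monomorphism lifting $f$ up to the isomorphism $A' \cong A$.

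Finally, the dual statements follow formally by running parts (1) and (2) inside the opposite abelian category $\Cc^{\mathrm{op}}$ with the Serre subcategory $\mathcal{S}^{\mathrm{op}}$: the class $W_{\mathcal{S}}$ and its calculus of fractions are self-dual, $(\Cc/\mathcal{S})^{\mathrm{op}}$ is canonically $\Cc^{\mathrm{op}}/\mathcal{S}^{\mathrm{op}}$, and monomorphisms there correspond to epimorphisms, so the lifts produced above transport back. I expect the only place needing genuine care is the coimage argument in part (2); the rest is bookkeeping with fractions.
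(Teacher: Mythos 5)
Your argument is correct. Part (1) is the same as the paper's (both just read off the left-fraction representation $A \to B' \leftarrow B$), but for part (2) you take a genuinely different route. The paper uses that $\Cc/\mathcal{S}$ is abelian to write the monomorphism $f$ as the kernel of some $g : B \to C$, applies part (1) to lift $g$ to $\tilde g$ in $\Cc$, and then takes $A' = \ker(\tilde g)$ in $\Cc$; exactness of the localization and uniqueness of kernels give $A' \cong A$ over $B$. You instead start from the right-fraction representation $A \leftarrow A'' \xrightarrow{g} B$ and repair $g$ directly: its kernel $K$ lies in $\mathcal{S}$ because $\iota$ is exact and $\iota(g)$ is monic, the coimage projection $A'' \to A''/K$ is in $W_{\mathcal{S}}$, and the induced map $A''/K \to B$ is monic by the coimage--image isomorphism in $\Cc$. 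Both proofs are sound; the paper's leans on the quotient being abelian (which it has already established) and reuses part (1), while yours stays closer to the raw calculus of fractions and makes explicit \emph{which} monomorphism in $\Cc$ represents $f$, namely the image factorization of any right-fraction representative. The steps you flag as delicate (that $q \in W_{\mathcal{S}}$ since $\ker q = K \in \mathcal{S}$ and $\coker q = 0$, and that $\iota(s)\circ\iota(q)^{-1}$ is the required isomorphism $A' \cong A$) all check out.
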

\begin{proof}
  The first point holds by the calculus of fractions.  Now suppose $f$
  is a monomorphism.  Then $f$ is the kernel of some morphism $g : B
  \to C$ in $\Cc/\mathcal{S}$, because the Serre quotient
  $\Cc/\mathcal{S}$ is an abelian category.  By the first point, we
  can change the object $C$ by an isomorphism and arrange $g$ to lift
  to a morphism $\tilde{g}$ in the category $\Cc$.  Take an exact sequence
  \begin{equation*}
    0 \to A' \to B \stackrel{\tilde{g}}{\to} C
  \end{equation*}
  in the category $\Cc$.  By exactness of the localization functor
  $\Cc \to \Cc/\mathcal{S}$, this induces an exact sequence in
  $\Cc/\mathcal{S}$.  In the category $\Cc/\mathcal{S}$, the two
  monomorphisms $A \to B$ and $A' \to B$ are both kernels of $g : B
  \to C$, so they must be isomorphic to each other.
\end{proof}
\begin{proposition} \label{oh-that-2}
  Let $\Cc$ be an abelian category, $\mathcal{S}$ be a Serre
  subcategory, and $A$ be an object in $\Cc$.  The exact localization
  functor $\Cc \to \Cc/\mathcal{S}$ induces a lattice homomorphism
  \begin{equation*}
    \Sub_\Cc(A) \to \Sub_{\Cc/\mathcal{S}}(A).
  \end{equation*}
  This lattice homomorphism is onto.  Two subobjects $X_1, X_2 \in
  \Sub_\Cc(A)$ map to the same subobject in $\Cc/\mathcal{S}$ if and
  only if
  \begin{align*}
    X_1/(X_1 \cap X_2) &\in \mathcal{S} \\
    X_2/(X_1 \cap X_2) &\in \mathcal{S}.
  \end{align*}
\end{proposition}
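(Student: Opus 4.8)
The plan is to exploit that the localization functor $L : \Cc \to \Cc/\mathcal{S}$ is exact, together with the two technical lemmas about Serre quotients (Lemma~\ref{serro} and Lemma~\ref{serre-lift}).

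First I would check that $L$ induces a homomorphism of bounded modular lattices $\Sub_\Cc(A) \to \Sub_{\Cc/\mathcal{S}}(A)$. Since $L$ is exact it carries monomorphisms to monomorphisms and respects the ``factors through'' relation, so it gives an order-preserving map; exactness also means $L$ preserves finite limits and colimits, hence preserves pullbacks (the meet $X_1 \wedge X_2 = X_1 \times_A X_2$), direct sums, images (the join $X_1 \vee X_2 = \img(X_1 \oplus X_2 \to A)$), and the objects $0$ and $A$. This is the same bookkeeping as in Example~\ref{from-functor}, now with an exact rather than merely left-exact functor, so both $\wedge$ and $\vee$ are preserved; thus the induced map is a bounded lattice homomorphism.

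Next, surjectivity. Given a subobject of $A$ in $\Cc/\mathcal{S}$ represented by a monomorphism $f : Y \to A$, the monomorphism-lifting part of Lemma~\ref{serre-lift} produces an object $Y' \in \Cc$, a monomorphism $\tilde f : Y' \hookrightarrow A$ in $\Cc$, and an isomorphism $L(Y') \cong Y$ in $\Cc/\mathcal{S}$ compatible with the maps to $A$. Hence the subobject of $A$ in $\Cc$ represented by $\tilde f$ maps to the subobject represented by $f$, and the homomorphism is onto. Then I would pin down the description of the kernel via the key sub-fact that a morphism $g : X \to X'$ in $\Cc$ becomes an isomorphism in $\Cc/\mathcal{S}$ if and only if $\ker g \in \mathcal{S}$ and $\coker g \in \mathcal{S}$: the ``if'' direction is immediate from the definition of $W_{\mathcal{S}}$ and the localization, and the ``only if'' direction follows because $L$ is exact, so $L(\ker g) = \ker(L(g)) = 0$ and $L(\coker g) = \coker(L(g)) = 0$, whence $\ker g,\ \coker g \in \mathcal{S}$ by Lemma~\ref{serro}. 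Given $X_1, X_2 \in \Sub_\Cc(A)$, since the map is a lattice homomorphism their images agree iff $L(X_1) = L(X_1) \wedge L(X_2) = L(X_2)$, i.e. iff $L(X_1) = L(X_1 \cap X_2) = L(X_2)$ as subobjects of $A$; for each $i$ the inclusion $X_1 \cap X_2 \hookrightarrow X_i$ maps to the canonical monomorphism $L(X_1 \cap X_2) \to L(X_i)$, and this represents the same subobject as $L(X_i)$ exactly when that monomorphism is an isomorphism, equivalently (by the sub-fact applied to the inclusion, which has zero kernel) when $X_i/(X_1 \cap X_2) = \coker(X_1 \cap X_2 \hookrightarrow X_i) \in \mathcal{S}$. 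Combining $i = 1, 2$ yields the stated criterion.

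The main obstacle I anticipate is not any single hard step but keeping the identifications of subobjects in $\Cc/\mathcal{S}$ honest --- in particular verifying that two monomorphisms into $A$ represent the same subobject iff the canonical comparison map between their sources is an isomorphism, and that this canonical comparison is precisely the one obtained by applying $L$ to an inclusion in $\Cc$. Once this is made precise, everything else is a direct consequence of exactness of $L$ and the two cited lemmas.
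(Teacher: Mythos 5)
Your proposal is correct and follows essentially the same route as the paper's proof: surjectivity via the monomorphism-lifting statement of Lemma~\ref{serre-lift}, and the kernel description via exactness of the localization functor together with Lemma~\ref{serro} (an object becomes zero in $\Cc/\mathcal{S}$ iff it lies in $\mathcal{S}$). The extra bookkeeping you include --- that the induced map preserves $\wedge$ and $\vee$, and that two subobjects coincide iff the canonical comparison monomorphism becomes an isomorphism --- is exactly the content the paper leaves implicit, and your handling of it is sound.
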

\begin{proof}
  The map is onto by Lemma~\ref{serre-lift}.  Since the localization
  functor is exact, it preserves constructs like $X_1/(X_1 \cap X_2)$.
  Therefore, $X_1$ and $X_2$ map to the same subobject in
  $\Cc/\mathcal{S}$ if and only if the identities
  \begin{align*}
    X_1/(X_1 \cap X_2) &\cong 0 \\
    X_2/(X_1 \cap X_2) &\cong 0
  \end{align*}
  hold in the Serre quotient $\Cc/\mathcal{S}$.  By Lemma~\ref{serro},
  this is equivalent to the stated conditions on $X_1, X_2$.
\end{proof}
This gives another way to think about the Serre quotient.  The Serre
quotient $\Cc/\mathcal{S}$ has the same underlying objects as $\Cc$,
but the morphisms from $A$ to $B$ correspond to subobjects $\Gamma
\subseteq A \oplus B$ such that
\begin{align*}
  \Gamma + (0 \oplus B) &\approx A \oplus B \\
  \Gamma \cap (0 \oplus B) &\approx 0 \oplus 0,
\end{align*}
where $\approx$ denotes commensurability modulo $\mathcal{S}$.

As an example, let $M$ be some model-theoretic structure and let $\Cc$
be the category of interpretable abelian groups in the structure $M$.
Then $\Cc$ is an abelian category.  Let $\mathcal{S}$ be the
subcategory of finite interpretable abelian groups.  Then
$\mathcal{S}$ is a Serre subcategory, because in a short exact
sequence
\begin{equation*}
  0 \to X \to Y \to Z \to 0,
\end{equation*}
$Z$ is finite if and only if $X$ and $Y$ are both finite.  Therefore
we can form the Serre quotient.  If $X \in \Cc$, then the subobject
lattice of $X$ in $\Cc/\mathcal{S}$ is the lattice of definable
subgroups, modulo commensurability.

If $M$ is a structure in which $G^0$ always exists, then we can give a
more explicit description of the Serre quotient $\Cc/\mathcal{S}$:
\begin{itemize}
\item Objects are interpretable abelian groups $(G,+)$ such that $G =
  G^0$.
\item The subobject lattice of $G$ is the lattice of connected
  subgroups.
\item A morphism from $G$ to $H$ is a connected subgroup $\Gamma
  \subseteq G \oplus H$ such that $\Gamma + (0 \oplus H) = G \oplus H$
  (i.e., $\Gamma$ projects onto $G$) and such that $(\Gamma \cap (0
  \oplus H))^0 = 0 \oplus 0$, (i.e., $\Gamma \cap (0 \oplus H)$ is
  finite).
\end{itemize}
So a morphism from $G$ to $H$ is an ``endogeny'' or
``quasi-homomorphism'' from $G$ to $H$, in the sense of
(\cite{stab-groups}, \S1.5) or (\cite{frank-jan}, \S3).  The category
$\Cc/\mathcal{S}$ is similar to the isogeny category of abelian
varieties.

\subsection{A criterion for recognizing abelian categories}

\begin{lemma}\label{lem:reflector}
  Let $\Cc, \Dd$ be additive categories, with $\Dd$
  abelian.  Let $F : \Cc \to \Dd$ be a faithful additive functor.
  Suppose the following conditions hold:
  \begin{enumerate}
  \item \label{ker-basically} For every $f : X \to Y$ in $\Cc$, there
    is $e : W \to X$ in $\Cc$ such that the diagram
    \begin{equation*}
      0 \to F(W) \to F(X) \to F(Y)
    \end{equation*}
    is exact in $\Dd$.
  \item For every $f : X \to Y$ in $\Cc$, there is $g : X \to Z$ in
    $\Cc$ such that the diagram
    \begin{equation*}
      F(X) \to F(Y) \to F(Z) \to 0
    \end{equation*}
    is exact in $\Dd$.
  \item \label{ker-other} Let $f : X \to Y$ and $g : X' \to Y$ be
    morphisms in $\Cc$.  Suppose $F(g) : F(X') \to F(Y)$ is a
    monomorphism in $\Dd$ and $F(f) : F(X) \to F(Y)$ factors through
    $F(g)$.  Then $f$ factors through $g$.
  \item \label{coker-other} Let $f : X \to Y$ and $g : X \to Y'$ be
    morphisms in $\Cc$.  Suppose $F(g) : F(X) \to F(Y')$ is an
    epimorphism in $\Dd$ and $F(f) : F(X) \to F(Y)$ factors through
    $F(g)$.  Then $f$ factors through $g$.
  \end{enumerate}
  Then $\Cc$ is abelian and $F$ is an exact functor.
\end{lemma}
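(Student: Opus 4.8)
The plan is to establish the two standard criteria for a pre-additive category to be abelian: that $\Cc$ is pre-abelian (additive with all kernels and cokernels), and that in $\Cc$ every monomorphism is a kernel and every epimorphism is a cokernel.

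First I would show that the morphism $e : W \to X$ supplied by hypothesis~(\ref{ker-basically}) for a given $f : X \to Y$ is a genuine kernel of $f$ in $\Cc$. Faithfulness of $F$ together with $F(f)\circ F(e) = 0$ gives $f\circ e = 0$. If $h : V \to X$ satisfies $f\circ h = 0$, then $F(h)$ factors through $\ker F(f)$; by exactness of $0 \to F(W) \to F(X) \to F(Y)$ that kernel is, up to isomorphism, the monomorphism $F(e)$, so $F(h)$ factors through $F(e)$, and by hypothesis~(\ref{ker-other}) the factorization descends to $\Cc$. It is unique because $e$ is monic in $\Cc$ ($F(e)$ is monic and $F$ is faithful). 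Dually, the $g : X \to Z$ of the second hypothesis is a cokernel of $f$. Hence $\Cc$ has all kernels and cokernels and is therefore pre-abelian, and moreover $F(\ker f) \cong \ker F(f)$ and $F(\coker f) \cong \coker F(f)$, so $F$ is both left and right exact.

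Next I would check that $F$ preserves monomorphisms and epimorphisms. If $m$ is monic in $\Cc$, then $\ker m \cong 0$ (a morphism in a pre-abelian category is monic iff its kernel vanishes); by the previous step the object $W$ attached to $m$ is a kernel of $m$, so $W \cong 0$, hence $F(W) \cong F(0) \cong 0$, and since $F(W) \cong \ker F(m)$ the map $F(m)$ is monic in $\Dd$. The dual argument, using the cokernel hypothesis, shows $F$ preserves epimorphisms. Now let $m : X' \to Y$ be monic in $\Cc$ and let $g : Y \to Z$ be its cokernel, built as above, so $F(g) \cong \coker F(m)$. Because $F(m)$ is monic and $F(X') \to F(Y) \to F(Z)$ is exact at $F(Y)$, the map $F(m)$ is a kernel of $F(g)$ in $\Dd$. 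Letting $k : K \to Y$ be the kernel of $g$ in $\Cc$, we have $F(k) \cong \ker F(g)$, so $F(m)$ and $F(k)$ are both kernels of $F(g)$ and hence factor through each other; since both are monic, hypothesis~(\ref{ker-other}) transports these factorizations to mutually inverse isomorphisms in $\Cc$. Thus $m$ and $k$ represent the same subobject, i.e. $m \cong \ker(\coker m)$, so $m$ is a kernel. Dually, every epimorphism is a cokernel, so $\Cc$ is abelian; combined with the left and right exactness established above, $F$ is an exact functor.

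The diagram chases are routine; the one step requiring care is transporting the statement "$F(m)$ is a kernel of $F(g)$" from $\Dd$ back to $\Cc$. This genuinely needs the auxiliary fact that $F$ preserves monomorphisms — otherwise exactness at $F(Y)$ would only yield $\img F(m) = \ker F(g)$ rather than exhibiting $F(m)$ itself as a kernel — together with faithfulness and hypotheses~(\ref{ker-other})--(\ref{coker-other}) to pull the resulting isomorphism down. Keeping this bookkeeping straight is the main obstacle, but no idea beyond the preservation-of-monomorphisms observation is needed.
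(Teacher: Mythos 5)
Your proposal is correct, and the first two-thirds of it coincides with the paper's argument: both proofs show that the morphisms supplied by the first two hypotheses are genuine kernels and cokernels in $\Cc$ (using faithfulness to get $f\circ e=0$, the third and fourth hypotheses to descend factorizations, and reflection of monomorphisms by the faithful functor to get uniqueness), so that $\Cc$ is pre-abelian and $F$ preserves kernels and cokernels. Where you diverge is in the final step. The paper proves that $F$ is conservative --- if $F(f)$ is an isomorphism, then $\mathrm{id}_{F(Y)}$ factors through the monomorphism $F(f)$ and $\mathrm{id}_{F(X)}$ factors through the epimorphism $F(f)$, so the third and fourth hypotheses produce a two-sided inverse of $f$ --- and then applies $F$ to the coimage/image factorization of an arbitrary $f$, using preservation of kernels and cokernels plus conservativity to conclude that $\coim(f)\to\img(f)$ is an isomorphism. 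You instead verify that $F$ preserves monomorphisms and epimorphisms (via vanishing of the kernel/cokernel objects) and then show that every monomorphism in $\Cc$ is the kernel of its cokernel, by comparing the two kernels of $F(\coker m)$ in $\Dd$ and pulling the resulting mutual factorizations back along the third hypothesis. Both routes pass through standard equivalent characterizations of "abelian" among pre-abelian categories (the paper's appendix records the one you use), and both require essentially the same inputs; the paper's conservativity argument is marginally shorter, while yours makes the preservation of monos and epis explicit, which is occasionally useful information in its own right. Your closing remark correctly identifies the one point needing care: exactness of $F(X')\to F(Y)\to F(Z)$ at $F(Y)$ only identifies $\img F(m)$ with $\ker F(g)$, so you genuinely need $F(m)$ monic before you can say $F(m)$ itself is a kernel of $F(g)$.
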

\begin{proof}
  Because $F$ is faithful, $F$ reflects monomorphisms.  Indeed,
  suppose $f : X \to Y$ is a morphism in $\Cc$, $F(f)$ is a
  monomorphism, and $g_1, g_2 : W \to X$ are parallel morphisms such
  that $f \circ g_1 = f \circ g_2$.  Then $F(f) \circ F(g_1) = F(f)
  \circ F(g_2)$.  As $F(f)$ is a monomorphism, $F(g_1) = F(g_2)$.
  Then $g_1 = g_2$ because $F$ is faithful.  A similar argument shows
  that $F$ reflects epimorphisms.

  We claim that $\Cc$ has kernels and $F$ preserves them.  Let $f : X
  \to Y$ be a morphism, and let $e : W \to X$ be as in
  (\ref{ker-basically}).  Then $F(f) \circ F(e) = 0$, so $f \circ e =
  0$ by faithfulness of $F$.  We claim that $e$ is the kernel of $f$.
  Let $e' : W' \to X$ be some morphism in $\Cc$ such that $f \circ e'
  = 0$.  Then $F(f) \circ F(e') = 0$.  As $F(e)$ is the kernel of
  $F(f)$, we see that $F(e') : F(W') \to F(X)$ factors through the
  monomorphism $F(e) : F(W) \hookrightarrow F(X)$.  By
  (\ref{ker-other}), we see that $e'$ factors through $e$.  Because
  $F$ reflects monomorphisms, $e$ is a monomorphism, and so the
  factorization of $e'$ through $e$ is unique.  This proves that $e$
  is the kernel of $f$.  Therefore $\Cc$ has kernels and $F$ preserves
  them.  By duality, $\Cc$ has cokernels and $F$ preserves them.
  
  Therefore, $\Cc$ is pre-abelian, and if $\Cc$ is abelian, then $F$
  is exact.

  Next we claim that $F$ is conservative (i.e., $F$ reflects
  isomorphisms).  Indeed, suppose $f : X \to Y$ is such that $F(f) :
  F(X) \to F(Y)$ is an isomorphism.  Note that $id_{F(Y)} = F(id_Y)$
  factors through the monomorphism $F(f)$.  By (\ref{ker-other}),
  $id_{Y}$ factors through $f$, i.e.,
  \begin{equation*}
    id_Y = f \circ g
  \end{equation*}
  for some morphism $g : Y \to X$.  Similarly,
  \begin{equation*}
    id_X = h \circ f
  \end{equation*}
  for some morphism $h : Y \to X$.  Then $f$ has a two-sided inverse
  given by
  \begin{equation*}
    h = h \circ id_Y = h \circ f \circ g = id_X \circ g = g.
  \end{equation*}
  So $f$ is an isomorphism.  This shows that $F$ reflects
  isomorphisms.

  Finally, let $f : X \to Y$ be any morphism.  As $\Cc$ is
  pre-abelian, there is the usual coimage/image factorization
  \begin{equation*}
    \xymatrix{ \ker(f) \ar[r] & X \ar[dr] \ar[rrr] & & & Y \ar[r] & \coker(f). \\
       & & \coim(f) \ar[r] & \img(f) \ar[ur] & &}
  \end{equation*}
  Because $F$ preserves kernels and cokernels, the diagram obtained by
  applying $F$ is also a coimage/image factorization.  In particular,
  $F(\coim(f))$ is the coimage and $F(\img(f))$ is the image of $F(f)
  : F(X) \to F(Y)$.  As $\Dd$ is abelian, the morphism $F(\coim(f))
  \to F(\img(f))$ is an isomorphism.  As $F$ is convervative, it
  follows that $\coim(f) \to \img(f)$ is an isomorphism, which is
  exactly what it means for $\Cc$ to be an abelian category.
\end{proof}

\section{Review of ind- and pro-objects}\label{app:ind}
For a textbook account of ind-objects and pro-objects, see Chapter 6
and Section 8.6 of \cite{cat-sheaves}.
\subsection{The category of ind-objects}
Let $\Cc$ be a small category.  The Yoneda lemma gives a fully
faithful embedding of $\Cc$ into the functor category
$\Fun(\Cc^{op},\Set)$.  For $A \in \Cc$, let $h_A$ be the
corresponding representable functor
\begin{equation*}
  h_A(B) := \Hom_\Cc(B,A).
\end{equation*}
The category $\Fun(\Cc^{op},\Set)$ has all small limits and colimits;
they are constructed pointwise.
If $\{A_i\}_{i \in I}$ is a filtered diagram of objects in $A$, one
defines
\begin{equation*}
  \textrm{``}\varinjlim_{i \in I}\textrm{''} A_i := \varinjlim_{i \in I} h_{A_i}.
\end{equation*}
One defines the \emph{indization} $\Ind \Cc$ to be the full subcategory of
$\Fun(\Cc^{op},\Set)$ consisting of the objects of the form
\begin{equation*}
  \textrm{``}\varinjlim_{i \in I}\textrm{''} A_i.
\end{equation*}
Objects of $\Ind \Cc$ are called \emph{ind-objects}.  There is a fully
faithful embedding $\Cc \to \Ind \Cc$ sending $A$ to its representable
functor $h_A$.

From the Yoneda lemma, one gets the usual formula
\begin{equation}
  \Hom_{\Ind\Cc}\left(\textrm{``}\varinjlim_{j \in
    J}\textrm{''} B_j, \textrm{``}\varinjlim_{i \in I}\textrm{''} A_i\right)
  = \varprojlim_{j \in J}\varinjlim_{i \in
    I}\Hom(B_j,A_i). \label{usual-formula}
\end{equation}
So one can alternatively define $\Ind \Cc$ as the category of formal
colimits $\textrm{``}\varinjlim_{i \in I} \textrm{''} A_i$ with
morphisms given by (\ref{usual-formula}).  The downside of this
alternative approach is that it now takes some work to define
composition and verify associativity.

\subsection{The case of posets}

If $P$ is a poset, say that a subset $I \subseteq P$ is an
\emph{ideal} if the following conditions hold:
\begin{itemize}
\item $I$ is downwards-closed, i.e.,
  \begin{equation*}
    \left(x \in I \textrm{ and } y \le x\right) \implies y \in I.
  \end{equation*}
\item $I$ is upwards-directed, i.e.,
  \begin{equation*}
    \forall x, y \in I ~\exists z \in I : x, y \le z.
  \end{equation*}
\item $I$ is non-empty.
\end{itemize}
Any object $x \in P$ determines a principal ideal
\begin{equation*}
  (x) = \{y \in P : y \le x\}.
\end{equation*}
\begin{proposition}\label{poset-case}
  If $P$ is a poset, then $\Ind P$ is (isomorphic to) the poset of
  ideals in $P$, ordered by inclusion.  The map $P \to \Ind P$ is the
  map $x \to (x)$.
\end{proposition}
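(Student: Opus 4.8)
The plan is to unwind the definition of $\Ind P$ from the preceding subsection in the concrete case where $\Cc = P$ is a poset, regarded as a thin category (at most one morphism between any two objects). First I would record the basic observation that for $x \in P$ the representable presheaf $h_x \in \Fun(P^{op},\Set)$ is essentially the characteristic function of the principal ideal $(x)$: for $y \in P$ we have $h_x(y) = \Hom_P(y,x)$, a one-element set when $y \le x$ and empty otherwise. More generally, for any subset $S \subseteq P$ write $\chi_S$ for the presheaf with $\chi_S(y) = \{\ast\}$ if $y \in S$ and $\chi_S(y) = \emptyset$ otherwise. A morphism $\chi_S \to \chi_T$ in $\Fun(P^{op},\Set)$ is automatically natural (every square of singletons-or-emptysets commutes) and unique when it exists (each component is forced), and it exists precisely when $S \subseteq T$: the component at $y \in S$ needs $\chi_T(y) \neq \emptyset$, while at $y \notin S$ it is the empty map. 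Since $\Ind P$ is a full subcategory, this has the consequence that once we know the objects of $\Ind P$ are exactly the $\chi_J$ for $J$ an ideal, it follows that $\Ind P$ is a poset with $\chi_J \le \chi_{J'} \iff J \subseteq J'$, hence isomorphic to the poset of ideals under inclusion, and the Yoneda embedding $P \to \Ind P$ sends $x$ to $\chi_{(x)} = h_x$, as claimed.

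So the crux is identifying the objects. Given a small filtered diagram $\{x_i\}_{i \in I}$ in $P$ (with $I$ nonempty, per the standing convention that filtered categories are nonempty), I would compute $\varinjlim_i h_{x_i}$ pointwise in $\Fun(P^{op},\Set)$: its value at $y$ is $\varinjlim_i h_{x_i}(y)$, a filtered colimit of sets each of which is empty or a singleton. Such a colimit is empty if every term is empty, and is a single point otherwise — the ``otherwise'' direction using filteredness of $I$ to check that any two elements arising from nonempty terms become equal further along the diagram. Hence $\varinjlim_i h_{x_i} = \chi_J$ with $J = \{y \in P : y \le x_i \text{ for some } i\}$. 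Then I would verify $J$ is an ideal: it is downward closed trivially; it is nonempty because $I$ is nonempty; and it is upward directed because, given $y \le x_i$ and $y' \le x_j$, filteredness of $I$ furnishes $k$ with morphisms $i \to k$ and $j \to k$, whence $x_i \le x_k \ge x_j$ and $x_k \in J$ dominates both $y$ and $y'$.

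For the converse, given an ideal $J \subseteq P$ I would observe that $J$, viewed as a full subposet of $P$, is a nonempty upward-directed poset, hence a filtered category, and that the colimit of the inclusion diagram $J \hookrightarrow P$ is $\varinjlim_{y \in J} h_y = \chi_{J'}$ where $J' = \{z \in P : z \le y \text{ for some } y \in J\}$; downward closure of $J$ gives $J' = J$, so $\chi_J \in \Ind P$. Combined with the previous paragraph, this shows the objects of $\Ind P$ are precisely the presheaves $\chi_J$ with $J$ an ideal. The proposition then follows from the description of $\Hom(\chi_S,\chi_T)$ recorded in the first step.

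I expect the only genuine subtlety to be bookkeeping around the empty case — keeping in force the convention that filtered categories are nonempty (so that $\varinjlim$ of a filtered diagram is never the empty presheaf, which would correspond to no ideal), and correctly handling the filtered colimit of singletons-and-emptysets in the pointwise computation. Everything else is a routine unwinding of the Yoneda-based definition of $\Ind P$.
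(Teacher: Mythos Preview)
Your proof is correct and takes essentially the same approach as the paper: both unwind the definition of $\Ind P$ to identify objects with ideals, though you compute pointwise in $\Fun(P^{op},\Set)$ while the paper works via the abstract hom-formula~(\ref{usual-formula}). One small slip worth fixing: $\chi_S$ is only a presheaf on $P$ when $S$ is downward closed (otherwise the restriction maps out of $\{\ast\}$ need not exist), but since you only ever apply this construction with $S$ an ideal or a principal ideal, the argument is unaffected.
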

\begin{proof}
  By formula (\ref{usual-formula}), every Hom-set in $\Ind P$ has at
  most one element, and so $\Ind P$ is equivalent to a poset.  Let
  $\{x_i\}_{i \in I}$ and $\{y_j\}_{j \in J}$ be two filtered diagrams
  in $P$.  These determine ind-objects
  \begin{align*}
    x &= \textrm{``}\varinjlim_{i \in I}\textrm{''} x_i \in \Ind P \\
    y &= \textrm{``}\varinjlim_{j \in J}\textrm{''} y_j \in \Ind P
  \end{align*}
  as well as ideals
  \begin{align*}
    E_x &= \{z \in P ~|~ \exists i \in I : z \le x_i\} \\
    E_y &= \{z \in P ~|~ \exists j \in J : z \le x_j\}.
  \end{align*}
  Now one has equivalences
  \begin{equation*}
    x \le y \iff \left(\forall i \in I ~ \exists j \in J : x_i \le
    y_j\right) \iff E_x \subseteq E_y,
  \end{equation*}
  where the first equivalence is a disguised form of
  (\ref{usual-formula}), and the second equivalence is easy.  Thus
  $\Ind P$ embeds into the poset of ideals via the map $x \mapsto
  E_x$.  But the map is onto, because any ideal $I \subseteq P$
  determines a filtered diagram $\{i\}_{i \in I}$, mapping to $I$
  under the embedding.

  Finally, if $x \in P$, then the ind-object $x$ is represented by the
  filtered diagram $\{x\}_{i \in 1}$, where $1$ is the terminal
  category.  The corresopnding ideal is the principal ideal generated
  by $x$.
\end{proof}

\subsection{The case of abelian categories}
See \S 8.6 of \cite{cat-sheaves} for a textbook account of indization
of abelian categories.

Let $\Cc$ be a small abelian category.  Then
\begin{itemize}
\item $\Ind \Cc$ is abelian.
\item The embedding $\Cc \to \Ind \Cc$ is additive, fully faithful,
  and exact.
\item The category $\Ind \Cc$ has directed limits (i.e., filtered
  colimits), and they are exact.
\end{itemize}
See Theorem~8.6.5 in \cite{cat-sheaves}.  The analogous facts hold for
$k$-linear abelian categories.  In particular, if $\Cc$ is a
$k$-linear abelian category, then $\Ind \Cc$ naturally has the
structure of a $k$-linear category.

We also need some facts about subobject lattices in $\Ind \Cc$.
\begin{lemma}\label{mor}
  Let $Y$ be an object in a small abelian category $\Cc$.  If $\{f_i :
  X_i \hookrightarrow Y\}$ is a filtered diagram of monomorphisms in
  $\Cc$, then
  \begin{equation*}
    \textrm{``}\varinjlim_{i \in I}\textrm{''}X_i \to Y
  \end{equation*}
  is a monomorphism in $\Ind \Cc$.  Up to isomorphism over $Y$, all
  $\Ind\Cc$-monomorphisms into $Y$ arise this way.
\end{lemma}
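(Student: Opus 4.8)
\textbf{Proof proposal for Lemma~\ref{mor}.}

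The plan is to split the claim into two halves: first that a filtered colimit of monomorphisms is a monomorphism in $\Ind\Cc$, and second that every monomorphism into $Y$ (with $Y \in \Cc$) arises this way up to isomorphism over $Y$.

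For the first half, I would use the fact that filtered colimits are exact in $\Ind\Cc$ (Theorem~8.6.5 in \cite{cat-sheaves}, cited in the excerpt). Given a filtered diagram $\{f_i : X_i \hookrightarrow Y\}$ of monomorphisms, each short exact sequence $0 \to X_i \to Y \to Y/X_i \to 0$ (these form a filtered diagram of short exact sequences, since the transition maps $X_i \to X_j$ are forced and one gets induced maps $Y/X_i \to Y/X_j$) has an exact colimit. Since the constant diagram with value $Y$ has colimit $Y$, exactness of the colimit functor gives an exact sequence $0 \to \text{``}\varinjlim\text{''} X_i \to Y \to \text{``}\varinjlim\text{''} (Y/X_i) \to 0$, and in particular $\text{``}\varinjlim\text{''} X_i \to Y$ is a monomorphism.

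For the second half, let $g : Z \hookrightarrow Y$ be a monomorphism in $\Ind\Cc$ with $Y \in \Cc$. Write $Z = \text{``}\varinjlim_{i \in I}\text{''} W_i$ for some filtered diagram in $\Cc$. The composite maps $W_i \to Z \to Y$ live in $\Cc$ (using full faithfulness of $\Cc \hookrightarrow \Ind\Cc$ to see that a map from an object of $\Cc$ into an object of $\Cc$ is a $\Cc$-morphism — here I need the source $W_i$ to be in $\Cc$, which it is). Let $X_i \subseteq Y$ be the image of $W_i \to Y$ in $\Cc$; these $X_i$ form a filtered diagram of subobjects of $Y$, hence a filtered diagram of monomorphisms $X_i \hookrightarrow Y$. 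I would then check that the induced map $\text{``}\varinjlim\text{''} X_i \to Y$ is isomorphic over $Y$ to $g : Z \to Y$. The factorization $W_i \twoheadrightarrow X_i \hookrightarrow Y$ induces $Z = \text{``}\varinjlim\text{''} W_i \to \text{``}\varinjlim\text{''} X_i$, which is an epimorphism (filtered colimit of epimorphisms, again using exactness) and sits in a commuting triangle over $Y$; since both $Z \to Y$ and $\text{``}\varinjlim\text{''} X_i \to Y$ are monomorphisms and the triangle commutes, the map $Z \to \text{``}\varinjlim\text{''} X_i$ is also a monomorphism, hence an isomorphism. This gives the desired presentation.

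The main obstacle I anticipate is bookkeeping in the second half: making sure the diagram $\{X_i\}$ really is filtered and that the transition maps are the restrictions of the ambient structure, and checking carefully that the comparison map $Z \to \text{``}\varinjlim\text{''} X_i$ is well-defined and an isomorphism over $Y$ rather than merely abstractly. The exactness of filtered colimits in $\Ind\Cc$ is doing all the real work; everything else is diagram-chasing that should go through smoothly once the setup is pinned down.
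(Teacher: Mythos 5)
Your proposal is correct and follows essentially the same route as the paper: the first half is exactly the paper's argument (exactness of filtered colimits in $\Ind\Cc$ preserves monomorphisms), and the second half likewise replaces the presenting diagram by the diagram of images $\img(W_i \to Y)$ and uses exactness to identify its colimit with the given subobject. The only cosmetic difference is the final comparison step — the paper computes $\textrm{``}\varinjlim\textrm{''}\img(f_i) \cong \ker(\coker(f))$ directly, while you argue that the comparison map is both epi (colimit of epis) and mono (left factor of a mono), hence an isomorphism; both are valid.
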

\begin{proof}
  Because the embedding $\Cc \hookrightarrow \Ind\Cc$ is exact, each
  morphism $X_i \to Y$ is a monomorphism in $\Ind \Cc$.  Filtered
  colimits are exact in $\Ind \Cc$, so the filtered colimit of all
  these monomorphisms is a monomorphism.\footnote{Note that in the
    category $\Ind \Cc$, the formal filtered colimit
    $\textrm{``}\varinjlim_{i \in I}\textrm{''}X_i$ is the actual
    filtered colimit.  This is because $\Ind \Cc$ gets its filtered
    colimits from the larger category $\Fun(\Cc^{op},\Set)$.}

  Now let
  \begin{equation*}
    f : \textrm{``}\varinjlim_{i \in I}\textrm{''}X_i \hookrightarrow Y
  \end{equation*}
  be some arbitrary monomorphism in $\Ind\Cc$.  This map arises as the
  filtered colimit of a filtered diagram $\{f_i : X_i \to Y\}_{i \in
    I}$ of morphisms in $\Cc$, but the $f_i$ are not guaranteed to be
  monic.  Nevertheless, we can form a filtered family of diagrams of
  the form
  \begin{equation*}
    X_i \to \img(f_i) \to Y \to \coker(f_i).
  \end{equation*}
  This induces a diagram
  \begin{equation*}
    \textrm{``}\varinjlim_{i \in I}\textrm{''}X_i \to
    \textrm{``}\varinjlim_{i \in I}\textrm{''}\img(f_i) \to Y \to
    \textrm{``}\varinjlim_{i \in I}\textrm{''}\coker(f_i).
  \end{equation*}
  Because filtered colimits in $\Ind \Cc$ are exact, we see that
  \begin{equation*}
    \textrm{``}\varinjlim_{i \in I}\textrm{''}\coker(f_i) \cong \coker(f),
  \end{equation*}
  and then
  \begin{equation*}
    \textrm{``}\varinjlim_{i \in I}\textrm{''}\img(f_i) \cong \ker(\coker(f)).
  \end{equation*}
  Because $f$ is monic, it follows that $f$ is isomorphic to the monomorphism
  \begin{equation*}
    \textrm{``}\varinjlim_{i \in I}\textrm{''}\img(f_i) \hookrightarrow Y,
  \end{equation*}
  which has the desired form.
\end{proof}
\begin{proposition}
  If $\Cc$ is an abelian category and $A \in \Cc$, then the natural map
  \begin{align*}
    \Ind\Sub_\Cc(A) & \to \Sub_{\Ind \Cc}(A)
    \\ \left(\textrm{``}\varinjlim_{i \in I}\textrm{''}(X_i
    \hookrightarrow A)\right) & \mapsto
    \left(\left(\textrm{``}\varinjlim_{i \in I}\textrm{''} X_i\right)
    \hookrightarrow A\right)
  \end{align*}
  is an isomorphism of lattices.
\end{proposition}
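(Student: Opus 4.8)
The plan is to use Proposition~\ref{poset-case} to identify $\Ind \Sub_\Cc(A)$ with the poset of ideals of the lattice $\Sub_\Cc(A)$, and then show that the map in the statement is an order isomorphism onto $\Sub_{\Ind\Cc}(A)$; since a bijective order-preserving map between lattices whose inverse is also order-preserving automatically preserves $\wedge$ and $\vee$ (they are the order-theoretic greatest lower and least upper bounds), this suffices, once one notes that the ideal poset is indeed a lattice, with meet given by intersection and join by the ideal generated by the union. Under this identification an ideal $E \subseteq \Sub_\Cc(A)$ is a nonempty, downward-closed, upward-directed set of subobjects of $A$; regarding $E$ as a filtered diagram with the inclusions as transition maps, the natural map sends $E$ to $\textrm{``}\varinjlim_{Z \in E}\textrm{''} Z \hookrightarrow A$, which is a genuine monomorphism in $\Ind\Cc$ by Lemma~\ref{mor}. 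Well-definedness is not an issue: a filtered colimit of subobjects of $A$ depends only on the directed set of their images, and that directed set is cofinal in the ideal it generates.

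First I would record that the map is order-preserving: if $E \subseteq E'$, the evident morphism $\textrm{``}\varinjlim_{Z \in E}\textrm{''} Z \to \textrm{``}\varinjlim_{W \in E'}\textrm{''} W$ over $A$ exhibits the containment of subobjects in $\Sub_{\Ind\Cc}(A)$. Surjectivity is immediate from the second half of Lemma~\ref{mor}: an arbitrary monomorphism into $A$ in $\Ind\Cc$ is, up to isomorphism over $A$, a filtered colimit of monomorphisms $X_i \hookrightarrow A$ from $\Cc$; the transition maps are forced to be the subobject inclusions, the family of images $\{X_i\} \subseteq \Sub_\Cc(A)$ is directed, and its downward closure $E$ is an ideal with $\textrm{``}\varinjlim_{Z \in E}\textrm{''} Z \cong \textrm{``}\varinjlim_i\textrm{''} X_i$ over $A$ by cofinality.

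The crux is order-reflection, which also gives injectivity. Suppose $E, E'$ are ideals with $\textrm{``}\varinjlim_{Z \in E}\textrm{''} Z \le \textrm{``}\varinjlim_{W \in E'}\textrm{''} W$ in $\Sub_{\Ind\Cc}(A)$; I claim $E \subseteq E'$. Fix $Z \in E$. Composing $Z \hookrightarrow \textrm{``}\varinjlim_{E}\textrm{''} Z' \hookrightarrow \textrm{``}\varinjlim_{E'}\textrm{''} W$ yields a morphism $Z \to \textrm{``}\varinjlim_{W \in E'}\textrm{''} W$ in $\Ind\Cc$ that commutes with the structure maps to $A$. Since $Z$ is representable, formula~(\ref{usual-formula}), with the left indexing category trivial, gives $\Hom_{\Ind\Cc}(Z,\textrm{``}\varinjlim_{E'}\textrm{''} W) = \varinjlim_{W \in E'}\Hom_\Cc(Z,W)$, so this morphism factors through a single $g : Z \to W_0$ with $W_0 \in E'$. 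Chasing the maps to $A$ shows the composite $Z \xrightarrow{g} W_0 \hookrightarrow A$ equals the given inclusion $Z \hookrightarrow A$; as $W_0 \hookrightarrow A$ is monic and this composite is monic, $g$ is monic, so $Z \le W_0$ in $\Sub_\Cc(A)$, and hence $Z \in E'$ by downward-closure. Thus $E \subseteq E'$.

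I expect the main obstacle to be the bookkeeping in this last step: one must be careful that all the relevant triangles commute over $A$, so that the factorization $g : Z \to W_0$ is itself a monomorphism realizing $Z$ as a subobject of $A$ contained in $W_0$, and one must invoke the correct form of the $\Hom$-out-of-a-representable formula (\ref{usual-formula}) to see that objects of $\Cc$ are compact in $\Ind\Cc$. The remaining points---that the ideal poset is a lattice and that an order isomorphism therefore respects the lattice operations---are routine.
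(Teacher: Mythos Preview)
Your proof is correct and follows essentially the same approach as the paper: surjectivity via Lemma~\ref{mor}, and strict order-preservation via the compactness of representables in $\Ind\Cc$ (formula~(\ref{usual-formula})). The only organizational difference is that the paper splits the order-reflection into two separate claims (first that $X_i \subseteq Y$ iff $X_i \subseteq Y_j$ for some $j$, then that $X \subseteq Y$ iff each $X_i \subseteq Y$), whereas you fold these together by working directly with ideals and invoking downward closure.
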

\begin{proof}
  By Lemma~\ref{mor}, the map is onto.  It remains to check that it is
  strictly order-preserving.  Let $\{X_i\}_{i \in I}$ and $\{Y_j\}_{j
    \in J}$ be two directed diagrams of $\Cc$-subobjects of $A$.  Let
  \begin{align*}
    X &= \textrm{``}\varinjlim_{i \in I}\textrm{''} X_i \\
    Y &= \textrm{``}\varinjlim_{j \in J}\textrm{''} Y_j
  \end{align*}
  be the corresponding $\Ind\Cc$-subobjects of $A$.
  \begin{claim}
    For each $i$, $X_i \subseteq Y$ if and only if there is some $j$
    such that $X_i \subseteq Y_j$.
  \end{claim}
  \begin{claimproof}
    There is a natural morphism $Y_j \to Y$.  As $Y_j, Y$ are both
    subobjects of $A$, it follows that $Y_j \subseteq Y$.  If $X_i
    \subseteq Y_j$, then $X_i \subseteq Y$.

    Conversely, suppose $X_i \subseteq Y$.  Then the inclusion $X_i
    \to Y$ factors through some $Y_j \to Y$, by
    Formula~(\ref{usual-formula}).  Then $X_i \subseteq Y_j$.
  \end{claimproof}
  \begin{claim}
    $X \subseteq Y$ if and only if $X_i \subseteq Y$ for all $i$.
  \end{claim}
  \begin{claimproof}
    Note $X_i \subseteq X$ because of the morphism $X_i \to X$ over
    $A$.  Thus $X \subseteq Y$ clearly implies $X_i \subseteq Y$ for
    all $i$.  Conversely, suppose $X_i \subseteq Y$ for all $i$.  For
    any arrow $i \to i'$ in the category $I$, the diagram
    \begin{equation*}
      \xymatrix{ X_i \ar[d] \ar[dr] & \\ X_{i'} \ar[r] & Y}
    \end{equation*}
    commutes, because the same diagram commutes when $Y$ is replaced
    with $A$, and the map $Y \to A$ is a monomorphism.  Therefore the
    maps $X_i \to Y$ assemble into a morphism
    \begin{equation*}
      X = \textrm{``}\varinjlim_{i \in I}\textrm{''} X_i \to Y
    \end{equation*}
    whose composition with $Y \to A$ is the given morphism
    $X \to A$.  Therefore $X \subseteq Y$.
  \end{claimproof}
  By the two claims, we see that
  \begin{equation*}
    X \subseteq Y \iff \left(\forall i ~ \exists j : X_i \subseteq Y_j\right).
  \end{equation*}
  As in the proof of Proposition~\ref{poset-case}, the right hand side
  corresponds to
  \begin{equation*}
    \textrm{``}\varinjlim_{i \in I}\textrm{''}(X_i \hookrightarrow A) \le
        \textrm{``}\varinjlim_{j \in J}\textrm{''}(Y_j \hookrightarrow A)
  \end{equation*}
  in the poset $\Ind\Sub_\Cc(A)$.
\end{proof}

\subsection{Pro-objects}\label{sec:pro}
If $\Cc$ is any category, the category $\Pro\Cc$ of pro-objects is
defined as $(\Ind(\Cc^{op}))^{op}$.  Objects of $\Pro\Cc$ can be
thought of as formal inverse limits
\begin{equation*}
  \textrm{``}\varprojlim_{i \in I}\textrm{''} A_i.
\end{equation*}
If $M$ is a bounded lattice, then $\Pro M$ is dual to the lattice of
filters on $M$, where a \emph{filter} is a subset $F \subseteq M$ such
that
\begin{itemize}
\item $\top \in F$
\item If $x, y \in F$, then $x \wedge y \in F$
\item If $x \in F$ and $y \in M$, then $x \vee y \in F$.
\end{itemize}
If $\Cc$ is an abelian category, then $\Pro \Cc$ is an abelian
category.  The inclusion $\Cc \hookrightarrow \Pro \Cc$ is fully
faithful and exact.  If $A \in \Cc$, then there is a canonical isomorphism
\begin{equation*}
  \Pro \Sub_\Cc(A) \cong \Sub_{\Pro \Cc}(A).
\end{equation*}

\section{Boring proofs}\label{app:dirs}
We give the proofs of several statements from
\S\ref{sec:directories}-\ref{sec:lambda} that sound obvious, but
perhaps aren't.

\subsection{Semisimple directories}\label{app:ssd}
For completeness, we should give a proof of
Proposition~\ref{prop:to-prove}.  Recall Schur's lemma:
\begin{lemma}[Schur]
  Let $A, B$ be simple objects in an abelian category $\Cc$.  Every
  morphism from $A$ to $B$ is either zero or an isomorphism.
  Therefore,
  \begin{itemize}
  \item If $A, B$ are non-isomorphic, $\Hom_\Cc(A,B) = 0$.
  \item $\End_\Cc(A) := \Hom_\Cc(A,A)$ is a division ring.
  \end{itemize}
\end{lemma}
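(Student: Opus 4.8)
The statement to prove is Schur's lemma, which asserts: if $A, B$ are simple objects in an abelian category $\Cc$, then every morphism $f : A \to B$ is either zero or an isomorphism; consequently non-isomorphic simple objects have no nonzero maps between them, and $\End_\Cc(A)$ is a division ring.

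The plan is to work directly with the image factorization available in any abelian category. Given a morphism $f : A \to B$, I would form its image $\img(f) \subseteq B$, so that $f$ factors as $A \twoheadrightarrow \img(f) \hookrightarrow B$. Since $B$ is simple, $\Sub_\Cc(B)$ has exactly two elements, $0$ and $B$; hence $\img(f)$ is either $0$ (in which case $f = 0$) or all of $B$ (in which case the monomorphism $\img(f) \hookrightarrow B$ is an isomorphism, so $f$ is an epimorphism). In the latter case I would dualize the argument via the kernel: $\ker(f) \subseteq A$, and simplicity of $A$ forces $\ker(f) = 0$ or $\ker(f) = A$; the case $\ker(f) = A$ would make $f = 0$, contradicting that $f$ is epi onto a nonzero object, so $\ker(f) = 0$ and $f$ is also a monomorphism. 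A morphism in an abelian category that is both a monomorphism and an epimorphism is an isomorphism (this is the standard fact that $\coim(f) \to \img(f)$ is an iso, combined with $f$ being monic and epi); therefore $f$ is an isomorphism.

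The two bulleted consequences then follow immediately. If $A \not\cong B$, no morphism $A \to B$ can be an isomorphism, so every morphism is zero, i.e.\ $\Hom_\Cc(A,B) = 0$. For $\End_\Cc(A)$: it is a ring (the endomorphism ring in any additive category), it is nonzero since $\id_A \neq 0$ because $A \not\cong 0$, and by the main assertion every nonzero element is an isomorphism, hence invertible; thus $\End_\Cc(A)$ is a division ring.

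I do not expect any real obstacle here — the proof is entirely formal and uses only the image/coimage machinery recalled in Appendix~\ref{app:ab}. The only point requiring a sentence of care is the passage from ``monic and epic'' to ``isomorphism,'' which in a general category is false but holds in abelian categories precisely because of the coimage-image axiom; I would cite the relevant discussion in the appendix rather than reprove it. One could alternatively phrase the whole argument in terms of the subobject lattices $\Sub_\Cc(A)$ and $\Sub_\Cc(B)$ being the two-element lattice, which makes the simplicity hypotheses bite in the cleanest way.
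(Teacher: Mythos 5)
Your proof is correct and is the standard argument; note that the paper itself states Schur's lemma only as a recalled fact (in \S\ref{app:ssd}) and gives no proof, so there is nothing to diverge from. The one step needing the abelian-category axioms — that a morphism which is both monic and epic is an isomorphism, via the coimage--image factorization — is exactly the point you flag, and your handling of it is fine.
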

We will also use Morita equivalence---the fact that the category of
$M_n(R)$-modules is equivalent to the category of $R$-modules via an
equivalence sending $M_n(R)$ to $R^n$.

Recall the notion of ``neighborhood'' from \S\ref{sec:neighborhoods}.
\begin{proposition}\label{prop:sureish}
  If $A$ is a semisimple object in an abelian category $\Cc$, then
  there is a ring $R$ such that
  \begin{enumerate}
  \item $R$ is a finite product of division rings $D_1 \times \cdots
    \times D_k$
  \item The neighborhood of $A$ in $\Cc$ is equivalent to the category of
    finitely generated $R$-modules, or equivalently, the neighborhood of
    $R$ in $R\Mod$.
  \end{enumerate}
\end{proposition}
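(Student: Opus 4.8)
The plan is to reduce to the case where $A$ is a finite direct sum of simple objects, analyze the endomorphism ring using Schur's lemma, and then identify the neighborhood via Morita equivalence. First I would observe that since $A$ is semisimple of finite length, we may write $A \cong S_1^{m_1} \oplus \cdots \oplus S_k^{m_k}$, where $S_1, \ldots, S_k$ are the distinct simple summands up to isomorphism and $m_i \ge 1$. Set $D_i = \End_\Cc(S_i)^{op}$; by Schur's lemma each $D_i$ is a division ring, and $\Hom_\Cc(S_i, S_j) = 0$ for $i \ne j$. Computing the endomorphism ring block by block gives
\begin{equation*}
  \End_\Cc(A) \cong \prod_{i=1}^k M_{m_i}(\End_\Cc(S_i)) = \prod_{i=1}^k M_{m_i}(D_i^{op}),
\end{equation*}
so $\End_\Cc(A)^{op} \cong \prod_{i=1}^k M_{m_i}(D_i)$, a semisimple ring in the sense of Artin--Wedderburn.

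Next I would analyze the neighborhood $\Cc'$ of $A$, i.e.\ the full subcategory of objects isomorphic to subquotients of finite powers of $A$. Every object of $\Cc'$ is semisimple (subquotients and finite sums of semisimple objects are semisimple), and its simple summands lie among $S_1, \ldots, S_k$. So $\Cc'$ is the category of finite-length semisimple objects built from $S_1, \ldots, S_k$; by Schur's lemma this category splits as a product $\Cc'_1 \times \cdots \times \Cc'_k$, where $\Cc'_i$ is the full subcategory of finite sums of copies of $S_i$. Each $\Cc'_i$ is equivalent, via $X \mapsto \Hom_\Cc(S_i, X)$ (a standard argument: this functor is fully faithful and essentially surjective onto finite-dimensional $D_i$-vector spaces), to the category of finite-dimensional $D_i$-vector spaces, i.e.\ the neighborhood of $D_i$ in $D_i\Mod$. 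Taking $R = D_1 \times \cdots \times D_k$ and using Remark~\ref{ring-splitting} (that $R\Mod \simeq D_1\Mod \times \cdots \times D_k\Mod$), we get that $\Cc'$ is equivalent to the category of finitely generated $R$-modules, which is the neighborhood of $R$ in $R\Mod$.

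The main obstacle is making the functor $X \mapsto \Hom_\Cc(S_i, X)$ argument fully rigorous in the abstract abelian-category setting—in particular verifying that it lands in \emph{finite-dimensional} $D_i$-vector spaces and is an equivalence, which requires knowing that every object of $\Cc'_i$ really is a finite coproduct $S_i^m$ with $m = \dim_{D_i}\Hom_\Cc(S_i, X)$, and that maps between such objects are exactly $D_i$-linear maps of the Hom-spaces. This is the semisimple-module version of the argument underlying Artin--Wedderburn; since the problem statement explicitly invokes ``the same circle of ideas which prove the Artin--Wedderburn theorem,'' I would cite a standard reference for this equivalence rather than reproving it. Once that is in hand, Proposition~\ref{prop:to-prove} follows: part (1) of that proposition is the computation $\End_\Cc(A)^{op} \cong \prod M_{m_i}(D_i)$ combined with Morita equivalence to rewrite $\Dir_\Cc(A)$ as $\Dir_R(R)$ for $R = \End_\Cc(A)^{op}$, and part (2) is exactly the neighborhood description above with $M$ the finitely generated $S$-module $\bigoplus_i D_i^{m_i}$ over $S = D_1 \times \cdots \times D_k$.
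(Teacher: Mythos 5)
Your proposal is correct and follows essentially the same route as the paper's proof: the same decomposition $A \cong S_1^{m_1} \oplus \cdots \oplus S_k^{m_k}$, Schur's lemma as the key input, and the same ring $R = D_1 \times \cdots \times D_k$. The only cosmetic difference is the direction in which the equivalence is realized --- the paper exhibits an explicit fully faithful functor from a product of matrix categories $Mat(D_1) \times \cdots \times Mat(D_k)$ into $\Cc$ with essential image the neighborhood of $A$ (and then runs the identical argument a second time inside $R\Mod$), whereas you go the other way via $X \mapsto \Hom_\Cc(S_i,X)$; these functors are quasi-inverse, so the content is the same.
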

\begin{proof}
  For any noncommutative ring $R$, let $Mat(R)$ denote the category in
  which
  \begin{itemize}
  \item objects are nonnegative integers
  \item morphisms from $n$ to $m$ are $m \times n$ matrices
  \item composition is matrix multiplication.
  \end{itemize}
  Equivalently, $Mat(R)$ is the category of free $R^{op}$-modules of
  finite rank.  More generally, if $A$ is any object in an abelian category,
  then $Mat(\End_\Cc(A)^{op})$ is equivalent to the full category
  $\{0,A,A^2,A^3,\ldots\} \subseteq \Cc$.

  Write $A \cong A_1^{n_1} \oplus \cdots \oplus A_k^{n_k}$ for some
  pairwise non-isomorphic simple $A_i \in \Cc$ and $n_k > 0$.  Let
  $D_i$ be the division ring $\End_\Cc(A_i)^{op}$.  There is a natural
  functor
  \begin{align*}
    Mat(D_1) \times \cdots \times Mat(D_k) & \to \Cc
    \\ (j_1,\ldots,j_k) & \mapsto A_1^{j_1} \oplus \cdots \oplus A_k^{j_k}
  \end{align*}
  This functor is fully faithful by Schur's lemma, and the essential
  image is the neighborhood of $A$.  So the neighborhood of $A$ in $\Cc$ is
  equivalent to the category $Mat(D_1) \times \cdots \times Mat(D_k)$.

  Now let $R = D_1 \times \cdots \times D_k$.  In the category
  $R\Mod$, the object $R$ decomposes as an internal direct sum $D_1
  \oplus \cdots \oplus D_k$, where $\End_R(D_k) \cong D_k^{op}$.
  Replacing $\Cc$ with $R\Mod$ in the above argument, we see that the
  neighborhood of $R$ in $R\Mod$ is equivalent to $Mat(D_1) \times \cdots
  \times Mat(D_k)$.  Thus the neighborhood of $R$ in $R\Mod$ is also
  equivalent to the neighborhood of $A$ in $\Cc$.
\end{proof}
The following variant holds as well:
\begin{proposition}\label{prop:sureish2}
  If $A$ is a semisimple object in an abelian category $\Cc$, and $R
  = \End_\Cc(A)^{op}$, then
  \begin{enumerate}
  \item $R$ is a finite product of matrix algebras over division
    rings.
  \item The neighborhood of $A$ in $\Cc$ is equivalent to the neighborhood of
    $R$ in $R\Mod$, and the equivalence sends $A$ to $R$.
  \end{enumerate}
\end{proposition}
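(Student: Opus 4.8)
The plan is to assemble the statement out of three ingredients already available: the computation behind Proposition~\ref{prop:sureish}, Morita equivalence (as recalled in the excerpt), and the trivial observation that the neighborhood of an object does not change if one replaces it by a finite power of itself.

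\textit{Step 1 (computing $R$).} Write $A \cong A_1^{n_1} \oplus \cdots \oplus A_k^{n_k}$ with the $A_i$ pairwise non-isomorphic simple objects and each $n_i > 0$, and set $D_i = \End_\Cc(A_i)^{op}$, a division ring by Schur's lemma. Schur's lemma also forces $\Hom_\Cc(A_i^{n_i}, A_j^{n_j}) = 0$ for $i \ne j$, so
\begin{equation*}
  \End_\Cc(A) \cong \prod_{i=1}^k \End_\Cc(A_i^{n_i}) \cong \prod_{i=1}^k M_{n_i}(\End_\Cc(A_i)).
\end{equation*}
Passing to opposite rings and using the transpose anti-isomorphism $M_n(S)^{op} \cong M_n(S^{op})$, we get $R = \End_\Cc(A)^{op} \cong \prod_i M_{n_i}(D_i)$, a finite product of matrix algebras over division rings. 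This is point (1). Also record that, as a left $R$-module, $R \cong \bigoplus_i M_{n_i}(D_i)$, where the $i$th summand is $M_{n_i}(D_i)$ with $R$ acting through the $i$th projection.

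\textit{Step 2 (neighborhoods and finite powers).} For any object $B$ in an abelian category and any $m \ge 1$, $B$ is a subobject of $B^m$ and $B$ is a quotient of $B^m$, so $B$ and $B^m$ have the same subquotients of finite powers, i.e.\ the same neighborhood; more generally, if all $m_i \ge 1$ then $\bigoplus_i B_i^{m_i}$ and $\bigoplus_i B_i$ have the same neighborhood. I would record this as a one-line remark and use it three times below.

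\textit{Step 3 (assembly).} The proof of Proposition~\ref{prop:sureish} produces an equivalence from the neighborhood of $A$ in $\Cc$ onto $Mat(D_1) \times \cdots \times Mat(D_k)$, sending the standard generating object $(n_1,\ldots,n_k)$ to $A$. Setting $R' = D_1 \times \cdots \times D_k$, the same proof applied to $R'$ in $R'\Mod$ gives an equivalence from the neighborhood of $R'$ onto the same product category, sending $(n_1,\ldots,n_k)$ to $\bigoplus_i D_i^{n_i}$; by Step 2 the neighborhood of $\bigoplus_i D_i^{n_i}$ equals the neighborhood of $R'$. Composing, we obtain an equivalence from the neighborhood of $A$ in $\Cc$ onto the neighborhood of $R'$ in $R'\Mod$ carrying $A$ to $\bigoplus_i D_i^{n_i}$. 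Now apply Morita equivalence in each factor, $M_{n_i}(D_i)\Mod \simeq D_i\Mod$ with $M_{n_i}(D_i) \mapsto D_i^{n_i}$; taking products gives $R\Mod \simeq R'\Mod$ sending the regular module $R = \bigoplus_i M_{n_i}(D_i)$ to $\bigoplus_i D_i^{n_i}$. Restricting to neighborhoods (Step 2 again) yields an equivalence from the neighborhood of $R$ in $R\Mod$ onto the neighborhood of $R'$ in $R'\Mod$ sending $R$ to $\bigoplus_i D_i^{n_i}$. Composing this with the previous equivalence gives an equivalence from the neighborhood of $A$ in $\Cc$ onto the neighborhood of $R$ in $R\Mod$ taking $A$ to $R$, which is point (2).

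The only genuine obstacle is bookkeeping: keeping track of the opposite-ring conventions (so that it really is $\End_\Cc(A)^{op}$, and not $\End_\Cc(A)$, whose module category matches) and of the direction of Morita equivalence (so that $M_{n_i}(D_i)$ corresponds to the $n_i$-dimensional simple module rather than to $D_i$ itself). Everything else is a direct assembly of facts already used in the proof of Proposition~\ref{prop:sureish}.
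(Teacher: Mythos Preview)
Your proof is correct and follows essentially the same route as the paper's: invoke Proposition~\ref{prop:sureish} to land in $R'\Mod$ with $R' = D_1 \times \cdots \times D_k$, then use Morita equivalence in each factor to pass to $R\Mod$ with $R = \prod_i M_{n_i}(D_i)$, and chain the equivalences so that $A$ lands on $R$. Your Step~2 observation about neighborhoods and finite powers is exactly the implicit identification the paper makes when it matches the neighborhood of $R_0$ (from Proposition~\ref{prop:sureish}) with the neighborhood of $M = \bigoplus_i D_i^{n_i}$.

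The one organizational difference is that you compute $R = \End_\Cc(A)^{op} \cong \prod_i M_{n_i}(D_i)$ directly at the start via Schur's lemma and the transpose anti-isomorphism $M_n(S)^{op} \cong M_n(S^{op})$, whereas the paper reverses this: it first builds the equivalence sending $A$ to the regular module over $\prod_i M_{n_i}(D_i)$, and only then reads off $\End_\Cc(A) \cong \End_R(R) = R^{op}$ from the equivalence. The paper's ordering sidesteps the opposite-ring bookkeeping you flag at the end---since any equivalence preserves endomorphism rings, the identification $R = \End_\Cc(A)^{op}$ comes for free once the equivalence is in hand. Your ordering has the virtue of making point~(1) self-contained and independent of the Morita step.
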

\begin{proof}
  By Proposition~\ref{prop:sureish} there is $R_0 = D_1 \times \cdots
  \times D_k$ such that the neighborhood of $A$ in $\Cc$ is equivalent to
  the neighborhood of $R_0$ in $R_0\Mod$.  Let $M \in R_0\Mod$ be the
  object corresponding to $A$ under this equivalence.  Write $M$ as
  $D_1^{n_1} \times \cdots \times D_k^{n_k}$.  For each $k$, there is
  a Morita equivalence from the category of $M_{n_k}(D_k)$-modules to
  the category of $D_k$-modules.  This equivalence sends
  $M_{n_k}(D_k)$ to $D_k^{n_k}$.  Let $R = M_{n_1}(D_1) \times \cdots
  \times M_{n_k}(D_k)$.  The Morita equivalences assemble to an
  equivalence
  \begin{equation*}
    R\Mod = (M_{n_1}(D_1) \times \cdots \times M_{n_k}(D_k))\Mod
    \stackrel{\sim}{\to} (D_1 \times \cdots \times D_k)\Mod = R_0\Mod
  \end{equation*}
  under which $R$ maps to $D_1^{n_1} \times \cdots \times D_k^{n_k} =
  M$.  Thus the neighborhood of $M$ in $R_0\Mod$ is equivalent to the
  neighborhood of $R$ in $R\Mod$, via an equivalence sending $M$ to $R_0$.

  Chaining things together, there is an equivalence from the neighborhood
  of $A$ in $\Cc$ to the neighborhood of $R$ in $R\Mod$, and this
  equivalence sends $A$ to $R$.  It follows that $\End_\Cc(A)
  = \End_R(R) = R^{op}$.
\end{proof}
Now Proposition~\ref{prop:to-prove} follows from
Propositions~\ref{prop:sureish} and \ref{prop:sureish2}, because
$\Dir_\Cc(A)$ is determined by the neighborhood of $A$ in $\Cc$.  Also,
Proposition~\ref{prop:sureish2} implies the harder
(\ref{aw1})$\implies$(\ref{aw2}) direction of the Artin-Wedderburn
theorem (Theorem~\ref{thm:aw}).\footnote{The easier
  (\ref{aw2})$\implies$(\ref{aw1}) direction follows by Morita
  equivalence, I suppose.}  This is essentially the standard proof of
the Artin-Wedderburn theorem.

\subsection{Examples of directory morphisms}
The next two propositions verify the (intuitively obvious) fact that
Examples~\ref{from-morphism}-\ref{from-functor} are valid.
\begin{proposition}\label{prop:ffs1}
  If $f : A \to B$ is a morphism in an abelian category $\Cc$, then the
  pushforward and pullback maps
  \begin{align*}
    f_* : \Dir_\Cc(A) &\to \Dir_\Cc(B) \\
    f^* : \Dir_\Cc(B) &\to \Dir_\Cc(A)
  \end{align*}
  of Example~\ref{from-morphism} are directory morphisms.
\end{proposition}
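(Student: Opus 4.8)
## Proof proposal for Proposition~\ref{prop:ffs1}

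The plan is to verify that $f_*$ and $f^*$ satisfy the three conditions in Definition~\ref{def:morphisms}: each component map is order-preserving, each is $GL_n(K_0)$-equivariant, and the family is compatible with $\oplus$. I would treat the pushforward $f_*$ in detail and then obtain $f^*$ either by the same kind of argument or by a duality remark (passing to $\Cc^{op}$, where pullback along $f : A \to B$ becomes pushforward along $f^{op} : B \to A$, and noting that a directory morphism for $\Cc^{op}$ is the same data as one for $\Cc$ since the $\oplus$-structure and $GL_n(K_0)$-action are self-dual up to the obvious identifications).

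First I would recall the concrete description: $f_*$ acts on level $n$ by direct image along $f^{\oplus n} : A^n \to B^n$, sending a subobject $X \hookrightarrow A^n$ to $\img(f^{\oplus n}|_X) \hookrightarrow B^n$; dually $f^*$ acts by inverse image along $f^{\oplus n}$. Order-preservation is immediate, since direct and inverse images of subobjects along a fixed morphism are monotone operations on subobject lattices (this is standard and can be checked via the Mitchell embedding, reducing to $R\Mod$). For $GL_n(K_0)$-equivariance, the key point is that the componentwise map $f^{\oplus n}$ commutes with the $GL_n(K_0)$-action: a matrix $\mu \in GL_n(K_0)$ acts on $A^n$ and on $B^n$ by the ``same'' linear combinations of coordinates, and $f^{\oplus n} \circ \mu_A = \mu_B \circ f^{\oplus n}$ because $f$ is $K_0$-linear (every entry of $\mu$ lies in $K_0$ and $\Cc$ is a $K_0$-linear category, so scalar multiplication by elements of $K_0$ is natural). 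From $f^{\oplus n} \circ \mu_A = \mu_B \circ f^{\oplus n}$ one gets $f_{*,n}(\mu \cdot X) = \mu \cdot f_{*,n}(X)$ and $f^*_n(\mu \cdot Y) = \mu \cdot f^*_n(Y)$ by functoriality of direct/inverse image under composition. For compatibility with $\oplus$, I would observe that $f^{\oplus(n+m)}$ is identified with $f^{\oplus n} \oplus f^{\oplus m}$ under the canonical isomorphism $A^{n+m} \cong A^n \oplus A^m$, and that direct image along a direct sum of two maps sends $X \oplus X'$ to $(\text{direct image of } X) \oplus (\text{direct image of } X')$; likewise for inverse images. Hence $f_{*,n+m}(X \oplus X') = f_{*,n}(X) \oplus f_{*,m}(X')$, and similarly for $f^*$.

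The main obstacle, such as it is, is bookkeeping rather than mathematics: one must be careful that the three structures in play — the lattice operations (not required to be preserved), the $GL_n(K_0)$-action, and the $\oplus$ connecting maps — are each respected, and in particular that the $\oplus$-compatibility is stated with the correct transpose conventions matching Definition~\ref{def:morphisms}. I would handle this by reducing everything to the single clean fact that $f^{\oplus\bullet}$ is a natural transformation of the relevant functors $A^\bullet \to B^\bullet$ respecting the biproduct structure, and then citing the general behaviour of direct and inverse image under composition and under biproducts (provable via Mitchell embedding into $R\Mod$, where all of this is elementary module theory). The functoriality claims $(f \circ g)_* = f_* \circ g_*$ and $(f\circ g)^* = g^* \circ f^*$, together with the three special cases (isomorphism, monomorphism, epimorphism) listed in Example~\ref{from-morphism}, then follow from the corresponding identities for direct and inverse images of submodules: $\img$ and preimage are functorial in the map, $f^* \circ f_* = \id$ when $f$ is monic (a monomorphism of modules is injective, so preimage recovers the original submodule), and $f_* \circ f^* = \id$ when $f$ is epic (surjectivity of $f^{\oplus n}$ forces the image of a preimage to be the whole submodule). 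I would include these as short remarks at the end rather than as a separate argument.
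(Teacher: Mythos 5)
Your proposal is correct and follows essentially the same route as the paper: order-preservation is immediate, equivariance comes from the commuting square $f^{\oplus n}\circ\mu_A=\mu_B\circ f^{\oplus n}$ (which the paper draws as a diagram and pushes/pulls along both ways), and $\oplus$-compatibility is checked after a Mitchell embedding by identifying $f^{\oplus(n+m)}$ with $f^{\oplus n}\oplus f^{\oplus m}$. Your added remarks on duality and on the functoriality identities of Example~\ref{from-morphism} are fine but go beyond what the proposition itself asserts.
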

\begin{proof}
  It is trivial that $f^*$ and $f_*$ are levelwise order-preserving.
  For $GL_n(K_0)$-equivariance, let $\mu$ be an invertible $n \times
  n$ matrix over $K_0$.  Then there is a commuting diagram
  \begin{equation*}
    \xymatrix{ A^n \ar[r]^{f^{\oplus n}} \ar[d] & B^n \ar[d] \\
      A^n \ar[r]_{f^{\oplus n}} & B^n}
  \end{equation*}
  in which the vertical maps are isomorphisms induced by $\mu$.  Given
  a subobject $X \le A^n$, the two ways of pushing forward to the
  bottom left corner are equal, which shows that
  \begin{equation*}
    (f^{\oplus n})_*(\mu \cdot X) = \mu \cdot (f^{\oplus n})_*(X).
  \end{equation*}
  A similar argument using $\mu^{-1}$ instead of $\mu$, and pullbacks
  instead of pushforwards, shows that
  \begin{equation*}
    (f^{\oplus n})^*(\mu \cdot X) = \mu \cdot (f^{\oplus n})^*(X).
  \end{equation*}
  Finally, for $\oplus$-compatibility, note that for $X \le A^n$ and $Y \le A^m$,
  \begin{equation*}
    (f^{\oplus (n + m)})_*(X \oplus Y) = (f^{\oplus n})_*(X) \oplus (f^{\oplus m})_*(Y)    
  \end{equation*}
  because, choosing a Mitchell embedding, both sides are
  \begin{equation*}
    \{(f(x_1),\ldots,f(x_n),f(y_1),\ldots,f(y_m)) : \vec{x} \in X, ~ \vec{y} \in Y\}.
  \end{equation*}
  Similarly, for $X \le B^n$ and $Y \le B^m$,
  \begin{equation*}
    (f^{\oplus (n + m)})^*(X \oplus Y) = (f^{\oplus n})^*(X) \oplus (f^{\oplus m})^*(Y) 
  \end{equation*}
  because, choosing a Mitchell embedding, both sides are
  \begin{equation*}
    \{(\vec{x},\vec{y}) \in A^n \times A^m : (f(x_1),\ldots,f(x_n))
    \in X \textrm{ and } (f(y_1),\ldots,f(y_m)) \in Y\} \qedhere
  \end{equation*}
\end{proof}

\begin{proposition}\label{prop:ffs2}
  If $F : \Cc \to \Cc'$ is a left-exact functor and $A \in \Cc$, then the map
  \begin{equation*}
    F_* : \Dir_\Cc(A) \to \Dir_{\Cc'}(F(A))
  \end{equation*}
  of Example~\ref{from-functor} is a directory morphism.
\end{proposition}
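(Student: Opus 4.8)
The plan is to verify the three conditions of Definition~\ref{def:morphisms} directly, in close parallel with the proof of Proposition~\ref{prop:ffs1}. The first thing to record is why $F_{*,n}$ is even well-defined on subobjects: since $F$ is left-exact it preserves kernels, and a monomorphism $X \hookrightarrow A^n$ is the kernel of the quotient map $A^n \to A^n/X$, so $F(X) \to F(A^n)$ is again a monomorphism and determines a genuine subobject of $F(A^n)$; composing with the canonical isomorphism $F(A^n) \cong F(A)^n$---which exists and is natural because $F$, being $K_0$-linear, is additive and hence preserves finite biproducts---gives the subobject $F_{*,n}(X) \in \Sub_{\Cc'}(F(A)^n)$. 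Order-preservation is then immediate: if $X \subseteq Y$ inside $A^n$, factor the monomorphism $X \hookrightarrow A^n$ through $Y \hookrightarrow A^n$, apply $F$, and transport along the canonical isomorphisms to exhibit $F_{*,n}(X) \subseteq F_{*,n}(Y)$.

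For $GL_n(K_0)$-equivariance, fix $\mu \in GL_n(K_0)$ and let $\mu_A \in \Aut_\Cc(A^n)$ be the induced automorphism; the key point is that, under the canonical isomorphism $F(A^n) \cong F(A)^n$, one has $F(\mu_A) = \mu_{F(A)}$, because $\mu_A$ is assembled out of the scalars $\mu_{ij} \in K_0$ acting through the $K_0$-linear structure on the Hom-groups $\Hom_\Cc(A,A)$, and $F$ is $K_0$-linear and additive while the canonical isomorphism is natural. Since the direct image of a subobject along an isomorphism is computed by $F$ functorially (as in Example~\ref{from-morphism}), we get $F_{*,n}(\mu \cdot X) = F(\mu_A \cdot X) = \mu_{F(A)} \cdot F(X) = \mu \cdot F_{*,n}(X)$. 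For $\oplus$-compatibility, recall that $X \oplus Y \subseteq A^{n+m}$ is the image of the canonical monomorphism $X \oplus Y \to A^n \oplus A^m \cong A^{n+m}$; since $F$ is additive it preserves direct sums and since it preserves monomorphisms it preserves images of monomorphisms, so applying $F$ and transporting along the canonical isomorphisms yields $F_{*,n+m}(X \oplus Y) = F_{*,n}(X) \oplus F_{*,m}(Y)$. As in Proposition~\ref{prop:ffs1}, one may also choose a Mitchell embedding of $\Cc'$ into a module category and carry out all three checks by a short computation with honest elements.

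For a statement of this kind---part of the appendix double-checking of ``obvious'' facts---the only real point of care is to keep the canonical isomorphisms $F(A^n) \cong F(A)^n$ honest: one must use that they are natural in $A$, compatible with the block matrix/permutation action of $GL_n(K_0)$, and compatible with the identification $A^{n+m} = A^n \oplus A^m$, so that the bare functoriality of $F$ really produces $GL_n(K_0)$-equivariance and $\oplus$-compatibility on the nose rather than only up to some uncontrolled isomorphism. Invoking the Mitchell embedding theorem for $\Cc'$ (Appendix~\ref{app:ab}) sidesteps this entirely by reducing every verification to the module-theoretic computation already used for Proposition~\ref{prop:ffs1}.
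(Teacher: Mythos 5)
Your proposal is correct and takes essentially the same route as the paper: both verify the three conditions of Definition~\ref{def:morphisms} directly, using left-exactness to see that $F$ carries monomorphisms to monomorphisms, $K_0$-linearity/additivity to see that $F(\mu_A)$ is $\mu_{F(A)}$ under the canonical identification $F(A^n)\cong F(A)^n$, and preservation of biproducts for $\oplus$-compatibility. The only cosmetic difference is that the paper fixes Mitchell embeddings of $\Cc$ and $\Cc'$ at the outset so that all three checks become element-level computations, whereas you carry them out abstractly and mention the Mitchell embedding as an alternative.
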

\begin{proof}
  Fix Mitchell embeddings of $\Cc$ and $\Cc'$, so that we can identify
  subobjects of $A$ with literal subgroups of $A$.  Then the map from
  $\Sub_\Cc(A^n)$ to $\Sub_{\Cc'}(F(A)^n)$ is the map sending a
  subobject $X \subseteq A^n$ to the image of $F(X) \to F(A)^n$.

  The map is order-preserving: suppose $X \subseteq Y \subseteq A^n$.
  Then $F(X) \to F(A)^n$ factors through $F(Y) \to F(A)^n$, so $F_*(X)
  \le F_*(Y)$.

  The map is is $GL_n(K_0)$-equivariant: suppose $X \subseteq A^n$ and
  $\mu$ is an invertible $n \times n$ matrix over $K_0$.  Since $F$ is
  $K_0$-linear, the image of
  \begin{equation*}
    A^n \stackrel{\mu}{\to} A^n
  \end{equation*}
  under $F$ is
  \begin{equation*}
    F(A)^n \stackrel{\mu}{\to} F(A)^n.
  \end{equation*}
  Now $\mu \cdot X$ is the image of the composition
  \begin{equation*}
    X \stackrel{\subseteq}{\to} A^n \stackrel{\mu}{\to} A^n,
  \end{equation*}
  and so $F_*(\mu \cdot X)$ is the image of
  \begin{equation*}
    F(X) \to F(A)^n \stackrel{\mu}{\to} F(A)^n
  \end{equation*}
  The first map is essentially the inclusion of $F_*(X)$ into
  $F(A)^n$, showing that $F_*(\mu \cdot X) = \mu \cdot F_*(X)$.

  The map is compatible with $\oplus$: suppose $X \subseteq A^n$ and
  $Y \subseteq A^m$.  Apply $F$ to the commuting diagram
  \begin{equation*}
    \xymatrix{ X \ar[r] \ar[d] & X \oplus Y \ar[d] & Y \ar[l] \ar[d] \\
      A^n \ar[r] & A^{n+m} & A^m. \ar[l]}
  \end{equation*}
  This yields the following diagram,
  \begin{equation*}
    \xymatrix{ F(X) \ar[r] \ar[d] & F(X \oplus Y) \ar[d] & F(Y) \ar[l] \ar[d] \\
      F(A)^n \ar[r] & F(A)^{n+m} & F(A)^m. \ar[l]}
  \end{equation*}
  The images of the vertical maps are $F_*(X), F_*(X \oplus Y),
  F_*(Y)$, respectively.  Moreover, the vertical maps are inclusions,
  by left-exactness.  Changing the top row by an isomorphism, we get a diagram in which all the vertical maps are inclusions:
  \begin{equation*}
    \xymatrix{ F_*(X) \ar[r] \ar[d] & F_*(X \oplus Y) \ar[d] & F_*(Y) \ar[l] \ar[d] \\
      F(A)^n \ar[r] & F(A)^{n+m} & F(A)^m. \ar[l]}    
  \end{equation*}
  The top row is a coproduct diagram, because $F$ preserves
  coproducts.  Therefore $F_*(X \oplus Y) = F_*(X) \oplus F_*(Y)$.
\end{proof}

\subsection{The theorem on interval directories}\label{app:interval-proof}
\begin{proposition}[= Proposition~\ref{prop:sq}]\label{prop:sq-copy}
  Let $D_\bullet$ be a directory and $a \le b$ be elements of $D_1$.
  Let $D^{[a,b]}_n$ be the interval $[a^{\oplus n},b^{\oplus n}]$
  inside $D_n$.
  \begin{enumerate}
  \item \label{sq1} $D^{[a,b]}_\bullet$ forms a substructure of
    $D_\bullet$, i.e., $D^{[a,b]}_\bullet$ is closed under $\oplus,
    \vee, \wedge,$ and the $GL_\bullet(K_0)$-action.
  \item \label{sq2} The substructure $D^{[a,b]}_\bullet$ is itself a
    directory.
  \item \label{sq3} The inclusion maps $i_n : D^{[a,b]}_n
    \stackrel{\subseteq}{\to} D_n$ form a directory morphism $i :
    D^{[a,b]}_\bullet \to D_\bullet$.
  \item \label{sq4} There is a directory morphism $r : D_\bullet \to
    D^{[a,b]}_\bullet$ given by
    \begin{equation*}
      r_n(x) = (x \vee a^{\oplus n}) \wedge b^{\oplus n} = (x \wedge
      b^{\oplus n}) \vee a^{\oplus n},
    \end{equation*}
    and $r$ is a retract of $i$: $r \circ i$ is the identity map on
    $D^{[a,b]}_\bullet$.
  \item \label{sq5} If $f : D' \to D$ is some directory morphism, then
    $f$ factors through $D^{[a,b]} \to D$ if and only if $f_n(x) \in
    [a^{\oplus n},b^{\oplus n}]$ for all $n$ and all $x \in D'_n$.
  \item \label{sq6} If $D_\bullet = \Dir(C)$ for some object $C$ in an
    abelian category, and if $a, b$ correspond to subobjects $A
    \subseteq B \subseteq C$, then $D^{[a,b]}_\bullet$ is isomorphic
    to $\Dir(B/A)$ via the maps from the isomorphism theorems.
  \end{enumerate}
\end{proposition}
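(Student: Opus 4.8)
The plan is to reduce everything to the concrete setting of an abelian category via the definition of ``directory,'' and then to verify each claim by unwinding the isomorphism theorems. So first I would write $D_\bullet = \Dir_\Cc(C)$ for some object $C$ in a $K_0$-linear abelian category $\Cc$, and let $A \subseteq B \subseteq C$ be the subobjects corresponding to $a \le b \in D_1 = \Sub_\Cc(C)$. Then the interval $[a^{\oplus n}, b^{\oplus n}]$ inside $D_n = \Sub_\Cc(C^n)$ is precisely the interval $[A^n, B^n]$, and by the third isomorphism theorem this is isomorphic (as a poset, hence as a lattice) to $\Sub_\Cc(B^n/A^n)$. This identification is the backbone of the whole proof: it simultaneously suggests that $D^{[a,b]}_\bullet \cong \Dir_\Cc(B/A)$ and explains why all the structure maps restrict correctly.

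Next I would address the closure statement (\ref{sq1}). Closure under $\vee$ and $\wedge$ is automatic since an interval in a lattice is a sublattice. For the $GL_n(K_0)$-action: a matrix $\mu \in GL_n(K_0)$ fixes $A^n$ and $B^n$ setwise, because $A^n, B^n$ are ``coordinate-diagonal'' subobjects (they are $A^{\oplus n}, B^{\oplus n}$, images of $K_0$-linear — indeed even the diagonal — maps), so $\mu$ preserves the interval $[A^n, B^n]$. Concretely, $\mu \cdot A^{\oplus n} = A^{\oplus n}$ since $\mu$ acts on $C^n$ through the functor and $A^{\oplus n}$ is $\mu$-invariant as a subobject. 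For closure under $\oplus$: if $x \in [A^n, B^n]$ and $y \in [A^m, B^m]$ then $x \oplus y \in [A^n \oplus A^m, B^n \oplus B^m] = [A^{n+m}, B^{n+m}]$, using that $A^{\oplus(n+m)} = A^{\oplus n} \oplus A^{\oplus m}$ and similarly for $B$. This gives (\ref{sq1}).

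For (\ref{sq6}) — which I would prove before (\ref{sq2}) and (\ref{sq3}), since they follow from it — I would exhibit the explicit isomorphism of multi-sorted structures $D^{[a,b]}_\bullet \to \Dir_\Cc(B/A)$. At level $n$, the map $[A^n, B^n] \to \Sub_\Cc(B^n/A^n) = \Sub_\Cc((B/A)^n)$ is $X \mapsto X/A^n$, where I use the canonical isomorphism $B^n/A^n \cong (B/A)^n$. One checks it is a lattice isomorphism (third isomorphism theorem), is $GL_n(K_0)$-equivariant (the action on $B^n/A^n$ is induced from that on $C^n$, which restricts to $[A^n,B^n]$), and is compatible with $\oplus$ (the isomorphisms $B^n/A^n \cong (B/A)^n$ are compatible with direct sums). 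Then (\ref{sq2}) is immediate since $\Dir_\Cc(B/A)$ is a directory by definition; and (\ref{sq3}) follows because the inclusion $i_n : [A^n, B^n] \hookrightarrow \Sub_\Cc(C^n)$ is, under this identification, the composite of the above isomorphism with the pushforward along the inclusion monomorphism $B \hookrightarrow C$ composed with pullback — or more simply, $i$ is just the inclusion of a substructure, and (\ref{sq1}) already says the substructure operations agree, so $i$ is trivially order-preserving, $GL_n$-equivariant, and $\oplus$-compatible, hence a directory morphism.

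For (\ref{sq4}) I would first verify the algebraic identity $(x \vee a^{\oplus n}) \wedge b^{\oplus n} = (x \wedge b^{\oplus n}) \vee a^{\oplus n}$, which is exactly the modular law applied with $a^{\oplus n} \le b^{\oplus n}$ in the modular lattice $D_n$. Then $r_n$ lands in $[a^{\oplus n}, b^{\oplus n}]$ by construction, and $r_n$ is order-preserving since $\vee$ and $\wedge$ are monotone. Equivariance: $\mu \cdot r_n(x) = \mu \cdot ((x \vee a^{\oplus n}) \wedge b^{\oplus n}) = (\mu x \vee \mu a^{\oplus n}) \wedge \mu b^{\oplus n} = (\mu x \vee a^{\oplus n}) \wedge b^{\oplus n} = r_n(\mu x)$, using $\mu$-invariance of $a^{\oplus n}, b^{\oplus n}$ and that $\mu$ acts by a lattice automorphism. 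Compatibility with $\oplus$: here one needs $(x \oplus y) \vee (a^{\oplus n} \oplus a^{\oplus m}) = (x \vee a^{\oplus n}) \oplus (y \vee a^{\oplus m})$ and the dual, which hold because in $\Dir_\Cc(C)$ the operation $\oplus$ on subobjects is itself given by direct sum of subobjects, and direct sum commutes with $\vee, \wedge$ of subobjects in the respective factors — this is the one spot where I'd be a little careful, reducing to the module case via a Mitchell embedding if needed. Finally $r \circ i = \id$ because for $x \in [a^{\oplus n}, b^{\oplus n}]$ we have $x \vee a^{\oplus n} = x$ and $x \wedge b^{\oplus n} = x$.

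For (\ref{sq5}): if $f : D' \to D$ factors as $D' \to D^{[a,b]} \xrightarrow{i} D$ then clearly $f_n(x) \in [a^{\oplus n}, b^{\oplus n}]$ for all $x$. Conversely, if every $f_n(x)$ lies in the interval, define $g_n = r_n \circ f_n = f_n$ (since $r_n$ is the identity on the interval), viewed now as a map $D'_n \to D^{[a,b]}_n$; it is a directory morphism as a composite $r \circ f$ of directory morphisms, and $i \circ g = i \circ r \circ f = f$ because $i \circ r$ restricted to the image of $f$ (which lies in $D^{[a,b]}$) is the identity — more precisely $i_n(r_n(f_n(x))) = i_n(f_n(x)) = f_n(x)$ since $f_n(x) \in [a^{\oplus n}, b^{\oplus n}]$. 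The only subtlety here is noting that $r$ is globally a morphism (from (\ref{sq4})) so $r \circ f$ is too; the factorization is then formal.

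I expect the main obstacle to be the bookkeeping in (\ref{sq4}) and (\ref{sq6}) around the compatibility of the $\oplus$ operation with $\vee$ and $\wedge$, and around the natural isomorphisms $B^{\oplus n}/A^{\oplus n} \cong (B/A)^{\oplus n}$ being compatible with both the $GL_n(K_0)$-actions and the connecting maps $\oplus : \Sub(\cdot^n) \times \Sub(\cdot^m) \to \Sub(\cdot^{n+m})$. None of this is deep, but it requires either a careful diagram chase in a general abelian category or an appeal to the Mitchell embedding theorem to reduce to $R\Mod$, where the subobjects are honest submodules and all the identities become elementary set-theoretic facts about direct sums of submodules. I would take the Mitchell-embedding route for the parts involving $\oplus$-compatibility and verify the purely lattice-theoretic parts (the modular identity, monotonicity, $r \circ i = \id$) directly.
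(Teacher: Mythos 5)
Your proposal is correct, and the overall reduction is the same as the paper's: pass to $D_\bullet = \Dir_\Cc(C)$ with $A \subseteq B \subseteq C$ and let the isomorphism theorems do the work. The difference is in how the directory-morphism axioms for $i$ and $r$ get verified. The paper factors both maps through $\Dir(B)$: writing $f : B \hookrightarrow C$ for the inclusion and $g : B \twoheadrightarrow B/A$ for the quotient, it observes that $i = f_* \circ g^*$ and $r = g_* \circ f^*$, so that all the order-preservation, $GL_n(K_0)$-equivariance, and $\oplus$-compatibility checks are inherited for free from the already-established fact (Example~\ref{from-morphism} / Proposition~\ref{prop:ffs1}) that pushforward along a monomorphism and pullback along an epimorphism are directory morphisms; the identification of these composites with the interval inclusion and the map $x \mapsto (x \wedge b^{\oplus n}) \vee a^{\oplus n}$ is then exactly the isomorphism-theorem bookkeeping, and the retract property $r \circ i = \mathrm{id}$ falls out of $g_* g^* = \mathrm{id}$ and $f^* f_* = \mathrm{id}$. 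You instead verify each axiom by hand — the modular-law identity for the two expressions of $r_n$, the $\mu$-invariance of $a^{\oplus n}$ and $b^{\oplus n}$, and the distributivity of $\oplus$ over $\vee$ and $\wedge$ for product-form subobjects (correctly reduced to $R\Mod$ via Mitchell embedding). Both routes are sound; the paper's buys brevity by reusing its stock of generating morphisms, which is precisely the design philosophy announced in \S\ref{sec:directory-morphisms}, while yours is more self-contained at the cost of redoing checks that were already done once for $f_*$ and $f^*$ in general.
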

\begin{proof}
  Changing everything by an isomorphism, we may assume we are in the
  setting of (\ref{sq6}).  Let $f : B \hookrightarrow C$ be the
  inclusion and $g : B \twoheadrightarrow B/A$ be the quotient map.
  By Example~\ref{from-morphism}, we get a commutative diagram
  \begin{equation} \label{perseus}
    \xymatrix{ \Dir(B/A) \ar[r]^{g^*} \ar[d]_{id} & \Dir(B)
      \ar[r]^{f_*} \ar[d]_{id} & \Dir(C) \ar[dl]^{f^*} \\ \Dir(B/A) &
      \Dir(B) \ar[l]^{g_*} & }
  \end{equation}
  Thus $r^0 := g_* \circ f^*$ is a retract of $i^0 := f_* \circ g^*$.

  At level $n$, the resulting diagram of functions is
  \begin{equation*}
    \xymatrix{ \Sub(B^n/A^n) \ar[r]^{g^*} \ar[d]_{id} & \Sub(B^n)
      \ar[r]^{f_*} \ar[d]_{id} & \Sub(C^n) \ar[dl]^{f^*} \\ \Sub(B^n/A^n) &
      \Sub(B^n) \ar[l]^{g_*} & }
  \end{equation*}
  By the isomorphism theorems, this diagram is isomorphic to
  \begin{equation}\label{theseus}
    \xymatrix{ [a^{\oplus n},b^{\oplus n}] \ar[r] \ar[d]_{id} &
      [\bot,b^{\oplus n}] \ar[r] \ar[d]_{id} & \Sub(C^n) \ar[dl]
      \\ [a^{\oplus n},b^{\oplus n}] & [\bot,b^{\oplus n}] \ar[l]}
  \end{equation}
  where all the intervals are in $\Sub(C^n)$, the rightward maps are
  inclusions, and the leftward maps are $- \wedge b^{\oplus n}$ and $-
  \vee a^{\oplus n}$.

  Therefore, there is \emph{some} choice of directory structures on
  $D^{[a,b]}_\bullet$ and $D^{[\bot,b]}_\bullet$ such that the maps of
  (\ref{theseus}) assemble into directory morphisms forming a diagram
  isomorphic to (\ref{perseus}):
  \begin{equation*}
    \xymatrix{ D^{[a,b]}_\bullet \ar[r] \ar[d]_{id} &
      D^{[\bot,b]}_\bullet \ar[r] \ar[d]_{id} & \Dir(C) \ar[dl]
      \\ D^{[a,b]}_\bullet & D^{[\bot,b]}_\bullet \ar[l] & }
  \end{equation*}
  Dropping the middle column, we get
  \begin{equation} \label{minos}
    \xymatrix{ D^{[a,b]}_\bullet \ar[r]^i \ar[d]_{id} & \Dir(C) \ar[dl]^r
      \\ D^{[a,b]}_\bullet &  }
  \end{equation}
  where $i$ is the inclusion and $r$ is the map
  \begin{equation*}
    x \mapsto (x \wedge b^{\oplus n}) \vee a^{\oplus n}
  \end{equation*}
  The diagram (\ref{minos}) ensures that $D^{[a,b]}_\bullet$ is an
  induced substructure of $\Dir(C) = D_\bullet$, at least for the
  following parts of the structure:
  \begin{itemize}
  \item The $\oplus$ operator
  \item The $GL_n(K_0)$-action
  \item The partial order $\le$.
  \end{itemize}
  As for $\vee$ and $\wedge$, it is a general fact that any interval
  $[c,d]$ in a lattice $L$ is naturally a sublattice of $L$.

  This proves all the points but (\ref{sq5}).  Let $f : D'_\bullet \to
  D_\bullet$ be a directory morphism.  If $f$ factors through the
  inclusion $i : D^{[a,b]}_\bullet \hookrightarrow D_\bullet$, then
  clearly $f_n(x) \in D^{[a,b]}_n = [a^{\oplus n},b^{\oplus n}]$ for
  all $x \in D'_n$.  Conversely, if $f_n(x) \in D^{[a,b]}_n$ for all
  $x$, then $f = i \circ r \circ f$, so $f$ factors through $i$.
\end{proof}

\subsection{The category $\Hh$}\label{sec:annoy}
As in \S\ref{sec:tdef-setting}, let $\Kk$ be a monster model of a
field, possibly with extra structure.  Let $K_0$ be a small infinite
subfield.  Let $\Lambda_n$ be the lattice of type-definable
$K_0$-linear subspaces of $\Kk$.
\begin{proposition}[= Proposition~\ref{prop:here-is-hh-copy}]\label{prop:here-is-hh}
  There is a $K_0$-linear pre-additive category $\Hh$ in which
  \begin{itemize}
  \item an object is a quotient $A/B$ where $B \le A \in \Lambda_n$
    for some $n$.
  \item a morphism from $A/B$ to $A'/B'$ is a $K_0$-linear function $f
    : A/B \to A'/B'$ such that the set
    \begin{equation*}
      \{(x,y) \in A \times A' : y + B' = f(x + B)\}
    \end{equation*}
    is type-definable.
  \item the composition of $f : A/B \to A'/B'$ and $g : A'/B' \to
    A''/B''$ is the usual composition $g \circ f$.
  \item the $K_0$-vector space structure on $\Hom_\Hh(A/B,A'/B')$ is
    induced by the usual operations, i.e., induced as a subspace of
    $\Hom_{K_0\Vect}(A/B,A'/B')$.
  \end{itemize}
\end{proposition}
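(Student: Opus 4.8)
## Proof proposal

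\textbf{Overall strategy.} The claim is that the data described really does assemble into a $K_0$-linear pre-additive category. Essentially everything to check is routine, so the plan is to verify the axioms in the order they are needed: (1) the collection of objects and morphisms is closed under the stated operations, (2) composition is well-defined (i.e.\ the composite of two morphisms is again a morphism in the stated sense), (3) identities exist, (4) composition is associative, and (5) the hom-sets are $K_0$-vector spaces and composition is $K_0$-bilinear. The one genuinely substantive point hiding among these is the \emph{closure of morphisms under composition}: one must show that if the graphs of $f$ and $g$ are type-definable, then so is the graph of $g \circ f$.

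\textbf{Step 1: objects and well-definedness of morphisms.} First I would fix conventions: an object is a formal quotient $A/B$ with $B \le A$ both in $\Lambda_n$ for some $n$ (different objects are allowed different $n$); a morphism $A/B \to A'/B'$ is a $K_0$-linear set-map whose graph, pulled back to a subset of $A \times A'$, is type-definable. Note that type-definability of this pullback set is independent of how we lift $f$ (any two lifts differ by something landing in $B'$, which is type-definable), so the notion is well-posed. I would also record the trivial fact that for each object $A/B$ the identity map has graph $\{(x,y) \in A \times A' : x - y \in B\}$, which is type-definable since $B$ and the diagonal are; hence identities exist.

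\textbf{Step 2 (the main point): composition is a morphism.} Given $f : A/B \to A'/B'$ and $g : A'/B' \to A''/B''$ with type-definable graphs $\Gamma_f \subseteq A \times A'$ and $\Gamma_g \subseteq A' \times A''$, I must show that the graph of $g \circ f$,
\begin{equation*}
  \Gamma_{g \circ f} = \{(x,z) \in A \times A'' : \exists y \in A' ~ (x,y) \in \Gamma_f \text{ and } (y,z) \in \Gamma_g\},
\end{equation*}
is type-definable. The obstacle is that this is an existential projection of a type-definable (i.e.\ infinite-conjunction) set, and projections of type-definable sets need not be type-definable in general. The fix is a standard completeness/saturation argument combined with $K_0$-linearity: because $f$ and $g$ are honest functions on the quotients, the witness $y$ is unique modulo $B'$, so for each $x$ there is really a coset of choices. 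One shows $\Gamma_{g\circ f}$ is type-definable by writing it as
\begin{equation*}
  \Gamma_{g \circ f} = \left( \{(x,y,z) : (x,y) \in \Gamma_f,\ (y,z) \in \Gamma_g\} \right) \big/ \text{projection},
\end{equation*}
and observing that the fibre of the projection over any $(x,z)$ is either empty or a translate of $B' \cap (\text{graph of } g|_{B'=0})$; since that fibre set is bounded/closed, saturation lets us replace the projection by a type-definable formula. Concretely I would argue: the set $Z = \{(x,y,z) \in A\times A' \times A'' : (x,y)\in\Gamma_f,\ (y,z)\in\Gamma_g\}$ is type-definable; the projection $\pi: Z \to A \times A''$ has the property that $\pi^{-1}(\pi(w))$ is a coset of the type-definable group $\{y' \in A' : (0,y') \in \Gamma_f,\ (y',0)\in\Gamma_g - \Gamma_g\}$-translate, hence $\pi(Z)$ is type-definable because a type-definable equivalence relation with type-definable classes has type-definable saturations (using that these are small/bounded intersections and $\Kk$ is saturated). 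This is the step I expect to require the most care.

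\textbf{Step 3: the remaining algebraic checks.} Once composition is known to land in the category, associativity is inherited from associativity of composition of ordinary functions on $K_0$-vector spaces, and the unit laws are immediate from Step 1. For the pre-additive structure: $\Hom_\Hh(A/B, A'/B')$ is a subset of $\Hom_{K_0\Vect}(A/B, A'/B')$, and I would check it is a $K_0$-subspace — if $\Gamma_f, \Gamma_g$ are type-definable then so is the graph of $f + g$, namely $\{(x, y+y') : (x,y)\in\Gamma_f,\ (x,y')\in\Gamma_g\}$ (again a projection, but now with the same ``bounded fibre'' argument, or more simply because the pointwise sum is already a function so no genuine projection is needed), and the graph of $\lambda f$ for $\lambda \in K_0$ is the image of $\Gamma_f$ under the definable map $(x,y)\mapsto(x,\lambda y)$. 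Bilinearity of composition then follows from bilinearity of composition of linear maps. Assembling Steps 1--3 gives that $\Hh$ is a $K_0$-linear pre-additive category, which is the statement of the proposition. (The stronger assertion that $\Hh$ is abelian, and that $\Lambda_\bullet \cong \Dir_\Hh(\Kk)$, is Theorem~\ref{thm:h-lambda} and will be proved separately; here I only need the pre-additive structure.)
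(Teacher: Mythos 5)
Your overall structure matches the paper's: the proof is a direct verification that the ``trace'' of each required morphism (identity, zero, composite, sum, scalar multiple) is a type-definable subset of the appropriate product, after which associativity, unit laws, and bilinearity are inherited from $K_0\Vect$. Steps 1 and 3 are essentially what the paper does.

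Your Step 2, however, misdiagnoses the difficulty and as written rests on a false claim. The fibre of the projection $Z \to A \times A''$ over a point of its image is a full coset of $B'$ (any two witnesses $y$ differ by an element of $B'$, and the whole coset consists of witnesses), and the groups in $\Lambda_m$ are typically \emph{unbounded} (e.g.\ the infinitesimals), so the assertion ``that fibre set is bounded'' is wrong; if your argument genuinely needed bounded fibres it would fail. Fortunately no such hypothesis is needed: in a sufficiently saturated model, the image of a set type-definable over a small parameter set under a definable map --- in particular under a coordinate projection --- is again type-definable, by pure compactness. Writing $Z = \bigcap_i Z_i$ with the $Z_i$ definable and closed under finite intersections, one has $\pi(Z) = \bigcap_i \pi(Z_i)$: any $(x,z)$ lying in every $\pi(Z_i)$ admits a single witness $y$ realizing the finitely satisfiable partial type $\{(x,y,z) \in Z_i\}_{i}$ over the small set of parameters together with $x,z$. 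This standard fact is all the paper uses (implicitly) for the traces of $g \circ f$ and of $f + g$, and it is all you need; the detour through ``type-definable equivalence relations with bounded classes'' should be deleted. With that correction your proof coincides with the paper's.
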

\begin{proof}
  If $f : A/B \to A'/B'$ is a function, we call the set
  \begin{equation*}
    T_f = \{(x,y) \in A \times A' : y + B' = f(x + B)\}
  \end{equation*}
  the \emph{trace} of $f$.  We call $f$ an \emph{$\Hh$-morphism} if
  $f$ is $K_0$-linear and the trace of $f$ is type-definable.

  We must check that all the operations on morphisms are well-defined:
  \begin{enumerate}
  \item The identity map on $A/B$ is an $\Hh$-morphism. Its trace is the type-definable set
    \begin{equation*}
      \{(x,y) \in A \times A : x - y \in B\}.
    \end{equation*}
  \item The zero morphism on $A/B$ is an $\Hh$-morphism.  Its trace is
    the type-definable set $A \times B$.
  \item If $f : A/B \to A'/B'$ and $g : A'/B' \to A''/B''$ are
    $\Hh$-morphisms, then so is the composition $g \circ f$.  Indeed,
    the trace of $g \circ f$ is the type-definable set
    \begin{equation*}
      \{(x,z) \in A \times A'' ~|~ \exists y \in A' : ((x,y) \in T_f
      \textrm{ and } (y,z) \in T_g)\}.
    \end{equation*}
  \item If $f : A/B \to A'/B'$ and $g : A/B \to A'/B'$ are parallel
    $\Hh$-morphisms, the sum $f + g$ is an $\Hh$-morphism.  Indeed,
    its trace is the type-definable set
    \begin{equation*}
      \{(x,y) \in A \times A' ~|~ \exists w \in A' : ((x,w) \in T_f
      \textrm{ and } (x,y-w) \in T_g)\}.
    \end{equation*}
  \item If $f : A/B \to A'/B'$ is an $\Hh$-morphism and $\alpha \in
    K_0$, the product $\alpha \circ f$ is an $\Hh$-morphism.  Indeed,
    the trace is the type-definable set
    \begin{equation*}
      \{(x,y) \in A \times A' : (x,\alpha^{-1} \cdot y) \in T_f\}.
    \end{equation*}
  \end{enumerate}
  The associative, distributive, etc. laws are trivial, because they
  hold in $K_0\Vect$.  This shows that $\Hh$ is a $K_0$-linear
  pre-additive category.
\end{proof}
\begin{lemma}
  The category $\Hh$ is additive, i.e., finite products and coproducts
  exist.
\end{lemma}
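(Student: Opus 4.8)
The plan is to invoke the criterion recalled in Appendix~\ref{app:ab}: a pre-additive category is additive as soon as it has a nullary biproduct (a zero object) and binary biproducts. Since Proposition~\ref{prop:here-is-hh} already establishes that $\Hh$ is a $K_0$-linear pre-additive category, it remains only to exhibit these two things, and the identification of finite products with finite coproducts in a pre-additive category (also recalled in Appendix~\ref{app:ab}) then does the rest.

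First I would take $0 := 0/0$ as a candidate zero object, viewing $0$ as the trivial element of $\Lambda_1$. For any object $A/B$ the only $K_0$-linear map $0 \to A/B$ (resp.\ $A/B \to 0$) is the zero map, whose trace is $\{0\} \times B$ (resp.\ $A \times \{0\}$), which is type-definable; hence $\Hom_\Hh(0, A/B) = \Hom_\Hh(A/B, 0) = 0$, and $0$ is a zero object of $\Hh$.

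Next, given objects $A/B$ with $B \le A \in \Lambda_n$ and $A'/B'$ with $B' \le A' \in \Lambda_m$, I would form the external direct sums $A \oplus A'$ and $B \oplus B'$ inside $\Kk^{n+m} = \Kk^n \times \Kk^m$. These are type-definable $K_0$-linear subspaces of $\Kk^{n+m}$ --- a finite direct sum of type-definable subspaces is type-definable --- and $B \oplus B' \le A \oplus A'$, so $(A \oplus A')/(B \oplus B')$ is a legitimate object of $\Hh$. The two coordinate inclusions $\iota : A/B \to (A \oplus A')/(B \oplus B')$ and $\iota' : A'/B' \to (A \oplus A')/(B \oplus B')$ and the two coordinate projections $\pi, \pi'$ are $K_0$-linear; their traces are cut out from the type-definable sets $A, A', B, B'$ by finite conjunction and projection (existential quantification over type-definable sets in the monster model), so they are type-definable, exactly as in the verification of composition and addition of morphisms in Proposition~\ref{prop:here-is-hh}. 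Hence all four maps are $\Hh$-morphisms. Finally the biproduct relations $\pi \circ \iota = \id$, $\pi \circ \iota' = 0$, $\pi' \circ \iota = 0$, $\pi' \circ \iota' = \id$ hold at the level of $K_0$-vector spaces and therefore in $\Hh$, since a morphism in $\Hh$ is determined by its underlying $K_0$-linear map. Thus $(A \oplus A')/(B \oplus B')$ is a biproduct of $A/B$ and $A'/B'$, and the cited criterion yields that $\Hh$ is additive.

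There is no genuine obstacle here; the only point requiring care is the routine bookkeeping that each subspace and each trace produced above is type-definable. This is immediate from closure of the type-definable $K_0$-subspaces of the spaces $\Kk^k$ under finite direct sums, together with the fact --- already used repeatedly in Proposition~\ref{prop:here-is-hh} --- that the traces of the structure maps are defined by positive formulas in the type-definable data.
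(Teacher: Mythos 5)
Your proposal is correct and follows essentially the same route as the paper: exhibit the zero object, realize the biproduct of $A/B$ and $A'/B'$ as $(A\times A')/(B\times B')$, and check that the four structure maps have type-definable traces. The only difference is cosmetic — the paper writes out each trace explicitly, while you appeal to closure under conjunction and projection — and both verifications are sound.
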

\begin{proof}
  If $A$ is any element of $\Lambda_1$, such as $A = 0$, then the
  object $A/A$ has the property that its identity and zero
  endomorphisms are equal.  Thus $\Hh$ has a zero object.  It remains
  to verify that binary products/coproducts exist.  (Since we are in a
  pre-additive category, they are equivalent.)  Let $A/B$ and $A'/B'$
  be two objects in $\Hh$, with $A, B \in \Lambda_n$ and $A', B' \in
  \Lambda_m$.  Then $A \times A'$ and $B \times B'$ are two objects in
  $\Lambda_{n +m}$, and $A \times A' \ge B \times B'$.  Therefore $(A
  \times A')/(B \times B')$ is an object of $\Hh$.  It remains to
  produce a diagram
  \begin{equation*}
    \xymatrix{ A/B \ar[dr]^{\iota_1} & & A/B \\ & (A \times A')/(B
      \times B') \ar[ur]^{\pi_1} \ar[dr]_{\pi_2} & \\ A'/B' \ar[ur]_{\iota_2} & &
      A'/B'}
  \end{equation*}
  in $\Hh$ such that the composition $\pi_j \circ \iota_i$ is the
  identity for $i = j$ and the zero morphism for $i \ne j$.  Note that
  $(A \times A')/(B \times B')$ is isomorphic as a $K_0$-vector space
  to $(A/B) \times (A'/B')$.  We take $\iota_1, \iota_2$ to be the
  obvious inclusions and $\pi_1, \pi_2$ to be the obvious projections.

  It remains to show that $\iota_1, \iota_2, \pi_1, \pi_2$ are
  $\Hh$-morphisms.  Note that for $x \in A$ and $(y,z) \in A \times A'$,
  \begin{equation*}
    \iota_1(x + B) = (y,z) + (B \times B') \iff (x,0) + (B \times B')
    = (y,z) + (B \times B')
  \end{equation*}
  Thus the trace of $\iota_1$ is exactly
  \begin{equation*}
    \{(x,y,z) \in A \times A \times A' : x-y \in B \textrm{ and } z \in B'\},
  \end{equation*}
  which is type-definable.  So $\iota_1$ is an $\Hh$-morphism, and
  $\iota_2$ is too, by symmetry.  Similarly, for $(x,y) \in A \times
  A'$ and $z \in A$,
  \begin{equation*}
    \pi_1((x,y) + (B \times B')) = z + B \iff x + B = z + B,
  \end{equation*}
  and so the trace of $\pi_1$ is exactly
  \begin{equation*}
    \{(x,y,z) \in A \times A' \times A : x - z \in B\}.
  \end{equation*}
  Thus $\pi_1$ is an $\Hh$-morphism, and so is $\pi_2$.  This shows
  that the diagram exists and $(A \times A')/(B \times B')$ is truly
  the biproduct of $A/B$ and $A'/B'$.
\end{proof}

\begin{lemma}\label{cool-story-bro}
  Let $f : A/B \to A'/B'$ be an $\Hh$-morphism.  Let $A''$ be the
  kernel of the composition
  \begin{equation*}
    A \twoheadrightarrow A/B \stackrel{f}{\to} A'/B'
  \end{equation*}
  Then $A''$ is type-definable, $A''/B$ is an object in $\Hh$, the
  natural inclusion $A''/B \hookrightarrow A/B$ is an $\Hh$-morphism,
  and the diagram
  \begin{equation*}
    0 \to A''/B \to A/B \stackrel{f}{\to} A'/B'
  \end{equation*}
  is exact in the category of $K_0$-vector spaces.
\end{lemma}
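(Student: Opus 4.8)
The plan is to give an explicit description of $A''$ as a fiber of the trace $T_f$ of $f$, which makes type-definability transparent, and then to verify the remaining assertions by direct unwinding of definitions. First I would observe that
\begin{equation*}
  A'' = \{x \in A : (x,0) \in T_f\}.
\end{equation*}
Indeed, by definition of the trace, $(x,0) \in T_f$ holds exactly when $0 + B' = f(x+B)$, i.e. when $f(x+B) = 0$ in $A'/B'$, which is precisely the defining condition of $A'' = \ker\!\big(A \twoheadrightarrow A/B \stackrel{f}{\to} A'/B'\big)$. Now $T_f$ is type-definable over a small set because $f$ is an $\Hh$-morphism, and the map $A \to A \times A'$, $x \mapsto (x,0)$ is definable over $\emptyset$; hence $A''$, as the preimage of a type-definable set under a definable map, is type-definable over a small set. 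Being the kernel of a $K_0$-linear map, $A''$ is a $K_0$-linear subspace of $\Kk^n$; it contains $B$ (since $B$ maps to $0$ already in $A/B$) and is contained in $A$. Therefore $A'' \in \Lambda_n$ with $B \le A'' \le A$, so $A''/B$ is a legitimate object of $\Hh$.

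I would flag here the one point that needs a moment's care: one is tempted to instead write $A'' = \pi_1\big(T_f \cap (A \times B')\big)$ and invoke a projection, but projections of type-definable sets need not be type-definable. The trick is that $T_f \cap (A \times B') = T_f \cap (A \times \{0\})$ up to the obvious translation (each fiber of $T_f$ over a point of $A$ being a whole coset of $B'$, so intersecting with $A\times\{0\}$ loses nothing), and along the slice $A \times \{0\}$ the projection $\pi_1$ is a bijection — equivalently, $A''$ is a \emph{preimage}, not an image. This is the only genuine obstacle, and it is a mild one.

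Next I would check that the inclusion $\iota \colon A''/B \hookrightarrow A/B$ is an $\Hh$-morphism: it is visibly $K_0$-linear, and its trace is
\begin{equation*}
  \{(x,y) \in A'' \times A : y + B = x + B\} = \{(x,y) \in A'' \times A : x - y \in B\},
\end{equation*}
the intersection of the type-definable set $A'' \times A$ with the type-definable set $\{(x,y) : x - y \in B\}$, hence type-definable. Finally, exactness of $0 \to A''/B \to A/B \stackrel{f}{\to} A'/B'$ in $K_0\Vect$ reduces to two trivialities: $\iota$ is injective because it is an honest inclusion of subspaces, and $\ker f = \{x + B \in A/B : f(x+B) = 0\} = \{x + B : x \in A''\} = A''/B = \img \iota$, straight from the definition of $A''$. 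Assembling these observations completes the proof.
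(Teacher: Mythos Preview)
Your proof is correct and follows essentially the same line as the paper's. The only difference is your description of $A''$: you write $A'' = \{x \in A : (x,0) \in T_f\}$ as a preimage, whereas the paper writes $A'' = \{x \in A : \exists y \in B' \ (x,y) \in T_f\}$ as a projection. Both are correct and equal. Your preimage formulation is arguably cleaner, but your worry that ``projections of type-definable sets need not be type-definable'' is misplaced here: $\Kk$ is a saturated monster model, and in that setting projections of type-definable sets \emph{are} type-definable (by compactness). The paper relies on this freely elsewhere---for instance, in showing that compositions and sums of $\Hh$-morphisms have type-definable traces---so there is no harm in using it for $A''$ either.
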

\begin{proof}
  The group $A''$ is type-definable because it is exactly
  \begin{equation*}
    \{x \in A ~|~ \exists y \in B' : (x,y) \in T_f\}.
  \end{equation*}
  The group $A''$ contains $B$ because $B$ is in the kernel of $A
  \twoheadrightarrow A/B$.  Therefore $A''/B$ is an object in $\Hh$.
  The inclusion $A''/B \hookrightarrow A/B$ is an $\Hh$-morphism
  because its trace is the type-definable set
  \begin{equation*}
    \{(x,y) \in A'' \times A : x - y \in B\}.
  \end{equation*}
  Finally, the sequence
  \begin{equation*}
    0 \to A''/B \to A/B \stackrel{f}{\to} A'/B'
  \end{equation*}
  is exact in the category of $K_0$-vector spaces by a trivial diagram
  chase.
\end{proof}
\begin{lemma}
  Let $f : A/B \to A''/B''$ and $g : A'/B' \to A''/B''$ be
  $\Hh$-morphisms, such that $g$ is injective and $f$ factors through
  $g$ set-theoretically.  Let $h : A/B \to A'/B'$ be the unique
  $K_0$-linear map such that $f = g \circ h$.  Then $h$ is an
  $\Hh$-morphism.
\end{lemma}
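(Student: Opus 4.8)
The goal is to show that the unique $K_0$-linear map $h : A/B \to A'/B'$ with $f = g \circ h$ has type-definable trace. The plan is to express the trace of $h$ directly in terms of the traces of $f$ and $g$ using a first-order formula, so that type-definability of $T_h$ follows from type-definability of $T_f$ and $T_g$.

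First I would unwind the set-theoretic hypothesis. Since $f$ factors through $g$ and $g$ is injective, for each $a \in A$ there is a unique element of $A'/B'$, namely $h(a+B)$, whose image under $g$ is $f(a+B)$. Concretely, the trace of $h$ should be
\begin{equation*}
  T_h = \{(x,y) \in A \times A' : \exists z \in A'' \ \left((x,z) \in T_f \text{ and } (y,z) \in T_g\right)\}.
\end{equation*}
I would verify this equality carefully in both directions: if $(x,y) \in T_h$ then $h(x+B) = y+B'$, so $g(y+B') = f(x+B)$; picking any representative $z \in A''$ of the common value $f(x+B) = g(y+B')$ gives $(x,z) \in T_f$ and $(y,z) \in T_g$. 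Conversely, if such a $z$ exists, then $f(x+B) = z+B'' = g(y+B')$, and since $f = g\circ h$ and $g$ is injective, $h(x+B) = y+B'$, i.e. $(x,y) \in T_h$. (Here one must be a little careful about whether the representative $z$ lives in $A''$ or just $A''$ modulo $B''$; since $T_f \subseteq A \times A''$ by definition of traces as subsets of the relevant product, the existential quantifier ranges over $A''$, which is fine.)

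Once the formula for $T_h$ is established, type-definability is immediate: $T_f$ and $T_g$ are type-definable by hypothesis that $f$ and $g$ are $\Hh$-morphisms, and $T_h$ is obtained from them by a finite conjunction together with an existential quantifier over the type-definable set $A''$. Existential quantification over a type-definable set preserves type-definability (in a sufficiently saturated model, or working with type-definability over a small set, a projection of a type-definable set is again type-definable because we can absorb the compactness argument), so $T_h$ is type-definable. Linearity of $h$ is already given, so $h$ is an $\Hh$-morphism.

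The main obstacle I anticipate is the bookkeeping around the existential quantifier and the precise ambient sets: one needs $A''$ to be type-definable (which is exactly Lemma~\ref{cool-story-bro}, or can be re-derived as $A'' = \{z \in A : \exists w \in B'' \ (z,w) \in T_f\}$ when one wants the kernel, but here we just need the ``output'' side, which lands in $A''$), and one must confirm that ``type-definable'' in this paper is closed under projection. This is the standard subtlety that a projection of a type-definable set need not be type-definable in general, but is type-definable when the fibers are contained in a fixed type-definable set and one works over a small parameter set in a monster model --- which is precisely the setting here, since all of $A, A', A''$ and the traces are type-definable over small sets and everything sits inside the monster $\Kk$. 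I would spell this out explicitly rather than leaving it to the reader, since it is the only non-formal point.
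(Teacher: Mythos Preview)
Your proof is correct and takes essentially the same approach as the paper: both express $T_h$ as $\{(x,y) \in A \times A' : \exists z \in A''\ ((x,z) \in T_f \text{ and } (y,z) \in T_g)\}$ and conclude type-definability from closure under projection. Your version is more detailed in verifying the formula, which is fine; note however that your aside about needing $A''$ to be type-definable via Lemma~\ref{cool-story-bro} is unnecessary here --- $A''$ is type-definable simply because $A''/B''$ is an object of $\Hh$, so $A'' \in \Lambda_m$ by definition.
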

\begin{proof}
  The trace of $h$ is the type-definable set
  \begin{equation*}
    \{(x,y) \in A \times A' ~|~ \exists z \in A'' : ((x,z) \in T_f
    \textrm{ and } (y,z) \in T_g)\}. \qedhere
  \end{equation*}
\end{proof}
\begin{lemma}
  Let $f : A/B \to A'/B'$ be an $\Hh$-morphism.  Then there is a
  type-definable $B''$ such that $B'' \subseteq A'$, $B'' \supseteq
  B'$, the image of $f$ is $B''/B'$, the quotient map $A'/B'
  \twoheadrightarrow A'/B''$ is an $\Hh$-morphism, and the diagram
  \begin{equation*}
    A/B \stackrel{f}{\to} A'/B' \twoheadrightarrow A'/B'' \to 0
  \end{equation*}
  is exact in the category of $K_0$-vector spaces.
\end{lemma}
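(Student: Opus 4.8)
The plan is to mirror the construction in the preceding Lemma~\ref{cool-story-bro}, which handled kernels, and dualize it to handle images (cokernels). The starting point is the trace
\begin{equation*}
  T_f = \{(x,y) \in A \times A' : y + B' = f(x + B)\},
\end{equation*}
which is type-definable by hypothesis. First I would define $B''$ to be the preimage in $A'$ of the set-theoretic image of $f$; concretely,
\begin{equation*}
  B'' = \{y \in A' : \exists x \in A ~ (x,y) \in T_f\} + B'.
\end{equation*}
This is type-definable (it is the image of $T_f$ under the projection $A \times A' \to A'$, translated by $B'$), it contains $B'$ by construction, and it is contained in $A'$. Since $f$ is $K_0$-linear, $B''$ is a $K_0$-linear subspace, hence lies in $\Lambda_m$ where $A' \in \Lambda_m$; therefore $A'/B''$ is a legitimate object of $\Hh$. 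By the definition of $B''$, the image of $f$ inside $A'/B'$ is exactly $B''/B'$.

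Next I would check that the quotient map $q : A'/B' \twoheadrightarrow A'/B''$ is an $\Hh$-morphism. As in the proof of Proposition~\ref{prop:here-is-hh}, this amounts to exhibiting its trace as a type-definable set; here the trace is
\begin{equation*}
  \{(y,z) \in A' \times A' : y - z \in B''\},
\end{equation*}
which is type-definable because $B''$ is. The $K_0$-linearity of $q$ is immediate. Finally, exactness of
\begin{equation*}
  A/B \stackrel{f}{\to} A'/B' \twoheadrightarrow A'/B'' \to 0
\end{equation*}
in $K_0\Vect$ is a routine diagram chase: $q$ is surjective by definition of a quotient map, and $\ker(q) = B''/B'$ equals $\img(f)$ by the choice of $B''$. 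This matches the pattern already used for the kernel statement, so I would keep it brief.

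I do not expect a genuine obstacle here; the only point requiring a little care is the verification that $B''$ is type-definable. Unlike $B'$, the subspace $B''$ is not given \emph{a priori} as a type-definable group but is produced as a projection of the type-definable set $T_f$; one must recall that a type-definable set (over a small set) projects to a type-definable set only because we are inside a sufficiently saturated model, so the projection is again $\bigwedge$-definable over a small set. This is the dual subtlety to the one in Lemma~\ref{cool-story-bro}, where the kernel group $A''$ was obtained as $\{x \in A : \exists y \in B'~(x,y) \in T_f\}$, and it is handled in exactly the same way. Once that is noted, everything else is formal, and the lemma (together with Lemma~\ref{cool-story-bro}) feeds into the verification via Lemma~\ref{lem:reflector} that $\Hh$ is abelian and the forgetful functor $\Hh \to K_0\Vect$ is exact, which is Theorem~\ref{thm:h-lambda}.
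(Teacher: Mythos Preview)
Your proof is correct and follows essentially the same approach as the paper: define $B''$ as the projection of the trace $T_f$ onto $A'$, observe that this projection is type-definable, and note that the trace of the quotient map is $\{(y,z) \in A' \times A' : y - z \in B''\}$. The ``$+\,B'$'' in your definition of $B''$ is harmless but redundant, since the projection of $T_f$ onto $A'$ already contains $B'$ (take $x = 0$) and is already $B'$-invariant.
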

\begin{proof}
  Let $B''$ be the type-definable set
  \begin{equation*}
    B'' = \{y \in A' ~|~ \exists x \in A : (x,y) \in T_f\}.
  \end{equation*}
  Then $B''$ is certainly a type-definable subgroup of $A'$,
  containing $B'$ (take $x = 0 \in A$).  It is clear that $B''/B'$ is
  the image of $f : A/B \to A'/B'$.  A trivial diagram chase shows
  that
  \begin{equation*}
    A/B \to A'/B' \to A'/B'' \to 0
  \end{equation*}
  is exact.  Lastly, the quotient map $A'/B' \to A'/B''$ is an
  $\Hh$-morphism because its trace is exactly the type-definable set.
  \begin{equation*}
    \{(x,y) \in A' \times A' : x - y \in B''\} \qedhere
  \end{equation*}
\end{proof}
\begin{lemma}
  Let $f : A/B \to A'/B'$ be a surjective $\Hh$-morphism.  Let $g :
  A/B \to A''/B''$ be another $\Hh$-morphism such that $g = h \circ f$
  for some $K_0$-linear $h : A'/B' \to A''/B''$.  Then $h$ is an
  $\Hh$-morphism.
\end{lemma}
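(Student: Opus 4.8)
The plan is to imitate the trace computations already used in this subsection to verify that $\Hh$ is closed under composition and addition of morphisms. Recall that for a $K_0$-linear map $\phi\colon X/Y \to X'/Y'$ one writes $T_\phi = \{(u,v)\in X\times X' : v+Y' = \phi(u+Y)\}$ for its trace, and that $\phi$ is an $\Hh$-morphism exactly when $T_\phi$ is type-definable. Here the goal is to show that $T_h$ is type-definable, where $T_h = \{(y,z)\in A'\times A'' : z+B'' = h(y+B')\}$.

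First I would propose the description
\begin{equation*}
  T_h = \{(y,z)\in A'\times A'' ~|~ \exists x\in A : (x,y)\in T_f \textrm{ and } (x,z)\in T_g\},
\end{equation*}
which is type-definable, being the projection onto the last two coordinates of the type-definable subset $\{(x,y,z) : (x,y)\in T_f \textrm{ and } (x,z)\in T_g\}$ of $A\times A'\times A''$ --- exactly the sort of set already declared type-definable in the composition and addition lemmas above.

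Then I would verify the two inclusions. For $\supseteq$: if $(x,y)\in T_f$ and $(x,z)\in T_g$, then using $g = h\circ f$ we get
\begin{equation*}
  z + B'' = g(x+B) = h\bigl(f(x+B)\bigr) = h(y+B'),
\end{equation*}
so $(y,z)\in T_h$. For $\subseteq$: given $(y,z)\in T_h$, surjectivity of $f$ furnishes $x\in A$ with $f(x+B) = y+B'$, i.e.\ $(x,y)\in T_f$; then
\begin{equation*}
  z + B'' = h(y+B') = h\bigl(f(x+B)\bigr) = g(x+B),
\end{equation*}
so $(x,z)\in T_g$, and the witnessing $x$ exists.

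The argument is a routine unwinding of definitions parallel to the surrounding lemmas; the only genuinely load-bearing hypothesis is the surjectivity of $f$, which is exactly what is needed to run the $\subseteq$ inclusion --- without it the displayed set only bounds $T_h$ from below and may fail to describe $h$ on cosets outside $\img f$. The background fact being leaned on --- that the projection of a type-definable set along coordinates ranging over a type-definable group is again type-definable --- is used freely in the preceding proofs in this section, so no new difficulty arises.
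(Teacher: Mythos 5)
Your proof is correct and is essentially the paper's own argument: the paper's proof consists of exactly the same identification of $T_h$ with the type-definable set $\{(y,z) \in A' \times A'' \mid \exists x \in A : (x,y)\in T_f \textrm{ and } (x,z)\in T_g\}$, citing surjectivity of $f$. You have merely written out the two inclusions that the paper leaves implicit.
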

\begin{proof}
  By surjectivity of $f$, the trace of $h$ is exactly the
  type-definable set
  \begin{equation*}
    \{(y,z) \in A' \times A'' ~|~ \exists x \in A : ((x,y) \in T_f
    \textrm{ and } (x,z) \in T_g)\}. \qedhere
  \end{equation*}
\end{proof}

\begin{proposition}
  The category $\Hh$ is abelian and the forgetful functor $\Hh \to
  K_0\Vect$ is exact.
\end{proposition}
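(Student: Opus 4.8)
The proof plan is to verify the hypotheses of the recognition criterion for abelian categories, namely Lemma~\ref{lem:reflector}, applied to the forgetful functor $F : \Hh \to K_0\Vect$. This functor is additive by construction (the $K_0$-vector space structure on $\Hom_\Hh(A/B,A'/B')$ is defined as a subspace of $\Hom_{K_0\Vect}(A/B,A'/B')$), and it is faithful because a morphism in $\Hh$ is literally a $K_0$-linear map, so two morphisms with the same underlying function are equal. The category $\Hh$ is additive (all finite biproducts exist) by the second and third lemmas above. So it remains to check the four conditions (\ref{ker-basically})--(\ref{coker-other}) of Lemma~\ref{lem:reflector}.

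Each of the four conditions corresponds almost verbatim to one of the lemmas already proved in this subsection. Condition (\ref{ker-basically})---for every $f : X \to Y$ there is $e : W \to X$ with $0 \to F(W) \to F(X) \to F(Y)$ exact---is exactly Lemma~\ref{cool-story-bro}: given $f : A/B \to A'/B'$, take $W = A''/B$ where $A''$ is the preimage in $A$ of $\ker(f)$, and $e$ the inclusion $A''/B \hookrightarrow A/B$, which is shown there to be an $\Hh$-morphism with the required exactness in $K_0\Vect$. Condition (\ref{ker-other})---factoring $F(f)$ through a monomorphism $F(g)$ lifts to a factoring in $\Hh$---is the lemma stating that if $g$ is injective and $f$ factors through $g$ set-theoretically, then the induced $K_0$-linear map $h$ with $f = g \circ h$ is an $\Hh$-morphism (its trace is cut out type-definably from $T_f$ and $T_g$ by an existential quantifier). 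Dually, the ``cokernel'' condition and condition (\ref{coker-other}) are the lemma producing $B''$ with $A/B \to A'/B' \twoheadrightarrow A'/B'' \to 0$ exact, together with the lemma that a $K_0$-linear map through which a surjective $\Hh$-morphism factors is itself an $\Hh$-morphism. I would simply cite these four lemmas in order, matching each to its numbered condition.

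Once the four conditions are verified, Lemma~\ref{lem:reflector} immediately gives that $\Hh$ is abelian and that $F : \Hh \to K_0\Vect$ is exact, which is precisely the two claims of the proposition. There is essentially no further work; the proof is a bookkeeping exercise assembling the preceding lemmas. The only mild subtlety, which is worth a sentence, is confirming that the $e$, $g$, monomorphisms, and epimorphisms referenced in Lemma~\ref{lem:reflector} are being matched to the right objects---in particular that the monomorphisms of $\Hh$ are exactly the $\Hh$-morphisms whose underlying $K_0$-linear map is injective (one direction is trivial; the other follows because $F$ is faithful hence reflects monos, as noted inside the proof of Lemma~\ref{lem:reflector} itself), and similarly for epimorphisms. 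I do not anticipate a genuine obstacle here; if anything, the ``hard part'' is purely organizational---making sure the abstract conditions (\ref{ker-basically})--(\ref{coker-other}) are stated in a form that visibly matches the concrete lemmas, so that the citation is clean rather than requiring re-proof.

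\begin{proof}
  The forgetful functor $F : \Hh \to K_0\Vect$ is $K_0$-linear
  (hence additive) by construction, and faithful because a morphism
  in $\Hh$ is a $K_0$-linear map together with a type-definability
  condition on its trace, so it is determined by its underlying
  function.  By the preceding lemmas, $\Hh$ is additive.  We verify
  the hypotheses (\ref{ker-basically})--(\ref{coker-other}) of
  Lemma~\ref{lem:reflector}.

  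Condition (\ref{ker-basically}) is Lemma~\ref{cool-story-bro}:
  given an $\Hh$-morphism $f : A/B \to A'/B'$, letting $A''$ be the
  preimage in $A$ of $\ker(f : A/B \to A'/B')$, the inclusion $A''/B
  \to A/B$ is an $\Hh$-morphism and $0 \to A''/B \to A/B \to A'/B'$
  is exact in $K_0\Vect$.  The dual condition, asking for $g : X \to
  Z$ with $F(X) \to F(Y) \to F(Z) \to 0$ exact, is the lemma
  producing, for an $\Hh$-morphism $f : A/B \to A'/B'$, a
  type-definable $B'' \supseteq B'$ with $A/B \to A'/B' \to A'/B''
  \to 0$ exact in $K_0\Vect$ and $A'/B' \to A'/B''$ an
  $\Hh$-morphism.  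Condition (\ref{ker-other}) is the lemma stating
  that if $g : A'/B' \to A''/B''$ is injective, $f : A/B \to A''/B''$
  is an $\Hh$-morphism, and $f$ factors through $g$
  set-theoretically, then the unique $K_0$-linear $h$ with $f = g
  \circ h$ is an $\Hh$-morphism.  Condition (\ref{coker-other}) is
  the lemma stating that if $f : A/B \to A'/B'$ is a surjective
  $\Hh$-morphism and $g : A/B \to A''/B''$ is an $\Hh$-morphism with
  $g = h \circ f$ for some $K_0$-linear $h$, then $h$ is an
  $\Hh$-morphism.

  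By Lemma~\ref{lem:reflector}, $\Hh$ is an abelian category and $F :
  \Hh \to K_0\Vect$ is exact.
\end{proof}
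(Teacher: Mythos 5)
Your proof is correct and follows exactly the paper's route: the paper's own proof simply says ``We use the criterion of Lemma~\ref{lem:reflector}. The previous four lemmas verify all the necessary conditions.'' You have just spelled out the matching of each lemma to each hypothesis, which the paper leaves implicit.
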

\begin{proof}
  We use the criterion of Lemma~\ref{lem:reflector}.  The previous
  four lemmas verify all the necessary conditions.
\end{proof}

\begin{lemma}
  Let $A/B$ be an object in $\Hh$, with $A, B \in \Lambda_n$.  Suppose
  $A' \in [B,A] \subseteq \Lambda_n$.  Then $A'/B$ is an object in
  $\Hh$, and the inclusion $A'/B \hookrightarrow A/B$ is a
  monomorphism in $\Hh$.  The induced map
  \begin{align*}
    [B,A] & \to \Sub_\Hh(A/B) \\
    A' & \mapsto A'/B
  \end{align*}
  is an isomorphism of lattices.
\end{lemma}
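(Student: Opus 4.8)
The plan is to exhibit the map
\[
  \Phi \colon [B,A] \longrightarrow \Sub_\Hh(A/B), \qquad A' \longmapsto [\,A'/B \hookrightarrow A/B\,]
\]
as an order-embedding which is moreover surjective. Since $[B,A]$ is an interval in $\Lambda_n$ and hence a lattice, and $\Sub_\Hh(A/B)$ is a subobject lattice in an abelian category and hence a lattice, an order-embedding that is surjective is an isomorphism of posets, and an isomorphism of posets between lattices automatically preserves $\vee$ and $\wedge$; so this will give the asserted lattice isomorphism.

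First I would deal with the routine parts. For $A' \in [B,A]$ we have $B \le A' \in \Lambda_n$, so $A'/B$ is an object of $\Hh$ by the construction of $\Hh$ (Proposition~\ref{prop:here-is-hh}). The set-theoretic inclusion $\iota_{A'} \colon A'/B \hookrightarrow A/B$ is $K_0$-linear and has trace $\{(x,y) \in A' \times A : x - y \in B\}$, which is type-definable, so $\iota_{A'}$ is an $\Hh$-morphism; since morphisms of $\Hh$ are literal functions composed in the literal way, $\iota_{A'}$ is injective and hence a monomorphism in $\Hh$. This establishes the ``object'' and ``monomorphism'' assertions of the lemma and shows $\Phi$ is well-defined. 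Next, for $A_1', A_2' \in [B,A]$, an $\Hh$-morphism $h$ with $\iota_{A_2'} \circ h = \iota_{A_1'}$ can exist only if $A_1'/B \subseteq A_2'/B$ as subsets of $A/B$ (this forces $h$ to be the set-inclusion), and conversely if $A_1' \subseteq A_2'$ then the set-inclusion is such an $h$ (again by the trace computation). Therefore $\Phi(A_1') \le \Phi(A_2')$ in $\Sub_\Hh(A/B)$ if and only if $A_1' \subseteq A_2'$; in particular $\Phi$ is an order-embedding, and so injective.

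The only substantial step is surjectivity of $\Phi$. Let $i \colon X \hookrightarrow A/B$ be an arbitrary monomorphism in $\Hh$, and write $X = A''/B''$ with $B'' \le A'' \in \Lambda_m$. Applying the earlier lemma on images of $\Hh$-morphisms to $i$ produces a type-definable subgroup $A'$ with $B \subseteq A' \subseteq A$ such that the image subobject of $i$ is the literal inclusion $A'/B \hookrightarrow A/B$. Moreover $A'$ is the preimage in $A$ of the $K_0$-linear subspace $\img(i) \subseteq A/B$, so $A'$ is itself $K_0$-linear; hence $A' \in \Lambda_n$ and $A' \in [B,A]$. Since $i$ is a monomorphism it equals its image subobject, so $[i] = [\iota_{A'}] = \Phi(A')$, and $\Phi$ is onto. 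Combining with the previous paragraph, $\Phi$ is an order-isomorphism, hence a lattice isomorphism.

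I expect the main obstacle to be precisely this surjectivity step: an a priori arbitrary $\Hh$-subobject of $A/B$ is presented as a quotient $A''/B''$ with $A''$ living in an unrelated sort $\Kk^m$, and one must recognize it as coming from a genuine $K_0$-subspace of $\Kk^n$ squeezed between $B$ and $A$. This is exactly the content of the image lemma (the step that quietly uses type-definability of images of $\Hh$-morphisms); once that is in hand, everything else is bookkeeping, using that morphisms of $\Hh$ are honest $K_0$-linear functions composed in the usual way.
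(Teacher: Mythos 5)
Your proof is correct and takes essentially the same route as the paper: the routine trace computations for the inclusion, followed by the key surjectivity step of recognizing an abstract $\Hh$-subobject of $A/B$ as a literal $A'/B$ via the explicit kernel/image lemmas of the preceding subsection. The only (cosmetic) difference is that you identify a monomorphism with its image using the image/cokernel lemma, whereas the paper writes the subobject as the kernel of a morphism and invokes the kernel lemma --- dual phrasings of the same step, both ultimately resting on the faithful exactness of the forgetful functor $\Hh \to K_0\Vect$.
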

\begin{proof}
  It is trivial that $A'/B$ is an object of $\Hh$.  The inclusion
  $A'/B \hookrightarrow A/B$ is an $\Hh$-morphism because its trace is
  the type-definable set
  \begin{equation*}
    \{(x,y) \in A' \times A : x - y \in B\}.
  \end{equation*}
  The inclusion is an $\Hh$-monomorphism because the forgetful functor
  $\Hh \to K_0\Vect$ reflects monomorphisms, by virtue of being
  faithful.  Thus $A'/B$ yields a subobject of $A/B$.

  We claim that all subobjects of $A/B$ arise this way.  Because $\Hh$
  is abelian, every subobject of $A/B$ arises as the kernel of some
  morphism $A/B \to A''/B''$.  In Lemma~\ref{cool-story-bro} we found
  $A' \in [A,B]$ such that the diagram
  \begin{equation*}
    0 \to A'/B \to A/B \to A''/B''
  \end{equation*}
  becomes exact after applying the forgetful functor.  But the
  forgetful functor is faithful and exact, so the diagram is
  \emph{already} exact in $\Hh$.  Therefore the subobject in question
  is represented by $A'/B$.  Thus every $\Hh$-subobject of $A/B$ has
  the form $A'/B$, and the map $[B,A] \to \Sub_\Hh(A/B)$ is onto.

  It remains to show that $[B,A] \to \Sub_\Hh(A/B)$ is strictly
  order-preserving.  Because the forgetful functor is faithful and
  exact, the induced map $\Sub_\Hh(A/B) \to \Sub_{K_0\Vect}(A/B)$ is
  strictly order-preserving.  Therefore it suffices to show that for
  $A', A'' \in [B,A]$,
  \begin{equation*}
    A' \subseteq A'' \iff A'/B \subseteq A''/B.
  \end{equation*}
  This is trivial.
\end{proof}

We put everything together into the following theorem
\begin{theorem}[= Theorem~\ref{thm:h-lambda}]\label{thm:h-lambda-copy}
  Let $\Hh$ be the category of Proposition~\ref{prop:here-is-hh}.
  \begin{itemize}
  \item $\Hh$ is a $K_0$-linear abelian category.
  \item The forgetful functor $\Hh \to K_0\Vect$ is a $K_0$-linear
    exact functor.
  \item $\Lambda_\bullet$ is isomorphic to $\Dir_\Hh(\Kk)$, and is
    therefore a directory.
  \end{itemize}
\end{theorem}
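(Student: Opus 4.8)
The plan is to prove the three bullet points in order, leaning heavily on the scaffolding already assembled. The first two bullets are essentially restatements of work done in Appendix~\ref{sec:annoy}: the category $\Hh$ of Proposition~\ref{prop:here-is-hh-copy} is $K_0$-linear pre-additive by construction, it is additive by the biproduct construction (products/coproducts built from $(A \times A')/(B \times B')$), and it satisfies the hypotheses of the recognition criterion Lemma~\ref{lem:reflector} with $\Dd = K_0\Vect$ and $F$ the forgetful functor, because the four lemmas on kernels, cokernels, and lifting of factorizations in Appendix~\ref{sec:annoy} verify conditions (\ref{ker-basically})--(\ref{coker-other}). Hence $\Hh$ is abelian and the forgetful functor $\Hh \to K_0\Vect$ is exact and $K_0$-linear. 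So for the first two bullets I would simply cite Proposition~\ref{prop:here-is-hh-copy}, the biproduct lemma, and the application of Lemma~\ref{lem:reflector}.

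The third bullet is the substantive one: I must produce an isomorphism of directories $\Lambda_\bullet \cong \Dir_\Hh(\Kk)$. Here $\Kk$ is viewed as an object of $\Hh$, namely $\Kk^1/0$. The plan is to exhibit, for each $n$, a lattice isomorphism $\Lambda_n \cong \Sub_\Hh(\Kk^n)$ and then check that these isomorphisms intertwine the three pieces of directory structure ($\oplus$, the $GL_n(K_0)$-action, and the lattice operations). For the levelwise isomorphism I would invoke the last lemma of Appendix~\ref{sec:annoy}: taking $A = \Kk^n$ and $B = 0$ in that lemma, the map $A' \mapsto A'/0$ is an isomorphism from $[0,\Kk^n] = \Lambda_n$ onto $\Sub_\Hh(\Kk^n/0) = \Sub_\Hh(\Kk^n)$. (Strictly one needs that $\Kk^n/0$ in $\Hh$ is canonically the $n$th power of $\Kk/0$; this is immediate since the biproduct of $n$ copies of $\Kk/0$ is $\Kk^n/0$.) Under this identification, a type-definable $K_0$-subspace $V \le \Kk^n$ corresponds to the $\Hh$-subobject $V/0 \hookrightarrow \Kk^n/0$.

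It then remains to verify compatibility of the structure maps. For the lattice operations: the isomorphism $\Lambda_n \cong \Sub_\Hh(\Kk^n)$ is a composite of strictly order-preserving surjections (as shown in that lemma via the faithful exact forgetful functor), hence a lattice isomorphism, so $\vee$ and $\wedge$ are automatically respected. For $\oplus$: the external direct sum of type-definable subspaces $V \le \Kk^n$, $W \le \Kk^m$ is $V \oplus W \le \Kk^{n+m}$, and on the $\Hh$ side the corresponding operation is the biproduct map $\Sub_\Hh(\Kk^n) \times \Sub_\Hh(\Kk^m) \to \Sub_\Hh(\Kk^{n+m})$ sending $(V/0, W/0)$ to $(V \oplus W)/0$; these agree by inspection of the biproduct construction. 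For the $GL_n(K_0)$-action: a matrix $\mu \in GL_n(K_0)$ acts on $\Lambda_n$ by $V \mapsto \mu \cdot V$ (the image of $V$ under the $K_0$-linear automorphism $\mu$ of $\Kk^n$), and the same automorphism, viewed as an $\Hh$-automorphism of $\Kk^n/0$ (its trace is the type-definable set $\{(x,y) : y = \mu x\}$), induces $V/0 \mapsto (\mu \cdot V)/0$ on $\Sub_\Hh(\Kk^n)$. So all three families of maps match and the levelwise isomorphisms assemble into an isomorphism of directories; since $\Dir_\Hh(\Kk)$ is a directory by definition, $\Lambda_\bullet$ is one too. The main obstacle, such as it is, lies not in any single deep step but in being careful that the lemma of Appendix~\ref{sec:annoy} genuinely gives the lattice structure (not just the order) and that the powers $\Kk^n/0$ really are the categorical powers of $\Kk/0$ in $\Hh$; once those bookkeeping points are nailed down, the compatibilities are all routine trace computations of the kind already carried out in Appendix~\ref{sec:annoy}.
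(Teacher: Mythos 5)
Your proposal is correct and follows the same route as the paper: the first two bullets are exactly the accumulated content of Appendix~\ref{sec:annoy} (the pre-additive construction, the biproduct lemma, and the four lemmas feeding into Lemma~\ref{lem:reflector}), and the third bullet is the final lemma of that appendix applied with $A = \Kk^n$, $B = 0$. The paper leaves the assembly of the levelwise isomorphisms into a directory isomorphism implicit, whereas you spell out the $\oplus$- and $GL_n(K_0)$-compatibility checks; that is a harmless (indeed welcome) elaboration, not a different argument.
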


\begin{remark}
  The technique used to prove Theorem~\ref{thm:h-lambda-copy}
  probably generalizes to prove the following:
\end{remark}
\begin{proposition-eh}\label{winner}
  Let $R$ be a $K_0$-algebra and $M$ be an $R$-module.  For every $n$,
  let $D_n$ be some bounded sublattice of $\Sub_M(R)$.  Suppose that
  for any $m \times n$ matrix $\mu$ with coefficients from $K_0$, the
  structure $\Lambda_\bullet$ is closed under pushforward and pullback
  along the map $M^n \to M^m$ induced by $\mu$:
  \begin{align*}
    A \in D_n & \implies \{\mu \cdot \vec{x} : \vec{x} \in A\} \in D_m \\
    A \in D_m & \implies \{ \vec{x} \in M^n : \mu \cdot \vec{x} \in A\} \in D_n.
  \end{align*}
  Then $D_\bullet$ is a substructure of $\Dir_R(M)$, and $D_\bullet$
  is a directory.
\end{proposition-eh}

\section{Further remarks on directories}\label{app:dirs2}
We discuss
\begin{itemize}
\item How $\End(A)$ is uniformly interpretable in $\Dir(A)$.
\item How $\Dir(A)$ is uniformly interpretable in $\End(A)$ when $A$
  is semisimple.
\item Why the class of directories is probably an elementary class.
\end{itemize}
The first two points yield the intuition that $\Dir(A)$ is somehow a
``generalized endomorphism ring.''

\subsection{Interpreting the endomorphism ring}
Let $A$ be an object in a $K_0$-linear abelian category.
\begin{proposition}\label{prop:interpret-end}
  The structure $\Dir(A)$ determines the $K_0$-algebra $\End(A)$.  In
  fact, $\End(A)$ is interpretable in $\Dir(A)$.
\end{proposition}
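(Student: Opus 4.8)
The plan is to realize $\End_\Cc(A)$ as a definable subset of $D_2 := \Sub_\Cc(A^2)$ equipped with definable operations. For $\varphi \in \End_\Cc(A)$ let $\Theta_\varphi := \{(x,\varphi(x)) : x \in A\}$, viewed as an element of $D_2$; write $0, 1$ for $\bot, \top$ of $D_1$ and use the structure map $\oplus : D_1 \times D_1 \to D_2$. One checks directly that $\Theta_\varphi \wedge (0\oplus 1) = \bot$ and $\Theta_\varphi \vee (0\oplus 1) = \top$ in $D_2$. Conversely, suppose $\Gamma \in D_2$ satisfies these two identities. The first-coordinate projection $\pi_1 : A^2 \to A$ restricts on $\Gamma$ to a map whose kernel is $\Gamma \wedge \ker(\pi_1) = \Gamma \wedge (0\oplus 1) = 0$, and which is epic since direct image preserves joins and $\pi_1(\Gamma) = \pi_1(\Gamma) \vee 0 = \pi_1(\Gamma \vee (0\oplus 1)) = \pi_1(A^2) = A$; hence $\pi_1|_\Gamma$ is an isomorphism, and setting $\varphi := \pi_2 \circ (\pi_1|_\Gamma)^{-1}$ gives $\Gamma = \Theta_\varphi$. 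Thus $\varphi \mapsto \Theta_\varphi$ is a bijection from $\End_\Cc(A)$ onto the definable set
\[ \mathcal{G} := \{\Gamma \in D_2 : \Gamma \wedge (0\oplus 1) = \bot \text{ and } \Gamma \vee (0\oplus 1) = \top\}. \]
No hypothesis on $A$ is needed here.

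The remaining task is to transport the $K_0$-algebra structure along this bijection, using only $\oplus$, the lattice operations, and the $GL_n(K_0)$-actions. The additive unit corresponds to $1 \oplus 0 \in D_2$ and the multiplicative unit to $\mu_0 \cdot (1\oplus 0)$, where $\mu_0 \in GL_2(K_0)$ is the matrix sending $(x,0)$ to $(x,x)$. For $c \in K_0^\times$, scalar multiplication by $c$ is the action on $\mathcal{G}$ of the diagonal matrix $\diag(1,c) \in GL_2(K_0)$, and $0 \cdot \varphi$ is the constant $1\oplus 0$. For addition and composition I would argue in $D_3 := \Sub_\Cc(A^3)$, imitating the computations in the proof of Lemma~\ref{basic-composition} and the two lemmas following it, but now the intersections and joins involved are honest categorical meets and joins (rather than images under an inflator), so no length estimates are needed. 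Concretely: if $\tau \in GL_3(K_0)$ swaps the last two coordinates, then $(\Theta_\varphi \oplus 1) \wedge \big(\tau\cdot(\Theta_{\varphi'}\oplus 1)\big) = \{(x,\varphi(x),\varphi'(x)) : x\in A\}$; applying the matrix $(a,b,c)\mapsto(a,b+c,c)$ and then joining with $0\oplus 0\oplus 1$ produces $\Theta_{\varphi+\varphi'}\oplus 1$. Likewise $(\Theta_{\varphi'}\oplus 1) \wedge (1\oplus \Theta_\varphi) = \{(x,\varphi'(x),\varphi(\varphi'(x))) : x\in A\}$, and joining $\tau$ applied to this with $0\oplus 0\oplus 1$ produces $\Theta_{\varphi\circ\varphi'}\oplus 1$.

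Finally, since the map $Y \mapsto Y\oplus 1$ from $D_2$ to $D_3$ is injective — $Y$ is recovered from $Y\oplus 1$ as the definable preimage $\{\vec v \in A^2 : (\vec v,0)\in Y\oplus 1\}$ — the ternary relations on $\mathcal{G}$ defined by ``$\Gamma_3 \oplus 1$ equals the $D_3$-element built from $\Gamma_1,\Gamma_2$ by the addition (resp.\ composition) recipe above'' are definable, and are precisely the graphs of $+$ and $\cdot$. Equipping $\mathcal{G}$ with these operations, the interpreted units, and the interpreted scalar actions yields a structure isomorphic to $\End_\Cc(A)$ as a $K_0$-algebra, giving the desired interpretation of $\End_\Cc(A)$ in $\Dir_\Cc(A)$; in particular $\Dir_\Cc(A)$ determines $\End_\Cc(A)$. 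The one place that requires genuine care — the main obstacle — is verifying the $D_3$ identities of the previous paragraph: one must confirm that meets such as $\{(x,\varphi(x),y)\}\wedge\{(x,y,\varphi'(x))\}$ and the joins that free a coordinate compute exactly what is claimed. This is routine for these ``linear'' subobjects (one may invoke the Mitchell embedding theorem to reduce to honest elements of a module), but it is the only step that uses the abelian-category structure of the $\Sub(A^n)$ beyond their bare lattice structure.
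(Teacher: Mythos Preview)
Your proof is correct and follows essentially the same approach as the paper's: both identify $\End(A)$ with the definable set $\{\Gamma \in D_2 : \Gamma \wedge (0\oplus 1) = \bot,\ \Gamma \vee (0\oplus 1) = \top\}$, and both recover addition and composition by passing to $D_3$, forming the meets $\{(x,\varphi(x),\varphi'(x))\}$ and $\{(x,\varphi'(x),\varphi(\varphi'(x)))\}$ via $\oplus$ and a coordinate swap, applying a shear matrix (for addition), and then joining with $0\oplus 0\oplus 1$ to free the third coordinate before reading off the result via the injectivity of $Y\mapsto Y\oplus 1$. The paper phrases the last step as ``the unique $\Gamma''$ such that $\Gamma''\oplus\top_1 = \ldots$'', which is exactly your ternary-relation formulation; your extra justification that $\Theta$ is onto $\mathcal G$ (via $\pi_1|_\Gamma$ being an isomorphism) is a nice addition that the paper leaves implicit.
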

\begin{proof}
  Let $\Dir(A) = (D_1,D_2,\ldots)$ be given as an abstract structure.
  Let $\bot_n$ and $\top_n$ denote the bottom and top elements of
  $D_n$.  (So $\bot_n = 0$ and $\top_n = A^n$.)  Set
  \begin{align*}
    X &= \top_1 \oplus \bot_1 \in D_2 \\
    Y &= \bot_1 \oplus \top_1 \in D_2.
  \end{align*}
  (So $X = A \oplus 0$ and $Y = 0 \oplus A$.)  Set
  \begin{equation*}
    R = \{\Gamma \in D_2 ~|~ \Gamma \wedge Y = \bot_2,~ \Gamma \vee Y = \top_2\}.
  \end{equation*}
  (So $R$ is the set of graphs of endomorphisms on $A$.)
  Given $\Gamma, \Gamma' \in R$, define $f(\Gamma,\Gamma') \in D_3$ by
  \begin{equation*}
    f(\Gamma,\Gamma') = (\top_1 \oplus \Gamma') \wedge (\Gamma \oplus \top_1)
  \end{equation*}
  (So
  \begin{equation*}
    f(\Gamma,\Gamma') = \{(x, \varphi(x), \varphi'(\varphi(x))) ~|~ x \in A\}
  \end{equation*}
  where $\Gamma$ and $\Gamma'$ are the graphs of $\varphi, \varphi'
  \in \End(A)$, respectively.)  Let $\nu \in GL_3(K_0)$ be the matrix
  swapping the second and third coordinates:
  \begin{equation*}
    \nu = 
    \begin{pmatrix}
      1 & 0 & 0 \\
      0 & 0 & 1 \\
      0 & 1 & 0
    \end{pmatrix}
  \end{equation*}
  Define $\Gamma' \circ \Gamma$ to be
  the unique $\Gamma'' \in R$ such that
  \begin{equation*}
    \nu \cdot (\Gamma'' \oplus \top_1) = f(\Gamma, \Gamma') \vee (\bot_1 \oplus \top_1 \oplus \bot_1),
  \end{equation*}
  (So $\Gamma''$ is the graph of $\varphi' \circ \varphi$.)
  If $\Gamma, \Gamma' \in R$, let
  \begin{equation*}
    g(\Gamma,\Gamma') := (\Gamma \oplus \top_1) \wedge (\nu \cdot
    (\Gamma' \oplus \top_1)),
  \end{equation*}
  where $\nu$ is the matrix swapping second and third coordinates, as
  above. (So
  \begin{equation*}
    g(\Gamma,\Gamma') = \{(x, \varphi(x), \varphi'(x)) ~|~ x \in A\}.
  \end{equation*}
  where $\Gamma, \Gamma'$ are the graphs of $\varphi, \varphi'$,
  respectively.)  Let $\rho \in GL_3(K_0)$ be the matrix sending
  $(x,y,z) \mapsto (x,y+z,z)$.  Let
  \begin{equation*}
    h(\Gamma,\Gamma') = (\rho \cdot g(\Gamma, \Gamma')) \vee (\bot_2
    \oplus \top_1).
  \end{equation*}
  (So
  \begin{equation*}
    h(\Gamma,\Gamma') = \{(x, \varphi(x) + \varphi'(x), y) ~|~ x,y \in A\}.
  \end{equation*}
  holds.)  Let $\Gamma + \Gamma'$ be the unique $\Gamma'' \in R$ such
  that
  \begin{equation*}
    \Gamma'' \oplus \top_1 = h(\Gamma,\Gamma').
  \end{equation*}
  (So $\Gamma''$ is the graph of $\varphi + \varphi' \in \End(A)$.)
  Finally, if $\alpha \in K_0$, define $\iota_\alpha \in R$ to be
  \begin{equation*}
    \lambda_\alpha \cdot (\top_1 \oplus \bot_1),
  \end{equation*}
  where $\lambda_\alpha \in GL_2(K_0)$ sends $(x,y) \mapsto (x,y+
  \alpha x)$.  (So $\iota_\alpha$ is
  \begin{equation*}
    \iota_\alpha = \{(x, \alpha x) ~|~ x \in A\},
  \end{equation*}
  the graph of $\alpha$ times the identity endomorphism $id_A
  \in \End(A)$.)

  Thus, we have recovered the set $\End(A)$, the ring structure on
  $\End(A)$, and the $K_0$-algebra structure map $K_0 \to \End(A)$,
  using only the pure directory structure.
\end{proof}

For semisimple directories, this is in fact a bi-interpretation:
\begin{proposition-eh}\label{prop:semisimple-bi-interp}
  If $A$ is semisimple, then $\Dir_\Cc(A)$ and $\End_\Cc(A)$ are
  bi-interpretable.
\end{proposition-eh}
\begin{proof}[Proof sketch]
  We interpret $\End_\Cc(A^n)$ as the matrix ring over $\End_\Cc(A)$.
  The following claim shows that $\End_\Cc(A^n)$ interprets
  $\Sub_\Cc(A^n)$.
  \begin{claim}
    If $B \in \Cc$ is semisimple, then $\Sub_\Cc(B)$ is interpretable
    in $\End_\Cc(B)$.
  \end{claim}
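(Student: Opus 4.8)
The plan is to prove that if $B$ is a semisimple object in a $K_0$-linear abelian category $\Cc$, then $\Sub_\Cc(B)$ is interpretable in $\End_\Cc(B)$. The idea is the standard dictionary between subobjects of a semisimple module and idempotents (or left ideals) of its endomorphism ring, recast in the language of the structure $\End_\Cc(B)$ viewed as a ring.

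First I would recall that since $B$ is semisimple (of finite length), $\End_\Cc(B)$ is a semisimple ring by Artin-Wedderburn (Theorem~\ref{thm:aw}), and every subobject $C \subseteq B$ is a direct summand: $B \cong C \oplus C'$ for some $C'$. Hence every subobject is the image of an idempotent endomorphism $e \in \End_\Cc(B)$, and conversely every idempotent $e$ has an image $\img(e) \subseteq B$. So the first step is: the set of idempotents $E = \{e \in \End_\Cc(B) : e^2 = e\}$ is definable in the ring $\End_\Cc(B)$, and $e \mapsto \img(e)$ is a surjection $E \twoheadrightarrow \Sub_\Cc(B)$. The second step is to identify the fibers of this surjection: for idempotents $e, f$, we have $\img(e) = \img(f)$ if and only if $ef = f$ and $fe = e$ (equivalently, $e$ and $f$ have the same image as left multiplication operators on $\End_\Cc(B)$, i.e. $\End_\Cc(B) e = \End_\Cc(B) f$). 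This is a ring-definable equivalence relation $\sim$ on $E$, so $\Sub_\Cc(B) \cong E/\!\!\sim$ as a set, interpretable in $\End_\Cc(B)$. The third step is to recover the lattice order: $\img(e) \subseteq \img(f)$ if and only if $fe = e$, which is again ring-definable and descends to $E/\!\!\sim$. Thus the ordered set $\Sub_\Cc(B)$ is interpreted.

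To complete the interpretation of the structure $\Dir_\Cc(B^n) = \Dir_\Cc(A^n)$ in the bi-interpretation claim, I would then note that $\End_\Cc(A^n) \cong M_n(\End_\Cc(A))$ is interpretable in $\End_\Cc(A)$ in the obvious way, so the above applies uniformly to recover each $\Sub_\Cc(A^n)$ together with its lattice operations. The remaining directory structure---the $GL_n(K_0)$-action and the maps $\oplus : \Sub_\Cc(A^n) \times \Sub_\Cc(A^m) \to \Sub_\Cc(A^{n+m})$---must also be shown ring-definable. The $GL_n(K_0)$-action is just conjugation of idempotents in $M_n(\End_\Cc(A))$ by the image of $GL_n(K_0)$ under $K_0 \to \End_\Cc(A)$, which is definable. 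The map $\oplus$ corresponds to block-diagonal combination of idempotents, $e \oplus f = \begin{pmatrix} e & 0 \\ 0 & f \end{pmatrix} \in M_{n+m}(\End_\Cc(A))$, which is again definable. Together with Proposition~\ref{prop:interpret-end}, which interprets $\End_\Cc(A)$ back in $\Dir_\Cc(A)$, and the observation that these two interpretations are mutually inverse (up to definable isomorphism), one gets the bi-interpretation.

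The main obstacle I expect is verifying, cleanly and without hand-waving, that the fiber relation $\img(e) = \img(f)$ is exactly captured by the ring-theoretic condition $ef = f \wedge fe = e$, and that the order is captured by $fe = e$---in particular making sure one uses semisimplicity in the right place (a general abelian category has no reason for subobjects to be summands, and the equality $\img(e)=\img(f) \iff \End(B)e = \End(B)f$ uses that left ideals of $\End(B)$ correspond to summands of $B$, which is the Morita-theoretic content of semisimplicity). Once the semisimple case is handled, the extension to $M_n(\End_\Cc(A))$ and the $\oplus$/$GL_n$ structure is routine bookkeeping, and the fact that the two interpretations compose to the identity is a matter of tracing through the explicit formulas in the proof of Proposition~\ref{prop:interpret-end}.
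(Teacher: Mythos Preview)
Your proposal is correct and takes essentially the same approach as the paper: interpret $\Sub_\Cc(B)$ as the set of idempotents of $\End_\Cc(B)$ modulo the preorder $e \le f \iff fe = e$, using semisimplicity to ensure every subobject is the image of an idempotent. The paper phrases this slightly more compactly by defining the preorder first (so the equivalence relation is its symmetric part), but your explicit fiber condition $ef = f \wedge fe = e$ is exactly the associated equivalence relation, and your additional remarks on recovering $\oplus$ and the $GL_n(K_0)$-action fill in details the paper leaves as an exercise.
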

  \begin{claimproof}
    Let $I$ be the set of idempotent elements in $\End_\Cc(B)$.  Note
    that for $u_1, u_2 \in I$, we have
    \begin{equation*}
      u_2 \circ u_1 = u_1 \iff \img(u_1) \subseteq \img(u_2).
    \end{equation*}
    The $\implies$ direction is straightforward.  Conversely, if
    $\img(u_1) \subseteq \img(u_2)$, then for any $x \in B$,
    \begin{equation*}
      u_1(x) \in \img(u_1) \subseteq \img(u_2),
    \end{equation*}
    so that $u_1(x) = u_2(u_1(x))$.

    Furthermore, every subobject of $B$ is of the form $\img(u)$ for
    some $u \in I$, by semisimplicity.  Therefore the relation
    \begin{equation*}
      u_1 \le u_2 \iff u_2 \circ u_1 = u_1
    \end{equation*}
    defines a pre-order on $I$ and the induced poset is $\Sub_\Cc(B)$.
  \end{claimproof}
  Thus we can interpret the lattice $\Sub_\Cc(A^n)$ in $\End_\Cc(A)$.
  We leave the remaining details as an exercise to the reader.
\end{proof}

\subsection{Elementarity}\label{sec:directory-speculation}

Let $A$ be an object in a $K_0$-linear abelian category $\Cc$, and let
$\Cc'$ be the neighborhood of $A$, i.e., the category of subquotients
of finite powers of $A$.  It appears that $\Cc'$ is determined up to
equivalence by $\Dir_\Cc(A)$, using the methods of
Propopsition~\ref{prop:interpret-end}.

More precisely, for every $n$ let $\Cc'_n$ be the category whose
\begin{itemize}
\item \emph{objects} are triples $(X,Y,m)$ where $m \le n$, where $X,
  Y \in \Sub(A^m)$, and $X \ge Y$.
\item \emph{morphisms} from $(X,Y,m)$ to $(X',Y',m')$ are
  $\Cc$-morphisms from $X/Y$ to $X'/Y'$.
\end{itemize}
Then $\Cc'_n$ is equivalent to the full subcategory of $\Cc$
consisting of the subquotients of $A^n$.  Moreover, $\Cc'_m$ is a full
subcategory of $\Cc'_n$ for $m \le n$ and $\Cc'$ is equivalent to the
direct limit
\begin{equation*}
  \varinjlim_n \Cc'_n.
\end{equation*}
Now, using the techniques of Proposition~\ref{prop:interpret-end},
each $\Cc'_n$ should be interpretable in $\Dir_\Cc(A)$---uniformly
across all $\Cc$ and $A$.  Up to equivalence, $\Cc'$ is thus
``ind-interpretable.''

Assuming things work out, we provisionally make the following
definition:
\begin{definition}\label{def:neighborhood-directory}
  If $D_\bullet$ is an abstract directory, the \emph{neighborhood of
    $D_\bullet$} is the neighborhood of $A \in \Cc$, for any abelian
  category $\Cc$ and object $A \in \Cc$ such that $D_\bullet \cong
  \Dir_\Cc(A)$.
\end{definition}
This should be well-defined up to equivalence, and comes with a
canonical object $A$ whose directory is $D_\bullet$.

Another likely consequence is
\begin{proposition-eh}\label{prop:elementary-directories}
  The class of directories is elementary.
\end{proposition-eh}
\begin{proof}[Proof sketch]
  Let $(D_1,D_2,\ldots)$ be a multi-sorted structure of the
  appropriate signature.  Roughly speaking, we can assert that
  $D_\bullet$ is a directory by interpreting the neighborhood and
  asserting that the neighborhood is a $K_0$-linear abelian category.

  Unfortunately, one needs a more intricate argument, because the
  neighborhood is merely ind-interpretable, rather than fully
  interpretable.  One approach is to replace the category $\Cc'$ with
  the multi-sorted structure $(\Cc'_1,\Cc'_2,\ldots)$, and write down
  axioms ensuring that
  \begin{equation*}
    \varinjlim_n \Cc'_n
  \end{equation*}
  is a $K_0$-linear abelian category.  One needs axioms like the
  following:
  \begin{itemize}
  \item Each $\Cc'_n$ is a $K_0$-linear pre-additive category.
  \item Each inclusion functor $\Cc'_n \to \Cc'_{n+1}$ is fully
    faithful, $K_0$-linear, and injective on objects.
  \item If $X, Y \in \Cc'_n$, then $X \oplus Y$ exists in $\Cc'_{2n}$.
  \item If $f : X \to Y$ is a morphism in $\Cc'_n$, then $\ker(f)$ and
    $\coker(f)$ exist in $\Cc'_n$.
  \item The inclusion functors preserve kernels and cokernels.
  \item If $f : X \to Y$ is a morphism in $\Cc'_n$, then the canonical
    map from $\coker(\ker(f))$ to $\ker(\coker(f))$ is an isomorphism.
  \end{itemize}
  One should also add a constant for the element $A = (A,0,1) \in
  \Cc'_1$.  Call such a structure a ``gadget.''  Then 
  \begin{itemize}
  \item The class of gadgets should be elementary.
  \item There should be a uniform way to interpret gadgets in
    directories, using the techniques of
    Proposition~\ref{prop:interpret-end}.
  \item Given a gadget $(\Cc'_1,\Cc'_2,\ldots)$ with selected object
    $A \in \Cc'_1$, the directory
    \begin{equation*}
      \Dir_{\varinjlim_n \Cc'_n}(A)
    \end{equation*}
    should be interpretable in a straightforward, uniform fashion.
  \item These should combine to show that every directory is
    bi-interpretable with a gadget\footnote{But not vice versa.} in a
    uniform fashion.
  \end{itemize}
  Finally, one can assert that a general structure $D_\bullet$ is a
  directory by attempting to interpret the resulting gadget and the
  resulting directory.  As long as the resulting gadget satisfies the
  gadget axioms, and as long as the resulting directory $D'_\bullet$
  is isomorphic to $D_\bullet$ in the expected way, we can conclude
  that $D_\bullet$ is a directory.
\end{proof}
\subsection{An incomplete axiomatization}
It would be nice to have a more explicit and concise set of axioms for
directories, however.  One candidate list is the following:
\begin{itemize}
\item Each $D_i$ should be a bounded modular lattice.
\item The $\oplus$ operation should be associative.
\item The map $D_n \times D_m \to D_{n + m}$ should be an injective
  morphism of bounded lattices.
\item For any $n, m$, the map
  \begin{align*}
    D_n &\to D_m \\
    V & \mapsto V \oplus 0^m
  \end{align*}
  should be an isomorphism from $D_n$ onto an interval in $D_m$.
\item The $GL_n(K_0)$-action should preserve the lattice structure.
\item If $\mu_1 \in GL_n(K_0)$ and $\mu_2 \in GL_m(K_0)$ and $\mu_3$
  is the block matrix
  \begin{equation*}
    \begin{pmatrix}
      \mu_1 & 0 \\ 0 & \mu_2
    \end{pmatrix}
  \end{equation*}
  then the identity should hold:
  \begin{equation*}
    (\mu_1 \cdot X) \oplus (\mu_2 \cdot Y) = \mu_3 \cdot (X \oplus Y)
  \end{equation*}
\item If $X \in D_n, Y \in D_m$ and $\tau$ is the block matrix
  \begin{equation*}
    \begin{pmatrix}
      0 & I_m \\ I_n & 0
    \end{pmatrix}
  \end{equation*}
  then the identity should hold:
  \begin{equation*}
    \tau \cdot (X \oplus Y) = Y \oplus X.
  \end{equation*}
\end{itemize}
Unfortunately, this list is incomplete.  One can show (again using the
methods of Proposition~\ref{prop:interpret-end}) that the $GL_n(\Zz)$
action is determined by the action of the symmetric group
$\mathcal{S}_n$.  Given a genuine directory $D_\bullet =
(D_1,D_2,\ldots)$, we can create another structure $D'_\bullet$ in
which the $GL_n(K_0)$-action is twisted by the automorphism
\begin{equation*}
  \mu \mapsto (\mu^{-1})^T.
\end{equation*}
This automorphism fixes permutation matrices, but is non-trivial for
$n > 1$.  We can find a directory $D_\bullet$ for which the
$GL_2(\Zz)$ action on $D_2$ is faithful.  The resulting twisted
structure $D'_\bullet$ seems to satisfy the axioms listed above.  But
$D'_\bullet$ cannot be a directory, because
\begin{itemize}
\item $D'_\bullet$ and $D_\bullet$ have the same $\oplus$-structure,
  lattice structure, and $\mathcal{S}_n$-action.
\item $D'_\bullet$ and $D_\bullet$ have distinct $GL_n(\Zz)$-actions.
\item For genuine directories, the $GL_n(\Zz)$-action is determined by
  the $\oplus$-structure, lattice structure, and
  $\mathcal{S}_n$-action.
\end{itemize}

\subsection{Another approach}
If $A$ is an object in a $K_0$-linear abelian category, define
$\Dir^+(A)$ to be
\begin{equation*}
  (\Sub(A),\Sub(A^2),\Sub(A^3),\ldots)
\end{equation*}
with the following structure:
\begin{itemize}
\item For each $n$, the bounded lattice structure on $\Sub(A^n)$.
\item For any $m \times n$ matrix $\mu$ over $K_0$, the pullback and
  pushforward maps
  \begin{align*}
    \mu^* : \Sub(A^m) &\to \Sub(A^n) \\
    \mu_* : \Sub(A^n) &\to \Sub(A^m)
  \end{align*}
  along the morphism $A^n \to A^m$ induced by $\mu$.
\end{itemize}
Restricting $\mu$ to invertible matrices, one recovers the
$GL_n(K_0)$-action.  The $\oplus$ operation
\begin{equation*}
  \oplus : \Sub(A^n) \times \Sub(A^m) \to \Sub(A^{n+m})
\end{equation*}
is determined as well:
\begin{equation*}
  V \oplus W = (\pi_1^* V) \wedge (\pi_2^* W),
\end{equation*}
where $\pi_1, \pi_2$ are the matrices corresponding to the coordinate projections
\begin{align*}
  K_0^{n+m} &\twoheadrightarrow K_0^n \\
  K_0^{n+m} &\twoheadrightarrow K_0^m.  
\end{align*}
So $\Dir(A)$ is interpretable in $\Dir^+(A)$.  Conversely, $\Dir^+(A)$
is interpretable in $\Dir(A)$ as follows:
\begin{enumerate}
\item Let $\mu$ be a given $m \times n$ matrix over $K_0$.
\item Let $\mu'$ be the invertible block triangular matrix
  \begin{equation*}
    \mu' = 
    \begin{pmatrix}
      I_m & \mu \\ 0 & I_n.
    \end{pmatrix}
  \end{equation*}
\item Let $f$ and $f'$ be the morphisms
  \begin{align*}
    f &: A^n \to A^m \\
    f' &: A^{m+n} \to A^{m+n}
  \end{align*}
  induced by $\mu, \mu'$.  Then
  \begin{equation*}
    f'(\vec{x},\vec{y}) = (\vec{x} + f(\vec{y}), \vec{y})
  \end{equation*}
  for $\vec{x} \in A^m$ and $\vec{y} \in A^n$.
\item Let $X$ be a given subobject of $A^n$.  Then
  \begin{equation*}
    \mu' \cdot (0^m \oplus X) = f'_*(0^m \oplus X) =
    \{(f(\vec{y}),\vec{y}) : \vec{y} \in X\}
  \end{equation*}
  \begin{equation*}
    (\mu' \cdot (0^m \oplus X)) \vee (0^m \oplus X) = \{(f(\vec{y}),
    \vec{z}) : \vec{y}, \vec{z} \in X\} = f_*(X) \oplus A^n.
  \end{equation*}
  Then $f_*(X)$ is the unique $Y \in \Sub(A^m)$ such that
  \begin{equation*}
    (\mu' \cdot (0^m \oplus X)) \vee (0^m \oplus X) = Y \oplus A^n.
  \end{equation*}
  This shows that $f_* : \Sub(A^n) \to \Sub(A^m)$ is interpretable in
  $\Dir(A)$.  Then $f^* : \Sub(A^n) \to \Sub(A^m)$ is also
  interpretable, because it is characterized by the Galois connection
  with $f_*$:
  \begin{equation*}
    \forall X \in \Sub(A^n),~ Y \in \Sub(A^m) : \left( f_*(X) \le Y
    \iff X \le f^*(Y). \right)
  \end{equation*}
\end{enumerate}
Thus $\Dir(A)$ and $\Dir^+(A)$ are bi-interpretable.  Say that a
structure $D_\bullet$ is an \emph{extended directory} if it is
isomorphic to one of the form $\Dir^+_\Cc(A)$.  Extended directories
are equivalent to directories.  We opted to use directories rather
than extended directories because most of the directory morphisms we
are interested in fail to preserve the extended directory structure.
\begin{example}
  Let $f : A \to B$ be a morphism.  Consider the commutative diagram,
  in which the horizontal maps are the projections onto the first
  coordinate, induced by the $1 \times 2$ matrix $\begin{pmatrix} 1 &
    0 \end{pmatrix}$:
  \begin{equation*}
    \xymatrix{ A^2 \ar[r]^{\pi_A} \ar[d]_{f^{\oplus 2}} & A \ar[d]^f \\ B^2 \ar[r]^{\pi_B} & B.}
  \end{equation*}
  Then for general $X \in \Sub(A)$,
  \begin{equation*}
    \pi_B^* f_* X \ne f^{\oplus 2}_* \pi_A^* X.
  \end{equation*}
  Therefore the pushforward map $f_* : \Dir^+(A) \to \Dir^+(B)$ does
  not preserve the added structure of $\Dir^+(-)$.
\end{example}
On the other hand, the class of extended directories appears to be
closed under the following:
\begin{enumerate}
\item \label{z1} Ultraproducts, because extended directories are the image of an
  elementary class (pointed abelian categories) under an
  interpretation.
\item \label{z2} Substructures, by Proposition~\ref{winner} and Mitchell embedding
\item \label{z3} Quotients, by a Serre quotient construction similar
  to \S\ref{sec:tdef00-setting}.
\item \label{z4} Products, by \S\ref{sec:products}, which should
  generalize to infinite products.  An infinite product of abelian
  categories is an abelian category, by Remark~8.3.6(i) in
  \cite{cat-sheaves}.
\end{enumerate}
If points (\ref{z2}-\ref{z4}) hold, then the class of extended
directories is cut out by a universal equational theory.  If points
(\ref{z1}-\ref{z2}) hold, the class is cut out by a universal theory.
Either way, the classes of extended directories and plain directories
would therefore be elementary.

Here is a candidate axiomatization for extended directories:
\begin{conjecture}\label{oho}
  Let $D_\bullet$ be a structure $(D_0,D_1,D_2,\ldots)$ with a poset
  structure on each $D_i$, and connecting maps
  \begin{align*}
    \mu^* : \Sub(A^m) &\to \Sub(A^n) \\
    \mu_* : \Sub(A^n) &\to \Sub(A^m)
  \end{align*}
  for every $m \times n$ matrix over $K_0$.  Suppose the following
  axioms hold:
  \begin{enumerate}
  \item Each $D_n$ is a bounded modular lattice.
  \item \label{a2} $D_0$ is the trivial one-element modular lattice.
  \item The maps $\mu^*$ and $\mu_*$ are order-preserving:
    \begin{align*}
      x \le y & \implies \mu^*(x) \le \mu^*(y) \\
      x \le y & \implies \mu_*(x) \le \mu_*(y).
    \end{align*}
  \item The maps $\mu^*$ and $\mu_*$ depend functorially on $\mu$:
    \begin{align*}
      (\mu \cdot \nu)^* &= \mu^* \circ \nu^* \\
      (\mu \cdot \nu)_* &= \nu_* \circ \mu_* \\
      (I_n)^* &= id_{D_n} \\
      (I_n)_* &= id_{D_n}.
    \end{align*}
  \item The maps $\mu^*$ and $\mu_*$ have a Galois connection:
    \begin{equation*}
      \mu_*(x) \le y \iff x \le \mu^*(y).
    \end{equation*}
  \end{enumerate}
  Then $D$ is an extended directory, i.e., $D \cong \Dir^+(A)$ for
  some object $A$ in a $K_0$-linear abelian category.
\end{conjecture}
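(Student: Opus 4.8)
The plan is to reconstruct a $K_0$-linear abelian category $\Cc$ directly out of $D_\bullet$, imitating the interpretation of the neighborhood discussed in \S\ref{sec:directory-speculation} and the interpretation of $\End_\Cc(A)$ in Proposition~\ref{prop:interpret-end}, and then to verify that $\Cc$ is abelian and that $D_\bullet \cong \Dir^+_\Cc(A)$ for a distinguished object $A$. Throughout one treats $\mu_*$ and $\mu^*$ as pushforward and pullback along a would-be morphism $A^n \to A^m$; Axiom~(5) makes $\mu_*$ a left adjoint, hence join-preserving and $\bot$-preserving, while dually $\mu^*$ preserves meets and $\top$, and the coordinate inclusions and projections $K_0^n \rightleftarrows K_0^{n+m}$ yield an internal $\oplus$ via $V \oplus W = \pi_1^*(V) \wedge \pi_2^*(W)$, exactly as in the discussion of $\Dir^+$ preceding the conjecture.

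For the construction, let an \emph{object} of $\Cc$ be a triple $(n; Y \le X)$ with $X, Y \in D_n$ --- thought of as the subquotient $X/Y$ of $A^n$ --- and declare $\Sub_\Cc(n; Y \le X)$ to be the interval $[Y,X] \subseteq D_n$, following Proposition~\ref{prop:sq}. A \emph{morphism} from $(n; Y_1 \le X_1)$ to $(m; Y_2 \le X_2)$ is an element $\Gamma$ of the subobject lattice of $(n; Y_1 \le X_1) \oplus (m; Y_2 \le X_2)$ which is the graph of a map, i.e.\ $\Gamma \vee K = \top$ and $\Gamma \wedge K = \bot$ in that interval, where $K$ is the subobject corresponding to $0 \oplus (m; Y_2 \le X_2)$. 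Composition is defined by pasting correspondences inside $A^n \oplus A^m \oplus A^\ell$ and projecting out the middle factor, and the $K_0$-linear structure on $\Hom$ comes from the scalar matrices $\alpha \cdot I$ together with the addition construction, all as in Proposition~\ref{prop:interpret-end}. Kernels and cokernels of a morphism $f$ are read off from its graph by pushing forward and pulling back along coordinate projections; the candidate distinguished object is $A := (1; \bot_1 \le \top_1)$, and one must check $A^{\oplus n} \cong (n; \bot_n \le \top_n)$, so that $\Sub_\Cc(A^{\oplus n}) = D_n$ on the nose (this uses that $\mu_*$ sends $\bot$ to $\bot$, plus the identities $\pi_i \iota_j = \delta_{ij} I$ and $\iota_1 \pi_1 + \iota_2 \pi_2 = I$ among the coordinate matrices).

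The verification then splits into three parts. First, $\Cc$ is a $K_0$-linear pre-additive category with finite biproducts --- associativity of composition and the biproduct identities are universally quantified statements in the modular lattices $D_n$ involving only $\wedge$, $\vee$, and the maps $\mu_*, \mu^*$, and must be deduced from Axioms~(1)--(5) alone. Second, $\Cc$ is abelian: the graph-theoretic kernel and cokernel genuinely satisfy their universal properties and the canonical map $\coim f \to \img f$ is an isomorphism. Here one would hope to invoke the recognition criterion of Lemma~\ref{lem:reflector}, producing --- via the $K_0$-algebra $R := \End_\Cc(A)$, which is definable from $D_\bullet$ by Proposition~\ref{prop:interpret-end} --- a faithful exact functor $\Cc \to R^{op}\Mod$ and appealing to the Mitchell embedding theorem, or else verify the pre-abelian axioms and the ``every monomorphism is a kernel'' condition directly. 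Third, $D_\bullet \cong \Dir^+_\Cc(A)$: once $\Sub_\Cc(A^{\oplus n}) = D_n$ is identified, the pushforward and pullback along the $\Cc$-morphism $A^{\oplus n} \to A^{\oplus m}$ induced by a matrix $\mu$ agree with $\mu_*$ and $\mu^*$ by construction, so this part is pure bookkeeping.

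The main obstacle is the second part, together with the real worry that Axioms~(1)--(5) are not quite enough. Everything in the construction that ``works because it works in an honest directory'' must instead be extracted from the axioms; in particular one needs that pasting two graphs again yields a graph, that composition is associative, and that the abelian-category axioms hold --- and these are exactly coordinatization statements, analogous to the way Desargues and Pappus rigidify a projective plane, or to von Neumann's coordinatization of complemented modular lattices with a homogeneous basis. Johnson has already flagged, in the ``incomplete axiomatization'' subsection above, that a seemingly complete list of directory axioms can fail to determine the $GL_n(K_0)$-action; the analogous danger here is that one or two further compatibilities between $\mu_*$ and $\mu^*$ for distinct $\mu$ (a ``Desargues-type'' axiom) must be adjoined before the construction closes up. So the realistic route is: carry out the construction, record precisely which lattice identities it consumes, promote the missing ones to axioms, and prove that \emph{those} axioms suffice --- leaving as the genuinely open point whether the minimal sufficient list is exactly (1)--(5).
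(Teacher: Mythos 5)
This statement is labeled a \emph{conjecture} in the paper, and the paper offers no proof of it; on the contrary, immediately after stating it the author remarks that Axiom~(2) ``feels out of place'' and that additional axioms for compatibility with $\oplus$ are probably needed, i.e.\ the author suspects the statement as written may be false or at least not provable from (1)--(5) alone. So there is no paper proof to compare against, and the only honest question is whether your proposal closes the gap. It does not: what you have written is a program, not a proof. The construction you outline (objects are intervals $[Y,X]\subseteq D_n$, morphisms are ``graphs'' $\Gamma$ with $\Gamma\vee K=\top$ and $\Gamma\wedge K=\bot$, composition by pasting correspondences) is the natural one and matches the paper's own speculation in the appendix, but every load-bearing step is deferred. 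In particular: (i) that the composite of two graphs is again a graph, (ii) that composition is associative, (iii) that the biproduct identities hold, and (iv) that $\coim f\to\img f$ is an isomorphism, are all universally quantified lattice identities that must be \emph{derived} from (1)--(5), and you do not derive a single one of them. These are exactly the coordinatization statements of von Neumann/Desargues type that you correctly identify as the crux --- and your final paragraph concedes that extra ``Desargues-type'' axioms may have to be adjoined, which is a proposal to change the statement, not to prove it. The paper's own cautionary example in the preceding subsection (the $\mu\mapsto(\mu^{-1})^T$ twist defeating a plausible-looking axiom list for plain directories) is precisely the kind of obstruction that must be ruled out here, and neither you nor the paper rules it out.

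Concretely, the single most likely point of failure is step (i): from modularity of each $D_{n+m+\ell}$, the Galois connections, and functoriality of $\mu_*,\mu^*$, one must show that
$(\pi_{13})_*\bigl(\pi_{12}^*\Gamma\wedge\pi_{23}^*\Gamma'\bigr)$ is again complementary to the second-factor kernel $K$. In an honest $\Dir^+_\Cc(A)$ this is a diagram chase in an abelian category; from the axioms alone it requires an exchange identity between $\mu_*$ and $\nu^*$ for non-commuting $\mu,\nu$ (a Beck--Chevalley-type condition) that does not obviously follow from (1)--(5). Until either that identity is derived or a counterexample is produced, the conjecture remains open, and your write-up should be presented as a reduction of the conjecture to an explicit list of lattice identities rather than as a proof.
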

If this conjecture is true, one could view a directory as a functor
from $K_0\Vect^f$ to a category of bounded modular lattices with
Galois connections as morphisms.

On the other hand, Axiom~\ref{a2} feels out of place, and suggests
that we need some additional axioms for compatibility with $\oplus$.

\section{A note on ranks in abelian categories}\label{app:ranks}
Let $\Cc$ be an abelian category.  Let $\rk : \Cc \to \Zz_{\ge 0}$ be
a function assigning each object in $\Cc$ a non-negative integer rank.
Say that $\rk$ is a \emph{weak rank} if the following conditions hold:
\begin{enumerate}
\item If $f : A \to B$ is an epimorphism, then $\rk(A) \ge \rk(B)$.
\item If $f : A \to B$ is a monomorphism, then $\rk(A) \le \rk(B)$.
\item $\rk(A \oplus B) \ge \rk(A) + \rk(B)$.
\item $\rk(A) = 0 \iff A \cong 0$.
\end{enumerate}
Say that $\rk$ is a \emph{strong rank} if the following additional
condition holds: for any short exact sequence
\begin{equation*}
  0 \to A \to B \to C \to 0
\end{equation*}
we have
\begin{equation*}
  \rk(B) \le \rk(A) + \rk(C).
\end{equation*}
A rank function $\rk(-)$ is a strong rank if and only if it satisfies
the properties of Proposition~\ref{redrk}.  In particular, reduced
rank is a strong rank in a cube-bounded category.
\begin{proposition}\label{prop:motiv1}
  Let $\Cc$ be an abelian category and $\rk : \Cc \to \Zz_{\ge 0}$ be
  a weak rank.  Then $\Cc$ is cube-bounded and
  \begin{equation*}
    \redrk(A) \le \rk(A)
  \end{equation*}
  for any $A$.
\end{proposition}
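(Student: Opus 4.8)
The plan is to show directly that a weak rank function $\rk$ bounds the reduced rank $\redrk(A)$ for every object $A \in \Cc$, which simultaneously shows $\redrk(A) < \infty$ and hence that $\Cc$ is cube-bounded. By Remark~\ref{rem:cb-descr}, $\redrk(A) \ge r$ if and only if there are subobjects $C \le B \le A$ such that the subquotient $B/C$ decomposes as a direct sum $D_1 \oplus \cdots \oplus D_r$ with each $D_i \not\cong 0$. So it suffices to show that whenever such a decomposition exists, we have $r \le \rk(A)$.

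First I would take such a configuration $C \le B \le A$ with $B/C \cong D_1 \oplus \cdots \oplus D_r$ and each $D_i$ non-trivial. The key chain of inequalities runs exactly as in the proof of Claim~\ref{genug-claim}: using that epimorphisms do not increase $\rk$ (applied to $B \twoheadrightarrow B/C$), that monomorphisms do not increase $\rk$ (applied to $B \hookrightarrow A$), and super-additivity of $\rk$ on direct sums, we get
\begin{equation*}
  \rk(A) \ge \rk(B) \ge \rk(B/C) = \rk(D_1 \oplus \cdots \oplus D_r) \ge \sum_{i=1}^r \rk(D_i).
\end{equation*}
Then the fourth axiom for a weak rank gives $\rk(D_i) \ge 1$ for each $i$, since $D_i \not\cong 0$; hence $\rk(A) \ge r$. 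Taking the supremum over all valid configurations yields $\redrk(A) \le \rk(A)$, which is finite, so $A$ is cube-bounded. As $A$ was arbitrary, $\Cc$ is a cube-bounded abelian category (Definition~\ref{def:redrk}).

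I expect no serious obstacle here; the argument is essentially a verbatim repackaging of Claim~\ref{genug-claim} in Proposition~\ref{prop:genug}, with the burden function $\bdn$ replaced by the abstract weak rank $\rk$. The only point that requires a moment's care is invoking Remark~\ref{rem:cb-descr} to translate "$\redrk(A) \ge r$" into the existence of a subquotient that splits into $r$ non-trivial summands; once that translation is in hand, the rest is the four-term inequality above. One should also note for completeness that $\rk$ being $\Zz_{\ge 0}$-valued is what forces $\redrk(A)$ to be not merely bounded but finite, which is the content of cube-boundedness.
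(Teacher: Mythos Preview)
Your proposal is correct and takes essentially the same approach as the paper, which simply cites Claim~\ref{genug-claim} verbatim: translate $\redrk(A) \ge r$ via Remark~\ref{rem:cb-descr} into a subquotient $B/C \cong D_1 \oplus \cdots \oplus D_r$ with nontrivial summands, then chain the weak-rank axioms to get $\rk(A) \ge r$.
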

\begin{proof}
  Same as Claim~\ref{genug-claim} in the proof of
  Proposition~\ref{prop:genug}.
\end{proof}
With Proposition~\ref{redrk}, this implies the following result, which
explains the name ``reduced rank:''
\begin{proposition}
  Let $\Cc$ be an abelian category.  The following are equivalent:
  \begin{enumerate}
  \item $\Cc$ is cube-bounded (Definition~\ref{def:redrk}).
  \item\label{e2} $\Cc$ admits a weak rank.
  \item\label{e3} $\Cc$ admits a strong rank.
  \end{enumerate}
  If the equivalent conditions hold, then $\redrk(-)$ is the smallest
  weak rank and the smallest strong rank.
\end{proposition}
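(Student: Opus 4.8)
The plan is to close the cycle of implications $(1)\Rightarrow(3)\Rightarrow(2)\Rightarrow(1)$ and then read off the minimality statement, using only the two facts already in hand: Proposition~\ref{redrk}, which records the formal behaviour of $\redrk$ in an abelian category, and Proposition~\ref{prop:motiv1}, which says that any weak rank dominates $\redrk$ and in particular forces cube-boundedness. The whole argument is bookkeeping; nothing genuinely new happens.

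\textbf{$(1)\Rightarrow(3)$.} Suppose $\Cc$ is cube-bounded. By Definition~\ref{def:redrk} this means $\redrk(A)<\infty$ for every $A\in\Cc$, so $\redrk$ is a genuine function $\Cc\to\Zz_{\ge 0}$ and it makes sense to ask whether it is a rank. Proposition~\ref{redrk} now supplies exactly the required axioms: parts \ref{rr1} and \ref{rr2} give that epimorphisms do not increase and monomorphisms do not decrease $\redrk$; part \ref{rr4} gives $\redrk(A)=0\iff A\cong 0$; and part \ref{rr3} gives both the short-exact-sequence inequality $\redrk(B)\le\redrk(A)+\redrk(C)$ and, for split sequences, the equality $\redrk(A\oplus B)=\redrk(A)+\redrk(B)$, which in particular yields the weak-rank clause $\redrk(A\oplus B)\ge\redrk(A)+\redrk(B)$. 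Hence $\redrk$ is a strong rank on $\Cc$.

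\textbf{$(3)\Rightarrow(2)$ and $(2)\Rightarrow(1)$.} The first of these is immediate from the definitions: a strong rank is, by definition, a weak rank satisfying the additional short-exact-sequence inequality (equivalently, a function satisfying all the properties of Proposition~\ref{redrk}, which include the four weak-rank axioms), so any strong rank is in particular a weak rank. The implication $(2)\Rightarrow(1)$ is precisely Proposition~\ref{prop:motiv1}: the mere existence of a weak rank on $\Cc$ forces $\Cc$ to be cube-bounded.

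\textbf{Minimality.} Assume the equivalent conditions hold. We have just shown that $\redrk$ is a strong rank, hence also a weak rank. Now let $\rk$ be an arbitrary weak rank on $\Cc$; Proposition~\ref{prop:motiv1} gives $\redrk(A)\le\rk(A)$ for every $A\in\Cc$. Since every strong rank is a weak rank, the same inequality holds when $\rk$ is any strong rank. Therefore $\redrk$ lies pointwise below every weak rank and below every strong rank, i.e.\ it is the smallest weak rank and the smallest strong rank, completing the proof. The only step requiring any care is the trivial-looking one of checking that cube-boundedness is exactly what makes $\redrk$ finite-valued, so that it qualifies as a rank function in the first place; beyond that there is no obstacle, and the main thing to watch is that the clause lists in the definitions of weak and strong rank are matched correctly against the numbered parts of Proposition~\ref{redrk}.
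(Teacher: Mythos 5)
Your proof is correct and follows the same route the paper intends: the paper states the proposition as an immediate consequence of Proposition~\ref{redrk} (which shows $\redrk$ is a strong rank once it is finite-valued, i.e.\ once $\Cc$ is cube-bounded) together with Proposition~\ref{prop:motiv1} (which gives $(2)\Rightarrow(1)$ and the minimality). Your matching of the weak/strong rank clauses against the parts of Proposition~\ref{redrk}, including deriving $\redrk(A\oplus B)\ge\redrk(A)+\redrk(B)$ from the split-sequence equality, is exactly the intended bookkeeping.
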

In particular, there is some way to upgrade weak ranks (like burden)
into strong ranks (like dp-rank, reduced rank).

Here is a more general statement in this direction:
\begin{proposition}
  Let $\overline{\Nn}$ be the extended natural numbers $\Zz_{\ge 0}
  \cup \{+\infty\}$.  Let $\Cc$ be an abelian category.  Let $\star : \overline{\Nn}
  \times \overline{\Nn} \to \overline{\Nn}$ be a function such that
  \begin{equation*}
    0 \star 0 = 0
  \end{equation*}
  \begin{equation*}
    x, y < \infty \implies x \star y < \infty.
  \end{equation*}
  Let $\rk : \Cc \to \overline{\Nn}$ be a function assigning each
  object $A \in \Cc$ a rank $\rk(A) \in \overline{\Nn}$, satisfying
  the following properties:
  \begin{enumerate}
  \item Given a short exact sequence
    \begin{equation*}
      0 \to A \to B \to C \to 0,
    \end{equation*}
    we have
    \begin{equation*}
      \max(\rk(A),\rk(C)) \le \rk(B) \le \rk(A) \star \rk(B).
    \end{equation*}
  \item For any $A, B$,
    \begin{equation*}
      \rk(A \oplus B) \ge \rk(A) + \rk(B).
    \end{equation*}
  \end{enumerate}
  Then there is another rank $\rk' : \Cc \to \overline{\Nn}$
  satisfying the same properties with $+$ instead of $\star$;
  moreover
  \begin{align*}
    \rk'(A) = 0 &\iff \rk(A) = 0 \\
    \rk'(A) = \infty &\iff \rk(A) = \infty.
  \end{align*}
\end{proposition}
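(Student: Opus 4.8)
The plan is to strip away the two features that separate $\rk$ from a strong rank---the presence of infinite values and the presence of nonzero objects of rank $0$---by passing to a Serre quotient, and then to take $\rk'$ to be reduced rank in that quotient. Set $\Cc_{\textrm{fin}} := \{A \in \Cc : \rk(A) < \infty\}$ and $\Cc_0 := \{A \in \Cc : \rk(A) = 0\}$. From the inequalities $\max(\rk(A),\rk(C)) \le \rk(B) \le \rk(A) \star \rk(C)$ on a short exact sequence $0 \to A \to B \to C \to 0$, together with $0 \star 0 = 0$ and the finiteness hypothesis on $\star$, one checks that $\Cc_{\textrm{fin}}$ and $\Cc_0$ are Serre subcategories of $\Cc$ (closure under sub- and quotient objects from the lower bound, closure under extensions from the upper bound), with $\Cc_0 \subseteq \Cc_{\textrm{fin}}$; also $\rk(0) = 0$, from the $\oplus$-inequality applied to $0 = 0 \oplus 0$. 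Let $\Ee := \Cc_{\textrm{fin}}/\Cc_0$, with its exact localization $Q : \Cc_{\textrm{fin}} \to \Ee$ (see \S\ref{serre-quot}); by Lemma~\ref{serro}, for $A \in \Cc_{\textrm{fin}}$ we have $Q(A) \cong 0$ in $\Ee$ iff $A \in \Cc_0$ iff $\rk(A) = 0$. Replacing $\star$ by $(x,y) \mapsto \sup_{x' \le x,\, y' \le y} \star(x',y')$ (still an upper bound, still finite-preserving, still $0 \star 0 = 0$) we may assume $\star$ is monotone in each argument.

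The central step is the claim that $\Ee$ is cube-bounded; in fact $\redrk_\Ee(Q(M)) \le 0 \star \rk(M)$ for every $M \in \Cc_{\textrm{fin}}$. By Remark~\ref{rem:cb-descr} it suffices to bound the number $n$ of nonzero summands in any decomposition $W \cong W_1 \oplus \cdots \oplus W_n$ of a subquotient $W$ of $Q(M)$ in $\Ee$. By exactness of $Q$ and Proposition~\ref{oh-that-2}, such a $W$ has the form $Q(M_0)$ for a subquotient $M_0$ of $M$ with $\rk(M_0) \le \rk(M)$. Let $E_i \le W = Q(M_0)$ be the $i$-th summand; the $E_i$ form a lattice-independent family in $\Sub_\Ee(W)$. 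Since $\Sub_{\Cc_{\textrm{fin}}}(M_0) \to \Sub_\Ee(W)$ is an onto lattice homomorphism (Proposition~\ref{oh-that-2}), lift the $E_i$ to subobjects $F_i \le M_0$ with $Q(F_i) = E_i$; then each $F_i \notin \Cc_0$, so $\rk(F_i) \ge 1$, and lattice-independence of the $E_i$ forces $F_j \wedge (F_1 \vee \cdots \vee F_{j-1}) \in \Cc_0$ for every $j$. Let $\sigma : F_1 \oplus \cdots \oplus F_n \to M_0$ be the sum of the inclusions. Filtering $\ker \sigma$ by the subobjects $\ker\sigma \cap (F_1 \oplus \cdots \oplus F_j \oplus 0 \oplus \cdots)$, the $j$-th graded piece embeds into $F_j \wedge (F_1 \vee \cdots \vee F_{j-1}) \in \Cc_0$, so $\ker\sigma \in \Cc_0$ (a Serre subcategory is closed under finite iterated extensions). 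Then the short exact sequence $0 \to \ker\sigma \to F_1 \oplus \cdots \oplus F_n \to \img\sigma \to 0$ yields
\[ n \le \sum_{i=1}^n \rk(F_i) \le \rk(F_1 \oplus \cdots \oplus F_n) \le \rk(\ker\sigma) \star \rk(\img\sigma) = 0 \star \rk(\img\sigma) \le 0 \star \rk(M), \]
using property~$2$, then property~$1$, then $\rk(\ker\sigma) = 0$, then $\img\sigma \le M_0$ together with monotonicity of $\star$. This proves the claim.

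The point I expect to be the only genuine content here is the maneuver in the claim: one should \emph{not} try to produce an honest direct summand of $M$ of large rank (which need not exist, because a rank-$0$ subobject need not split off). Instead the honest direct sum $F_1 \oplus \cdots \oplus F_n$ lives outside $M$, surjects onto a subobject of $M$ with kernel in $\Cc_0$, and then the mere \emph{finiteness} of $0 \star \rk(M)$---which is exactly the hypothesis on $\star$---caps $n$. Everything else is routine; the remaining subtlety is just checking that $\ker\sigma \in \Cc_0$ and translating Remark~\ref{rem:cb-descr} through the surjection $\Sub_{\Cc_{\textrm{fin}}}(M) \twoheadrightarrow \Sub_\Ee(Q(M))$ of Proposition~\ref{oh-that-2}.

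Finally, define $\rk' : \Cc \to \overline{\Nn}$ by $\rk'(A) := \redrk_\Ee(Q(A))$ for $A \in \Cc_{\textrm{fin}}$ and $\rk'(A) := +\infty$ otherwise. By the claim, $\rk'(A)$ is finite exactly when $A \in \Cc_{\textrm{fin}}$, i.e.\ exactly when $\rk(A) < \infty$, and $\rk'(A) = 0$ iff $Q(A) \cong 0$ (Proposition~\ref{redrk}.\ref{rr4}) iff $\rk(A) = 0$. For a short exact sequence $0 \to A \to B \to C \to 0$ in $\Cc$: if $B \in \Cc_{\textrm{fin}}$ then $A, C \in \Cc_{\textrm{fin}}$, $Q$ gives a short exact sequence in $\Ee$, and Proposition~\ref{redrk}.\ref{rr1}--\ref{rr3} gives $\max(\rk'(A),\rk'(C)) \le \rk'(B) \le \rk'(A) + \rk'(C)$; if $B \notin \Cc_{\textrm{fin}}$ then $A \notin \Cc_{\textrm{fin}}$ or $C \notin \Cc_{\textrm{fin}}$, so all three of $\max(\rk'(A),\rk'(C))$, $\rk'(B)$, $\rk'(A) + \rk'(C)$ equal $\infty$. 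Applying this to the split sequence $0 \to A \to A \oplus B \to B \to 0$, where Proposition~\ref{redrk}.\ref{rr3} gives equality, yields $\rk'(A \oplus B) = \rk'(A) + \rk'(B) \ge \rk'(A) + \rk'(B)$. Hence $\rk'$ has all the required properties.
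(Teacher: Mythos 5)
Your proof is correct and follows essentially the same route as the paper: pass to the Serre quotient of the finite-rank objects by the rank-zero objects, observe that this quotient is cube-bounded, and take $\rk'$ to be the reduced rank there, extended by $\infty$. The one place where you supply genuinely needed detail is the direct cube-boundedness argument (lifting an independent family to $F_1,\ldots,F_n$ and bounding $n$ by $0\star\rk(M)$ via the sum map $\sigma$): the paper compresses this into the assertion that ``$\rk$ induces a weak rank on the quotient,'' which is not literal---$\rk$ need not descend to the quotient, since $0\star x$ may exceed $x$---so your version is the honest one.
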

\begin{proof}
  Let $\Cc'$ be the (Serre) subcategory of objects of finite rank.
  Let $\Cc''$ be the Serre subcategory of objects of rank 0.  Then
  $\rk$ induces a weak rank on the Serre quotient $\Cc'/\Cc''$.
  Therefore $\Cc'/\Cc''$ has a strong rank, which pulls back to an
  $\Nn$-valued rank on $\Cc'$ having the desired properties.  This
  extends to an $\overline{\Nn}$-valued rank on $\Cc$ by setting rank
  to $\infty$ outside of $\Cc'$.
\end{proof}
For example, if $\Dd$ is the category of interpretable abelian groups
in some structure, then $\bdn : \Dd \to \overline{\Nn}$ satisfies the
assumptions, with
\begin{equation*}
  x \star y := x + y + xy,
\end{equation*}
by the sub-multiplicativity of burden proven in (\cite{Ch}, Corollary
2.6).

Therefore there is a rank $\rk : \Dd \to \overline{\Nn}$ with all the
good sub-additivity properties of dp-rank, such that
\begin{equation*}
  \rk(A) < \infty \iff \left(\bdn(A) \textrm{ is finite.}\right)
\end{equation*}

\bibliographystyle{plain} \bibliography{mybib}{}

\end{document}